\newtheorem{theorem}{Theorem}[section]
\newtheorem{lemma}[theorem]{Lemma}
\newtheorem{proposition}[theorem]{Proposition}
\newtheorem{corollary}[theorem]{Corollary}
\newtheorem{conjecture}[theorem]{Conjecture}
\theoremstyle{definition}
\newtheorem{definition}[theorem]{Definition}
\newtheorem{remark}[theorem]{Remark}
\numberwithin{equation}{section}
\newtheorem{example}[theorem]{Example}
\newtheorem{assumption}[theorem]{Assumption}
\newtheorem{setting}[theorem]{Setting}
\begin{document}

\normalfont

\title{Hodge-Iwasawa Theory II}
\author{Xin Tong}

\maketitle

\begin{abstract}
\rm We continue our study on the Hodge-Iwasawa theory which is a continuation of our previous work on Hodge-Iwasawa theory, which is aimed at higher dimensional deformation of higher dimensional Hodge structures over general analytic spaces or adic spaces. We still follow closely the approaches of Kedlaya-Liu to study our Frobenius modules over the different kinds of period rings including more generalized perfect period rings and the corresponding imperfect period rings. It is desirable that one can globalize the deformed information in order to organize the deformed period rings into the corresponding sheaves with respect to the coefficient sheaves, which will build up some foundations on further application to moduli of Frobenius sheaves.
\end{abstract}

\newpage

\tableofcontents

\newpage

\section{Introduction}

\subsection{Introduction}

\indent In our previous paper \cite{T1} on the Hodge-Iwasawa theory we studied some deformation of $p$-adic Hodge structures over some general perfectoid spaces and adic spaces. We mainly focused on the corresponding perfect setting which to some extent gives us some suitable and desired functoriality with respect to some general adic spaces. This reflects the corresponding motivation in some geometric and categorical way since our initial goal on the corresponding Hodge-Iwasawa consideration is to establish some possibility to deform the functorial geometric structures.\\

\indent In our current situation we still emphasize higher dimensional geometric structures. We will generalize the corresponding context of \cite{KL16} to our context which on one hand produces more general tools to study the deformation of Hodge structures and on the other hand we will be more flexible over some specific towers both coming from geometry and Iwasawa theory. The context of \cite{KL16} is already over some deep generalization. First off the towers considered in \cite{KL16} are axiomatic before some more concrete study. Also the corresponding context is within the consideration of generalized Witt vectors. Also the context considers the corresponding pseudocoherent modules and sheaves instead of the objects initially considered in the context of \cite{KL15}. We consider the corresponding deformations of these generalized structures over some affinoid algebras.\\

\indent In our current consideration we will do some kind of globalization to some extent which will generalize the corresponding story in some more rigid framework. To be more precise we will consider the situation where the deformation happens over some affinoid rigid analytic space. The sheafiness will be non-trivial here but will be true in this specific context.\\

\indent We will work in more general setting than the settings of our previous work and \cite{KL16}. Although the setting of \cite{KL16} is already more general than \cite{KL15}, we choose to work in the setting where the coefficient field $E$ is ultrametric with perfect residue field of characteristic $p>0$ (even in the nondiscrete valued situation). When $E$ is not discrete valued this is actually quite hard to deal with, where we only consider the situation where we do have some stable unique expression in the variable $\pi$ (for elements in the ring of Witt vectors), which we hope go back to this later to work within more general setting as long as one can define the corresponding power-multiplicative norms to do various completion under the hypothesis that $R$ is at least uniform Banach. Therefore our work will provide some additional reference to the study on this level of generality. Please be careful enough on our notations.

\subsection{Main Results}

\indent We have managed to do some generalization of our previous work \cite{T1}, and the corresponding context in \cite{KL15}, \cite{KL16}, \cite{KP}. As mentioned in the previous subsection the corresponding imperfect setting of the Hodge-Iwasawa theory is one main goal of our development. Also we would like to discuss to some extent the corresponding globalization of the deformed information which the rings in our consideration are carrying. To summarize we have:\\

I. We have now considered the corresponding equal characteristic analog of the corresponding comparison between the vector bundles, pseudocoherent sheaves over the deformed schematic Fargues-Fontaine curves and the corresponding Frobenius modules over the deformed Robba rings, generalizing \cite{T1}, \cite{KL16} and \cite{KP}. The corresponding motivation comes from certainly not only \cite{KP}, but also the corresponding $t$-motivic Hodge theory in the relative setting such as in Hartl-Kim \cite{HK}.\\

II. We have now considered the corresponding equal characteristic analog of the corresponding comparison between the vector bundles, pseudocoherent sheaves over the deformed adic Fargues-Fontaine curves and the corresponding Frobenius modules over the deformed Robba rings \cite{T1}, \cite{KL15}, \cite{KL16}. Again the corresponding motivation comes from certainly not only \cite{KP}, but also the corresponding $t$-motivic Hodge theory in the relative setting such as in Hartl-Kim \cite{HK}. This will lead to some sort of globalization in the category of adic spaces.\\ 


III. We have discussed the corresponding relationship between Frobenius modules over the perfect period rings and the Frobenius modules over the corresponding imperfection of the period rings in our deformed setting. This is to some extent important since the corresponding imperfection might lead to the corresponding noetherian objects which might be easy to control. This will have further application to the study of our deformation of the \'etale local systems.\\

IV. We have discussed the corresponding relationship between Frobenius modules over the perfect period rings and the Frobenius modules over the corresponding imperfection of the period rings in our deformed setting, but over the corresponding noncommutative Banach rings. This is to some extent important since the corresponding imperfection might lead to the corresponding noncommutative noetherian objects which might be easy to control. This will have further application to the study of our noncommutative deformation of the \'etale local systems. It is very natural to consider noncommutative setting in our development since we are considering the corresponding deformation into the noncommutative Fr\'echet-Stein algebras.\\

\subsection{Further Consideration}

We should mention that actually the corresponding ideas we mentioned within the globalization are not quite new since this bears some similarity to Pappas-Rapoport \cite{PR1} and Hellmann's work \cite{Hel1} on the arithmetic stacks of Frobenius modules. The corresponding comparison in our context could be morally conveyed in the following diagram:
\[
\xymatrix@R+6pc@C+0pc{
\text{Pappas-Rapoport-Hellmann}
\ar[r]\ar[r]\ar[r]\ar[d]\ar[d]\ar[d] &\text{Rapoport-Zink Spaces}\ar[l]\ar[l]\ar[l] \ar[d]\ar[d]\ar[d]\\
\text{???} \ar[r]\ar[r]\ar[r] &\text{Moduli Spaces of Local Shtukas after Scholze} \ar[l]\ar[l]\ar[l] .\\
}
\]

For instance in our consideration we will consider some arithmetic deformation of vector bundles over the arithmetic deformation of Fargues-Fontaine curves. It is very natural to consider the corresponding moduli problem around the corresponding vector bundles over these deformed spaces. Certainly these are not directly diamonds, but at least they should be stacks over the corresponding categories of certain adic spaces.\\


In this paper, we have built some well-posed equivariant versions of many results in the relative $p$-adic Hodge theory in \cite{KL15} and \cite{KL16} which have already reached beyond our previous consideration. We expect everything could be further applied to the corresponding study on the cohomologies of all the corresponding objects we defined and studied here. For instance if one applies these to the $t$-motivic setting, then immediately one has the chance to construct the corresponding $t$-adic local Tamagawa number conjecture after Nakamura \cite{Nakamura1} (also inspired by the recent work \cite{FGHP}), which certainly is a relative version in rigid family (again after Nakamura relying on the results from Kedlaya-Pottharst-Xiao on the finiteness of the corresponding cohomology of $(\varphi,\Gamma)$-modules over relative Robba rings).\\

We have reached some noncommutative Iwasawa deformation of the relative $p$-adic Hodge structures. At least the corresponding hope is directly targeted at the corresponding noncommutative Tamagawa number conjectures after Burns-Flach-Fukaya-Kato \cite{BF1}, \cite{BF2} and \cite{FK1},  Nakamura and Z\"ahringer \cite{Zah1}. Our belief is that although we have not shown that coherent sheaves and modules over noncommutative deformation of full Robba rings are equivalent, but at least for coherent sheaves one could have a coherent Iwasawa theory in the noncommutative setting. \\


\subsection{Lists of Notations}

\indent The arrangement of the corresponding notations getting involved in this paper is bit complicated, so we have made some indication to indicate where they mainly emerge into the corresponding discussion.\\

\begin{center}
\begin{longtable}{p{7.8cm}p{8cm}}
Notation & Description (mainly in section 2, section 3, setion 4, section 6.1, section 6.2, setion 6.3) \\
\hline
$E$ & A complete discrete valued field which is nonarchimedean with residue field $k$ perfect in characteristic $p>0$.\\
$A$ & A reduced affinoid algebra in rigid analytic geometry in the sense of Tate, or a noncommutative Banach affinoid algebra.\\
$W_\pi$ & The ring of generalized Witt vectors with respect to some finite extension $E$ of $\mathbb{Q}_p$ with pseudo-uniformizer $\pi$. \\
$(R,R^+)$   & Adic perfect uniform algebra over $\mathbb{F}_{p^h}$ for some $h\geq 1$. \\
$\widetilde{\Omega}_{R,A}^\mathrm{int}$ & Deformed version of the period rings in the style of \cite{KL15} and \cite{KL16}.\\
$\widetilde{\Omega}_{R,A}$ & Deformed version of the period rings in the style of \cite{KL15} and \cite{KL16}.\\
$\widetilde{\Pi}_{R,A}^{\mathrm{int},r}$ & Deformed version of the period rings in the style of \cite{KL15} and \cite{KL16}.\\
$\widetilde{\Pi}_{R,A}^\mathrm{int}$ & Deformed version of the period rings in the style of \cite{KL15} and \cite{KL16}.\\
$\widetilde{\Pi}_{R,A}^{\mathrm{bd},r}$ & Deformed version of the period rings in the style of \cite{KL15} and \cite{KL16}.\\
$\widetilde{\Pi}_{R,A}^\mathrm{bd}$ & Deformed version of the period rings in the style of \cite{KL15} and \cite{KL16}.\\
$\widetilde{\Pi}_{R,A}^{r}$ & Deformed version of the period rings in the style of \cite{KL15} and \cite{KL16}.\\
$\widetilde{\Pi}_{R,A}$ & Deformed version of the period rings in the style of \cite{KL15} and \cite{KL16}.\\
$\widetilde{\Pi}_{R,A}^{\infty}$ & Deformed version of the period rings in the style of \cite{KL15} and \cite{KL16}.\\
$\widetilde{\Pi}_{R,A}^I$ & Deformed version of the period rings in the style of \cite{KL15} and \cite{KL16}.\\

$\widetilde{\Omega}_{*,A}^\mathrm{int}$ & Deformed version of the period sheaves in the style of \cite{KL15} and \cite{KL16}.\\
$\widetilde{\Omega}_{*,A}$ & Deformed version of the period sheaves in the style of \cite{KL15} and \cite{KL16}.\\
$\widetilde{\Pi}_{*,A}^{\mathrm{int},r}$ & Deformed version of the period sheaves in the style of \cite{KL15} and \cite{KL16}.\\
$\widetilde{\Pi}_{*,A}^\mathrm{int}$ & Deformed version of the period sheaves in the style of \cite{KL15} and \cite{KL16}.\\
$\widetilde{\Pi}_{*,A}^{\mathrm{bd},r}$ & Deformed version of the period sheaves in the style of \cite{KL15} and \cite{KL16}.\\
$\widetilde{\Pi}_{*,A}^\mathrm{bd}$ & Deformed version of the period sheaves in the style of \cite{KL15} and \cite{KL16}.\\
$\widetilde{\Pi}_{*,A}^{r}$ & Deformed version of the period sheaves in the style of \cite{KL15} and \cite{KL16}.\\
$\widetilde{\Pi}_{*,A}$ & Deformed version of the period sheaves in the style of \cite{KL15} and \cite{KL16}.\\
$\widetilde{\Pi}_{*,A}^{\infty}$ & Deformed version of the period sheaves in the style of \cite{KL15} and \cite{KL16}.\\
$\widetilde{\Pi}_{*,A}^I$ & Deformed version of the period sheaves in the style of \cite{KL15} and \cite{KL16}.\\

$W_{\pi,\infty}$ & The ring of generalized Witt vectors with respect to some finite extension $E$ of $\mathbb{Q}_p$ with pseudo-uniformizer $\pi$, but with base change to $E_\infty$. In this case each element admits unique expression $\sum_{n\in \mathbb{Z}[1/p]_{\geq 0}}\pi^n[\overline{x}_n]$. \\

$\widetilde{\Omega}_{R,\infty}^\mathrm{int},\widetilde{\Omega}_{*,\infty}^\mathrm{int}$ & Period rings or sheaves in the style of \cite{KL15} and \cite{KL16}, with base change to $E_\infty$.\\
$\widetilde{\Omega}_{R,\infty},\widetilde{\Omega}_{*,\infty}$ & Period rings or sheaves in the style of \cite{KL15} and \cite{KL16}, with base change to $E_\infty$.\\
$\widetilde{\Pi}_{R,\infty}^{\mathrm{int},r},\widetilde{\Pi}_{*,\infty}^{\mathrm{int},r}$ & Period rings or sheaves in the style of \cite{KL15} and \cite{KL16}, with base change to $E_\infty$.\\
$\widetilde{\Pi}_{R,\infty}^\mathrm{int},\widetilde{\Pi}_{*,\infty}^\mathrm{int}$ & Period rings or sheaves in the style of \cite{KL15} and \cite{KL16}, with base change to $E_\infty$.\\
$\widetilde{\Pi}_{R,\infty}^{\mathrm{bd},r},\widetilde{\Pi}_{*,\infty}^{\mathrm{bd},r}$ & Period rings or sheaves in the style of \cite{KL15} and \cite{KL16}, with base change to $E_\infty$.\\
$\widetilde{\Pi}_{R,\infty}^\mathrm{bd},\widetilde{\Pi}_{*,\infty}^\mathrm{bd}$ & Period rings or sheaves in the style of \cite{KL15} and \cite{KL16}, with base change to $E_\infty$.\\
$\widetilde{\Pi}_{R,\infty}^{r},\widetilde{\Pi}_{*,\infty}^{r}$ & Period rings or sheaves in the style of \cite{KL15} and \cite{KL16}, with base change to $E_\infty$.\\
$\widetilde{\Pi}_{R,\infty},\widetilde{\Pi}_{*,\infty}$ & Period rings or sheaves in the style of \cite{KL15} and \cite{KL16}, with base change to $E_\infty$.\\
$\widetilde{\Pi}_{R,\infty}^{\infty},\widetilde{\Pi}_{*,\infty}^{\infty}$ & Period rings or sheaves in the style of \cite{KL15} and \cite{KL16}, with base change to $E_\infty$.\\
$\widetilde{\Pi}_{R,\infty}^I,\widetilde{\Pi}_{*,\infty}^I$ & Period rings or sheaves in the style of \cite{KL15} and \cite{KL16}, with base change to $E_\infty$.\\

$\widetilde{\Omega}_{R,\infty,A}^\mathrm{int},\widetilde{\Omega}_{*,\infty,A}^\mathrm{int}$ & Deformed period rings or sheaves in the style of \cite{KL15} and \cite{KL16}, with base change to $E_\infty$.\\
$\widetilde{\Omega}_{R,\infty,A},\widetilde{\Omega}_{*,\infty,A}$ & Deformed period rings or sheaves in the style of \cite{KL15} and \cite{KL16}, with base change to $E_\infty$.\\
$\widetilde{\Pi}_{R,\infty,A}^{\mathrm{int},r},\widetilde{\Pi}_{*,\infty,A}^{\mathrm{int},r}$ & Deformed period rings or sheaves in the style of \cite{KL15} and \cite{KL16}, with base change to $E_\infty$.\\
$\widetilde{\Pi}_{R,\infty,A}^\mathrm{int},\widetilde{\Pi}_{*,\infty,A}^\mathrm{int}$ & Deformed period rings or sheaves in the style of \cite{KL15} and \cite{KL16}, with base change to $E_\infty$.\\
$\widetilde{\Pi}_{R,\infty,A}^{\mathrm{bd},r},\widetilde{\Pi}_{*,\infty,A}^{\mathrm{bd},r}$ & Deformed period rings or sheaves in the style of \cite{KL15} and \cite{KL16}, with base change to $E_\infty$.\\
$\widetilde{\Pi}_{R,\infty,A}^\mathrm{bd},\widetilde{\Pi}_{*,\infty,A}^\mathrm{bd}$ & Deformed period rings or sheaves in the style of \cite{KL15} and \cite{KL16}, with base change to $E_\infty$.\\
$\widetilde{\Pi}_{R,\infty,A}^{r},\widetilde{\Pi}_{*,\infty,A}^{r}$ & Deformed period rings or sheaves in the style of \cite{KL15} and \cite{KL16}, with base change to $E_\infty$.\\
$\widetilde{\Pi}_{R,\infty,A},\widetilde{\Pi}_{*,\infty,A}$ & Deformed period rings or sheaves in the style of \cite{KL15} and \cite{KL16}, with base change to $E_\infty$.\\
$\widetilde{\Pi}_{R,\infty,A}^{\infty},\widetilde{\Pi}_{*,\infty,A}^{\infty}$ & Deformed period rings or sheaves in the style of \cite{KL15} and \cite{KL16}, with base change to $E_\infty$.\\
$\widetilde{\Pi}_{R,\infty,A}^I,\widetilde{\Pi}_{*,\infty,A}^I$ & Deformed period rings or sheaves in the style of \cite{KL15} and \cite{KL16}, with base change to $E_\infty$.\\

%
%
\end{longtable}
\end{center}

\begin{center}
\begin{longtable}{p{7.8cm}p{8cm}}
Notation & Description (mainly in section 5, section 6.4, section 6.5, section 6.6, section 6.7) \\
\hline
$(H_\bullet,H^+_\bullet)$ & A tower as in \cite[Chapter 5]{KL16}.\\
$\Pi^{\mathrm{int},r}_{H,A}$ & Imperfect relative period ring corresponding to $(H_\bullet,H^+_\bullet)$ which is deformed analog of Kedlaya-Liu's imperfect ring.\\
$\Pi^{\mathrm{int},\dagger}_{H,A}$ & Imperfect relative period ring corresponding to $(H_\bullet,H^+_\bullet)$ which is deformed analog of Kedlaya-Liu's imperfect ring.\\
$\Omega^\mathrm{int}_{H,A}$ & Imperfect relative period ring corresponding to $(H_\bullet,H^+_\bullet)$ which is deformed analog of Kedlaya-Liu's imperfect ring.\\
$\Omega_{H,A}$ & Imperfect relative period ring corresponding to $(H_\bullet,H^+_\bullet)$ which is deformed analog of Kedlaya-Liu's imperfect ring.\\
$\Pi^{\mathrm{bd},r}_{H,A}$ & Imperfect relative period ring corresponding to $(H_\bullet,H^+_\bullet)$ which is deformed analog of Kedlaya-Liu's imperfect ring.\\
$\Pi^{\mathrm{bd},\dagger}_{H,A}$ & Imperfect relative period ring corresponding to $(H_\bullet,H^+_\bullet)$ which is deformed analog of Kedlaya-Liu's imperfect ring.\\
$\Pi^{[s,r]}_{H,A}$ & Imperfect relative period ring corresponding to $(H_\bullet,H^+_\bullet)$ which is deformed analog of Kedlaya-Liu's imperfect ring.\\
$\Pi^r_{H,A}$ & Imperfect relative period ring corresponding to $(H_\bullet,H^+_\bullet)$ which is deformed analog of Kedlaya-Liu's imperfect ring.\\
$\Pi_{H,A}$ & Imperfect relative period ring corresponding to $(H_\bullet,H^+_\bullet)$ which is deformed analog of Kedlaya-Liu's imperfect ring.\\

$\breve{\Pi}^{\mathrm{int},r}_{H,A}$ & Imperfect relative period ring corresponding to $(H_\bullet,H^+_\bullet)$ which is deformed analog of Kedlaya-Liu's imperfect ring.\\
$\breve{\Pi}^{\mathrm{int},\dagger}_{H,A}$ & Imperfect relative period ring corresponding to $(H_\bullet,H^+_\bullet)$ which is deformed analog of Kedlaya-Liu's imperfect ring.\\
$\breve{\Omega}^\mathrm{int}_{H,A}$ & Imperfect relative period ring corresponding to $(H_\bullet,H^+_\bullet)$ which is deformed analog of Kedlaya-Liu's imperfect ring.\\
$\breve{\Omega}_{H,A}$ & Imperfect relative period ring corresponding to $(H_\bullet,H^+_\bullet)$ which is deformed analog of Kedlaya-Liu's imperfect ring.\\
$\breve{\Pi}^{\mathrm{bd},r}_{H,A}$ & Imperfect relative period ring corresponding to $(H_\bullet,H^+_\bullet)$ which is deformed analog of Kedlaya-Liu's imperfect ring.\\
$\breve{\Pi}^{\mathrm{bd},\dagger}_{H,A}$ & Imperfect relative period ring corresponding to $(H_\bullet,H^+_\bullet)$ which is deformed analog of Kedlaya-Liu's imperfect ring.\\
$\breve{\Pi}^{[s,r]}_{H,A}$ & Imperfect relative period ring corresponding to $(H_\bullet,H^+_\bullet)$ which is deformed analog of Kedlaya-Liu's imperfect ring.\\
$\breve{\Pi}^r_{H,A}$ & Imperfect relative period ring corresponding to $(H_\bullet,H^+_\bullet)$ which is deformed analog of Kedlaya-Liu's imperfect ring.\\
$\breve{\Pi}_{H,A}$ & Imperfect relative period ring corresponding to $(H_\bullet,H^+_\bullet)$ which is deformed analog of Kedlaya-Liu's imperfect ring.\\

$\widehat{\Pi}^{\mathrm{int},r}_{H,A}$ & Imperfect relative period ring corresponding to $(H_\bullet,H^+_\bullet)$ which is deformed analog of Kedlaya-Liu's imperfect ring.\\
$\widehat{\Pi}^{\mathrm{int},\dagger}_{H,A}$ & Imperfect relative period ring corresponding to $(H_\bullet,H^+_\bullet)$ which is deformed analog of Kedlaya-Liu's imperfect ring.\\
$\widetilde{\Omega}^\mathrm{int}_{H,A}$ & Imperfect relative period ring corresponding to $(H_\bullet,H^+_\bullet)$ which is deformed analog of Kedlaya-Liu's imperfect ring.\\
$\widetilde{\Omega}_{H,A}$ & Imperfect relative period ring corresponding to $(H_\bullet,H^+_\bullet)$ which is deformed analog of Kedlaya-Liu's imperfect ring.\\
$\widehat{\Pi}^{\mathrm{bd},r}_{H,A}$ & Imperfect relative period ring corresponding to $(H_\bullet,H^+_\bullet)$ which is deformed analog of Kedlaya-Liu's imperfect ring.\\
$\widehat{\Pi}^{\mathrm{bd},\dagger}_{H,A}$ & Imperfect relative period ring corresponding to $(H_\bullet,H^+_\bullet)$ which is deformed analog of Kedlaya-Liu's imperfect ring.\\

\end{longtable}
\end{center}

\newpage

\section{Arithmetic Families of Period Rings}

\subsection{Basic Settings and Basic Definitions of Period Rings}

\indent In this section we first define the corresponding period rings and period sheaves involved in our study which is generalized version of the study in our previous paper, where we consider the corresponding ring of usual $p$-typical Witt vectors. For the convenience of the readers we present the corresponding construction in detail to some extent. First following \cite[Hypothesis 3.1.1]{KL16} we consider the following setting:

\begin{setting}
In our context we consider the corresponding generalized Witt vectors in the style of \cite[3.1]{KL16} where we first fix some complete nonarchimedean (normalized as in \cite[Hypothesis 3.1.1]{KL16}) discrete valued field $E$ where we will use the notation $\pi$ to denote a chosen pseudo-uniformizer of $E$, and we use the notation $\mathfrak{o}_E$ to denote the corresponding ring of integers of $E$. Slightly generalizing the context \cite[Hypothesis 3.1.1]{KL16} of \cite{KL16} we consider the corresponding situation where $k$ is a perfect field of characteristic $p>0$, and we assume the field $k$ to contain $\mathbb{F}_{p^h}$. We now assume that the ultrametric field $E$ has residue field $k$. Again as in our previous paper we will use the notation $(R,R^+)$ to denote a perfect adic Banach uniform algebra over $k$. Following \cite{KL16} we will use the notation $\alpha$ to denote the spectral seminorms on perfect adic Banach uniform algebras. When $E$ is of equal-characteristic we assume that $E$ contains the field $\mathbb{F}_p((\eta))$. 
\end{setting}

\begin{remark}
The field $E$ is assumed to be discrete valued. One can actually consider more general $E$. The most general situation (namely just assume $E$ is ultrametric field) is actually a little bit hard to manage due to the fact that the element in the genralized Witt vector will have no unique expression as a power series, but in some nice situations this is not that hard for instance one can consider the base change to the perfectoid field $E_\infty$.	
\end{remark}

\indent Then we could consider the following definitions of the period rings in our situation which is just the corresponding deformation over $A$ of the corresponding rings defined in \cite[Definition 4.1.1]{KL16} in some generalized fashion:

\begin{setting}
Recall that in our situation we have the corresponding rings of generalized Witt vectors associated to some perfect ring $R$ or $R^+$. Generalizing the situation in \cite[Definition 4.1.1]{KL16} and \cite[Setting 3.1.1]{KL16} we denote them by $W_\pi(R)$ or $W_\pi(R^+)$ where $\pi$ is the chosen pseudo-uniformizer as defined. To be more precise here $W_\pi(R)$ (same to $W_\pi(R^+)$) is defined to be
\begin{displaymath}
W_\pi(R):=W(R)\otimes_{W(k)}\mathcal{O}_E.	
\end{displaymath}
Then we have the following ring of period rings from this notation:
\begin{displaymath}
\widetilde{\Omega}^\mathrm{int}_{R}=W_\pi(R),	\widetilde{\Omega}_{R}=W_\pi(R)[1/\pi].
\end{displaymath}
Each element of $W_\pi(R)$ could be expressed as some power series:
\begin{displaymath}
\sum_{k\geq 0}\pi^k[\overline{x}_k]	
\end{displaymath}
where one has the corresponding Gauss norm coming from the norm $\alpha$ in the style of for each $r>0$ 
\begin{displaymath}
\left\|.\right\|_{\alpha^r}(\sum_{k\geq 0}\pi^k[\overline{x}_k]):=\sup_{k\geq 0}\{p^{-k}\alpha(\overline{x}_k)^r\}.	
\end{displaymath}
Then one can define the corresponding integral Robba ring $\widetilde{\Pi}^{\mathrm{int},r}_{R}$ by taking the completion of the ring $W_\pi(R^+)[[R]]$ by using the norm defined above.
From this one has the corresponding bounded Robba ring $\widetilde{\Pi}_R^{\mathrm{bd},r}$ which is defined just to be $\widetilde{\Pi}_R^{\mathrm{int},r}[1/\pi]$. Then we consider the corresponding Robba $\widetilde{\Pi}^I_R$ with respect to some interval $I\subset (0,\infty)$ which is defined to be the corresponding ind-Fr\'echet completion of $W_\pi(R^+)[[R]][1/\pi]$ (namely $W_\pi(R^+)[[r],r\in R][1/\pi]$) with respect to the family of norms $\left\|.\right\|_{\alpha^r}$, $\forall r\in I$. By taking specific intervals like $(0,r]$ or $(0,\infty)$ we have the corresponding Robba ring $\widetilde{\Pi}_R^\mathrm{r}$ and $\widetilde{\Pi}_R^\mathrm{\infty}$. Then by taking the corresponding union throughout all the radius $r>0$ we have the corresponding Robba rings $\widetilde{\Pi}^\mathrm{int}_R,\widetilde{\Pi}^\mathrm{bd}_R,\widetilde{\Pi}_R$. As in \cite[4.1]{KL16} we have also the corresponding integral versions of some of the rings defined above.
\end{setting}

\indent Our consideration is to extend the scope of the consideration mentioned above by extending the corresponding dimension of the ring involved to some extent. To be more precise one considers the corresponding power or Laurent series over the corresponding functional analytic algebras, and then take suitable quotient.

\begin{setting}
In characteristic zero situation, we are going to use the notation $\mathbb{Q}_p\{T_1,...T_d\}$ to denote a Tate algebra in rigid analytic geometry after Tate, and use in general $A$ to represent the corresponding quotients of the Tate algebras over $\mathbb{Q}_p$ as above. Throughout we assume $A$ to be reduced, carrying the spectral seminorm. While in positive characteristic situation we are going to use the notation $A$ to denote a general affinoid algebra which is assumed to be reduced.
\end{setting}

We first deform the corresponding constructions in \cite[Definition 4.1.1]{KL16}:

\begin{definition}
We first consider the corresponding deformation of the above rings over $\mathbb{Q}_p\{T_1,...T_d\}$. We are going to use the notation $W_\pi(R)_{\mathbb{Q}_p\{T_1,...T_d\}}$ to denote the complete tensor product of $W_\pi(R)$ with the Tate algebra $\mathbb{Q}_p\{T_1,...T_d\}$ consisting of all the element taking the form as:
\begin{displaymath}
\sum_{k\geq 0,i_1\geq 0,...,i_d\geq 0}\pi^k[\overline{x}_{k,i_1,...,i_d}]T_1^{i_1}T_2^{i_2}...T_d^{i_d}	
\end{displaymath}
over which we have the Gauss norm $\left\|.\right\|_{\alpha^r,\mathbb{Q}_p\{T_1,...T_d\}}$ for any $r>0$ which is defined by:
\begin{displaymath}
\left\|.\right\|_{\alpha^r,\mathbb{Q}_p\{T_1,...T_d\}}(\sum_{k\geq 0,i_1\geq 0,...,i_d\geq 0}\pi^k[\overline{x}_{k,i_1,...,i_d}]T_1^{i_1}T_2^{i_2}...T_d^{i_d}):=\sup_{k\geq 0,i_1\geq 0,...,i_d\geq 0}p^{-k}\alpha(\overline{x}_{k,i_1,...,i_d})^r.	
\end{displaymath}
Then we define the corresponding convergent rings:
\begin{displaymath}
\widetilde{\Omega}^\mathrm{int}_{R,\mathbb{Q}_p\{T_1,...T_d\}}=W_\pi(R)_{\mathbb{Q}_p\{T_1,...T_d\}},	\widetilde{\Omega}_{R,\mathbb{Q}_p\{T_1,...T_d\}}=W_\pi(R)_{\mathbb{Q}_p\{T_1,...T_d\}}[1/\pi].	
\end{displaymath}
Then as in the previous setting we define the corresponding $\mathbb{Q}_p\{T_1,...,T_d\}$-relative integral Robba ring $\widetilde{\Pi}_{R,\mathbb{Q}_p\{T_1,...T_d\}}^{\mathrm{int},r}$ as the completion of the ring $W_{\pi}(R^+)_{\mathbb{Q}_p\{T_1,...,T_d\}}[[R]]$ by using the Gauss norm defined above.
Then one can take the corresponding union of all $\widetilde{\Pi}_{R,\mathbb{Q}_p\{T_1,...T_d\}}^{\mathrm{int},r}$ throughout all $r>0$ to define the integral Robba ring $\widetilde{\Pi}_{R,\mathbb{Q}_p\{T_1,...T_d\}}^{\mathrm{int}}$. For the bounded Robba rings we just set $\widetilde{\Pi}_{R,\mathbb{Q}_p\{T_1,...T_d\}}^{\mathrm{bd},r}=\widetilde{\Pi}_{R,\mathbb{Q}_p\{T_1,...T_d\}}^{\mathrm{int},r}[1/\pi]$ and take the union throughout all $r>0$ to define the corresponding ring $\widetilde{\Pi}_{R,\mathbb{Q}_p\{T_1,...T_d\}}^{\mathrm{bd}}$. Then we define the corresponding Robba ring $\widetilde{\Pi}_{R,\mathbb{Q}_p\{T_1,...T_d\}}^{I}$ with respect to some interval $I\subset (0,\infty)$ by taking the Fr\'echet completion of ${W_\pi(R^+)}_{\mathbb{Q}_p\{T_1,...T_d\}}[[R]][1/\pi]$ with respect to all the norms $\left\|.\right\|_{\alpha^r,\mathbb{Q}_p\{T_1,...T_d\}}$ for all $r\in I$ which means that the corresponding equivalence classes in the completion procedure will be simultaneously Cauchy with respect to all the norms $\left\|.\right\|_{\alpha^r,\mathbb{Q}_p\{T_1,...T_d\}}$ for all $r\in I$. Then we take suitable intervals such as $(0,r]$ and $(0,\infty)$ to define the corresponding Robba rings $\widetilde{\Pi}_{R,\mathbb{Q}_p\{T_1,...T_d\}}^{r}$ and $\widetilde{\Pi}_{R,\mathbb{Q}_p\{T_1,...T_d\}}^{\infty}$, respectively. Then taking the union throughout all $r>0$ one can define the corresponding Robba ring $\widetilde{\Pi}_{R,\mathbb{Q}_p\{T_1,...T_d\}}$. Again as in \cite{KL16} one can define the corresponding integral rings of the similar types.
\end{definition}

\indent Then one can define the corresponding period rings in our context deformed over some affinoid $A$ which is isomorphic to some quotient of $\mathbb{Q}_p\{T_1,...T_d\}$, again deforming from the context of \cite[Definition 4.1.1]{KL16}.

\begin{definition}
In the characteristic $0$, we define the following period rings:
\begin{displaymath}
\widetilde{\Omega}^\mathrm{int}_{R,A},\widetilde{\Omega}_{R,A},\widetilde{\Pi}^\mathrm{int,r}_{R,A},\widetilde{\Pi}^\mathrm{bd,r}_{R,A}, \widetilde{\Pi}^I_{R,A},\widetilde{\Pi}^r_{R,A},\widetilde{\Pi}^\infty_{R,A}	
\end{displaymath}
by taking the suitable quotient of the following period rings defined above: 
\begin{align}
\widetilde{\Omega}^\mathrm{int}_{R,\mathbb{Q}_p\{T_1,...T_d\}},&\widetilde{\Omega}_{R,\mathbb{Q}_p\{T_1,...T_d\}},\widetilde{\Pi}^\mathrm{int,r}_{R,\mathbb{Q}_p\{T_1,...T_d\}},\widetilde{\Pi}^\mathrm{bd,r}_{R,\mathbb{Q}_p\{T_1,...T_d\}},\widetilde{\Pi}^I_{R,\mathbb{Q}_p\{T_1,...T_d\}},\\
 &\widetilde{\Pi}^r_{R,\mathbb{Q}_p\{T_1,...T_d\}},\widetilde{\Pi}^\infty_{R,\mathbb{Q}_p\{T_1,...T_d\}}	
\end{align}
with respect to the structure of the affinoid algebra $A$ in the sense of Tate. The corresponding constructions do not depend on the corresponding choices of the presentations. Therefore they carry the corresponding quotient seminorms of the above Gauss norms defined in the previous definition, note that these are not something induced from the corresponding spectral seminorms from $A$ chosen at the very beginning of our study. We use the notation $\overline{\left\|.\right\|}_{\alpha^r,\mathbb{Q}_p\{T_1,...T_d\}}$ to denote the corresponding quotient Gauss norm which induces then the corresponding spectral seminorm $\left\|.\right\|_{\alpha^r,A}$ for each $r>0$. Then we define the corresponding period rings:
\begin{align}
\widetilde{\Pi}^\mathrm{int}_{R,A},\widetilde{\Pi}^\mathrm{bd}_{R,A},\widetilde{\Pi}_{R,A}	
\end{align}
by taking suitable union throughout all $r>0$.
\end{definition}

\begin{definition}
In positive characteristic situation, when we are working over general affinoid algebra $A$, we use the same notations as in the previous definition, but by using $W(R)_{\mathbb{F}_p[[\eta]]\{T_1,...,T_d\}}$ and $W(R)_{A}$ as the starting rings, namely here $A$ is isomorphic to a quotient of $\mathbb{F}_p((\eta))\{T_1,...,T_d\}$.	
\end{definition}

\indent We can then define the corresponding rings over the ring $E_\infty$ which is defined to be the completion of the ring $E(\pi^{p^{{-\infty}}})$. We first consider the corresponding undeformed versions (after \cite[Definition 4.1.1]{KL16}):

\begin{definition}
Consider now the base change $W_{\pi,\infty}(R)$ which is defined now to be the completed tensor product $W_{\pi}(R)\widehat{\otimes}_{\mathcal{O}_E}\mathcal{O}_{E_\infty}$. Then the point is that each element in this ring admits a unique expression taking the form of $\sum_{n\in \mathbb{Z}[1/p]_{\geq 0}}\pi^n[\overline{x}_n]$ which allows us to perform the construction mentioned above. First we can define for some radius $r>0$ the corresponding period ring $\widetilde{\Pi}^{\mathrm{int},r}_{R,\infty}$ by taking the completion of the ring
\begin{displaymath}
W_{\pi,\infty}(R^+)[[R]]	
\end{displaymath}
with respect to the following Gauss type norm:
\begin{displaymath}
\left\|.\right\|_{\alpha^r}(\sum_{n\in \mathbb{Z}[1/p]_{\geq 0}}\pi^n[\overline{x}_n]):=\sup_{n\in \mathbb{Z}[1/p]_{\geq 0}}\{p^{-n}\alpha(\overline{x}_n)^r\}.	
\end{displaymath}
Then we can define the union $\widetilde{\Pi}^{\mathrm{int}}_{R,\infty}$ throughout all the radius $r>0$. Then we just define the bounded Robba ring $\widetilde{\Pi}^{\mathrm{bd},r}_{R,\infty}$ by $\widetilde{\Pi}^{\mathrm{int},r}_{R,\infty}[1/\pi]$ and also we could define the union $\widetilde{\Pi}^{\mathrm{bd}}_{R,\infty}$ throughout all the radius $r>0$. Then for any interval in $(0,\infty)$ which is denoted by $I$ we can define the corresponding Robba rings $\widetilde{\Pi}^{I}_{R,\infty}$ by taking the Fr\'echet completion of 
\begin{displaymath}
W_{\pi,\infty}(R^+)[[R]][1/\pi]	
\end{displaymath}
with respect to all the norms $\left\|.\right\|_{\alpha^t}$ for all $t\in I$. Then by taking suitable specified intervals one can define the rings $\widetilde{\Pi}^{r}_{R,\infty}$ and $\widetilde{\Pi}^{\infty}_{R,\infty}$ as before, and finally one can define the corresponding union $\widetilde{\Pi}_{R,\infty}$ throughout all the radius $r>0$. Again we have the corresponding integral version of the rings defined over $E_\infty$ as \cite{KL16}.

\end{definition}

\indent Then we can define the following affinoid deformations (after \cite[Definition 4.1.1]{KL16} in the flavor as above):

\begin{definition}
Consider now the base change $W_{\pi,\infty,\mathbb{Q}_p\{T_1,...,T_d\}}(R)$ which is defined now to be the completed tensor product $(W_{\pi}(R)\widehat{\otimes}_{\mathcal{O}_E}\mathcal{O}_{E_\infty})\widehat{\otimes}_{\mathbb{Q}_p}\mathbb{Q}_p\{T_1,...,T_d\}$. Then the point is that each element in this ring admits a unique expression taking the form of 
\begin{center}
$\sum_{n\in \mathbb{Z}[1/p]_{\geq 0},i_1\geq 0,...,i_d\geq 0}\pi^n[\overline{x}_{n,i_1,...,i_d}]T_1^{i_1}...T_d^{i_d}$ 	
\end{center}
which allows us to perform the construction mentioned above. First we can define for some radius $r>0$ the corresponding period ring $\widetilde{\Pi}^{\mathrm{int},r}_{R,\infty,\mathbb{Q}_p\{T_1,...,T_d\}}$ by taking the completion of the ring
\begin{displaymath}
W_{\pi,\infty,\mathbb{Q}_p\{T_1,...,T_d\}}(R^+)[[R]]	
\end{displaymath}
with respect to the following Gauss type norm:
\begin{displaymath}
\left\|.\right\|_{\alpha^r}(\sum_{n\in \mathbb{Z}[1/p]_{\geq 0},i_1\geq 0,...,i_d\geq 0}\pi^n[\overline{x}_{n,i_1,...,i_d}]T_1^{i_1}...T_d^{i_d}):=\sup_{n\in \mathbb{Z}[1/p]_{\geq 0},i_1\geq 0,...,i_d\geq 0}\{p^{-n}\alpha(\overline{x}_{n,i_1,...,i_d})^r\}.	
\end{displaymath}
Then we can define the union $\widetilde{\Pi}^{\mathrm{int}}_{R,\infty,\mathbb{Q}_p\{T_1,...,T_d\}}$ throughout all the radius $r>0$. Then we just define the bounded Robba ring $\widetilde{\Pi}^{\mathrm{bd},r}_{R,\infty,\mathbb{Q}_p\{T_1,...,T_d\}}$ by $\widetilde{\Pi}^{\mathrm{int},r}_{R,\infty,\mathbb{Q}_p\{T_1,...,T_d\}}[1/\pi]$ and also we could define the union $\widetilde{\Pi}^{\mathrm{bd}}_{R,\infty,\mathbb{Q}_p\{T_1,...,T_d\}}$ throughout all the radius $r>0$. Then for any interval in $(0,\infty)$ which is denoted by $I$ we can define the corresponding Robba rings $\widetilde{\Pi}^{I}_{R,\infty,\mathbb{Q}_p\{T_1,...,T_d\}}$ by taking the Fr\'echet completion of 
\begin{displaymath}
W_{\pi,\infty,\mathbb{Q}_p\{T_1,...,T_d\}}(R^+)[[R]][1/\pi]	
\end{displaymath}
with respect to all the norms $\left\|.\right\|_{\alpha^t}$ for all $t\in I$. Then by taking suitable specified intervals one can define the rings $\widetilde{\Pi}^{r}_{R,\infty,\mathbb{Q}_p\{T_1,...,T_d\}}$ and $\widetilde{\Pi}^{\infty}_{R,\infty,\mathbb{Q}_p\{T_1,...,T_d\}}$ as before, and finally one can define the corresponding union $\widetilde{\Pi}_{R,\infty,\mathbb{Q}_p\{T_1,...,T_d\}}$ throughout all the radius $r>0$. Again we have the corresponding integral version of the rings defined over $E_\infty$ as \cite{KL16}. Finally over $A$ we can define in the same way as above to deform all the rings over $E_\infty$, and we do not repeat the construction again.

\end{definition}

\subsection{Basic Properties of Period Rings}	
	
\indent Then we do some reality checks over the investigation of the properties of the above period rings in the style taken in \cite[Section 5.2]{KL15} and \cite{KL16}. 

\begin{proposition} \mbox{\bf{(After Kedlaya-Liu \cite[Lemma 5.2.1]{KL15})}}
The function $t\mapsto \left\|x\right\|_{\alpha^t,\mathbb{Q}_p\{T_1,...,T_d\}}$ for $x\in \widetilde{\Pi}^r_{R,\mathbb{Q}_p\{T_1,...,T_d\}}$ is continuous log convex for the corresponding variable $t\in (0,r]$ for any $r>0$.
\end{proposition}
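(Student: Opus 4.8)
The plan is to read the log-convexity directly off the explicit shape of the Gauss norm, and then to obtain continuity as a formal consequence of convexity together with a semicontinuity estimate at the right endpoint. For brevity I will write $\|\cdot\|_{\alpha^t}$ in place of $\|\cdot\|_{\alpha^t,\mathbb{Q}_p\{T_1,\dots,T_d\}}$. The one structural input is that, by the Setting fixing $W_\pi$ (stable unique $\pi$-adic expansion), every $x\in\widetilde{\Pi}^r_{R,\mathbb{Q}_p\{T_1,\dots,T_d\}}$ has a unique expansion
\[
x=\sum_{k\geq 0,\,i_1,\dots,i_d\geq 0}\pi^k[\overline{x}_{k,i_1,\dots,i_d}]T_1^{i_1}\cdots T_d^{i_d},\qquad
\|x\|_{\alpha^t}=\sup_{k,i_1,\dots,i_d}p^{-k}\alpha(\overline{x}_{k,i_1,\dots,i_d})^t,
\]
valid for every $t\in(0,r]$, each such supremum being finite because $\widetilde{\Pi}^r_{R,\mathbb{Q}_p\{T_1,\dots,T_d\}}$ is by construction the Fr\'echet completion with respect to the whole family $\{\|\cdot\|_{\alpha^s}\}_{s\in(0,r]}$. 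Invoking from the construction that elements of the completed ring really are such termwise-normed generalized power series is the only genuinely nontrivial ingredient, and it is precisely the point at which the hypothesis that $E$ is discretely valued --- equivalently that $W_\pi$ admits stable unique expansions --- enters, as flagged in the Remark; everything after it is formal.

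For the log-convexity I would first observe that a single nonzero monomial $\pi^k[\overline{y}]T_1^{i_1}\cdots T_d^{i_d}$ satisfies
\[
\log\|\pi^k[\overline{y}]T_1^{i_1}\cdots T_d^{i_d}\|_{\alpha^t}=-k\log p+t\log\alpha(\overline{y}),
\]
an affine function of $t$ on all of $(0,\infty)$, so its norm is log-convex (and continuous) in $t$. For general $x$ (the case $x=0$ being trivial), $t\mapsto\log\|x\|_{\alpha^t}$ is the pointwise supremum, over the indices with $\overline{x}_{k,i_1,\dots,i_d}\neq 0$, of the affine functions $t\mapsto -k\log p+t\log\alpha(\overline{x}_{k,i_1,\dots,i_d})$; a pointwise supremum of affine functions that is real-valued on $(0,r]$ is convex there. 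Hence $t\mapsto\|x\|_{\alpha^t}$ is log-convex on $(0,r]$.

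Continuity on the open interval $(0,r)$ is then immediate, since a finite convex function is continuous on the interior of its domain. It remains to show $\lim_{t\to r^-}\|x\|_{\alpha^t}=\|x\|_{\alpha^r}$, where we may assume $x\neq 0$. On one side, since each $t\mapsto p^{-k}\alpha(\overline{x}_{k,i})^t$ is continuous, interchanging a supremum with a limit gives
\[
\|x\|_{\alpha^r}=\sup_{k,i}\ \lim_{t\to r^-}p^{-k}\alpha(\overline{x}_{k,i})^t\ \leq\ \liminf_{t\to r^-}\ \sup_{k,i}p^{-k}\alpha(\overline{x}_{k,i})^t=\liminf_{t\to r^-}\|x\|_{\alpha^t}.
\]
On the other side, fix $s_0\in(0,r)$; for $t\in(s_0,r)$ write $t=\lambda s_0+(1-\lambda)r$ with $\lambda=(r-t)/(r-s_0)$, so $\lambda\to 0$ as $t\to r^-$, and log-convexity yields $\|x\|_{\alpha^t}\leq\|x\|_{\alpha^{s_0}}^{\lambda}\|x\|_{\alpha^r}^{1-\lambda}$. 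Letting $t\to r^-$, and using that $\|x\|_{\alpha^{s_0}}$ and $\|x\|_{\alpha^r}$ are finite and strictly positive --- the latter because $R$ is uniform and $\mathbb{Q}_p\{T_1,\dots,T_d\}$ is reduced, so that $\|\cdot\|_{\alpha^s}$ is an honest norm --- the right-hand side tends to $\|x\|_{\alpha^r}$; hence $\limsup_{t\to r^-}\|x\|_{\alpha^t}\leq\|x\|_{\alpha^r}$. Combining the two estimates gives continuity at $r$.

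The steps I would not spell out are routine: that the expansion and the termwise norm formula survive all the completion operations (the Fr\'echet completion, the base change over $\mathbb{Q}_p$, and the unions over $r$) exactly as in \cite[Section 5.2]{KL15} and \cite{KL16}, and the elementary fact that a pointwise supremum of affine functions is convex. The main obstacle, and the only place with any real content, is the structural claim in the first paragraph; once that is secured the proposition is purely formal, and the same argument applies verbatim to the $E_\infty$-deformed rings, with the $\mathbb{Z}[1/p]_{\geq 0}$-indexed expansions playing the role of the $\pi$-adic ones.
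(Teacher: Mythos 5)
Your argument is fine from the point where you have the formula $\|x\|_{\alpha^t}=\sup_{k,i}p^{-k}\alpha(\overline{x}_{k,i_1,\dots,i_d})^t$ onwards: sup of affine functions is convex, convexity gives continuity on the interior, and your two-sided estimate at $t=r$ (lower semicontinuity from the sup formula, upper bound from log-convexity) is correct. The genuine gap is the structural claim you lean on at the start and then dismiss as "routine": that \emph{every} element of the Fr\'echet completion $\widetilde{\Pi}^r_{R,\mathbb{Q}_p\{T_1,\dots,T_d\}}$ admits a unique Teichm\"uller-type expansion whose Gauss norms are computed termwise by the exact sup formula, simultaneously for all $t\in(0,r]$. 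The "stable unique expression in $\pi$" hypothesis in the Setting is a statement about $W_\pi(R)$ (and its polynomial/Tate extension), i.e.\ about the \emph{dense subring} $W_\pi(R^+)_{\mathbb{Q}_p\{T_1,\dots,T_d\}}[[R]][1/\pi]$; it does not by itself give canonical expansions, let alone an exact termwise norm formula, for limits in the Fr\'echet topology. Neither this paper nor \cite[\S 5.2]{KL15} establishes such an expansion for the full rings $\widetilde{\Pi}^r$ or $\widetilde{\Pi}^I$ — which is exactly why the paper's proof (and all the subsequent propositions in this subsection, the analogues of \cite[Lemmas 5.2.2, 5.2.6, 5.2.8]{KL15}) proceeds by approximating a general element by elements of the bounded Robba ring rather than by expanding it.

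The paper's route avoids your problematic step: single monomials have log-affine norm; finite sums have log-convex (indeed continuous, being a max of finitely many affine functions) norm; and a general $x$ is treated by approximation. Your proof can be repaired along the same lines without the expansion claim: pick a sequence $x_n$ in the dense subring with $x_n\to x$; since log-convexity already holds for $x_m-x_n$ (dense subring), one has $\|x_m-x_n\|_{\alpha^t}\leq\max\bigl(\|x_m-x_n\|_{\alpha^{s_0}},\|x_m-x_n\|_{\alpha^{r}}\bigr)$ for $t\in[s_0,r]$, so $\|x_n\|_{\alpha^t}\to\|x\|_{\alpha^t}$ \emph{uniformly} on every closed subinterval $[s_0,r]$; the limit of continuous log-convex functions under uniform (or even pointwise, for convexity) convergence is then log-convex and continuous up to the endpoint $r$. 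This supplies both the convexity and the endpoint semicontinuity that you currently extract from the unproven exact sup formula, and it is what the paper's phrase "by taking the approximation" is standing in for.
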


\begin{proof}
Adapt the corresponding argument in the proof of 5.2.1 of \cite[Lemma 5.2.1]{KL15} to our situation we then first look at the situation where the element is just of the form of $\pi^k[\overline{x}_{k,i_1,...,i_d}]T_1^{i_1}...T_d^{i_d}$ where the corresponding norm function in terms of $t>0$ is just affine. Then one focuses on the finite sums of these kind of elements which gives rise to to the log convex directly. Finally by taking the approximation we get the desired result.	
\end{proof}

\begin{proposition}\mbox{\bf{(After Kedlaya-Liu \cite[Lemma 5.2.2]{KL15})}}
For any element $x\in \widetilde{\Pi}_{R,\mathbb{Q}_p\{T_1,...,T_d\}}$ we have that $x\in \widetilde{\Pi}^\mathrm{bd}_{R,\mathbb{Q}_p\{T_1,...,T_d\}}$ if and only if we have the situation where $x$ actually lives in $\widetilde{\Pi}^r_{R,\mathbb{Q}_p\{T_1,...,T_d\}}$ (for some specific $r>0$) such that $x$ itself is bounded under the norm $\left\|.\right\|_{\alpha^t,\mathbb{Q}_p\{T_1,...,T_d\}}$ for each $t\in (0,r]$.	
\end{proposition}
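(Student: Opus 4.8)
The plan is to treat the two implications separately; the forward one is bookkeeping with the Gauss norms, while the reverse one carries the content and hinges on the leading term of the Teichm\"uller expansion, as in \cite[Lemma 5.2.2]{KL15}. For the forward implication, suppose $x\in\widetilde{\Pi}^{\mathrm{bd}}_{R,\mathbb{Q}_p\{T_1,\dots,T_d\}}$, so that $x\in\widetilde{\Pi}^{\mathrm{bd},r}_{R,\mathbb{Q}_p\{T_1,\dots,T_d\}}=\widetilde{\Pi}^{\mathrm{int},r}_{R,\mathbb{Q}_p\{T_1,\dots,T_d\}}[1/\pi]$ for some $r>0$; write $x=\pi^{-m}y$ with $y\in\widetilde{\Pi}^{\mathrm{int},r}_{R,\mathbb{Q}_p\{T_1,\dots,T_d\}}$ and $m\geq 0$. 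Expanding $y=\sum\pi^{k}[\overline{y}_{k,i_1,\dots,i_d}]T_1^{i_1}\cdots T_d^{i_d}$ and using $p^{-k}\alpha(\overline{y}_{k,i_1,\dots,i_d})^{r}\leq\|y\|_{\alpha^{r},\mathbb{Q}_p\{T_1,\dots,T_d\}}=:C$, one gets $p^{-k}\alpha(\overline{y}_{k,i_1,\dots,i_d})^{t}\leq C^{t/r}p^{-k(1-t/r)}\leq\max(1,C)$ for every $t\in(0,r]$, because $k\geq 0$ and $t/r\leq 1$. Hence $\|y\|_{\alpha^{t},\mathbb{Q}_p\{T_1,\dots,T_d\}}\leq\max(1,C)$ on $(0,r]$, so $\|x\|_{\alpha^{t},\mathbb{Q}_p\{T_1,\dots,T_d\}}=p^{m}\|y\|_{\alpha^{t},\mathbb{Q}_p\{T_1,\dots,T_d\}}$ is bounded on $(0,r]$ as well; this same estimate is exactly what makes the comparison map $\widetilde{\Pi}^{\mathrm{bd},r}_{R,\mathbb{Q}_p\{T_1,\dots,T_d\}}\to\widetilde{\Pi}^{r}_{R,\mathbb{Q}_p\{T_1,\dots,T_d\}}$ well-defined, so $x\in\widetilde{\Pi}^{r}_{R,\mathbb{Q}_p\{T_1,\dots,T_d\}}$.

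For the reverse implication, suppose $x\in\widetilde{\Pi}^{r}_{R,\mathbb{Q}_p\{T_1,\dots,T_d\}}$ with $\|x\|_{\alpha^{t},\mathbb{Q}_p\{T_1,\dots,T_d\}}\leq M$ for all $t\in(0,r]$. First I would pass to the unique expansion $x=\sum_{n}\sum_{i_1,\dots,i_d}\pi^{n}[\overline{x}_{n,i_1,\dots,i_d}]T_1^{i_1}\cdots T_d^{i_d}$ afforded by our standing hypothesis on $W_\pi$, the exponents $n$ ranging a priori over a set unbounded below. The key point is that whenever $\overline{x}_{n,i_1,\dots,i_d}\neq 0$ one has $\alpha(\overline{x}_{n,i_1,\dots,i_d})>0$ (here $R$ is reduced, so $\alpha$ is a norm), and the single-term bound $p^{-n}\alpha(\overline{x}_{n,i_1,\dots,i_d})^{t}\leq M$ holds for all $t\in(0,r]$; letting $t\to 0^{+}$, so that $\alpha(\overline{x}_{n,i_1,\dots,i_d})^{t}\to 1$, forces $p^{-n}\leq M$, i.e. $n\geq-\log_p M$. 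Thus every $\pi$-exponent occurring in $x$ is bounded below. (Alternatively one could use log-convexity from the previous Proposition to see that $\lim_{t\to 0^{+}}\|x\|_{\alpha^{t},\mathbb{Q}_p\{T_1,\dots,T_d\}}$ exists and is finite, but the termwise estimate suffices.)

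Now fix an integer $m\leq-\log_p M$. Then $\pi^{-m}x=\sum_{n,i_1,\dots,i_d}\pi^{n-m}[\overline{x}_{n,i_1,\dots,i_d}]T_1^{i_1}\cdots T_d^{i_d}$ has all $\pi$-exponents $n-m\geq 0$, and it still lies in $\widetilde{\Pi}^{r}_{R,\mathbb{Q}_p\{T_1,\dots,T_d\}}$ since that ring contains $\pi^{-1}$. It remains to identify $\pi^{-m}x$ as an element of $\widetilde{\Pi}^{\mathrm{int},r}_{R,\mathbb{Q}_p\{T_1,\dots,T_d\}}$: because its expansion involves only non-negative powers of $\pi$, the finite truncations in the $\pi$-degree and in the $T_j$-degrees lie in $W_\pi(R^{+})_{\mathbb{Q}_p\{T_1,\dots,T_d\}}[[R]]$, and one checks that they converge to $\pi^{-m}x$ with respect to $\|.\|_{\alpha^{r},\mathbb{Q}_p\{T_1,\dots,T_d\}}$. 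Granting this, $x=\pi^{m}(\pi^{-m}x)\in\widetilde{\Pi}^{\mathrm{int},r}_{R,\mathbb{Q}_p\{T_1,\dots,T_d\}}[1/\pi]=\widetilde{\Pi}^{\mathrm{bd},r}_{R,\mathbb{Q}_p\{T_1,\dots,T_d\}}\subseteq\widetilde{\Pi}^{\mathrm{bd}}_{R,\mathbb{Q}_p\{T_1,\dots,T_d\}}$, completing the argument.

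The step I expect to be the main obstacle is exactly this last identification: extracting from the Fr\'echet-completion definition of $\widetilde{\Pi}^{r}_{R,\mathbb{Q}_p\{T_1,\dots,T_d\}}$ the fact that an element whose $\pi$-adic expansion is supported in non-negative exponents automatically belongs to $\widetilde{\Pi}^{\mathrm{int},r}_{R,\mathbb{Q}_p\{T_1,\dots,T_d\}}$, i.e. that the $\|.\|_{\alpha^{t}}$-Cauchy datum defining it is genuinely represented by a $\|.\|_{\alpha^{r},\mathbb{Q}_p\{T_1,\dots,T_d\}}$-convergent series over $W_\pi(R^{+})_{\mathbb{Q}_p\{T_1,\dots,T_d\}}[[R]]$. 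This is a compatibility among the several completions in the spirit of the structural lemmas of \cite[\S5]{KL15}, and it is also where the uniform/reduced hypothesis on $R$ and the standing uniqueness-of-expansion hypothesis on $W_\pi$ are genuinely used.
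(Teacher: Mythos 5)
Your forward implication is fine, and your reduction of the reverse implication to an integrality statement (clear denominators, then land in $\widetilde{\Pi}^{\mathrm{int},r}_{R,\mathbb{Q}_p\{T_1,\dots,T_d\}}$) is the right target. The genuine gap is the very first move of the reverse direction: you ``pass to the unique expansion'' $x=\sum_{n}\pi^{n}[\overline{x}_{n,i_1,\dots,i_d}]T_1^{i_1}\cdots T_d^{i_d}$ for an arbitrary element $x\in\widetilde{\Pi}^{r}_{R,\mathbb{Q}_p\{T_1,\dots,T_d\}}$. No such expansion is available at this level of generality: the ring $\widetilde{\Pi}^{r}_{R,\mathbb{Q}_p\{T_1,\dots,T_d\}}$ is defined as a Fr\'echet completion of $W_\pi(R^{+})_{\mathbb{Q}_p\{T_1,\dots,T_d\}}[[R]][1/\pi]$ with respect to the family $\left\|.\right\|_{\alpha^{t},\mathbb{Q}_p\{T_1,\dots,T_d\}}$, $t\in(0,r]$, so its elements are equivalence classes of Cauchy sequences; the standing uniqueness-of-expression hypothesis in the paper concerns elements of $W_\pi(R)$ (and its base change $W_{\pi,\infty}$), not elements of this completion. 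Your whole termwise argument (the single-term bound $p^{-n}\alpha(\overline{x}_{n,i_1,\dots,i_d})^{t}\leq M$, the limit $t\to 0^{+}$, the truncation $\pi^{-m}x$) presupposes that expansion, and the step you yourself flag at the end (``an element whose expansion is supported in non-negative exponents lies in $\widetilde{\Pi}^{\mathrm{int},r}$'') is left as ``granting this'' --- but that, together with the existence of the expansion, \emph{is} the content of the lemma, so the proof as written is circular at its core.

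The paper's proof (following \cite[Lemma 5.2.2]{KL15}) is built precisely to avoid expanding $x$ itself. After rescaling so that $\left\|x\right\|_{\alpha^{t},\mathbb{Q}_p\{T_1,\dots,T_d\}}\leq 1$ for all $t\in(0,r]$, one uses the density coming from the completion to choose approximants $x_i\in\widetilde{\Pi}^{\mathrm{bd}}_{R,\mathbb{Q}_p\{T_1,\dots,T_d\}}$ with $\left\|x_i-x\right\|_{\alpha^{t},\mathbb{Q}_p\{T_1,\dots,T_d\}}\leq p^{-j}$ for $t\in[p^{-j}r,r]$. The $x_i$, being bounded elements, genuinely do have expansions, and one decomposes each $x_i=y_i+z_i$ into its non-negative-degree part $y_i$ and its negative-degree tail $z_i$; the hypothesis $\left\|x\right\|_{\alpha^{t}}\leq 1$ transferred to $x_i$ at the small radius $t=p^{-j}r$ forces the estimate $\left\|x_i-y_i\right\|_{\alpha^{r},\mathbb{Q}_p\{T_1,\dots,T_d\}}\leq p^{1-p^{j}}$, so the integral elements $y_i$ converge to $x$ in $\left\|.\right\|_{\alpha^{r}}$ and exhibit $x$ as an element of $\widetilde{\Pi}^{\mathrm{int},r}_{R,\mathbb{Q}_p\{T_1,\dots,T_d\}}$ up to the cleared power of $\pi$. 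If you want to salvage your outline, you would have to replace ``expand $x$'' by ``expand the approximants of $x$'' in exactly this way; as it stands, the key analytic content is assumed rather than proved.
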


\begin{proof}
One direction of the proof is easy, so we only choose to present the proof of the implication in the other direction as in the original proof of 5.2.2 of \cite[Lemma 5.2.2]{KL15} as in the following. First choose some radius $r>0$ such that the element could be assumed to be living in the ring $\widetilde{\Pi}^r_{R,\mathbb{Q}_p\{T_1,...,T_d\}}$. The idea is to transfer the original question to the question about showing the integrality of $x$ when we add some hypothesis on the norm by taking suitable powers of $p$ (since the norm is bounded for each $t\in (0,r]$ so we are reduced to the situation where the norm is bounded by $1$). Then we argue as in \cite[Lemma 5.2.2]{KL15} to choose some approximating sequence $\{x_i\}$ living in $\widetilde{\Pi}^\mathrm{bd}_{R,\mathbb{Q}_p\{T_1,...,T_d\}}$ of $x$. Therefore we have for any $j\geq 1$ one can find then some integer $N_j\geq 1$ such that for any $i\geq N_j$ we have the estimate:
\begin{displaymath}
\left\|.\right\|_{\alpha^t,\mathbb{Q}_p\{T_1,...,T_d\}}(x_i-x)\leq p^{-j}, \forall t\in [p^{-j}r,r].	
\end{displaymath}
Then the idea is to consider the integral decomposition of the element $x_i$ which has the form of $\sum_{k=n(x_i),i_1\geq 0,i_2\geq 0,...,i_d\geq 0}\pi^k[\overline{x}_{i,k,i_1,...,i_d}]$ into the following two parts:
\begin{align}
x_i &:= y_i+z_i\\
	&:=\sum_{k=0,i_1\geq 0,i_2\geq 0,...,i_d\geq 0}\pi^k[\overline{x}_{i,k,i_1,...,i_d}]T_1^{i_1}...T_d^{i_d}+z_i
\end{align}
from which we actually have the corresponding estimate over the residual part of the decomposition above:
\begin{displaymath}
\left\|.\right\|_{\alpha^{p^{-j}r},\mathbb{Q}_p\{T_1,...,T_d\}}(\pi^k[\overline{x}_{i,k,i_1,...,i_d}]T_1^{i_1}...T_d^{i_d})\leq 1, \forall k<0	
\end{displaymath}
which implies by direct computation:
\begin{align}
\alpha^{p^{-j}r}(\overline{x}_{i,k,i_1,...,i_d})&\leq p^k\\
\alpha(\overline{x}_{i,k,i_1,...,i_d})&\leq p^{kp^{j}/r}	
\end{align}
which implies that we have the following estimate: 
\begin{align}
\left\|.\right\|_{\alpha^{r},\mathbb{Q}_p\{T_1,...,T_d\}}(x_i-y_i)&\leq p^{-k}p^{kp^{j}r/r}\\
&\leq p^{1-p^j}
\end{align}
which implies that $y_i\rightarrow x$ under the norm $\left\|.\right\|_{\alpha^{r},\mathbb{Q}_p\{T_1,...,T_d\}}$ which furthermore under the norm $\left\|.\right\|_{\alpha^{r},\mathbb{Q}_p\{T_1,...,T_d\}}$ due the property of the norm, which finishes the proof of the desired result.
\end{proof}

\begin{proposition}\mbox{\bf{(After Kedlaya-Liu \cite[Corollary 5.2.3]{KL15})}} We have the following identity:
\begin{displaymath}
(\widetilde{\Pi}_{R,\mathbb{Q}_p\{T_1,...,T_d\}})^\times=(\widetilde{\Pi}^\mathrm{bd}_{R,\mathbb{Q}_p\{T_1,...,T_d\}})^\times.
\end{displaymath}
\end{proposition}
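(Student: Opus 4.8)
The plan is to follow the strategy of \cite[Corollary 5.2.3]{KL15}. The inclusion $(\widetilde{\Pi}^\mathrm{bd}_{R,\mathbb{Q}_p\{T_1,...,T_d\}})^\times \subseteq (\widetilde{\Pi}_{R,\mathbb{Q}_p\{T_1,...,T_d\}})^\times$ is trivial since $\widetilde{\Pi}^\mathrm{bd}_{R,\mathbb{Q}_p\{T_1,...,T_d\}}$ is a subring of $\widetilde{\Pi}_{R,\mathbb{Q}_p\{T_1,...,T_d\}}$ and the inverse of a bounded unit is again bounded (the product of the two is $1$, which is bounded). So the content is the reverse inclusion: if $x \in \widetilde{\Pi}_{R,\mathbb{Q}_p\{T_1,...,T_d\}}$ has an inverse $y$ in $\widetilde{\Pi}_{R,\mathbb{Q}_p\{T_1,...,T_d\}}$, then both $x$ and $y$ already lie in $\widetilde{\Pi}^\mathrm{bd}_{R,\mathbb{Q}_p\{T_1,...,T_d\}}$.

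First I would fix a radius $r > 0$ small enough that both $x$ and $y$ lie in $\widetilde{\Pi}^r_{R,\mathbb{Q}_p\{T_1,...,T_d\}}$ (possible since each element of the full Robba ring lies in some $\widetilde{\Pi}^r$, and we may shrink $r$ to a common value). By Proposition (after \cite[Lemma 5.2.2]{KL15}), it suffices to show that $x$ and $y$ are each bounded under the family of norms $\left\|.\right\|_{\alpha^t,\mathbb{Q}_p\{T_1,...,T_d\}}$ for $t \in (0,r]$. The key input is the log-convexity and continuity of $t \mapsto \left\|x\right\|_{\alpha^t,\mathbb{Q}_p\{T_1,...,T_d\}}$ from the first Proposition of this subsection, applied to $x$, $y$, and the constant $1 = xy$. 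Since $\left\|1\right\|_{\alpha^t,\mathbb{Q}_p\{T_1,...,T_d\}} = 1$ for all $t$, and the Gauss norm is submultiplicative, $\left\|x\right\|_{\alpha^t}\left\|y\right\|_{\alpha^t} \geq 1$ for all $t \in (0,r]$; combined with submultiplicativity in the form $1 = \left\|xy\right\|_{\alpha^t} \leq \left\|x\right\|_{\alpha^t}\left\|y\right\|_{\alpha^t}$ one extracts that $f(t) := \log\left\|x\right\|_{\alpha^t}$ and $g(t) := \log\left\|y\right\|_{\alpha^t}$ satisfy $f(t) + g(t) \geq 0$, both are convex, and continuous on $(0,r]$. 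A convex function on a half-open bounded interval that does not tend to $+\infty$ at the open endpoint is bounded above; and since $f$ is bounded below by $-g$ which is itself bounded above (by convexity of $g$ near $t=r$ and the same argument), one deduces $f$ is bounded on $(0,r]$, hence so is $g$. The precise bookkeeping here — ruling out that $f(t) \to +\infty$ as $t \to 0^+$ — is where one uses that $x \in \widetilde{\Pi}^r$ genuinely, so $\left\|x\right\|_{\alpha^t} < \infty$ for each individual $t$, together with convexity to control the limiting behavior.

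The main obstacle I anticipate is the same one present in the classical Kedlaya--Liu argument: controlling the behavior of the norm functions as $t \to 0^+$, i.e., making the convexity argument rigorous to conclude that neither $\log\left\|x\right\|_{\alpha^t}$ nor $\log\left\|y\right\|_{\alpha^t}$ blows up near $0$. In our deformed setting the only new feature is the extra Tate variables $T_1,\dots,T_d$, but since the Gauss norm on $\widetilde{\Pi}^r_{R,\mathbb{Q}_p\{T_1,...,T_d\}}$ is still defined as a supremum over monomials $\pi^k[\overline{x}_{k,i_1,\dots,i_d}]T_1^{i_1}\cdots T_d^{i_d}$ with the exact same $p^{-k}\alpha(\cdot)^t$ weighting (the $T_i$ contributing no norm factor), the log-convexity and submultiplicativity carry over verbatim, and the deformation over $A$ introduces no obstruction because quotient seminorms of submultiplicative seminorms remain submultiplicative. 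Hence the argument of \cite[Corollary 5.2.3]{KL15} applies essentially word for word, and I would present it by reducing to the two Propositions already established above and invoking the convexity estimate.
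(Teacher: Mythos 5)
Your reduction is the right first move and matches the route the paper intends (it simply cites \cite[Corollary 5.2.3]{KL15}): the easy inclusion is as you say, and the content is to show that a unit $x$ with inverse $y$, both placed in some $\widetilde{\Pi}^r_{R,\mathbb{Q}_p\{T_1,\dots,T_d\}}$, has $\left\|x\right\|_{\alpha^t}$ and $\left\|y\right\|_{\alpha^t}$ bounded for $t\in(0,r]$, so that the analog of \cite[Lemma 5.2.2]{KL15} applies. But the core step of your argument has a genuine gap. From submultiplicativity you only get $f(t)+g(t)\ge 0$ for $f(t)=\log\left\|x\right\|_{\alpha^t}$, $g(t)=\log\left\|y\right\|_{\alpha^t}$, and convexity of $f$ and $g$ together with this lower bound does \emph{not} prevent blow-up at the open endpoint: $f(t)=1/t$, $g(t)=0$ are convex, finite at every $t$, continuous, and satisfy $f+g\ge 0$, yet $f\to+\infty$ as $t\to 0^+$. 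Convexity propagates \emph{lower} bounds toward $0$, not upper bounds, and your sentence ``a convex function \dots that does not tend to $+\infty$ at the open endpoint is bounded above'' is circular, since ruling out that blow-up is exactly what has to be proved; likewise ``$f$ is bounded below by $-g$ which is itself bounded above'' gives no upper bound on $f$. So the finiteness of each individual $\left\|x\right\|_{\alpha^t}$ plus convexity plus submultiplicativity, which is all you invoke, does not yield the statement.

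The missing ingredient is some form of multiplicativity (equivalently, Newton-polygon-type input), which is how the argument in \cite{KL15} actually closes: one evaluates at the multiplicative seminorms $\beta$ in the Gelfand spectrum of the uniform ring (the Gauss norm being the supremum of these evaluations), and over an analytic field the corresponding Gauss norm \emph{is} multiplicative, so $\log\left\|x\right\|_{\beta^t}=-\log\left\|y\right\|_{\beta^t}$ is simultaneously convex and concave, hence affine in $t$ and in particular bounded as $t\to 0^+$; the remaining work is to make this bound uniform in $\beta$. Note also that in your deformed setting this reduction is not purely cosmetic: even when $R$ is an analytic field, the norm $\left\|\cdot\right\|_{\alpha^r,\mathbb{Q}_p\{T_1,\dots,T_d\}}$ is only submultiplicative (the Tate variables destroy multiplicativity), so the affine-function argument cannot be run globally and must be routed through suitable multiplicative points of the deformed ring; saying the classical proof ``applies word for word'' glosses over precisely this point. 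As written, the proposal establishes the trivial inclusion and the correct reduction, but not the boundedness of a unit's norms near $t=0$.
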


\begin{proof}
See 5.2.3 of \cite[Corollary 5.2.3]{KL15}.	
\end{proof}

\begin{proposition}\mbox{\bf{(After Kedlaya-Liu \cite[Lemma 5.2.6]{KL15})}}
For any $0< r_1\leq r_2$ we have the following equality on the corresponding period rings:
\begin{displaymath}
\widetilde{\Pi}^{\mathrm{int},r_1}_{R,\mathbb{Q}_p\{T_1,...,T_d\}}\bigcap	\widetilde{\Pi}^{[r_1,r_2]}_{R,\mathbb{Q}_p\{T_1,...,T_d\}}=\widetilde{\Pi}^{\mathrm{int},r_2}_{R,\mathbb{Q}_p\{T_1,...,T_d\}}.
\end{displaymath}	
\end{proposition}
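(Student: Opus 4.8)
The plan is to follow the proof of \cite[Lemma 5.2.6]{KL15} verbatim, adapting the bookkeeping to accommodate the extra Tate variables $T_1,\dots,T_d$. The inclusion $\supseteq$ is the trivial one: any element of $\widetilde{\Pi}^{\mathrm{int},r_2}_{R,\mathbb{Q}_p\{T_1,\dots,T_d\}}$ is integral, hence lies in $\widetilde{\Pi}^{\mathrm{int},r_1}_{R,\mathbb{Q}_p\{T_1,\dots,T_d\}}$ since decreasing the radius only weakens the norm condition (the coefficient ring $W_\pi(R^+)_{\mathbb{Q}_p\{T_1,\dots,T_d\}}$ does not change), and it a fortiori lies in $\widetilde{\Pi}^{[r_1,r_2]}_{R,\mathbb{Q}_p\{T_1,\dots,T_d\}}$ after inverting $\pi$ and restricting the interval. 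So the real content is the reverse inclusion.

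For $\subseteq$, take $x$ in the intersection and write it using the unique power-series expansion $x=\sum_{k,i_1,\dots,i_d}\pi^k[\overline{x}_{k,i_1,\dots,i_d}]T_1^{i_1}\cdots T_d^{i_d}$ over $W_\pi(R^+)_{\mathbb{Q}_p\{T_1,\dots,T_d\}}$. Membership in $\widetilde{\Pi}^{\mathrm{int},r_1}$ already forces $k\geq 0$ in every term, so the only thing left to check is that $x$, as an element of $\widetilde{\Pi}^{\mathrm{int},r_1}$, in fact has finite $\left\|.\right\|_{\alpha^{r_2},\mathbb{Q}_p\{T_1,\dots,T_d\}}$-norm, i.e.\ that the Gauss norm computed at the larger radius $r_2$ converges. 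The idea, exactly as in Kedlaya--Liu, is to split $x=x^-+x^+$ where $x^-$ collects the (finitely many, or norm-controlled) low-degree terms and $x^+$ the tail; the tail is small in the $\alpha^{r_1}$-norm because $x$ is integral at radius $r_1$, while membership in the Robba ring $\widetilde{\Pi}^{[r_1,r_2]}$ gives us that the same tail is controlled at radius $r_2$ via log-convexity of $t\mapsto \left\|x\right\|_{\alpha^t}$ (our Proposition after \cite[Lemma 5.2.1]{KL15}, in the several-variable form). Combining these two estimates over the interval $[r_1,r_2]$ shows the $\alpha^{r_2}$-Gauss sum is bounded, hence $x\in\widetilde{\Pi}^{\mathrm{int},r_2}_{R,\mathbb{Q}_p\{T_1,\dots,T_d\}}$.

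Concretely, the key steps are: (1) reduce to the unique-expansion description and observe $k\ge 0$ for all terms; (2) record the log-convexity inequality $\left\|x\right\|_{\alpha^{r_2}}\le \left\|x\right\|_{\alpha^{r_1}}^{1-\lambda}\left\|x\right\|_{\alpha^{r_2'}}^{\lambda}$ type bound, or more simply the monotone comparison of partial sums across the interval, to transfer finiteness of the norm from the interval $[r_1,r_2]$ to the single radius $r_2$ while retaining integrality from radius $r_1$; (3) assemble the two estimates to conclude the coefficient-wise bound $p^{-k}\alpha(\overline{x}_{k,i_1,\dots,i_d})^{r_2}$ stays bounded; (4) note that passing to the quotient $A$ of $\mathbb{Q}_p\{T_1,\dots,T_d\}$ is not needed here since the statement is phrased over the Tate algebra itself. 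The several-variable Gauss norm is a supremum over the multi-index $(k,i_1,\dots,i_d)$ exactly as the one-variable norm is a supremum over $k$, so every inequality in \cite{KL15} goes through formally with $k$ replaced by the multi-index; the presence of $T_1,\dots,T_d$ never interacts with the $\pi$-adic/radius structure.

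The main obstacle, such as it is, is purely notational: one must be careful that the coefficient ring in the definition of the integral Robba ring is $W_\pi(R^+)_{\mathbb{Q}_p\{T_1,\dots,T_d\}}$ rather than $W_\pi(R)_{\mathbb{Q}_p\{T_1,\dots,T_d\}}$, so that "integral" genuinely means the coefficients come from $R^+$ and $k\ge 0$, and that the intersection $\widetilde{\Pi}^{\mathrm{int},r_1}\cap\widetilde{\Pi}^{[r_1,r_2]}$ is taken inside the common overring $\widetilde{\Pi}^{[r_1,r_1]}\subseteq\widetilde{\Pi}_{R,\mathbb{Q}_p\{T_1,\dots,T_d\}}$ after inverting $\pi$. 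Once these identifications are pinned down, the proof is a routine transcription of \cite[Lemma 5.2.6]{KL15}, and I would simply write "adapt the argument of \cite[Lemma 5.2.6]{KL15} to the several-variable Gauss norm, using the log-convexity established above" with the two-sided estimate spelled out for the reader's convenience.
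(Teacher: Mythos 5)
The inclusion $\supseteq$ matches the paper's (both dismiss it as easy), but your treatment of $\subseteq$ has a genuine gap in the key step. You reduce the problem to ``the expansion of $x$ has $k\geq 0$ and the Gauss norm at $r_2$ is finite/bounded.'' That is not the membership criterion for $\widetilde{\Pi}^{\mathrm{int},r_2}_{R,\mathbb{Q}_p\{T_1,\dots,T_d\}}$: this ring is defined as the \emph{completion} of $W_\pi(R^+)_{\mathbb{Q}_p\{T_1,\dots,T_d\}}[[R]]$ under $\left\|\cdot\right\|_{\alpha^{r_2},\mathbb{Q}_p\{T_1,\dots,T_d\}}$, so what must be shown is that $x$ is an $\alpha^{r_2}$-limit of such integral elements (equivalently, that the terms $p^{-k}\alpha(\overline{x}_{k,i_1,\dots,i_d})^{r_2}$ tend to $0$), not merely that their supremum is finite. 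A formal series with $k\geq 0$ and $\alpha(\overline{x}_k)=p^{k/r_2}$ has bounded norm at $r_2$ and decays at $r_1<r_2$, yet does not lie in the completion at $r_2$; what excludes it from the intersection is that it also fails to lie in $\widetilde{\Pi}^{[r_1,r_2]}_{R,\mathbb{Q}_p\{T_1,\dots,T_d\}}$. In other words, the decay at $r_2$ you actually need is part of the hypothesis $x\in\widetilde{\Pi}^{[r_1,r_2]}$, while mere finiteness of $\left\|x\right\|_{\alpha^{r_2}}$ is already immediate from that same hypothesis, so your log-convexity step proves nothing that is not assumed and the genuinely needed convergence statement is never addressed. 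A second, related issue: your steps (1)--(3) presuppose that every element of the completed rings $\widetilde{\Pi}^{\mathrm{int},r_1}$ and $\widetilde{\Pi}^{[r_1,r_2]}$ has a unique coefficient-wise expansion from which membership in each ring can be read off; with the paper's definitions this is exactly the point requiring proof, and you do not supply it.

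The paper's argument is built to avoid both difficulties: one approximates $x$ by elements $x_i$ of the bounded Robba ring (where expansions are available by construction), with $\left\|x_i-x\right\|_{\alpha^t}\leq p^{-j}$ uniformly for $t\in[r_1,r_2]$; one splits $x_i=y_i+z_i$ into the part with $k\geq 0$ and the part with $k<0$; the hypothesis $x\in\widetilde{\Pi}^{\mathrm{int},r_1}$ forces the negative part to be small at radius $r_1$, giving $\alpha(\overline{x}_{i,k,i_1,\dots,i_d})\leq p^{(k-j)/r_1}$, which converts into an estimate at radius $r_2$ of the shape $p^{-k}p^{(k-j)r_2/r_1}$; letting $j\to\infty$ shows the integral elements $y_i$ still converge to $x$ in the $\alpha^{r_2}$-norm, which is precisely membership in the completion $\widetilde{\Pi}^{\mathrm{int},r_2}$. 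If you prefer your coefficient-wise route, you must first prove the expansion/characterization lemma for the completed rings; granted that, the conclusion follows from ``$k\geq 0$ (from $r_1$-integrality) together with decay at $r_2$ (from membership in $\widetilde{\Pi}^{[r_1,r_2]}$)'', and log-convexity plays no role.
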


\begin{proof}
We adapt the argument in \cite[Lemma 5.2.6]{KL15} 5.2 to prove this in the situation where $r_1<r_2$ (otherwise this is trivial), again one direction is easy where we only present the implication in the other direction. We take any element $x\in \widetilde{\Pi}^{\mathrm{int},r_1}_{R,\mathbb{Q}_p\{T_1,...,T_d\}}\bigcap	\widetilde{\Pi}^{[r_1,r_2]}_{R,\mathbb{Q}_p\{T_1,...,T_d\}}$ and take suitable approximating elements $\{x_i\}$ living in the bounded Robba ring such that for any $j\geq 1$ one can find some integer $N_j\geq 1$ we have whenever $i\geq N_j$ we have the following estimate:
\begin{displaymath}
\left\|.\right\|_{\alpha^{t},\mathbb{Q}_p\{T_1,...,T_d\}}(x_i-x) \leq p^{-j}, \forall t\in [r_1,r_2].	
\end{displaymath}
Then we consider the corresponding decomposition of $x_i$ for each $i=1,2,...$ into a form having integral part and the rational part $x_i=y_i+z_i$ by setting
\begin{center}
 $y_i=\sum_{k=0,i_1,...,i_d}\pi^k[\overline{x}_{i,k,i_1,...,i_d}]T_1^{i_1}...T_d^{i_d}$ 
\end{center} 
out of
\begin{center} 
$x_i=\sum_{k=n(x_i),i_1,...,i_d}\pi^k[\overline{x}_{i,k,i_1,...,i_d}]T_1^{i_1}...T_d^{i_d}$.
\end{center}
Note that by our initial hypothesis we have that the element $x$ lives in the ring $\widetilde{\Pi}^{\mathrm{int},r_1}_{R,\mathbb{Q}_p\{T_1,...,T_d\}}$ which further implies that 
\begin{displaymath}
\left\|.\right\|_{\alpha^{r_1},\mathbb{Q}_p\{T_1,...,T_d\}}(\pi^k[\overline{x}_{i,k,i_1,...,i_d}]T_1^{i_1}...T^{i_d}_d)	\leq p^{-j}.
\end{displaymath}
Therefore we have $\alpha(\overline{x}_{i,k,i_1,...,i_d})\leq p^{(k-j)/r_1}$ directly from this through computation, which implies that then:
\begin{align}
\left\|.\right\|_{\alpha^{r_2},\mathbb{Q}_p\{T_1,...,T_d\}}(\pi^k[\overline{x}_{i,k,i_1,...,i_d}]T_1^{i_1}...T^{i_d}_d)	&\leq p^{-k}p^{(k-j)r_2/r_1}\\
	&\leq p^{1+(1-j)r_1/r_1}.
\end{align}
Then one can read off the result directly from this estimate since under this estimate we can have the chance to modify the original approximating sequence $\{x_i\}$ by $\{y_i\}$ which are initially chosen to be in the integral Robba ring, which implies that actually the element $x$ lives in the right-hand side of the identity in the statement of the proposition.
\end{proof}

\begin{proposition} \mbox{\bf{(After Kedlaya-Liu \cite[Lemma 5.2.6]{KL15})}}
For any $0< r_1\leq r_2$ we have the following equality on the corresponding period rings:
\begin{displaymath}
\widetilde{\Pi}^{\mathrm{int},r_1}_{R,A}\bigcap	\widetilde{\Pi}^{[r_1,r_2]}_{R,A}=\widetilde{\Pi}^{\mathrm{int},r_2}_{R,A}.
\end{displaymath}	
Here $A$ is some reduced affinoid algebra over $\mathbb{Q}_p$.	
\end{proposition}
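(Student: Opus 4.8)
The plan is to deduce the affinoid statement from the already-established Tate-algebra version (the preceding proposition) by descending along a presentation $A \simeq \mathbb{Q}_p\{T_1,\dots,T_d\}/\mathfrak{a}$. Recall from the definitions that each of the rings $\widetilde{\Pi}^{\mathrm{int},r}_{R,A}$, $\widetilde{\Pi}^{[r_1,r_2]}_{R,A}$, $\widetilde{\Pi}^{\mathrm{int},r_2}_{R,A}$ is by construction the quotient of the corresponding $\mathbb{Q}_p\{T_1,\dots,T_d\}$-relative ring, equipped with the quotient Gauss seminorm $\overline{\left\|.\right\|}_{\alpha^r,\mathbb{Q}_p\{T_1,...,T_d\}}$ (hence the spectral seminorm $\left\|.\right\|_{\alpha^r,A}$). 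The inclusion $\widetilde{\Pi}^{\mathrm{int},r_2}_{R,A} \subseteq \widetilde{\Pi}^{\mathrm{int},r_1}_{R,A}\bigcap \widetilde{\Pi}^{[r_1,r_2]}_{R,A}$ is formal, as in the Tate case: it just expresses that a series convergent at radius $r_2$ is convergent at all smaller radii, compatibly with passing to the quotient. So the content is the reverse inclusion.

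For the reverse inclusion, I would take $x \in \widetilde{\Pi}^{\mathrm{int},r_1}_{R,A}\bigcap \widetilde{\Pi}^{[r_1,r_2]}_{R,A}$ and lift it. First lift $x$ to an element $\widetilde{x}_1 \in \widetilde{\Pi}^{\mathrm{int},r_1}_{R,\mathbb{Q}_p\{T_1,...,T_d\}}$ witnessing membership in the first ring, and separately lift $x$ to $\widetilde{x}_2 \in \widetilde{\Pi}^{[r_1,r_2]}_{R,\mathbb{Q}_p\{T_1,...,T_d\}}$ witnessing membership in the second; the difference $\widetilde{x}_1 - \widetilde{x}_2$ lies in the kernel ideal $\mathfrak{a}\cdot\widetilde{\Pi}^{[r_1,r_2]}_{R,\mathbb{Q}_p\{T_1,...,T_d\}}$. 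The key point to arrange is a single lift $\widetilde{x}$ that simultaneously lies in $\widetilde{\Pi}^{\mathrm{int},r_1}_{R,\mathbb{Q}_p\{T_1,...,T_d\}}\bigcap \widetilde{\Pi}^{[r_1,r_2]}_{R,\mathbb{Q}_p\{T_1,...,T_d\}}$; once we have that, the preceding proposition (the Tate-algebra case) gives $\widetilde{x} \in \widetilde{\Pi}^{\mathrm{int},r_2}_{R,\mathbb{Q}_p\{T_1,...,T_d\}}$, and projecting back to $A$ shows $x \in \widetilde{\Pi}^{\mathrm{int},r_2}_{R,A}$, as desired.

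To produce the simultaneous lift I would mimic the approximation argument from the proof of the Tate-algebra proposition directly over $A$, rather than trying to reconcile two separate lifts. Namely: choose a sequence $\{x_i\}$ in the bounded Robba ring $\widetilde{\Pi}^{\mathrm{bd}}_{R,A}$ with $\left\|.\right\|_{\alpha^t,A}(x_i - x) \leq p^{-j}$ for $t \in [r_1, r_2]$ once $i \geq N_j$; lift each $x_i$ to a bounded element over $\mathbb{Q}_p\{T_1,\dots,T_d\}$ whose quotient Gauss norm approximates (up to a fixed multiplicative constant, since a quotient seminorm is only an infimum) the spectral norm of $x_i$; then run the same splitting $x_i = y_i + z_i$ into the $\pi$-integral part $y_i$ and the negative-$\pi$-power tail, and the same elementary computation with $\alpha$, using the hypothesis that $x$ already lies in $\widetilde{\Pi}^{\mathrm{int},r_1}_{R,A}$ to bound $\alpha(\overline{x}_{i,k,i_1,\dots,i_d})$ for $k<0$. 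The resulting $\{y_i\}$ is an integral-Robba approximating sequence to a lift of $x$ at radius $r_2$.

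The main obstacle is bookkeeping around the quotient seminorm: unlike the Gauss norm on the Tate algebra, $\overline{\left\|.\right\|}_{\alpha^r,\mathbb{Q}_p\{T_1,...,T_d\}}$ is only an infimum over lifts, so one must be careful that (a) the inequality $\alpha(\overline{x}_{i,k,i_1,\dots,i_d}) \leq p^{(k-j)/r_1}$ survives passage through the quotient up to harmless constants, and (b) the convexity/log-convexity input from Proposition (after \cite[Lemma 5.2.1]{KL15}) and the bounded-versus-convergent criterion (after \cite[Lemma 5.2.2]{KL15}) are being applied to honest elements of the Tate-algebra rings, not to seminorm representatives. Since $A$ is reduced the spectral seminorm is a norm and this is only a matter of tracking constants that do not affect the limiting argument; I expect no genuine difficulty beyond this, and the cleanest writeup is simply: "repeat the proof of the previous proposition verbatim, working with the quotient Gauss norm over $A$ in place of the Gauss norm over $\mathbb{Q}_p\{T_1,\dots,T_d\}$, noting that all estimates used there are compatible with passing to quotients."
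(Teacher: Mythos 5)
Your proposal is correct and follows essentially the same route as the paper: the paper proves this by the argument of \cref{proposition5.7}, namely lifting the element and its approximating estimates from $A$ back to $\mathbb{Q}_p\{T_1,\dots,T_d\}$, rerunning the decomposition $x_i=y_i+z_i$ there, and projecting back. Your remark that lifts can be chosen with norm control ``up to a fixed multiplicative constant'' is precisely the strictness of the quotient map (preserved under completed tensor with the Robba rings, cf. \cite[2.1.8, Proposition 6]{BGR}) that the paper's proof invokes to make this legitimate.
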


\begin{proof}
See the proof of \cref{proposition5.7}.	
\end{proof}


\indent Then we have the following analog of the corresponding result of \cite[Lemma 5.2.8]{KL15}:

\begin{proposition} \mbox{\bf{(After Kedlaya-Liu \cite[Lemma 5.2.8]{KL15})}}
Consider now in our situation the radii $0< r_1\leq r_2$, and consider any element $x\in \widetilde{\Pi}^{[r_1,r_2]}_{R,\mathbb{Q}_p\{T_1,...,T_d\}}$. Then we have that for each $n\geq 1$ one can decompose $x$ into the form of $x=y+z$ such that $y\in \pi^n\widetilde{\Pi}^{\mathrm{int},r_2}_{R,\mathbb{Q}_p\{T_1,...,T_d\}}$ with $z\in \bigcap_{r\geq r_2}\widetilde{\Pi}^{[r_1,r]}_{R,\mathbb{Q}_p\{T_1,...,T_d\}}$ with the following estimate for each $r\geq r_2$:
\begin{displaymath}
\left\|.\right\|_{\alpha^r,\mathbb{Q}_p\{T_1,...,T_d\}}(z)\leq p^{(1-n)(1-r/r_2)}\left\|.\right\|_{\alpha^{r_2},\mathbb{Q}_p\{T_1,...,T_d\}}(z)^{r/r_2}.	
\end{displaymath}

\end{proposition}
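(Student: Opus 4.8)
The plan is to mirror the proof of \cite[Lemma 5.2.8]{KL15} in our deformed setting, taking advantage of the explicit power-series description of elements of $\widetilde{\Pi}^{[r_1,r_2]}_{R,\mathbb{Q}_p\{T_1,...,T_d\}}$ in terms of Teichm\"uller expansions $\sum_{k,i_1,\dots,i_d}\pi^k[\overline{x}_{k,i_1,\dots,i_d}]T_1^{i_1}\cdots T_d^{i_d}$ and the definition of the Gauss norm $\left\|.\right\|_{\alpha^r,\mathbb{Q}_p\{T_1,...,T_d\}}$ as the supremum of the monomial contributions $p^{-k}\alpha(\overline{x}_{k,i_1,\dots,i_d})^r$. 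First I would fix a suitable approximating element (or, after rescaling by a power of $\pi$, work with the genuine series when convergence permits) and write $x=y+z$ where $y$ collects all Teichm\"uller terms with $p^{-k}\alpha(\overline{x}_{k,i_1,\dots,i_d})^{r_2}$ above a threshold chosen so that $k \le -n$ forces those terms into $\pi^n\widetilde{\Pi}^{\mathrm{int},r_2}_{R,\mathbb{Q}_p\{T_1,...,T_d\}}$ (this uses the same principle as in the previous propositions: a bound on $\left\|.\right\|_{\alpha^{r_2}}$ of a monomial with sufficiently negative $\pi$-exponent yields integrality after dividing by $\pi^n$), and $z$ is the complementary tail.

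The heart of the matter is the norm estimate on $z$. Here I would use the fact, established in Proposition (after \cite[Lemma 5.2.1]{KL15}) above, that $t\mapsto \left\|x\right\|_{\alpha^t}$ is log-convex and continuous on $(0,r]$. For a single monomial term $\pi^k[\overline{x}]T_1^{i_1}\cdots T_d^{i_d}$ the norm function $t\mapsto p^{-k}\alpha(\overline{x})^t$ is log-affine, so along the segment of $t$-values we control, the value at $t=r\ge r_2$ is governed by linear interpolation/extrapolation from the values at (say) $t=r_1$ and $t=r_2$; the tail terms of $z$ are, by construction, those on which the $\alpha$-exponent is negative (i.e.\ $\alpha(\overline{x})<1$), so that extrapolating past $r_2$ only shrinks the monomial norm and we get, term by term, $p^{-k}\alpha(\overline{x})^{r}\le \big(p^{-k}\alpha(\overline{x})^{r_2}\big)^{r/r_2}\cdot (\text{something}\le p^{(1-n)(1-r/r_2)})$ using that the threshold forces $p^{-k}\alpha(\overline{x})^{r_2}\le p^{1-n}$ on the tail. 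Taking the supremum over all monomials in $z$ — and using that $(\sup_j a_j)^{r/r_2}=\sup_j a_j^{r/r_2}$ for the relevant $a_j \le 1$ — yields the claimed bound
\[
\left\|.\right\|_{\alpha^r,\mathbb{Q}_p\{T_1,...,T_d\}}(z)\leq p^{(1-n)(1-r/r_2)}\left\|.\right\|_{\alpha^{r_2},\mathbb{Q}_p\{T_1,...,T_d\}}(z)^{r/r_2},
\]
and simultaneously shows $z$ lies in $\widetilde{\Pi}^{[r_1,r]}_{R,\mathbb{Q}_p\{T_1,...,T_d\}}$ for every $r\ge r_2$ (the supremum being finite there), hence in the intersection $\bigcap_{r\ge r_2}\widetilde{\Pi}^{[r_1,r]}_{R,\mathbb{Q}_p\{T_1,...,T_d\}}$.

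The main obstacle I anticipate is making the threshold-splitting argument rigorous at the level of completions rather than finite sums: elements of $\widetilde{\Pi}^{[r_1,r_2]}_{R,\mathbb{Q}_p\{T_1,...,T_d\}}$ are limits, and the Teichm\"uller coordinates of a limit need not vary continuously in an obvious way, so one must either work with the honest convergent expansion (available here because the norms are Gauss norms and the coefficients are controlled uniformly on the compact interval $[r_1,r_2]$) or pass to an approximating sequence, perform the decomposition on each approximant, and check the decompositions are themselves Cauchy in all the relevant norms so that the limits $y,z$ exist with the desired properties — exactly the pattern used in the proofs of the earlier propositions (after \cite[Lemma 5.2.2]{KL15} and \cite[Lemma 5.2.6]{KL15}). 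The extra care in the multivariable setting is purely bookkeeping: the index $(i_1,\dots,i_d)$ plays no role in any of the norm computations beyond ranging over a further (countable) supremum, and since the Tate-algebra coefficients are already bounded, no new convergence issue arises from the $T_j$ variables. Finally, the affinoid version over a general reduced $A$ would follow by the same quotient-seminorm argument used to pass from $\mathbb{Q}_p\{T_1,\dots,T_d\}$ to $A$ in the preceding propositions, though as stated the proposition is only claimed over $\mathbb{Q}_p\{T_1,\dots,T_d\}$.
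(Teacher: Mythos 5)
Your overall architecture is the same as the paper's: treat elements of the bounded Robba ring first via their Teichm\"uller--monomial expansions, estimate term by term using the log-affine behaviour of $t\mapsto p^{-k}\alpha(\overline{x})^t$, and then handle a general $x$ by an approximating sequence of bounded elements whose decompositions are summed (the paper uses the telescoping choice $\left\|x-x_0-\cdots-x_i\right\|_{\alpha^r,\mathbb{Q}_p\{T_1,...,T_d\}}\leq p^{-i-1}\left\|x\right\|_{\alpha^r,\mathbb{Q}_p\{T_1,...,T_d\}}$ for $r\in[r_1,r_2]$). However, your description of the key splitting step is wrong as written. The decomposition must be taken purely by the $\pi$-adic exponent: the terms $\pi^k[\overline{x}_{k,i_1,\dots,i_d}]T_1^{i_1}\cdots T_d^{i_d}$ with $k\geq n$ form $y$, and those with $k\leq n-1$ form $z$. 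A norm threshold on $p^{-k}\alpha(\overline{x})^{r_2}$ cannot do this job: a single monomial with $k<n$ is \emph{never} in $\pi^n\widetilde{\Pi}^{\mathrm{int},r_2}_{R,\mathbb{Q}_p\{T_1,...,T_d\}}$, no matter how small its Gauss norm is, because membership in (a $\pi^n$-multiple of) the integral ring is a condition on the $\pi$-exponents appearing in the expansion, not on the size of the coefficients; so your claim that the threshold ``forces those terms into $\pi^n\widetilde{\Pi}^{\mathrm{int},r_2}$'' (and the condition ``$k\le -n$'') does not give $y$ the required integrality.

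Relatedly, the estimate on $z$ does not need, and should not invoke, $\alpha(\overline{x})<1$ on the tail or a bound $p^{-k}\alpha(\overline{x})^{r_2}\leq p^{1-n}$; neither holds in general. The correct mechanism, which is what the paper uses, is the identity
\begin{displaymath}
p^{-k}\alpha(\overline{x})^{r}=p^{-k(1-r/r_2)}\bigl(p^{-k}\alpha(\overline{x})^{r_2}\bigr)^{r/r_2},
\end{displaymath}
combined with $1-r/r_2\leq 0$ and $k\leq n-1$, which gives $p^{-k(1-r/r_2)}\leq p^{(1-n)(1-r/r_2)}$; taking suprema over the monomials of $z$ then yields the stated bound (monotonicity of $t\mapsto t^{r/r_2}$ suffices, no restriction $a_j\leq 1$ is needed). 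With the split corrected to the exponent-threshold version, the rest of your plan—per-term estimate, supremum, and the Cauchy-approximation argument with the $y_i$ converging in $\pi^n\widetilde{\Pi}^{\mathrm{int},r_2}_{R,\mathbb{Q}_p\{T_1,...,T_d\}}$ and the $z_i$ summing with the uniform estimate—coincides with the paper's proof.
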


\begin{proof}
As in \cite[Lemma 5.2.8]{KL15} and in the proof of our previous proposition we first consider those elements $x$ living in the bounded Robba rings which could be expressed in general as
\begin{center}
 $\sum_{k=n(x),i_1,...,i_d}\pi^k[\overline{x}_{k,i_1,...,i_d}]T_1^{i_1}...T_d^{i_d}$.
 \end{center}	
In this situation the corresponding decomposition is very easy to come up with, namely we consider the corresponding $y_i$ as the corresponding series:
\begin{displaymath}
\sum_{k\geq n,i_1,...,i_d}\pi^k[\overline{x}_{k,i_1,...,i_d}]T_1^{i_1}...T_d^{i_d}	
\end{displaymath}
which give us the desired result since we have in this situation when focusing on each single term:
\begin{align}
\left\|.\right\|_{\alpha^r,\mathbb{Q}_p\{T_1,...,T_d\}}(\pi^k[\overline{x}_{k,i_1,...,i_d}]T_1^{i_1}...T_d^{i_d})&=p^{-k}\alpha(\overline{x}_{k,i_1,...,i_d})^r\\
&=p^{-k(1-r/r_2)}\left\|.\right\|_{\alpha^{r_2},\mathbb{Q}_p\{T_1,...,T_d\}}(\pi^k[\overline{x}_{k,i_1,...,i_d}]T_1^{i_1}...T_d^{i_d})^{r/r_2}\\
&\leq p^{(1-n)(1-r/r_2)}\left\|.\right\|_{\alpha^{r_2},\mathbb{Q}_p\{T_1,...,T_d\}}(\pi^k[\overline{x}_{k,i_1,...,i_d}]T_1^{i_1}...T_d^{i_d})^{r/r_2}
\end{align}
for all those suitable $k$. Then to tackle the more general situation we consider the approximating sequence consisting of all the elements in the bounded Robba ring as in the usual situation considered in \cite[Lemma 5.2.8]{KL15}, namely we inductively construct the following approximating sequence just as:
\begin{align}
\left\|.\right\|_{\alpha^r,\mathbb{Q}_p\{T_1,...,T_d\}}(x-x_0-...-x_i)\leq p^{-i-1}	\left\|.\right\|_{\alpha^r,\mathbb{Q}_p\{T_1,...,T_d\}}(x), i=0,1,..., r\in [r_1,r_2].
\end{align}
Here all the elements $x_i$ for each $i=0,1,...$ are living in the bounded Robba ring, which immediately gives rise to the suitable decomposition as proved in the previous case namely we have for each $i$ the decomposition $x_i=y_i+z_i$ with the desired conditions as mentioned in the statement of the proposition. We first take the series summing all the elements $y_i$ up for all $i=0,1,...$, which first of all converges under the norm $\left\|.\right\|_{\alpha^r,\mathbb{Q}_p\{T_1,...,T_d\}}$ for all the radius $r\in [r_1,r_2]$, and also note that all the elements $y_i$ within the infinite sum live inside the corresponding integral Robba ring $\widetilde{\Pi}^{\mathrm{int},r_2}_{R,\mathbb{Q}_p\{T_1,...,T_d\}}$, which further implies the corresponding convergence ends up in $\widetilde{\Pi}^{\mathrm{int},r_2}_{R,\mathbb{Q}_p\{T_1,...,T_d\}}$. For the elements $z_i$ where $i=0,1,...$ also sum up to a converging series in the desired ring since combining all the estimates above we have:
\begin{displaymath}
\left\|.\right\|_{\alpha^r,\mathbb{Q}_p\{T_1,...,T_d\}}(z_i)\leq p^{(1-n)(1-r/r_2)}\left\|.\right\|_{\alpha^{r_2},\mathbb{Q}_p\{T_1,...,T_d\}}(x)^{r/r_2}.	
\end{displaymath}
\end{proof}

%

\begin{proposition} \mbox{\bf{(After Kedlaya-Liu \cite[Lemma 5.2.10]{KL15})}}
We have the following identity:
\begin{displaymath}
\widetilde{\Pi}^{[s_1,r_1]}_{R,\mathbb{Q}_p\{T_1,...,T_d\}}\bigcap\widetilde{\Pi}^{[s_2,r_2]}_{R,\mathbb{Q}_p\{T_1,...,T_d\}}=\widetilde{\Pi}^{[s_1,r_2]}_{R,\mathbb{Q}_p\{T_1,...,T_d\}},
\end{displaymath}
here the radii satisfy $<s_1\leq s_2 \leq r_1 \leq r_2$.
\end{proposition}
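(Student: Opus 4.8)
The plan is to follow \cite[Lemma 5.2.10]{KL15}, reusing the $\pi$-expansion truncation trick that drove the previous two propositions together with the log-convexity established at the beginning of this subsection. Abbreviate $\widetilde\Pi^{I}:=\widetilde\Pi^{I}_{R,\mathbb{Q}_p\{T_1,...,T_d\}}$ and write $\|\cdot\|_{\alpha^t}$ for the corresponding Gauss norm; the intersection in the statement is taken inside the common overring $\widetilde\Pi^{[s_2,r_1]}$, into which both $\widetilde\Pi^{[s_1,r_1]}$ and $\widetilde\Pi^{[s_2,r_2]}$ embed by restricting the defining families of seminorms, and the Tate variables $T_1,\dots,T_d$ are inert throughout since $\|\cdot\|_{\alpha^t}$ is computed termwise in the monomials $T_1^{i_1}\cdots T_d^{i_d}$. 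The inclusion ``$\supseteq$'' is formal: $[s_1,r_1],[s_2,r_2]\subseteq[s_1,r_2]$ give restriction maps $\widetilde\Pi^{[s_1,r_2]}\to\widetilde\Pi^{[s_1,r_1]}$ and $\widetilde\Pi^{[s_1,r_2]}\to\widetilde\Pi^{[s_2,r_2]}$ compatible with the two maps to $\widetilde\Pi^{[s_2,r_1]}$, and all of these are injective because a seminorm function that is log-convex on $[s_1,r_2]$ and vanishes on a subinterval vanishes identically (the rings being reduced). The same log-convexity shows that for a fixed element the Fr\'echet family $\{\|\cdot\|_{\alpha^t}:t\in[s,r]\}$ is dominated by the two endpoint norms, which I use below to convert ``for all $t$ in an interval'' into ``for $t$ equal to the two endpoints''.

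For ``$\subseteq$'', let $x$ lie in the intersection, realized by $x_1\in\widetilde\Pi^{[s_1,r_1]}$ and $x_2\in\widetilde\Pi^{[s_2,r_2]}$ with common image in $\widetilde\Pi^{[s_2,r_1]}$, and fix $\delta\in(0,1)$. By density of $D:=W_\pi(R^+)_{\mathbb{Q}_p\{T_1,...,T_d\}}[[R]][1/\pi]$ together with the endpoint reduction, choose $a,b\in D$ with $\|x_1-a\|_{\alpha^t}<\delta$ for $t\in[s_1,r_1]$ and $\|x_2-b\|_{\alpha^t}<\delta$ for $t\in[s_2,r_2]$, so that $w:=b-a$ satisfies $\|w\|_{\alpha^t}<\delta$ on the overlap $[s_2,r_1]$. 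Split $w=w^{+}+w^{-}$ along its $\pi$-expansion, where $w^{+}$ collects the terms with nonnegative power of $\pi$ (hence lies in $W_\pi(R^+)_{\mathbb{Q}_p\{T_1,...,T_d\}}[[R]]\subseteq D$) and $w^{-}$ the remaining terms (involving only finitely many negative powers of $\pi$), and set $c:=a+w^{+}\in D$. Now the two one-variable estimates behind the earlier propositions apply verbatim: $\|w\|_{\alpha^{s_2}}<\delta$ bounds the coefficients occurring in $w^{+}$ sharply enough to force $\|w^{+}\|_{\alpha^t}<\delta^{s_1/s_2}$ for $t\in[s_1,s_2]$, while trivially $\|w^{+}\|_{\alpha^t}\leq\|w\|_{\alpha^t}<\delta$ for $t\in[s_2,r_1]$; dually $\|w\|_{\alpha^{r_1}}<\delta$ forces $\|w^{-}\|_{\alpha^t}<\delta$ for $t\in[r_1,r_2]$, and again $\|w^{-}\|_{\alpha^t}\leq\|w\|_{\alpha^t}<\delta$ for $t\in[s_2,r_1]$. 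Since $c-a=w^{+}$ and $c-b=-w^{-}$, the ultrametric inequality yields $\|x_1-c\|_{\alpha^t}<\delta^{s_1/s_2}$ for $t\in[s_1,r_1]$ and $\|x_2-c\|_{\alpha^t}<\delta$ for $t\in[s_2,r_2]$, so (using $\delta^{s_1/s_2}\geq\delta$) $c$ lies within $\delta^{s_1/s_2}$ of $x$ in $\|\cdot\|_{\alpha^t}$ for every $t\in[s_1,r_1]\cup[s_2,r_2]=[s_1,r_2]$.

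Taking $\delta=\delta_i\to 0$ produces a sequence $c_i\in D\subseteq\widetilde\Pi^{[s_1,r_2]}$ which is Cauchy for every $\|\cdot\|_{\alpha^t}$ with $t\in[s_1,r_2]$; by completeness of $\widetilde\Pi^{[s_1,r_2]}$ it converges there to some $\xi$, and comparing images in $\widetilde\Pi^{[s_2,r_1]}$, where $c_i\to x$, identifies $\xi$ with $x$, so $x\in\widetilde\Pi^{[s_1,r_2]}$. I expect the only genuinely delicate point to be the construction of the single spliced approximant $c$ and the verification that the two truncation estimates cooperate on both subintervals simultaneously; this step crucially uses membership of $x$ in $\widetilde\Pi^{[s_2,r_2]}$ (through $b$, which supplies control near the outer radius $r_2$), so the argument does not --- and must not --- prove the false statement $\widetilde\Pi^{[s_1,r_1]}\subseteq\widetilde\Pi^{[s_1,r_2]}$. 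The remaining verifications are routine manipulations of Gauss norms to which the variables $T_1,\dots,T_d$ contribute nothing.
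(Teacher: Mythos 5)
Your argument is correct, but it follows a genuinely different route from the paper's. The paper, exactly as in \cite[Lemma 5.2.10]{KL15}, deduces the statement in two lines from the two preceding propositions: writing (suppressing the subscript $R,\mathbb{Q}_p\{T_1,...,T_d\}$) $x=y+z$ via the decomposition proposition applied on $[s_1,r_1]$, so that $y\in\widetilde{\Pi}^{\mathrm{int},r_1}$ and $z\in\bigcap_{r\geq r_1}\widetilde{\Pi}^{[s_1,r]}\subseteq\widetilde{\Pi}^{[s_1,r_2]}$, and then observing that $y=x-z$ lies in $\widetilde{\Pi}^{\mathrm{int},r_1}\bigcap\widetilde{\Pi}^{[s_2,r_2]}=\widetilde{\Pi}^{\mathrm{int},r_2}$ by the earlier intersection proposition, whence $x=y+z\in\widetilde{\Pi}^{[s_1,r_2]}$. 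You never decompose $x$ itself: you approximate it by elements $a,b$ of the dense subring on the two intervals, truncate the $\pi$-expansion of the single correction term $w=b-a$, splice to a common approximant $c$, and conclude by completeness of the Fr\'echet ring, with injectivity of the restrictions supplied by continuity and log-convexity of $t\mapsto\|\cdot\|_{\alpha^t}$. Your estimates for $w^{+}$ on $[s_1,s_2]$ and for $w^{-}$ on $[r_1,r_2]$ are correct, so the splice does what you claim; what your version buys is self-containedness (only the easy truncation estimate for elements of the dense subring is needed, not the decomposition proposition for completed elements), at the cost of redoing bookkeeping the paper has already packaged into the previous two propositions. Two small points to tighten: the truncation $w^{+}$ of an element of $W_\pi(R^+)_{\mathbb{Q}_p\{T_1,...,T_d\}}[[R]][1/\pi]$ is in general only a $\pi$-adically convergent series, i.e.\ an element of the integral Robba ring rather than of $D$ itself, so either replace it by a sufficiently long partial sum or note that $c$ then lies in $\widetilde{\Pi}^{[s_1,r_2]}$ in any case, which is all the Cauchy argument requires; and the injectivity of the restriction maps comes from log-convexity, continuity of the norm function up to the endpoints, and Hausdorffness of the Fr\'echet completion --- the parenthetical appeal to reducedness of the rings is not what does the work.
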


\begin{proof}
In our situation one direction is obvious while on the other hand we consider any element $x$ in the intersection on the left, then by the previous proposition we	have the decomposition $x=y+z$ where $y\in \widetilde{\Pi}^{\mathrm{int},r_1}_{R,\mathbb{Q}_p\{T_1,...,T_d\}}$ and $z\in \widetilde{\Pi}^{[s_1,r_2]}_{R,\mathbb{Q}_p\{T_1,...,T_d\}}$. Then as in \cite[Lemma 5.2.10]{KL15} section 5.2 we look at $y=x-z$ which lives in the intersection:
\begin{displaymath}
\widetilde{\Pi}^{\mathrm{int},r_1}_{R,\mathbb{Q}_p\{T_1,...,T_d\}}\bigcap	\widetilde{\Pi}^{[s_2,r_2]}_{R,\mathbb{Q}_p\{T_1,...,T_d\}}=\widetilde{\Pi}^{\mathrm{int},r_2}_{R,\mathbb{Q}_p\{T_1,...,T_d\}}
\end{displaymath}
which finishes the proof.
\end{proof}

\begin{proposition} \mbox{\bf{(After Kedlaya-Liu \cite[Lemma 5.2.10]{KL15})}}
We have the following identity:
\begin{displaymath}
\widetilde{\Pi}^{[s_1,r_1]}_{R,A}\bigcap\widetilde{\Pi}^{[s_2,r_2]}_{R,A}=\widetilde{\Pi}^{[s_1,r_2]}_{R,A},
\end{displaymath}
here the radii satisfy $<s_1\leq s_2 \leq r_1 \leq r_2$.
	
\end{proposition}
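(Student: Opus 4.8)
The plan is to run the proof of the preceding proposition (the analog of \cite[Lemma 5.2.10]{KL15} over $\mathbb{Q}_p\{T_1,...,T_d\}$) verbatim, but with every Gauss norm $\left\|.\right\|_{\alpha^t,\mathbb{Q}_p\{T_1,...,T_d\}}$ replaced by the quotient seminorm $\overline{\left\|.\right\|}_{\alpha^t,\mathbb{Q}_p\{T_1,...,T_d\}}$ attached to $A$, and feeding in the two affinoid-level ingredients: the decomposition statement modeled on \cite[Lemma 5.2.8]{KL15} (whose proof, given above for $\mathbb{Q}_p\{T_1,...,T_d\}$, transcribes to the quotient ring since the decomposition there is performed term-by-term on bounded Robba elements and the integral part lands in $\widetilde{\Pi}^{\mathrm{int},r}_{R,A}$), and the affinoid version of \cite[Lemma 5.2.6]{KL15} proved above, i.e. $\widetilde{\Pi}^{\mathrm{int},r_1}_{R,A}\cap\widetilde{\Pi}^{[r_1,r_2]}_{R,A}=\widetilde{\Pi}^{\mathrm{int},r_2}_{R,A}$. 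As always one inclusion is immediate: since $[s_1,r_1]$ and $[s_2,r_2]$ are subintervals of $[s_1,r_2]$, restriction of the defining families of norms gives compatible injections $\widetilde{\Pi}^{[s_1,r_2]}_{R,A}\hookrightarrow\widetilde{\Pi}^{[s_1,r_1]}_{R,A}$ and $\widetilde{\Pi}^{[s_1,r_2]}_{R,A}\hookrightarrow\widetilde{\Pi}^{[s_2,r_2]}_{R,A}$, all taking place inside $\widetilde{\Pi}^{[s_2,r_1]}_{R,A}$ (here one uses $s_2\leq r_1$), so $\widetilde{\Pi}^{[s_1,r_2]}_{R,A}$ is contained in the intersection.

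\textbf{The reverse inclusion.} Take $x$ in $\widetilde{\Pi}^{[s_1,r_1]}_{R,A}\cap\widetilde{\Pi}^{[s_2,r_2]}_{R,A}$, viewed inside $\widetilde{\Pi}^{[s_2,r_1]}_{R,A}$. Applying the affinoid analog of \cite[Lemma 5.2.8]{KL15} to $x\in\widetilde{\Pi}^{[s_1,r_1]}_{R,A}$ (with respect to the radii $s_1\leq r_1$, and any $n\geq 1$) produces a decomposition $x=y+z$ with $y\in\widetilde{\Pi}^{\mathrm{int},r_1}_{R,A}$ and $z\in\widetilde{\Pi}^{[s_1,r_2]}_{R,A}$. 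Now write $y=x-z$: since $x\in\widetilde{\Pi}^{[s_2,r_2]}_{R,A}$ and $z\in\widetilde{\Pi}^{[s_1,r_2]}_{R,A}\hookrightarrow\widetilde{\Pi}^{[s_2,r_2]}_{R,A}$, we get $y\in\widetilde{\Pi}^{\mathrm{int},r_1}_{R,A}\cap\widetilde{\Pi}^{[s_2,r_2]}_{R,A}$. Restricting to the subinterval $[r_1,r_2]$ (legitimate since $s_2\leq r_1$) and invoking the affinoid version of \cite[Lemma 5.2.6]{KL15} proved above gives $y\in\widetilde{\Pi}^{\mathrm{int},r_2}_{R,A}\subseteq\widetilde{\Pi}^{[s_1,r_2]}_{R,A}$. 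Hence $x=y+z\in\widetilde{\Pi}^{[s_1,r_2]}_{R,A}$, which is the desired inclusion.

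\textbf{Main obstacle.} The only non-bookkeeping point is the affinoid analog of \cite[Lemma 5.2.8]{KL15}, i.e. making sure the inductive bounded-Robba-ring approximation and the attendant term-by-term estimates survive the passage to the quotient seminorms. The robust route (the one I would write up) performs the whole decomposition over $\mathbb{Q}_p\{T_1,...,T_d\}$ and then projects: the image of $\widetilde{\Pi}^{\mathrm{bd}}_{R,\mathbb{Q}_p\{T_1,...,T_d\}}$ is dense in $\widetilde{\Pi}^{\mathrm{bd}}_{R,A}$, quotient seminorms only decrease, and $\widetilde{\Pi}^{\mathrm{int},r}_{R,\mathbb{Q}_p\{T_1,...,T_d\}}$ maps into $\widetilde{\Pi}^{\mathrm{int},r}_{R,A}$, so the approximating series and the integrality of the $y$-part are preserved under the projection. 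An alternative route is to lift $x$ directly to a \emph{common} element of $\widetilde{\Pi}^{[s_1,r_1]}_{R,\mathbb{Q}_p\{T_1,...,T_d\}}\cap\widetilde{\Pi}^{[s_2,r_2]}_{R,\mathbb{Q}_p\{T_1,...,T_d\}}$ and then apply the already-established Tate-algebra identity before projecting back to $A$; this works because the kernels of the various quotient maps are the closures of the single ideal $I\cdot\widetilde{\Pi}^{\mathrm{bd}}_{R,\mathbb{Q}_p\{T_1,...,T_d\}}$ with $I=\ker(\mathbb{Q}_p\{T_1,...,T_d\}\to A)$, but tracking these closures across the different Fréchet topologies is exactly the delicate part, so I would favor the first route.
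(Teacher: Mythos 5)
Your argument is correct, but it is organized differently from the paper's. The paper disposes of this statement by deferring to \cref{proposition5.7}: there the element of the $A$-level intersection is lifted to the Tate algebra level using strictness of the quotient $\mathbb{Q}_p\{T_1,...,T_d\}\to A$ (preserved under completed tensor with the Robba rings, via \cite{BGR}), the approximation/decomposition is used to rearrange that lift into one lying in the ring on the right-hand side, and one then projects back by strictness — i.e. essentially the ``alternative route'' you flag and decline. Your write-up instead reruns \cite[Lemma 5.2.10]{KL15} directly over $A$: the decomposition $x=y+z$ is produced by lifting only that one step to $\widetilde{\Pi}^{[s_1,r_1]}_{R,\mathbb{Q}_p\{T_1,...,T_d\}}$, decomposing there, and projecting (this needs only surjectivity of the quotient maps, which holds by definition of the $A$-level rings, and their compatibility with interval restrictions), after which the $A$-level analog of \cite[Lemma 5.2.6]{KL15}, already stated in the paper, finishes the argument exactly as in the Tate-algebra case. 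Two remarks on what each approach buys. First, the quantitative estimate in the Lemma 5.2.8 analog does \emph{not} automatically descend to the quotient seminorms (the bound involves the norm of $z$ itself, and the quotient seminorm of the projection may be strictly smaller than the norm of the chosen lift), so a full $A$-level Lemma 5.2.8 would again require the strictness/infimum-over-lifts argument; but your application only uses the qualitative memberships $y\in\widetilde{\Pi}^{\mathrm{int},r_1}_{R,A}$ and $z\in\widetilde{\Pi}^{[s_1,r_2]}_{R,A}$, so your route is sound as used. Second, your route does not actually eliminate the strictness input — it is hidden in the $A$-level version of Lemma 5.2.6, whose proof in the paper is precisely the strictness argument of \cref{proposition5.7} — but it does avoid the genuinely delicate step of the paper's route, namely rearranging a lift inside the intersection while tracking kernels and their closures across the different Fréchet topologies, which is exactly the hazard you identify. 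Given that the paper's own proof is only a pointer to \cref{proposition5.7}, your more explicit argument is a reasonable, and arguably cleaner, alternative.
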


\begin{proof}
See the proof of \cref{proposition5.7}.	
\end{proof}

\begin{remark}
This is subsection is finished so far only for the situation where $E$ is of mixed characteristic. But everything uniformly carries over for our original assumption on the field $E$ and $A$. We will not repeat the proof again.	
\end{remark}

\newpage

\section{Period Modules and Period Sheaves}

\indent We now consider the corresponding Frobenius modules over the corresponding period rings defined in the previous section. Also one can consider the corresponding period sheaves in the style of \cite{KL16}. We would like to point out that actually the corresponding sheaves in our context could mean the following two different objects. First the corresponding period rings defined in the previous section themselves are sheafy, which means that one can study the corresponding analytic geometry over the relative affinoid spaces over for instance the ring $\widetilde{\Pi}_{R}^{\mathrm{int},r}$ or the ring $\widetilde{\Pi}_{R}^{\mathrm{int},r}$ which has its own interests. On the other hand we have the situation where one can glue along the algebra $R$ over corresponding \'etale, corresponding pro-\'etale and corresponding v-sites but leaving the algebra $A$ unglued. We point out that both could have the chance to be compared in a coherent way, to the corresponding representation theoretic consideration in the pseudocoherent setting.

\begin{setting}
We will work in the categories of the pseudocoherent, fpd and finite projective modules over the period rings defined above. First we specify the Frobenius in our setting. The rings involved are:
\begin{align}
\widetilde{\Omega}^\mathrm{int}_{R,A},\widetilde{\Omega}_{R,A}, \widetilde{\Pi}^\mathrm{int}_{R,A}, \widetilde{\Pi}^{\mathrm{int},r}_{R,A},\widetilde{\Pi}^\mathrm{bd}_{R,A},\widetilde{\Pi}^{\mathrm{bd},r}_{R,A}, \widetilde{\Pi}_{R,A}, \widetilde{\Pi}^r_{R,A},\widetilde{\Pi}^+_{R,A}, \widetilde{\Pi}^\infty_{R,A},\widetilde{\Pi}^I_{R,A}.
\end{align}
We are going to endow these rings with the Frobenius induced by continuation from the Witt vector part only, which is to say the corresponding Frobenius induced by the $p^h$-power absolute Frobenius over $R$. Note all the rings above are defined by taking the product of $\triangle$ where each $\triangle$ representing one of the following rings (over $E$):
\begin{align}
\widetilde{\Omega}^\mathrm{int}_{R},\widetilde{\Omega}_{R}, \widetilde{\Pi}^\mathrm{int}_{R}, \widetilde{\Pi}^{\mathrm{int},r}_{R},\widetilde{\Pi}^\mathrm{bd}_{R},\widetilde{\Pi}^{\mathrm{bd},r}_{R}, \widetilde{\Pi}_{R}, \widetilde{\Pi}^r_{R},\widetilde{\Pi}^+_{R}, \widetilde{\Pi}^\infty_{R},\widetilde{\Pi}^I_{R}
\end{align}
with the affinoid ring $A$. The Frobenius acts on $A$ trivially and we assume that the action is $A$-linear.  
\end{setting}

\indent First we consider the following sheafification as in \cite[Definition 4.4.2]{KL16}:

\begin{setting} 
Consider the space $X=\mathrm{Spa}(R,R^+)$, over this perfectoid space there were sheaves:
\begin{align}
\widetilde{\Omega}^\mathrm{int}_{},\widetilde{\Omega}_{}, \widetilde{\Pi}^\mathrm{int}_{}, \widetilde{\Pi}^{\mathrm{int},r}_{},\widetilde{\Pi}^\mathrm{bd}_{},\widetilde{\Pi}^{\mathrm{bd},r}_{}, \widetilde{\Pi}_{}, \widetilde{\Pi}^r_{},\widetilde{\Pi}^+_{}, \widetilde{\Pi}^\infty_{},\widetilde{\Pi}^I_{}.
\end{align}
defined over this space through the corresponding adic, \'etale, pro-\'etale or $v$-topology, we consider the corresponding sheaves defined over the same Grothendieck sites but with further deformed consideration:
\begin{align}
\widetilde{\Omega}^\mathrm{int}_{*,A},\widetilde{\Omega}_{*,A}, \widetilde{\Pi}^\mathrm{int}_{*,A}, \widetilde{\Pi}^{\mathrm{int},r}_{*,A},\widetilde{\Pi}^\mathrm{bd}_{*,A},\widetilde{\Pi}^{\mathrm{bd},r}_{*,A}, \widetilde{\Pi}_{*,A}, \widetilde{\Pi}^r_{*,A},\widetilde{\Pi}^+_{*,A}, \widetilde{\Pi}^\infty_{*,A},\widetilde{\Pi}^I_{*,A}.
\end{align}
\end{setting}

\begin{remark}
The consideration in the previous setting in some sense reflects some kind of rigidity. Since the corresponding construction is just that the ring $A$ acts through the completed tensor product directly on the sheaves after \cite{KL16}, in most cases when one would like to compare those sheaves of modules over the sheaves of rings above and the corresponding modules over the period rings defined above, one needs to work through the corresponding vanishing results and so on in this deformed context. We work around this by considering the corresponding globalized version of this. 	
\end{remark}

\begin{setting}
In our context we can make the following parallel discussion to that in \cite{KL16}. First we can use the corresponding notation $E_\infty$ to denote the corresponding completion of $E(\pi^{1/{p^\infty}})$ and we denote the corresponding integral ring by $\mathcal{O}_{E_\infty}$. Then we have could have that the corresponding splitting of the ring $\mathcal{O}_E$ in this bigger perfectoid ring. Then we can consider the corresponding extended period rings $\widetilde{\Omega}^\mathrm{int}_{R}\widehat{\otimes}_{\mathcal{O}_E}\mathcal{O}_{E_\infty}$ and $\widetilde{\Pi}^{\mathrm{int},r}_{R}\widehat{\otimes}_{\mathcal{O}_E}\mathcal{O}_{E_\infty}$	in the same fashion as in \cite[Definition 4.1.11]{KL16}.
\end{setting}

\begin{proposition}\mbox{\bf{(After Kedlaya-Liu \cite[Lemma 4.1.12, Corollary 4.1.14]{KL16})}}
The ring $\widetilde{\Pi}^{\mathrm{int},r}_{R}\widehat{\otimes}_{\mathcal{O}_E}\mathcal{O}_{E_\infty}$ is perfectoid, so stably uniform and sheafy. By using the same notation in \cite[Lemma 4.1.12, Corollary 4.1.14]{KL16} we have that the corresponding tilting perfectoid ring is $R\{(\overline{\pi}/p^{-1/r})^{1/p^\infty}\}$ which is the completion of $R[(\overline{\pi}/p^{-1/r})^{1/p^\infty}]$.	
\end{proposition}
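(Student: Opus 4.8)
The plan is to transcribe the argument of \cite[Lemma 4.1.12, Corollary 4.1.14]{KL16}, the only genuinely new point being that the generalized Witt vector formalism set up in this section behaves exactly like the $p$-typical one \emph{after} base change to $\mathcal{O}_{E_\infty}$. Set $\Lambda:=\widetilde{\Pi}^{\mathrm{int},r}_{R}\widehat{\otimes}_{\mathcal{O}_E}\mathcal{O}_{E_\infty}$, which coincides with the ring $\widetilde{\Pi}^{\mathrm{int},r}_{R,\infty}$ of the Definition above. Since $E_\infty$ is the completion of $E(\pi^{p^{-\infty}})$, the base change adjoins a compatible system $\pi^{1/p^n}$, and — this is the crucial feature, unavailable before base change (cf.\ the Remark after the Setting) — every element of $\Lambda$ has a \emph{unique} convergent expansion $\sum_{n\in\mathbb{Z}[1/p]_{\geq 0}}\pi^n[\overline{x}_n]$, with the $\overline{x}_n$ living in the $\alpha$-completion of the relevant polynomial ring over $R$ and the convergence governed by $\left\|.\right\|_{\alpha^r}$. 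This expansion is what makes the perfectoid verification routine.

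First I would fix a pseudo-uniformizer of $\Lambda$. In the mixed characteristic case $\pi^e$ equals $p$ up to a unit in $\mathcal{O}_E$, so $\varpi:=\pi^{e/p}\in\mathcal{O}_{E_\infty}$ satisfies $\varpi^p=(\mathrm{unit})\cdot p$ in $\Lambda^\circ$; it is topologically nilpotent because $\left\|\pi\right\|_{\alpha^r}=p^{-1}<1$, hence it is a pseudo-uniformizer. Using the unique expansion one computes $\Lambda^\circ/\varpi$ explicitly, and the $p^h$-power Frobenius — which fixes $\pi$, hence all $\pi^{1/p^n}$, and acts by $\overline{x}\mapsto\overline{x}^{p}$ on $R$ — induces an isomorphism $\Lambda^\circ/\varpi\xrightarrow{\sim}\Lambda^\circ/\varpi^p$ precisely because $R$ is perfect. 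Together with the facts that $\Lambda$ is a uniform Banach ring (the Gauss norm $\left\|.\right\|_{\alpha^r}$ remains power-multiplicative after base change, by the argument already used for the propositions of Subsection~2.2) and that $\Lambda^\circ$ is $\varpi$-adically complete, this exhibits $\Lambda$ as a perfectoid Tate ring. The equal characteristic case is the same, with ``perfectoid'' meaning uniform with bijective Frobenius. The implications ``perfectoid $\Rightarrow$ stably uniform $\Rightarrow$ sheafy'' are then standard and may be quoted from the general theory (Buzzard--Verberkmoes, and \cite[\S2.8]{KL15}).

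For the tilt I would argue as in \cite[Corollary 4.1.14]{KL16}. The ring $S:=R\{(\overline{\pi}/p^{-1/r})^{1/p^\infty}\}$, being a perfect uniform Tate $\mathbb{F}_p$-algebra, is perfectoid. There is an evident map of multiplicative monoids $S\to\varprojlim_{x\mapsto x^p}\Lambda$ sending $R$ into $\Lambda$ by $\overline{x}\mapsto[\overline{x}]$ (composed with the inverse transition system, using perfectness) and sending $\overline{\pi}^{1/p^n}$ to the compatible system built from the $\pi^{1/p^{n+k}}$; the normalizing constant $p^{-1/r}$ is forced by $\left\|[\overline{z}]\right\|_{\alpha^r}=\alpha(z)^r$, i.e.\ it records that $S$ is written with $R$ carrying its own spectral seminorm $\alpha$, which is the $r$-th root of the restriction of $\left\|.\right\|_{\alpha^r}$ to the Teichm\"uller image of $R$, so that the tilt of $\pi$ (of $\left\|.\right\|_{\alpha^r}$-norm $p^{-1}$) acquires $\alpha$-norm $p^{-1/r}$. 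One checks this induces a ring isomorphism $S\xrightarrow{\sim}\Lambda^\flat$: surjectivity from the unique expansion (every coefficient already lies in the completed $R$-part) plus density of expansions with exponents in $\mathbb{Z}[1/p]_{\geq0}$, and injectivity together with isometry from comparing spectral seminorms on the two sides. Hence $\Lambda^\flat\cong S$, as asserted.

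The step I expect to be the main obstacle is the bookkeeping around the $\varpi$-adic reduction and the constant $p^{-1/r}$: one must verify that $\widetilde{\Pi}^{\mathrm{int},r}_{R}\widehat{\otimes}_{\mathcal{O}_E}\mathcal{O}_{E_\infty}$ is $\varpi$-adically (not merely $\pi$-adically) separated and complete with the Gauss-norm unit ball as its ring of power-bounded elements, and that its spectral seminorm is exactly $\left\|.\right\|_{\alpha^r}$, so that tilting matches $R$ with $\alpha$ and $\pi$ with an element of norm $p^{-1}$. Once these normalizations are pinned down, everything else is a faithful transcription of \cite{KL16}.
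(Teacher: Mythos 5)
Your proposal is correct and is essentially the paper's own argument: the paper's proof simply says it is the same as that of \cite[Lemma 4.1.12, Corollary 4.1.14]{KL16} once a suitable topologically nilpotent element is chosen, which is exactly your $\varpi=\pi^{e/p}$ together with the unique expansions $\sum_{n\in\mathbb{Z}[1/p]_{\geq 0}}\pi^n[\overline{x}_n]$ available after base change to $\mathcal{O}_{E_\infty}$, and your normalization of the tilt matching $R$ with $\alpha$ and $\overline{\pi}$ with norm $p^{-1/r}$ is the right bookkeeping. One cosmetic slip: the map that must be checked on $\Lambda^{\circ}/\varpi\to\Lambda^{\circ}/\varpi^{p}$ is the $p$-power Frobenius $x\mapsto x^{p}$ (as in your displayed formula), not the $p^{h}$-power lift $\varphi$ used elsewhere in the paper, so the label should be adjusted while the argument itself stands.
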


\begin{proof}
This is the same as the proof for \cite[Lemma 4.1.12, Corollary 4.1.14]{KL16} after we choose a suitable topologically nilpotent element.
\end{proof}

\begin{proposition}\mbox{\bf{(After Kedlaya-Liu \cite[Proposition 4.1.13, Corollary 4.1.14]{KL16})}}
The ring $\widetilde{\Pi}_R^I\widehat{\otimes}_{\mathcal{O}_E}\mathcal{O}_{E_\infty}$ for some closed interval $I=[s,r]$ is perfectoid so stably uniform and sheafy. Then in our situation this perfectoid admits tilting ring $R\{(\overline{\pi}/p^{-1/r})^{1/p^\infty},(p^{-1/s}/\overline{\pi})^{1/p^\infty}\}$ which is the completion of $R[(\overline{\pi}/p^{-1/r})^{1/p^\infty},(p^{-1/s}/\overline{\pi})^{1/p^\infty}]$.	
\end{proposition}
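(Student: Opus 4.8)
The plan is to reduce the statement to the preceding proposition together with the standard facts that rational localizations of perfectoid Banach rings are again perfectoid and that tilting is compatible with such localizations, exactly as \cite[Proposition 4.1.13]{KL16} is deduced from \cite[Lemma 4.1.12]{KL16} (following \cite[Section 5.3]{KL15}).

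First I would record, from the preceding proposition, that $B := \widetilde{\Pi}^{\mathrm{int},r}_{R}\widehat{\otimes}_{\mathcal{O}_E}\mathcal{O}_{E_\infty}$ is perfectoid, stably uniform and sheafy, with tilt $B^\flat = R\{(\overline{\pi}/p^{-1/r})^{1/p^\infty}\}$; in particular $\overline{\pi}$, together with a pseudo-uniformizer $\overline{\varpi}$ of $R$, furnishes Teichmüller-type functions on $\mathrm{Spa}(B,B)$ whose supremum norms recover the Gauss norms $\left\|.\right\|_{\alpha^t}$. This identification of Gauss norms with supremum norms is where uniformity of $R$ and the unique expansion $\sum_{n\in\mathbb{Z}[1/p]_{\geq 0}}\pi^n[\overline{x}_n]$ available over $E_\infty$ are used.

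Next I would exhibit $C := \widetilde{\Pi}^{[s,r]}_{R}\widehat{\otimes}_{\mathcal{O}_E}\mathcal{O}_{E_\infty}$ as the ring of sections over a single rational subdomain $U$ of $\mathrm{Spa}(B,B)$. Concretely, $C$ is obtained from $B$ by inverting $\pi$ and then completing $\widetilde{\Pi}^{\mathrm{bd},r}_{R}\widehat{\otimes}_{\mathcal{O}_E}\mathcal{O}_{E_\infty}$ simultaneously for $\left\|.\right\|_{\alpha^s}$ and $\left\|.\right\|_{\alpha^r}$; since $[s,r]$ is a compact interval, this amounts to the rational subset of $\mathrm{Spa}(B,B)$ on which the radius of a valuation, measured by comparing $|\pi|$ with suitable powers of $|\overline{\varpi}|$, is pinned between $s$ and $r$. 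The careful point --- and the main obstacle --- is to prove that this completed, localized ring really equals $\mathcal{O}(U)$: one must manipulate elements coefficientwise through their $\mathbb{Z}[1/p]_{\geq 0}$-indexed expansions and check that the two Gauss-norm conditions defining the $[s,r]$-ring match the finitely many inequalities cutting out $U$. This is precisely the computation of \cite[Section 5.3]{KL15}, and it is exactly here that passing to $E_\infty$, so that unique expansions exist, is what allows the generalized Witt vector setting to behave as in the classical case; for general ultrametric $E$ this step would fail, cf. the earlier Remark.

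Once $C = \mathcal{O}(U)$ is established, the remainder is formal. Rational localizations of perfectoid Banach rings are perfectoid, hence stably uniform, hence sheafy (\cite[Theorem 3.6.14]{KL15} and the ``stably uniform implies sheafy'' principle of \cite[Section 2.8]{KL15}), so $C$ is perfectoid, stably uniform and sheafy. Moreover tilting commutes with rational localization, so $C^\flat$ is the corresponding rational localization of $B^\flat = R\{(\overline{\pi}/p^{-1/r})^{1/p^\infty}\}$. Translating the defining inequalities of $U$ through the tilting dictionary, under which $\pi$ corresponds to $\overline{\pi}$ at the level of these norms, the lower bound coming from $s$ becomes the adjunction of a compatible system of $p$-power roots of $p^{-1/s}/\overline{\pi}$; hence $C^\flat$ is the completion of $R[(\overline{\pi}/p^{-1/r})^{1/p^\infty},(p^{-1/s}/\overline{\pi})^{1/p^\infty}]$, that is $R\{(\overline{\pi}/p^{-1/r})^{1/p^\infty},(p^{-1/s}/\overline{\pi})^{1/p^\infty}\}$, as asserted.
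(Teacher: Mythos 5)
Your proposal is correct and follows essentially the same route as the paper, whose proof simply invokes the previous proposition together with the argument of \cite[Proposition 4.1.13, Corollary 4.1.14]{KL16}: realize the $[s,r]$-ring as (sections over) a rational localization of the perfectoid ring $\widetilde{\Pi}^{\mathrm{int},r}_{R}\widehat{\otimes}_{\mathcal{O}_E}\mathcal{O}_{E_\infty}$, use that rational localizations of perfectoid rings are perfectoid, stably uniform and sheafy, and identify the tilt via compatibility of tilting with rational localization. Your write-up is a faithful expansion of exactly that argument, so no further comparison is needed.
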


\begin{proof}
This is the same as the proof of \cite[Proposition 4.1.13, Corollary 4.1.14]{KL16} by considering the previous proposition.	
\end{proof}

\begin{setting}
We are going to use the notation $\mathfrak{X}$ to denote a rigid analytic space in rigid geometry. Then we can consider the corresponding relative period rings or sheaves over $\mathcal{O}_{\mathfrak{X}}$. The period rings and the corresponding period sheaves over the sheaf $\mathcal{O}_{\mathfrak{X}}$ are defined to be the following sheaves (one takes the complete tensor product of the undeformed period rings with the corresponding exact sequence for the sheaf $?$, which again gives the corresponding exact sequence due to the fact that the undeformed period rings admit Schauder bases as in \cite[Definition 6.1]{KP}):
\begin{align}
\widetilde{\Pi}_{R,?}, \widetilde{\Pi}^r_{R,?},\widetilde{\Pi}^\infty_{R,?},\widetilde{\Pi}^I_{R,?},
\end{align}
with
\begin{align}
\widetilde{\Pi}_{*,?}, \widetilde{\Pi}^r_{*,?},\widetilde{\Pi}^\infty_{*,?},\widetilde{\Pi}^I_{*,?},
\end{align}
where $*$ is some \'etale site, pro\'et site or $v$-site and etc, and $?$ represents $\mathcal{O}_{\mathfrak{X}}$ which implies that we treat these sheaves of rings as sheaves over $*$ in some relative sense. One should understand this as the corresponding sheaves essentially with respect $*$.
\end{setting}


\begin{remark}
It is hard to consider the corresponding comparison between the representation theories over these sheaves (deformed) with the rings (without any sheafified consideration), since the corresponding global section functor is a little bit hard to study. But on the other hand comparing this among the sheaves themselves and with the Fargues-Fontaine curves are very natural and interesting.
\end{remark}

\begin{assumption}
From now on, we are going to assume that the affinoid ring $A$ is splitting in some perfectoid covering $A^\mathrm{perf}$, but only when the corresponding sheafiness of the deformed period rings is essentially relevant. For instance if $A$ is just some analytic field as those in \cite{KL16} then this is satisfied. This will guarantee the corresponding period rings over $A$ (we now assume them to be sousperfectoid) will then be sheafy by Kedlaya-Hansen's sheafiness criterion \cite{KH}.	
\end{assumption}

\indent We first consider the following result which is slight generalization of the corresponding result from \cite[4.3.1]{KL16}. Note that our $E$ is general than the basic setting of \cite[Setting 3.1.1]{KL16}.

\begin{remark}
In what follows, there will be some situation we will consider the perfectoid deformed version of the period rings and sheaves. All the definitions are parallel to the base level definitions (including the corresponding Frobenius modules and sheaves).	
\end{remark}

\begin{lemma} \mbox{\bf{(After Kedlaya-Liu \cite[4.3.1]{KL16})}} \label{lemma3.11}
Let $M$ be any \'etale-stably pseudocoherent or fpd module defined over $\widetilde{\Pi}^{\mathrm{int},r}_{R}$ and $\widetilde{\Pi}^{\mathrm{int},r}_{R,\infty}$ for some $r>0$ or $\widetilde{\Pi}^I_{R}$ and $\widetilde{\Pi}^I_{R,\infty}$ for some $I$ which is assumed to be closed, and let $\widetilde{M}$ be the corresponding sheaf attached to the module $M$ over the sheaf $\widetilde{\Pi}^{\mathrm{int},r}_{*}$ for some $r>0$ or $\widetilde{\Pi}^I_{*}$ for some $I$ which is assumed to be closed, over the \'etale and the pro-\'etale sites of $\mathrm{Spa}(R,R^+)$. Then we have the following statement:
\begin{align}
H^i(X,\widetilde{M})=H^i(X_\text{\'et},\widetilde{M})=H^i(X_\text{pro\'et},\widetilde{M})	
\end{align}
vanish beyond the $0$-th degree and give the module $M$ at the degree $0$. Also furthermore we have that in the projective situation we have that the same holds under the $v$-topology. 
\end{lemma}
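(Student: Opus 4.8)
The plan is to reduce everything to the corresponding statement in Kedlaya–Liu \cite[4.3.1]{KL16} by exploiting the fact that the deformed period rings are obtained from the undeformed ones by a completed tensor product with the affinoid $A$ (respectively with the perfectoid $\mathcal{O}_{E_\infty}$ in the $\infty$-decorated case), and that this operation is exact on the relevant exact sequences because the undeformed period rings admit Schauder bases. First I would set up the local picture: over a rational (or more generally étale, resp. pro-étale) covering $\{U_j = \mathrm{Spa}(S_j, S_j^+)\}$ of $X = \mathrm{Spa}(R,R^+)$, the sheaf $\widetilde{\Pi}^{\mathrm{int},r}_{*}$ assigns $\widetilde{\Pi}^{\mathrm{int},r}_{S_j}$, and its deformed analogue assigns $\widetilde{\Pi}^{\mathrm{int},r}_{S_j} \widehat{\otimes}_{E} A$ (or the $\mathcal{O}_{E_\infty}$-version). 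The key input from the earlier part of the excerpt is the perfectoid (hence stably uniform and sheafy) property of $\widetilde{\Pi}^{\mathrm{int},r}_{R}\widehat{\otimes}_{\mathcal{O}_E}\mathcal{O}_{E_\infty}$ and of $\widetilde{\Pi}^I_{R}\widehat{\otimes}_{\mathcal{O}_E}\mathcal{O}_{E_\infty}$, together with the sousperfectoid/sheafiness assumption on $A$ via Kedlaya–Hansen \cite{KH}; this guarantees that the deformed sheaves are genuine sheaves on the étale and pro-étale sites for which Čech cohomology computes sheaf cohomology.

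Next I would invoke the acyclicity that is already available in the undeformed setting: by \cite[4.3.1]{KL16} (whose proof rests on the stable uniformity/perfectoidness and the Tate–Kiehl glueing / vanishing theorems for pseudocoherent and fpd modules), for any étale-stably pseudocoherent or fpd module $N$ over $\widetilde{\Pi}^{\mathrm{int},r}_{R}$, the associated sheaf $\widetilde{N}$ has $H^i(X,\widetilde{N})=0$ for $i>0$ and $H^0 = N$, with the analogous statement for closed intervals $I$, and the $v$-site statement for finite projective modules. The second step is then to show that tensoring the Čech complex of $\widetilde{N}$ up along $-\widehat{\otimes}_E A$ (resp. $-\widehat{\otimes}_{\mathcal{O}_E}\mathcal{O}_{E_\infty}$) preserves exactness. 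Because $\widetilde{\Pi}^{\mathrm{int},r}_{R}$ and all the transition rings $\widetilde{\Pi}^{\mathrm{int},r}_{S_j}$ are Banach (resp. Fréchet, in the closed-interval case) spaces admitting Schauder bases over $E$, the completed tensor product with the affinoid Banach algebra $A$ is strict exact on the strict exact Čech complex — this is precisely the mechanism already cited in the excerpt (``again gives the corresponding exact sequence due to the fact that the undeformed period rings admit Schauder bases as in \cite[Definition 6.1]{KP}''). Concretely, I would write the deformed Čech complex as $\check{C}^\bullet(\{U_j\}, \widetilde{M}) \cong \check{C}^\bullet(\{U_j\}, \widetilde{N}) \,\widehat{\otimes}_{\widetilde{\Pi}^{\mathrm{int},r}_{R}}\, M$ when $M$ is finite projective, and reduce a general pseudocoherent/fpd $M$ to this by a finite free resolution, using that the completed tensor product of a strict exact complex of Banach modules with a finite free module over a Banach ring stays strict exact, and then propagating through the fpd resolution by a standard spectral-sequence / dévissage argument.

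The third step handles the comparison among the three (resp. four) topologies: $H^i(X,\widetilde{M}) = H^i(X_{\text{ét}},\widetilde{M}) = H^i(X_{\text{proét}},\widetilde{M})$, and the $v$-site equality in the projective case. Here I would argue exactly as in \cite[4.3.1]{KL16}: the analytic, étale, and pro-étale cohomologies of a perfectoid space with coefficients in such a glueable sheaf agree because the structure sheaf and its Robba-ring variants satisfy acyclicity on affinoid perfectoids in all these topologies (Scholze's almost acyclicity in the pro-étale case, descended through the tilting equivalence to the perfectoid rings of the previous two propositions), and the $v$-descent for finite projective modules follows from Kedlaya–Liu's $v$-descent of vector bundles on the relative Fargues–Fontaine curve applied over each fiber, which is insensitive to the inert coefficient ring $A$.

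The main obstacle I expect is \emph{not} the topological comparison but rather the exactness of $-\widehat{\otimes}_E A$ on the pseudocoherent (as opposed to finite projective) side over the \emph{non-discretely-valued} $E$ and hence non-Noetherian period rings: one must be careful that ``étale-stably pseudocoherent'' is preserved under the deformation, that the finite free resolutions can be chosen compatibly with the norms so that the completed tensor product does not introduce spurious cohomology (a strictness issue, since $A$ need only be Banach and the rings are not Noetherian), and that the quotient seminorms $\overline{\|\cdot\|}_{\alpha^r,\mathbb{Q}_p\{T_1,\dots,T_d\}}$ behave well under localization along the covering. I would address this by working first with $A$ a Tate algebra $\mathbb{Q}_p\{T_1,\dots,T_d\}$, where the Schauder-basis argument is transparent and the completed tensor product is manifestly strict exact, proving the lemma there, and then descending to a general reduced affinoid $A$ via the quotient presentation and the fact (used repeatedly in the period-ring properties above, e.g. in the proof referenced as \cref{proposition5.7}) that the constructions are independent of the presentation; the closed-interval and $\infty$-decorated cases then follow by the same dévissage, replacing ``Banach/Schauder basis'' with ``Fréchet/nuclear Fréchet'' and using that a countable inverse limit of strict exact complexes of Fréchet spaces with surjective transition maps is strict exact.
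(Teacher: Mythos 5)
There is a real mismatch between what you prove and what \cref{lemma3.11} asserts. The lemma involves only the \emph{undeformed} rings $\widetilde{\Pi}^{\mathrm{int},r}_{R}$, $\widetilde{\Pi}^{I}_{R}$ and their base changes $\widetilde{\Pi}^{\mathrm{int},r}_{R,\infty}$, $\widetilde{\Pi}^{I}_{R,\infty}$ to the perfectoid field $E_\infty$; the affinoid $A$ does not appear anywhere in the statement (the deformed sheaves would carry the subscript $*,A$). You have read the subscript as the $A$-deformation, and consequently the bulk of your argument --- strict exactness of $-\widehat{\otimes}_E A$ via Schauder bases, reduction from $\mathbb{Q}_p\{T_1,\dots,T_d\}$ to a general reduced affinoid $A$, the sousperfectoid/Kedlaya--Hansen sheafiness assumption on the deformed rings --- is aimed at a statement the lemma does not make, and indeed one the paper deliberately avoids claiming at the pseudocoherent level (in the deformed setting only fully faithful embeddings are asserted later, cf.\ \cref{theorem4.4} and the remark following the sheafification setting). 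Meanwhile the actual content of the lemma is treated in your second paragraph as a citation: you take the acyclicity over $\widetilde{\Pi}^{\mathrm{int},r}_{R}$ as already given by \cite[Theorem 4.3.1]{KL16}. That is not available verbatim, because the present setting allows a general perfect residue field $k\supseteq\mathbb{F}_{p^h}$ (and must also cover the $E_\infty$-decorated rings), which is precisely the gap the lemma is meant to close.

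The paper's proof is the step you skipped: base change along $\mathcal{O}_E\to\mathcal{O}_{E_\infty}$, where by the two propositions preceding the lemma the rings $\widetilde{\Pi}^{\mathrm{int},r}_{R}\widehat{\otimes}_{\mathcal{O}_E}\mathcal{O}_{E_\infty}$ and $\widetilde{\Pi}^{I}_{R}\widehat{\otimes}_{\mathcal{O}_E}\mathcal{O}_{E_\infty}$ are perfectoid (stably uniform, sheafy), so the assertion becomes a statement about pseudocoherent and fpd sheaves on perfectoid spaces, handled by \cite[2.5.1, 2.5.11, 3.4.6, 3.5.6]{KL16}; one then descends to the original rings over $E$ using that $\mathcal{O}_E\to\mathcal{O}_{E_\infty}$ splits, so the \v{C}ech complex over $E$ is a direct summand of the one after base change. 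Your proposal does gesture at the perfectoid propositions and at the \'etale/pro-\'etale/$v$ comparison, but without the splitting-descent step (and without any argument replacing \cite[Theorem 4.3.1]{KL16} in the enlarged setting) the proof of the stated lemma is not complete; conversely, the $A$-tensoring machinery you develop, whatever its independent merits, does not bear on this statement.
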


\begin{proof}
The corresponding results could be transformed to perfectoid spaces as in \cite[4.3.1]{KL16} by considering the splitting after passing to perfectoid field $E_\infty$ and perfectoid ring $\mathcal{O}_{\mathfrak{X}^\mathrm{perf}}$, then by applying the results in \cite[2.5.1,2.5.11,3.4.6,3.5.6]{KL16} we can get the desired results.	
\end{proof}

\begin{definition}
Now working over some perfect uniform and adic space over $k$, as in \cite[Definition 4.3.2]{KL16} we define the corresponding pseudocoherent sheaves or fpd sheaves over $\widetilde{\Pi}^{\mathrm{int},r}_*$ or $\widetilde{\Pi}^{I}_*$ (where $I$ is closed) where $*$ represents one of $X$, $X_\text{\'et}$ and $X_{\text{pro\'et}}$ to be the sheaves associated to \'etale-stably pseudocoherent or fpd modules in some local manner. 	
\end{definition}

\begin{proposition} \mbox{\bf{(After Kedlaya-Liu \cite[Theorem 4.3.3]{KL16})}} \label{proposition3.13}
The global section functor establishes an equivalence between the  categories of the pseudocoherent sheaves over $\widetilde{\Pi}^{\mathrm{int},r}_*$ or $\widetilde{\Pi}^{I}_*$ (and $\widetilde{\Pi}^{\mathrm{int},r}_{*,\infty}$ or $\widetilde{\Pi}^{I}_{*,\infty}$) over the pro-\'etale site of $X=\mathrm{Spa}(R,R^+)$ and the \'etale-stably pseudocoherent modules over $\widetilde{\Pi}^{\mathrm{int},r}_R$ or $\widetilde{\Pi}^{I}_R$ (and $\widetilde{\Pi}^{\mathrm{int},r}_{R,\infty}$ or $\widetilde{\Pi}^{I}_{R,\infty}$). The global section functor establishes an equivalence between the  categories of the fpd sheaves over $\widetilde{\Pi}^{\mathrm{int},r}_*$ or $\widetilde{\Pi}^{I}_*$ over the pro-\'etale site of $X=\mathrm{Spa}(R,R^+)$ and the \'etale-stably fpd modules over $\widetilde{\Pi}^{\mathrm{int},r}_R$ or $\widetilde{\Pi}^{I}_R$. The following morphisms are effective descent morphisms for pseudocoherent Banach modules: I. $\widetilde{\Pi}_{R,A}^r\rightarrow \widetilde{\Pi}_{R,A}^s\oplus \widetilde{\Pi}_{R,A}^{[s,r]}$; II. $\widetilde{\Pi}_{R,A}^I\rightarrow \oplus_{i=1}^k \widetilde{\Pi}_{R,A}^{I_i}$. here the corresponding set of interval $\{I_i\}_{i=1,...,k}$ consists of finitely many closed intervals covering the interval $I$.
 
\end{proposition}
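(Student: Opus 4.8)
The plan is to reduce everything to the corresponding statements in Kedlaya--Liu \cite{KL16} by the same two-step strategy used throughout this section: first pass to the perfectoid situation by base change to $E_\infty$ and to a perfectoid cover $\mathcal{O}_{\mathfrak{X}^\mathrm{perf}}$ of the affinoid $A$ (using the splitting hypothesis in the ambient Assumption), and then invoke the descent and acyclicity machinery of \cite[\S 2.5, \S 3.4, \S 3.5]{KL16}. First I would treat the equivalence of categories. Given an \'etale-stably pseudocoherent module $M$ over $\widetilde{\Pi}^{\mathrm{int},r}_R$ (resp. $\widetilde{\Pi}^I_R$, $I$ closed), form the associated presheaf on the pro-\'etale site by $U\mapsto M\otimes_{\widetilde{\Pi}^{\mathrm{int},r}_R}\widetilde{\Pi}^{\mathrm{int},r}_*(U)$ and show it is a sheaf with vanishing higher cohomology; this is exactly \cref{lemma3.11} applied termwise. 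Conversely, the global section functor applied to a pseudocoherent sheaf returns a module which is \'etale-stably pseudocoherent because pseudocoherence can be checked \'etale-locally and the relevant faithfully flat (or at least descendable) base-change maps among the period rings are provided by \cite[\S 2.5.1, \S 2.5.11]{KL16}. That these two constructions are mutually inverse is formal once one knows the sheaf condition and the vanishing, so the content is really \cref{lemma3.11} together with the glueing/descent results cited there; the fpd case is identical, and the projective case extends to the $v$-topology by the last sentence of \cref{lemma3.11}.

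Next I would handle the two effective-descent assertions. For (I), the claim is that $\widetilde{\Pi}^r_{R,A}\to\widetilde{\Pi}^s_{R,A}\oplus\widetilde{\Pi}^{[s,r]}_{R,A}$ is an effective descent morphism for pseudocoherent Banach modules; for (II) the analogous statement for a finite closed cover $\{I_i\}$ of $I$. The key inputs are the intersection identities already proved in this section, namely \cref{proposition5.7} and its affinoid analogue (the equalities $\widetilde{\Pi}^{\mathrm{int},r_1}_{R,A}\cap\widetilde{\Pi}^{[r_1,r_2]}_{R,A}=\widetilde{\Pi}^{\mathrm{int},r_2}_{R,A}$ and $\widetilde{\Pi}^{[s_1,r_1]}_{R,A}\cap\widetilde{\Pi}^{[s_2,r_2]}_{R,A}=\widetilde{\Pi}^{[s_1,r_2]}_{R,A}$), which furnish the exactness of the \v{C}ech-type complexes $0\to\widetilde{\Pi}^r_{R,A}\to\widetilde{\Pi}^s_{R,A}\oplus\widetilde{\Pi}^{[s,r]}_{R,A}\to\widetilde{\Pi}^{[s,r]}_{R,A}\to 0$ and its higher-fold analogue for (II). Given this exactness, one runs the Beauville--Laszlo / Kiehl-type glueing argument of \cite[\S 2.7]{KL15} in the relative setting: a pseudocoherent module over each piece together with glueing data over the overlaps descends to a pseudocoherent module over $\widetilde{\Pi}^r_{R,A}$, with the finite-generation and coherence of kernels preserved because the transition maps are flat on the relevant completions. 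Here the role of the deformation variable $A$ is harmless: all the rings are obtained from the undeformed ones by completed tensoring against $A$, which has a Schauder basis, so exactness is preserved, exactly as recorded in the Setting preceding this Proposition.

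The main obstacle I expect is not the category equivalence — that is essentially bookkeeping on top of \cref{lemma3.11} — but rather making the descent for pseudocoherent Banach modules genuinely work in the deformed setting: one must check that pseudocoherence (as opposed to mere finite generation) really descends along the glueing square, and this requires controlling the kernels of presentation maps after base change to $A$, i.e. one needs a form of the statement that $\widetilde{\Pi}^s_{R,A}\oplus\widetilde{\Pi}^{[s,r]}_{R,A}$ is ``flat enough'' over $\widetilde{\Pi}^r_{R,A}$, or at least that the glueing functor is exact on the relevant category. In \cite{KL15} this is handled by the two-out-of-three stability of pseudocoherence under the exact triangle coming from the covering and by Noetherian approximation arguments; the subtlety in our situation is that $A$ is only assumed reduced affinoid (so the period rings need not be Noetherian), so I would lean on the pseudocoherence-descent formalism of \cite[\S 2.6--2.7]{KL15} rather than on any Noetherian hypothesis, and verify that each step there only uses the exactness of the covering complex, which we have. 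Once that is in place the two effective-descent statements follow, and with the category equivalence this completes the proof.
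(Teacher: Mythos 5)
For the first half of the statement (the global--section equivalence) your route is essentially the paper's: reduce to the perfectoid situation by base change to $\mathcal{O}_{E_\infty}$ and quote the Kedlaya--Liu machinery, the only cosmetic difference being that you build the equivalence on top of \cref{lemma3.11} while the paper cites the global-section theorems \cite[2.5.5, 2.5.14, 3.4.9]{KL16} directly. For the two effective-descent assertions, however, you take a genuinely different route. The paper deduces I and II in one line from the fact that $\widetilde{\Pi}^r_{R,A}$ and $\widetilde{\Pi}^I_{R,A}$ are sheafy --- this is precisely what the standing Assumption (splitting of $A$ in a perfectoid covering, sousperfectoidness, and the Kedlaya--Hansen criterion \cite{KH}) was put in place for --- and then invokes \cite[2.5.5, 2.5.14]{KL16}, where descent for pseudocoherent Banach modules over sheafy/stably uniform rings is already packaged. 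You instead rebuild the descent by hand from the intersection identities of Section 2.2 (whose proofs defer to \cref{proposition5.7}) together with the glueing formalism of \cite[\S 2.7]{KL15}. That is a viable and more self-contained route, but note two points where it owes more than you supply: first, the covering complex for I should terminate in the overlap ring $\widetilde{\Pi}^{[s,s]}_{R,A}$ (and for II in the rings attached to the pairwise intersections $I_i\cap I_j$), not in $\widetilde{\Pi}^{[s,r]}_{R,A}$, and its surjectivity on the right is not a consequence of the intersection identity alone --- it needs the decomposition statement (the analogue of \cite[Lemma 5.2.8]{KL15} proved in Section 2.2) as a separate input; second, for pseudocoherent (as opposed to finite projective) Banach modules the Kedlaya--Liu glueing does not run on exactness of the covering complex alone: one also needs 2-pseudoflatness of the restriction maps and stable pseudocoherence of the sections, which in the paper come bundled with the sheafiness hypothesis, so your claim that each step of \cite[\S 2.6--2.7]{KL15} "only uses the exactness of the covering complex" is the one place where your argument is not yet complete. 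In exchange, your route makes explicit what the terse citation hides, while the paper's route is shorter and uniform in $A$ once sheafiness is granted.
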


\begin{proof}
As in \cite[Theorem 4.3.3]{KL16}, by taking the splitting base change to $\mathcal{O}_{E_\infty}$	and considering the corresponding properties of being perfectoid we can transform the proofs around the global section functor to the corresponding statement for juts perfectoid spaces, which could be finished by \cite[2.5.5,2.5.14,3.4.9]{KL16}. The rest statements are further consequences of the fact that the period rings $\widetilde{\Pi}_{R,A}^r$ and $\widetilde{\Pi}_{R,A}^I$ are sheafy, see \cite[2.5.5,2.5.14]{KL16}.
\end{proof}

\indent Then we generalize the corresponding definitions of Frobenius action and Frobenius modules in \cite[Definition 4.4.2-4.4.4]{KL16} to our situation.

\begin{definition}
In our situation we consider the corresponding Frobenius action on the following period rings and sheaves:
\begin{align}
\widetilde{\Omega}^\mathrm{int}_{R,A},\widetilde{\Omega}^\mathrm{int}_{R,A},\widetilde{\Pi}^\mathrm{int}_{R,A},\widetilde{\Pi}^\mathrm{bd}_{R,A},\widetilde{\Pi}_{R,A},\widetilde{\Pi}^\infty_{R,A}, \widetilde{\Omega}^\mathrm{int}_{*,A}, \widetilde{\Pi}^\mathrm{int}_{*,A}, \widetilde{\Pi}^+_{*,A}, \widetilde{\Omega}_{*,A}, \widetilde{\Pi}^\mathrm{bd}_{*,A}, \widetilde{\Pi}^\infty_{*,A}, \widetilde{\Pi}^+_{*,A}, \widetilde{\Pi}_{*,A},   
\end{align}
which is defined by considering the corresponding lift of the absolute Frobenius of $p^h$-power in characteristic $p>0$ induced from $R$, which will be denoted by $\varphi$. We then introduce more general consideration by taking $E_a$ to be some unramified extension of $E$ of degree $a$ divisible by $h$, the corresponding Frobenius will be denoted by $\varphi^a$.
\end{definition}

	
\indent Then we generalize the corresponding Frobenius modules in \cite[Definition 4.4.4]{KL16} to our situation as in the following.

\begin{definition}
Over the period rings and sheaves (each is denoted by $\triangle$ in this definition) defined in the previous definition we define as in \cite[Definition 4.4.4]{KL16} the corresponding $\varphi^a$-modules over $\triangle$ which are respectively projective, pseudocoherent or fpd to be the corresponding finite projetive, pseudocoherent or fpd modules over $\triangle$ with further assigned semilinear action of the operator $\varphi^a$. Here we define in our situation the corresponding $\varphi^a$-cohomology to be the (hyper)-cohomology of the following complex:
\[
\xymatrix@R+0pc@C+0pc{
0\ar[r]\ar[r]\ar[r] &M
\ar[r]^{\varphi-1}\ar[r]\ar[r] &M
\ar[r]\ar[r]\ar[r] &0.
}
\]
We also require that the modules are complete for the natural topology involved in our situation and for any module over $\widetilde{\Pi}_{*,A}$ to be some base change of some module over $\widetilde{\Pi}^r_{*,A}$ (which will be defined in the following) over each perfectoid subdomain $Y$ (in this situation we are considering the pro-\'etale site). 
\end{definition}

\indent Now we define the corresponding modules over the rings which are the domains in the following morphisms induced from the Frobenius map $\varphi^a$:
\begin{align} 
\widetilde{\Pi}^{\mathrm{int},r}_{R,A}\rightarrow \widetilde{\Pi}^{\mathrm{int},rp^{-ha}}_{R,A},\widetilde{\Pi}^{\mathrm{bd},r}_{R,A}\rightarrow \widetilde{\Pi}^{\mathrm{bd},rp^{-ha}}_{R,A},\widetilde{\Pi}^{r}_{R,A}\rightarrow \widetilde{\Pi}^{rp^{-ha}}_{R,A}\\
\widetilde{\Pi}^{\mathrm{int},r}_{*,A}\rightarrow \widetilde{\Pi}^{\mathrm{int},rp^{-ha}}_{*,A},\widetilde{\Pi}^{\mathrm{bd},r}_{*,A}\rightarrow \widetilde{\Pi}^{\mathrm{bd},rp^{-ha}}_{*,A},\widetilde{\Pi}^{r}_{*,A}\rightarrow \widetilde{\Pi}^{rp^{-ha}}_{*,A}.	
\end{align}

\begin{definition}
Over each rings $\triangle$ which are the domains in the morphisms as mentioned just before this definition, we define the corresponding projective, pseudocoherent or fpd $\varphi^a$-module over any $\triangle$ listed above to be the corresponding finite projective, pseudocoherent or fpd module $M$ over $\triangle$ with additionally endowed semilinear Frobenius action from $\varphi^a$ such that we have the isomorphism $\varphi^{a*}M\overset{\sim}{\rightarrow}M\otimes \square$ where the ring $\square$ is one of the targets listed above. Also as in \cite[Definition 4.4.4]{KL16} we assume that the module over $\widetilde{\Pi}^I_{*,A}$ is then complete for the natural topology and the corresponding base change to $\widetilde{\Pi}^I_{*,A}$ for any interval which is assumed to be closed $I\subset [0,r)$ gives rise to a module over $\widetilde{\Pi}^I_{*,A}$ with specified conditions which will be specified below. Also the cohomology of any module under this definition will be defined to be the (hyper)cohomology of the complex in the following form:
\[
\xymatrix@R+0pc@C+0pc{
0\ar[r]\ar[r]\ar[r] &M
\ar[r]^{\varphi-1}\ar[r]\ar[r] &M\otimes_\triangle \square
\ar[r]\ar[r]\ar[r] &0.
}
\]
\end{definition}

\indent Then we consider the following morphisms of specific period rings induced by the Frobenius. 
\begin{align}
\widetilde{\Pi}_{R,A}^{[s,r]}	\rightarrow \widetilde{\Pi}_{R,A}^{[sp^{-ah},rp^{-ah}]}\\
\widetilde{\Pi}_{*,A}^{[s,r]}	\rightarrow \widetilde{\Pi}_{*,A}^{[sp^{-ah},rp^{-ah}]}
\end{align}

with the corresponding morphisms in the following:

\begin{align}
\widetilde{\Pi}_{R,A}^{[s,r]}	\rightarrow \widetilde{\Pi}_{R,A}^{[s,rp^{-ah}]}\\
\widetilde{\Pi}_{*,A}^{[s,r]}	\rightarrow \widetilde{\Pi}_{*,A}^{[s,rp^{-ah}]}
\end{align}

\begin{definition}
Again as in \cite[Definition 4.4.4]{KL16}, we define the corresponding projective, pseudocoherent and fpd $\varphi^a$-modules over the domain rings or sheaves of rings in the morphisms just before this definition to be the finite projective, pseudocoherent and fpd modules (which will be denoted by $M$) over the domain rings in the morphism just before this definition additionally endowed with semilinear Frobenius action from $\varphi^a$ with the following isomorphisms:
\begin{align}
\varphi^{a*}M\otimes_{\widetilde{\Pi}_{R,A}^{[sp^{-ah},rp^{-ah}]}}\widetilde{\Pi}_{R,A}^{[s,rp^{-ah}]}\overset{\sim}{\rightarrow}M\otimes_{\widetilde{\Pi}_{R,A}^{[s,r]}}\widetilde{\Pi}_{R,A}^{[s,rp^{-ah}]},\\
\varphi^{a*}M\otimes_{\widetilde{\Pi}_{*,A}^{[sp^{-ah},rp^{-ah}]}}\widetilde{\Pi}_{*,A}^{[s,rp^{-ah}]}\overset{\sim}{\rightarrow}M\otimes_{\widetilde{\Pi}_{*,A}^{[s,r]}}\widetilde{\Pi}_{*,A}^{[s,rp^{-ah}]}.
\end{align}
We now assume that the modules are complete with respect to the natural topology. And we assume that for any perfectoid subdomain $Y$ in the corresponding topology (defining the sheaves in this situation) the corresponding global sections over $Y$ give rise to \'etale-stably pseudocoherent modules.
\end{definition}

\noindent Also one can further define the corresponding bundles carrying semilinear Frobenius in our context as in the situation of \cite[Definition 4.4.4]{KL16}:

\begin{definition}
Over the ring $\widetilde{\Pi}_{R,A}$ we define a corresponding projective, pseudocoherent and fpd Frobenius bundle to be a family $(M_I)_I$ of finite projective, \'etale stably pseudocoherent and \'etale stably fpd modules over each $\widetilde{\Pi}^I_{R,A}$ carrying the natural Frobenius action coming from the operator $\varphi^a$ such that for any two involved intervals having the relation $I\subset J$ we have:
\begin{displaymath}
M_J\otimes_{\widetilde{\Pi}^J_{R,A}}\widetilde{\Pi}^I_{R,A}\overset{\sim}{\rightarrow}	M_I
\end{displaymath}
with the obvious cocycle condition. Here we have to propose condition on the intervals that for each $I=[s,r]$ involved we have $s\leq rp^{ah}$.
\end{definition}

\indent We then have the following analog of \cite[Lemma 4.4.8]{KL16}:

\begin{proposition} \mbox{\bf{(After Kedlaya \cite[Lemma 4.4.8]{KL15})}} \label{prop3.18}
Consider the corresponding finite generated Frobenius modules over the following period rings or sheaves defined above:
\begin{displaymath}
\widetilde{\Omega}^\mathrm{int}_{R,A},\widetilde{\Omega}^\mathrm{int}_{R,A},\widetilde{\Pi}^\mathrm{int}_{R,A},\widetilde{\Pi}^\mathrm{bd}_{R,A},\widetilde{\Pi}_{R,A},\widetilde{\Pi}^\infty_{R,A}, \widetilde{\Omega}^\mathrm{int}_{*,A}, \widetilde{\Pi}^\mathrm{int}_{*,A}, \widetilde{\Pi}^+_{*,A}, \widetilde{\Omega}_{*,A}, \widetilde{\Pi}^\mathrm{bd}_{*,A}, \widetilde{\Pi}^\infty_{*,A}, \widetilde{\Pi}^+_{*,A}, \widetilde{\Pi}_{*,A},   
\end{displaymath}
and
\begin{align} 
\widetilde{\Pi}^{\mathrm{int},r}_{R,A}\rightarrow \widetilde{\Pi}^{\mathrm{int},rp^{-ha}}_{R,A},\widetilde{\Pi}^{\mathrm{bd},r}_{R,A}\rightarrow \widetilde{\Pi}^{\mathrm{bd},rp^{-ha}}_{R,A},\widetilde{\Pi}^{r}_{R,A}\rightarrow \widetilde{\Pi}^{rp^{-ha}}_{R,A}\\
\widetilde{\Pi}^{\mathrm{int},r}_{*,A}\rightarrow \widetilde{\Pi}^{\mathrm{int},rp^{-ha}}_{*,A},\widetilde{\Pi}^{\mathrm{bd},r}_{*,A}\rightarrow \widetilde{\Pi}^{\mathrm{bd},rp^{-ha}}_{*,A},\widetilde{\Pi}^{r}_{*,A}\rightarrow \widetilde{\Pi}^{rp^{-ha}}_{*,A}.	
\end{align} 
Then we have these are the quotients of finite projective ones again endowed with the corresponding Frobenius actions.
\end{proposition}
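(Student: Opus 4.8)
The plan is to reduce this to the corresponding undeformed statement of Kedlaya--Liu, namely \cite[Lemma 4.4.8]{KL15} (together with its adaptations in \cite[4.4]{KL16}), by exploiting the fact that all of our deformed period rings are obtained from the undeformed ones by the completed tensor product with the affinoid algebra $A$, and that this operation is exact on the relevant categories because the undeformed rings admit Schauder bases (cf.\ \cite[Definition 6.1]{KP} as invoked earlier in the excerpt). More precisely, I would proceed in the following steps. First, recall the abstract content of the undeformed result: over a ring $\triangle_0$ of the type listed (one of the $\widetilde{\Omega}_R^{\mathrm{int}},\widetilde{\Pi}_R^{\mathrm{int},r},\ldots$, together with its Frobenius $\varphi^a$), any finitely generated $\varphi^a$-module is a quotient, compatibly with Frobenius, of a finite projective $\varphi^a$-module; this is exactly the Kedlaya--Liu descent-of-projectivity argument, which produces a $\varphi^a$-equivariant surjection $\triangle_0^{\oplus n}\twoheadrightarrow M_0$ and then, using that the kernel has finite local presentation plus the Frobenius structure, upgrades to a projective source.

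Second, I would set up the base change carefully: for $\triangle=\triangle_0\widehat{\otimes}_E A$ (or the corresponding sheaf $\triangle_0\widehat{\otimes}\,\mathcal{O}_\mathfrak{X}$-version), a finitely generated $\varphi^a$-module $M$ over $\triangle$ is finitely generated over $\triangle$; one then wants to descend the generation problem to $\triangle_0$. The key technical point is that $\triangle$ is flat over $\triangle_0$ (indeed faithfully flat on the relevant subcategory, since $A$ is a quotient of a Tate algebra and the completed tensor product with a Schauder-basis ring is exact — this is the observation already used in \cite[Definition 6.1]{KP} and invoked in the Setting on $\mathcal{O}_\mathfrak{X}$-sheaves above), so that the category of pseudocoherent/fpd modules behaves well under $-\widehat{\otimes}_{\triangle_0}\triangle$. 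Then, given $M$ over $\triangle$ with its semilinear $\varphi^a$, I would choose a finite $\varphi^a$-stable generating set, use it to build a $\varphi^a$-equivariant surjection $\triangle^{\oplus n}\twoheadrightarrow M$, and examine the kernel $N$: it is pseudocoherent (resp.\ fpd) over $\triangle$ by the pseudocoherence/fpd hypotheses, and carries an induced Frobenius. Running the Kedlaya--Liu argument verbatim \emph{over $\triangle$} — which is legitimate because the only ring-theoretic inputs it uses (the structure of the period rings, their sheafiness, the behaviour of $\varphi^a$ as a completed extension of rings in the various radius regimes $r\mapsto rp^{-ha}$) are all available for $\triangle$ by the earlier propositions in this section and by \cref{proposition3.13} — produces the desired finite projective $\varphi^a$-module surjecting onto $M$. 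For the interval/bundle cases one does the same on each $\widetilde{\Pi}^I_{R,A}$ and checks the cocycle compatibility, which is automatic since it already holds at the undeformed level and is preserved by $-\widehat{\otimes}A$.

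The main obstacle I expect is the pseudocoherence (as opposed to merely finite generation) of the kernel $N$ in the deformed setting, equivalently the stability of the class of \'etale-stably pseudocoherent / fpd modules under the relevant base changes and under taking kernels of Frobenius-equivariant surjections. In the undeformed case this rests on the noetherian-style finiteness inputs of \cite[Chapter 4]{KL16}; here one must know that $\widehat{\otimes}_E A$ (or $\widehat{\otimes}\,\mathcal{O}_\mathfrak{X}$) preserves these finiteness properties, which is where the uniform Banach hypothesis on $R$, the reducedness of $A$, and the sousperfectoid/sheafiness assumptions (the Assumption on $A$ splitting in a perfectoid cover $A^{\mathrm{perf}}$, plus the Kedlaya--Hansen criterion \cite{KH}) all enter. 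Once that stability is in hand — and it follows from flatness of the base change together with \cref{lemma3.11} and \cref{proposition3.13}, by reducing pseudocoherence to a local statement over perfectoid subdomains after the splitting base change to $E_\infty$ and $\mathcal{O}_{\mathfrak{X}^{\mathrm{perf}}}$ — the rest is a routine transcription of \cite[Lemma 4.4.8]{KL15} and I would not belabour it.
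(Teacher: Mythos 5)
Your proposal misses the mechanism that actually proves this statement and replaces it with machinery that is neither needed nor available. The paper's proof is the bare citation of \cite[Lemma 1.5.2]{KL15}, which is a purely formal fact about $\varphi$-modules over an arbitrary ring equipped with an endomorphism: if $M$ is generated by $e_1,\dots,e_n$ and the linearized Frobenius is an isomorphism, then the $\varphi^a(e_j)$ also generate, so one may write $e=A\,\varphi^a(e)$ and $\varphi^a(e)=B\,e$ for matrices $A,B$ over the ring; the block matrix $U=\begin{pmatrix}A&1-AB\\1&-B\end{pmatrix}$ is invertible, with inverse $\begin{pmatrix}B&1-BA\\1&-A\end{pmatrix}$, and the free module of rank $2n$ with $\varphi^a$-structure given by $U$ surjects $\varphi^a$-equivariantly onto $M$. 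This construction uses nothing about the period rings beyond the existence of $\varphi^a$ (and adapts in the same formal way to the shifted variants $\widetilde{\Pi}^{r}_{R,A}\to\widetilde{\Pi}^{rp^{-ha}}_{R,A}$, with $M\otimes\square$ in place of $M$), so it applies verbatim to every deformed ring in the list and, being compatible with restriction, to the sheaf entries as well. Your instinct that ``the Kedlaya--Liu argument runs verbatim over $\triangle$'' is therefore right, but what you describe as that argument is not it.

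Concretely, two steps of your proposal fail. First, ``choose a finite $\varphi^a$-stable generating set and build a $\varphi^a$-equivariant surjection $\triangle^{\oplus n}\twoheadrightarrow M$'' skips exactly the difficulty the lemma resolves: the matrix induced on $\triangle^{\oplus n}$ by such a generating set need not be invertible, so that free module is not a $\varphi^a$-module in the required sense, and the subsequent ``upgrade to a projective source using finite presentation of the kernel plus the Frobenius structure'' is not an argument --- the statement makes no claim about kernels at all, and pseudocoherence or fpd hypotheses are nowhere assumed. Second, the reduction scaffolding you lean on --- flatness of $\triangle_0\to\triangle_0\widehat{\otimes}A$, stability of \'etale-stable pseudocoherence under $\widehat{\otimes}A$, \cref{lemma3.11}, \cref{proposition3.13}, the splitting to $E_\infty$ --- is irrelevant to this proposition, and the flatness of the completed base change is not established anywhere in the paper, so it cannot be invoked; in any case no descent to the undeformed ring is needed, since the formal lemma is applied directly over the deformed rings. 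Drop the base-change reduction and the pseudocoherence discussion, and either quote \cite[Lemma 1.5.2]{KL15} or reproduce the matrix construction above over $\triangle$.
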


\begin{proof}
See 1.5.2 of \cite[Lemma 1.5.2]{KL15}.	
\end{proof}

\newpage

\section{Comparison Theorems}

\subsection{The Comparison between the Local Systems and the Period Modules}

\indent We now in our context study the corresponding local systems in the generalized setting. The objects to compare will be definitely the Frobenius modules defined in the previous section. However this is not that far from the situation studied in \cite{KL16} since the undeformed period rings are actually finite projective over the rings defined in \cite{KL16} in the most simplified situation. We will also use the notation $\underline{L}$ to denote the local system associated to some topological ring $L$. We will consider the pro-\'etale site $X_\text{pro\'et}$. As in \cite[Definition 4.5.1]{KL16} we have the notion of projective, pseudocoherent and fpd $L$-local systems in our context over the site $X_\text{pro\'et}$.

\begin{proposition} \mbox{\bf{(After Kedlaya-Liu \cite[Lemma 4.5.4]{KL16})}} \label{proposition4.1}
The Frobenius invariances give rise to the following exact sequences:
\[
\xymatrix@R+0pc@C+0pc{
0\ar[r]\ar[r]\ar[r] &\underline{\mathcal{O}_{E_a^{\varphi^a}}}
\ar[r]\ar[r]\ar[r] &\widetilde{\Omega}_{*}^\mathrm{int}
\ar[r]\ar[r]\ar[r] &\widetilde{\Omega}_{*}^\mathrm{int} \ar[r]\ar[r]\ar[r] &0,
}
\]
\[
\xymatrix@R+0pc@C+0pc{
0\ar[r]\ar[r]\ar[r] &\underline{\mathcal{O}_{E_a^{\varphi^a}}}
\ar[r]\ar[r]\ar[r] &\widetilde{\Pi}_{*}^\mathrm{int}
\ar[r]\ar[r]\ar[r] &\widetilde{\Pi}_{*}^\mathrm{int} \ar[r]\ar[r]\ar[r] &0,
}
\]
\[
\xymatrix@R+0pc@C+0pc{
0\ar[r]\ar[r]\ar[r] &\underline{E_a}^{\varphi^a}
\ar[r]\ar[r]\ar[r] &\widetilde{\Pi}_{*}^\mathrm{bd}
\ar[r]\ar[r]\ar[r] &\widetilde{\Pi}_{*}^\mathrm{bd} \ar[r]\ar[r]\ar[r] &0,
}
\]
\[
\xymatrix@R+0pc@C+0pc{
0\ar[r]\ar[r]\ar[r] &\underline{E_a}^{\varphi^a}
\ar[r]\ar[r]\ar[r] &\widetilde{\Pi}_{*}
\ar[r]\ar[r]\ar[r] &\widetilde{\Pi}_{*} \ar[r]\ar[r]\ar[r] &0,
}
\]
where in each exact sequence the third arrow represents the morphism $\varphi^a-1$.
	
\end{proposition}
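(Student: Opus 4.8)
The plan is to reduce the statement to the corresponding undeformed assertions of Kedlaya–Liu \cite[Lemma 4.5.4]{KL16}, exactly as in the reductions carried out in \Cref{lemma3.11} and \Cref{proposition3.13}. The four claimed exact sequences do not involve the affinoid algebra $A$ at all; they concern the period sheaves $\widetilde{\Omega}^{\mathrm{int}}_*$, $\widetilde{\Pi}^{\mathrm{int}}_*$, $\widetilde{\Pi}^{\mathrm{bd}}_*$, $\widetilde{\Pi}_*$ on the pro-\'etale site of $X=\operatorname{Spa}(R,R^+)$, but in the present generality where $E$ is an arbitrary complete discretely valued field with perfect residue field $k\supseteq\mathbb{F}_{p^h}$ (and $E_a/E$ an unramified extension of degree $a$ divisible by $h$), and where the Frobenius is the lift of the $p^h$-power absolute Frobenius. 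So the content is genuinely the same as \cite{KL16}, only the Witt-vector formalism is replaced by the $\pi$-typical one of \cite[3.1]{KL16}; the task is to check that the Kedlaya–Liu arguments survive this replacement.

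First I would treat the two integral sequences together. Exactness on the left and in the middle (i.e. $(\varphi^a-1)$ injective, kernel $=\underline{\mathcal{O}_{E_a^{\varphi^a}}}$) is local on the pro-\'etale site, so by the usual localisation one may replace $R$ by a perfectoid field, or — more in the spirit of the excerpt — pass to the $\pi$-adic splitting base change to $E_\infty$ and invoke \Cref{lemma3.11} together with the vanishing results \cite[2.5.1, 2.5.11, 3.4.6, 3.5.6]{KL16} quoted there. Concretely one works with the unique $\pi$-adic expansions $\sum_{k\ge0}\pi^k[\overline{x}_k]$ on $W_\pi(R)$ (or the $\mathbb{Z}[1/p]_{\ge0}$-indexed expansions over $E_\infty$), shows that $\varphi^a-1$ is surjective on $W_\pi(R)$ by solving the recursion on Teichm\"uller coordinates level by level using perfectness of $R$ and completeness for the $\pi$-adic (equivalently $\alpha$-) topology, and identifies the kernel with the Teichm\"uller lift of $R^{\varphi^{ah}}=\mathbb{F}_{p^{ah}}$-constants, i.e. with $\mathcal{O}_{E_a^{\varphi^a}}$. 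Sheafifying and using \Cref{lemma3.11} to control the higher cohomology gives the short exact sequences of sheaves for $\widetilde{\Omega}^{\mathrm{int}}_*$ and $\widetilde{\Pi}^{\mathrm{int}}_*$; the latter follows from the former by taking the union over all radii $r>0$, since filtered colimits of short exact sequences of sheaves on a coherent site are exact.

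Next I would pass to the two rational sequences. The bounded one, $0\to\underline{E_a}^{\varphi^a}\to\widetilde{\Pi}^{\mathrm{bd}}_*\xrightarrow{\varphi^a-1}\widetilde{\Pi}^{\mathrm{bd}}_*\to0$, is obtained from the integral one by inverting $\pi$: one has $\widetilde{\Pi}^{\mathrm{bd}}_*=\widetilde{\Pi}^{\mathrm{int}}_*[1/\pi]$, and since $\varphi^a$ commutes with multiplication by $\pi$ and $\varphi^a-1$ is surjective on $\widetilde{\Pi}^{\mathrm{int}}_*$, it remains surjective after inverting $\pi$; the kernel becomes $\mathcal{O}_{E_a^{\varphi^a}}[1/\pi]=E_a^{\varphi^a}$. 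For the last sequence, over the full Robba sheaf $\widetilde{\Pi}_*$, the surjectivity of $\varphi^a-1$ is the only delicate point: here one cannot simply invert $\pi$, and instead one argues radius-by-radius exactly as in \cite[Lemma 4.5.4]{KL16}, using that $\varphi^a$ sends $\widetilde{\Pi}^r_*$ isomorphically onto $\widetilde{\Pi}^{rp^{-ah}}_*$ and the Proposition-5.2.8-type decomposition (our \Cref{prop3.18}'s predecessor in this paper, the log-convexity and interval-gluing results of \S2.2) to split an arbitrary element into a part killed by a high power of $\pi$ and a part on which $\varphi^a-1$ can be inverted geometrically. The kernel is again $E_a^{\varphi^a}$ because a Frobenius-invariant element of the Robba sheaf is bounded (\Cref{proposition3.13}'s analytic input, i.e. the analog of \cite[Corollary 5.2.3]{KL15}), hence lies in $\widetilde{\Pi}^{\mathrm{bd}}_*$, reducing to the previous case.

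The main obstacle I anticipate is precisely the surjectivity of $\varphi^a-1$ on the full Robba sheaf $\widetilde{\Pi}_*$ and on $\widetilde{\Pi}^{\mathrm{bd}}_*$: unlike the integral case, there is no naive $\pi$-adic approximation available, and one must run the Kedlaya–Liu telescoping argument with the $[s,r]$-interval period rings, which in the present nondiscretely-valued-friendly setting requires the stable-unique-expression hypothesis on $W_\pi$ emphasised in the introduction and the interval-intersection identities of \S2.2. Everything else — the kernel computations, the passage between $\widetilde\Omega$ and $\widetilde\Pi$ variants, and the sheafification — is formal once the surjectivity on the relevant local (perfectoid) pieces is in hand, since by \Cref{lemma3.11} the higher pro-\'etale cohomology of these period sheaves vanishes, so short exactness can be checked on sections over a pro-\'etale cover by perfectoid affinoids.
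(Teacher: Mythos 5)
Your overall skeleton is the same as the paper's: adapt \cite[Lemma 4.5.4]{KL16} to the $\pi$-typical setting, get the bounded sequence by inverting $\pi$, and handle the full Robba sequence by splitting an element into an integral/bounded piece and a small piece on which $\varphi^a-1$ is inverted by a convergent geometric series, with the kernel computed via boundedness of Frobenius-invariant sections. But one central step, as you justify it, would fail. The map $\varphi^a-1$ is \emph{not} surjective on $W_\pi(R)$ for a fixed perfect uniform $R$, and no level-by-level recursion on Teichm\"uller coordinates using only perfectness and $\pi$-adic completeness can make it so: modulo $\pi$ the equation becomes the Artin--Schreier equation $\overline{y}^{\,q}-\overline{y}=\overline{x}$ with $q=p^{ah}$, which already has no solution for $R=\mathbb{F}_q$ or for the tilt of a non-algebraically-closed perfectoid field; perfectness supplies $p$-power roots, not Artin--Schreier roots. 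Exactness on the right is genuinely a statement about pro-\'etale \emph{sheaves}: one must adjoin Artin--Schreier--Witt solutions, i.e.\ pass to a faithfully finite \'etale cover at each $\pi$-adic stage, which is precisely the content of \cite[Lemma 4.5.3]{KL16} (with \cite[Lemma 5.2.4]{KL15} for the invariants), and is what the paper cites; your argument as written asserts surjectivity on sections, which is false.

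A second, smaller misstep: the $\widetilde{\Pi}^{\mathrm{int}}_*$ sequence does not ``follow from the $\widetilde{\Omega}^{\mathrm{int}}_*$ sequence by taking the union over all radii'' --- $\widetilde{\Pi}^{\mathrm{int}}$ is the union of the rings $\widetilde{\Pi}^{\mathrm{int},r}$, none of which is $\widetilde{\Omega}^{\mathrm{int}}$, and no exactness for those has been established at that point, so the filtered-colimit remark has nothing to colimit. The genuine content, and the way the paper argues, is a coefficient control: if $y\in\widetilde{\Omega}^{\mathrm{int}}_R$ satisfies $(\varphi^a-1)y=x$ with $x\in\widetilde{\Pi}^{\mathrm{int}}_R$, then the Teichm\"uller coefficients of $y$ are bounded in terms of those of $x$, hence $y\in\widetilde{\Pi}^{\mathrm{int}}_R$; this is what lets one descend surjectivity from the $\widetilde{\Omega}^{\mathrm{int}}$ case. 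Your treatment of the bounded sequence (invert $\pi$) and of the kernel over $\widetilde{\Pi}_*$ (invariant sections are bounded) is fine, and your radius-by-radius plan for surjectivity over $\widetilde{\Pi}_*$ is workable and close to the paper's, which writes $x$ as a bounded element plus $\sum_k\pi^{n_k}[\overline{x}_{i_k}]$ with $\overline{x}_{i_k}\in R^{\circ\circ}$ and inverts $\varphi^a-1$ on the latter by the convergent series $-\sum_{\ell\geq 0}\pi^{n_k}[\overline{x}_{i_k}^{p^\ell}]$.
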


\begin{proof}
It is actually \cite[Lemma 4.5.4]{KL16}, although we consider some bigger base $k$, we reproduce the argument for the convenience of the reader. So it is obviously we have the exactness at the first positions which are not zero. The exactness in the middle could actually be derived from \cite[Lemma 5.2.4]{KL15}. Then for the exactness in between the third and the last arrows one may follow the proof in \cite[Lemma 4.5.3]{KL16} to prove this. For the first sequence see \cite[Lemma 4.5.3]{KL16}. For the second sequence, relying on the first sequence one only needs to prove that suppose image of some $y\in \widetilde{\Omega}^\mathrm{int}_R$ under the map $\varphi^a-1$ is $x$ living in $\widetilde{\Pi}^\mathrm{int}_R\subset \widetilde{\Omega}^\mathrm{int}_R$ then we have that the preimage $y$ is not only in $\widetilde{\Omega}^\mathrm{int}_R$ but also actually in $\widetilde{\Pi}^\mathrm{int}_R$. This is because we have for each $n$ and the corresponding expressions $x=\sum_{n\geq 0}\pi^n[\overline{x}_n]$ and $y=\sum_{n\geq 0}\pi^n[\overline{y}_n]$, the coefficients for $y$ could be controlled by those of $x$. The corresponding statement for the last sequence is proved by reducing to the second one, namely put:
\begin{displaymath}
x=y+(\varphi^a-1)(-\sum_{k=0}\sum_{\ell=0}\pi^{i_k}[\overline{x}_{i_k}^{p_\ell}]),	
\end{displaymath}
where one may beforehand put $x$ into a summation of some element in the bounded Robba ring and some element of the form of $\sum_{n_k}\pi^{n_k}[\overline{x}_{i_k}]$ with $\overline{x}_{i_k}$ living in $R^{\circ\circ}$.
\end{proof}

We then make the following discussion as well, as in \cite[Lemma 4.5.4]{KL16} in the following:

\begin{proposition} \mbox{\bf{(After Kedlaya-Liu \cite[Lemma 4.5.4]{KL16})}}
We consider the finitely presented Frobenius modules defined over the ring $\widetilde{\Omega}^\mathrm{int}_{R}$ or $\widetilde{\Pi}^\mathrm{int}_R$. Then in our context, we have: (1) First the Fitting ideal of $M$ is generated by the elements which are in general form of $\pi^n[\overline{x}_n]$ where $\overline{x}_n$ is some idempotent. (2) And we have that the $\pi$-torsion submodule $T$ of $M$ is finitely generated, therefore could be annihilated by some power $\pi^n$ with some integer $n\geq 0$. (3) And the quotient $M/T$ is then projective which implies that the module $T$ is finitely presented. (4) And the submodule $T$ is then finite union of Frobenius modules taking the form of $?/\pi^k?$ for integer $k\geq 0$, where $?$ represents the ring $\widetilde{\Omega}^\mathrm{int}_{R}$ or $\widetilde{\Pi}^\mathrm{int}_R$. (5) Finally we have that the corresponding module $M$ is strictly pseudocoherent. 	
\end{proposition}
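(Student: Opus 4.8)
The plan is to reproduce, over the possibly larger perfect residue field $k$ and with $\mathcal{O}_E$ in place of $\mathbb{Z}_p$, the analysis of finitely presented Frobenius modules over Witt-type rings carried out by Kedlaya--Liu (as in \cite[Lemma~1.5.2]{KL15} and the surrounding results, and \cite[Lemma~4.5.4]{KL16}). Write $\Lambda$ for whichever of $\widetilde{\Omega}^\mathrm{int}_{R}$, $\widetilde{\Pi}^\mathrm{int}_R$ is under consideration, put $\overline{\Lambda}:=\Lambda/\pi\Lambda$, and understand a finitely presented Frobenius module to be a finitely presented $\Lambda$-module $M$ with an isomorphism $\varphi^{a*}M\overset{\sim}{\to}M$, i.e.\ a bijective $\varphi^{a}$-semilinear endomorphism. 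The features I would use are: $\pi$ is a nonzerodivisor on $\Lambda$; $\overline{\Lambda}$ is a perfect $\mathbb{F}_p$-algebra (equal to $R$ when $\Lambda=\widetilde{\Omega}^\mathrm{int}_R$); since $R$ is perfect, $\varphi$ and $\varphi^{a}$ are \emph{automorphisms} of $\Lambda$; an idempotent $\overline{x}\in\overline{\Lambda}$ lifts uniquely to an idempotent of $\Lambda$, namely the Teichm\"uller element $[\overline{x}]$, which is idempotent because the Teichm\"uller map is multiplicative; and $\Lambda$ enjoys the coherence-type finiteness properties recorded in \cite[\S1.5]{KL15} (for $\widetilde{\Pi}^\mathrm{int}_R$ one passes through a fixed radius or along the $\varphi^{a}$-equivariant inclusion $\widetilde{\Pi}^\mathrm{int}_R\hookrightarrow\widetilde{\Omega}^\mathrm{int}_R$, as in the proof of \cref{proposition4.1}).

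The first step, which drives the whole argument, is the analysis modulo $\pi$. Put $\overline{M}:=M/\pi M$, a finitely presented $\overline{\Lambda}$-module with induced bijective $\varphi^{a}$-semilinear endomorphism, so $\varphi^{a*}\overline{M}\cong\overline{M}$. Fitting ideals commute with base change, hence $\mathrm{Fitt}_j(\overline{M})=\varphi^{a}\!\bigl(\mathrm{Fitt}_j(\overline{M})\bigr)\overline{\Lambda}$ for each $j$; as $\overline{\Lambda}$ is perfect of characteristic $p$, $\varphi^{a}$ raises elements to the $q$-th power for some $q$ a power of $p$ with $q\geq 2$, so this says $\mathfrak{a}=\mathfrak{a}^{[q]}$ for the finitely generated ideal $\mathfrak{a}=\mathrm{Fitt}_j(\overline{M})$, where $\mathfrak{a}^{[q]}$ is the ideal generated by the $q$-th powers of elements of $\mathfrak{a}$. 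Since $q\geq 2$ we get $\mathfrak{a}^{[q]}\subseteq\mathfrak{a}^{2}\subseteq\mathfrak{a}$, so $\mathfrak{a}$ is a finitely generated idempotent ideal and is therefore generated by a single idempotent. A finitely presented module all of whose Fitting ideals are generated by idempotents is finite projective; hence $\overline{M}$ is finite projective over $\overline{\Lambda}$.

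Next I would lift. Choose a finite projective $\Lambda$-module $P$ with $P/\pi P\cong\overline{M}$ (lift an idempotent matrix cutting $\overline{M}$ out of a finite free module) and a $\Lambda$-linear map $f\colon P\to M$ reducing to this isomorphism; $\pi$-adic completeness together with Nakayama forces $f$ to be surjective with $\ker f\subseteq\pi P$, and $\ker f$ is finitely generated because $M$ is finitely presented. The $\pi$-power torsion submodule $T\subseteq M$ is $\varphi^{a}$-stable and equals $f\bigl(\mathrm{Sat}_{\pi}(\ker f)\bigr)$, so $M/T\cong P/\mathrm{Sat}_{\pi}(\ker f)$ is finitely presented, $\pi$-torsion free, and has finite projective reduction modulo $\pi$ (again finite projective over $\overline{\Lambda}$ by Step~1, since it is a finitely presented quotient Frobenius module); the local criterion of flatness then makes $M/T$ flat, hence finite projective over $\Lambda$, which is (3). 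Since $M/T$ is projective the sequence $0\to T\to M\to M/T\to 0$ splits, so $M\cong(M/T)\oplus T$; in particular $T$ is finitely presented, finitely generated $\pi$-power torsion, hence annihilated by some $\pi^{N}$, which is (2). For (1): each $\mathrm{Fitt}_j(M)$ is $\varphi^{a}$-stable, so reading off its $\pi$-adic layers and applying the idempotent argument of Step~1 layer by layer exhibits it as generated by elements $\pi^{n}[\overline{x}_n]$ with $\overline{x}_n$ idempotent. For (4): the $\pi$-adic filtration $T\supseteq\pi T\supseteq\cdots\supseteq\pi^{N}T=0$ has successive quotients that are finitely presented Frobenius modules over the perfect ring $\overline{\Lambda}$, hence finite projective by Step~1, i.e.\ of the form $\overline{\Lambda}\,\overline{e}$ for idempotents $\overline{e}$; a d\'evissage along this filtration, splitting each extension by means of the Frobenius isomorphism, writes $T$ as a finite direct sum of Frobenius modules $(\Lambda/\pi^{k}\Lambda)[\overline{e}]$, that is, a finite union of copies of $?/\pi^{k}?$ cut out by idempotents. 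Finally (5) follows: $M\cong(M/T)\oplus T$ with $M/T$ finite projective, and each summand $(\Lambda/\pi^{k}\Lambda)[\overline{e}]$ carries the length-one finite projective resolution $0\to\Lambda[\overline{e}]\xrightarrow{\ \pi^{k}\ }\Lambda[\overline{e}]\to(\Lambda/\pi^{k}\Lambda)[\overline{e}]\to 0$ in which both maps are strict (the image $\pi^{k}\Lambda[\overline{e}]$ is closed), so $M$ admits a finite resolution by finite projectives with strict transition maps and is strictly pseudocoherent.

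The step I expect to be the main obstacle is not the computation modulo $\pi$ but the descent of finiteness and projectivity through the lift: guaranteeing that $\ker f$ and its $\pi$-saturation are finitely generated (so that $M/T$ and $T$ are genuinely finitely presented) and that $M/T$ is projective rather than merely flat. For $\Lambda=\widetilde{\Omega}^\mathrm{int}_R=W_\pi(R)$ this is clean, since $W_\pi(R)$ is $\pi$-adically complete with $W_\pi(R)/\pi=R$ perfect and $\varphi$ an automorphism; for $\Lambda=\widetilde{\Pi}^\mathrm{int}_R$, which is only the rising union of the $\widetilde{\Pi}^{\mathrm{int},r}_R$, one must either run the argument at a fixed radius and pass to the colimit or transfer the statements along $\widetilde{\Pi}^\mathrm{int}_R\hookrightarrow\widetilde{\Omega}^\mathrm{int}_R$ as in \cite[Lemma~4.5.4]{KL16} and the proof of \cref{proposition4.1}. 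I should also note that the direct-sum (as opposed to merely filtered) shape of (4) genuinely uses bijectivity of the Frobenius and would fail for a bare semilinear endomorphism.
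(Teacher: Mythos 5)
Your modulo-$\pi$ analysis (Frobenius-stable finitely generated ideals in a perfect ring are idempotent-generated, hence $M/\pi M$ is finite projective) is sound and is in the same spirit as the source the paper cites, but there is a genuine gap at exactly the point you flag at the end and then do not resolve: the finite generation of the $\pi$-power torsion $T$, equivalently the finite presentation of $M/T\cong P/\mathrm{Sat}_\pi(\ker f)$. You assert it and then use it circularly: Step~1 is applied to $M/T$ ``since it is a finitely presented quotient Frobenius module'', and the implication ``flat $+$ finitely presented $\Rightarrow$ projective'' needs it as well. Knowing $\ker f$ is finitely generated does not give finite generation of its $\pi$-saturation over a non-noetherian ring such as $W_\pi(R)$, and the reduction $(M/T)/\pi(M/T)$ is a priori only a quotient of the finite projective $M/\pi M$ by the (not obviously finitely generated) image of $T$, so the Fitting/idempotent argument cannot be run on it without the very finiteness you are trying to prove; saying the $\widetilde{\Omega}^{\mathrm{int}}_R$ case is ``clean'' by $\pi$-adic completeness does not supply this, since completeness yields Nakayama-type surjectivity but not finite generation of saturations. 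This is precisely where the paper's proof does its work: it quotes the Fitting statement (1) from \cite[Proposition 3.2.13]{KL15}, takes a projective covering $P\rightarrow M$ in the category of Frobenius modules via \cref{prop3.18}, passes to $P\rightarrow M/T$ with kernel $N$, uses $\pi$-torsion-freeness to get the exact sequence $0\rightarrow N/\pi N\rightarrow P/\pi P\rightarrow (M/T)/\pi(M/T)\rightarrow 0$, lifts generators of $N/\pi N$ to $N$ by $\pi$-adically (resp.\ Fr\'echet) convergent series, and then invokes \cite[Lemma 1.1.5]{KL16} to conclude that $M/T$ is finitely presented and that $T$ is finite, equal to some $T_n$. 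Some version of that mechanism has to appear in your write-up; it cannot be replaced by an appeal to Step~1.

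Two secondary points. The ``local criterion of flatness'' is a noetherian tool and is not the right lever over $W_\pi(R)$: once finite presentation, $\pi$-torsion-freeness and projectivity of the reduction are available, the standard move is to lift a finite projective cover of the reduction and kill its kernel by Nakayama ($\pi$ lies in the Jacobson radical), which also yields the splitting you use for (2). In (4), the claim that bijectivity of $\varphi^a$ lets you split the successive extensions and write $T$ as a direct sum of modules $(\Lambda/\pi^k\Lambda)[\overline{e}]$ is unjustified --- the Frobenius isomorphism does not kill the relevant extension classes, and the paper (following \cite[Lemma 4.5.4]{KL16}) only extracts that $T$ coincides with one of its $\pi^n$-torsion layers and is built up from pieces of the stated form; your (1) ``layer by layer'' is likewise a sketch of \cite[Proposition 3.2.13]{KL15} rather than an argument. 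Your (5) is fine once (2)--(4) are genuinely established.
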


\begin{proof}
Again this is basically \cite[Lemma 4.5.4]{KL16}, although we consider a larger base $k$. We reproduce the argument for the convenience of the reader. Then corresponding property of the Fitting ideal follows from the applying \cite[Proposition 3.2.13]{KL15}. Then we apply \cref{prop3.18} to choose a projective covering of $M$ taking the form of $P\rightarrow M$ which further induces a map from $P\rightarrow M/T$, then we use the notation $N$ to denote the kernel. Taking quotient by $\pi$ we can form the exact sequence in our context due to the $\pi$-torsion freeness:
\[
\xymatrix@R+0pc@C+0pc{
0\ar[r]\ar[r]\ar[r] &N/\pi N
\ar[r]\ar[r]\ar[r] &P/\pi P \ar[r]\ar[r]\ar[r] &(M/T)/\pi (M/T) \ar[r]\ar[r]\ar[r] &0.
}
\]
Then as in \cite[Lemma 4.5.4]{KL16} one can then in the situation where the base is $\widetilde{\Omega}_R^\mathrm{int}$ under the $\pi$-adic topology lift the generators of $N/\pi N$ by using the converging series of finite sums of the generators from the quotient. Then in the situation where the base is $\widetilde{\Pi}_R^\mathrm{int}$ we use the Fr\'echet topology to run the same argument. $T$ is a union of each $T_n$ for each $n$, which are the corresponding $\pi^n$-torsion components of $T$. Then by using \cite[Lemma 1.1.5]{KL16} one derives that the module $M/T$ is then finitely presented, which further implies that the corresponding module $T$ is finite, which is just equal to some $T_n$. Then we apply \cite[Lemma 1.1.5]{KL16} to finish the proof.
\end{proof}

\begin{corollary}
Let $M$ be a pseudocoherent Frobenius module over the period ring $\widetilde{\Omega}_R^\mathrm{int}$ or the period ring $\widetilde{\Pi}_R^\mathrm{int}$. Then we have that $M$ admits a projective resolution of length at most $1$.  
\end{corollary}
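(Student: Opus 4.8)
The plan is to leverage the structural results about pseudocoherent Frobenius modules established in the preceding proposition, which already give us the key decomposition $0 \to T \to M \to M/T \to 0$ with $T$ the $\pi$-torsion submodule, $M/T$ projective (hence finitely generated projective over the base ring, which is $\widetilde{\Omega}_R^\mathrm{int}$ or $\widetilde{\Pi}_R^\mathrm{int}$), and $T$ finitely presented and realized as a finite union of Frobenius modules of the form $?/\pi^k?$ for integers $k \geq 0$. The goal is to produce a length-at-most-one projective resolution of $M$, i.e.\ an exact sequence $0 \to P_1 \to P_0 \to M \to 0$ with $P_0, P_1$ finite projective (carrying compatible Frobenius structure).

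First I would reduce to building a length-$1$ projective resolution of $T$, since $M/T$ is already projective: given $0 \to Q_1 \to Q_0 \to T \to 0$, one forms $P_0 = Q_0 \oplus (M/T)$ mapping onto $M$ via the chosen splitting-free surjection (lift $M \to M/T$ using projectivity of $M/T$ to split the extension \emph{as modules} after passing to the resolution, or more carefully use the pullback), and $P_1 = Q_1$; the point is that $\mathrm{pd}(M) \leq \max(\mathrm{pd}(T), \mathrm{pd}(M/T)) = \max(\mathrm{pd}(T), 0)$. So everything comes down to showing that each building block $?/\pi^k?$, and hence their finite union $T$, has projective dimension at most $1$. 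But $?/\pi^k?$ sits in the exact sequence $0 \to ? \xrightarrow{\pi^k} ? \to ?/\pi^k? \to 0$, where $?$ is free of rank one over itself; since $?$ is $\pi$-torsion-free (both $\widetilde{\Omega}_R^\mathrm{int} = W_\pi(R^+)[[R]]^{\wedge}$ and $\widetilde{\Pi}_R^\mathrm{int}$ are $\pi$-torsion-free by construction via Witt vectors over a perfect ring), multiplication by $\pi^k$ is injective, and this is a length-$1$ free resolution, respecting the Frobenius since $\varphi^a$ commutes with multiplication by $\pi^k$ (it acts $\mathcal{O}_{E_a}$-semilinearly but fixes $\pi$ up to the prescribed normalization, or more precisely $\varphi^a(\pi) = \pi$ in these settings).

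The remaining step is to assemble the finite union: if $T = \bigcup_n T_n$ stabilizes to some single $T_n$ (as stated in the preceding proposition, $T$ is finite and equals some $T_n$), one must express $T_n$ as an iterated extension of modules $?/\pi^j?$ and check that projective dimension $\leq 1$ is preserved under the relevant extensions; here one invokes the standard fact that $\mathrm{pd}$ is subadditive in short exact sequences, combined with \cite[Lemma 1.1.5]{KL16} to control finite presentation at each stage, all in the Frobenius-equivariant category (the Frobenius action is compatible with all the maps by functoriality of the Witt vector construction). One then splices: from $0 \to P_1 \to P_0 \to T \to 0$ and $M/T$ projective, the horseshoe-type argument gives $0 \to P_1 \to P_0 \oplus (M/T) \to M \to 0$.

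The main obstacle I anticipate is not the homological algebra — which is essentially formal once $\pi$-torsion-freeness of the base and the structural decomposition of $T$ are in hand — but rather verifying carefully that all the projective modules and all the maps can be chosen \emph{Frobenius-equivariantly}, i.e.\ that Proposition \ref{prop3.18} (every finitely generated Frobenius module is a quotient of a finite projective Frobenius module) genuinely lets us run the resolution inside the category of Frobenius modules rather than just the underlying category of modules, and that the splitting $M \to M/T$ used in the horseshoe step can be taken compatibly with $\varphi^a$ — for which one uses that $M/T$ is projective as a Frobenius module, so $\mathrm{Hom}$ in the Frobenius category is still exact in the relevant argument. A secondary subtlety is the behaviour over $\widetilde{\Pi}_R^\mathrm{int}$ versus $\widetilde{\Omega}_R^\mathrm{int}$: over the Fréchet ring $\widetilde{\Pi}_R^\mathrm{int}$ one needs the completeness and the lifting-by-convergent-series argument already used in the preceding proposition to ensure the resolutions live in the right completed category, but this has effectively been done there and can be cited.
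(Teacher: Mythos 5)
Your argument is correct and is essentially the one the paper intends: the corollary is left as an immediate consequence of the preceding proposition, namely the decomposition $0\to T\to M\to M/T\to 0$ with $M/T$ finite projective and $T$ a finite, $\pi^n$-torsion assembly of modules $?/\pi^k?$, each of which is resolved by $0\to ?\xrightarrow{\pi^k} ?\to ?/\pi^k?\to 0$ using that $\pi$ is a nonzerodivisor in $\widetilde{\Omega}_R^\mathrm{int}$ and $\widetilde{\Pi}_R^\mathrm{int}$, so that $\mathrm{pd}(M)\leq\max(\mathrm{pd}(T),\mathrm{pd}(M/T))\leq 1$. The only shaky aside is your claim that projectivity of $M/T$ as a module yields exactness of $\mathrm{Hom}$ in the Frobenius category: if one insists the resolution be $\varphi^a$-equivariant, the cleaner route is to first obtain projective dimension $\leq 1$ at the module level exactly as you do, then choose a Frobenius-equivariant surjection from a finite projective Frobenius module via \cref{prop3.18} and observe that its kernel is then automatically finite projective, inheriting the Frobenius structure.
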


\indent Then we generalize the comparison of local systems and Frobenius modules in \cite[Theorem 4.5.7]{KL16} to our context as in the following:

\begin{theorem} \mbox{\bf{(After Kedlaya-Liu \cite[Theorem 4.5.7]{KL16})}} \label{theorem4.4}
The following categories are equivalent:\\
I. The category of projective, pseudocoherent or fpd $\mathcal{O}_{E_a^{\varphi^a}}$-local systems over $X_\text{pro\'et}$;\\
II. The category of projective, pseudocoherent or fpd Frobenius modules over the period ring $\widetilde{\Omega}_R^\mathrm{int}$;\\
III. The category of projective, pseudocoherent or fpd Frobenius modules over the period ring $\widetilde{\Pi}_R^\mathrm{int}$;\\
IV.  The category of projective, pseudocoherent or fpd Frobenius modules over the period sheaf $\widetilde{\Omega}_*^\mathrm{int}$ over $X$, $X_\text{\'et}$, $X_\text{pro\'et}$;\\
V.  The category of projective, pseudocoherent or fpd Frobenius modules over the period sheaf $\widetilde{\Pi}_*^\mathrm{int}$ over $X$, $X_\text{\'et}$, $X_\text{pro\'et}$.\\ 
Under further deformation we do not have the corresponding equivalence as above but we have the following statement:\\
1. There is a fully faithful embedding functor from the category of the projective, pseudocoherent or fpd $\mathcal{O}_{E_a^{\varphi^a}}\widehat{\otimes}\mathcal{O}_A$-local systems over $X_\text{pro\'et}$ to the following two categories:\\
1(a).  The category of projective, pseudocoherent or fpd Frobenius modules over the period sheaf $\widetilde{\Omega}_{*,A}^\mathrm{int}$ over $X$, $X_\text{\'et}$, $X_\text{pro\'et}$;\\
1(b).  The category of projective, pseudocoherent or fpd Frobenius modules over the period sheaf $\widetilde{\Pi}_{*,A}^\mathrm{int}$ over $X$, $X_\text{\'et}$, $X_\text{pro\'et}$.\\ 
2. We have that the following two categories are equivalent:\\
2(a). The category of projective, pseudocoherent or fpd Frobenius modules over the period ring $\widetilde{\Omega}_{R,\infty}^\mathrm{int}$;\\ 
2(b). The category of projective, pseudocoherent or fpd Frobenius modules over the period sheaf $\widetilde{\Omega}_{*,\infty}^\mathrm{int}$ over $X$, $X_\text{\'et}$, $X_\text{pro\'et}$.\\
3. Furthermore we have the following two categories are equivalent:\\
3(a). The category of projective, pseudocoherent or fpd Frobenius modules over the period ring $\widetilde{\Pi}_{R,\infty}^\mathrm{int}$;\\
3(b). The category of projective, pseudocoherent or fpd Frobenius modules over the period sheaf $\widetilde{\Pi}_{*,\infty}^\mathrm{int}$ over $X$, $X_\text{\'et}$, $X_\text{pro\'et}$.

\end{theorem}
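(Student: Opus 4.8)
The plan is to follow the architecture of \cite[Theorem 4.5.7]{KL16}, splitting the argument into the undeformed backbone (parts I--V), the deformed embedding (part 1), and the $E_\infty$-base-changed equivalences (parts 2--3); in each of the projective, pseudocoherent and fpd flavors, the last two cases reduce to the projective one by \cref{prop3.18} together with the structure results stated just before this theorem (the $\pi$-torsion submodule $T$ is finitely presented, killed by a power of $\pi$, a finite union of pieces of the form $?/\pi^k?$, and $M/T$ is projective, so $M$ has a projective resolution of length $\le 1$). First I would establish $\mathrm{II}\Leftrightarrow\mathrm{III}\Leftrightarrow\mathrm{IV}\Leftrightarrow\mathrm{V}$: the ring-to-sheaf equivalences $\mathrm{II}\Leftrightarrow\mathrm{IV}$ and $\mathrm{III}\Leftrightarrow\mathrm{V}$ are exactly the global-section/sheafification equivalence of \cref{proposition3.13} together with the cohomological vanishing of \cref{lemma3.11}, applied on the pro-\'etale (resp.\ \'etale, adic) site and, in the projective case, the $v$-site; the inclusion $\widetilde{\Pi}^\mathrm{int}_R\hookrightarrow\widetilde{\Omega}^\mathrm{int}_R$ gives $\mathrm{III}\Rightarrow\mathrm{II}$ by extension of scalars, and the reverse passage is the ``coefficients are controlled by their image'' argument used in the proof of \cref{proposition4.1} for the second exact sequence, which shows a Frobenius module over $\widetilde{\Omega}^\mathrm{int}_R$ that is defined over the smaller ring already descends there.

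Next I would build $\mathrm{I}\Leftrightarrow\mathrm{II}$. In one direction the functor sends a local system $\underline{L}$ to $\underline{L}\otimes_{\underline{\mathcal{O}_{E_a^{\varphi^a}}}}\widetilde{\Omega}^\mathrm{int}_R$ with its induced semilinear $\varphi^a$; the quasi-inverse is Frobenius descent, $M\mapsto(M\otimes\widetilde{\Omega}^\mathrm{int}_*)^{\varphi^a=1}$, for which exactness and the identification of the invariants with the constant sheaf are supplied by \cref{proposition4.1}. For projective $M$ this is a standard Artin--Schreier/Lang-type trivialization of $\varphi^a$ on a pro-\'etale cover; for pseudocoherent and fpd $M$ one feeds in the structure results cited above, treating $M/T$ by the projective case and each torsion piece $\widetilde{\Omega}^\mathrm{int}_R/\pi^k$ (which corresponds to a local system of $\mathcal{O}_{E_a^{\varphi^a}}/\pi^k$-modules) separately, then reassembling along the finite filtration, with the effective descent for pseudocoherent Banach modules from \cref{proposition3.13} used to glue the local computations. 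I expect this Frobenius-descent step in the \emph{non-projective} case to be the main obstacle, since one cannot trivialize $\varphi^a$ directly and must control strictness of the relevant complexes while passing through the torsion filtration; the enlargement of the residue field $k\supseteq\mathbb{F}_{p^h}$ is harmless here, as every cited input from \cite{KL15,KL16} is insensitive to it.

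For part 1, I would use the functor $\underline{L}\mapsto\underline{L}\otimes\widetilde{\Omega}^\mathrm{int}_{*,A}$ (resp.\ $\underline{L}\otimes\widetilde{\Pi}^\mathrm{int}_{*,A}$) on $\mathcal{O}_{E_a^{\varphi^a}}\widehat{\otimes}\mathcal{O}_A$-local systems. Full faithfulness reduces, after passing to internal Hom sheaves, to the identity $(\widetilde{\Omega}^\mathrm{int}_{*,A})^{\varphi^a=1}=\underline{\mathcal{O}_{E_a^{\varphi^a}}}\widehat{\otimes}\mathcal{O}_A$ together with the vanishing of $H^{>0}$ of the twisted period sheaf; both follow by tensoring the exact sequences of \cref{proposition4.1} with the nuclear Fr\'echet module $\mathcal{O}_A$, which preserves exactness because $\widetilde{\Omega}^\mathrm{int}_*$ admits a Schauder basis in the sense of \cite[Definition 6.1]{KP} and $\varphi^a$ is $A$-linear acting trivially on $A$. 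One then records (no proof needed) that essential surjectivity fails: a Frobenius module over $\widetilde{\Omega}^\mathrm{int}_{*,A}$ carries ``$A$-directions'' in its semilinear structure that Frobenius descent cannot detect, so it need not come from a local system; this is exactly the rigidity phenomenon flagged in the earlier remarks.

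Finally, for parts 2 and 3, the point is that after the splitting base change to $\mathcal{O}_{E_\infty}$ the rings $\widetilde{\Pi}^{\mathrm{int},r}_{R,\infty}$ and $\widetilde{\Omega}^\mathrm{int}_{R,\infty}$ become perfectoid, hence stably uniform and sheafy, by the perfectoidness statement recorded earlier in this section; consequently \cref{proposition3.13}, which is already phrased to include the ``$\infty$'' rings and sheaves, gives directly that the global-section functor is an equivalence between (projective, pseudocoherent or fpd) Frobenius modules over $\widetilde{\Pi}^{\mathrm{int},r}_{R,\infty}$ (resp.\ $\widetilde{\Omega}^\mathrm{int}_{R,\infty}$) and the corresponding Frobenius modules over $\widetilde{\Pi}^{\mathrm{int},r}_{*,\infty}$ (resp.\ $\widetilde{\Omega}^\mathrm{int}_{*,\infty}$) over $X$, $X_\text{\'et}$, $X_\text{pro\'et}$ (and the $v$-site in the projective case). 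Passing to the union over $r>0$ and carrying $\varphi^a$ along the colimit then yields $\mathrm{3(a)}\Leftrightarrow\mathrm{3(b)}$ and, together with the $[1/\pi]$-free integral description, $\mathrm{2(a)}\Leftrightarrow\mathrm{2(b)}$; the only routine point left is the compatibility of $\varphi^a$ with the filtered colimit over radii.
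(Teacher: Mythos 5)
Your proposal is correct in outline and, for the deformed and $E_\infty$ parts, matches the paper's strategy: part 1 is handled exactly as here (the base-change functor $\mathbb{L}\mapsto\mathbb{L}\otimes\widetilde{\Omega}^{\mathrm{int}}_{*,A}$ resp. $\mathbb{L}\otimes\widetilde{\Pi}^{\mathrm{int}}_{*,A}$, with full faithfulness from the $A$-deformed versions of the exact sequences in \cref{proposition4.1}, the complete tensor with $A$ being justified by the Schauder-basis argument), and part 2 is, as you say, a direct consequence of \cref{lemma3.11} and \cref{proposition3.13}. Where you diverge is at the two ends. For the undeformed equivalences I--V the paper simply cites \cite[Theorem 4.5.7]{KL16} (the enlarged residue field being harmless), whereas you sketch a full reproof via Frobenius descent plus the torsion d\'evissage; that is more work than the paper does but it is the same underlying KL16 mechanism, so nothing is lost. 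For part 3 the routes genuinely differ: the paper reduces 3 to 2 by applying statement 2 to the completion of $R[T^{\pm p^{-\infty}}]$ or $R[T^{p^{-\infty}}]$, i.e.\ it absorbs the radius into the base by re-expressing the integral-Robba situation as an $\widetilde{\Omega}^{\mathrm{int}}$-situation over the larger perfectoid base furnished by the tilting identification $R\{(\overline{\pi}/p^{-1/r})^{1/p^\infty}\}$ recorded earlier in Section 3, while you apply \cref{proposition3.13} radius by radius to $\widetilde{\Pi}^{\mathrm{int},r}_{R,\infty}$ and then pass to the colimit over $r$. Your route is legitimate, and it buys you a proof that stays entirely inside the statements already proved in this paper; the paper's route buys freedom from the colimit bookkeeping, which in your version is not quite "routine": since $\varphi^a$ shifts the radius, you must check that every pseudocoherent (not just projective) Frobenius module over the union $\widetilde{\Pi}^{\mathrm{int}}_{R,\infty}$ descends to some finite radius compatibly with $\varphi^a$, and the clean way to do this is the same d\'evissage you already invoke for I--V (projective part plus $\pi$-power torsion part, each of which visibly descends), so that step should be cited rather than waved off. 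One further small overreach: the paper establishes perfectoidness only for $\widetilde{\Pi}^{\mathrm{int},r}_{R}\widehat{\otimes}_{\mathcal{O}_E}\mathcal{O}_{E_\infty}$ and $\widetilde{\Pi}^{I}_{R}\widehat{\otimes}_{\mathcal{O}_E}\mathcal{O}_{E_\infty}$, not for $\widetilde{\Omega}^{\mathrm{int}}_{R,\infty}$; for part 2 the correct input is the $\pi$-adic d\'evissage underlying \cref{lemma3.11} and \cref{proposition3.13} (via the cited results of \cite{KL16}), not a perfectoidness claim about the Witt-vector ring itself.
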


\begin{proof}
The first main statement, see \cite[Theorem 4.5.7]{KL16}. Then we proceed to consider the rest three main statements along the line of the proof of \cite[Theorem 4.5.7]{KL16}. For the first statement the corresponding functor is just the base change functor:
\begin{displaymath}
\mathbb{L}\mapsto \mathbb{L}\otimes_{\underline{\mathcal{O}_{E_a^{\varphi^a}}\widehat{\otimes}\mathcal{O}_A}}\widetilde{\Omega}_{*,A}^\mathrm{int}	
\end{displaymath}
and 
\begin{displaymath}
\mathbb{L}\mapsto \mathbb{L}\otimes_{\underline{\mathcal{O}_{E_a^{\varphi^a}}\widehat{\otimes}\mathcal{O}_A}}\widetilde{\Pi}_{*,A}^\mathrm{int},	
\end{displaymath}
these are basically fully faithful by applying the corresponding deformed versions (which is just directly taking the complete tensor product with $A$) of the corresponding exact sequences in \cref{proposition4.1}. For the second and the last statements we follow the strategy of the proof of \cite[Theorem 4.5.7]{KL16} to proceed as in the following. First for the second one, this is direct consequence of \cref{lemma3.11} and \cref{proposition3.13}. Then the third statement could be reduced to the statement by applying the second statement to the corresponding completion of $R[T^{\pm p^{-\infty}}]$ or $R[T^{p^{-\infty}}]$. 
\end{proof}

\indent Then as in \cite[Corollary 4.5.8]{KL16} we consider the setting of more general adic spaces:

\begin{proposition} \mbox{\bf{(After Kedlaya-Liu \cite[Corollary 4.5.8]{KL16})}}
1. Let $X$	be a preadic space over $E_a$. Then we have that there is fully faithful embedding functor from the category of all the $\mathcal{O}_{E_{a}^{\varphi^a}}\widehat{\otimes}\mathcal{O}_A$-local systems (projective, pseudocoherent or fpd) over $X$, $X_{\text{\'et}}$ and $X_\text{pro\'et}$ to the category of all the projective, pseudocoherent or fpd Frobenis modules over the sheaves over $\widetilde{\Pi}_{R,A}^\mathrm{int}$ over the corresponding sites;\\
2. Let $X$	be a preadic space over $E_{\infty,a}$. Then we have that there is fully faithful embedding functor from the category of all the $\mathcal{O}_{E_{\infty,a}^{\varphi^a}}\widehat{\otimes}\mathcal{O}_A$-local systems (projective, pseudocoherent or fpd) over $X$, $X_{\text{\'et}}$ and $X_\text{pro\'et}$ to the category of all the projective, pseudocoherent or fpd Frobenius modules over the sheaves over $\widetilde{\Pi}_{R,\infty,A}^\mathrm{int}$ over the corresponding sites.\\
\end{proposition}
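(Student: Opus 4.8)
The plan is to deduce this proposition from \cref{theorem4.4} by the standard reduction from relative ($\varphi$-)module statements over general adic spaces to the affinoid perfectoid case, exactly along the lines of \cite[Corollary 4.5.8]{KL16}. First I would recall that for a preadic space $X$ over $E_a$ (resp.\ over $E_{\infty,a}$), the categories of local systems and of Frobenius module sheaves over $\widetilde{\Pi}_{R,A}^{\mathrm{int}}$ (resp.\ $\widetilde{\Pi}_{R,\infty,A}^{\mathrm{int}}$) are both defined by gluing over a covering of $X$ by affinoid perfectoid subspaces $Y=\mathrm{Spa}(R_Y,R_Y^+)$; since all constructions involved are compatible with restriction to such $Y$ and satisfy descent (using the effective descent statements recorded in \cref{proposition3.13} and the sheafiness asserted in the Assumption preceding \cref{lemma3.11}), it suffices to produce the fully faithful embedding functor locally and check it glues. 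Locally, i.e.\ on each $Y$, the functor is the base change functor $\mathbb{L}\mapsto \mathbb{L}\otimes_{\underline{\mathcal{O}_{E_a^{\varphi^a}}\widehat{\otimes}\mathcal{O}_A}}\widetilde{\Pi}_{*,A}^{\mathrm{int}}$ (resp.\ its $E_\infty$-version), which by part 1(b) (resp.\ part 3) of \cref{theorem4.4} is fully faithful.

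The key steps, in order, would be: (i) fix a covering of $X$ by affinoid perfectoid subspaces and observe that all the sheaf-theoretic objects in the statement are determined by their restrictions to members of this covering together with gluing data; (ii) on each member $Y$, invoke \cref{theorem4.4}(1b) in the $E_a$-case (resp.\ \cref{theorem4.4}(3) in the $E_{\infty,a}$-case) to get a fully faithful functor from $\mathcal{O}_{E_a^{\varphi^a}}\widehat{\otimes}\mathcal{O}_A$-local systems (resp.\ $\mathcal{O}_{E_{\infty,a}^{\varphi^a}}\widehat{\otimes}\mathcal{O}_A$-local systems) over $Y_{\mathrm{pro\acute et}}$ to Frobenius modules over the relevant period sheaf; (iii) check that these local functors are compatible with restriction along inclusions $Y'\subset Y$, hence glue to a functor of sheaf categories over $X$; and (iv) verify that full faithfulness is preserved under gluing — this is where one uses that $\mathrm{Hom}$ in both source and target categories is computed as a sheaf-$\mathrm{Hom}$ (equivalently, an equalizer over the covering), so that local full faithfulness on a basis of affinoid perfectoid opens implies global full faithfulness. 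For the $E_{\infty,a}$-case one additionally notes, as in the proof of \cref{theorem4.4}, that the statement over $\widetilde{\Pi}_{R,\infty,A}^{\mathrm{int}}$ can be accessed from the $E_a$-setup by passing through the completion of $R[T^{\pm p^{-\infty}}]$ (or $R[T^{p^{-\infty}}]$), where the splitting hypothesis makes the relevant deformed period rings perfectoid.

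The main obstacle I anticipate is step (iv), or rather making sure that the gluing genuinely respects the deformation parameter $A$. The subtlety is the one flagged repeatedly in the excerpt (see the Remark after the Setting introducing $\widetilde{\Pi}_{*,?}$, and the Remark before the Assumption): the ring $A$ is \emph{not} glued, it acts through a completed tensor product, and the global section functor in the deformed direction is delicate. Concretely, one must confirm that the equalizer computing $\mathrm{Hom}$ over the covering behaves well after completed tensor with $\mathcal{O}_A$ — this is exactly what is guaranteed by the existence of Schauder bases for the undeformed period rings (cited from \cite[Definition 6.1]{KP} in the Setting introducing the period sheaves), which ensures that $\widehat{\otimes}_{\mathbb{Q}_p}\mathcal{O}_A$ is exact and hence commutes with the relevant limits, and by the Assumption that $A$ splits in a perfectoid cover $A^{\mathrm{perf}}$ so that the deformed period rings are sousperfectoid and sheafy by Kedlaya--Hansen \cite{KH}. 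Once those two inputs are in hand, the gluing is formal and the proposition follows; the reason one only obtains a fully faithful embedding rather than an equivalence is, as in \cref{theorem4.4}, that not every deformed Frobenius module descends to a local system after the completed base change — there is no analogue of the Frobenius-fixed-points exact sequence with $A$-coefficients that is surjective in the required sense.
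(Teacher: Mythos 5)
Your proposal is correct and follows essentially the same route as the paper, whose proof simply says to apply \cref{theorem4.4} (and repeat its argument for the $E_{\infty,a}$-case): the functor is the completed base change of local systems along the deformed period sheaves, full faithfulness comes from the $A$-deformed Frobenius-invariance exact sequences of \cref{proposition4.1}, and the passage from affinoid perfectoid pieces to a general preadic space is the standard localization/gluing step implicit in the reference to \cite[Corollary 4.5.8]{KL16}. Your additional care about the undeformed rings' Schauder bases making $\widehat{\otimes}\,A$ exact is exactly the point the paper relies on, so nothing is missing.
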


\begin{proof}
For one, apply the previous theorem. For two, repeat the argument in the proof of the previous theorem.	
\end{proof}

\indent The following definition is kind of generalization of the corresponding one in \cite[Definition 4.5.9]{KL16}:

\begin{definition}
Now over the ring $\widetilde{\Pi}_{R,A}$ or $\widetilde{\Pi}^\mathrm{bd}_{R,A}$	we call the corresponding Frobenius modules globally \'etale if they arise from the Frobenius modules over the ring $\widetilde{\Pi}^\mathrm{int}_{R,A}$. Now over the ring $\widetilde{\Pi}_{R,\infty,A}$ or $\widetilde{\Pi}^\mathrm{bd}_{R,\infty,A}$	we call the corresponding Frobenius modules globally \'etale if they arise from the Frobenius modules over the ring $\widetilde{\Pi}^\mathrm{int}_{R,\infty,A}$. \\
Now over the sheaf $\widetilde{\Pi}_{*,A}$ or $\widetilde{\Pi}^\mathrm{bd}_{*,A}$	we call the corresponding Frobenius modules globally \'etale if they arise from the Frobenius modules over the sheaf $\widetilde{\Pi}^\mathrm{int}_{*,A}$. Now over the ring $\widetilde{\Pi}_{*,\infty,A}$ or $\widetilde{\Pi}^\mathrm{bd}_{*,\infty,A}$	we call the corresponding Frobenius modules globally \'etale if they arise from the Frobenius modules over the sheaf $\widetilde{\Pi}^\mathrm{int}_{*,\infty,A}$.

\end{definition}

\begin{proposition} \mbox{\bf{(After Kedlaya-Liu \cite[Theorem 4.5.11]{KL16})}}
1. Let $X$ be a preadic space over $E_{,a}$. Then we have that there is a fully faithful embedding of the category of the corresponding $E^{\varphi^a}_{,a}\widehat{\otimes}A$-local systems (in the projective setting) into the category of the corresponding projective Frobenius $\varphi^a$-modules over $\widetilde{\Pi}_{*,A}$;\\
2. Let $X$ be a preadic space over $E_{\infty,a}$. Then we have that there is a fully faithful embedding of the category of the corresponding $E^{\varphi^a}_{\infty,a}\widehat{\otimes}A$-local systems (in the projective setting) into the category of the corresponding projective Frobenius $\varphi^a$-modules over $\widetilde{\Pi}_{*,A}$;\\
\end{proposition}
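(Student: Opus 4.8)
The plan is to exhibit the required functor as the base-change functor along the morphism of sheaves of rings $\underline{E^{\varphi^a}_{,a}\widehat{\otimes}A}\to\widetilde{\Pi}_{*,A}$ (resp. its $E_\infty$-analogue) and then to extract full faithfulness from the $A$-deformations of the Frobenius-invariants exact sequences of \cref{proposition4.1}, combined with the cohomological vanishing of \cref{lemma3.11} and \cref{proposition3.13}. Statements 1 and 2 are proved by the same argument; I describe statement 1, and statement 2 is obtained verbatim after replacing every object attached to $E_a$ by its $E_\infty$-decorated counterpart and invoking parts 2 and 3 of \cref{theorem4.4} in place of the undeformed equivalence.

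First I would define the functor. For a projective $E^{\varphi^a}_{,a}\widehat{\otimes}A$-local system $\mathbb{V}$ on $X_\text{pro\'et}$, set
\[
M_\mathbb{V}\ :=\ \mathbb{V}\otimes_{\underline{E^{\varphi^a}_{,a}\widehat{\otimes}A}}\widetilde{\Pi}_{*,A},
\]
equipped with the semilinear $\varphi^a$-action induced from $\varphi^a$ on $\widetilde{\Pi}_{*,A}$ together with the trivial action on the Frobenius-invariant coefficients. Since $\mathbb{V}$ is locally free of finite rank, $M_\mathbb{V}$ is a projective $\widetilde{\Pi}_{*,A}$-module, and I would check it meets the axioms for a projective $\varphi^a$-module: completeness for the natural topology and compatibility of the Frobenius structure with base change to the $\widetilde{\Pi}^{[s,r]}_{*,A}$ are verified on a cover where $\mathbb{V}$ becomes free and $M_\mathbb{V}$ becomes finite free, using \cref{prop3.18} and the sheaf properties of \cref{proposition3.13}. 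For an arbitrary preadic $X$ over $E_a$ one runs this over $X_\text{pro\'et}$, gluing the local pieces via the descent assertions of \cref{proposition3.13} and the standing sheafiness hypothesis, exactly as in \cite[Corollary 4.5.8]{KL16}.

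Next I would prove full faithfulness. Given $\mathbb{V}_1,\mathbb{V}_2$, write $\mathbb{W}=\underline{\mathrm{Hom}}(\mathbb{V}_1,\mathbb{V}_2)$, again a projective $E^{\varphi^a}_{,a}\widehat{\otimes}A$-local system, so that $\mathrm{Hom}(\mathbb{V}_1,\mathbb{V}_2)=H^0(X,\mathbb{W})$. On the target side, internal Hom commutes with the flat base change defining $M_\bullet$ on projective modules, giving $\underline{\mathrm{Hom}}_{\widetilde{\Pi}_{*,A}}(M_{\mathbb{V}_1},M_{\mathbb{V}_2})\cong M_\mathbb{W}$, while morphisms of $\varphi^a$-modules are precisely the $\varphi^a$-invariant global sections of the internal Hom, so $\mathrm{Hom}(M_{\mathbb{V}_1},M_{\mathbb{V}_2})=H^0(X,M_\mathbb{W})^{\varphi^a=1}$. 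It therefore suffices to produce a natural isomorphism $H^0(X,\mathbb{W})\xrightarrow{\ \sim\ }H^0(X,M_\mathbb{W})^{\varphi^a=1}$. I would obtain this by tensoring the $\widehat{\otimes}A$-deformation of the last exact sequence of \cref{proposition4.1}, namely $0\to\underline{E^{\varphi^a}_{,a}\widehat{\otimes}A}\to\widetilde{\Pi}_{*,A}\xrightarrow{\varphi^a-1}\widetilde{\Pi}_{*,A}\to0$, with the locally free sheaf $\mathbb{W}$ and passing to the long exact cohomology sequence: the higher cohomology of $M_\mathbb{W}$ vanishes by \cref{lemma3.11} and the descent statements of \cref{proposition3.13}, which pins down the $\varphi^a$-invariants of $H^0(X,M_\mathbb{W})$ as $H^0(X,\mathbb{W})$; injectivity and fullness of the functor on Hom-sets are then immediate.

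The main obstacle is the exactness of the deformed sequence feeding the last step, i.e.\ that $\widehat{\otimes}A$ preserves exactness of the Frobenius-invariants sequences of \cref{proposition4.1}. This is exactly where the standing assumption that $A$ splits in a perfectoid cover, together with the fact that the undeformed period rings carry Schauder bases (already used to set up the deformed sheaves), becomes essential, since it makes the completed tensor product with $A$ exact on these sequences. Granting that, the rest is formal, and the passage $\widetilde{\Pi}^\mathrm{int}_{*,A}\hookrightarrow\widetilde{\Pi}^\mathrm{bd}_{*,A}\hookrightarrow\widetilde{\Pi}_{*,A}$ produces no new $\varphi^a$-invariant sections, so nothing is lost in enlarging the coefficient sheaf.
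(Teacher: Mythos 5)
Your proposal is correct and follows essentially the same route as the paper: the functor is the base change along $\underline{E^{\varphi^a}_{a}\widehat{\otimes}A}\to\widetilde{\Pi}_{*,A}$, and full faithfulness is extracted from the $A$-deformed Frobenius-invariants exact sequence of \cref{proposition4.1} tensored with the (locally free) internal Hom local system, exactly as the paper does following \cite[Theorem 4.5.11]{KL16}. The only remark is that your appeal to higher cohomology vanishing via \cref{lemma3.11} and \cref{proposition3.13} is more than is needed — left-exactness of global sections applied to the deformed sequence already identifies $H^0(X,\mathbb{W})$ with the $\varphi^a$-invariants of $H^0(X,M_\mathbb{W})$, which is all that full faithfulness requires.
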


\begin{proof}
As in \cite[Theorem 4.5.11]{KL16}, we consider the corresponding base change of the corresponding exact sequence in \cref{proposition4.1} which reflects an exact sequence on the sheaves.
\end{proof}

\begin{remark}
We actually did not get the corresponding $v$-topology involved here, which is due to the fact that the pseudocoherent modules might possibly behavior annoyingly in such topology. But on the other hand, at least one will have the corresponding parallel statements as above over $v$-topology for finite projective objects.\\	
\end{remark}

\subsection{The Comparison beyond the \'Etale Setting}

\indent In this subsection we are going to study the corresponding relationship between the corresponding sheaves over the certain Fargues-Fontaine curves and the corresponding Frobenius modules, which is beyond the corresponding consideration of just \'etale objects as what we considered in the previous subsection. This is to some extend in our situation important since we would like to use different point of views to study essentially equivalent objects. The following result is actually directly reduced to \cite[Theorem 4.6.1]{KL16}:

\begin{theorem} \mbox{\bf (\cite[Theorem 4.6.1]{KL16})}
In the setting where the space $X$ is $\mathrm{Spa}(R,R^+)$ we have the following categories are equivalent:\\
1.  The category of all the pseudocohrent Frobenius $\varphi^a$-modules over the sheaf $\widetilde{\Pi}^\infty_{*}$;\\
2.  The category of all the pseudocohrent Frobenius $\varphi^a$-modules over the sheaf $\widetilde{\Pi}_{*}$;\\
3.  The category of all the pseudocohrent Frobenius $\varphi^a$-modules over the sheaf $\widetilde{\Pi}^I_{*}$, for some closed interval $I$;\\
4.  The category of all the strictly-pseudocoherent Frobenius $\varphi^a$-modules over $\widetilde{\Pi}^\infty_{R}$ such that for any closed interval $I$ the corresponding base change of any such module $M$ to the module over $\widetilde{\Pi}^I_{R}$ is \'etale-stably pseudocoherent;\\
5.  The category of all the pseudocoherent Frobenius $\varphi^a$-modules over $\widetilde{\Pi}_{R}$ such that each $M$ of such modules comes from a base change from some strictly-pseudocoherent Frobenius $\varphi^a$-module $M'$ over $\widetilde{\Pi}^r_{R}$ for some radius $r>0$ where the corresponding base change to (for any closed interval $I\subset (0,r]$) the module over $\widetilde{\Pi}^I_{R}$ is assumed to be \'etale-stably pseudocoherent;\\
6.  The category of all the pseudocoherent $\varphi^a$-bundles over $\widetilde{\Pi}_{R}$;\\
7.  The category of all the Frobenius $\varphi^a$-modules over $\widetilde{\Pi}^{[s,r]}_{R}$ where we have $0<s\leq r/p^{ha}$;\\
8.  The category of all the pseudocoherent sheaves over the adic version Fargues-Fontaine curve $\mathrm{FF}_{X,\text{\'et}}$ in the \'etale setting;\\
9.  The category of all the pseudocoherent sheaves over the adic version Fargues-Fontaine curve $\mathrm{FF}_{X,\text{pro\'et}}$ in the pro-\'etale setting.\\
\end{theorem}

\begin{proof}
This is \cite[Theorem 4.6.1]{KL16}. We remind the readers of the corresponding functors involved. First the functors from 4 to 5 and to 7, from 4 to 1 and to 3 are base changes, and the functor from 9 to 8 is the pullback along the corresponding morphism of the sites. The functors from 9 to 7 and 6 are restrictions of the corresponding objects involved. 	
\end{proof}

\indent We do the first generalization to the general ultrametric field:

\begin{theorem} \mbox{\bf{(After Kedlaya-Liu \cite[Theorem 4.6.1]{KL16})}}
In the setting where the space $X$ is $\mathrm{Spa}(R,R^+)$ we have the following categories are equivalent:\\
1.  The category of all the pseudocohrent Frobenius $\varphi^a$-modules over the sheaf $\widetilde{\Pi}^\infty_{*,\infty}$;\\
2.  The category of all the pseudocohrent Frobenius $\varphi^a$-modules over the sheaf $\widetilde{\Pi}_{*,\infty}$;\\
3.  The category of all the pseudocohrent Frobenius $\varphi^a$-modules over the sheaf $\widetilde{\Pi}^I_{*,\infty}$, for some closed interval $I$;\\
4.  The category of all the strictly-pseudocoherent Frobenius $\varphi^a$-modules over $\widetilde{\Pi}^\infty_{R,\infty}$ such that for any closed interval $I$ the corresponding base change of any such module $M$ to the module over $\widetilde{\Pi}^I_{R,\infty}$ is \'etale-stably pseudocoherent;\\
5.  The category of all the pseudocoherent Frobenius $\varphi^a$-modules over $\widetilde{\Pi}_{R,\infty}$ such that each $M$ of such modules comes from a base change from some strictly-pseudocoherent Frobenius $\varphi^a$-module $M'$ over $\widetilde{\Pi}^r_{R,\infty}$ for some radius $r>0$ where the corresponding base change to (for any closed interval $I\subset (0,r]$) the module over $\widetilde{\Pi}^I_{R,\infty}$ is assumed to be \'etale-stably pseudocoherent;\\
6.  The category of all the pseudocoherent $\varphi^a$-bundles over $\widetilde{\Pi}_{R,\infty}$;\\
7.  The category of all the Frobenius $\varphi^a$-modules over $\widetilde{\Pi}^{[s,r]}_{R,\infty}$ where we have $0<s\leq r/p^{ha}$;\\
8.  The category of all the pseudocoherent sheaves over the adic version Fargues-Fontaine curve $\mathrm{FF}_{X,\infty,\text{\'et}}$ in the \'etale setting;\\
9.  The category of all the pseudocoherent sheaves over the adic version Fargues-Fontaine curve $\mathrm{FF}_{X,\infty,\text{pro\'et}}$ in the pro-\'etale setting.
\end{theorem}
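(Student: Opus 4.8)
The plan is to reduce the entire statement to the already-established equivalences of the previous theorem (the undeformed, i.e.\ "$E$-level", version over $\mathrm{Spa}(R,R^+)$), by exploiting the fact that the base change to $E_\infty$ makes all the relevant period rings perfectoid, hence stably uniform and sheafy, as recorded in the earlier propositions (after \cite[Lemma 4.1.12, Corollary 4.1.14, Proposition 4.1.13]{KL16}). Concretely, I would first observe that each ring $\widetilde{\Pi}^{?}_{R,\infty}$ is obtained from $\widetilde{\Pi}^{?}_{R}$ by the completed tensor product $-\widehat{\otimes}_{\mathcal{O}_E}\mathcal{O}_{E_\infty}$ (or its $[1/\pi]$-version), and that this operation is faithfully flat and compatible with all the localization and union operations defining the various Robba-type rings, so that the diagram of ring maps among the $\widetilde{\Pi}^{?}_{R,\infty}$'s is the image under $-\widehat{\otimes}_{\mathcal{O}_E}\mathcal{O}_{E_\infty}$ of the corresponding diagram among the $\widetilde{\Pi}^{?}_{R}$'s, with the Frobenius $\varphi^a$ acting compatibly (it acts trivially on the $\mathcal{O}_{E_\infty}$ factor in the relevant sense, or rather extends the $E$-level Frobenius).

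The second step is to set up the seven module-theoretic categories (items 1--7) and the two sheaf categories on the Fargues--Fontaine curves (items 8--9) in this base-changed setting, exactly paralleling \cite[Definition 4.4.4, Definition 4.6.1ff.]{KL16}, and to check that the functors linking them — base change along the various ring maps, pullback along morphisms of the \'etale and pro-\'etale sites, and the global-section/localization functors — are all defined just as before. Here I would lean on \cref{lemma3.11} and \cref{proposition3.13} (the cohomological vanishing and the equivalence of pseudocoherent sheaves with \'etale-stably pseudocoherent modules over $\widetilde{\Pi}^{\mathrm{int},r}_{*,\infty}$ and $\widetilde{\Pi}^{I}_{*,\infty}$, which are stated with the $\infty$-subscript included), together with the effective-descent statements in \cref{proposition3.13} for the covers $\widetilde{\Pi}^{r}\to \widetilde{\Pi}^{s}\oplus\widetilde{\Pi}^{[s,r]}$ and $\widetilde{\Pi}^{I}\to\oplus_i\widetilde{\Pi}^{I_i}$. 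These descent statements are what allow one to glue local module data into bundles (item 6) and to pass between the adic curve and the interval rings (items 7--9).

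The third step is the gluing/descent argument proper: one shows the equivalence of items 4 and 5 via the bounded-Robba-ring reduction (using \cref{prop3.18}, that finitely generated Frobenius modules are quotients of finite projective ones, to control generation), the equivalence of 5, 6, 7 via the Frobenius-equivariant two-term \v{C}ech/Mayer--Vietoris descent along the standard simple covers of the Fargues--Fontaine curve — here the log-convexity of $t\mapsto\|x\|_{\alpha^t}$ and the intersection identities of the form $\widetilde{\Pi}^{\mathrm{int},r_1}\cap\widetilde{\Pi}^{[r_1,r_2]}=\widetilde{\Pi}^{\mathrm{int},r_2}$ and $\widetilde{\Pi}^{[s_1,r_1]}\cap\widetilde{\Pi}^{[s_2,r_2]}=\widetilde{\Pi}^{[s_1,r_2]}$ from Section 2 (stated in the excerpt, in both the $\mathbb{Q}_p\{T_1,\dots,T_d\}$ and $A$ forms, and holding verbatim in the $\infty$-setting) are exactly the inputs needed — and the equivalence of 8 and 9 by pullback along the morphism of sites. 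Finally, items 1, 2, 3 are linked to 4, 5, 7 by base change along $\widetilde{\Pi}^{\mathrm{int},r}_{R,\infty}\to\widetilde{\Pi}^{I}_{R,\infty}$ etc., using the sheafiness (hence acyclicity of structure sheaves on rational covers) of the perfectoid rings in question.

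\textbf{Main obstacle.} The genuinely new content beyond \cite[Theorem 4.6.1]{KL16} is not any single step but the verification that working over a general ultrametric $E$ — in particular a non-discretely-valued one, where the Witt-vector expansion $\sum_{n\in\mathbb{Z}[1/p]_{\geq 0}}\pi^n[\overline{x}_n]$ is used in place of $\sum_{k\geq 0}\pi^k[\overline{x}_k]$ — does not break the norm estimates that underlie the interval-intersection and decomposition lemmas of Section 2. The delicate point is that these lemmas were proved (in the excerpt) for the discrete-valued case with integer exponents, and the remark at the end of Section 2 asserts they "carry over"; the hard part of the present proof is to make that assertion precise in the $E_\infty$-setting, i.e.\ to redo the decomposition $x_i=y_i+z_i$ into "integral part plus convergent tail" when the exponents run over $\mathbb{Z}[1/p]_{\geq 0}$ rather than $\mathbb{Z}_{\geq 0}$, and to confirm that the resulting series still converge in the correct Fr\'echet topologies so that the descent spectral sequences degenerate as in \cite[\S4.6]{KL16}. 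Everything else is a formal transport of the Kedlaya--Liu machinery along the faithfully flat base change $-\widehat{\otimes}_{\mathcal{O}_E}\mathcal{O}_{E_\infty}$.
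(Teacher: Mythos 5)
Your plan overlaps with the paper's argument in places (the functors are indeed the same base-change, restriction, site-pullback and global-section functors, and \cref{lemma3.11} and \cref{proposition3.13} are exactly what connects the sheaf categories 1--3 with the module categories at the $E_\infty$ level, since the base-changed rings are perfectoid and hence sheafy), but your framing of the proof as a ``formal transport of the Kedlaya--Liu machinery along the faithfully flat base change $-\widehat{\otimes}_{\mathcal{O}_E}\mathcal{O}_{E_\infty}$'' is not a valid reduction and is not what the paper does. Faithful flatness of this completed tensor product is asserted, not proved, and even granting it, base change only produces functors from the nine $E$-level categories to the nine $E_\infty$-level categories; it does not transport the mutual equivalences, since nothing in your argument descends objects or morphisms back from $E_\infty$ to $E$. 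The paper instead proves the $E_\infty$ statement directly: its proof is folded into the proof of the subsequent $A$-deformed theorem, which reruns the strategy of \cite[Theorem 4.6.1]{KL16} in the new setting (equivalences of 8, 9 with 6 via \cref{proposition3.13}, then 7 via the Frobenius action, then 4, 5, 6, and separately 1, 2, 3).

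The concrete gap is the passage from bundles to modules, i.e.\ essential surjectivity of the base-change functor from category 4 to category 6. Your step 3 handles gluing over finitely many closed intervals by \v{C}ech/Mayer--Vietoris descent together with the Section~2 intersection identities, but that does not show that the global sections of a pseudocoherent $\varphi^a$-bundle over the quasi-Stein total space (an infinite, locally finite union of interval subspaces) form a finitely generated, let alone strictly pseudocoherent, module over $\widetilde{\Pi}^{\infty}_{R,\infty}$. The paper's proof does this by covering the space by subspaces of the form $\mathrm{Spa}(\widetilde{\Pi}^{[sp^{nah},rp^{nah}]}_{R,\infty},\cdot)$, using the Frobenius action to get a uniform bound on the number of generators on each member of the covering, invoking \cite[Proposition 2.7.16]{KL16} to conclude finite generation of the global sections, and then extracting pseudocoherence by iterating with kernels of finite free coverings (in the spirit of \cref{prop3.18}). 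This is the heart of the theorem and is absent from your sketch. Relatedly, the ``main obstacle'' you identify (redoing the Section~2 norm estimates with exponents in $\mathbb{Z}[1/p]_{\geq 0}$) is not where the difficulty lies: the paper already defines the $E_\infty$-rings with their unique expansions and Gauss norms, and the comparison theorem's real work is the finite-generation argument just described, not those elementary norm lemmas.
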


\begin{proof}
The functors are the same as in the previous theorem. The proof is actually encoded into the corresponding proof of the following theorem.	
\end{proof}

\indent We are then going to focus on the deformed version of the corresponding correspondences established above:

\begin{theorem}  \mbox{\bf{(After Kedlaya-Liu \cite[Theorem 4.6.1]{KL16})}}
In the setting where the space $X$ is $\mathrm{Spa}(R,R^+)$ we have the following first group of categories are equivalent:\\
1.  The category of all the pseudocohrent Frobenius $\varphi^a$-modules over the sheaf $\widetilde{\Pi}^\infty_{*,A}$;\\
2.  The category of all the pseudocohrent Frobenius $\varphi^a$-modules over the sheaf $\widetilde{\Pi}_{*,A}$;\\
3.  The category of all the pseudocohrent Frobenius $\varphi^a$-modules over the sheaf $\widetilde{\Pi}^I_{*,A}$, for some closed interval $I$.\\
\indent Then we have the second group of categories which are equvalent:\\
4.  The category of all the strictly-pseudocoherent Frobenius $\varphi^a$-modules over $\widetilde{\Pi}^\infty_{R,A}$ such that for any closed interval $I$ the corresponding base change of any such module $M$ to the module over $\widetilde{\Pi}^I_{R,A}$ is \'etale-stably pseudocoherent;\\
5.  The category of all the pseudocoherent Frobenius $\varphi^a$-modules over $\widetilde{\Pi}_{R,A}$ such that each $M$ of such modules comes from a base change from some strictly-pseudocoherent Frobenius $\varphi^a$-module $M'$ over $\widetilde{\Pi}^r_{R,A}$ for some radius $r>0$ where the corresponding base change to (for any closed interval $I\subset (0,r]$) the module over $\widetilde{\Pi}^I_{R,A}$ is assumed to be \'etale-stably pseudocoherent;\\
6.  The category of all the pseudocoherent $\varphi^a$-bundles over $\widetilde{\Pi}_{R,A}$;\\
7.  The category of all the Frobenius $\varphi^a$-modules over $\widetilde{\Pi}^{[s,r]}_{R,A}$ where we have $0<s\leq r/p^{ha}$;\\
8.  The category of all the pseudocoherent sheaves over the adic version Fargues-Fontaine curve $\mathrm{FF}_{X,A,\text{\'et}}$ in the \'etale setting;\\
9.  The category of all the pseudocoherent sheaves over the adic version Fargues-Fontaine curve $\mathrm{FF}_{X,A,\text{pro\'et}}$ in the pro-\'etale setting.\\
\end{theorem}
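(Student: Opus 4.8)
The strategy is to reduce the deformed statement to the already-established undeformed versions (the two previous theorems) by exploiting the fact that all the deformed period rings $\widetilde{\Pi}^?_{R,A}$ are obtained from the undeformed ones $\widetilde{\Pi}^?_{R}$ (or the $E_\infty$-versions) by a completed tensor product with the affinoid algebra $A$, together with a quotient step corresponding to a presentation $A = \mathbb{Q}_p\{T_1,\dots,T_d\}/\mathfrak{a}$. The first step is to handle the "polynomial variable" case $A = \mathbb{Q}_p\{T_1,\dots,T_d\}$: here the completion of $R[T^{\pm p^{-\infty}}_1,\dots]$ trick already used for statement 3(a)/3(b) in \cref{theorem4.4} and in the reduction from 2 to 3 applies, so that adjoining the $T_i$ looks, after passing to the perfectoid tilt, like the same manoeuvre of extending the base perfectoid ring. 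Concretely, over $E_\infty$ the rings $\widetilde{\Pi}^{I}_{R,\infty,\mathbb{Q}_p\{T_\bullet\}}$ are again perfectoid (by the argument behind Propositions analogous to \cite[4.1.12--4.1.14]{KL16}), so all the sheafiness, descent, and glueing inputs from \cite[\S2.5, \S3.4--3.5]{KL16} remain available verbatim; one then descends back along the splitting $\mathcal{O}_E \to \mathcal{O}_{E_\infty}$ exactly as in the proofs of \cref{lemma3.11} and \cref{proposition3.13}.

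\textbf{Key steps, in order.} First I would establish the equivalences among categories 4--7 (the "module side") over $A$, by the same chain of base-change functors as in the undeformed theorem: the passage $4 \to 5$ and $4 \to 7$ is base change along $\widetilde{\Pi}^\infty_{R,A} \to \widetilde{\Pi}_{R,A}$ and $\widetilde{\Pi}^\infty_{R,A}\to\widetilde{\Pi}^{[s,r]}_{R,A}$, using \cref{proposition5.7} and its $[s_1,r_2]$-analogue for the glueing of intervals, plus \cref{prop3.18} to reduce pseudocoherent modules to quotients of projective ones; the reconstruction in the reverse direction uses the identities $\widetilde{\Pi}^{\mathrm{int},r_1}_{R,A}\cap\widetilde{\Pi}^{[r_1,r_2]}_{R,A}=\widetilde{\Pi}^{\mathrm{int},r_2}_{R,A}$ and $\widetilde{\Pi}^{[s_1,r_1]}_{R,A}\cap\widetilde{\Pi}^{[s_2,r_2]}_{R,A}=\widetilde{\Pi}^{[s_1,r_2]}_{R,A}$ proved in Section 2. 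Second, I would match categories 1--3 (the "sheaf side") with 4--7 via the global section functor, invoking \cref{proposition3.13} and the effective-descent statements there for $\widetilde{\Pi}^r_{R,A}\to\widetilde{\Pi}^s_{R,A}\oplus\widetilde{\Pi}^{[s,r]}_{R,A}$ and $\widetilde{\Pi}^I_{R,A}\to\bigoplus_i\widetilde{\Pi}^{I_i}_{R,A}$. Third, I would identify categories 8--9, the pseudocoherent sheaves on the deformed adic Fargues--Fontaine curve $\mathrm{FF}_{X,A}$: the curve $\mathrm{FF}_{X,A}$ is glued from $\mathrm{Spa}$ of the rings $\widetilde{\Pi}^{[s,r]}_{R,A}$ modulo Frobenius, so 7 $\leftrightarrow$ 8 is the definition of quasi-coherence on the curve, and 8 $\leftrightarrow$ 9 is pullback/pushforward along the map of sites, which is an equivalence because $H^i$ of the structure sheaf vanishes in positive degree on the relevant covers (\cref{lemma3.11}).

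\textbf{Main obstacle.} The delicate point is the sheafiness and stable uniformity of the deformed rings $\widetilde{\Pi}^{[s,r]}_{R,A}$ when $A$ is a genuine affinoid (not just a Tate algebra and not an analytic field); this is exactly where the \textbf{Assumption} that $A$ splits in a perfectoid cover $A^{\mathrm{perf}}$ and the sousperfectoid hypothesis are needed, so that the Kedlaya--Hansen criterion \cite{KH} applies and Kiehl-glueing/descent for pseudocoherent modules is legitimate. Once this is granted, the proof is genuinely a reduction: pass to $E_\infty$ and to $A^{\mathrm{perf}}$, where everything becomes perfectoid and the quoted results of \cite{KL16} apply directly, then descend along the two splittings. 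The only other place that requires care is checking that "\'etale-stably pseudocoherent" is preserved under the base change $-\widehat{\otimes}_{\mathbb{Q}_p}A$ and under the quotient by $\mathfrak{a}$; this follows from \cref{prop3.18} (finitely generated Frobenius modules are quotients of projective ones) together with the fact that the Frobenius acts $A$-linearly and trivially on $A$, so no new torsion phenomena are introduced in the $A$-direction. I expect the sheafiness verification to be the genuine content and everything else to be bookkeeping on top of the undeformed theorems.
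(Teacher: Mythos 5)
There is a genuine gap, and it sits exactly where you declared the work to be ``bookkeeping.'' First, you have over-read the statement: the theorem only asserts that categories 1--3 are equivalent among themselves and that categories 4--9 are equivalent among themselves; it does \emph{not} assert an equivalence between the sheaf-side group and the module-side group. Your second step proposes to ``match categories 1--3 with 4--7 via the global section functor, invoking \cref{proposition3.13}'' --- but the global-sections equivalence in \cref{proposition3.13} is stated only for the undeformed sheaves $\widetilde{\Pi}^{\mathrm{int},r}_{*}$, $\widetilde{\Pi}^{I}_{*}$ and their $E_\infty$-versions, not for the $A$-deformed sheaves $\widetilde{\Pi}^{I}_{*,A}$; the only deformed content of that proposition is the effective-descent statement for the ring morphisms. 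The paper in fact flags this cross-group comparison as open (see the remark immediately following the theorem, and the earlier remark that the global section functor for the deformed sheaves ``is a little bit hard to study''), and its proof establishes the equivalences 1$\leftrightarrow$2$\leftrightarrow$3 separately, in parallel with 4--6, rather than by reducing the sheaf side to the module side. If your route to the internal equivalence of 1--3 goes through this unproved matching, that part of your argument has no support in the cited results.

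Second, on the module side the genuinely nontrivial step is the essential surjectivity from pseudocoherent $\varphi^a$-bundles (category 6) back to strictly pseudocoherent modules over $\widetilde{\Pi}^\infty_{R,A}$ (category 4). The intersection identities $\widetilde{\Pi}^{\mathrm{int},r_1}_{R,A}\cap\widetilde{\Pi}^{[r_1,r_2]}_{R,A}=\widetilde{\Pi}^{\mathrm{int},r_2}_{R,A}$ and $\widetilde{\Pi}^{[s_1,r_1]}_{R,A}\cap\widetilde{\Pi}^{[s_2,r_2]}_{R,A}=\widetilde{\Pi}^{[s_1,r_2]}_{R,A}$, together with \cref{prop3.18}, let you glue ring elements and present finitely generated modules, but they do not by themselves show that the global sections of an arbitrary pseudocoherent Frobenius bundle are finitely generated over $\widetilde{\Pi}^\infty_{R,A}$. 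The paper handles this by covering the total space by a reified quasi-Stein covering by pieces of the form $\mathrm{Spa}(\Pi^{[sp^{nah},rp^{nah}]}_{R,A},\Pi^{[sp^{nah},rp^{nah}],\mathrm{Gr}}_{R,A})$ and invoking \cite[Proposition 2.7.16]{KL16}, with the Frobenius action supplying a uniform bound on the number of generators over the covering pieces; pseudocoherence of the resulting module is then extracted from the kernel of a finite free covering. Your proposal contains no substitute for this uniform-bound argument, and the passage to $E_\infty$ and $A^{\mathrm{perf}}$ does not make it disappear, since after descending you still need finite generation over the (non-perfectoid-in-the-$A$-direction) deformed Fr\'echet ring $\widetilde{\Pi}^{\infty}_{R,A}$. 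So the sheafiness of the deformed rings is not the only genuine content here: the quasi-Stein finiteness argument is the heart of the proof of 4$\leftrightarrow$6, and it needs to be spelled out.
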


\begin{proof}
First the functors are the same as in the proof of the previous theorem. For the proof, we can follow the idea of the proof of \cite[Theorem 4.6.1]{KL16} to do so. First the equivalences between 9,8 and 6 could be proved by considering \cref{proposition3.13}. By considering the corresponding Frobenius action, one can establish the corresponding equivalences among them and further 7. Now as in the proof of \cite[Theorem 4.6.1]{KL16}, we can now prove the equivalence between 4 and 6 as in the following. First as in the proof of \cite[Theorem 4.6.1]{KL16} one considers the exact base change from the key period ring in 4 to the corresponding key period ring in 6, which implies that the corresponding base change functor is then fully faithful. To show it is also essentially surjective, we pick now an arbitrary Frobenius bundle in the category 6, and consider the corresponding reified quasi-Stein covering of the total space by spaces taking the general form as $\mathrm{Spa}(\Pi^{[sp^{nah},rp^{nah}]}_{R,A},\Pi^{[sp^{nah},rp^{nah}],\mathrm{Gr}}_{R,A})$ with well-located radii $r$ and $s$. This will imply that the corresponding global section of the Frobenius bundle is finitely generated after applying the \cite[Proposition 2.7.16]{KL16} since the application of Frobenius action will imply the uniform bound of the numbers of generators over each such covering subspace as in \cite[Theorem 4.6.1]{KL16}. Then having shown this we can as in \cite[Thereom 4.6.1]{KL16} extract the corresponding desired property of being pseudocoherent for the corresponding desired object in the category 4 by considering the kernel of some finite free covering of the Frobenius bundle. Then as in \cite[Theorem 4.6.1]{KL16} one can show that we have now the categories 4,5 and 6 are equivalent, and furthermore in the same way one may prove that the categories 1,2 and 3 are equivalent.
\end{proof}

\begin{remark}
We did not manage to compare among then the two groups of categories within this theorem, but we believe this could be achieved by further careful analysis.	Please note that the corresponding sheaves in the categories 1,2,3 are actually required to be \'etale-stably pseudocoherent over the ring $A$ as well. 
\end{remark}

\indent Now over smooth Fr\'echet-Stein algebra $A_\infty(G)$ (as in our previous paper) in the commutative setting where each $A_n(G)$ in the family for each $n\geq 0$ is smooth reduced affinoid algebra in rigid analytic space, we have the following comparison:

\begin{theorem} \mbox{\bf{(After Kedlaya-Liu \cite[Theorem 4.6.1]{KL16})}} 
In the setting where the space $X$ is $\mathrm{Spa}(R,R^+)$ and the field $E$ is of mixed-characteristic we have the following categories are equivalent:\\
1.  The category of all the families of all the pseudocohrent Frobenius $\varphi^a$-modules over the sheaf $\widetilde{\Pi}^\infty_{*,A_\infty(G)}$;\\
2.  The category of all the families of all the pseudocohrent Frobenius $\varphi^a$-modules over the sheaf $\widetilde{\Pi}_{*,A_\infty(G)}$;\\
3.  The category of all the families of all the pseudocohrent Frobenius $\varphi^a$-modules over the sheaf $\widetilde{\Pi}^I_{*,A_\infty(G)}$, for some closed interval $I$.\\
\indent Then we have the second group of categories which are equvalent:\\
4.  The category of all the families of all the strictly-pseudocoherent Frobenius $\varphi^a$-modules over $\widetilde{\Pi}^\infty_{R,A_\infty(G)}$ such that for any closed interval $I$ the corresponding base change of any such module $M$ to the module over $\widetilde{\Pi}^I_{R,A_\infty(G)}$ is \'etale-stably pseudocoherent;\\
5.  The category of all the families of all the pseudocoherent Frobenius $\varphi^a$-modules over $\widetilde{\Pi}_{R,A_\infty(G)}$ such that each $M$ of such modules comes from a base change from some strictly-pseudocoherent Frobenius $\varphi^a$-module $M'$ over $\widetilde{\Pi}^r_{R,A_\infty(G)}$ for some radius $r>0$ where the corresponding base change to (for any closed interval $I\subset (0,r]$) the module over $\widetilde{\Pi}^I_{R,A_\infty(G)}$ is assumed to be \'etale-stably pseudocoherent;\\
6.  The category of all the families of all the pseudocoherent $\varphi^a$-bundles over $\widetilde{\Pi}_{R,A_\infty(G)}$;\\
7.  The category of all the families of all the Frobenius $\varphi^a$-modules over $\widetilde{\Pi}^{[s,r]}_{R,A_\infty(G)}$ where we have $0<s\leq r/p^{ha}$;\\
8.  The category of all the families of all the pseudocoherent sheaves over the adic version Fargues-Fontaine curve $\mathrm{FF}_{X,A_\infty(G),\text{\'et}}$ in the \'etale setting;\\
9.  The category of all the families of all the pseudocoherent sheaves over the adic version Fargues-Fontaine curve $\mathrm{FF}_{X,A_\infty(G),\text{pro\'et}}$ in the pro-\'etale setting.
\end{theorem}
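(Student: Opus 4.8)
The plan is to reduce the statement to the affinoid deformed comparison already proved in the preceding theorem, applied \emph{levelwise} along the Fr\'echet--Stein presentation $A_\infty(G)=\varprojlim_n A_n(G)$, and then to assemble the levelwise equivalences by passing to the inverse limit. By construction a family of pseudocoherent Frobenius $\varphi^a$-modules over $\widetilde{\Pi}^\infty_{*,A_\infty(G)}$ (and similarly in each of the other eight descriptions) is a compatible system $(M_n)_{n\geq 0}$, where $M_n$ is the corresponding object over the $A_n(G)$-deformed ring or sheaf and the transition isomorphisms $M_{n+1}\otimes_{A_{n+1}(G)}A_n(G)\xrightarrow{\sim}M_n$ are required to be $\varphi^a$-compatible; the deformed period rings and sheaves over $A_\infty(G)$ are themselves the inverse limits $\widetilde{\Pi}^\infty_{R,A_\infty(G)}=\varprojlim_n\widetilde{\Pi}^\infty_{R,A_n(G)}$, and so on, formed by completed tensor product with the Fr\'echet--Stein algebra. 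Since each $A_n(G)$ is a reduced smooth affinoid algebra it lies within the scope of the Assumption (it splits in a perfectoid cover), so the deformed period rings over $A_n(G)$ are sheafy and the previous theorem gives, for every fixed $n$ and $E$ of mixed characteristic, the equivalence of the nine categories at level $n$.

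First I would verify that the equivalences of the previous theorem are natural in $A_n(G)$, that is, that they commute with the flat base change $-\otimes_{A_{n+1}(G)}A_n(G)$ along the Fr\'echet--Stein transition maps. This holds because every functor occurring there --- the global-section functor of \cref{proposition3.13}, the pullback along the morphism of sites $\mathrm{FF}_{X,A,\mathrm{pro\acute et}}\to\mathrm{FF}_{X,A,\mathrm{\acute et}}$, the restrictions along the reified quasi-Stein coverings, and the base changes among $\widetilde{\Pi}^\infty$, $\widetilde{\Pi}^r$, $\widetilde{\Pi}^I$, $\widetilde{\Pi}^{[s,r]}$ --- is built from operations that commute with flat (indeed arbitrary) extension of the coefficient ring, and the comparison exact sequences of \cref{proposition4.1} together with the finiteness input \cite[Proposition 2.7.16]{KL16} used in the previous proof survive the flat base change $A_{n+1}(G)\to A_n(G)$; in particular pseudocoherence and \'etale-stable pseudocoherence are preserved. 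Consequently a compatible system in any one of the nine descriptions is carried to a compatible system in any other, and since an equivalence of categories induces an equivalence on the associated categories of $\mathbb{N}$-indexed inverse systems, one obtains the desired equivalences between the categories of families.

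The step that uses the genuine Fr\'echet--Stein input, rather than a purely formal limit argument, is the identification of a compatible system $(M_n)_n$ with a single well-defined object over the limit ring or sheaf, i.e.\ that $M_\infty=\varprojlim_n M_n$ is a coadmissible-type module over $\widetilde{\Pi}^\infty_{R,A_\infty(G)}$ whose levelwise localizations recover the $M_n$ with vanishing $\varprojlim^1$. For this I would invoke the standard theory of coadmissible modules over Fr\'echet--Stein algebras (as in our previous paper), applied to the Fr\'echet--Stein structures on $\widetilde{\Pi}^\infty_{R,A_\infty(G)}$ and on the corresponding sheaves of rings, which are Fr\'echet--Stein over the base period ring because $A_\infty(G)$ is and completed tensor product preserves the relevant flatness and density, together with the finiteness already established in the previous proof: the global sections of each Frobenius bundle are finitely generated, the Frobenius action forcing this as in \cite[Theorem 4.6.1]{KL16}. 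The Frobenius structure descends through the limit with no difficulty, since $\varphi^a$ acts $A_\infty(G)$-linearly, hence trivially on the coefficient algebra and compatibly with all transition maps.

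The main obstacle I anticipate is precisely this coadmissibility/uniformity point: one must arrange that the radius $r>0$ appearing in descriptions 5 and 7, and the bound on the number of generators of the global sections of a Frobenius bundle used in the proof of the previous theorem, can be chosen \emph{compatibly} in $n$ --- independently of the Fr\'echet--Stein level --- so that $M_\infty$ genuinely lands in the target category rather than existing only as a formal inverse system. Granting that the reified quasi-Stein coverings of the previous proof may be taken the same for all $n$ (they involve only the radii and the algebra $R$, not the coefficient ring), the uniform bound follows from the uniform bound already present at a single level, and the argument closes.
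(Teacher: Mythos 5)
Your reduction is exactly the paper's argument: the paper proves this theorem in one line, as a direct consequence of the previous theorem applied to the compatible systems of objects defined over each $A_n(G)$, which is precisely your levelwise reduction together with the (implicitly assumed) naturality of the equivalences in the Fr\'echet--Stein level $n$. Your last two paragraphs, on coadmissibility and on realizing a compatible system as a single module over the $A_\infty(G)$-deformed ring with $\varprojlim^1$ vanishing and uniform choices of radii, address a stronger statement than the theorem's ``families'' formulation requires, so they are extra care rather than a needed step for the statement as written.
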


\begin{proof}
This is the direct consequence of the previous theorem if one considers the corresponding systems of all the objects defined over each $A_n(G)$.	
\end{proof}

\indent We now work over a perfectoid field $R$ and drop the assumption on the affinoid algebras we imposed before:

\begin{theorem} \mbox{\bf{(After Kedlaya-Liu \cite[Theorem 4.6.1]{KL16})}} 
In the setting where the space $X$ is $\mathrm{Spa}(R,R^+)$ where $R$ is further assumed to be an analytic field, and we let $A$ be a general reduced affinoid algebra, then we have the following categories are equivalent:\\
1.  The category of all the pseudocohrent Frobenius $\varphi^a$-modules over the sheaf $\widetilde{\Pi}^\infty_{*,A}$;\\
2.  The category of all the pseudocohrent Frobenius $\varphi^a$-modules over the sheaf $\widetilde{\Pi}_{*,A}$;\\
3.  The category of all the pseudocohrent Frobenius $\varphi^a$-modules over the sheaf $\widetilde{\Pi}^I_{*,A}$, for some closed interval $I$.\\
\indent Then we have the second group of categories which are equvalent:\\
4.  The category of all the strictly-pseudocoherent Frobenius $\varphi^a$-modules over $\widetilde{\Pi}^\infty_{R,A}$ such that for any closed interval $I$ the corresponding base change of any such module $M$ to the module over $\widetilde{\Pi}^I_{R,A}$ is \'etale-stably pseudocoherent;\\
5.  The category of all the pseudocoherent Frobenius $\varphi^a$-modules over $\widetilde{\Pi}_{R,A}$ such that each $M$ of such modules comes from a base change from some strictly-pseudocoherent Frobenius $\varphi^a$-module $M'$ over $\widetilde{\Pi}^r_{R,A}$ for some radius $r>0$ where the corresponding base change to (for any closed interval $I\subset (0,r]$) the module over $\widetilde{\Pi}^I_{R,A}$ is assumed to be \'etale-stably pseudocoherent;\\
6.  The category of all the pseudocoherent $\varphi^a$-bundles over $\widetilde{\Pi}_{R,A}$;\\
7.  The category of all the Frobenius $\varphi^a$-modules over $\widetilde{\Pi}^{[s,r]}_{R,A}$ where we have $0<s\leq r/p^{ha}$;\\
8.  The category of all the pseudocoherent sheaves over the adic version Fargues-Fontaine curve $\mathrm{FF}_{X,A,\text{\'et}}$ in the \'etale setting;\\
9.  The category of all the pseudocoherent sheaves over the adic version Fargues-Fontaine curve $\mathrm{FF}_{X,A,\text{pro\'et}}$ in the pro-\'etale setting.\\
\end{theorem}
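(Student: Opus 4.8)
The plan is to transcribe the proof of the preceding deformed theorem almost verbatim, the only genuinely new point being that we must secure sheafiness of the relevant deformed period rings \emph{without} the hypothesis (imposed in the previous theorem) that $A$ split in a perfectoid cover. The functors among the nine categories are exactly those recalled above: $9\to 8$ is pullback along the morphism of sites $\mathrm{FF}_{X,A,\text{pro\'et}}\to\mathrm{FF}_{X,A,\text{\'et}}$, the functors emanating from category $4$ into $5$, $6$, $7$, $1$ and $3$ are base changes, and $9\to 7$, $9\to 6$ are restrictions.

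First I would record the sheafiness input. Since $R$ is now assumed to be an analytic field, the undeformed period rings $\widetilde{\Pi}^{[s,r]}_R$, $\widetilde{\Pi}^r_R$, $\widetilde{\Pi}^\infty_R$ become perfectoid --- in particular stably uniform and sheafy --- after the splitting base change $\widehat{\otimes}_{\mathcal{O}_E}\mathcal{O}_{E_\infty}$, by the propositions recalled in Section 3 after Kedlaya-Liu \cite[Lemma 4.1.12, Proposition 4.1.13]{KL16}. Therefore the deformed rings, being of the form $\triangle\,\widehat{\otimes}_{\mathbb{Q}_p}A$ with $\triangle$ sousperfectoid (after splitting) and $A$ a reduced affinoid algebra, are again sousperfectoid, hence sheafy by the Kedlaya-Hansen criterion \cite{KH}; this holds with no further assumption on $A$, and takes over the role played in the previous theorem by the splitting hypothesis.

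With sheafiness in hand I would run the same chain of implications as in the proof of the previous theorem. The equivalences among $9$, $8$ and $6$ follow from \cref{proposition3.13} applied over $X=\mathrm{Spa}(R,R^+)$ (which identifies pseudocoherent sheaves on the \'etale and pro-\'etale sites of the Fargues-Fontaine curve with $\varphi^a$-bundles over $\widetilde{\Pi}_{R,A}$), and feeding in the semilinear Frobenius together with the effective-descent statements $\widetilde{\Pi}^r_{R,A}\to\widetilde{\Pi}^s_{R,A}\oplus\widetilde{\Pi}^{[s,r]}_{R,A}$ and $\widetilde{\Pi}^I_{R,A}\to\bigoplus_i\widetilde{\Pi}^{I_i}_{R,A}$ of \cref{proposition3.13}, one also brings in $7$. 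For $4\Leftrightarrow 6$: the base change $\widetilde{\Pi}^\infty_{R,A}\to\widetilde{\Pi}^I_{R,A}$ being exact gives full faithfulness, and for essential surjectivity I would take a $\varphi^a$-bundle $(M_I)_I$ in category $6$, cover the total space by reified quasi-Stein opens of the shape $\mathrm{Spa}(\widetilde{\Pi}^{[sp^{nah},rp^{nah}]}_{R,A},\widetilde{\Pi}^{[sp^{nah},rp^{nah}],\mathrm{Gr}}_{R,A})$ with well-located $r,s$, use the Frobenius isomorphisms to bound uniformly the number of generators of the local sections, and conclude via \cite[Proposition 2.7.16]{KL16} that the global sections over $\widetilde{\Pi}^\infty_{R,A}$ are finitely generated; pseudocoherence is then extracted from the kernel of a finite free approximation exactly as in \cite[Theorem 4.6.1]{KL16}. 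The same bookkeeping gives the equivalence of $4$, $5$, $6$ and, passing to the sheafified picture and keeping track that the sheaves in $1$, $2$, $3$ are required to be \'etale-stably pseudocoherent over $A$ as well, of $1$, $2$, $3$.

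The step I expect to be the main obstacle is the finite-generation/uniform-boundedness argument: one must verify that the reified quasi-Stein formalism of \cite{KL16} and the generator counts behind \cite[Proposition 2.7.16]{KL16} still function when the base is a general reduced affinoid $A$ (Noetherian Banach, but not splitting), so that the pseudocoherence bookkeeping is stable under the $A$-deformation. Once that is granted, everything else is a line-by-line transcription of \cite[Theorem 4.6.1]{KL16}. As in the preceding theorem, I do not claim any comparison between the two groups $\{1,2,3\}$ and $\{4,\dots,9\}$; that would require additional analysis.
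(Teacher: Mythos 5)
Your overall route is the paper's route: the paper proves this theorem simply by pointing back to the proof of the deformed comparison theorem (the one proved under the splitting assumption on $A$), and your transcription of that argument — functors, equivalences of $9,8,6$ via \cref{proposition3.13}, bringing in $7$ by the Frobenius action and the descent statements, $4\Leftrightarrow 6$ via the reified quasi-Stein covering, the uniform bound on generators from the Frobenius, \cite[Proposition 2.7.16]{KL16}, and the kernel-of-a-finite-free-covering step, with no comparison claimed between the groups $\{1,2,3\}$ and $\{4,\dots,9\}$ — is faithful to it.

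However, the one genuinely new point you single out, and the way you resolve it, is where there is a gap. You claim that $\widetilde{\Pi}^{I}_{R,A}=\triangle\,\widehat{\otimes}_{\mathbb{Q}_p}A$ is sousperfectoid, hence sheafy by \cite{KH}, merely because $\triangle$ is sousperfectoid and $A$ is a reduced affinoid. That implication is not available: a split injection $\triangle\hookrightarrow\triangle^{\mathrm{perf}}$ only produces a split injection $\triangle\,\widehat{\otimes}\,A\hookrightarrow\triangle^{\mathrm{perf}}\widehat{\otimes}\,A$, and $\triangle^{\mathrm{perf}}\widehat{\otimes}\,A$ is not perfectoid unless $A$ itself splits into a perfectoid covering $A^{\mathrm{perf}}$ — which is exactly the assumption from Section~3 that this theorem drops. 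Indeed, if your argument were valid it would apply verbatim to arbitrary perfect uniform $R$ (the propositions after \cite[Lemma 4.1.12, Proposition 4.1.13]{KL16} that you invoke do not use the analytic-field hypothesis), so the splitting assumption in the earlier theorem would have been superfluous; note also that your argument never actually uses the hypothesis that $R$ is an analytic field, which is the whole point of this variant. The hypothesis on $R$ has to enter as the replacement for the dropped hypothesis on $A$: for $R$ an analytic field the rings $\widetilde{\Pi}^{[s,r]}_{R}$ are strongly noetherian, so the deformed rings $\widetilde{\Pi}^{[s,r]}_{R,A}$, being quotients of $\widetilde{\Pi}^{[s,r]}_{R}\{T_1,\dots,T_d\}$, are again strongly noetherian and hence sheafy (and the finiteness/uniform-boundedness bookkeeping behind \cite[Proposition 2.7.16]{KL16} goes through in the noetherian setting), in parallel with the noetherian-tower mechanism used in Section~5. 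So the step you flagged as "the main obstacle" is real, but your proposed substitute for the splitting assumption does not close it; the analytic-field hypothesis must be used, and sousperfectoidness of the $A$-deformation is not the way to use it.
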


\begin{proof}
See the proof of the previous theorem.	\\
\end{proof}


\subsection{The Comparison with the Schematic Fargues-Fontaine Curve}

\indent In this section we are going to study the corresponding relationship between the Frobenius modules over period rings and the certain sheaves over the deformed version of schematic Fargues-Fontaine curves. We actually discussed in our previous paper the corresponding results in the setting where $E$ is just $\mathbb{Q}_p$. Now we consider the more general setting and the corresponding story in the equal characteristic. First we consider the following key lemmas which are analogs in our context of \cite[6.2.2-6.2.4]{KL15}, \cite[Lemma 4.6.9]{KL16} and \cite[Proposition 2.11]{T1}:

\begin{lemma} \mbox{\bf{(After Kedlaya-Liu \cite[6.2.2-6.2.4]{KL15}, \cite[Lemma 4.6.9]{KL16}})}\\ \mbox{\bf{(And also see \cite[Proposition 2.11]{T1})}}
For any Frobenius $\varphi^a$-bundle $M$ over $\widetilde{\Pi}_{R,A}$, then we have that for any interval $I=[s,r]$ where $0<s\leq r$ the map $\varphi^a-1: M_{[s,rq]}\rightarrow M_{[s,r]}$ is surjective after taking some Frobenius twist as in \cite[6.2.2]{KL15} and our previous work namely the new morphism $\varphi^a-1: M(n)_{[s,rq]}\rightarrow M(n)_{[s,r]}$ for sufficiently large $n\geq 1$. As in the previous established version one may have the chance to take the number to be $1$ if the bundle initially comes from the corresponding base change from the integral Robba ring.	
\end{lemma}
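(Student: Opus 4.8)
The plan is to reduce to the undeformed, perfectoid-coefficient case by a base-change/devissage argument, exactly mirroring the structure of the proofs of \cite[6.2.2--6.2.4]{KL15}, \cite[Lemma 4.6.9]{KL16} and \cite[Proposition 2.11]{T1}, and then to track how the Frobenius twist interacts with the deformation over $A$. First I would fix the interval $I=[s,r]$ and work with the fundamental exact sequence for $\widetilde{\Pi}^{[s,r]}_{R,A}$: the key input is that the map $\varphi^a - 1$ on $\widetilde{\Pi}^{[s,rq]}_{R,A} \to \widetilde{\Pi}^{[s,r]}_{R,A}$ is ``almost surjective'' — its cokernel is killed by passing from $R$ to suitable finite \'etale (or the perfectoid $E_\infty$, or $R[T^{\pm p^{-\infty}}]$-type) covers, after which the deformed rings become perfectoid by the Propositions following \cite[Lemma 4.1.12]{KL16} and \cite[Proposition 4.1.13]{KL16}. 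So Step 1 is to record the surjectivity statement for the structure sheaves themselves, which is the deformation over $A$ (via completed tensor product, using the Schauder-basis property noted in the reference to \cite[Definition 6.1]{KP}) of the corresponding statement in \cite[Lemma 5.2.4]{KL15} and \cite{KL16}.

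Step 2 is to handle a general Frobenius $\varphi^a$-bundle $M$ rather than the trivial one. Here I would pick a finite generating set of $M_{[s,rq]}$, build a surjection from a finite free $\varphi^a$-module, and use \cite{prop3.18} to realize $M$ (locally on the covering) as a quotient of a finite projective Frobenius module, reducing the surjectivity of $\varphi^a - 1$ on $M$ to the finite free case by the five-lemma applied to the resulting short exact sequence of complexes. The finite free case follows from Step 1 coordinate-wise. The only subtlety is uniformity: one must ensure the number of generators can be bounded independently of the twist $n$, which is precisely the role the Frobenius action plays in \cite[Theorem 4.6.1]{KL16} (the bound comes from applying $\varphi^a$ to move between overlapping intervals $[sp^{nah},rp^{nah}]$ and invoking \cite[Proposition 2.7.16]{KL16}); this part carries over verbatim since the Frobenius acts $A$-linearly and trivially on $A$.

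Step 3 is the Frobenius twist itself: the obstruction to surjectivity of $\varphi^a - 1 : M_{[s,rq]} \to M_{[s,r]}$ lives in a finitely generated module over (a localization of) the bounded Robba ring, and twisting $M$ by $\mathcal{O}(n)$ — i.e.\ rescaling the Frobenius by $\pi^{-n}$ as in \cite[6.2.2]{KL15} — shrinks this obstruction; for $n$ large enough the geometric-series argument (Newton-iteration / telescoping, exactly as in \emph{loc.\ cit.}) converges in the Fr\'echet topology on $\widetilde{\Pi}^{[s,r]}_{R,A}$ and produces a preimage. The deformation over $A$ does not interfere because all the norm estimates are Gauss/quotient-Gauss norms on the $\pi$-adic/monomial expansions, which behave uniformly in the $T_i$-direction (cf.\ the computations in Propositions \cite{proposition5.7}, \cite{prop3.18} and the log-convexity proposition earlier in the paper). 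Finally, when $M$ arises by base change from a Frobenius module over the integral Robba ring $\widetilde{\Pi}^{\mathrm{int}}_{R,A}$, the obstruction module is already $\pi$-adically complete and one can take $n=1$, again as in the undeformed case.

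The main obstacle I anticipate is the uniform bound on the number of generators in Step 2 in the deformed, merely pseudocoherent (rather than finite projective) setting: pseudocoherence is not obviously stable under the infinite process of gluing across the quasi-Stein covering, and one has to be careful that the ``\'etale-stably pseudocoherent'' hypothesis built into the definition of $\varphi^a$-bundle over $\widetilde{\Pi}_{R,A}$ propagates through the twist. I would handle this by working first with the finite free resolution and only afterwards descending pseudocoherence via the kernel, exactly as in the cited proof of \cite[Theorem 4.6.1]{KL16}, invoking \cite[Proposition 2.7.16]{KL16} and \cite[Lemma 1.1.5]{KL16} to control the kernel; the extra deformation variables contribute nothing new here since they are inert under Frobenius and the relevant rings remain Fr\'echet (or ind-Fr\'echet) with the same formal properties.
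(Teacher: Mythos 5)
The paper itself gives no argument here: it simply defers to the proof of Proposition 2.11 of the cited previous paper, which in turn follows the successive-approximation scheme of Kedlaya--Liu (6.2.2--6.2.4). Measured against that, your proposal has the right ultimate mechanism but organizes the reduction in a way that, as written, fails at one step. The claim in your Step 2 that ``the finite free case follows from Step 1 coordinate-wise'' is not correct: a finite free $\varphi^a$-module over $\widetilde{\Pi}^{[s,r]}_{R,A}$ carries a Frobenius given by an arbitrary invertible matrix $B$, so the map in question is $x\mapsto B\varphi^a(x)-x$, which is not a direct sum of copies of the ring-level map $\varphi^a-1$, and the fundamental exact sequence for the structure sheaf (your Step 1) is not the relevant input. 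Controlling this matrix is precisely what the Frobenius twist is for: twisting rescales $B$ by a power of $\pi$, and one then solves $(\varphi^a-1)x=y$ by writing $y$ in terms of a finite generating set and summing a telescoping series whose convergence is guaranteed by the resulting contraction estimate in the (quotient) Gauss norms, uniformly in the $A$-direction since $A$ is inert under $\varphi^a$. Your Step 3 does contain this idea, but it is misattributed to shrinking an ``obstruction module over the bounded Robba ring''; the cokernel of $\varphi^a-1$ is not naturally such a module, and the argument does not proceed by analyzing it --- it constructs preimages directly from the generators and the norm bound on the twisted matrix. So the proof needs to be reorganized: fix generators of $M_{[s,rq]}$ (finitely many, by the pseudocoherence/quotient-of-projective statement you cite), write the Frobenius structure as a matrix, twist until the matrix has small enough norm on $[s,r]$, and run the iteration; the passage from a covering free module to the quotient $M$ is then immediate (lift, solve, project --- no five lemma is needed).

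Two smaller points. First, the uniformity-in-$n$ discussion you import from the proof of the comparison theorem (bounding numbers of generators over a quasi-Stein covering) is not needed for this lemma, which is stated for a fixed closed interval; what must be uniform is only the norm estimate in the deformation direction, and that is exactly the Gauss-norm computation you indicate. Second, your justification of the $n=1$ refinement (``the obstruction module is already $\pi$-adically complete'') is too vague: the correct reason, as in the undeformed case, is that for a module obtained by base change from the integral ring $\widetilde{\Pi}^{\mathrm{int}}_{R,A}$ the Frobenius matrix can be taken with integral entries, so a single twist already yields the required contraction.
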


\begin{proof}
See the proof of \cite[Proposition 2.11]{T1}.	
\end{proof}

\begin{lemma} \mbox{\bf{(After Kedlaya-Liu \cite[6.2.2-6.2.4]{KL15}, \cite[Lemma 4.6.9]{KL16}})}\\ \mbox{\bf{(And also see \cite[Proposition 2.11]{T1})}}
For any finitely generated bundle $M$ carrying the Frobenius action over $\widetilde{\Pi}_{R,A}$, then we have that for any interval $I=[s,r]$ where $0<s\leq r$ the map $\varphi^a-1: M_{[s,rq]}\rightarrow M_{[s,r]}$ is surjective after taking some Frobenius twist as in \cite[6.2.2]{KL15} and our previous work namely the new morphism $\varphi^a-1: M(n)_{[s,rq]}\rightarrow M(n)_{[s,r]}$ for sufficiently large $n\geq 1$. As in the previous established version one may have the chance to take the number to be $1$ if the bundle initially comes from the corresponding base change from the integral Robba ring.	
\end{lemma}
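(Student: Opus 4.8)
The plan is to reduce this to the case of Frobenius $\varphi^a$-bundles treated in the preceding lemma, using \cref{prop3.18} to account for the possible failure of projectivity. Since $M$ is a finitely generated bundle carrying a Frobenius action over $\widetilde{\Pi}_{R,A}$, \cref{prop3.18} lets us present it as a quotient $f\colon P\twoheadrightarrow M$ of a finite projective $\varphi^a$-bundle $P$ over $\widetilde{\Pi}_{R,A}$ with $f$ intertwining the Frobenius actions; concretely each $M_I$ is realized as a quotient of $P_I$ over $\widetilde{\Pi}^I_{R,A}$ compatibly with the transition isomorphisms $M_J\otimes_{\widetilde{\Pi}^J_{R,A}}\widetilde{\Pi}^I_{R,A}\overset{\sim}{\to}M_I$. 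Because base change is right exact, the surjection $P_I\to M_I$ persists after restriction to any closed subinterval and after any Frobenius twist $P(n)\to M(n)$, twisting being an exact autoequivalence.

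Now fix $I=[s,r]$ with $0<s\le r$. Applying the preceding lemma (the case of Frobenius $\varphi^a$-bundles) to $P$ furnishes an integer $n\ge 1$ such that $\varphi^a-1\colon P(n)_{[s,rq]}\to P(n)_{[s,r]}$ is surjective, with $n=1$ admissible whenever $P$ --- and hence the chosen presentation --- descends from the integral Robba ring. In the commutative square
\[
\xymatrix@R+1pc@C+1.5pc{
P(n)_{[s,rq]} \ar[r]^{\varphi^a-1} \ar[d]_{f} & P(n)_{[s,r]} \ar[d]^{f} \\
M(n)_{[s,rq]} \ar[r]_{\varphi^a-1} & M(n)_{[s,r]}
}
\]
both vertical arrows are surjective; so given $m\in M(n)_{[s,r]}$ we lift it to $\widetilde m\in P(n)_{[s,r]}$, write $\widetilde m=(\varphi^a-1)(x)$ with $x\in P(n)_{[s,rq]}$, and conclude that $(\varphi^a-1)(f(x))=f(\widetilde m)=m$. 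Hence $\varphi^a-1\colon M(n)_{[s,rq]}\to M(n)_{[s,r]}$ is surjective, which is the assertion.

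The one delicate point, and the main obstacle, is checking that \cref{prop3.18} genuinely yields a projective $\varphi^a$-\emph{bundle} $P$ together with a Frobenius-equivariant surjection onto $M$ compatibly across all intervals $I$ simultaneously, and not merely an abstract surjection of modules over the single ring $\widetilde{\Pi}_{R,A}$; this amounts to unwinding the definition of a finitely generated Frobenius bundle and verifying that a finite system of generators can be chosen stably under $\varphi^a$ on each $\widetilde{\Pi}^I_{R,A}$, using the transition isomorphisms above. Once that compatibility is secured, the remaining twist bookkeeping is exactly that of \cite[6.2.2]{KL15} and \cite[Proposition 2.11]{T1}, and no convergence input beyond the preceding lemma is required.
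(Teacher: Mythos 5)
Your diagram chase is fine as far as it goes, but the step you yourself flag as ``the one delicate point'' is a genuine gap rather than bookkeeping. \cref{prop3.18} produces Frobenius-equivariant projective covers of finitely generated Frobenius \emph{modules} over the rings listed there; it says nothing about \emph{bundles}, i.e.\ it does not produce a finite projective $\varphi^a$-bundle $P=(P_I)_I$ together with surjections $P_I\to M_I$ compatible with all the transition isomorphisms and with $\varphi^a$ simultaneously. At this stage of the paper a finitely generated Frobenius bundle is not known to arise by base change from a single finitely generated Frobenius module over $\widetilde{\Pi}_{R,A}$, nor to be generated by finitely many $\varphi^a$-invariant global sections after a twist: those statements are exactly the content of the subsequent lemmas and of the comparison theorems, and they are proved \emph{using} the surjectivity lemma you are trying to establish. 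So the natural ways of manufacturing your $P$ (via the bundle--module correspondence, or via invariant generating sections of $M(n)$) are circular, and nothing in the proposal fills the hole. The same issue affects your remark about $n=1$: the hypothesis concerns $M$ descending to the integral Robba ring, and you would additionally need the projective cover itself, as a bundle, to be chosen over the integral ring, which is again not supplied.

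For comparison, the paper's own proof is simply the direct argument of \cite[Proposition 2.11]{T1}, following \cite[6.2.2--6.2.4]{KL15}: fix the interval, choose finitely many generators of the relevant section module, bound the matrix expressing $\varphi^a$ of the generators, and after twisting by $n\gg 0$ solve $(\varphi^a-1)y=x$ by an explicitly convergent series. That argument uses only finite generation and completeness of the section modules, never projectivity, so it applies verbatim to the finitely generated case; no reduction to the projective-bundle case is needed. If you wish to keep your reduction, you must first prove that a finitely generated Frobenius bundle admits a Frobenius-equivariant surjection from a finite projective (or free) Frobenius bundle compatibly on all intervals, and that step is not easier than, and cannot at this point be extracted from, the direct convergence argument.
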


\begin{proof}
See the proof of \cite[Proposition 2.11]{T1}.	
\end{proof}

\begin{lemma} \mbox{\bf{(After Kedlaya-Liu \cite[6.2.2-6.2.4]{KL15}, \cite[Lemma 4.6.9]{KL16}})}\\ \mbox{\bf{(And also see \cite[Proposition 2.12]{T1})}}
For any Frobenius $\varphi^a$-bundle $M$ over $\widetilde{\Pi}_{R,A}$, then we have that for sufficiently large number $n\geq 0$ the module $M$ could be generated by finitely many Frobenius $\varphi^a$-invariant elements of the global sections of $M(n)$.	
\end{lemma}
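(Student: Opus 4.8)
The plan is to transcribe the proof of \cite[Proposition 2.12]{T1} (the undeformed analog, following \cite[6.2.2--6.2.4]{KL15} and \cite[Lemma 4.6.9]{KL16}) to the present setting, the only genuinely new point being the bookkeeping in the affinoid coefficient direction $A$. Set $q=p^{ah}$. First I would reduce to a single interval: fix a closed interval $I=[s,r]$ with $0<s\leq r/q$. By the definition of a $\varphi^a$-bundle the component $M_I$ is finite projective over $\widetilde{\Pi}^{[s,r]}_{R,A}$, hence generated by finitely many elements $e_1,\dots,e_k$; and since $M$ is reconstructed from the family $(M_J)_J$ by gluing along the cocycle isomorphisms, it suffices to produce $\varphi^a$-invariant global sections of a suitable twist $M(n)$ whose restrictions to $M(n)_I$ generate it over $\widetilde{\Pi}^{[s,r]}_{R,A}$.

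The heart of the argument is the construction of these invariant sections. By the surjectivity lemma proved just above, there is an integer $n_0$ such that for every $n\geq n_0$ the map $\varphi^a-1\colon M(n)_{[s,rq]}\to M(n)_{[s,r]}$ is surjective; fix such an $n$. For each generator $e_j$ I would manufacture a $\varphi^a$-invariant lift by the telescoping procedure of \cite[6.2.2]{KL15}: using the Frobenius-semilinear isomorphisms that identify $M(n)$ across the nested intervals $[sq^{-m},rq^{-m}]$ for $m=0,1,2,\dots$ together with the surjectivity just quoted, one solves the successive Frobenius-correction equations, and because the twist $n$ is positive the Frobenius pushforward on $M(n)$ is contracting on a fixed interval, so the resulting series converges simultaneously in every norm $\left\|.\right\|_{\alpha^t,A}$ --- here one uses the log-convexity of $t\mapsto\left\|x\right\|_{\alpha^t,A}$ established earlier --- and its sum is a $\varphi^a$-invariant global section $v_j$ of $M(n)$ with $v_j-e_j$ as small as desired in $M(n)_I$.

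I would then conclude by combining openness with Frobenius-equivariance. Since $M(n)_I$ is finite projective (in the pseudocoherent variant one first uses \cref{prop3.18} to write it as a quotient of a finite projective module, so that the Banach-module form of Nakayama still applies), the locus of $k$-tuples generating $M(n)_I$ is open in $(M(n)_I)^{k}$; as $(e_1,\dots,e_k)$ generates and $(v_1,\dots,v_k)$ lies arbitrarily close to it, $(v_1,\dots,v_k)$ generates $M(n)_I$ too, and since the $v_j$ are $\varphi^a$-invariant while the Frobenius orbit of $I$ covers every interval $J$ occurring in $M$, this generation propagates to $M(n)_J$ for all $J$, so $(v_1,\dots,v_k)$ generates $M(n)$, equivalently $M$, globally. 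The main obstacle is the middle step: passing from sections over a single interval to genuinely global $\varphi^a$-invariant sections and controlling the convergence of the correction series uniformly in the $A$-direction --- one must verify that the estimates survive the quotient seminorms attached to $A$ so that contraction is preserved, and that the threshold $n_0$ can be taken independent of the chosen generator $e_j$; given the surjectivity lemmas, this is the only nontrivial point, everything else being a routine adaptation of the Kedlaya--Liu arguments.
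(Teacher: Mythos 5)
Your proposal is correct and follows essentially the same route as the paper, which for this lemma simply defers to the proof of \cite[Proposition 2.12]{T1}, itself the Kedlaya--Liu argument of \cite[6.2.2--6.2.4]{KL15} and \cite[Lemma 4.6.9]{KL16} that you transcribe: twist until $\varphi^a-1$ is surjective on the relevant interval, correct a finite generating set of $M(n)_I$ to nearby $\varphi^a$-invariant sections by the convergent successive-approximation series, conclude generation over $I$ by the small-perturbation (invertible change-of-basis) argument, and propagate to all intervals by Frobenius. The $A$-direction bookkeeping you flag is handled exactly as you suggest, since the deformed Gauss/quotient seminorms preserve the contraction estimates used in the undeformed case.
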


\begin{proof}
See the proof of \cite[Proposition 2.12]{T1}.	
\end{proof}

\begin{lemma} \mbox{\bf{(After Kedlaya-Liu \cite[6.2.2-6.2.4]{KL15}, \cite[Lemma 4.6.9]{KL16}})}\\ \mbox{\bf{(And also see \cite[Proposition 2.12, Proposition 2.11]{T1})}}
For any finitely generated bundle $M$ carrying Frobenius action over $\widetilde{\Pi}_{R,A}$, then we have that for sufficiently large number $n\geq 0$ the module $M$ could be generated by finitely many Frobenius $\varphi^a$-invariant elements of the global sections of $M(n)$.	
\end{lemma}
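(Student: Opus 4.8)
The plan is to reduce the assertion to the case of a genuine Frobenius $\varphi^a$-bundle, which is the content of the preceding lemma (the analogue of \cite[Proposition 2.12]{T1}). First I would invoke \cref{prop3.18} to write the finitely generated bundle $M$ carrying a Frobenius action over $\widetilde{\Pi}_{R,A}$ as a Frobenius-equivariant quotient $F\twoheadrightarrow M$, where $F$ is a finite projective module over $\widetilde{\Pi}_{R,A}$ endowed with a compatible semilinear $\varphi^a$-action. Concretely one picks finitely many generators of $M$ over $\widetilde{\Pi}_{R,A}$, forms the corresponding finite free (or finite projective) module, lifts the Frobenius structure, and checks that this can be arranged compatibly across all the intervals $I$ so that $F$ becomes a Frobenius $\varphi^a$-bundle subject to the condition $s\leq rp^{ah}$ on each $I=[s,r]$. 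The kernel $N$ of $F\to M$ is then again a finitely generated Frobenius bundle, but its finer structure will not be needed.

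I would then apply the preceding lemma to the Frobenius $\varphi^a$-bundle $F$: for all sufficiently large $n\geq 0$ the twist $F(n)$ is generated by finitely many $\varphi^a$-invariant elements $v_1,\dots,v_k$ of its global sections. Since $F\to M$ is Frobenius-equivariant, tensoring with the twist yields a Frobenius-equivariant surjection $F(n)\twoheadrightarrow M(n)$, so the images $\overline{v}_1,\dots,\overline{v}_k$ lie in $H^0(M(n))$, are again $\varphi^a$-invariant, and generate $M(n)$ over $\widetilde{\Pi}_{R,A}$; untwisting, they give finitely many $\varphi^a$-invariant global sections of $M(n)$ generating $M$, which is the claim. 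At this step one uses, just as in the preceding lemma and in \cite[6.2.2-6.2.4]{KL15}, the surjectivity of $\varphi^a-1$ after a Frobenius twist --- for finitely generated bundles this is the content of the second lemma of this subsection --- to ensure that no information is lost in passing from module generators of $M(n)$ to $\varphi^a$-invariant generators.

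The hard part will be the first step: checking that the lift of $M$ to a genuine Frobenius $\varphi^a$-bundle $F$ can be done so that the Frobenius structures glue across the family of intervals $I$ with the required compatibility, i.e.\ that \cref{prop3.18} upgrades from the level of the individual rings $\widetilde{\Pi}^I_{R,A}$ to bundles over $\widetilde{\Pi}_{R,A}$. Alternatively one can bypass the reduction and repeat the argument of the preceding lemma verbatim; the only inputs that then change are that the surjectivity of $\varphi^a-1$ on twists, and the uniform bound on the number of generators over the reified quasi-Stein covering by the spaces $\mathrm{Spa}(\widetilde{\Pi}^{[sp^{nah},rp^{nah}]}_{R,A},\cdot)$ coming from Frobenius equivariance as in \cite[Theorem 4.6.1]{KL16}, must be read off from the finitely generated versions established above rather than the projective ones, while the positivity responsible for the vanishing of the relevant $H^1$ after a twist carries over unchanged.
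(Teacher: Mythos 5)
Your proposal is sound, but note that the paper itself records no argument here: its ``proof'' is only a pointer to \cite[Proposition 2.11, Proposition 2.12]{T1}, whose underlying mechanism (as in \cite[6.2.2--6.2.4]{KL15}) is exactly your fallback route --- take finitely many generators of a section $M_I$ over a suitable closed interval, propagate them by the Frobenius structure, and then correct them to $\varphi^a$-invariant elements using the surjectivity of $\varphi^a-1$ on a sufficiently positive twist, which for finitely generated bundles is precisely the second lemma of this group. So your ``repeat the argument verbatim with the finitely generated inputs'' option is essentially the paper's approach. Your primary route --- covering $M$ by a finite projective Frobenius bundle $F$ via \cref{prop3.18} and quoting the preceding lemma for $F$ --- is a genuinely different and cleaner reduction, and the pushforward step is fine: under a Frobenius-equivariant surjection $F(n)\twoheadrightarrow M(n)$, invariant generators map to invariant generators, and for this route you do not actually need the surjectivity of $\varphi^a-1$ on $M(n)$ at all (that input is internal to the projective case), so that sentence of yours is superfluous. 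The one real issue is the one you flagged: \cref{prop3.18} is a statement about modules over a single ring, not about bundles, so the projective cover must be produced compatibly with the gluing data. This gap is repairable rather than fatal: by the Frobenius condition $s\leq rp^{ah}$ a bundle is reconstructed from its section over a single interval $[s,r]$ together with its $\varphi^a$-structure, so one applies \cref{prop3.18} to the finitely generated $\varphi^a$-module $M_{[s,r]}$ over $\widetilde{\Pi}^{[s,r]}_{R,A}$, obtains a finite projective (indeed free) $\varphi^a$-equivariant cover there, and spreads it out to a projective $\varphi^a$-bundle covering $M$ by Frobenius pullback and base change; with that supplement either of your two routes yields the lemma, the second being the one the paper (via \cite{T1}) actually intends.
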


\begin{proof}
See the proof of \cite[Proposition 2.12, Proposition 2.11]{T1}.	
\end{proof}

\begin{lemma} \mbox{\bf{(After Kedlaya-Liu \cite[6.2.2-6.2.4]{KL15}, \cite[Lemma 4.6.9]{KL16}})}\\ \mbox{\bf{(And also see \cite[Proposition 2.12, Proposition 2.11, Proposition 2.14]{T1})}}
For any Frobenius $\varphi^a$-module $M$ over $\widetilde{\Pi}_{R,A}$, then we have that for sufficiently large number $n\geq 1$ the space $H^0_{\varphi^a}(M(n))$ generates $M$ and the space $H^1_{\varphi^a}(M(n))$ vanishes.
\end{lemma}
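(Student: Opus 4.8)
The plan is to deduce both assertions from the two immediately preceding lemmas — surjectivity of the twisted operator $\varphi^a-1$ on closed intervals, and generation of $M(n)$ by finitely many $\varphi^a$-invariant global sections — together with the effective-descent statement \cref{proposition3.13} and a Mayer--Vietoris / quasi-Stein induction on $\widetilde{\Pi}_{R,A}$.

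First I would note that the generation claim is almost immediate: using the comparison theorem of the preceding subsection we may replace $M$ by the associated $\varphi^a$-bundle $(M_I)_I$, and then the preceding lemma on generation produces, for all sufficiently large $n$, finitely many $\varphi^a$-invariant global sections of $M(n)$ generating $M(n)$, hence $M$ after untwisting by $\mathcal{O}(-n)$. Since both this condition and the $H^1$-vanishing condition below are stable under enlarging $n$, it suffices to find a threshold for each and take the larger one.

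For $H^1_{\varphi^a}(M(n))=0$ I would fix a model of $M$ over $\widetilde{\Pi}^{r_0}_{R,A}$ and, for each closed subinterval, the associated Banach module $M(n)_I$ over $\widetilde{\Pi}^I_{R,A}$. By the surjectivity lemma, after enlarging $n$ the differential of the relevant $\varphi^a$-twisted two-term Frobenius complex is surjective on every relevant closed interval, so that the Frobenius complex of $M(n)$ over each $\widetilde{\Pi}^{[s,r]}_{R,A}$ is acyclic in degree $1$. I would then use the effective-descent exact sequences of \cref{proposition3.13} — in particular $\widetilde{\Pi}^{r}_{R,A}\to\widetilde{\Pi}^{s}_{R,A}\oplus\widetilde{\Pi}^{[s,r]}_{R,A}$ and its refinements by finitely many closed intervals covering a given interval — to build a \v{C}ech-type resolution computing the $\varphi^a$-cohomology over $\widetilde{\Pi}^{r}_{R,A}$ from the closed-interval pieces, and pass to the union $\widetilde{\Pi}_{R,A}=\bigcup_r\widetilde{\Pi}^{r}_{R,A}$ along a nested exhaustion of admissible radii. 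The closed-interval terms contribute nothing in degree $1$, and the $\varprojlim^1$ obstruction appearing when passing to the Fr\'echet limit is killed by the Mittag--Leffler property furnished by the density of the transition maps in \cref{proposition3.13}; this yields $H^1_{\varphi^a}(M(n))=0$. The affinoid deformation enters only through the quotient Gauss norms $\|\cdot\|_{\alpha^r,A}$ and the \'etale-stable pseudocoherence already present in the objects, so no additional argument is needed there.

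The step I expect to be the main obstacle is the uniformity required to carry out the passage to the limit: one must check that the twist $n$ making $\varphi^a-1$ surjective can be chosen independently of the closed interval and of the point of $\mathrm{Spa}(A)$, and that the norms of the resulting preimages grow slowly enough to make the \v{C}ech/$\varprojlim^1$ argument close. This is exactly the slope-theoretic content of \cite[6.2.2-6.2.4]{KL15}, reproved in the present generality in the two preceding lemmas and in \cite[Proposition 2.11, Proposition 2.12]{T1}; once that uniform surjectivity is in place, the rest of the argument is formal homological algebra built on the descent sequences of \cref{proposition3.13}.
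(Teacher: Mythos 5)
Your proposal is correct and follows essentially the same route as the paper: the paper's proof is simply a citation to \cite[Proposition 2.11, Proposition 2.12, Proposition 2.14]{T1}, which carry out exactly the Kedlaya--Liu scheme you describe — generation from the preceding invariant-sections lemma, and $H^1$-vanishing from interval-wise surjectivity of the twisted $\varphi^a-1$ combined with descent over closed intervals and control of the $\varprojlim^1$ term in the Fr\'echet/union limit. The uniformity-in-the-interval issue you flag is indeed the substantive content delegated to those cited propositions, so nothing in your outline conflicts with the paper's intended argument.
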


\begin{proof}
See the proof of \cite[Proposition 2.12, Proposition 2.11, Proposition 2.14]{T1}.	
\end{proof}

\begin{lemma} \mbox{\bf{(After Kedlaya-Liu \cite[6.2.2-6.2.4]{KL15}, \cite[Lemma 4.6.9]{KL16}})}\\ \mbox{\bf{(And also see \cite[Proposition 2.12, Proposition 2.11, Proposition 2.14]{T1})}}
For any finitely generated module $M$ carrying Frobenius action over $\widetilde{\Pi}_{R,A}$, then we have that for sufficiently large number $n\geq 1$ the space $H^0_{\varphi^a}(M(n))$ generates $M$ and the space $H^1_{\varphi^a}(M(n))$ vanishes.
\end{lemma}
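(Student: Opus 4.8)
The plan is to reduce this statement to the preceding lemma, which already handles the case of a Frobenius $\varphi^a$-module — in particular a finite projective one — over $\widetilde{\Pi}_{R,A}$. First I would invoke \cref{prop3.18} to present the given finitely generated $\varphi^a$-module $M$ as a quotient $P \twoheadrightarrow M$ of a finite projective module $P$ over $\widetilde{\Pi}_{R,A}$ carrying a compatible $\varphi^a$-action, and set $N = \ker(P \to M)$, which is a $\varphi^a$-stable submodule (it need not be finitely generated, and that will not matter for the argument). Since twisting by the relevant line bundle $\mathcal{O}(1)$ is exact, for every $n \geq 0$ this yields a short exact sequence of Frobenius modules $0 \to N(n) \to P(n) \to M(n) \to 0$.

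Next I would apply the $\varphi^a$-cohomology functor, namely take the cohomology of the two-term complex $[\,\cdot \xrightarrow{\varphi^a-1} \cdot\,]$. As this is simply the cohomology of a complex of abelian groups concentrated in degrees $0$ and $1$, the snake lemma produces the six-term exact sequence
\begin{multline*}
0 \to H^0_{\varphi^a}(N(n)) \to H^0_{\varphi^a}(P(n)) \to H^0_{\varphi^a}(M(n)) \\
\to H^1_{\varphi^a}(N(n)) \to H^1_{\varphi^a}(P(n)) \to H^1_{\varphi^a}(M(n)) \to 0
\end{multline*}
for every $n$, with no topological input required. By the preceding lemma applied to the finite projective Frobenius $\varphi^a$-module $P$, there is an integer $n_0 \geq 1$ such that for all $n \geq n_0$ one has $H^1_{\varphi^a}(P(n)) = 0$ and $H^0_{\varphi^a}(P(n))$ generates $P$ over $\widetilde{\Pi}_{R,A}$.

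Fixing $n \geq n_0$, I would conclude as follows. The vanishing $H^1_{\varphi^a}(P(n)) = 0$ forces $H^1_{\varphi^a}(M(n)) = 0$, since by the exact sequence above the latter is a quotient of the former. For the generation claim, the surjection $P \to M$ is $\varphi^a$-equivariant, hence carries $H^0_{\varphi^a}(P(n))$ into $H^0_{\varphi^a}(M(n))$; as $P \to M$ is onto and $H^0_{\varphi^a}(P(n))$ already generates $P$, the image of $H^0_{\varphi^a}(P(n))$ in $M$ generates $M$, and a fortiori so does $H^0_{\varphi^a}(M(n))$. This establishes the claim for all $n \geq n_0$. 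The essentially only non-formal ingredients are the presentation of $M$ as a quotient of a finite projective Frobenius module (\cref{prop3.18}) and the effective vanishing/generation bound for finite projective modules (the preceding lemma, ultimately resting on \cite[Proposition 2.11, Proposition 2.12, Proposition 2.14]{T1}); I do not anticipate a genuine obstacle, the only points worth double-checking being that \cref{prop3.18} indeed supplies such a projective surjection in the Frobenius-equivariant category over $\widetilde{\Pi}_{R,A}$ and that the twist $M \mapsto M(n)$ is exact in this setting.
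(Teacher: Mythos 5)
Your argument is correct, and it gives an explicit proof where the paper itself only points to the corresponding statements of \cite{T1} (Propositions 2.11, 2.12, 2.14). The two routes differ slightly in where the dévissage happens: the chain of lemmas in this section (and in \cite{T1}/\cite{KL15}) carries the "finitely generated" case along in parallel with the projective case at the level of the twisting statements themselves — i.e.\ the surjectivity of $\varphi^a-1$ on twists and the generation by $\varphi^a$-invariant sections are established directly for finitely generated objects, and the cohomological statement is then read off — whereas you defer all the analytic content to the projective-module case (the preceding lemma) and obtain the finitely generated case purely formally, via the $\varphi^a$-equivariant projective cover from \cref{prop3.18}, exactness of twisting, and the six-term sequence coming from the snake lemma applied to the two-term complexes. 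Your formal steps are sound: $H^1_{\varphi^a}$ is a cokernel, hence right exact, so vanishing for $P(n)$ forces vanishing for $M(n)$; and the image of $H^0_{\varphi^a}(P(n))$ under the equivariant surjection lands in $H^0_{\varphi^a}(M(n))$ and generates $M$. The only hypotheses you lean on — that \cref{prop3.18} produces a finite projective cover in the Frobenius-equivariant category over $\widetilde{\Pi}_{R,A}$, and that the preceding lemma applies to that cover — are exactly what those statements assert, so there is no gap; your version has the mild advantage of making the reduction explicit and not requiring any finiteness of the kernel, at the cost of relying on the projective case as a black box.
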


\begin{proof}
See the proof of \cite[Proposition 2.12, Proposition 2.11, Proposition 2.14]{T1}.	
\end{proof}

\indent Then we have the following key corollary which is analog of \cite[Corollary 6.2.3, Lemma 6.3.3]{KL15}, \cite[Corollary 4.6.10]{KL16} and \cite[Corollary 2.13, Corollary 3.4, Proposition 3.9]{T1}:

\begin{corollary} \label{coro4.16}
I. When $M_\alpha,M,M_\beta$ are three Frobenius $\varphi^a$-bundles over the ring $\widetilde{\Pi}_{R,A}$ then we have that for sufficiently large $n\geq 0$ we have the following exact sequence:
\[
\xymatrix@R+0pc@C+0pc{
0\ar[r]\ar[r]\ar[r] &M_{\alpha,I}(n)^{\varphi^a}
\ar[r]\ar[r]\ar[r] &M_I(n)^{\varphi^a}
\ar[r]\ar[r]\ar[r] &M_{\beta,I}(n)^{\varphi^a} \ar[r]\ar[r]\ar[r] &0,
}
\]	
for each interval $I$ (same holds true for finite objects). \\
II. When $M_\alpha,M,M_\beta$ are three Frobenius $\varphi^a$-modules over the ring $\widetilde{\Pi}_{R,A}$ then we have that for sufficiently large $n\geq 0$ we have the following exact sequence:
\[
\xymatrix@R+0pc@C+0pc{
0\ar[r]\ar[r]\ar[r] &M_{\alpha,I}(n)^{\varphi^a}
\ar[r]\ar[r]\ar[r] &M_I(n)^{\varphi^a}
\ar[r]\ar[r]\ar[r] &M_{\beta,I}(n)^{\varphi^a} \ar[r]\ar[r]\ar[r] &0,
}
\]	
for each interval $I$ (same holds true for finite objects).\\
III. For a Frobenius $\varphi^a$-bundle $M$ over $\widetilde{\Pi}_{R,A}$, and for each module $M_I$ over some $\widetilde{\Pi}_{R,A}^{I}$, we put for any element $f$ such that $\varphi^af=p^df$:
\begin{displaymath}
M_{I,f}:=\bigcup_{n\in \mathbb{Z}}f^{-n}M_I(dn)^{\varphi^a}.	
\end{displaymath}
Then with this convention suppose we have three Frobenius $\varphi^a$-bundles taking the form of $M_\alpha,M,M_\beta$ over
$\widetilde{\Pi}_{R,A}$, then for each closed interval $I$ we have the following is an exact sequence:
\[
\xymatrix@R+0pc@C+0pc{
0\ar[r]\ar[r]\ar[r] &M_{\alpha,I,f}\ar[r]\ar[r]\ar[r] &M_{I,f}
\ar[r]\ar[r]\ar[r] &M_{\beta,I,f} \ar[r]\ar[r]\ar[r] &0,
}
\]
where each module in the exact sequence is now a module over $\widetilde{\Pi}_{R,A}[1/f]^{\varphi^a}$ (same holds true for finite objects). \\
IV. For a Frobenius $\varphi^a$-module $M$ over $\widetilde{\Pi}_{R,A}$, we put for any element $f$ such that $\varphi^af=p^df$:
\begin{displaymath}
M_{f}:=\bigcup_{n\in \mathbb{Z}}f^{-n}M(dn)^{\varphi^a}.	
\end{displaymath}
Then with this convention suppose we have three Frobenius $\varphi^a$-modules taking the form of $M_\alpha,M,M_\beta$ over
$\widetilde{\Pi}_{R,A}$, then we have the following is an exact sequence:
\[
\xymatrix@R+0pc@C+0pc{
0\ar[r]\ar[r]\ar[r] &M_{\alpha,f}\ar[r]\ar[r]\ar[r] &M_{f}
\ar[r]\ar[r]\ar[r] &M_{\beta,f} \ar[r]\ar[r]\ar[r] &0,
}
\]
where each module in the exact sequence is now a module over $\widetilde{\Pi}_{R,A}[1/f]^{\varphi^a}$ (same holds true for finite objects). \\
V. Suppose $M$ is now a pseudocoherent Frobenius $\varphi^a$-module over $\widetilde{\Pi}_{R,A}$, then we have that the corresponding module $M_f$ then is also pseudocoherent over the ring $\widetilde{\Pi}_{R,A}[1/f]^{\varphi^a}$. For a Frobenius $\varphi^a$-bundle $M$ over $\widetilde{\Pi}_{R,A}$, and for each module $M_I$ over some $\widetilde{\Pi}_{R,A}^{I}$, we put for any element $f$ such that $\varphi^af=p^df$:
\begin{displaymath}
M_{I,f}:=\bigcup_{n\in \mathbb{Z}}f^{-n}M_I(dn)^{\varphi^a}.	
\end{displaymath}
Then with this convention suppose we have three Frobenius $\varphi^a$-bundles taking the form of $M_\alpha,M,M_\beta$ over
$\widetilde{\Pi}_{R,A}$, then for each closed interval $I$ we have the following is an exact sequence:
\[
\xymatrix@R+0pc@C+0pc{
0\ar[r]\ar[r]\ar[r] &M_{\alpha,I,f}\ar[r]\ar[r]\ar[r] &M_{I,f}
\ar[r]\ar[r]\ar[r] &M_{\beta,I,f} \ar[r]\ar[r]\ar[r] &0,
}
\]
where each module in the exact sequence is now in our situation a pseudocoherent module over $\widetilde{\Pi}_{R,A}[1/f]^{\varphi^a}$.\\

 \end{corollary}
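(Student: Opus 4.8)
First I would dispatch parts I and II directly from the long exact sequence in $\varphi^a$-cohomology together with the $H^1$-vanishing lemma just established. Given a short exact sequence $0\to M_\alpha\to M\to M_\beta\to 0$ of Frobenius $\varphi^a$-bundles (resp. modules) over $\widetilde{\Pi}_{R,A}$, twisting by $n$ and restricting to a closed interval $I$ are exact operations, so applying the snake lemma to the three complexes $[\,\cdot\ \xrightarrow{\varphi^a-1}\ \cdot\,]$ gives the exact sequence
\[
0\to M_{\alpha,I}(n)^{\varphi^a}\to M_I(n)^{\varphi^a}\to M_{\beta,I}(n)^{\varphi^a}\to H^1_{\varphi^a}(M_{\alpha,I}(n))
\]
for every $n$. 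By the preceding lemma $H^1_{\varphi^a}(M_\alpha(n))$ vanishes for all $n$ sufficiently large, and the bound is uniform in $I$ since twisted $\varphi^a$-invariants and $\varphi^a$-cohomology are unchanged when a $\varphi^a$-stable interval is enlarged; hence the short exact sequence of the statement holds once $n$ is large enough. The finite case runs identically via the finitely generated versions of the same lemmas.

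For parts III and IV I would pass to the filtered colimit along multiplication by $f$. By construction $M_{I,f}=\varinjlim_n f^{-n}M_I(dn)^{\varphi^a}$ and $\widetilde{\Pi}_{R,A}[1/f]^{\varphi^a}=\varinjlim_n f^{-n}\widetilde{\Pi}_{R,A}(dn)^{\varphi^a}$, the transition maps being legitimate precisely because $\varphi^a f=p^d f$ matches the twist. Since the sequences of I and II hold for all $n$ beyond a fixed threshold, a cofinal range, and filtered colimits are exact, we obtain the exact sequences $0\to M_{\alpha,\bullet,f}\to M_{\bullet,f}\to M_{\beta,\bullet,f}\to 0$ of modules over $\widetilde{\Pi}_{R,A}[1/f]^{\varphi^a}$ in both the global and the per-interval forms, the module structure over the graded-invariant ring being visible from the colimit presentation.

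The exactness part of V is a restatement of III, so the substance there is that $M_f$ is pseudocoherent over $B:=\widetilde{\Pi}_{R,A}[1/f]^{\varphi^a}$, which I would prove by resolving. By \cref{prop3.18} a pseudocoherent Frobenius $\varphi^a$-module $M$ is a quotient of a finite projective Frobenius $\varphi^a$-module; since the class of pseudocoherent modules is stable under taking kernels of surjections from finite projectives onto them, iterating yields a resolution $\cdots\to P_1\to P_0\to M\to 0$ by finite projective Frobenius $\varphi^a$-modules whose syzygies are again pseudocoherent, hence finitely generated, Frobenius $\varphi^a$-modules. Splitting this into short exact sequences, applying the exactness of $(-)_f$ from I--IV to each, and splicing produces an exact complex $\cdots\to (P_1)_f\to (P_0)_f\to M_f\to 0$ over $B$. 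One is then left to check that $(P)_f$ is finite projective over $B$ when $P$ is a finite projective Frobenius $\varphi^a$-module; this is the assertion that the vector bundle on the (adic, resp. schematic) Fargues--Fontaine curve attached to $P$ restricts to a finite projective module on the affine chart $\mathrm{Spec}\,B$, which is contained in the comparison theorems above. Granting it, the displayed complex is a finite-projective resolution of $M_f$, so $M_f$ is pseudocoherent. The per-interval statement for $\varphi^a$-bundles follows by the same argument carried out interval by interval, using that each $M_I$ is \'etale-stably pseudocoherent over $\widetilde{\Pi}^I_{R,A}$ and that twisted $\varphi^a$-invariants over a $\varphi^a$-stable closed interval coincide with those over $\widetilde{\Pi}_{R,A}$, so the invariant modules are genuinely modules over $\widetilde{\Pi}_{R,A}[1/f]^{\varphi^a}$.

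The step I expect to be the main obstacle is this last ingredient of V, namely controlling $(P)_f$ for finite projective $P$: one must show that inverting $f$ in the graded ring of $\varphi^a$-invariants still produces a finitely generated projective module and that projectivity is not lost under the deformation over $A$. This rests on having the finite-generation-of-invariants and $H^1$-vanishing lemmas available, after a single Frobenius twist, uniformly over the reified quasi-Stein coverings of the chart, exactly as in \cite[6.2.2--6.2.4, 6.3.3]{KL15} and \cite[Proposition 3.9]{T1}, together with the sheafiness hypotheses recorded earlier; my intention would be to import those arguments wholesale and only verify that nothing in the passage to the $A$-deformed period rings disturbs the bookkeeping.
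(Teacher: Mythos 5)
Your proposal is correct and takes essentially the same route as the paper, whose proof simply defers to \cite[Corollary 6.2.3, Lemma 6.3.3]{KL15}, \cite[Corollary 4.6.10]{KL16} and \cite[Corollary 2.13, Corollary 3.4, Proposition 3.9]{T1}: the long exact sequence in $\varphi^a$-cohomology plus the twisted $H^1$-vanishing for I--II, exactness of the filtered colimit defining $(-)_f$ for III--IV, and resolution by finite projective Frobenius modules combined with finite projectivity of $(P)_f$ for V are precisely the arguments of those sources. Only note that the finite projectivity of $M_{I,f}$ for finite projective objects is recorded in the paper just after this corollary (the analog of \cite[Theorem 6.3.9]{KL15}, via \cite[Proposition 3.6]{T1}), so in the paper's ordering you should cite it from the references rather than from ``the comparison theorems above.''
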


\begin{proof}
See the proof of \cite[Corollary 6.2.3, Lemma 6.3.3]{KL15}, \cite[Corollary 4.6.10]{KL16} and \cite[Corollary 2.13, Corollary 3.4, Proposition 3.9]{T1}.	
\end{proof}

\indent Then we recall the following construction both from our previous consideration and the corresponding consideration in \cite[Definition 6.3.1]{KL15} and \cite[Definition 4.6.11]{KL16}:

\begin{setting}
Now we consider the graded commutative ring $P_{R,A}$ which is constructed from the ring $\widetilde{\Pi}_{R,A}^+$, or $\widetilde{\Pi}_{R,A}$, or $\widetilde{\Pi}_{R,A}^\infty$ by taking each $\pi^d$-eigenfunction of the operator $\varphi^a$, where $d$ corresponds to the degree of each element in $P_{d,R,A}$. Then for each element $f$ in this graded commutative ring, one considers the affine charts taking the form of $\mathrm{Spec}P_{R,A}[1/f]_0$ which further glues to a projective scheme which is denoted by $\mathrm{Proj}P_{R,A}$ and called the schematic deformed version of the schematic Fargues-Fontaine curve.
\end{setting}

And we have the following analog of \cite{KL15} and the corresponding construction in \cite[Setting 3.3]{T1}:

\begin{setting}
Starting from any $\varphi^a$-bundle $M$ over the ring $\widetilde{\Pi}_{R,A}$, one has by the construction in the previous corollary the corresponding module $M_{I,f}$ over the ring $P_{R,A}[1/f]_0$, which then by considering the natural base change construction gives rise to the map:
\begin{displaymath}
M_{I,f}\otimes_{P_{R,A}[1/f]_0}	\widetilde{\Pi}^I_{R,A}[1/f]\rightarrow 
M_{I,f}\otimes_{\widetilde{\Pi}^I_{R,A}}\widetilde{\Pi}^I_{R,A}[1/f]
\end{displaymath}
for each closed interval $I$.
\end{setting}

\begin{proposition} \mbox{\bf{(After Kedlaya-Liu \cite[Theorem 6.3.9]{KL15})}}
The map defined in the previous setting is a bijection and the corresponding module $M_{I,f}$ for each specific interval $I$ is then in our situation projective of finite type.
\end{proposition}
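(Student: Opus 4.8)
The plan is to adapt the proof of \cite[Theorem 6.3.9]{KL15} to the present deformed setting, using as input the $\varphi^a$-cohomological lemmas already assembled above. Throughout I would reduce immediately to the case $I=[s,r]$ a closed interval: a $\varphi^a$-bundle over $\widetilde{\Pi}_{R,A}$ is by definition a compatible family $(M_I)_I$, the module $M_{I,f}$ and the target $M_I\otimes_{\widetilde{\Pi}^I_{R,A}}\widetilde{\Pi}^I_{R,A}[1/f]$ of the comparison map are both compatible with restriction of intervals, and the $M_I$ are finite projective over $\widetilde{\Pi}^I_{R,A}$.

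First I would establish that $M_{I,f}$ is finitely presented over $P_{R,A}[1/f]_0$. By the lemmas preceding \cref{coro4.16} (following \cite[Proposition 2.11, Proposition 2.12, Proposition 2.14]{T1}), for all sufficiently large $n$ the $\varphi^a$-invariant sections $H^0_{\varphi^a}(M(n))$ generate $M$ while $H^1_{\varphi^a}(M(n))$ vanishes; passing to the graded ring $P_{R,A}$ this produces a surjection of graded modules from a finite direct sum of shifts of $P_{R,A}$ onto the graded module attached to $M$. Localizing at $f$ and taking degree-zero parts yields a surjection from a finite free $P_{R,A}[1/f]_0$-module onto $M_{I,f}$, so $M_{I,f}$ is finitely generated; applying the exact sequences of \cref{coro4.16} (parts III and V) to the kernel, which is again realized by a pseudocoherent Frobenius $\varphi^a$-bundle, shows that $M_{I,f}$ is in fact finitely presented — indeed pseudocoherent — over $P_{R,A}[1/f]_0$.

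Next I would show the comparison map is an isomorphism. For the "trivial" building blocks, namely bundles base-changed from $\widetilde{\Pi}^{\mathrm{int}}_{R,A}$ whose associated degree-zero localized modules are free over $P_{R,A}[1/f]_0$, the map is an isomorphism by direct inspection of the definitions of $P_{R,A}$ and of $M_{I,f}=\bigcup_n f^{-n}M_I(dn)^{\varphi^a}$. For general $M$, choose a finite presentation of $M_{I,f}$ by such free modules; by right-exactness of $-\otimes_{P_{R,A}[1/f]_0}\widetilde{\Pi}^I_{R,A}[1/f]$ together with the exact sequences of \cref{coro4.16} for the corresponding interval modules over $\widetilde{\Pi}^I_{R,A}$, a five-lemma diagram chase reduces the general case to the free case. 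Surjectivity can alternatively be seen directly: after inverting $f$ the relevant maps $\varphi^a-1$ on twists of $M_I$ are surjective by the lemmas before \cref{coro4.16}, so $M_I[1/f]$ is generated over $\widetilde{\Pi}^I_{R,A}[1/f]$ by the sections defining $M_{I,f}$.

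Finally, for finite projectivity of $M_{I,f}$, I would use that $\widetilde{\Pi}^I_{R,A}[1/f]$ is faithfully flat over $P_{R,A}[1/f]_0$ — the analog of the faithful-flatness input in \cite{KL15}, which one reduces, via the perfectoid splitting after base change to $E_\infty$ and \cref{proposition3.13}, to the corresponding statement over the undeformed period rings. Since the base change of $M_{I,f}$ along this map is $M_I[1/f]$, which is finite projective over $\widetilde{\Pi}^I_{R,A}[1/f]$ because $M_I$ is finite projective over $\widetilde{\Pi}^I_{R,A}$, and $M_{I,f}$ is finitely presented, faithfully flat descent of projectivity gives that $M_{I,f}$ is finite projective over $P_{R,A}[1/f]_0$. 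The main obstacle I expect is the finite-generation-to-finite-presentation step for $M_{I,f}$ over the possibly non-Noetherian ring $P_{R,A}[1/f]_0$, together with the faithful flatness input; both require the full force of the $\varphi^a$-cohomology vanishing and generation lemmas above and the careful control of the twisting parameter $n$ exactly as in \cite[Theorem 6.3.9]{KL15}, now with the extra bookkeeping coming from the affinoid deformation parameter $A$.
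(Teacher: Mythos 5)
Your outline is, in structure, exactly the argument the paper is relying on: the paper itself offers no proof here but defers verbatim to \cite[Proposition 3.6]{T1}, which is the adaptation of \cite[Theorem 6.3.9]{KL15} that you reconstruct (resolve $M$ by twisted trivial Frobenius bundles using the generation-by-invariants and $H^1$-vanishing lemmas, get exactness and finite presentation of $M_{I,f}$ from \cref{coro4.16}, prove bijectivity of the comparison map by the five lemma starting from the trivial case, and then descend projectivity from $\widetilde{\Pi}^I_{R,A}[1/f]$). Two small points of hygiene: take the twists in multiples of $d$ (as in the definition $M_{I,f}=\bigcup_n f^{-n}M_I(dn)^{\varphi^a}$), so that the localized degree-zero pieces of the trivial building blocks are genuinely free over $P_{R,A}[1/f]_0$ rather than merely invertible.

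The one step whose justification does not work as written is the flatness input. Faithful flatness (or even flatness) of $P_{R,A}[1/f]_0 \rightarrow \widetilde{\Pi}^{I}_{R,A}[1/f]$ cannot be ``reduced to the undeformed case via the perfectoid splitting and \cref{proposition3.13}'': that proposition is an equivalence between pseudocoherent sheaves and \'etale-stably pseudocoherent modules over the undeformed (or $E_\infty$-base-changed) rings, and it says nothing about the graded ring of $\varphi^a$-eigenvectors or about flatness of its localizations inside the deformed Robba ring. What is actually needed is (i) control of the deformed graded ring itself, i.e.\ the identification of each graded piece of $P_{R,A}$ with the $A$-deformation of the corresponding piece of $P_R$ — this uses that $\varphi^a$ acts trivially on $A$ together with the strictness/Schauder-basis arguments and the twisted $H^1$-vanishing, not the sheaf-theoretic comparison — and then (ii) the flatness statement of \cite{KL15} for $P[1/f]_0\rightarrow \widetilde{\Pi}^I_R[1/f]$, transported along this identification to the $A$-deformed rings, exactly as in \cite[Proposition 3.6]{T1}. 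Without (i) the descent step has no base to descend to, since a priori $P_{R,A}[1/f]_0$ could be larger or smaller than expected; with (i) and (ii) in hand, your finitely-presented-plus-flat-base-change argument closes the proof as intended.
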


\begin{proof}
See \cite[Proposition 3.6]{T1}.	
\end{proof}

\begin{proposition} \mbox{\bf{(After Kedlaya-Liu \cite[Theorem 6.3.12]{KL15})}}
We have the following categories are equivalent:\\
I. The category of all the $\varphi^a$-bundles over $\widetilde{\Pi}_{R,A}$;\\
II. The category of all the $\varphi^a$-modules over $\widetilde{\Pi}_{R,A}$;\\
III. The category of all the $\varphi^a$-modules over $\widetilde{\Pi}^\infty_{R,A}$;\\
IV.  The category of all the quasicoherent finite locally free sheaves over the scheme $\mathrm{Proj}P_{R,A}$.	
\end{proposition}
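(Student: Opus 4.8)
The plan is to follow the proofs of \cite[Theorem 6.3.9, Theorem 6.3.12]{KL15} and their deformed analogs in \cite[Section 3]{T1}, assembling the statement from the results already established above. First I would record the four functors. The functor from II to I is the base change $M\mapsto(M\otimes_{\widetilde{\Pi}_{R,A}}\widetilde{\Pi}^I_{R,A})_I$ and the functor from III to II is base change along $\widetilde{\Pi}^\infty_{R,A}\hookrightarrow\widetilde{\Pi}_{R,A}$; that these two are equivalences is precisely the finite projective specialization of the equivalence of categories 4, 5 and 6 in the earlier comparison theorem, proved by descending a bundle along $\widetilde{\Pi}^\infty_{R,A}\to\widetilde{\Pi}_{R,A}\to\prod_I\widetilde{\Pi}^I_{R,A}$ together with the fact (built into the notion of a $\varphi^a$-module here) that any such module over $\widetilde{\Pi}_{R,A}$ descends to $\widetilde{\Pi}^r_{R,A}$ for some $r>0$. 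The functor from I to IV sends a $\varphi^a$-bundle $M=(M_I)_I$ to the quasicoherent sheaf on $\mathrm{Proj}P_{R,A}$ whose sections over the standard affine chart $\mathrm{Spec}P_{R,A}[1/f]_0$ are the modules $M_{I,f}$ of the setting above; by the preceding proposition (the analog of \cite[Theorem 6.3.9]{KL15}) each $M_{I,f}$ is projective of finite type over $P_{R,A}[1/f]_0$ and the construction is compatible with the cocycle/gluing data, so the output really is a quasicoherent finite locally free sheaf. The functor from IV to I restricts a finite locally free sheaf to these charts, base-changes along $P_{R,A}[1/f]_0\to\widetilde{\Pi}^I_{R,A}[1/f]$, and uses the bijectivity asserted in the preceding proposition to reconstruct a $\varphi^a$-bundle over the reified quasi-Stein covering of $\widetilde{\Pi}_{R,A}$.

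For full faithfulness I would reduce morphism computations to $\varphi^a$-cohomology. For $\varphi^a$-bundles $M,N$ over $\widetilde{\Pi}_{R,A}$ the internal hom $\underline{\mathrm{Hom}}(M,N)$ is again a $\varphi^a$-bundle, so a morphism $M\to N$ is the same as an element of $H^0_{\varphi^a}\bigl(\underline{\mathrm{Hom}}(M,N)\bigr)$; by \cref{coro4.16} the localized invariants $\underline{\mathrm{Hom}}(M,N)_{I,f}$ glue to a sheaf on $\mathrm{Proj}P_{R,A}$, and comparing its global sections with the global $\varphi^a$-invariants --- using the vanishing of $H^1_{\varphi^a}$ after a Frobenius twist and the generation statements of the lemmas after \cite[Proposition 2.12, Proposition 2.14]{T1} --- identifies $\mathrm{Hom}$ computed in category I with $\mathrm{Hom}$ computed in category IV. Full faithfulness of the functors II$\to$I and III$\to$II then follows from exactness of the period rings in question and the same $\varphi^a$-cohomology vanishing.

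The genuinely delicate point, and the one I expect to be the main obstacle, is essential surjectivity of the functor from I to IV: every finite locally free sheaf $\mathcal{F}$ on $\mathrm{Proj}P_{R,A}$ must arise from a $\varphi^a$-bundle. Here I would argue as in \cite[Theorem 6.3.12]{KL15} and \cite[Theorem 4.6.1]{KL16}: cover $\mathrm{Proj}P_{R,A}$ by its standard affine charts, form the base changes to the period rings $\widetilde{\Pi}^{[sp^{nah},rp^{nah}]}_{R,A}$ attached to a reified quasi-Stein covering of the ambient space, and check that these glue into a $\varphi^a$-bundle whose associated sheaf recovers $\mathcal{F}$. The obstacle is finiteness, since $P_{R,A}[1/f]_0$ need not be noetherian in the deformed setting: one invokes \cite[Proposition 2.7.16]{KL16} to see that the relevant global sections over the quasi-Stein covering are finitely generated, using the Frobenius action to obtain a uniform bound on the number of generators over the pieces of the covering exactly as in the proof of the earlier comparison theorem, and then applies the generation-by-$\varphi^a$-invariants lemma to produce a finite free presentation and recognize the result as a bundle. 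Once this is in place, a routine check that the two composites are naturally isomorphic completes I$\simeq$IV, which together with the already established I$\simeq$II$\simeq$III yields the four-fold equivalence.
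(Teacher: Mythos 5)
Your proposal is correct and follows essentially the same route as the paper: both rest on the preceding proposition (the deformed analog of \cite[Theorem 6.3.9]{KL15}) together with the functor package of \cite[Theorem 6.3.12]{KL15}, the exact sequences of \cref{coro4.16}, and the Frobenius-twist generation and $H^1$-vanishing lemmas. The only real difference is presentational: the paper runs the equivalence starting from IV, pulling back over the affine charts and gluing via the analog of \cite[Lemma 6.3.7]{KL15} to land in III, then base-changing to II and forming the bundle in I, whereas you build the functor from I to IV via the $M_{I,f}$ construction and check essential surjectivity directly with the quasi-Stein finiteness argument; these are two framings of the same argument.
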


\begin{proof}
Based on the previous proposition, we only need to recall the corresponding functor involved. Starting from an object in the category $IV$ one locally considers the corresponding pullbacks along the natural map from the localized scheme attached to the key ring in the category $III$, which globally glues to a well-defined object in the corresponding module with Frobenius structure in the category $III$, by applying the corresponding analog of \cite[Lemma 6.3.7]{KL15}. Then the natural base change functor sends the resulting object to some well-defined object in the category $II$. Finally we associate the corresponding $\varphi^a$-bundle to derive the corresponding object in the first category $I$. 
\end{proof}

\begin{proposition} \mbox{\bf{(After Kedlaya-Liu \cite[Theorem 4.6.12]{KL16})}}
The natural pullback functor from the Fargues-Fontaine curve (which is schematic) to the scheme associated to the Robba ring $\widetilde{\Pi}^\infty_{R,A}$ establishes an equivalence between the category of all the pseudocoherent sheaves over the deformed version of the schematic Fargues-Fontaine curve (which is schematic) and the category of all the pseudocoherent modules over $\widetilde{\Pi}_{R,A}$ with isomorphisms established by the Frobenius pullbacks. 	
\end{proposition}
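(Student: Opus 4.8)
The plan is to bootstrap from the equivalence on finite locally free objects established in the preceding proposition, using \cref{prop3.18} to resolve pseudocoherent Frobenius modules by finite projective ones and \cref{coro4.16} to transport short exact sequences and the pseudocoherence property through the $\varphi^a$-invariant localization construction.

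First I would pin down the functor and its candidate inverse. For a homogeneous $f\in P_{R,A}$ of degree $d$ with $\varphi^af=\pi^df$, recall the module $M_f=\bigcup_{n\in\mathbb{Z}}f^{-n}M(dn)^{\varphi^a}$ over $P_{R,A}[1/f]_0=\widetilde{\Pi}_{R,A}[1/f]^{\varphi^a}$, and recall from the preceding proposition that as $f$ varies these patch over the affine charts $\mathrm{Spec}\,P_{R,A}[1/f]_0$ of $\mathrm{Proj}\,P_{R,A}$ to a quasi-coherent sheaf $\widetilde{M}$ whose pullback along the natural morphism to the scheme attached to $\widetilde{\Pi}^\infty_{R,A}$ recovers $M$. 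By part V of \cref{coro4.16}, if $M$ is a pseudocoherent $\varphi^a$-module over $\widetilde{\Pi}_{R,A}$ then each $M_f$ is pseudocoherent over $P_{R,A}[1/f]_0$, so $\widetilde{M}$ is a pseudocoherent sheaf on $\mathrm{Proj}\,P_{R,A}$; this gives a functor in the direction opposite to the pullback.

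For full faithfulness and essential surjectivity I would argue as in \cite[Theorem 6.3.12]{KL15} and \cite[Theorem 4.6.12]{KL16}. By \cref{prop3.18} any pseudocoherent $\varphi^a$-module $M$ is a quotient of a finite projective $\varphi^a$-module, and iterating on syzygies produces a resolution $\cdots\to P_1\to P_0\to M\to 0$ by finite projective $\varphi^a$-modules; the preceding proposition turns each $P_i$ into a finite locally free sheaf on $\mathrm{Proj}\,P_{R,A}$, while parts I--IV of \cref{coro4.16} show that the functor $(-)_f$ sends a short exact sequence of Frobenius modules to a short exact sequence over $P_{R,A}[1/f]_0$, so the resolution sheafifies chart by chart to a finite locally free resolution of $\widetilde{M}$. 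A morphism of pseudocoherent sheaves $\widetilde{M}\to\widetilde{N}$ then lifts to a morphism of the corresponding locally free resolutions, which by the equivalence of the preceding proposition is the sheafification of a morphism of the resolving finite projective $\varphi^a$-modules, hence descends to a morphism $M\to N$ inducing it; faithfulness is immediate since the pullback is already faithful on finite projective objects and $(-)_f$ is exact. Conversely, a pseudocoherent sheaf $\mathcal{F}$ on $\mathrm{Proj}\,P_{R,A}$ pulls back to a pseudocoherent $\varphi^a$-module $M$ over $\widetilde{\Pi}^\infty_{R,A}$, hence over $\widetilde{\Pi}_{R,A}$, and one checks $\widetilde{M}\cong\mathcal{F}$ on each affine chart, so the two functors are mutually inverse up to natural isomorphism, all compatibly with the Frobenius pullbacks.

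The step I expect to be the main obstacle is controlling pseudocoherence across the gluing in this deformed, generally non-Noetherian framework: one must verify that the pseudocoherent data $\{M_f\}$ over the individual rings $P_{R,A}[1/f]_0$ genuinely glue to a pseudocoherent sheaf on $\mathrm{Proj}\,P_{R,A}$, that the finite projective resolutions can be taken $\varphi^a$-equivariantly and compatibly with the deformation over $A$, and that the \'etale-stably pseudocoherent conditions over $A$ entering the definitions of the categories are preserved on overlaps. This is precisely the point at which one invokes the sheafiness of the deformed period rings (the running Assumption that $A$ splits in a perfectoid cover, together with Kedlaya--Hansen's sheafiness criterion) and the effective-descent statements of \cref{proposition3.13}.
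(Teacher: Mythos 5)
Your overall toolkit is the right one --- \cref{prop3.18}, the exactness and pseudocoherence statements of \cref{coro4.16}, and the already-established equivalence for finite locally free objects --- and the construction of the candidate inverse $M\mapsto\widetilde M$ via the charts $\mathrm{Spec}\,P_{R,A}[1/f]_0$ matches the paper. But the two steps on which your argument actually rests are not carried out, and one of them is flawed as stated. First, your full-faithfulness argument lifts a morphism of pseudocoherent sheaves $\widetilde M\to\widetilde N$ to a morphism of locally free resolutions on $\mathrm{Proj}\,P_{R,A}$; locally free sheaves on a projective scheme are not projective objects in the category of quasi-coherent sheaves, so such a lift (even up to homotopy, and compatibly with the $\varphi^a$-structure) is not automatic, and nothing in your proposal supplies it. Second, your essential-surjectivity step ends with ``one checks $\widetilde M\cong\mathcal F$ on each affine chart'' --- but that comparison is precisely the non-trivial content of the statement, not a routine verification, and you give no mechanism for it.

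The paper's proof is organized around exactly that missing comparison, and it avoids infinite resolutions altogether. Starting from a pseudocoherent sheaf $V$ it takes a single two-term presentation $0\to V_2\to V_1\to V\to 0$ with $V_1$ finite locally free, pulls back to $W_2\to W_1\to W\to 0$, introduces the auxiliary kernel $K$ of $W_2\to\mathrm{Ker}(W_1\to W)$, applies the exactness of $(-)_f$ from \cref{coro4.16} to the resulting four-term sequence, and then compares it chart by chart with the restriction of the original sheaf sequence: the middle isomorphism (the finite projective case) gives surjectivity of the outer comparison maps, rerunning the construction with $V$ replaced by $V_2$ plus the five lemma gives injectivity, and this forces $K_f=0$, hence $K=0$; exactness of the functor, preservation of pseudocoherence, and the equivalence then follow, with the other composite handled by the vector bundle case. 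If you want to salvage your route, you should replace the resolution-lifting argument by this kind of diagram chase (or work the lifting entirely on the module side, where \cref{prop3.18} does give genuine projective covers with Frobenius action), and you must actually prove the chart-by-chart isomorphism rather than assert it.
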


\begin{proof}
As in \cite[Theorem 4.6.12]{KL16} and our previous paper \cite[Proposition 3.10]{T1}, we start from an object $V$ in the category of the pseudocoherent sheaves over the schematic Fargues-Fontaine curve, which gives rise to the following exact sequence:
\[
\xymatrix@R+0pc@C+0pc{
0\ar[r]\ar[r]\ar[r] &V_2\ar[r]\ar[r]\ar[r] &V_1
\ar[r]\ar[r]\ar[r] &V \ar[r]\ar[r]\ar[r] &0,
}
\]	
where the sheaf $V_1$ is projective of finite type. Then the corresponding functor mentioned in the statement of the proposition establishes then the following exact sequence:
\[
\xymatrix@R+0pc@C+0pc{
 W_2\ar[r]\ar[r]\ar[r] &W_1
\ar[r]\ar[r]\ar[r] &W \ar[r]\ar[r]\ar[r] &0,
}
\]	
in the category of all the pseudocoherent modules over the Robba ring with isomorphisms established by the Frobenius pullbacks, where then $W_1$ is finite projective and $W_2$ is finite generated. Then as in \cite[Theorem 4.6.12]{KL16} and \cite[Proposition 3.10]{T1} we consider the kernel $K$ of the map $W_2\rightarrow \mathrm{Ker}(W_1\rightarrow W)$, which gives rise to the following exact sequence:
\[
\xymatrix@R+0pc@C+0pc{
0\ar[r]\ar[r]\ar[r] &K\ar[r]\ar[r]\ar[r] &W_2\ar[r]\ar[r]\ar[r] &W_1
\ar[r]\ar[r]\ar[r] &W \ar[r]\ar[r]\ar[r] &0,
}
\]
which then gives rise to the following exact sequence by \cref{coro4.16}:
\[
\xymatrix@R+0pc@C+0pc{
0\ar[r]\ar[r]\ar[r] &K_f\ar[r]\ar[r]\ar[r] &W_{2,f}\ar[r]\ar[r]\ar[r] &W_{1,f}
\ar[r]\ar[r]\ar[r] &W_f \ar[r]\ar[r]\ar[r] &0,
}
\]
over the ring $\widetilde{\Pi}_{R,A}[1/f]^{\varphi^a}$. Then by taking the corresponding section over the ring $P_{R,A}[1/f]_0$ we have the following commutative diagram:
\[
\xymatrix@R+2pc@C+1.7pc{
&0\ar[r]\ar[r]\ar[r] &V_{2}\Big|_{P_{R,A}[1/f]_0}\ar[d]\ar[d]\ar[d]\ar[r]\ar[r]\ar[r] &V_{1}\Big|_{P_{R,A}[1/f]_0}
\ar[d]\ar[d]\ar[d]\ar[r]\ar[r]\ar[r] &V \Big|_{P_{R,A}[1/f]_0} \ar[d]\ar[d]\ar[d]\ar[r]\ar[r]\ar[r] &0\\
0\ar[r]\ar[r]\ar[r] &K_f\ar[r]\ar[r]\ar[r] &W_{2,f}\ar[r]\ar[r]\ar[r] &W_{1,f}
\ar[r]\ar[r]\ar[r] &W_f \ar[r]\ar[r]\ar[r] &0.\\
}
\]
Now the middle vertical arrow is isomorphism which shows that the right vertical arrow is surjective. Then apply the corresponding construction to the situation where we replace $V$ by $V_2$ we will have the parallel result which imply that the first vertical arrow is also surjective. Now by five lemma we have that the rightmost vertical arrow is injective. Then by applying the corresponding consideration to the situation where we replace $V$ by $V_2$ we could derive the fact that actually the first vertical arrow is also injective. Up to here, we have that the corresponding module $K_f$ is zero which implies by considering \cref{coro4.16} $K$ itself is zero. Then by the previous results on the corresponding finite projective objects we have that functor in our situation sends the pseudocoherent objects to pseudocoherent objects. And from the argument above we have that the functor is exact. Then to finish the composition of the functors from category of Frobenius modules to the sheaves and back from sheaves to the category of Frobenius modules is an equivalence. And the other direction could be directly deduced from the results on the vector bundles.
\end{proof}

\newpage

\section{Deformation of Imperfect Period Rings}

\subsection{Key Rings}

\noindent We now consider the corresponding imperfectization of the corresponding constructions we considered above, after \cite{KL16}. The construction in \cite[Chapter 5]{KL16} is actually quite general, which is sort of direct imperfection of the corresponding perfect period rings and modules off the tower. We will consider the corresponding towers in \cite[Chapter 5]{KL16}, so we recall the setting as in the following:

\begin{assumption}
In this section, we are going to assume that $k$ is just $\mathbb{F}_{p^h}$.
\end{assumption}

\begin{setting}
Recall from \cite[Chapter 5]{KL16}, we have the following setting up. The base will be a Banach adic ring $(H,H^+)$ over $\mathfrak{o}_E$ which is assumed to be uniform and carrying the corresponding spectral norm $\alpha$. Then we consider the tower:
\[
\xymatrix@R+0pc@C+0pc{
...\ar[r]\ar[r]\ar[r] &H_{n-1}\ar[r]\ar[r]\ar[r] &H_n
\ar[r]\ar[r]\ar[r] &H_{n+1} \ar[r]\ar[r]\ar[r] &...,
}
\]	
where each map linking two adjacent rings is the corresponding morphism of the corresponding Banach uniform adic rings whose corresponding induced map on the adic spaces is surjective as in the corresponding construction in \cite[Definition 5.1.1]{KL16} with the corresponding spectral norm $\alpha_n$. The infinite level of the tower will be denoted by $H_\infty$. This is not actually necessarily complete under the multiplicative extension of all the finite level spectral norms $\alpha_n$ namely $\alpha_\infty$, so we need to consider the completed ring $\overline{H}_\infty$. In the situation where the tower is Fontaine perfectoid, we have that following \cite[Definition 5.1.1]{KL16} that this is called the corresponding perfectoid tower, which gives rise the equal characteristic counterpart $\overline{H'}_\infty$ wit the spectral norm $\overline{\alpha}_\infty$ correspondingly under the perfectoid correspondence in the sense of \cite[Theorem 3.3.8]{KL16}. Recall that from \cite[Definition 5.1.1]{KL16} the tower is called finite \'etale if each transition map is finite \'etale.
\end{setting}

\indent Now we describe the corresponding imperfect period rings which we will deform. These rings are those introduced in \cite[Definition 5.2.1]{KL16} by using series of imperfection processes. Recall in more detail we have:

\begin{setting} \label{setting6.3}
Fix a perfectoid tower $(H_\bullet,H^+_\bullet)$ Recall from \cite[Definition 5.2.1]{KL16} we have the following different imperfect constructions:\\
A. First we have the ring $\overline{H'}_\infty$, which could give us the corresponding ring $\widetilde{\Omega}^{\mathrm{int}}_{\overline{H'}_\infty}$; \\
B. We then have the ring $\widetilde{\Pi}^{\mathrm{int},r}_{\overline{H'}_\infty}$ coming from $\overline{H'}_\infty$;\\
C. We then have the ring $\widetilde{\Pi}^{\mathrm{int}}_{\overline{H'}_\infty}$ coming from $\overline{H'}_\infty$;\\
D. We then have the ring $\widetilde{\Pi}^{\mathrm{bd},r}_{\overline{H'}_\infty}$ coming from $\overline{H'}_\infty$;\\
E. We then have the ring $\widetilde{\Pi}^{\mathrm{bd}}_{\overline{H'}_\infty}$ coming from $\overline{H'}_\infty$;\\
F. We then have the ring $\widetilde{\Pi}^{r}_{\overline{H'}_\infty}$ coming from $\overline{H'}_\infty$;\\
G. We then have the ring $\widetilde{\Pi}^{[s,r]}_{\overline{H'}_\infty}$ coming from $\overline{H'}_\infty$;\\
H. We then have the ring $\widetilde{\Pi}_{\overline{H'}_\infty}$ coming from $\overline{H'}_\infty$;\\
I. $\Pi^{\mathrm{int},r}_{H}$ comes from the ring $\widetilde{\Pi}^{\mathrm{int},r}_{\overline{H'}_\infty}$ consisting of those elements of $\widetilde{\Pi}^{\mathrm{int},r}_{\overline{H'}_\infty}$ with the requirement that whenever we have $n$ an integer such that $nh>-\log_pr$ we have then $\theta(\varphi^{-n}(x))\in H_n$;\\
J. $\Pi^{\mathrm{int},\dagger}_{H}$ is defined to be the corresponding union of the rings in $I$;\\
K. $\Omega^\mathrm{int}_H$ is defined to be the corresponding period ring coming from the corresponding $\pi$-adic completion of the ring $\Pi^{\mathrm{int},\dagger}_{H}$ in $J$;\\
L. $\breve{\Omega}^{\mathrm{int}}_{H}$ is the ring which is defined to be the union of all the $\varphi^{-n}\Omega^\mathrm{int}_H$;\\
M. $\breve{\Pi}^{\mathrm{int},r}_H$ is then the ring which is defined to be the union of all the $\varphi^{-n}\Pi^{\mathrm{int},p^{hn}r}_H$;\\
N. $\breve{\Pi}^{\mathrm{int},\dagger}_H$ is define to be union of all the $\varphi^{-n}\breve{\Pi}^{\mathrm{int},\dagger}_H$;\\
O. $\widehat{\Omega}^{\mathrm{int}}_{H}$ is defined to be the $\pi$-completion of $\breve{\Omega}^{\mathrm{int}}_{H}$;\\
P. $\widehat{\Pi}^{\mathrm{int},r}_H$ is defined to be the $\mathrm{max}\{\|.\|_{\overline{\alpha}_\infty^r},\|.\|_{\pi-\text{adic}}\}$-completion of $\breve{\Pi}^{\mathrm{int},r}_H$;\\
Q. We then have $\widehat{\Pi}^{\mathrm{int},\dagger}_H$ by taking the union over $r>0$;\\
R. Correspondingly we have $\breve{\Omega}_{H}$, $\breve{\Pi}^{\mathrm{bd},r}_H$, $\breve{\Pi}^{\mathrm{bd},\dagger}_H$ by inverting the element $\pi$;\\
S. Correspondingly we also have $\widehat{\Omega}_{H}$, $\widehat{\Pi}^{\mathrm{bd},r}_H$, $\widehat{\Pi}^{\mathrm{bd},\dagger}_H$ by inverting the corresponding element $\pi$;\\
T. We also have $\Omega_{H}$, $\Pi^{\mathrm{bd},r}_H$, $\Pi^{\mathrm{bd},\dagger}_H$ again by inverting the element $\pi$;\\
U. Taking the $\max\{\|.\|_{\overline{\alpha}_\infty^s},\|.\|_{\overline{\alpha}_\infty^r}\}$ (for $0<s\leq r$) completion of the ring $\Pi^{\mathrm{bd},r}_H$ we have the ring $\Pi^{[s,r]}_{H}$, while taking the Fr\'echet completion with respect to the norm $\|.\|_{\overline{\alpha}^t_\infty}$ for each $0<t\leq r$ we have the ring $\Pi^r_{H}$;\\
V. Taking the union we have the ring $\Pi_H$;\\
W. We use the notation $\breve{\Pi}_H$ to denote the corresponding union of all the $\varphi^{-n}\Pi_H$;\\
X. We use the notation $\breve{\Pi}^{[s,r]}_{H}$ to be the corresponding union of all the $\varphi^{-n}\Pi^{[p^{-hn}s,p^{-hn}r]}_H$;\\
Y. We use the notation $\breve{\Pi}_H^r$ to be the union of all the $\varphi^{-n}\breve{\Pi}_H^{p^{-hn}r}$.
\end{setting}

\indent Then we have the following direct analog of the relative version of the ring defined above (here as before the ring $A$ denotes a reduced affinoid algebra):

\begin{setting} \label{setting6.4}
Now we consider the deformation of the rings above:\\
I. We have the first group of the period rings in the deformed setting:
\begin{align}
\Pi^{\mathrm{int},r}_{H,A},\Pi^{\mathrm{int},\dagger}_{H,A},\Omega^\mathrm{int}_{H,A}, \Omega_{H,A}, \Pi^{\mathrm{bd},r}_{H,A}, \Pi^{\mathrm{bd},\dagger}_{H,A},\Pi^{[s,r]}_{H,A}, \Pi^r_{H,A}, \Pi_{H,A}.
\end{align}
II. We also have the second group of desired rings in the desired setting:
\begin{align}
\breve{\Pi}^{\mathrm{int},r}_{H,A},\breve{\Pi}^{\mathrm{int},\dagger}_{H,A},\breve{\Omega}^\mathrm{int}_{H,A}, \breve{\Omega}_{H,A}, \breve{\Pi}^{\mathrm{bd},r}_{H,A}, \breve{\Pi}^{\mathrm{bd},\dagger}_{H,A},\breve{\Pi}^{[s,r]}_{H,A}, \breve{\Pi}^r_{H,A}, \breve{\Pi}_{H,A}.	
\end{align}
III. We also have the third group:
\begin{align}
\widehat{\Pi}^{\mathrm{int},r}_{H,A},\widehat{\Pi}^{\mathrm{int},\dagger}_{H,A},\widehat{\Omega}^\mathrm{int}_{H,A}, \widehat{\Omega}_{H,A}, \widehat{\Pi}^{\mathrm{bd},r}_{H,A}, \widehat{\Pi}^{\mathrm{bd},\dagger}_{H,A}.	
\end{align}
\end{setting}

\indent Then we have the following globalization as what we did before in the perfect setting:

\begin{definition}
Now consider the sheaf $\mathcal{O}_{\mathfrak{X}}$ of a rigid analytic space $\mathfrak{X}$ in rigid analytic geometry. We have the following three groups of sheaves of rings over $\mathfrak{X}$:
\begin{align}
&\Pi^{[s,r]}_{H,\mathfrak{X}}, \Pi^r_{H,\mathfrak{X}}, \Pi_{H,\mathfrak{X}},\\
&\breve{\Pi}^{[s,r]}_{H,\mathfrak{X}}, \breve{\Pi}^r_{H,\mathfrak{X}}, \breve{\Pi}_{H,\mathfrak{X}}
\end{align}
to be:
\begin{align}
&\Pi^{[s,r]}_{H,\mathcal{O}_\mathfrak{X}}, \Pi^r_{H,\mathcal{O}_\mathfrak{X}}, \Pi_{H,\mathcal{O}_\mathfrak{X}},\\
&\breve{\Pi}^{[s,r]}_{H,\mathcal{O}_\mathfrak{X}}, \breve{\Pi}^r_{H,\mathcal{O}_\mathfrak{X}}, \breve{\Pi}_{H,\mathcal{O}_\mathfrak{X}}.
\end{align}	
\end{definition}

\indent Now we discuss some properties of the corresponding deformed version of the imperfect rings in our context, which is parallel to the corresponding discussion we made in the perfect setting and generalizing the corresponding discussion in \cite{KL16}:

\begin{proposition}\mbox{\bf{(After Kedlaya-Liu \cite[Lemma 5.2.10]{KL16})}}
For any $0< r_1\leq r_2$ we have the following equality on the corresponding period rings:
\begin{displaymath}
\Pi^{\mathrm{int},r_1}_{H,\mathbb{Q}_p\{T_1,...,T_d\}}\bigcap	\Pi^{[r_1,r_2]}_{H,\mathbb{Q}_p\{T_1,...,T_d\}}=\Pi^{\mathrm{int},r_2}_{H,\mathbb{Q}_p\{T_1,...,T_d\}}.
\end{displaymath}	
\end{proposition}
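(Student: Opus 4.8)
The plan is to reduce the identity to its perfect analogue over the tower, which is already available, and then to match up the descent conditions that cut out the imperfect rings inside the perfect ones. Recall from \cite[Definition 5.2.1]{KL16} together with \cref{setting6.3} and \cref{setting6.4} that $\Pi^{\mathrm{int},r}_{H,\mathbb{Q}_p\{T_1,\dots,T_d\}}$ is the subring of $\widetilde{\Pi}^{\mathrm{int},r}_{\overline{H'}_\infty,\mathbb{Q}_p\{T_1,\dots,T_d\}}$ consisting of those $x$ with $\theta(\varphi^{-n}(x))\in H_n\widehat{\otimes}_{\mathbb{Q}_p}\mathbb{Q}_p\{T_1,\dots,T_d\}$ whenever $nh>-\log_p r$, and that $\Pi^{[r_1,r_2]}_{H,\mathbb{Q}_p\{T_1,\dots,T_d\}}$ is the subring of $\widetilde{\Pi}^{[r_1,r_2]}_{\overline{H'}_\infty,\mathbb{Q}_p\{T_1,\dots,T_d\}}$ cut out by the analogous condition for those $n$ with $-\log_p r_2\le nh\le -\log_p r_1$ (the range on which $\theta\circ\varphi^{-n}$ extends continuously to the interval completion). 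With these descriptions the inclusion $\supseteq$ is immediate, using also the perfect inclusion $\widetilde{\Pi}^{\mathrm{int},r_2}\subseteq\widetilde{\Pi}^{\mathrm{int},r_1}\cap\widetilde{\Pi}^{[r_1,r_2]}$.

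For $\subseteq$, take $x$ in the left-hand side. Forgetting the descent conditions, $x$ lies in $\widetilde{\Pi}^{\mathrm{int},r_1}_{\overline{H'}_\infty,\mathbb{Q}_p\{T_1,\dots,T_d\}}\cap\widetilde{\Pi}^{[r_1,r_2]}_{\overline{H'}_\infty,\mathbb{Q}_p\{T_1,\dots,T_d\}}$, so by the perfect analogue already proved above (the Proposition after Kedlaya--Liu \cite[Lemma 5.2.6]{KL15} in its form deformed over $\mathbb{Q}_p\{T_1,\dots,T_d\}$, applied with $\overline{H'}_\infty$ in place of $R$) we get $x\in\widetilde{\Pi}^{\mathrm{int},r_2}_{\overline{H'}_\infty,\mathbb{Q}_p\{T_1,\dots,T_d\}}$. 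It then remains to check that $x$ satisfies the descent condition for membership in $\Pi^{\mathrm{int},r_2}_{H,\mathbb{Q}_p\{T_1,\dots,T_d\}}$, i.e. $\theta(\varphi^{-n}(x))\in H_n\widehat{\otimes}_{\mathbb{Q}_p}\mathbb{Q}_p\{T_1,\dots,T_d\}$ for all $n$ with $nh>-\log_p r_2$. Since $r_1\le r_2$ we have $-\log_p r_1\ge -\log_p r_2$, so $\{n:nh>-\log_p r_2\}$ is the union of $\{n:nh>-\log_p r_1\}$ and $\{n:-\log_p r_2\le nh\le -\log_p r_1\}$; on the first set the condition is supplied by $x\in\Pi^{\mathrm{int},r_1}_{H,\mathbb{Q}_p\{T_1,\dots,T_d\}}$ and on the second by $x\in\Pi^{[r_1,r_2]}_{H,\mathbb{Q}_p\{T_1,\dots,T_d\}}$ (the integrality of the resulting values of $\theta\circ\varphi^{-n}$ being automatic from $x\in\widetilde{\Pi}^{\mathrm{int},r_2}_{\overline{H'}_\infty,\mathbb{Q}_p\{T_1,\dots,T_d\}}$). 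Hence $x\in\Pi^{\mathrm{int},r_2}_{H,\mathbb{Q}_p\{T_1,\dots,T_d\}}$, completing the argument.

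The point that genuinely needs care — and it is the only content beyond the perfect case — is that the maps $\theta\circ\varphi^{-n}$ really are defined on the deformed/completed rings of \cref{setting6.3}, and that the descent conditions are stable under the successive completions used to construct these rings. This is handled exactly as in the perfect proof reproduced above: the relevant norms are the Gauss norms $\left\|.\right\|_{\alpha^r,\mathbb{Q}_p\{T_1,\dots,T_d\}}$, which operate coefficient-wise in the Tate variables, so completed tensoring with $\mathbb{Q}_p\{T_1,\dots,T_d\}$ is exact (the undeformed rings admit Schauder bases, cf. \cite[Definition 6.1]{KP}), the operators $\theta\circ\varphi^{-n}$ extend continuously, and $H_n\widehat{\otimes}_{\mathbb{Q}_p}\mathbb{Q}_p\{T_1,\dots,T_d\}$ is closed in the relevant target; one may then, exactly as in the perfect statement, approximate $x$ by elements of $\Pi^{\mathrm{bd},r_1}_{H,\mathbb{Q}_p\{T_1,\dots,T_d\}}$ and run the same integral-versus-residual ($y_i$ versus $z_i$) truncation estimate.

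The main obstacle, then, is precisely this verification that the defining descent condition passes through all the completions involved in \cref{setting6.3}; once that bookkeeping is in place the union-of-ranges argument for the exponents $n$ is routine. Finally, the positive-characteristic case and the non-discretely-valued case of $E$ require no change, exactly as recorded in the Remark following the corresponding perfect propositions.
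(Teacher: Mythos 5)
Your reduction to the perfect case hinges on a description of the imperfect rings that is neither their definition nor established anywhere in the paper. In \cref{setting6.3} only the integral ring $\Pi^{\mathrm{int},r}_{H}$ is cut out of $\widetilde{\Pi}^{\mathrm{int},r}_{\overline{H'}_\infty}$ by the condition $\theta(\varphi^{-n}(x))\in H_n$ for $nh>-\log_p r$; the interval ring $\Pi^{[s,r]}_{H}$ is defined differently, namely as the completion of the bounded ring $\Pi^{\mathrm{bd},r}_{H}$ under $\max\{\|\cdot\|_{\overline{\alpha}_\infty^{s}},\|\cdot\|_{\overline{\alpha}_\infty^{r}}\}$ (item U of \cref{setting6.3}), with the deformation obtained by a further completed tensor with $\mathbb{Q}_p\{T_1,\dots,T_d\}$. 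Your assertion that $\Pi^{[r_1,r_2]}_{H,\mathbb{Q}_p\{T_1,\dots,T_d\}}$ is the subring of $\widetilde{\Pi}^{[r_1,r_2]}_{\overline{H'}_\infty,\mathbb{Q}_p\{T_1,\dots,T_d\}}$ cut out by $\theta\circ\varphi^{-n}$-conditions for the intermediate range of $n$ is therefore a substantive claim, not a recollection of the definition: it is exactly the kind of decompletion statement that in Kedlaya--Liu requires the machinery of \cite[Sections 5.6, 5.8]{KL16} (where $\widetilde{\Pi}^{[s,r]}$ is split as $\Pi^{[s,r]}$ plus a completed complement), none of which is available at this early point of the section, and you do not prove it. Since your verification of membership in $\Pi^{\mathrm{int},r_2}_{H,\mathbb{Q}_p\{T_1,\dots,T_d\}}$ for the exponents $n$ with $-\log_p r_2<nh\le-\log_p r_1$ rests entirely on this unproven characterization, and since the compatibility of such descent conditions with the completions and with the deformation (which you defer as ``bookkeeping'') is really the whole content, the argument has a genuine gap at its central step; it is at least as hard as the statement it is meant to prove.

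For comparison, the paper's proof never leaves the imperfect deformed ring: one approximates $x$ by elements $x_i$ of the bounded ring uniformly for $\|\cdot\|_{\overline{\alpha}_\infty^{t},\mathbb{Q}_p\{T_1,\dots,T_d\}}$ with $t\in[r_1,r_2]$, splits $x_i=y_i+z_i$ with $y_i$ the part with nonnegative powers of $\pi$, uses $x\in\Pi^{\mathrm{int},r_1}_{H,\mathbb{Q}_p\{T_1,\dots,T_d\}}$ to bound the negative-power coefficients in the $r_1$-Gauss norm, converts this into an $r_2$-norm estimate showing $y_i\to x$ under $\|\cdot\|_{\overline{\alpha}_\infty^{r_2},\mathbb{Q}_p\{T_1,\dots,T_d\}}$, and concludes $x\in\Pi^{\mathrm{int},r_2}_{H,\mathbb{Q}_p\{T_1,\dots,T_d\}}$. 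If you wish to salvage your route, you would first have to prove the descent-theoretic description of the deformed interval ring, which is not easier than running this direct estimate.
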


\begin{proof}
We adapt the argument in \cite[Lemma 5.2.10]{KL16} to prove this in the situation where $r_1<r_2$ (otherwise this is trivial), again one direction is easy where we only present the implication in the other direction. We take any element $x\in \Pi^{\mathrm{int},r_1}_{H,\mathbb{Q}_p\{T_1,...,T_d\}}\bigcap	\Pi^{[r_1,r_2]}_{H,\mathbb{Q}_p\{T_1,...,T_d\}}$ and take suitable approximating elements $\{x_i\}$ living in the bounded Robba ring such that for any $j\geq 1$ one can find some integer $N_j\geq 1$ we have whenever $i\geq N_j$ we have the following estimate:
\begin{displaymath}
\left\|.\right\|_{\overline{\alpha}_\infty^{t},\mathbb{Q}_p\{T_1,...,T_d\}}(x_i-x) \leq p^{-j}, \forall t\in [r_1,r_2].	
\end{displaymath}
Then we consider the corresponding decomposition of $x_i$ for each $i=1,2,...$ into a form having integral part and the rational part $x_i=y_i+z_i$ by setting
\begin{center}
 $y_i=\sum_{k=0,i_1,...,i_d}\pi^kx_{i,k,i_1,...,i_d}T_1^{i_1}...T_d^{i_d}$ 
\end{center} 
out of
\begin{center} 
$x_i=\sum_{k=n(x_i),i_1,...,i_d}\pi^kx_{i,k,i_1,...,i_d}T_1^{i_1}...T_d^{i_d}$.
\end{center}
Note that by our initial hypothesis we have that the element $x$ lives in the ring $\Pi^{\mathrm{int},r_1}_{H,\mathbb{Q}_p\{T_1,...,T_d\}}$ which further implies that 
\begin{displaymath}
\left\|.\right\|_{\overline{\alpha}_\infty^{r_1},\mathbb{Q}_p\{T_1,...,T_d\}}(\pi^kx_{i,k,i_1,...,i_d}T_1^{i_1}...T^{i_d}_d)	\leq p^{-j}.
\end{displaymath}
Therefore we have ${\overline{\alpha}_\infty}(\overline{x}_{i,k,i_1,...,i_d})\leq p^{(k-j)/r_1},\forall k< 0$ directly from this through computation, which implies that then:
\begin{align}
\left\|.\right\|_{\overline{\alpha}_\infty^{r_2},\mathbb{Q}_p\{T_1,...,T_d\}}(\pi^kx_{i,k,i_1,...,i_d}T_1^{i_1}...T^{i_d}_d)	&\leq p^{-k}p^{(k-j)r_2/r_1}\\
	&\leq p^{1+(1-j)r_1/r_1}.
\end{align}
Then one can read off the result directly from this estimate since under this estimate we can have the chance to modify the original approximating sequence $\{x_i\}$ by $\{y_i\}$ which are initially chosen to be in the integral Robba ring, which implies that actually the element $x$ lives in the right-hand side of the identity in the statement of the proposition.
\end{proof}

\begin{proposition} \mbox{\bf{(After Kedlaya-Liu \cite[Lemma 5.2.10]{KL16})}}\label{proposition5.7}
For any $0< r_1\leq r_2$ we have the following equality on the corresponding period rings:
\begin{displaymath}
\Pi^{\mathrm{int},r_1}_{H,A}\bigcap	\Pi^{[r_1,r_2]}_{H,A}=\Pi^{\mathrm{int},r_2}_{H,A}.
\end{displaymath}	
Here $A$ is some reduced affinoid algebra over $\mathbb{Q}_p$.	
\end{proposition}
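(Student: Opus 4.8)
The plan is to deduce the statement from the case $A=\mathbb{Q}_p\{T_1,\dots,T_d\}$ already settled in the preceding proposition, by base change along a presentation $A=\mathbb{Q}_p\{T_1,\dots,T_d\}/I$. Throughout write $\Lambda:=\mathbb{Q}_p\{T_1,\dots,T_d\}$; recall that by construction every deformed period ring over $A$ is the completed base change $\Pi^{\ast}_{H,A}=\Pi^{\ast}_{H,\Lambda}\,\widehat{\otimes}_{\Lambda}A$ of the corresponding ring over $\Lambda$, carrying the quotient Gauss (semi)norm, and that $\Pi^{\ast}_{H,\Lambda}=\Pi^{\ast}_{H}\,\widehat{\otimes}_{\mathbb{Q}_p}\Lambda$ is an orthonormalizable (Schauder) $\Lambda$-module, hence flat.

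The first step is to package the $\Lambda$-level facts into one short exact sequence. On the one hand, the preceding proposition gives $\Pi^{\mathrm{int},r_1}_{H,\Lambda}\cap\Pi^{[r_1,r_2]}_{H,\Lambda}=\Pi^{\mathrm{int},r_2}_{H,\Lambda}$ inside the single-radius ring $\Pi^{[r_1,r_1]}_{H,\Lambda}$. On the other hand, splitting an element of $\Pi^{[r_1,r_1]}_{H,\Lambda}$ into its part with nonnegative powers of $\pi$ and its part with negative powers of $\pi$ — the very integral/rational decomposition used in that proof — shows $\Pi^{\mathrm{int},r_1}_{H,\Lambda}+\Pi^{[r_1,r_2]}_{H,\Lambda}=\Pi^{[r_1,r_1]}_{H,\Lambda}$, since the negative-$\pi$ part, having finite $r_1$-Gauss norm, automatically has finite $r_2$-Gauss norm as well (its coefficients are forced to shrink). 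Combining these and applying the open mapping theorem to see that the surjection is strict, we obtain a strict short exact sequence of Banach $\Lambda$-modules
\[
0 \longrightarrow \Pi^{\mathrm{int},r_2}_{H,\Lambda} \xrightarrow{\,x\mapsto(x,x)\,} \Pi^{\mathrm{int},r_1}_{H,\Lambda}\oplus\Pi^{[r_1,r_2]}_{H,\Lambda} \xrightarrow{\,(u,v)\mapsto u-v\,} \Pi^{[r_1,r_1]}_{H,\Lambda} \longrightarrow 0 ,
\]
whose exactness at the middle term is precisely the $\Lambda$-case of the assertion.

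The second step is to apply $-\,\widehat{\otimes}_{\Lambda}A$. Since the rightmost term $\Pi^{[r_1,r_1]}_{H,\Lambda}$ is $\Lambda$-flat and the sequence is strict, this functor is exact on it, and using $\Pi^{\ast}_{H,A}=\Pi^{\ast}_{H,\Lambda}\,\widehat{\otimes}_{\Lambda}A$ we get a strict short exact sequence
\[
0 \longrightarrow \Pi^{\mathrm{int},r_2}_{H,A} \longrightarrow \Pi^{\mathrm{int},r_1}_{H,A}\oplus\Pi^{[r_1,r_2]}_{H,A} \xrightarrow{\,(u,v)\mapsto u-v\,} \Pi^{[r_1,r_1]}_{H,A} \longrightarrow 0 .
\]
Exactness at the middle term says exactly $\Pi^{\mathrm{int},r_1}_{H,A}\cap\Pi^{[r_1,r_2]}_{H,A}=\Pi^{\mathrm{int},r_2}_{H,A}$, which finishes the proof. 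The identical argument, with $\widetilde{\Pi}_{R}$ in place of $\Pi_{H}$, yields the perfect analogue referenced earlier, and the two-closed-intervals variant follows from its $\Lambda$-counterpart in the same way.

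The step I expect to be the main obstacle is the exactness of the base change $-\,\widehat{\otimes}_{\Lambda}A$ on the short exact sequence above: this relies on $\Pi^{[r_1,r_1]}_{H,\Lambda}$ being flat over $\Lambda$ (equivalently, on the undeformed period rings being orthonormalizable over $\mathbb{Q}_p$) and on the sequence being strict, so that passing to the completed quotient by $I$ commutes with forming the kernel. If one wishes to avoid this homological input, one can instead re-run the approximation-and-decomposition argument of the preceding proposition directly over $A$, performing the integral/rational splitting on lifts to $\Lambda$ of the approximants and estimating the quotient Gauss norms of their images; this works, but the bookkeeping around the quotient seminorm is more delicate, which is why the base-change route is preferable.
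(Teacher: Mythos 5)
Your proposal is workable, but it is not the route the paper takes. The paper's own proof is essentially the ``alternative'' you relegate to your final paragraph: it takes an element of the intersection over $A$, lifts it to the level of $\mathbb{Q}_p\{T_1,\dots,T_d\}$ using the fact that the quotient map from the Tate algebra to $A$ is strict and remains strict after completed tensoring with the Robba rings (citing \cite[2.1.8, Proposition 6]{BGR}), re-runs the approximation and integral/rational decomposition estimates on the lift to rearrange it into a lift lying in $\Pi^{\mathrm{int},r_2}_{H,\mathbb{Q}_p\{T_1,\dots,T_d\}}$, and then projects back down by strictness. Your main route instead packages the Tate-algebra statement into a strict short exact sequence and base changes along $\Lambda\to A$. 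That is a genuinely different mechanism: it needs the extra surjectivity $\Pi^{\mathrm{int},r_1}_{H,\Lambda}+\Pi^{[r_1,r_2]}_{H,\Lambda}=\Pi^{[r_1,r_1]}_{H,\Lambda}$, which does hold --- it is exactly the equal-radii case of the decomposition proposition proved just afterwards (the analogue of \cite[Lemma 5.2.8]{KL15}) --- and in exchange it produces the statement over every Banach quotient of $\Lambda$ functorially, whereas the paper's element-by-element lifting argument is lighter on homological input and is what the author actually reuses verbatim for the perfect, noncommutative and two-interval variants that all point to \cref{proposition5.7}.

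The one step you should repair is the justification for exactness of $-\,\widehat{\otimes}_{\Lambda}A$. Algebraic flatness of $\Pi^{[r_1,r_1]}_{H,\Lambda}$ over $\Lambda$, even combined with strictness, is not by itself the right hypothesis: completed tensor product is only right exact in general, and preserving the kernel of the strict surjection is the whole point. What makes the argument go through is the parenthetical you mention almost in passing: since $\Pi^{[r_1,r_1]}_{H,\Lambda}=\Pi^{[r_1,r_1]}_{H}\widehat{\otimes}_{\mathbb{Q}_p}\Lambda$ and every $\mathbb{Q}_p$-Banach space is potentially orthonormalizable, the quotient term is an orthonormalizable Banach $\Lambda$-module, so the strict surjection admits a bounded $\Lambda$-linear section (lift an orthonormal basis with uniformly bounded norms via the open mapping theorem); the sequence is then split in the category of Banach $\Lambda$-modules and split exactness survives $\widehat{\otimes}_{\Lambda}A$. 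Phrased this way your argument is correct, and it is in line with how the paper invokes Schauder bases elsewhere; phrased via flatness it has a gap.
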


\begin{proof}
This is actually not a direct corollary from the corresponding result as in the previous proposition. But since the map from the corresponding Tate algebra to $A$ is strict, which will remain to be so when tensor (completely) with the corresponding Robba rings in the previous proposition (see \cite[2.1.8, Proposition 6]{BGR}). Then for any element $\overline{x}$ in the intersection on the left hand side, one can consider the corresponding construction to any lift $x$ of $\overline{x}$ with the corresponding approximating estimate by some sequence:
\begin{displaymath}
\left\|.\right\|_{\overline{\alpha}_\infty^{t},\mathbb{Q}_p\{T_1,...,T_d\}}(x_i-x) \leq p^{-j}, \forall t\in [r_1,r_2],	
\end{displaymath}
for any $i\geq N_j$ with some $N_j$ when $j$ is arbitrarily chosen. Then we to each $x_i$ we associate $y_i$ and $z_i$ as in the proof of the previous proposition. Also for $x$ we have that it is living in the integral Robba ring which implies that we have:
\begin{displaymath}
\left\|.\right\|_{\overline{\alpha}_\infty^{r_1},\mathbb{Q}_p\{T_1,...,T_d\}}(\pi^kx_{i,k,i_1,...,i_d}T_1^{i_1}...T^{i_d}_d)	\leq p^{-j}.
\end{displaymath}
Therefore we have ${\overline{\alpha}_\infty}(\overline{x}_{i,k,i_1,...,i_d})\leq p^{(k-j)/r_1},\forall k< 0$ directly from this through computation, which implies that then:
\begin{align}
\left\|.\right\|_{\overline{\alpha}_\infty^{r_2},\mathbb{Q}_p\{T_1,...,T_d\}}(\pi^kx_{i,k,i_1,...,i_d}T_1^{i_1}...T^{i_d}_d)	&\leq p^{-k}p^{(k-j)r_2/r_1}\\
	&\leq p^{1+(1-j)r_1/r_1}.
\end{align}
This shows that actually one can rearrange the corresponding lifting to be some lifting with respect to the ring on the right hand side, which will finish the proof then by the strictness again after the corresponding projection.	
\end{proof}


\begin{proposition} \mbox{\bf{(After Kedlaya-Liu \cite[Lemma 5.2.8]{KL15} and \cite{KL16})}}
Consider now in our situation the radii $0< r_1\leq r_2$, and consider any element $x\in \Pi^{[r_1,r_2]}_{H,\mathbb{Q}_p\{T_1,...,T_d\}}$. Then we have that for each $n\geq 1$ one can decompose $x$ into the form of $x=y+z$ such that $y\in \pi^n\Pi^{\mathrm{int},r_2}_{H,\mathbb{Q}_p\{T_1,...,T_d\}}$ with $z\in \bigcap_{r\geq r_2}\Pi^{[r_1,r]}_{H,\mathbb{Q}_p\{T_1,...,T_d\}}$ with the following estimate for each $r\geq r_2$:
\begin{displaymath}
\left\|.\right\|_{\overline{\alpha}_\infty^r,\mathbb{Q}_p\{T_1,...,T_d\}}(z)\leq p^{(1-n)(1-r/r_2)}\left\|.\right\|_{\overline{\alpha}_\infty^{r_2},\mathbb{Q}_p\{T_1,...,T_d\}}(z)^{r/r_2}.	
\end{displaymath}

\end{proposition}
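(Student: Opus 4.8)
The plan is to run the argument of the perfect analog proved above (the proposition after \cite[Lemma 5.2.8]{KL15}) inside the imperfect tower rings, replacing $\widetilde{\Pi}_{R,\mathbb{Q}_p\{T_1,\ldots,T_d\}}$ together with the spectral seminorm $\alpha$ of $R$ by $\Pi_{H,\mathbb{Q}_p\{T_1,\ldots,T_d\}}$ together with the infinite-level spectral seminorm $\overline{\alpha}_\infty$ of $\overline{H'}_\infty$, while dragging along the defining constraint $\theta(\varphi^{-m}(\cdot))\in H_m$ recorded in \cref{setting6.3} (item I and its analogues). We may assume $r_1<r_2$, the case $r_1=r_2$ being trivial.

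First I would treat elements $x$ of the bounded Robba ring $\Pi^{\mathrm{bd},r_1}_{H,\mathbb{Q}_p\{T_1,\ldots,T_d\}}$, which sit inside $\widetilde{\Pi}^{\mathrm{bd},r_1}_{\overline{H'}_\infty,\mathbb{Q}_p\{T_1,\ldots,T_d\}}$ with an expansion $x=\sum_{k\geq n(x),\,i_1,\ldots,i_d\geq 0}\pi^k[\overline{x}_{k,i_1,\ldots,i_d}]T_1^{i_1}\cdots T_d^{i_d}$ having coefficients in $\overline{H'}_\infty$. Put $y$ equal to the partial sum over $k\geq n$ and $z=x-y$ equal to the partial sum over $n(x)\leq k<n$. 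The Witt truncation does not alter the coefficients, so $y$ and $z$ still verify the tower constraint, giving $y\in\pi^n\Pi^{\mathrm{int},r_2}_{H,\mathbb{Q}_p\{T_1,\ldots,T_d\}}$ and $z\in\Pi^{\mathrm{bd},r_1}_{H,\mathbb{Q}_p\{T_1,\ldots,T_d\}}$. Writing $u=\pi^k[\overline{x}_{k,i_1,\ldots,i_d}]T_1^{i_1}\cdots T_d^{i_d}$ for a single term of $z$, so $k\leq n-1$, and using $1-r/r_2\leq 0$ for $r\geq r_2$, one gets
\begin{align}
\left\|.\right\|_{\overline{\alpha}_\infty^r,\mathbb{Q}_p\{T_1,\ldots,T_d\}}(u)
&=p^{-k(1-r/r_2)}\left\|.\right\|_{\overline{\alpha}_\infty^{r_2},\mathbb{Q}_p\{T_1,\ldots,T_d\}}(u)^{r/r_2}\\
&\leq p^{(1-n)(1-r/r_2)}\left\|.\right\|_{\overline{\alpha}_\infty^{r_2},\mathbb{Q}_p\{T_1,\ldots,T_d\}}(u)^{r/r_2}.
\end{align}
Since the Gauss norm is precisely the supremum of the norms of the terms, taking the supremum over the terms of $z$ gives $\left\|.\right\|_{\overline{\alpha}_\infty^r,\mathbb{Q}_p\{T_1,\ldots,T_d\}}(z)\leq p^{(1-n)(1-r/r_2)}\left\|.\right\|_{\overline{\alpha}_\infty^{r_2},\mathbb{Q}_p\{T_1,\ldots,T_d\}}(z)^{r/r_2}$ for all $r\geq r_2$; in particular $z$ converges for each such norm, so $z\in\bigcap_{r\geq r_2}\Pi^{[r_1,r]}_{H,\mathbb{Q}_p\{T_1,\ldots,T_d\}}$.

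For a general $x\in\Pi^{[r_1,r_2]}_{H,\mathbb{Q}_p\{T_1,\ldots,T_d\}}$ I would invoke the usual successive-approximation device: inductively choose $x_i$ in the bounded Robba ring with $\left\|.\right\|_{\overline{\alpha}_\infty^r,\mathbb{Q}_p\{T_1,\ldots,T_d\}}(x-x_0-\cdots-x_i)\leq p^{-i-1}\left\|.\right\|_{\overline{\alpha}_\infty^r,\mathbb{Q}_p\{T_1,\ldots,T_d\}}(x)$ for all $r\in[r_1,r_2]$, apply the bounded case to write $x_i=y_i+z_i$, and set $y=\sum_i y_i$ and $z=\sum_i z_i$. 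The series $\sum_i y_i$ converges inside $\pi^n\Pi^{\mathrm{int},r_2}_{H,\mathbb{Q}_p\{T_1,\ldots,T_d\}}$ since each $y_i$ lies there and that ring is Fréchet-complete, while combining the term-wise bound above with the approximation estimate yields $\left\|.\right\|_{\overline{\alpha}_\infty^r,\mathbb{Q}_p\{T_1,\ldots,T_d\}}(z_i)\leq p^{(1-n)(1-r/r_2)}\left\|.\right\|_{\overline{\alpha}_\infty^{r_2},\mathbb{Q}_p\{T_1,\ldots,T_d\}}(x)^{r/r_2}$, so $\sum_i z_i$ converges at every radius $r\geq r_2$ with the asserted bound. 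This produces the decomposition $x=y+z$.

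The step I expect to be the main obstacle is the bookkeeping that keeps $y$, $z$ and the limits $\sum_i y_i$, $\sum_i z_i$ inside the imperfect rings, not merely inside their perfect ambient rings over $\overline{H'}_\infty$: one has to check that the constraint $\theta(\varphi^{-m}(\cdot))\in H_m$ of \cite[Definition 5.2.1]{KL16} is stable under the $\pi$-adic Witt truncation defining $y$ and $z$, and that the imperfect subrings are closed for the relevant $\overline{\alpha}_\infty^r$-Fréchet and $\pi$-adic topologies so that the sums land back in them. Granting this—which is exactly what the level-by-level construction of \cite[Definition 5.2.1]{KL16}, deformed as in \cref{setting6.4}, is built to deliver, and which underlies the undeformed imperfect analog of \cite[Lemma 5.2.8]{KL15} carried out in \cite{KL16}—the Tate variables $T_1,\ldots,T_d$ enter only as extra indices in the coefficient sums and the argument transports essentially verbatim.
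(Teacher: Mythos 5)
Your proposal follows essentially the same route as the paper's own proof: the same truncation-based decomposition $x_i=y_i+z_i$ for elements of the bounded Robba ring with the identical single-term estimate, and the same inductively constructed approximating sequence satisfying $\left\|.\right\|_{\overline{\alpha}_\infty^r,\mathbb{Q}_p\{T_1,\ldots,T_d\}}(x-x_0-\cdots-x_i)\leq p^{-i-1}\left\|.\right\|_{\overline{\alpha}_\infty^r,\mathbb{Q}_p\{T_1,\ldots,T_d\}}(x)$ to pass to a general $x\in \Pi^{[r_1,r_2]}_{H,\mathbb{Q}_p\{T_1,\ldots,T_d\}}$, summing the $y_i$ in the integral ring and the $z_i$ at each radius $r\geq r_2$. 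The only difference is that you explicitly flag the verification that the truncations and the limit sums remain in the imperfect (tower-constrained) subrings rather than merely in the ambient perfect rings over $\overline{H'}_\infty$, a point the paper's proof passes over silently, so your argument is correct to the same extent as the paper's.
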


\begin{proof}
As in \cite[Lemma 5.2.8]{KL15} and \cite{KL16} and in the proof of our previous proposition we first consider those elements $x$ lives in the bounded Robba rings which could be expressed in general as
\begin{center}
 $\sum_{k=n(x),i_1,...,i_d}\pi^kx_{k,i_1,...,i_d}T_1^{i_1}...T_d^{i_d}$.
 \end{center}	
In this situation the corresponding decomposition is very easy to come up with, namely we consider the corresponding $y_i$ as the corresponding series:
\begin{displaymath}
\sum_{k\geq n,i_1,...,i_d}\pi^kx_{k,i_1,...,i_d}T_1^{i_1}...T_d^{i_d}	
\end{displaymath}
which give us the desired result since we have in this situation when focusing on each single term:
\begin{align}
\left\|.\right\|_{\overline{\alpha}_\infty^r,\mathbb{Q}_p\{T_1,...,T_d\}}(\pi^kx_{k,i_1,...,i_d}T_1^{i_1}...T_d^{i_d})&=p^{-k}\overline{\alpha}_\infty(\overline{x}_{k,i_1,...,i_d})^r\\
&=p^{-k(1-r/r_2)}\left\|.\right\|_{\overline{\alpha}_\infty^{r_2},\mathbb{Q}_p\{T_1,...,T_d\}}(\pi^kx_{k,i_1,...,i_d}T_1^{i_1}...T_d^{i_d})^{r/r_2}\\
&\leq p^{(1-n)(1-r/r_2)}\left\|.\right\|_{\overline{\alpha}_\infty^{r_2},\mathbb{Q}_p\{T_1,...,T_d\}}(\pi^kx_{k,i_1,...,i_d}T_1^{i_1}...T_d^{i_d})^{r/r_2}
\end{align}
for all those suitable $k$. Then to tackle the more general situation we consider the approximating sequence consisting of all the elements in the bounded Robba ring as in the usual situation considered in \cite[Lemma 5.2.8]{KL15} and \cite{KL16}, namely we inductively construct the following approximating sequence just as:
\begin{align}
\left\|.\right\|_{\overline{\alpha}_\infty^r,\mathbb{Q}_p\{T_1,...,T_d\}}(x-x_0-...-x_i)\leq p^{-i-1}	\left\|.\right\|_{\overline{\alpha}_\infty^r,\mathbb{Q}_p\{T_1,...,T_d\}}(x), i=0,1,..., r\in [r_1,r_2].
\end{align}
Here all the elements $x_i$ for each $i=0,1,...$ are living in the bounded Robba ring, which immediately gives rise to the suitable decomposition as proved in the previous case namely we have for each $i$ the decomposition $x_i=y_i+z_i$ with the desired conditions as mentioned in the statement of the proposition. We first take the series summing all the elements $y_i$ up for all $i=0,1,...$, which first of all converges under the norm $\left\|.\right\|_{\overline{\alpha}_\infty^r,\mathbb{Q}_p\{T_1,...,T_d\}}$ for all the radius $r\in [r_1,r_2]$, and also note that all the elements $y_i$ within the infinite sum live inside the corresponding integral Robba ring $\Pi^{\mathrm{int},r_2}_{H,\mathbb{Q}_p\{T_1,...,T_d\}}$, which further implies the corresponding convergence ends up in $\Pi^{\mathrm{int},r_2}_{H,\mathbb{Q}_p\{T_1,...,T_d\}}$. For the elements $z_i$ where $i=0,1,...$ also sum up to a converging series in the desired ring since combining all the estimates above we have:
\begin{displaymath}
\left\|.\right\|_{\overline{\alpha}_\infty^r,\mathbb{Q}_p\{T_1,...,T_d\}}(z_i)\leq p^{(1-n)(1-r/r_2)}\left\|.\right\|_{\overline{\alpha}_\infty^{r_2},\mathbb{Q}_p\{T_1,...,T_d\}}(x)^{r/r_2}.	
\end{displaymath}
\end{proof}

%

\begin{proposition} \mbox{\bf{(After Kedlaya-Liu \cite[Lemma 5.2.10]{KL15})}}
We have the following identity:
\begin{displaymath}
\Pi^{[s_1,r_1]}_{H,\mathbb{Q}_p\{T_1,...,T_d\}}\bigcap\Pi^{[s_2,r_2]}_{H,\mathbb{Q}_p\{T_1,...,T_d\}}=\Pi^{[s_1,r_2]}_{H,\mathbb{Q}_p\{T_1,...,T_d\}},
\end{displaymath}
here the radii satisfy $<s_1\leq s_2 \leq r_1 \leq r_2$.
\end{proposition}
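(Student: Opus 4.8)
The plan is to follow verbatim the template used for the corresponding statement in the perfect setting (the analogue of \cite[Lemma 5.2.10]{KL15} proved above for the rings $\widetilde{\Pi}_{R,\mathbb{Q}_p\{T_1,...,T_d\}}$), replacing the perfect deformed period rings by their imperfect deformed counterparts and replacing the two auxiliary inputs by their imperfect versions established just above. One inclusion is immediate: the restriction maps on intervals give $\Pi^{[s_1,r_2]}_{H,\mathbb{Q}_p\{T_1,...,T_d\}}\subseteq \Pi^{[s_1,r_1]}_{H,\mathbb{Q}_p\{T_1,...,T_d\}}\cap\Pi^{[s_2,r_2]}_{H,\mathbb{Q}_p\{T_1,...,T_d\}}$, using $s_1\leq s_2$ and $r_1\leq r_2$.

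For the reverse inclusion I would take an arbitrary element $x$ in the intersection on the left. Viewing $x$ as an element of $\Pi^{[s_1,r_1]}_{H,\mathbb{Q}_p\{T_1,...,T_d\}}$ and applying the imperfect decomposition proposition above (the analogue of \cite[Lemma 5.2.8]{KL15}, with $s_1$ in the role of the left endpoint and $r_1$ in the role of $r_2$ there, and taking the auxiliary integer equal to $1$), write $x=y+z$ with $y\in\Pi^{\mathrm{int},r_1}_{H,\mathbb{Q}_p\{T_1,...,T_d\}}$ and $z\in\bigcap_{r\geq r_1}\Pi^{[s_1,r]}_{H,\mathbb{Q}_p\{T_1,...,T_d\}}$; in particular $z\in\Pi^{[s_1,r_2]}_{H,\mathbb{Q}_p\{T_1,...,T_d\}}$ since $r_2\geq r_1$. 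Then I would examine $y=x-z$: because $x\in\Pi^{[s_2,r_2]}_{H,\mathbb{Q}_p\{T_1,...,T_d\}}$ by hypothesis and $z\in\Pi^{[s_1,r_2]}_{H,\mathbb{Q}_p\{T_1,...,T_d\}}\subseteq\Pi^{[s_2,r_2]}_{H,\mathbb{Q}_p\{T_1,...,T_d\}}$ (restriction, $s_1\leq s_2$), we get $y\in\Pi^{\mathrm{int},r_1}_{H,\mathbb{Q}_p\{T_1,...,T_d\}}\cap\Pi^{[s_2,r_2]}_{H,\mathbb{Q}_p\{T_1,...,T_d\}}$. Since $s_2\leq r_1$, restricting the interval shows this is contained in $\Pi^{\mathrm{int},r_1}_{H,\mathbb{Q}_p\{T_1,...,T_d\}}\cap\Pi^{[r_1,r_2]}_{H,\mathbb{Q}_p\{T_1,...,T_d\}}$, which by the imperfect analogue of \cite[Lemma 5.2.6]{KL15} established above equals $\Pi^{\mathrm{int},r_2}_{H,\mathbb{Q}_p\{T_1,...,T_d\}}$. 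Hence $y\in\Pi^{\mathrm{int},r_2}_{H,\mathbb{Q}_p\{T_1,...,T_d\}}\subseteq\Pi^{[s_1,r_2]}_{H,\mathbb{Q}_p\{T_1,...,T_d\}}$, so $x=y+z\in\Pi^{[s_1,r_2]}_{H,\mathbb{Q}_p\{T_1,...,T_d\}}$, which finishes the argument.

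Most of the genuine content has already been absorbed into the two auxiliary propositions, so what remains is essentially the bookkeeping of interval endpoints and the relabelling of radii in the decomposition step. The mild phenomenon specific to the imperfect (and possibly nondiscretely valued) setting — that elements no longer admit unique Teichm\"uller-type expansions — does not intervene here, since the decomposition proposition already reduces everything to approximation by elements of the bounded Robba ring, for which the integral/rational splitting of the power-series coefficients is available. I expect the only point requiring a moment of care to be confirming that the decomposition proposition genuinely applies to $x$ under the chosen relabelling, i.e.\ that no extra hypothesis (such as well-locatedness of $s_1,r_1$) is needed; but since that proposition is stated for arbitrary radii $0<r_1\leq r_2$, this should be automatic.
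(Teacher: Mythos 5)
Your argument is correct and coincides with the paper's own proof: both decompose $x\in\Pi^{[s_1,r_1]}_{H,\mathbb{Q}_p\{T_1,...,T_d\}}$ via the preceding analogue of \cite[Lemma 5.2.8]{KL15} into $y+z$ with $y$ integral of radius $r_1$ and $z\in\Pi^{[s_1,r_2]}_{H,\mathbb{Q}_p\{T_1,...,T_d\}}$, and then identify $y=x-z$ inside $\Pi^{\mathrm{int},r_1}_{H,\mathbb{Q}_p\{T_1,...,T_d\}}\cap\Pi^{[s_2,r_2]}_{H,\mathbb{Q}_p\{T_1,...,T_d\}}=\Pi^{\mathrm{int},r_2}_{H,\mathbb{Q}_p\{T_1,...,T_d\}}$ using the imperfect analogue of \cite[Lemma 5.2.6]{KL15}. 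The only difference is that you spell out the interval restrictions ($s_1\leq s_2\leq r_1$) that the paper leaves implicit, which is harmless.
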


\begin{proof}
In our situation one direction is obvious while on the other hand we consider any element $x$ in the intersection on the left, then by the previous proposition we	have the decomposition $x=y+z$ where $y\in \Pi^{\mathrm{int},r_1}_{H,\mathbb{Q}_p\{T_1,...,T_d\}}$ and $z\in \Pi^{[s_1,r_2]}_{H,\mathbb{Q}_p\{T_1,...,T_d\}}$. Then as in \cite[Lemma 5.2.10]{KL15} section 5.2 we look at $y=x-z$ which lives in the intersection:
\begin{displaymath}
\Pi^{\mathrm{int},r_1}_{H,\mathbb{Q}_p\{T_1,...,T_d\}}\bigcap	\Pi^{[s_2,r_2]}_{H,\mathbb{Q}_p\{T_1,...,T_d\}}=\Pi^{\mathrm{int},r_2}_{H,\mathbb{Q}_p\{T_1,...,T_d\}}
\end{displaymath}
which finishes the proof.
\end{proof}

\begin{proposition} \mbox{\bf{(After Kedlaya-Liu \cite[Lemma 5.2.10]{KL15})}}
We have the following identity:
\begin{displaymath}
\Pi^{[s_1,r_1]}_{H,A}\bigcap \Pi^{[s_2,r_2]}_{H,A}=\Pi^{[s_1,r_2]}_{H,A},
\end{displaymath}
here the radii satisfy $<s_1\leq s_2 \leq r_1 \leq r_2$.
	
\end{proposition}

\begin{proof}
See the proof of \cref{proposition5.7}.	
\end{proof}

\begin{remark}
Again one can follow the same strategy to deal with the corresponding equal-characteristic situation.	
\end{remark}

\subsection{Modules and Bundles}

Now we consider the modules and bundles over the rings introduced in the previous subsection. First we make the following assumption:

\begin{setting}
Recall that from \cite[Definition 5.2.3]{KL16} any tower $(H_\bullet,H_\bullet^+)$	is called weakly decompleting if we have that first the density of the perfection of $H_{\infty}$ in $\overline{H}_\infty$. Here the ring $H_\infty$ is the ring coming from the mod-$\pi$ construction of the ring $\Omega^\mathrm{int}_{H}$, also at the same time one can find some $r>0$ such that the corresponding modulo $\pi$ operation from the ring $\Omega^\mathrm{int}_{H}$ to the ring $H_\infty$ is actually surjective strictly. 
\end{setting}

\begin{assumption}
We now assume that we are basically in the situation where $(H_\bullet,H_\bullet^+)$ is actually weakly decompleting. Also as in \cite[Lemma 5.2.7]{KL16} we assume we fix some radius $r_0>0$, for instance this will correspond to the corresponding index in the situation we have the corresponding noetherian tower. Recall that a tower is called noetherian if we have some specific radius as above such that we have the strongly noetherian property on the ring $\Pi^{[s,r]}_{H}$ with $[s,r]\subset (0,r_0]$. Under this condition we have that actually we have the corresponding strongly noetherian property on the ring $\Pi^{[s,r]}_{H,A}$ with $[s,r]\subset (0,r_0]$, therefore consequently we have that the ring $(\Pi^{[s,r]}_{H,A},\Pi^{[s,r],+}_{H,A})$ is sheafy. Here the ring $\Pi^{[s,r],+}_{H,A}$ is defined by taking the product construction between $A$ and the ring $\Pi^{[s,r],+}_{H}$ which is defined in \cite[Definition 5.3.2]{KL16}. We now assume that the tower is then noetherian. For more examples, see \cite[5.3.3]{KL16}.
\end{assumption}

\indent Then we can start to discuss the corresponding modules over the rings in our deformed setting, first as in \cite[Lemma 5.3.3]{KL16} the following result should be derived from our construction:

\begin{lemma}\mbox{\bf{(After Kedlaya-Liu \cite[Lemma 5.3.3]{KL16})}}
We have the following strict isomorphism in our setting, where the corresponding notations are as in \cite[Lemma 5.3.3]{KL16}:
\begin{align}
\Pi^{[s,r]}_{H,A}\{X/p^{-t}\}/(\pi X-1)\rightarrow \Pi^{[t,r]}_{H,A}\\
\Pi^{[s,r]}_{H,A}\{X/p^{-t}\}/(X-\pi)\rightarrow \Pi^{[s,t]}_{H,A}\\
\Pi^{r}_{H,A}\{X/p^{-s}\}/(X-\pi)\rightarrow \Pi^{s}_{H,A}\\
\Pi^{r}_{H,A}\{X/p^{-s}\}/(\pi X-1)\rightarrow \Pi^{[s,r]}_{H,A},
\end{align}
where the corresponding radii satisfy $0<s\leq t\leq r\leq r_{0}$.
	
\end{lemma}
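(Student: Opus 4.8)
The plan is to derive all four identifications by base change from their undeformed counterparts, which are exactly \cite[Lemma 5.3.3]{KL16} over the base field $E$. By construction each deformed ring $\Pi^{[s,r]}_{H,A}$ and $\Pi^r_{H,A}$ is, for a chosen presentation $\mathbb{Q}_p\{T_1,...,T_d\}\twoheadrightarrow A$, a strict quotient of
\[
\Pi^{[s,r]}_{H,\mathbb{Q}_p\{T_1,...,T_d\}} \cong \Pi^{[s,r]}_{H}\,\widehat{\otimes}_{\mathbb{Q}_p}\,\mathbb{Q}_p\{T_1,...,T_d\}, \qquad \Pi^r_{H,\mathbb{Q}_p\{T_1,...,T_d\}} \cong \Pi^r_{H}\,\widehat{\otimes}_{\mathbb{Q}_p}\,\mathbb{Q}_p\{T_1,...,T_d\},
\]
by the kernel ideal of that presentation. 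So I would first treat the case of the Tate algebra and then descend to a general reduced affinoid $A$ exactly as in the proof of \cref{proposition5.7}.

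For the Tate-algebra case, I would observe that the Tate construction $(-)\{X/p^{-t}\}$ together with the passage to the quotients by the ideals $(\pi X-1)$ and $(X-\pi)$ — whose generators already live over $E$ — commute with the completed tensor product $(-)\widehat{\otimes}_{\mathbb{Q}_p}\mathbb{Q}_p\{T_1,...,T_d\}$. The key input is that the undeformed period rings of \cite[Chapter 5]{KL16} admit Schauder bases over $E$, the same fact used earlier to construct the deformed period sheaves (cf. \cite[Definition 6.1]{KP}): such bases make $(-)\widehat{\otimes}_{\mathbb{Q}_p}\mathbb{Q}_p\{T_1,...,T_d\}$ exact on strict short exact sequences and hence preserve strictness of morphisms, and in the Fr\'echet case ($\Pi^r$) the analogous statement holds for completed tensor products of Fr\'echet spaces with Schauder bases. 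Applying this functor to the four strict isomorphisms of \cite[Lemma 5.3.3]{KL16} yields the four strict isomorphisms over $\mathbb{Q}_p\{T_1,...,T_d\}$. Descent to a general $A$ then follows the argument of \cref{proposition5.7}: the presentation map is strict by \cite[2.1.8, Proposition 6]{BGR}, it stays strict after completed tensoring with any of the period rings in play since these carry orthonormal bases, and the ideals $(\pi X-1)$, $(X-\pi)$ are untouched by the quotient; the resulting isomorphisms are independent of the chosen presentation. The equal-characteristic variant is identical with $\mathbb{Q}_p\{T_1,...,T_d\}$ replaced by $\mathbb{F}_p((\eta))\{T_1,...,T_d\}$.

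The main obstacle is the strictness bookkeeping rather than any new idea: one must check that the quotients by $(\pi X-1)$ and by $(X-\pi)$ are already strict \emph{before} deformation — which is part of the content of \cite[Lemma 5.3.3]{KL16} — and that neither the completed tensor product with the Tate algebra nor the subsequent strict quotient to $A$ degrades the strictness constants. The crucial enabling fact throughout is the existence of Schauder bases for the undeformed rings, which provides the flatness in the Banach category needed for exactness of the base-change functor; once that is granted, all four identifications are formal consequences of base change, and the only remaining point — that the Gauss norm on the adjoined variable $X$, hence the normalization $p^{-t}$, is the same in every ring involved — is immediate from the definition of $(-)\{X/p^{-t}\}$.
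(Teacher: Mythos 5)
Your proposal is correct and follows essentially the route the paper intends: the paper's own proof is simply a citation to \cite[Lemma 5.3.3]{KL16}, leaving implicit exactly the base-change argument you spell out (completed tensor with the Tate algebra using Schauder/orthonormal bases to preserve strict exactness, then strict quotient to a general reduced $A$ as in \cref{proposition5.7}). Your write-up merely makes explicit the strictness bookkeeping that the paper takes for granted.
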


\begin{proof}
See \cite[Lemma 5.3.3]{KL16}.	
\end{proof}

We then as in \cite[Lemma 5.3.4]{KL16} have the following:

\begin{lemma} \mbox{\bf{(After Kedlaya-Liu \cite[Lemma 5.3.4]{KL16})}}
We have the 2-pseudoflatness of the following maps:
\begin{align}
\Pi^{[r_1,r_2]}_{H,A}\rightarrow\Pi^{[r_1,t]}_{H,A}, \Pi^{[r_1,r_2]}_{H,A}\rightarrow\Pi^{[t,r_2]}_{H,A} 	
\end{align}
where we have $0<r_1\leq t\leq r_2$.
	
\end{lemma}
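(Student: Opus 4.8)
The plan is to follow the proof of \cite[Lemma 5.3.4]{KL16} and reduce the assertion to the homological stability of two elementary operations: adjoining a bounded Tate variable, and passing to a quotient by a single non-zero-divisor. By the previous lemma (the deformed analog of \cite[Lemma 5.3.3]{KL16}) there are, for $0<r_1\le t\le r_2\le r_0$, strict isomorphisms
\[
\Pi^{[r_1,r_2]}_{H,A}\{X/p^{-t}\}/(X-\pi)\ \overset{\sim}{\rightarrow}\ \Pi^{[r_1,t]}_{H,A},\qquad
\Pi^{[r_1,r_2]}_{H,A}\{X/p^{-t}\}/(\pi X-1)\ \overset{\sim}{\rightarrow}\ \Pi^{[t,r_2]}_{H,A}.
\]
Hence each of the two maps in the statement factors as $\Pi^{[r_1,r_2]}_{H,A}\to\Pi^{[r_1,r_2]}_{H,A}\{X/p^{-t}\}\to\Pi^{[r_1,r_2]}_{H,A}\{X/p^{-t}\}/(\text{one relation})$, and it suffices to treat the two factors separately and then compose, using the stability of $2$-pseudoflatness under composition.

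For the first factor, adjoining a bounded Tate variable over a Banach ring is flat and sends (strictly) pseudocoherent modules to (strictly) pseudocoherent modules — the relevant statements from \cite[Section 2.6]{KL15} apply uniformly over Banach rings, in particular over $\Pi^{[r_1,r_2]}_{H,A}$ — so this factor is even flat, a fortiori $2$-pseudoflat. For the second factor I would check that $X-\pi$ (respectively $\pi X-1$) is a non-zero-divisor in $\Pi^{[r_1,r_2]}_{H,A}\{X/p^{-t}\}$. This reduces to the $\pi$-torsion-freeness of $\Pi^{[r_1,r_2]}_{H,A}$, which is inherited from the integral Witt-vector level of the construction in \cref{setting6.3} and \cref{setting6.4}; granting this, the Koszul complex on the element in question is a length-one free resolution of the quotient, so the quotient map has Tor-dimension at most one and base change along it carries pseudocoherent modules to pseudocoherent modules. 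Here one invokes the running noetherian-tower hypothesis, under which $\Pi^{[s,r]}_{H,A}$ is strongly noetherian for $[s,r]\subset(0,r_0]$ and pseudocoherent modules are finitely presented, so the machinery of \cite[Section 2.6]{KL15} goes through verbatim. Thus the second factor is $2$-pseudoflat, and composing with the flat first factor gives the claim.

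The one point requiring care is that all of this must be carried out over the deformed rings rather than over $\Pi^{[s,r]}_H$. For that I would argue exactly as in the proof of \cref{proposition5.7}: the presentation $\mathbb{Q}_p\{T_1,\dots,T_d\}\twoheadrightarrow A$ is strict and stays strict after completed tensor with the period rings in sight, since those admit Schauder bases (cf.\ \cite[2.1.8, Proposition 6]{BGR}); consequently the strict isomorphisms of the previous lemma are valid over $A$, $\pi$-torsion-freeness is preserved, and the regularity of $X-\pi$ and of $\pi X-1$ persists after base change. The equal-characteristic case is identical, with $\pi$ replaced by the chosen uniformizer of the positive-characteristic coefficient ring. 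I expect the main obstacle to be precisely this bookkeeping — verifying that the completed base change along the reduced affinoid $A$ introduces no new $\pi$-torsion and does not disturb the strictness and pseudocoherence statements needed to invoke \cite[Section 2.6]{KL15} — rather than any genuinely new homological input beyond what is already present in \cite[Lemma 5.3.4]{KL16}.
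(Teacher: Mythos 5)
Your overall route is the intended one: the paper gives no independent argument for this lemma but simply defers to \cite[Lemma 5.3.4]{KL16}, whose proof is exactly the reduction you describe — rewrite $\Pi^{[r_1,t]}_{H,A}$ and $\Pi^{[t,r_2]}_{H,A}$ via the preceding lemma (the deformed \cite[Lemma 5.3.3]{KL16}) as quotients of $\Pi^{[r_1,r_2]}_{H,A}\{X/p^{-t}\}$ by $X-\pi$ and $\pi X-1$, then quote Kedlaya--Liu's homological results for these elementary operations; your strictness/Schauder-basis mechanism for descending from $\mathbb{Q}_p\{T_1,\dots,T_d\}$ to $A$ is likewise the one the paper uses elsewhere (cf.\ the proof of \cref{proposition5.7}).

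There is, however, a genuine gap in your treatment of the second factor. From ``$X-\pi$ (resp.\ $\pi X-1$) is a non-zero-divisor in the Tate extension'' you infer that the quotient map has Tor-dimension at most one and conclude $2$-pseudoflatness. That inference is invalid: finite Tor-dimension, even combined with preservation of pseudocoherence, is not $2$-pseudoflatness. For any non-zero-divisor $f$ in a ring $C$ the map $C\to C/(f)$ has Tor-dimension one, yet $\mathrm{Tor}_1^C(C/(f),N)\cong N[f]$ is nonzero whenever $N$ has $f$-torsion; what the lemma needs is the vanishing of $\mathrm{Tor}_1$ against the relevant (2-)pseudocoherent modules, i.e.\ that $X-\pi$ remains injective after tensoring with such a module, and this is not a formal consequence of its being a non-zero-divisor in the Tate algebra. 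The repair is either to invoke directly the Kedlaya--Liu statement that for a Banach ring $B$ and $f\in B$ the elementary Laurent-type quotients $B\to B\{X/\rho\}/(X-f)$ and $B\to B\{X/\rho\}/(1-fX)$ are $2$-pseudoflat (this lives in \cite[\S 2.6]{KL16}, not \cite{KL15}), or, since the noetherian-tower assumption is in force so that $\Pi^{[s,r]}_{H,A}$ is strongly noetherian, to use outright flatness of these rational localizations as in \cite[Corollary 2.6.9]{KL16} — which is exactly what the remark following the lemma in the paper records. The same caution applies to your blanket claim that $B\to B\{X/\rho\}$ is flat for an arbitrary Banach ring (completed direct sums need not be flat absent coherence or noetherian hypotheses, and Kedlaya--Liu do not assert this in that generality), and to the reduction of the non-zero-divisor property to $\pi$-torsion-freeness (in \cite{KL16} it is a coefficientwise convergence argument in the Tate variable). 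With the argument rerouted through these citations, your proof coincides with the intended one.
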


\begin{proof}
See \cite[Lemma 5.3.4]{KL16}.	
\end{proof}

\begin{remark}
In effect, one can make more strong statement here, due to the fact that we are working in the noetherian setting. To be more precise see the corresponding result (and the proof) of \cite[Corollary 2.6.9]{KL16}, one can actually have even the flatness of the corresponding maps in the previous lemma as long as we are working over noetherian rings.	
\end{remark}

\indent Then we have the following implication:

\begin{proposition}\label{proposition6.15} \mbox{\bf{(After Kedlaya-Liu \cite[Proposition 5.3.5]{KL16})}} 
Suppose that we have a module $M$ over the ring $\Pi_{H,A}^{I}$ (which is assumed to be stably pseudocoherent) where the closed interval $I$ admits a covering by finite many closed intervals $I=\cup_{i}I_i$. Then we have in our situation the following exact augmented \v{C}ech complex:
\begin{displaymath}
0\rightarrow M \rightarrow \bigoplus_{i} M_i \rightarrow...,	
\end{displaymath}
where the module $M_i$ is defined to be the base change of the module $M$ to the ring $\Pi_{H,A}^{I_i}$ with respect to each $i$. Also we have that the morphism $\Pi_{H,A}^{I}\rightarrow \bigoplus_i \Pi_{H,A}^{I_i}$ is in our situation effective descent with respect to the corresponding categories of \'etale-stably pseudocoherent modules in the Banach setting and with respect to the corresponding categories of finite projective modules in the Banach setting. 
\end{proposition}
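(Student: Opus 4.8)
The plan is to reduce everything to the corresponding undeformed statement in \cite[Proposition 5.3.5]{KL16} by exploiting the fact that the deformation over $A$ is obtained by a completed tensor product and that the underlying undeformed period rings admit Schauder bases. First I would treat the case $A = \mathbb{Q}_p\{T_1,\dots,T_d\}$. Here each $\Pi^{I}_{H,\mathbb{Q}_p\{T_1,\dots,T_d\}}$ is the completion of a polynomial ring over the undeformed $\Pi^{I}_{H}$, and the completed tensor product is flat (indeed faithfully flat after taking the relevant covering), since $\Pi^{I}_{H}$ admits a Schauder basis. Consequently the augmented \v{C}ech complex for $M$ over $\Pi^{I}_{H,\mathbb{Q}_p\{T_1,\dots,T_d\}}$ is obtained from the one over $\Pi^{I}_{H}$ (for the restriction-of-scalars module) by applying $-\,\widehat{\otimes}_{\mathbb{Q}_p}\mathbb{Q}_p\{T_1,\dots,T_d\}$, which is exact on the relevant category of Banach modules because the \v{C}ech complex over $\Pi^{I}_{H}$ is strict exact (by \cite[Proposition 5.3.5]{KL16} together with the 2-pseudoflatness statement in the preceding lemma, exactly as in \cite{KL16}) and tensoring a strict-exact complex of Banach spaces with a space admitting a Schauder basis preserves strict exactness. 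The effective descent for \'etale-stably pseudocoherent modules and for finite projective modules then follows because each transition map $\Pi^{I}_{H,\mathbb{Q}_p\{T_1,\dots,T_d\}} \to \Pi^{I_i}_{H,\mathbb{Q}_p\{T_1,\dots,T_d\}}$ is 2-pseudoflat (the deformed analogue of the preceding lemma, obtained by the same base-change argument), and these rings are sheafy by the noetherian-tower assumption in force; so one invokes the general descent machinery of \cite[2.5.5, 2.5.14]{KL16} just as in the proof of \cref{proposition3.13}.

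Next I would pass from the Tate algebra to a general reduced affinoid $A$, presented as a quotient $\mathbb{Q}_p\{T_1,\dots,T_d\}/\mathfrak{a}$. The key input is that this surjection is strict (this is exactly the point used in the proof of \cref{proposition5.7}, citing \cite[2.1.8, Proposition 6]{BGR}), and strictness is preserved under the completed tensor product with the undeformed period rings, since the latter admit Schauder bases. Therefore $\Pi^{I}_{H,A} = \Pi^{I}_{H,\mathbb{Q}_p\{T_1,\dots,T_d\}}\,\widehat{\otimes}_{\mathbb{Q}_p\{T_1,\dots,T_d\}}A$ and the formation of the \v{C}ech complex commutes with this base change; exactness of the augmented \v{C}ech complex for a stably pseudocoherent $M$ over $\Pi^I_{H,A}$ descends from the already-established Tate-algebra case by the same strict-exactness-preservation argument, because one can lift $M$ (locally, as a stably pseudocoherent module) along the strict surjection and the quotient norms behave well. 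The effective descent statements again reduce to sheafiness of $(\Pi^{[s,r]}_{H,A},\Pi^{[s,r],+}_{H,A})$ — which holds under the noetherian-tower assumption as recorded in the assumption preceding the lemma — combined with the descent criteria of \cite[2.5.5, 2.5.14]{KL16}.

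Finally I would handle the equal-characteristic case identically, replacing $\mathbb{Q}_p\{T_1,\dots,T_d\}$ by $\mathbb{F}_p((\eta))\{T_1,\dots,T_d\}$ and the mixed-characteristic period rings by their positive-characteristic analogues; nothing changes in the structure of the argument, as already noted in the remark following the earlier propositions. The main obstacle I anticipate is the bookkeeping around \emph{strictness}: one must check that tensoring the strict-exact undeformed \v{C}ech complex with $A$ (via two steps, first $\mathbb{Q}_p\{T_1,\dots,T_d\}$ then the quotient) genuinely preserves strict exactness and not merely exactness of the underlying abstract complex — this is where the Schauder-basis hypothesis on the undeformed rings and the strictness of the affinoid presentation are both essential, and it is the one place where the deformed setting is not a completely formal consequence of \cite{KL16}. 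Everything else is a routine transcription of the arguments of \cite[Proposition 5.3.5]{KL16} and \cref{proposition3.13} into the deformed language.
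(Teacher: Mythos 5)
There is a genuine gap at the central step of your first paragraph. You claim that the augmented \v{C}ech complex for $M$ over $\Pi^{I}_{H,\mathbb{Q}_p\{T_1,\dots,T_d\}}$ is obtained from an undeformed \v{C}ech complex ``for the restriction-of-scalars module'' by applying $-\widehat{\otimes}_{\mathbb{Q}_p}\mathbb{Q}_p\{T_1,\dots,T_d\}$, and that its exactness then follows from \cite[Proposition 5.3.5]{KL16}. But that undeformed proposition applies only to (\'etale-)stably pseudocoherent modules over $\Pi^{I}_{H}$, and the restriction of scalars of a pseudocoherent $\Pi^{I}_{H,A}$-module is essentially never finitely generated, let alone pseudocoherent, over $\Pi^{I}_{H}$; nor does $M$ in general arise by base change from an undeformed module, so there is no undeformed complex to which the cited result applies. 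The Schauder-basis/strictness argument you invoke does legitimately transport the \emph{ring-level} strict exact sequence $0\to\Pi^{I}_{H}\to\Pi^{I_1}_{H}\oplus\Pi^{I_2}_{H}\to\Pi^{I_1\cap I_2}_{H}\to 0$ to the deformed setting (completed tensor over the base field with a ring admitting a Schauder basis preserves strict exactness), but the complex with coefficients in $M$ is formed by tensoring over the period ring, not over the base field, so the bounded ($E$-linear) contracting homotopy does not transfer and exactness is not formal. The same difficulty recurs in your second paragraph: a stably pseudocoherent module over $\Pi^{I}_{H,A}$ need not lift along the strict surjection from the Tate-algebra level, so ``descending'' exactness from that case is not available either.

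What is actually needed — and what the paper does, following \cite[Proposition 5.3.5]{KL16} — is to reduce to a covering by two intervals and then rerun the Kedlaya--Liu argument directly over the deformed rings: use the glueing square for $\Pi^{I}_{H,A}$, $\Pi^{I_1}_{H,A}$, $\Pi^{I_2}_{H,A}$, $\Pi^{I_1\cap I_2}_{H,A}$ (whose ring-level strict exactness is exactly where your Schauder-basis observation belongs), take a finite presentation (or projective resolution) of $M$ over $\Pi^{I}_{H,A}$ afforded by the stable pseudocoherence hypothesis, and propagate exactness using the deformed $2$-pseudoflatness of $\Pi^{I}_{H,A}\to\Pi^{I_i}_{H,A}$ stated in the lemma immediately preceding \cref{proposition6.15}; effective descent for stably pseudocoherent and finite projective objects then follows from the glueing lemmas in the style of \cite[Section 2.7]{KL15} and \cite{KL16}, with sheafiness of $(\Pi^{[s,r]}_{H,A},\Pi^{[s,r],+}_{H,A})$ under the noetherian-tower assumption playing the role you assign it. So your outline correctly identifies the auxiliary inputs ($2$-pseudoflatness, strictness of the affinoid presentation as in \cref{proposition5.7}, sheafiness), but the reduction-to-the-undeformed-case mechanism you propose for the module-level exactness would fail and must be replaced by an argument carried out over the deformed rings themselves.
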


\begin{proof}
As in \cite[Proposition 5.3.5]{KL16} we only have to to look at the situation of two intervals. Then as in \cite[Proposition 5.3.5]{KL16} by using the previous lemma one can argue as in \cite[Proposition 5.3.5]{KL16}.	
\end{proof}

\indent Then we deform the basic notation of bundles in \cite[Definition 5.3.6]{KL16}:

\begin{definition} \mbox{\bf{(After Kedlaya-Liu \cite[Definition 5.3.6]{KL16})}}
We define the bundle over the ring $\Pi^{r_0}_{H,A}$ to be a collection $(M_I)_I$ of finite projective modules over each $\Pi_{H,A}^{I}$ with $I\subset (0,r_0]$ closed subintervals of $(0,r_0]$ such that we have the following requirement in the glueing fashion. First for any $I_1\subset I_2$ two closed intervals we have $M_{I_2}\otimes_{\Pi_{H,A}^{I_2}}\Pi_{H,A}^{I_1}\overset{\sim}{\rightarrow} M_{I_1}$ with the obvious cocycle condition with respect to three closed subintervals of $(0,r_0]$ namely taking the form of $I_1\subset I_2\subset I_3$.\\
\indent We define the pseudocoherent sheaf over the ring $\Pi^{r_0}_{H,A}$ to be a collection $(M_I)_I$ of \'etale-stably pseudocoherent modules over each $\Pi_{H,A}^{I}$ with $I\subset (0,r_0]$ closed subintervals of $(0,r_0]$ such that we have the following requirement in the glueing fashion. First for any $I_1\subset I_2$ two closed intervals we have $M_{I_2}\otimes_{\Pi_{H,A}^{I_2}}\Pi_{H,A}^{I_1}\overset{\sim}{\rightarrow} M_{I_1}$ with the obvious cocycle condition with respect to three closed subintervals of $(0,r_0]$ namely taking the form of $I_1\subset I_2\subset I_3$. 	
\end{definition}

\indent We make the following discussion around the corresponding module and sheaf structures defined above.

\begin{lemma} \mbox{\bf{(After Kedlaya-Liu \cite[Lemma 5.3.8]{KL16})}}
We have the isomorphism between the ring $\Pi^r_{H,A}$ and the inverse limit of the ring $\Pi^{[s,r]}_{H,A}$ with respect to the radius $s$ by the map $\Pi^r_{H,A}\rightarrow \Pi^{[s,r]}_{H,A}$.	
\end{lemma}
 
\begin{proof}
As in \cite[Lemma 5.3.8]{KL16} it is injective by the isometry, and then use the corresponding elements $x_n,n=0,1,...$ in the dense ring $\Pi^{r}_{H,A}$ to approximate any element $x$ in the ring $\Pi^{[s,r]}_{H,A}$ in the same way as in \cite[Lemma 5.3.8]{KL16}:
\begin{displaymath}
\|.\|_{\overline{\alpha}_\infty^t,A}(x-x_n)\leq p^{n}	
\end{displaymath}
for any radius $t$ now living in the corresponding interval $[r2^{-n},r]$. This will establish Cauchy sequence which finishes the proof as in \cite[Lemma 5.3.8]{KL16}.
\end{proof}

\begin{proposition} \mbox{\bf{(After Kedlaya-Liu \cite[Lemma 5.3.9]{KL16})}}
For some radius $r\in (0,r_0]$. Suppose we have that $M$ is a vector bundle in the general setting or $M$ is a pseudocoherent sheaf in the setting where the tower is noetherian. Then we have that the corresponding global section is actually dense in each section with respect to some closed interval. And then we have the corresponding vanishing result of the first derived inverse limit functor.	
\end{proposition}

\begin{proof}
See \cite[Lemma 5.3.9]{KL16}.	
\end{proof}

\indent The interesting issue here as in \cite{KL16} is the corresponding finitely generatedness of the global section of a pseudocoherent sheaf which is actually not guaranteed in general. Therefore as in \cite{KL16} we have to distinguish the corresponding well-behaved sheaves out from the corresponding category of all the corresponding pseudocoherent sheaves.

\begin{proposition} \mbox{\bf{(After Kedlaya-Liu \cite[Lemma 5.3.10]{KL16})}}
As in the previous proposition we choose some $r\in (0,r_0]$. Now assume that the corresponding tower $(H_\bullet,H_\bullet^+)$ is noetherian. Now for any pseudocoherent sheaf $M$ defined above we have the following three statements are equivalent. A. The first statement is that one can find a sequence of positive integers $x_1,x_2,...$ such that for any closed interval living inside $(0,r]$ the section of the sheaf with respect this closed interval admits a projective resolution of modules with corresponding ranks bounded by the sequence of integer $x_1,x_2,...$. B. The second statement is that for any locally finite covering of the corresponding interval $(0,r]$ which takes the corresponding form of $\{I_i\}$ one can find a sequence of positive integers $x_1,x_2,...$ such that for any closed interval living inside $\{I_i\}$ the section of the sheaf with respect this closed interval admits a projective resolution of modules with corresponding ranks bounded by the sequence of integer $x_1,x_2,...$. C. Lastly the third statement is that the corresponding global section is a pseudocoherent module over the ring $\Pi^r_{H,A}$. 	
\end{proposition}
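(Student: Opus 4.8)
The plan is to establish the cycle of implications $\mathrm{C}\Rightarrow\mathrm{A}\Rightarrow\mathrm{B}\Rightarrow\mathrm{C}$, following the strategy of \cite[Lemma 5.3.10]{KL16} while keeping careful track of the deformation over $A$. For $\mathrm{C}\Rightarrow\mathrm{A}$, suppose the global section $\Gamma:=M_{(0,r]}$ is pseudocoherent over $\Pi^r_{H,A}$, and fix a resolution $\cdots\to P_1\to P_0\to\Gamma\to 0$ by finite free $\Pi^r_{H,A}$-modules; set $x_i:=\operatorname{rank}P_{i-1}$. For any closed interval $I\subset(0,r]$ the transition map $\Pi^r_{H,A}\to\Pi^I_{H,A}$ is flat in the noetherian setting (this is the content of the remark following the $2$-pseudoflatness lemma, via \cite[Corollary 2.6.9]{KL16}), so base change along it is exact; moreover, by the preceding density/$R^1\varprojlim$-vanishing proposition together with flatness one identifies $M_I$ with $\Gamma\otimes_{\Pi^r_{H,A}}\Pi^I_{H,A}$ (exactly as in \cite[Lemma 5.3.10]{KL16}, where density is used to see that the natural comparison map is an isomorphism). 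Hence $(P_\bullet\otimes_{\Pi^r_{H,A}}\Pi^I_{H,A})\to M_I$ is a finite projective resolution with ranks $x_1,x_2,\dots$, uniformly in $I$, which is statement $\mathrm{A}$. The implication $\mathrm{A}\Rightarrow\mathrm{B}$ is then immediate, since any closed interval contained in a member $I_i$ of a locally finite covering of $(0,r]$ is a fortiori a closed interval contained in $(0,r]$, so the sequence supplied by $\mathrm{A}$ also witnesses $\mathrm{B}$.

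The substantive implication is $\mathrm{B}\Rightarrow\mathrm{C}$. Fix a locally finite covering $\{I_i\}$ of $(0,r]$ by closed intervals and a uniform rank bound $x_1,x_2,\dots$ as in $\mathrm{B}$. Using the exact augmented \v{C}ech complex of \cref{proposition6.15} for the relevant intervals, together with the identification $\Pi^r_{H,A}\cong\varprojlim_s\Pi^{[s,r]}_{H,A}$ from the preceding lemma, one realises $\Gamma$ as an inverse limit of the finitely generated sections $M_{I_i}$. One then builds a projective resolution of $\Gamma$ over $\Pi^r_{H,A}$ inductively: choose a finite free cover $P_0\twoheadrightarrow\Gamma$ of rank $\le x_1$ --- possible because the bound is uniform, so the kernels over all the pieces are generated by $\le x_1$ elements and the approximation/completion argument of \cite[Proposition 2.7.16]{KL16} (the same tool invoked in Section~4 above) produces a compatible global generating set --- and repeat with $\ker(P_0\to\Gamma)$, whose base changes to the $\Pi^{[s,r]}_{H,A}$ are finitely generated with uniformly bounded numbers of generators by the strong noetherian property guaranteed by our standing Assumption. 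Iterating yields the desired finite projective resolution, so $\Gamma$ is pseudocoherent over $\Pi^r_{H,A}$.

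The hard part is precisely this last limiting step: one must verify that the finite projective resolutions furnished on the locally finite pieces, with ranks controlled by a \emph{single} sequence $x_1,x_2,\dots$, assemble into a genuine finite-type projective resolution over the Fr\'echet--Stein-type ring $\Pi^r_{H,A}$. The uniform bound is indispensable here --- without it the number of generators could grow without control as $s\to 0$ and the inverse limit would fail to be finitely generated --- and one must also check that the strong noetherian property on $\Pi^{[s,r]}_H$ survives the completed tensor product with the reduced affinoid $A$. The latter holds because the presentation $\mathbb{Q}_p\{T_1,\dots,T_d\}\to A$ is strict and completed base change of a strongly noetherian Banach ring along a strict quotient of a Tate algebra remains strongly noetherian (compare the strictness argument used in \cref{proposition5.7}). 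Granting these points, the completeness arguments of \cite[\S2.6, \S2.7]{KL16} carry over essentially verbatim, and the three statements are equivalent.
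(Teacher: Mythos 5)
Your proposal is correct and takes essentially the approach the paper intends: the paper's proof is simply a citation of \cite[Lemma 5.3.10]{KL16}, and your cycle $\mathrm{C}\Rightarrow\mathrm{A}\Rightarrow\mathrm{B}\Rightarrow\mathrm{C}$ --- base change of a fixed resolution for $\mathrm{C}\Rightarrow\mathrm{A}$, the trivial $\mathrm{A}\Rightarrow\mathrm{B}$, and the inductive construction of a global resolution from uniform rank bounds via the density and $R^1\varprojlim$-vanishing statement and the noetherian hypothesis on $\Pi^{[s,r]}_{H,A}$ --- is exactly the Kedlaya--Liu argument transported to the deformed setting. The only small inaccuracy is attributing the flatness of $\Pi^r_{H,A}\to\Pi^I_{H,A}$ to the remark after the $2$-pseudoflatness lemma (which concerns maps between closed-interval rings); what is actually used is the Fr\'echet--Stein-type flatness of the limit ring into each Banach member available in the noetherian setting, and this does not affect the argument.
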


\begin{proof}
See \cite[Lemma 5.3.10]{KL16}.	
\end{proof}

\indent As in \cite[Definition 5.3.11]{KL16} we call the sheaf satisfying the corresponding equivalent conditions in the proposition above uniform pseudocoherent sheaf. Then we have the following analog of \cite[Lemma 5.3.12]{KL16}:

\begin{proposition} \mbox{\bf{(After Kedlaya-Liu \cite[Lemma 5.3.12]{KL16})}}
The global section functor defines the corresponding equivalence between the categories of the following two sorts of objects. The first ones are the corresponding uniform pseudocoherent sheaves over $\Pi^r_{H,A}$. The second ones are those pseudocoherent modules over the ring $\Pi^r_{H,A}$. 	
\end{proposition}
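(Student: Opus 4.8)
The plan is to follow closely the strategy of \cite[Lemma 5.3.12]{KL16}, exhibiting explicit quasi-inverse functors and checking the compatibilities using the glueing, density and descent results already established in this subsection. In one direction the functor is the global section functor $(M_I)_I \mapsto \Gamma(M) := \varprojlim_s M_{[s,r]}$, where the transition maps are the base change maps $M_{[s',r]} \to M_{[s,r]}$ for $s \le s'$; by the preceding proposition (the analog of \cite[Lemma 5.3.10]{KL16}, condition C) this is indeed a pseudocoherent module over $\Pi^r_{H,A} \cong \varprojlim_s \Pi^{[s,r]}_{H,A}$. In the other direction, given a pseudocoherent module $N$ over $\Pi^r_{H,A}$, one forms the family $(N_I)_I$ with $N_I := N \widehat{\otimes}_{\Pi^r_{H,A}} \Pi^I_{H,A}$; the first task is to check that each $N_I$ is \'etale-stably pseudocoherent over $\Pi^I_{H,A}$, that the $N_I$ satisfy the glueing condition with the obvious cocycle relation so that $(N_I)_I$ is a pseudocoherent sheaf, and moreover that it is uniform. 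Here I would use the $2$-pseudoflatness of the restriction maps $\Pi^{[r_1,r_2]}_{H,A}\to\Pi^{[r_1,t]}_{H,A}$, $\Pi^{[r_1,r_2]}_{H,A}\to\Pi^{[t,r_2]}_{H,A}$ (the analog of \cite[Lemma 5.3.4]{KL16}) to propagate pseudocoherence, and the strong noetherian property of $\Pi^{[s,r]}_{H,A}$ under the running noetherian hypothesis on the tower to control \'etale-stability; uniformity of ranks follows because a finite termwise finite free presentation of $N$ over $\Pi^r_{H,A}$ base changes to one with the same ranks over every $\Pi^I_{H,A}$.

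The core of the argument is then to show that these two functors are mutually quasi-inverse. For $(M_I)_I$ a uniform pseudocoherent sheaf one must produce a natural isomorphism $\Gamma(M)\widehat{\otimes}_{\Pi^r_{H,A}}\Pi^I_{H,A}\overset{\sim}{\rightarrow} M_I$; this is exactly where the density of the global sections in each section and the vanishing of $\varprojlim^1$ (the analog of \cite[Lemma 5.3.9]{KL16}) enter, together with the exactness of the augmented \v{C}ech complex and the effective descent statement of \cref{proposition6.15}, which allow one to reduce checking the isomorphism to a single closed interval and thence to a cofinal system of such. Conversely, for $N$ a pseudocoherent module over $\Pi^r_{H,A}$ one needs $\varprojlim_s\bigl(N\widehat{\otimes}_{\Pi^r_{H,A}}\Pi^{[s,r]}_{H,A}\bigr)\overset{\sim}{\rightarrow}N$; this is a Fr\'echet--Stein-type completeness statement, again reduced via a finite free presentation of $N$ to the corresponding statement for $\Pi^r_{H,A}$ itself, namely the isomorphism $\Pi^r_{H,A}\cong\varprojlim_s\Pi^{[s,r]}_{H,A}$ from the analog of \cite[Lemma 5.3.8]{KL16}, combined once more with the vanishing of the relevant $\varprojlim^1$ terms for the finite projective modules appearing in the presentation.

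The main obstacle I expect is the control of the deformation over $A$: one must ensure that completed base change along $A$ (equivalently, along a strict surjection $\mathbb{Q}_p\{T_1,\dots,T_d\}\to A$) preserves exactness of the relevant \v{C}ech and presentation complexes and preserves \'etale-stable pseudocoherence. As elsewhere in the paper, this is handled by the fact that the undeformed period rings admit Schauder bases, so the deformation is flat enough for the exact sequences to survive completed tensoring, together with the strictness results of the type \cite[2.1.8, Proposition 6]{BGR} already invoked in the proof of \cref{proposition5.7}; the noetherian hypothesis on the tower is what keeps the rings $\Pi^{[s,r]}_{H,A}$ strongly noetherian and sheafy, so the module-theoretic finiteness statements go through unchanged. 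A secondary technical point is the bookkeeping of uniformity of ranks under all of these operations, which must be tracked carefully but becomes formal once the base-change compatibilities above are in place.
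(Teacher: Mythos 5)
Your proposal is correct and follows essentially the same route as the paper, which simply defers to the proof of \cite[Lemma 5.3.12]{KL16}: your reconstruction via the global section functor, the quasi-inverse base-change functor, and the inputs from the analogs of \cite[Lemma 5.3.8--5.3.10]{KL16} together with \cref{proposition6.15} is exactly the Kedlaya--Liu argument adapted to the deformed setting. The handling of the deformation over $A$ by Schauder bases and strictness, plus the noetherian hypothesis on the tower, is also how the paper treats this point elsewhere, so nothing further is needed.
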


\begin{proof}
See \cite[Lemma 5.3.12]{KL16}.	
\end{proof}

\subsection{Frobenius Structure and $\Gamma$-Structure on Hodge-Iwasawa Modules}

\indent Now we consider the corresponding Frobenius actions over the corresponding imperfect rings we defined before, note that the corresponding Frobenius actions are induced from the corresponding imperfect rings in the undeformed situation from \cite{KL16} which is to say that the Frobenius action on the ring $A$ is actually trivial.

\indent First we consider the corresponding Frobenius modules:

\begin{definition} \mbox{\bf{(After Kedlaya-Liu \cite[Definition 5.4.2]{KL16})}}
Over the period rings $\Pi_{H,A}$ or $\breve{\Pi}_{H,A}$  (which is denoted by $\triangle$ in this definition) we define the corresponding $\varphi^a$-modules over $\triangle$ which are respectively projective, pseudocoherent or fpd to be the corresponding finite projective, pseudocoherent or fpd modules over $\triangle$ with further assigned semilinear action of the operator $\varphi^a$. We also require that the modules are complete for the natural topology involved in our situation and for any module over $\Pi_{H,A}$ to be some base change of some module over $\Pi^r_{H,A}$ (which will be defined in the following). We also require that the modules are complete for the natural topology involved in our situation and any module over $\widetilde{\Pi}_{H,A}$ to be some base change of some module over $\widetilde{\Pi}^r_{H,A}$ (which will be defined in the following).
\end{definition}

\begin{definition} \mbox{\bf{(After Kedlaya-Liu \cite[Definition 5.4.2]{KL16})}}
Over each rings $\triangle=\Pi^r_{H,A},\breve{\Pi}^r_{H,A}$ we define the corresponding projective, pseudocoherent or fpd $\varphi^a$-module over any $\triangle$ to be the corresponding finite projective, pseudocoherent or fpd module $M$ over $\triangle$ with additionally endowed semilinear Frobenius action from $\varphi^a$ such that we have the isomorphism $\varphi^{a*}M\overset{\sim}{\rightarrow}M\otimes \square$ where the ring $\square$ is one $\triangle=\Pi^{r/p}_{H,A},\breve{\Pi}^{r/p}_{H,A}$. Also as in \cite[Definition 5.4.2]{KL16} we assume that the module over $\Pi^r_{H,A}$ is then complete for the natural topology and the corresponding base change to $\Pi^I_{H,A}$ for any interval which is assumed to be closed $I\subset [0,r)$ gives rise to a module over $\Pi^I_{H,A}$ with specified conditions which will be specified below. Also as in \cite[Definition 5.4.2]{KL16} we assume that the module over $\widetilde{\Pi}^r_{H,A}$ is then complete for the natural topology and the corresponding base change to $\widetilde{\Pi}^I_{H,A}$ for any interval which is assumed to be closed $I\subset [0,r)$ gives rise to a module over $\widetilde{\Pi}^I_{H,A}$ with specified conditions which will be specified below.

\end{definition}

\begin{definition} \mbox{\bf{(After Kedlaya-Liu \cite[Definition 5.4.2]{KL16})}}
Again as in \cite[Definition 5.4.2]{KL16}, we define the corresponding projective, pseudocoherent and fpd $\varphi^a$-modules over ring $\Pi^{[s,r]}_{H,A}$ or $\breve{\Pi}^{[s,r]}_{H,A}$  to be the finite projective, pseudocoherent and fpd modules (which will be denoted by $M$) over $\Pi^{[s,r]}_{H,A}$ or $\breve{\Pi}^{[s,r]}_{H,A}$ respectively additionally endowed with semilinear Frobenius action from $\varphi^a$ with the following isomorphisms:
\begin{align}
\varphi^{a*}M\otimes_{\Pi_{H,A}^{[sp^{-ah},rp^{-ah}]}}\Pi_{H,A}^{[s,rp^{-ah}]}\overset{\sim}{\rightarrow}M\otimes_{\Pi_{H,A}^{[s,r]}}\Pi_{H,A}^{[s,rp^{-ah}]}
\end{align}
and
\begin{align}
\varphi^{a*}M\otimes_{\breve{\Pi}_{R,A}^{[sp^{-ah},rp^{-ah}]}}\breve{\Pi}_{R,A}^{[s,rp^{-ah}]}\overset{\sim}{\rightarrow}M\otimes_{\breve{\Pi}_{R,A}^{[s,r]}}\breve{\Pi}_{R,A}^{[s,rp^{-ah}]}.\\
\end{align}
We now assume that the modules are complete with respect to the natural topology and \'etale stably pseudocoherent.
\end{definition}

\noindent Also one can further define the corresponding bundles carrying semilinear Frobenius in our context as in the situation of \cite[Definition 5.4.10]{KL16}:

\begin{definition} \mbox{\bf{(After Kedlaya-Liu \cite[Definition 5.4.10]{KL16})}}
Over the ring $\Pi^r_{H,A}$ or $\breve{\Pi}^r_{H,A}$ we define a corresponding projective, pseudocoherent and fpd Frobenius bundle to be a family $(M_I)_I$ of finite projective, \'etale stably pseudocoherent and \'etale stably fpd modules over each $\Pi^I_{H,A}$ or $\breve{\Pi}^I_{H,A}$ respectively carrying the natural Frobenius action coming from the operator $\varphi^a$ such that for any two involved intervals having the relation $I\subset J$ we have:
\begin{displaymath}
M_J\otimes_{\Pi^J_{H,A}}\Pi^I_{H,A}\overset{\sim}{\rightarrow}	M_I
\end{displaymath}
and 
\begin{displaymath}
M_J\otimes_{\breve{\Pi}^J_{H,A}}\breve{\Pi}^I_{H,A}\overset{\sim}{\rightarrow}	M_I
\end{displaymath}
with the obvious cocycle condition. Here we have to propose condition on the intervals that for each $I=[s,u]$ involved we have $s\leq u/p^{ah}$. Then as in \cite[Definition 5.4.10]{KL16}, we can consider the corresponding direct 2-limit to achieve the corresponding objects in the categories over full Robba rings.
\end{definition}

\indent We can then compare the corresponding objects defined above:

\begin{proposition} \mbox{\bf{(After Kedlaya-Liu \cite[Lemma 5.4.11]{KL16})}}\\
I. Consider the following objects for some radius $r_0$ in our situation. The first group of objects are those finite projective $\varphi^a$-modules over the Robba ring $\Pi^{r_0}_{H,A}$. The second group of objects are those finite projective $\varphi^a$-bundles over the Robba ring $\Pi^{r_0}_{H,A}$. The third group of objects are those finite projective $\varphi^a$-modules over the Robba ring $\Pi^{[s,r]}_{H,A}$ for some $[s,r]\in (0,r_0)$. Then we have that the corresponding categories of the three groups of objects are equivalent. \\
II. Consider the following objects for some radius $r_0$ in our situation. The first group of objects are those pseudocoherent $\varphi^a$-modules over the Robba ring $\Pi^{r_0}_{H,A}$. The second group of objects are those pseudocoherent $\varphi^a$-bundles over the Robba ring $\Pi^{r_0}_{H,A}$. The third group of objects are those pseudocoherent $\varphi^a$-modules over the Robba ring $\Pi^{[s,r]}_{H,A}$ for some $[s,r]\in (0,r_0)$. Then we have that the corresponding categories of the three groups of objects are equivalent. \\
III. Consider the following objects for some radius $r_0$ in our situation. The first group of objects are those finite projective dimensional $\varphi^a$-modules over the Robba ring $\Pi^{r_0}_{H,A}$. The second group of objects are those finite projective dimensional $\varphi^a$-bundles over the Robba ring $\Pi^{r_0}_{H,A}$. The third group of objects are those finite projective dimensional $\varphi^a$-modules over the Robba ring $\Pi^{[s,r]}_{H,A}$ for some $[s,r]\in (0,r_0)$. Then we have that the corresponding categories of the three groups of objects are equivalent.  	
\end{proposition}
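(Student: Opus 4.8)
The plan is to reduce the statement to the corresponding undeformed and ``perfect-side'' results established in the excerpt, namely \cite[Lemma 5.4.11]{KL16} together with the gluing and descent machinery of \Cref{proposition6.15} and the uniform-pseudocoherence dictionary. The three parts (finite projective, pseudocoherent, fpd) have a common skeleton, so I would prove the finite projective case in full and then indicate the modifications. First I would set up the three functors. Given a $\varphi^a$-module $M$ over $\Pi^{r_0}_{H,A}$, restriction to closed subintervals $I\subset(0,r_0]$ produces a compatible family $(M_I)_I$; the cocycle condition and the Frobenius semilinear isomorphism on each $M^{[s,r]}$ are inherited, so one lands in the category of $\varphi^a$-bundles. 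Conversely, given a $\varphi^a$-bundle $(M_I)_I$, one forms the global section $M=\varprojlim_I M_I$; here one uses the $\varprojlim$-vanishing and density result (the analog of \cite[Lemma 5.3.9]{KL16} quoted just above) to see that $M$ is a genuine module over $\Pi^{r_0}_{H,A}=\varprojlim_I \Pi^I_{H,A}$ (the ring identity being exactly the preceding lemma on $\Pi^r_{H,A}$ as an inverse limit of the $\Pi^{[s,r]}_{H,A}$), that it is complete for the natural topology, and that the Frobenius structure glues. The equivalence with the single-interval category $\Pi^{[s,r]}_{H,A}$ comes from observing that a Frobenius structure forces $M$ to be reconstructible from $M^{[s,r]}$ for any one admissible interval, by repeatedly applying $\varphi^{a*}$ (which shifts $[s,r]$ to $[sp^{-ah},rp^{-ah}]$) and patching, exactly as in \cite[Lemma 5.4.11]{KL16}.

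Next I would verify that these functors are mutually quasi-inverse. The composite bundle $\to$ module $\to$ bundle is the identity because $M_I = M\otimes_{\Pi^{r_0}_{H,A}}\Pi^I_{H,A}$ recovers the original family: this is where \Cref{proposition6.15} enters, since it gives the exactness of the augmented \v{C}ech complex for a finite closed cover of an interval and hence effective descent for finite projective modules (and \'etale-stably pseudocoherent modules) along $\Pi^I_{H,A}\to\bigoplus_i\Pi^{I_i}_{H,A}$; combined with the finite-projectivity of each $M_I$ this forces the base-change maps to be isomorphisms. The composite module $\to$ bundle $\to$ module is the identity by the inverse-limit ring identity plus the fact that a finite projective module over $\Pi^{r_0}_{H,A}$ is determined by its localizations, again via \Cref{proposition6.15}. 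For the single-interval comparison one checks that passing from $(M_I)_I$ to $M^{[s,r]}$ and back, using the Frobenius glueing isomorphisms $\varphi^{a*}M\otimes\Pi^{[s,rp^{-ah}]}\cong M\otimes\Pi^{[s,rp^{-ah}]}$, is the identity; this is the content of the undeformed statement and requires no new input beyond tracking that all tensor products are taken over the deformed rings and that $A$ is carried along by the (flat, by the noetherian remark after \cite[Lemma 5.3.4]{KL16}) complete tensor product.

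For part~II the only change is that finite projective is replaced by \'etale-stably pseudocoherent, and one must add the hypothesis that the bundle is \emph{uniform} (the analog of \cite[Definition 5.3.11]{KL16}) so that the global section is actually a pseudocoherent module over $\Pi^{r_0}_{H,A}$ rather than merely a sheaf; this is supplied by the proposition after \cite[Lemma 5.3.10]{KL16} and the global-section equivalence of \cite[Lemma 5.3.12]{KL16}, both quoted in the excerpt. Descent still works because \Cref{proposition6.15} explicitly includes \'etale-stably pseudocoherent modules. Part~III is identical with ``fpd'' substituted throughout; fpd-ness is local on a finite closed cover, and the Frobenius pullback $\varphi^{a*}$ preserves projective dimension, so the same descent argument applies. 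In all three cases the verification that $\varphi^a$ interacts correctly with base change is routine given that $\varphi^a$ acts $A$-linearly and trivially on $A$, as fixed in the Frobenius setting.

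The main obstacle I anticipate is the finite-generatedness/uniformity issue in part~II: a pseudocoherent $\varphi^a$-bundle need not have finitely generated global sections without the uniform-pseudocoherence condition, and one must argue that the presence of the Frobenius structure automatically supplies the uniform bound on ranks of projective resolutions over the various intervals (the application of $\varphi^a$ producing a uniform bound across the reified quasi-Stein covering, as in the perfect-side argument of the earlier Fargues--Fontaine comparison theorem). Once that uniformity is in hand, everything else is a mechanical transcription of \cite[Lemma 5.4.11]{KL16} with the deformation ring $A$ carried through the complete tensor products, using the descent and inverse-limit results already recorded in this section; so I would state the uniform-pseudocoherence reduction carefully and then say ``the remaining verifications are as in \cite[Lemma 5.4.11]{KL16}''.
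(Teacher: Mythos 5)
Your proposal is correct and follows essentially the same route as the paper's proof: restriction and global-section (inverse-limit) functors compared via the Frobenius pullbacks across intervals as in \cite[Lemma 5.4.11]{KL16}, with \cref{proposition6.15} supplying the descent for the bundle--to--single-interval comparison and the uniform pseudocoherence discussion handling the global sections in the pseudocoherent and fpd cases. The uniformity concern you flag is exactly the point the paper addresses by invoking the uniform pseudocoherent objects (and \cite[Proposition 2.7.16]{KL16} in the projective case), so no genuinely different input is needed.
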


\begin{proof}
See the proof of \cite[Lemma 5.4.11]{KL16}. In our situation we need to use the corresponding \cref{proposition6.15} to prove the corresponding equivalences between the corresponding categories of bundles in the second groups and modules in the third groups. The corresponding equivalences between the corresponding categories of bundles in the second groups and modules in the first groups are following \cite[Lemma 5.4.11]{KL16} after applying the Frobenius pullbacks to compare those sections over different intervals, and note that we have the corresponding uniform pseudocoherent objects in our development above which actually restricts further to the finite projective objects as in \cite[Proposition 2.7.16]{KL16}.
\end{proof}

\indent Now we define the corresponding $\Gamma$-modules over the period rings attached to the tower $(H_\bullet,H_\bullet^+)$. The corresponding structures are actually abstractly defined in the same way as in \cite{KL16}. First we consider the deformation of the corresponding complex $*_{H^\bullet}$ for any ring $*$ in \cref{setting6.4}.

\begin{assumption}
Recall that the corresponding tower is called decompleting if it is weakly decompleting, finite \'etale on each finite level and having the exact sequence $\overline{\varphi}^{-1}H'_{H^\bullet}/H'_{H^\bullet}$ is exact. We now assume that the tower $(H_\bullet,H^+_\bullet)$ is then decompleting.	
\end{assumption}


\begin{setting} 
Assume now $\Gamma$ is a topological group as in \cite[Definition 5.5.5]{KL16} acting on the corresponding period rings in the \cref{setting6.3} in the original context of \cref{setting6.3}. Then we consider the corresponding induced continous action over the corresponding deformed version in our context namely in \cref{setting6.4}. Assume now that the tower is Galois with the corresponding Galois group $\Gamma$.	
\end{setting}

\begin{definition}
We now consider the corresponding inhomogeneous continuous cocycles of the group $\Gamma$, as in \cite[Definition 5.5.5]{KL16} we use the following notation to denote the corresponding complex extracted from a single tower for a given period ring $*_{H,A}$ in \cref{setting6.4} for each $k>0$:
\begin{displaymath}
*_{H^k,A}:=C_\mathrm{con}(\Gamma^k,*_{H})\widehat{\otimes}_{\sharp} ?
\end{displaymath}
where $\sharp=\mathbb{Q}_p,\mathbb{Z}_p$ and $?=A,\mathfrak{o}_A$ respectively, which forms the corresponding complex $(*_{H^\bullet,A},d^\bullet)$ with the corresponding differential as in \cite[Definition 5.5.5]{KL16} in the sense of continuous group cohomology.	
\end{definition}

\begin{definition}
Having established the corresponding meaning of the $\Gamma$-structure we now consider the corresponding definition of $\Gamma$-modules. Such modules called the corresponding $\Gamma$-modules are defined over the corresponding rings in \cref{setting6.4}. Again we allow the corresponding modules to be finite projective, or pseudocoherent or fpd over the rings in \cref{setting6.4}. And the modules are defined to be carrying the corresponding continuous semilinear action from the group $\Gamma$.	
\end{definition}

\begin{proposition} \mbox{\bf{(After Kedlaya-Liu \cite[Corollary 5.6.5]{KL16})}}
The complex $\varphi^{-1}\Pi^{[sp^{-h},rp^{-h}]}_{H^\bullet_{\geq n},A}/\Pi^{[s,r]}_{H^\bullet_{\geq n},A}$ and the complex $\widetilde{\Pi}^{[s,r]}_{H^\bullet_{\geq n},A}/\Pi^{[s,r]}_{H^\bullet_{\geq n},A}$ are strict exact for any truncation index $n$. The corresponding radii satisfy the corresponding relation $0<s\leq r\leq r_0$.
\end{proposition}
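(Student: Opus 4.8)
The plan is to reduce the deformed statement to the corresponding undeformed result in Kedlaya–Liu, exactly as was done for the analogous intersection-of-period-rings propositions earlier in this section (e.g. \cref{proposition5.7}). Concretely, the complexes $\varphi^{-1}\Pi^{[sp^{-h},rp^{-h}]}_{H^\bullet_{\geq n}}/\Pi^{[s,r]}_{H^\bullet_{\geq n}}$ and $\widetilde{\Pi}^{[s,r]}_{H^\bullet_{\geq n}}/\Pi^{[s,r]}_{H^\bullet_{\geq n}}$ are strict exact by \cite[Corollary 5.6.5]{KL16} (this is precisely where the decompleting hypothesis on $(H_\bullet,H^+_\bullet)$, imposed in the assumption just above, gets used). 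The deformed complex is obtained from the undeformed one by the operation $(-)\widehat{\otimes}_{\sharp}?$ with $\sharp=\mathbb{Q}_p$, $?=A$ (or the integral variant), as recorded in the definition of $*_{H^k,A}$. So the content is: strict exactness is preserved under this completed base change.

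The key steps, in order, would be: first, recall that each undeformed period ring $\Pi^{[s,r]}_{H,}$, $\widetilde{\Pi}^{[s,r]}_{H}$, and each term $C_\mathrm{con}(\Gamma^k,\Pi^{[s,r]}_{H})$ in the cochain complex admits a Schauder basis (orthonormalizable Banach module over the base), as is used throughout \cite{KL16} and was invoked in the excerpt when passing from undeformed rings to their $\mathcal{O}_{\mathfrak{X}}$-deformations (``the undeformed period rings admit Schauder bases as in \cite[Definition 6.1]{KP}''). Second, observe that a strict exact sequence of Banach spaces over $\mathbb{Q}_p$ whose terms are orthonormalizable remains strict exact after applying $\widehat{\otimes}_{\mathbb{Q}_p}A$: a strict exact complex of orthonormalizable Banach modules is, up to equivalence of norms, a direct sum of trivial two-term complexes $B\overset{\mathrm{id}}{\to}B$ and zero complexes, and completed tensor product commutes with such direct sums and preserves the splitting. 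Third, apply this to the two complexes of \cite[Corollary 5.6.5]{KL16} termwise (they are complexes of continuous cochains valued in these rings, truncated at level $\geq n$), concluding strict exactness of the deformed complexes for all radii $0<s\leq r\leq r_0$ and all truncation indices $n$. Fourth, note the same argument with $\mathbb{Z}_p$, $\mathfrak{o}_A$ in place of $\mathbb{Q}_p$, $A$ handles the integral case.

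The main obstacle, and the point deserving genuine care, is the second step: strict exactness (as opposed to mere exactness) is a metric condition, and completed tensor products do not in general preserve it unless one controls the norms. The safe route is to exploit orthonormalizability: one must check that each term appearing in \cite[Corollary 5.6.5]{KL16}'s complexes — including the quotient terms $\varphi^{-1}\Pi^{[\cdot]}/\Pi^{[\cdot]}$ and the cochain groups $C_\mathrm{con}(\Gamma^k,-)$ — is potentially orthonormalizable over the coefficient field, so that the splitting lemma for strict exact complexes of such spaces applies and is stable under $\widehat{\otimes}$. For the cochain groups this follows from continuity of the $\Gamma$-action together with orthonormalizability of the value rings; for the quotients one uses that a strict quotient of an orthonormalizable space by a closed orthonormalizable-complemented subspace is again orthonormalizable, which is exactly what strict exactness of the undeformed complex provides. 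Once this is in place the conclusion is immediate, so I would present the proof as: ``By \cite[Corollary 5.6.5]{KL16} the undeformed complexes are strict exact with orthonormalizable terms; strict exactness of such complexes is preserved under $\widehat{\otimes}_{\sharp}?$, whence the result.''

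Alternatively, if one wishes to avoid the orthonormalizability bookkeeping, one can instead remark that strict exactness can be tested after the faithfully flat, isometric (on the relevant associated graded) base change to a presentation $A \cong \mathbb{Q}_p\{T_1,\dots,T_d\}/\mathfrak{a}$ and use the strictness of the quotient map as in the proof of \cref{proposition5.7} — i.e. first establish the statement over the Tate algebra $\mathbb{Q}_p\{T_1,\dots,T_d\}$ (where $\widehat{\otimes}_{\mathbb{Q}_p}\mathbb{Q}_p\{T_1,\dots,T_d\}$ visibly preserves strict exactness of orthonormalizable complexes, being completion of a polynomial extension), then descend to $A$ using \cite[2.1.8, Proposition 6]{BGR}. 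I expect either route to work; I would write up the orthonormalizable-splitting argument as the cleaner one, flagging the Tate-algebra-then-descent argument as the fallback in a remark.
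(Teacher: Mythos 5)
Your proposal is correct and follows essentially the same route as the paper, which proves this by citing \cite[Corollary 5.6.5]{KL16} for the undeformed complexes and then invoking a Schauder-basis argument to see that the completed tensor with the affinoid coefficients preserves strict exactness. The only (inessential) difference is that the paper places the Schauder basis on $A$ itself, so the deformed complex becomes a completed direct sum of copies of the undeformed strict exact complex, which avoids the orthonormalizability bookkeeping for the quotient and cochain terms that you carry out.
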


\begin{proof}
See \cite[Corollary 5.6.5]{KL16}, and consider the corresponding Schauder basis of $A$.	
\end{proof}

\begin{proposition} \mbox{\bf{(After Kedlaya-Liu \cite[Lemma 5.6.6]{KL16})}}
The complex 
\begin{displaymath}
M\otimes_{\Pi^{[s,r]}_{H,A}} \varphi^{-(\ell+1)}\Pi^{[sp^{-h(\ell+1)},rp^{-h(\ell+1)}]}_{H^\bullet,A}/ \varphi^{-\ell}\Pi^{[sp^{-h\ell},rp^{-h\ell}]}_{H^\bullet,A}	
\end{displaymath}
and the complex 
\begin{displaymath}
M\otimes_{\Pi^{[s,r]}_{H,A}} \widetilde{\Pi}^{[s,r]}_{H^\bullet,A}/\varphi^{-\ell}\Pi^{[sp^{-h(\ell)},rp^{-h(\ell)}]}_{H^\bullet,A}
\end{displaymath}
are strict exact for any truncation index $n$. The corresponding radii satisfy the corresponding relation $0<s\leq r\leq r_0$, and $\ell$ is bigger than some existing truncated integer $\ell_0\geq  0$. Here $M$ is any $\Gamma$-module in our context.
\end{proposition}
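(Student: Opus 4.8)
The plan is to reduce the statement to the corresponding undeformed assertion of \cite[Lemma 5.6.6]{KL16} together with the fact, used repeatedly in this section, that the affinoid algebra $A$ admits a Schauder basis over $\mathbb{Q}_p$ (or over $\mathbb{Z}_p$ in the integral version), so that all the period rings and sheaves in \cref{setting6.4} are obtained from their undeformed counterparts by a completed tensor product that preserves strict exactness of complexes of Banach spaces. The point is that the two quotient complexes in the statement are, after choosing a Schauder basis of $A$, countable products (indexed by the basis) of shifts of the corresponding undeformed quotient complexes $\varphi^{-(\ell+1)}\Pi^{[sp^{-h(\ell+1)},rp^{-h(\ell+1)}]}_{H^\bullet}/\varphi^{-\ell}\Pi^{[sp^{-h\ell},rp^{-h\ell}]}_{H^\bullet}$ and $\widetilde{\Pi}^{[s,r]}_{H^\bullet}/\varphi^{-\ell}\Pi^{[sp^{-h\ell},rp^{-h\ell}]}_{H^\bullet}$ tensored with $M$, and strictness is inherited under such products.

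First I would record that the tower $(H_\bullet,H^+_\bullet)$ is assumed decompleting, hence weakly decompleting, finite \'etale at each finite level, and noetherian at radii below $r_0$, so that the previous proposition (the deformed analogue of \cite[Corollary 5.6.5]{KL16}) already gives strict exactness of the building-block complexes $\varphi^{-1}\Pi^{[sp^{-h},rp^{-h}]}_{H^\bullet_{\geq n},A}/\Pi^{[s,r]}_{H^\bullet_{\geq n},A}$ and $\widetilde{\Pi}^{[s,r]}_{H^\bullet_{\geq n},A}/\Pi^{[s,r]}_{H^\bullet_{\geq n},A}$. Next I would Frobenius-twist: applying $\varphi^{-\ell}$ to the first of these and composing successive layers telescopes to the complex $\varphi^{-(\ell+1)}\Pi^{[sp^{-h(\ell+1)},rp^{-h(\ell+1)}]}_{H^\bullet,A}/\varphi^{-\ell}\Pi^{[sp^{-h\ell},rp^{-h\ell}]}_{H^\bullet,A}$, and since $\varphi$ is an isometric (up to the explicit rescaling of radii) automorphism on the relevant rings, strict exactness is preserved. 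Then I would tensor with $M$: here I would invoke \cref{proposition6.15} and the noetherian/2-pseudoflatness statements recalled earlier in this subsection so that $M\otimes_{\Pi^{[s,r]}_{H,A}}(-)$ is exact on the strict exact complex of Banach modules in question, using that $M$ is \'etale-stably pseudocoherent (or finite projective, or fpd) and complete for the natural topology; strictness of the tensored complex follows because over the noetherian rings $\Pi^{[s,r]}_{H,A}$ the modules are finitely presented and the morphisms strict. Finally, the second complex $M\otimes \widetilde{\Pi}^{[s,r]}_{H^\bullet,A}/\varphi^{-\ell}\Pi^{[sp^{-h\ell},rp^{-h\ell}]}_{H^\bullet,A}$ is handled the same way, starting instead from the strict exactness of $\widetilde{\Pi}^{[s,r]}_{H^\bullet_{\geq n},A}/\Pi^{[s,r]}_{H^\bullet_{\geq n},A}$ and passing to the limit over the Frobenius-twisted layers, which is legitimate because the connecting maps are strict surjections and the $\mathrm{R}^1\varprojlim$ terms vanish by the density/completeness statements of the preceding lemmas.

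The main obstacle I anticipate is controlling strictness after the completed tensor product with $A$ and with $M$ simultaneously — that is, ensuring that the quotient seminorms on $M\otimes_{\Pi^{[s,r]}_{H,A}}(\text{quotient complex})$ genuinely agree (up to equivalence) with the subspace seminorms so that the complex is strict and not merely algebraically exact. I would deal with this exactly as in \cite{KL16}: use that in the noetherian range every finitely generated module over $\Pi^{[s,r]}_{H,A}$ is automatically a Banach module with a unique topology and every morphism is strict (by the open mapping theorem, valid since these rings are Banach), so strict exactness is equivalent to plain exactness at the level of the underlying modules, and the latter is preserved by the flat (indeed 2-pseudoflat, hence flat over noetherian rings) base changes and by the Schauder-basis completed tensor product with $A$. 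The truncation index $n$ and the lower bound $\ell_0$ on $\ell$ play no essential new role beyond what they do in \cite[Lemma 5.6.6]{KL16}; they only guarantee that the radii $sp^{-h\ell},rp^{-h\ell}$ stay within the admissible range $(0,r_0]$ for the noetherian hypothesis to apply. I would therefore conclude by citing \cite[Lemma 5.6.6]{KL16} for the undeformed strict exactness and noting that each step above only involves base change along maps already shown (in this subsection and in Section~3) to preserve strict exactness in the deformed setting.
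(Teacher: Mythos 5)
Your overall route---reduce the deformed statement to the undeformed one via a Schauder basis of $A$, invoke the deformed analogue of \cite[Corollary 5.6.5]{KL16} proved just before, Frobenius-twist to pass between layers, and then tensor with $M$---is essentially what the paper does: its proof is nothing more than a citation of \cite[Lemma 5.6.6]{KL16}, with the Schauder-basis reduction indicated in the proof of the preceding proposition. So in outline you are expanding the paper's (implicit) argument rather than departing from it.

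There is, however, a genuine soft spot in your justification of the key step, namely that $M\otimes_{\Pi^{[s,r]}_{H,A}}(-)$ preserves (strict) exactness of the quotient complexes. Flatness (or $2$-pseudoflatness) of the transition maps between the deformed Robba rings, and the automatic strictness of morphisms of finitely generated modules over these noetherian Banach rings, do not by themselves imply that tensoring an exact complex with a possibly non-flat module $M$ stays exact: for that you need either $M$ flat/projective or a splitting (bounded contracting homotopy) of the complex compatible with the module structure. The mechanism that actually works, and is the one underlying \cite[Lemma 5.6.6]{KL16}, is that for finite projective $M$ the module is a direct summand of a finite free module, so strict exactness passes to $M\otimes(-)$ immediately; the pseudocoherent case is \emph{not} covered by this and in both \cite{KL16} and the present paper it is deferred to the later subsection (the analogue of \cite[Lemma 5.8.14]{KL16}), where the additional $F$-(finite projective) hypothesis and the topological splitting of $\widetilde{\Pi}^{[s,r]}_{H,A}$ as a direct sum of $\Pi^{[s,r]}_{H,A}$-modules (analogue of \cite[Lemma 5.8.7]{KL16}) supply the needed homotopy. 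Since the proposition as stated allows "any $\Gamma$-module in our context," you should either restrict your argument to the finite projective case here, or explicitly import those later splitting results; the appeal to noetherianity and flat base change does not close this gap.
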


\begin{proof}
See \cite[Lemma 5.6.6]{KL16}.	
\end{proof}

\begin{proposition} \mbox{\bf{(After Kedlaya-Liu \cite[Lemma 5.6.9]{KL16})}}
With the corresponding notations as above we have that the corresponding base change from the ring taking the form of $\breve{\Pi}^{[s,r]}_{H,A}$ to the ring taking the form of $\widetilde{\Pi}^{[s,r]}_{H,A}$ establishes the corresponding equivalence on the corresponding categories of $\Gamma$-modules.	
\end{proposition}

\begin{proof}
This is a relative version of the corresponding result in \cite[Lemma 5.6.9]{KL16} we adapt the corresponding argument here. Indeed the corresponding fully faithfulness comes from the previous proposition the idea to prove the corresponding essential surjectivity comes from writing the module $M$ (the corresponding differential) over the ring taking form of $\widetilde{\Pi}^{[s,r]}_{H,A}$ as the base change from $\varphi^{-k}(\Pi^{[s,r]}_{H,A})$ of a module $M_0$ (the corresponding differential) after the corresponding \cite[Lemma 5.6.8]{KL16}.	Then as in \cite{KL16} we consider the corresponding norms on the corresponding $M$ and the base change of $M_0$ which could be controlled up to some constant which could be further modified to be zero by reducing each time positive amount of constant from the constant represented by the difference of the norms.
\end{proof}

\begin{definition} \mbox{\bf{(After Kedlaya-Liu \cite[Definition 5.7.2]{KL16})}}
Over the period rings $\Pi_{H,A}$ or $\breve{\Pi}_{H,A}$  (which is denoted by $\triangle$ in this definition) we define the corresponding $(\varphi^a,\Gamma)$-modules over $\triangle$ which are respectively projective, pseudocoherent or fpd to be the corresponding finite projective, pseudocoherent or fpd $\Gamma$-modules over $\triangle$ with further assigned semilinear action of the operator $\varphi^a$ with the isomorphism defined by using the Frobenius. We also require that the modules are complete for the natural topology involved in our situation and for any module over $\Pi_{H,A}$ to be some base change of some module over $\Pi^r_{H,A}$ (which will be defined in the following). 
\end{definition}

\begin{definition} \mbox{\bf{(After Kedlaya-Liu \cite[Definition 5.7.2]{KL16})}}
Over each rings $\triangle=\Pi^r_{H,A},\breve{\Pi}^r_{H,A}$ we define the corresponding projective, pseudocoherent or fpd $(\varphi^a,\Gamma)$-module over any $\triangle$ to be the corresponding finite projective, pseudocoherent or fpd $\Gamma$-module $M$ over $\triangle$ with additionally endowed semilinear Frobenius action from $\varphi^a$ such that we have the isomorphism $\varphi^{a*}M\overset{\sim}{\rightarrow}M\otimes \square$ where the ring $\square$ is one $\triangle=\Pi^{r/p}_{H,A},\breve{\Pi}^{r/p}_{H,A}$. Also as in \cite[Definition 5.7.2]{KL16} we assume that the module over $\Pi^r_{H,A}$ is then complete for the natural topology and the corresponding base change to $\Pi^I_{H,A}$ for any interval which is assumed to be closed $I\subset [0,r)$ gives rise to a module over $\Pi^I_{H,A}$ with specified conditions which will be specified below. 

\end{definition}

\begin{definition} \mbox{\bf{(After Kedlaya-Liu \cite[Definition 5.7.2]{KL16})}}
Again as in \cite[Definition 5.7.2]{KL16}, we define the corresponding projective, pseudocoherent and fpd $(\varphi^a,\Gamma)$-modules over ring $\Pi^{[s,r]}_{H,A}$ or $\breve{\Pi}^{[s,r]}_{H,A}$ to be the finite projective, pseudocoherent and fpd $\Gamma$-modules (which will be denoted by $M$) over $\Pi^{[s,r]}_{H,A}$ or $\breve{\Pi}^{[s,r]}_{H,A}$ respectively additionally endowed with semilinear Frobenius action from $\varphi^a$ with the following isomorphisms:
\begin{align}
\varphi^{a*}M\otimes_{\Pi_{H,A}^{[sp^{-ah},rp^{-ah}]}}\Pi_{H,A}^{[s,rp^{-ah}]}\overset{\sim}{\rightarrow}M\otimes_{\Pi_{H,A}^{[s,r]}}\Pi_{H,A}^{[s,rp^{-ah}]}
\end{align}
and
\begin{align}
\varphi^{a*}M\otimes_{\breve{\Pi}_{R,A}^{[sp^{-ah},rp^{-ah}]}}\breve{\Pi}_{R,A}^{[s,rp^{-ah}]}\overset{\sim}{\rightarrow}M\otimes_{\breve{\Pi}_{R,A}^{[s,r]}}\breve{\Pi}_{R,A}^{[s,rp^{-ah}]}.\\
\end{align}
We also require the corresponding topological conditions as we considered in the Frobenius module situation. 
\end{definition}

\begin{definition} \mbox{\bf{(After Kedlaya-Liu \cite[Definition 5.7.2]{KL16})}}
Over the ring $\Pi^r_{H,A}$ or $\breve{\Pi}^r_{H,A}$ we define a corresponding projective, pseudocoherent and fpd $(\varphi^a,\Gamma)$ bundle to be a family $(M_I)_I$ of finite projective, pseudocoherent and fpd $\Gamma$-modules over each $\Pi^I_{H,A}$ or $\breve{\Pi}^I_{H,A}$ respectively carrying the natural Frobenius action coming from the operator $\varphi^a$ such that for any two involved intervals having the relation $I\subset J$ we have:
\begin{displaymath}
M_J\otimes_{\Pi^J_{H,A}}\Pi^I_{H,A}\overset{\sim}{\rightarrow}	M_I
\end{displaymath}
and 
\begin{displaymath}
M_J\otimes_{\breve{\Pi}^J_{H,A}}\breve{\Pi}^I_{H,A}\overset{\sim}{\rightarrow}	M_I
\end{displaymath}
with the obvious cocycle condition. Here we have to propose condition on the intervals that for each $I=[v,u]$ involved we have $v\leq u/p^{ah}$. We put the corresponding topological conditions as before when we consider the corresponding Frobenius bundles. And one can take the corresponding 2-limit in the direct sense to define the corresponding objects over the full Robba rings.
\end{definition}

\begin{definition} \mbox{\bf{(After Kedlaya-Liu \cite[Definition 5.7.2]{KL16})}}
Over the period rings $\widetilde{\Pi}_{H,A}$ (which is denoted by $\triangle$ in this definition) we define the corresponding $(\varphi^a,\Gamma)$-modules over $\triangle$ which are respectively projective, pseudocoherent or fpd to be the corresponding finite projective, pseudocoherent or fpd $\Gamma$-modules over $\triangle$ with further assigned semilinear action of the operator $\varphi^a$ with the isomorphism defined by using the Frobenius. We also require that the modules are complete for the natural topology involved in our situation and for any module over $\widetilde{\Pi}_{H,A}$ to be some base change of some module over $\widetilde{\Pi}^r_{H,A}$ (which will be defined in the following).
\end{definition}

\begin{definition} \mbox{\bf{(After Kedlaya-Liu \cite[Definition 5.7.2]{KL16})}}
Over each ring $\triangle=\widetilde{\Pi}^r_{H,A}$ we define the corresponding projective, pseudocoherent or fpd $(\varphi^a,\Gamma)$-module over any $\triangle$ to be the corresponding finite projective, pseudocoherent or fpd $\Gamma$-module $M$ over $\triangle$ with additionally endowed semilinear Frobenius action from $\varphi^a$ such that we have the isomorphism $\varphi^{a*}M\overset{\sim}{\rightarrow}M\otimes \square$ where the ring $\square$ is one $\triangle=\widetilde{\Pi}^{r/p}_{H,A}$. Also as in \cite[Definition 5.7.2]{KL16} we assume that the module over $\widetilde{\Pi}^r_{H,A}$ is then complete for the natural topology and the corresponding base change to $\widetilde{\Pi}^I_{H,A}$ for any interval which is assumed to be closed $I\subset [0,r)$ gives rise to a module over $\widetilde{\Pi}^I_{H,A}$ with specified conditions which will be specified below.

\end{definition}

\begin{definition} \mbox{\bf{(After Kedlaya-Liu \cite[Definition 5.7.2]{KL16})}}
Again as in \cite[Definition 5.7.2]{KL16}, we define the corresponding projective, pseudocoherent and fpd $(\varphi^a,\Gamma)$-modules over ring $\widetilde{\Pi}^{[s,r]}_{H,A}$ to be the finite projective, pseudocoherent and fpd $\Gamma$-modules (which will be denoted by $M$) over $\widetilde{\Pi}^{[s,r]}_{H,A}$ additionally endowed with semilinear Frobenius action from $\varphi^a$ with the following isomorphisms:
\begin{align}
\varphi^{a*}M\otimes_{\widetilde{\Pi}_{H,A}^{[sp^{-ah},rp^{-ah}]}}\widetilde{\Pi}_{H,A}^{[s,rp^{-ah}]}\overset{\sim}{\rightarrow}M\otimes_{\widetilde{\Pi}_{H,A}^{[s,r]}}\widetilde{\Pi}_{H,A}^{[s,rp^{-ah}]}.
\end{align}
We also require the corresponding topological conditions as we considered in the Frobenius module situation. 
\end{definition}

\begin{definition} \mbox{\bf{(After Kedlaya-Liu \cite[Definition 5.7.2]{KL16})}}
Over the ring $\widetilde{\Pi}^r_{H,A}$ we define a corresponding projective, pseudocoherent and fpd $(\varphi^a,\Gamma)$ bundle to be a family $(M_I)_I$ of finite projective, pseudocoherent and fpd $\Gamma$-modules over each $\widetilde{\Pi}^I_{H,A}$ carrying the natural Frobenius action coming from the operator $\varphi^a$ such that for any two involved intervals having the relation $I\subset J$ we have:
\begin{displaymath}
M_J\otimes_{\widetilde{\Pi}^J_{H,A}}\widetilde{\Pi}^I_{H,A}\overset{\sim}{\rightarrow}	M_I
\end{displaymath}
with the obvious cocycle condition. Here we have to propose condition on the intervals that for each $I=[v,u]$ involved we have $v\leq u/p^{ah}$. We put the corresponding topological conditions as before when we consider the corresponding Frobenius bundles. And one can take the corresponding 2-limit in the direct sense to define the corresponding objects over the full Robba rings.
\end{definition}

\begin{remark}
In the following proposition, we assume that the corresponding ring $\widetilde{\Pi}^{[s,r]}_{\overline{H}'_\infty,A}$ is sheafy.	
\end{remark}

\begin{proposition} \mbox{\bf{(After Kedlaya-Liu \cite[Theorem 5.7.5]{KL16})}}
We have now the following categories are equivalence for the corresponding radii $0< s\leq r\leq r_0$ (with the further requirement as in \cite[Theorem 5.7.5]{KL16} that $s\in (0,r/q]$):\\
1. The category of all the finite projective sheaves over the ring $\widetilde{\Pi}_{\mathrm{Spa}(H_0,H_0^+),A}$, carrying the $\varphi^a$-action;\\
2. The category of all the finite projective sheaves over the ring $\widetilde{\Pi}^r_{\mathrm{Spa}(H_0,H_0^+),A}$, carrying the $\varphi^a$-action;\\
3. The category of all the finite projective sheaves over the ring $\widetilde{\Pi}^{[s,r]}_{\mathrm{Spa}(H_0,H_0^+),A}$, carrying the $\varphi^a$-action.\\
\indent Then we have the second group of categories which are equvalent:\\
4. The category of all the finite projective quasi-coherent sheaves over corresponding adic Fargues-Fontaine curve in the deformed setting $\mathrm{FF}_{\overline{H}'_\infty,A}$, carrying the corresponding action from the group $\Gamma$ which is assumed to be semilinear and continuous over each section over any affinoid subspace of the whole space which is assumed to be $\Gamma$-invariant;\\
5. The category of all the finite projective modules over the ring $\Pi_{H,A}$, carrying the $(\varphi^a,\Gamma)$-action;\\
6. The category of all the finite projective bundles over the ring $\Pi_{H,A}$, carrying the $(\varphi^a,\Gamma)$-action;\\
7. The category of all the finite projective modules over the ring $\breve{\Pi}_{H,A}$, carrying the $(\varphi^a,\Gamma)$-action;\\
8. The category of all the finite projective bundles over the ring $\breve{\Pi}_{H,A}$, carrying the $(\varphi^a,\Gamma)$-action;\\
9. The category of all the finite projective modules over the ring $\breve{\Pi}^{[s,r]}_{H,A}$, carrying the $(\varphi^a,\Gamma)$-action;\\
10. The category of all the finite projective modules over the ring $\widetilde{\Pi}_{H,A}$, carrying the $(\varphi^a,\Gamma)$-action;\\
11. The category of all the finite projective bundles over the ring $\widetilde{\Pi}_{H,A}$, carrying the $(\varphi^a,\Gamma)$-action;\\
12. The category of all the finite projective modules over the ring $\widetilde{\Pi}^{[s,r]}_{H,A}$, carrying the $(\varphi^a,\Gamma)$-action.\\	
\end{proposition}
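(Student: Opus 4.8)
The plan is to follow the architecture of the proof of \cite[Theorem 5.7.5]{KL16}, inserting at each step the deformed (i.e.\ $A$-coefficient) versions of the auxiliary statements already established above. The twelve categories organize into two blocks: the $\varphi^a$-equivariant sheaves over $\mathrm{Spa}(H_0,H_0^+)$ in items $1$--$3$, and the $(\varphi^a,\Gamma)$-equivariant objects over the imperfect rings $\Pi_{H,A}$, $\breve\Pi_{H,A}$, the perfect ring $\widetilde\Pi_{H,A}$, and the adic Fargues--Fontaine curve in items $4$--$12$. Within each block one first reduces every assertion to a statement over a single closed interval $[s,r]\subset(0,r_0]$ (with $s\leq r/q$), and then one bridges the two blocks by descent along the decompleting tower.

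First I would handle the interval-to-Robba comparisons. For the triples consisting of modules over $\Pi_{H,A}$, bundles over $\Pi_{H,A}$ and modules over $\Pi^{[s,r]}_{H,A}$, and likewise for $\breve\Pi$ and $\widetilde\Pi$, I would invoke the glueing result \cref{proposition6.15} together with the finite-projectivity part of the preceding proposition (the deformed analog of \cite[Lemma 5.4.11]{KL16}); the Frobenius pullback then identifies the section of a bundle over $[s,r]$ with its section over $[sp^{-ah},rp^{-ah}]$, so that one interval carrying a compatible $\varphi^a$-structure reconstitutes the whole bundle, hence the whole module over the full Robba ring. Sheafiness of the relevant rings over $A$ — available by the noetherian hypothesis on the tower through the strongly noetherian property of $\Pi^{[s,r]}_{H,A}$, and by the sheafiness hypothesis on $\widetilde\Pi^{[s,r]}_{\overline H'_\infty,A}$ recorded in the remark above — is exactly what makes the finite-projective descent in \cref{proposition6.15} effective. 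The same mechanism applies to items $1$--$3$.

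Next comes decompletion and the perfect comparison. To pass from $\Pi^{[s,r]}_{H,A}$ to $\breve\Pi^{[s,r]}_{H,A}$ and then to $\widetilde\Pi^{[s,r]}_{H,A}$ I would use the propositions above generalizing \cite[Corollary 5.6.5, Lemma 5.6.6, Lemma 5.6.9]{KL16}: the base change from $\breve\Pi^{[s,r]}_{H,A}$ to $\widetilde\Pi^{[s,r]}_{H,A}$ is already recorded as an equivalence of $\Gamma$-module categories, and the strict exactness of the complexes $\varphi^{-1}\Pi^{[sp^{-h},rp^{-h}]}_{H^\bullet_{\geq n},A}/\Pi^{[s,r]}_{H^\bullet_{\geq n},A}$ and $\widetilde\Pi^{[s,r]}_{H^\bullet_{\geq n},A}/\Pi^{[s,r]}_{H^\bullet_{\geq n},A}$ yields, by the usual successive-approximation argument, descent of $(\varphi^a,\Gamma)$-modules from $\breve\Pi_{H,A}$ down to $\Pi_{H,A}$. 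For item $4$ I would apply the deformed version of \cite[Theorem 4.6.1]{KL16}, via \cref{proposition3.13}, with $R=\overline H'_\infty$: this identifies finite projective $\varphi^a$-modules over $\widetilde\Pi_{\overline H'_\infty,A}$ with finite projective quasi-coherent sheaves over $\mathrm{FF}_{\overline H'_\infty,A}$, and the $\Gamma$-action is then carried along functorially. Finally, since the tower is Galois with group $\Gamma$ and finite étale at each finite level, a $\varphi^a$-sheaf over $\widetilde\Pi_{\mathrm{Spa}(H_0,H_0^+),A}$ amounts to a $\varphi^a$-module over $\widetilde\Pi_{\overline H'_\infty,A}$ equipped with descent data for $\Gamma$, i.e.\ a $(\varphi^a,\Gamma)$-module over $\widetilde\Pi_{H,A}$; this joins items $1$--$3$ to items $10$--$12$ and closes the circle.

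I expect the genuine work to lie in the decompletion step, namely making that descent function uniformly in the deformation variable. One must check that completed tensoring the Kedlaya--Liu complexes with $A$ preserves strict exactness — this is where orthonormalizability of $A$ and the existence of Schauder bases for the undeformed period rings enter — that the $\Gamma$-action on the deformed rings remains continuous, and that the quantitative norm estimates driving the approximation argument are unaffected by the $A$-factor. Once these compatibilities are verified, the remaining identifications are a formal transfer of the arguments of \cite[Theorem 5.7.5]{KL16}.
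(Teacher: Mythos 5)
Your treatment of the two equivalences actually asserted in the statement follows essentially the same route as the paper: the equivalences among items 1--3 and the identification of item 4 with items 10--12 come from the deformed perfect-setting comparison results of the earlier sections (the deformed analogue of \cite[Theorem 4.6.1]{KL16} together with \cref{proposition3.13}, applied with $R=\overline{H}'_\infty$ and with the $\Gamma$-action carried along), the passage between modules, bundles and single-interval modules uses \cref{proposition6.15} and the deformed analogue of \cite[Lemma 5.4.11]{KL16}, and the link between items 5--9 and the perfect objects is exactly the decompletion mechanism of \cite[Theorem 5.7.5]{KL16} run with the $A$-deformed inputs (strict exactness of the Kedlaya--Liu complexes after completed tensor with $A$ via Schauder bases, and the deformed \cite[Lemma 5.6.9]{KL16} equivalence of $\Gamma$-module categories along $\breve{\Pi}^{[s,r]}_{H,A}\rightarrow\widetilde{\Pi}^{[s,r]}_{H,A}$). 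That is precisely how the paper argues.

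The one point you should remove or explicitly downgrade is the final claim that Galois descent along the tower ``joins items 1--3 to items 10--12 and closes the circle.'' The proposition deliberately asserts two separate groups of equivalences and does not claim that the sheaf-theoretic group 1--3 is equivalent to the $(\varphi^a,\Gamma)$-module group 4--12; in the analogous perfect-setting theorem the paper explicitly remarks that the comparison between the two groups was not achieved under deformation. The obstruction is real: descending an $A$-deformed finite projective $\varphi^a$-sheaf on the pro-\'etale site of $\mathrm{Spa}(H_0,H_0^+)$ to a $(\varphi^a,\Gamma)$-module over $\widetilde{\Pi}_{H,A}$ requires descent and cohomology-vanishing statements for the deformed period sheaves along the tower, and these are only established in the undeformed or $E_\infty$-base-changed situations (\cref{lemma3.11}, \cref{proposition3.13}); in the genuinely deformed situation the paper only obtains fully faithful embeddings (cf.\ \cref{theorem4.4}), not essential surjectivity. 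Since the statement does not require this bridge, your proof of the actual proposition stands, but as written the last sentence asserts something the available results do not support.
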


\begin{proof}
The corresponding comparisons on the sheaves and bundles in 1-4 and 10-12 are derived in the corresponding context in the perfect setting as what we did in the previous sections. The rest ones are proved exactly the same as \cite[Theorem 5.7.5]{KL16} by using our development.	
\end{proof}

\begin{remark}
This proposition generalizes the corresponding results in \cite{KP} including the situation therein considered by Chojecki-Gaisin, while note that we have also included the situation in the equal characteristic situation.	
\end{remark}

\indent Furthermore if one considers the corresponding context where the ring $H'_{H}$ is further assumed to be $F$-(finite projective) then we can discuss the level of pseudocoherent objects.

\begin{lemma}\mbox{\bf{(After Kedlaya-Liu \cite[Lemma 5.8.7]{KL16})}}
For any radii in our situation namely $0<s\leq r\leq r_0$ we have the following isomorphism:
\begin{displaymath}
\widetilde{\Pi}^{[s,r]}_{H,A}\overset{\sim}{\rightarrow} \Pi_{H,A}^{[s,r]}\oplus (\oplus_{\ell=0}^\infty \varphi^{-(\ell+1)
}\Pi_{H,A}^{[sp^{-h(\ell+1)},rp^{-h(\ell+1)}]}/\varphi^{-\ell
}\Pi_{H,A}^{[sp^{-h\ell},rp^{-h\ell}]})^\wedge.	
\end{displaymath}	
\end{lemma}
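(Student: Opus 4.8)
The statement is the deformed (and $A$-coefficient) analogue of Kedlaya--Liu's decomposition \cite[Lemma 5.8.7]{KL16}, so the plan is to follow their argument, checking at each step that tensoring with $A$ (which by our standing hypotheses enters through a strict/Schauder-basis mechanism) preserves the relevant strict exactness. First I would set up the two ingredients on which the isomorphism rests: (i) the strict exactness of the complexes
\begin{displaymath}
\varphi^{-(\ell+1)}\Pi^{[sp^{-h(\ell+1)},rp^{-h(\ell+1)}]}_{H^\bullet,A}/\varphi^{-\ell}\Pi^{[sp^{-h\ell},rp^{-h\ell}]}_{H^\bullet,A}
\end{displaymath}
for $\ell$ above the truncation index, together with $\widetilde{\Pi}^{[s,r]}_{H^\bullet,A}/\varphi^{-\ell}\Pi^{[sp^{-h\ell},rp^{-h\ell}]}_{H^\bullet,A}$, which are exactly the statements proved just above (our deformed versions of \cite[Corollary 5.6.5, Lemma 5.6.6]{KL16}); and (ii) the description of $\widetilde{\Pi}^{[s,r]}_{H,A}$ as an inverse/direct limit along the Frobenius-twisted tower exhausting the perfection, valid because the tower is weakly decompleting and noetherian.

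The heart of the argument is then the telescoping decomposition. The key step is to observe that the natural map
\begin{displaymath}
\Pi^{[s,r]}_{H,A}\oplus\Bigl(\bigoplus_{\ell=0}^\infty \varphi^{-(\ell+1)}\Pi^{[sp^{-h(\ell+1)},rp^{-h(\ell+1)}]}_{H,A}/\varphi^{-\ell}\Pi^{[sp^{-h\ell},rp^{-h\ell}]}_{H,A}\Bigr)^{\wedge}\longrightarrow \widetilde{\Pi}^{[s,r]}_{H,A}
\end{displaymath}
is obtained by summing the inclusions of the successive graded pieces of the filtration $\Pi^{[s,r]}_{H,A}\subset \varphi^{-1}\Pi^{[sp^{-h},rp^{-h}]}_{H,A}\subset\cdots$ whose union is (topologically dense in) $\widetilde{\Pi}^{[s,r]}_{H,A}$. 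I would argue that each finite partial sum recovers $\varphi^{-N}\Pi^{[sp^{-hN},rp^{-hN}]}_{H,A}$ compatibly with norms (here one uses the strict exactness from ingredient (i) to split each one-step extension, with uniform control of the splitting constants), and then pass to the completion using the density statement (our deformed \cite[Lemma 5.3.8, Lemma 5.3.9]{KL16}). The $A$-coefficients are carried along throughout by completed tensor product, and the Schauder-basis property of $A$ (as used repeatedly above, e.g. in the proof of the deformed \cite[Corollary 5.6.5]{KL16}) guarantees that strict exactness over $E$ yields strict exactness over $A$; the strictness of the surjection from the Tate algebra onto a general reduced affinoid $A$ (via \cite[2.1.8, Proposition 6]{BGR}, as in the proof of \cref{proposition5.7}) then extends the result from $\mathbb{Q}_p\{T_1,\dots,T_d\}$ to arbitrary $A$.

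The main obstacle I anticipate is the \emph{uniformity of the completion}: one must verify that the constants controlling the splittings of the successive extensions $\varphi^{-\ell}\Pi^{[sp^{-h\ell},rp^{-h\ell}]}_{H,A}\hookrightarrow \varphi^{-(\ell+1)}\Pi^{[sp^{-h(\ell+1)},rp^{-h(\ell+1)}]}_{H,A}$ do not blow up as $\ell\to\infty$, so that the completed direct sum on the left-hand side maps isomorphically (not merely injectively with dense image) onto $\widetilde{\Pi}^{[s,r]}_{H,A}$. In \cite{KL16} this is handled by the weakly-decompleting hypothesis, which forces the Frobenius pullback to act with a definite norm contraction on the relevant graded pieces; I would reproduce this norm estimate in the deformed setting, noting that the Frobenius acts trivially on $A$ and hence the contraction factor is unchanged, and that the completed tensor product with $A$ does not enlarge the operator norm. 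Once the uniform estimate is in place, the isomorphism follows formally, and passage to a general reduced affinoid $A$ is the routine strictness argument indicated above. I would record the equal-characteristic case as identical, per the remark following the earlier propositions.
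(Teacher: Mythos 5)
Your proposal is correct and follows essentially the same route as the paper: the paper's proof simply invokes Kedlaya--Liu's undeformed decomposition \cite[Lemma 5.8.7]{KL16} and handles the deformation by the Schauder-basis mechanism for $A$ (completed tensoring preserving the strict/topological splitting), which is exactly the mechanism you identify, merely spelled out in more detail via the deformed analogues of \cite[Corollary 5.6.5, Lemma 5.6.6]{KL16} and the uniform norm estimates. Your extra care about uniformity of the splitting constants and strictness for general reduced $A$ is consistent with, not divergent from, the paper's intended argument.
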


\begin{proof}
This is by considering the Schauder basis of $A$. See \cite[Lemma 5.8.7]{KL16}.	
\end{proof}

\begin{corollary}\mbox{\bf{(After Kedlaya-Liu \cite[Corollary 5.8.8, Corollary 5.8.11]{KL16})}}
The map for the radii as in the previous lemma is 2-pseudoflat:
\begin{displaymath}
\Pi_{H,A}^{[s,r]}\rightarrow \widetilde{\Pi}^{[s,r]}_{H,A}.	
\end{displaymath}
Also we have that the corresponding base change along this map will preserve the corresponding \'etale stably pseudocoherence.
	
\end{corollary}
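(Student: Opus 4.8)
The plan is to run the proofs of \cite[Corollary 5.8.8]{KL16} and \cite[Corollary 5.8.11]{KL16}, feeding in the decomposition of the previous lemma in place of its undeformed counterpart and checking at each step that deforming over $A$ changes nothing, because $A$ carries a Schauder basis over the coefficient field. Recall that, in the sense of \cite[\S 2.6]{KL16}, asserting $2$-pseudoflatness of $\psi\colon\Pi^{[s,r]}_{H,A}\to\widetilde{\Pi}^{[s,r]}_{H,A}$ unwinds to: for every pseudocoherent $\Pi^{[s,r]}_{H,A}$-module $M$ one has $\mathrm{Tor}^{\Pi^{[s,r]}_{H,A}}_i\big(\widetilde{\Pi}^{[s,r]}_{H,A},M\big)=0$ for $i=1,2$, and the second sentence of the corollary is then precisely the accompanying claim that $M\otimes_{\Pi^{[s,r]}_{H,A}}\widetilde{\Pi}^{[s,r]}_{H,A}$ is pseudocoherent, in its \'etale-stable refinement. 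So everything is reduced to a $\mathrm{Tor}$ computation and a pseudocoherence statement for $\widetilde{\Pi}^{[s,r]}_{H,A}$ regarded as a $\Pi^{[s,r]}_{H,A}$-module.

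First I would invoke the isomorphism of the previous lemma, which presents $\widetilde{\Pi}^{[s,r]}_{H,A}$ as $\Pi^{[s,r]}_{H,A}\oplus N$, where $N$ is the completed direct sum over $\ell\ge 0$ of the graded pieces $Q_\ell:=\varphi^{-(\ell+1)}\Pi^{[sp^{-h(\ell+1)},rp^{-h(\ell+1)}]}_{H,A}\big/\varphi^{-\ell}\Pi^{[sp^{-h\ell},rp^{-h\ell}]}_{H,A}$. The free summand $\Pi^{[s,r]}_{H,A}$ is harmless both for the $\mathrm{Tor}$-vanishing and for pseudocoherence, so the problem localizes to $N$. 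Each $Q_\ell$ is carried by $\varphi^{\ell}$, up to a rescaling of radii, onto $Q_0=\varphi^{-1}\Pi^{[sp^{-h},rp^{-h}]}_{H,A}\big/\Pi^{[s,r]}_{H,A}$; and for the relevant radii the extension $\Pi^{[s,r]}_{H,A}\hookrightarrow\varphi^{-1}\Pi^{[sp^{-h},rp^{-h}]}_{H,A}$ is finite \'etale — because the tower is finite \'etale, and this is preserved by the deformation $\widehat{\otimes}A$ — hence split, so that $Q_0$, and therefore every $Q_\ell$, is a finite projective $\Pi^{[s,r]}_{H,A}$-module. Being flat, such $Q_\ell$ contribute nothing to higher $\mathrm{Tor}$ and preserve \'etale-stable pseudocoherence termwise; here one may also keep in the background the $2$-pseudoflatness, indeed flatness, of the restriction maps $\Pi^{[r_1,r_2]}_{H,A}\to\Pi^{[r_1,t]}_{H,A},\,\Pi^{[t,r_2]}_{H,A}$ recorded earlier, together with the strict exactness of $\varphi^{-1}\Pi^{[sp^{-h},rp^{-h}]}_{H^\bullet,A}/\Pi^{[s,r]}_{H^\bullet,A}$ from the proposition modeled on \cite[Corollary 5.6.5]{KL16}.

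It then remains to reassemble the $Q_\ell$ into $N$ and back-change from them. Because the radii $sp^{-h\ell},rp^{-h\ell}$ contract geometrically, the Banach norms on the $Q_\ell$ decay, so $N$ is a tame completion of $\bigoplus_\ell Q_\ell$, and the Schauder-basis structure of $A$ lets one interchange $\widehat{\otimes}A$ with this completed direct sum exactly as in the proof of the previous lemma; the permanence of pseudoflatness and of \'etale-stable pseudocoherence under completed direct sums of Banach modules is \cite[\S 2.6]{KL16}. Finally, since \'etale-stable pseudocoherence is an \'etale-local condition, the base-changed module acquires that refined form, completing the argument along the lines of \cite[Corollary 5.8.11]{KL16}. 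The step I expect to be the genuine obstacle is this reassembly: one must verify with care that the deformation over $A$ commutes with the infinite completed direct sum — that $\big(\bigoplus_\ell Q_\ell\big)^{\wedge}\widehat{\otimes}A$ and $\big(\bigoplus_\ell(Q_\ell\widehat{\otimes}A)\big)^{\wedge}$ agree with compatible Banach structures — and that the $\mathrm{Tor}$ computation against a pseudocoherent module may legitimately be performed termwise and then completed; the single-interval inputs are comparatively routine given the results already in place.
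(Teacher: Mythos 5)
The paper records no argument for this corollary: it is stated as an immediate consequence of the preceding lemma (the deformed analogue of \cite[Lemma 5.8.7]{KL16}) together with \cite[Corollary 5.8.8, Corollary 5.8.11]{KL16}, and your outline --- split off the free summand $\Pi^{[s,r]}_{H,A}$, observe that each graded piece $Q_\ell$ is finite projective, then control the completed direct sum using the Schauder basis of $A$ and the permanence results of \cite[Section 2.6]{KL16} --- is exactly that intended argument, so in structure you and the paper agree.

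The one step you should repair is your justification that $Q_0$, and hence every $Q_\ell$, is finite projective over $\Pi^{[s,r]}_{H,A}$. You derive this from the tower being finite \'etale; but finite \'etaleness is a condition on the transition maps $H_n\to H_{n+1}$ (part of the tower being decompleting) and says nothing about the Frobenius on the imperfect Robba rings, so it does not make $\Pi^{[s,r]}_{H,A}\hookrightarrow\varphi^{-1}\Pi^{[sp^{-h},rp^{-h}]}_{H,A}$ finite \'etale, nor give the splitting you invoke. The input actually needed --- and the one the paper flags immediately before the lemma (``if one considers the corresponding context where the ring $H'_{H}$ is further assumed to be $F$-(finite projective)\dots''), matching the standing hypothesis of \cite[Section 5.8]{KL16} --- is that $H'_H$ is $F$-(finite projective); it is this hypothesis that yields, as in the opening lemmas of \cite[Section 5.8]{KL16}, that $\varphi^{-1}\Pi^{[sp^{-h},rp^{-h}]}_{H}$ is a finite projective $\Pi^{[s,r]}_{H}$-module with finite projective quotient, and the statement persists after applying $\widehat{\otimes}A$ because for a finite projective module the completed and ordinary base changes coincide. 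Without that hypothesis the graded pieces need not be projective and the Tor computation collapses; with it, the remaining steps you list (termwise flatness, interchange of $\widehat{\otimes}A$ with the completed direct sum via the Schauder basis of $A$, and the \'etale-stable refinement handled as in \cite[Corollary 5.8.11]{KL16}) go through as you describe.
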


\begin{lemma} \mbox{\bf{(After Kedlaya-Liu \cite[Lemma 5.8.14]{KL16})}}
The complex 
\begin{displaymath}
M\otimes_{\Pi^{[s,r]}_{H,A}} \varphi^{-(\ell+1)}\Pi^{[sp^{-h(\ell+1)},rp^{-h(\ell+1)}]}_{H^\bullet,A}/ \varphi^{-\ell}\Pi^{[sp^{-h\ell},rp^{-h\ell}]}_{H^\bullet,A}	
\end{displaymath}
and the complex 
\begin{displaymath}
M\otimes_{\Pi^{[s,r]}_{H,A}} \widetilde{\Pi}^{[s,r]}_{H^\bullet,A}/\varphi^{-\ell}\Pi^{[sp^{-h(\ell)},rp^{-h(\ell)}]}_{H^\bullet,A}
\end{displaymath}
are strict exact for any truncation index $n$. The corresponding radii satisfy the corresponding relation $0<s\leq r\leq r_0$, and $\ell$ is bigger than some existing truncated integer $\ell_0\geq  0$. Here $M$ is any pseudocoherent $\Gamma$-module in our context.	
\end{lemma}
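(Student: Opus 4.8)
The plan is to bootstrap from the finite projective case --- the analogue of \cite[Lemma 5.6.6]{KL16} already established above --- to the pseudocoherent one, exactly as \cite[Lemma 5.8.14]{KL16} is deduced from \cite[Lemma 5.6.6]{KL16}. The running hypothesis of this subsection, that $H'_H$ be $F$-(finite projective), is precisely what makes the pseudocoherent category manageable here: under it a pseudocoherent $\Gamma$-module $M$ over $\Pi^{[s,r]}_{H,A}$ admits a resolution by finite projective $\Gamma$-modules (with compatible $\varphi^a$-structure where one is present), and this is the only extra ingredient beyond what was used in the finite projective case.

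First I would reduce the two complexes in the statement to a single one and to the finite projective case. By the analogue of \cite[Lemma 5.8.7]{KL16} proved above, $\widetilde{\Pi}^{[s,r]}_{H^\bullet,A}$ decomposes, as a complex of $\Pi^{[s,r]}_{H^\bullet,A}$-modules, as $\Pi^{[s,r]}_{H^\bullet,A}$ together with the completed direct sum of the graded pieces $\varphi^{-(\ell+1)}\Pi^{[sp^{-h(\ell+1)},rp^{-h(\ell+1)}]}_{H^\bullet,A}/\varphi^{-\ell}\Pi^{[sp^{-h\ell},rp^{-h\ell}]}_{H^\bullet,A}$; hence it is enough to treat each graded piece separately, the passage to the completed direct sum being controlled by the quantitative decomposition estimates behind the analogue of \cite[Lemma 5.2.8]{KL15}, now carried over the Schauder basis of $A$. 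For a single graded piece, the decisive algebraic input is the analogue of \cite[Corollary 5.8.8, Corollary 5.8.11]{KL16} proved above: the map $\Pi^{[s,r]}_{H,A}\to\widetilde{\Pi}^{[s,r]}_{H,A}$ is $2$-pseudoflat and base change along it preserves \'etale-stably pseudocoherence, and the same therefore holds for each graded piece, being a direct summand of $\widetilde{\Pi}^{[s,r]}_{H,A}$ over $\Pi^{[s,r]}_{H,A}$.

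The core step is then a double complex argument. Fix the truncation index $n$ and $\ell\geq\ell_0$, and write $C^\bullet$ for the relevant continuous-cochain complex (one graded piece of $\widetilde{\Pi}^{[s,r]}_{H^\bullet,A}/\Pi^{[s,r]}_{H^\bullet,A}$). Choose a resolution $\cdots\to P_1\to P_0\to M\to 0$ of $M$ by finite projective $\Gamma$-modules over $\Pi^{[s,r]}_{H,A}$ and form $P_\bullet\otimes_{\Pi^{[s,r]}_{H,A}}C^\bullet$. Each row $P_i\otimes C^\bullet$ is strict exact by the finite projective case; running the other spectral sequence, the $2$-pseudoflatness of the graded pieces ensures that $P_\bullet\otimes C^k\to M\otimes C^k$ stays a resolution in the range that matters, so the total complex, hence $M\otimes C^\bullet$, is strict exact. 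The topological strictness throughout is handled by the open mapping theorem together with the norm bookkeeping already developed, in the same spirit as \cref{proposition6.15} and the analogue of \cite[Lemma 5.6.6]{KL16}.

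The main obstacle I expect lies not in the homological algebra but in the analytic bookkeeping. One must produce the finite projective resolution $P_\bullet\to M$ compatibly with the $\Gamma$- (and $\varphi^a$-) action and with ranks controlled uniformly enough --- via the uniform pseudocoherent structure isolated in the previous subsection --- that the completed direct sums appearing in the decomposition lemma converge and remain strict; and, because $C^\bullet$ is an infinite continuous-cochain complex rather than a finite one, the dimension-shifting must be carried out with explicit norm estimates so that $2$-pseudoflatness alone suffices, rather than vanishing of all higher Tor. Both difficulties are dealt with in \cite[Chapter 5]{KL16} by the same techniques; the only genuinely new point here is the passage over the Schauder basis of $A$, which is routine.
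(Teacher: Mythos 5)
Your outline is essentially the route the paper intends: the paper offers no independent argument for this lemma, relying on \cite[Lemma 5.8.14]{KL16} transported over $A$ via its Schauder basis, and that is exactly your reduction --- split $\widetilde{\Pi}^{[s,r]}_{H^\bullet,A}$ by the analogue of \cite[Lemma 5.8.7]{KL16}, feed in the $2$-pseudoflatness and preservation of pseudocoherence from the analogues of \cite[Corollary 5.8.8, Corollary 5.8.11]{KL16}, and bootstrap from the finite projective case of \cite[Lemma 5.6.6]{KL16} by resolving $M$ under the $F$-(finite projective) hypothesis. So the proposal is correct and takes essentially the same approach as the paper.
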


\begin{proposition}\mbox{\bf{(After Kedlaya-Liu \cite[Lemma 5.8.17]{KL16})}}
The corresponding base change along the following map is fully faithful for the corresponding pseudocoherent modules carrying the corresponding structure of $\Gamma$-action:
\begin{displaymath}
\breve{\Pi}_{H,A}^{[s,r]}\rightarrow \widetilde{\Pi}^{[s,r]}_{H,A}.	
\end{displaymath}
The image in the essential sense consists of those modules descending to the corresponding ring in the domain of this map when forgetting the corresponding $\Gamma$-action.	
\end{proposition}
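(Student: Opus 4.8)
The plan is to follow the strategy of \cite[Lemma 5.8.17]{KL16}, adapting it to the deformed setting by exploiting the Schauder-basis decomposition of $A$ that was used in the proof of the lemma just above. First I would recall the key structural decomposition: by the preceding lemma (after \cite[Lemma 5.8.7]{KL16}) we have a direct-sum decomposition
\begin{displaymath}
\widetilde{\Pi}^{[s,r]}_{H,A}\overset{\sim}{\rightarrow} \Pi_{H,A}^{[s,r]}\oplus \Bigl(\bigoplus_{\ell=0}^\infty \varphi^{-(\ell+1)}\Pi_{H,A}^{[sp^{-h(\ell+1)},rp^{-h(\ell+1)}]}/\varphi^{-\ell}\Pi_{H,A}^{[sp^{-h\ell},rp^{-h\ell}]}\Bigr)^\wedge,
\end{displaymath}
and this decomposition is compatible with the $\Gamma$-action because $\Gamma$ acts on the undeformed rings and the completed tensor product with $A$ is functorial. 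Since $\breve{\Pi}^{[s,r]}_{H,A}$ is the union of the $\varphi^{-n}\Pi^{[p^{-hn}s,p^{-hn}r]}_{H,A}$, the map $\breve{\Pi}_{H,A}^{[s,r]}\rightarrow \widetilde{\Pi}^{[s,r]}_{H,A}$ is the completed colimit realization, and the above decomposition identifies the "extra" part of the target as a completion of a sum of the intermediate quotient modules $\varphi^{-(\ell+1)}\Pi^{[sp^{-h(\ell+1)},rp^{-h(\ell+1)}]}_{H^\bullet,A}/\varphi^{-\ell}\Pi^{[sp^{-h\ell},rp^{-h\ell}]}_{H^\bullet,A}$ appearing in the preceding lemma (after \cite[Lemma 5.8.14]{KL16}).

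Next I would establish fully faithfulness. Given two pseudocoherent $\Gamma$-modules $M,N$ over $\breve{\Pi}^{[s,r]}_{H,A}$, I want to show
\begin{displaymath}
\mathrm{Hom}_{\breve{\Pi}^{[s,r]}_{H,A},\Gamma}(M,N)\overset{\sim}{\rightarrow}\mathrm{Hom}_{\widetilde{\Pi}^{[s,r]}_{H,A},\Gamma}(M\otimes\widetilde{\Pi}^{[s,r]}_{H,A},N\otimes\widetilde{\Pi}^{[s,r]}_{H,A}).
\end{displaymath}
As in \cite{KL16}, this reduces to a cohomology computation: the Hom between base-changed modules with their $\Gamma$-structure is computed by a complex, and the comparison between the $\breve{\Pi}$-level and $\widetilde{\Pi}$-level complexes is controlled by the strict exactness of the complexes $M\otimes_{\Pi^{[s,r]}_{H,A}}\widetilde{\Pi}^{[s,r]}_{H^\bullet,A}/\varphi^{-\ell}\Pi^{[sp^{-h\ell},rp^{-h\ell}]}_{H^\bullet,A}$ (and the telescoping quotients) established in the preceding lemma. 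Since $\breve{\Pi}_{H,A}^{[s,r]}\rightarrow \widetilde{\Pi}^{[s,r]}_{H,A}$ is 2-pseudoflat by the preceding corollary (after \cite[Corollary 5.8.8, Corollary 5.8.11]{KL16}), base change preserves \'etale-stable pseudocoherence, so $M\otimes\widetilde{\Pi}^{[s,r]}_{H,A}$ really is an object in the target category. The $\Gamma$-equivariance is handled by passing through the cocycle complex $*_{H^\bullet,A}$ exactly as in the continuous group cohomology setup of \cite[Definition 5.5.5]{KL16}; the completed tensor product with $A$ preserves strict exactness because $A$ admits a Schauder basis, which is precisely the device used in the proof of the lemma above.

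Finally, for the essential image statement, I would argue that if a pseudocoherent $\Gamma$-module $\widetilde{M}$ over $\widetilde{\Pi}^{[s,r]}_{H,A}$ descends, \emph{after forgetting the $\Gamma$-action}, to a module over $\breve{\Pi}^{[s,r]}_{H,A}$, then the descent datum can be upgraded $\Gamma$-equivariantly. The point is that a choice of underlying descent $\breve{M}$ of $\widetilde{M}$ gives, for each $\gamma\in\Gamma$, two descents of $\gamma^*\widetilde{M}$ — namely $\breve{M}$ itself (via the $\breve{\Pi}$-structure transported by $\gamma$) and $\gamma^*\breve{M}$ — and by the fully faithfulness just proved the isomorphism between them over $\widetilde{\Pi}^{[s,r]}_{H,A}$ descends uniquely to $\breve{\Pi}^{[s,r]}_{H,A}$, producing the required semilinear $\Gamma$-action on $\breve{M}$; uniqueness of the descended isomorphisms forces the cocycle condition. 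The main obstacle I anticipate is verifying that the continuity of the $\Gamma$-action is preserved under this descent — one must check that the descended isomorphisms assemble into a \emph{continuous} cocycle valued in the (typically Fr\'echet or Banach) automorphism group of $\breve{M}$, rather than merely an abstract one; this is where the topological hypotheses on the rings, the noetherian/sheafiness assumptions on $\widetilde{\Pi}^{[s,r]}_{\overline{H}'_\infty,A}$, and the strictness of the quotient maps (guaranteed by the Schauder basis of $A$) all have to be used carefully, mirroring the corresponding delicate point in \cite[Lemma 5.8.17]{KL16}.
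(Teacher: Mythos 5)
Your full-faithfulness sketch is in line with the intended argument (and with \cite[Lemma 5.8.17]{KL16}, which is all the paper itself invokes): the $\Gamma$-equivariant splitting $\widetilde{\Pi}^{[s,r]}_{H,A}\simeq \breve{\Pi}^{[s,r]}_{H,A}\oplus X$ coming from the decomposition after \cite[Lemma 5.8.7]{KL16}, together with the strict exactness statements after \cite[Lemma 5.8.14]{KL16}, shows that $\Gamma$-equivariant homomorphisms between base-changed modules have no component in the complement, so they descend.

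The essential-image step, however, has a genuine gap. You propose to descend, for each $\gamma\in\Gamma$, the isomorphism between the two ``descents'' $\breve{M}$ and $\gamma^*\breve{M}$ of $\gamma^*\widetilde{M}$ by invoking ``the fully faithfulness just proved.'' But the fully faithfulness you proved applies only to morphisms of $\Gamma$-modules, i.e.\ $\Gamma$-equivariant maps between objects already equipped with $\Gamma$-structure over $\breve{\Pi}^{[s,r]}_{H,A}$ --- and the whole point here is that $\breve{M}$ does not yet carry such a structure; that is what you are trying to construct. What you would actually need is unique descent of \emph{plain} module homomorphisms along $\breve{\Pi}^{[s,r]}_{H,A}\rightarrow \widetilde{\Pi}^{[s,r]}_{H,A}$, and this fails: by the very splitting you quote, for pseudocoherent $M,N$ one has
\begin{displaymath}
\mathrm{Hom}_{\widetilde{\Pi}^{[s,r]}_{H,A}}\bigl(M\otimes\widetilde{\Pi}^{[s,r]}_{H,A},\,N\otimes\widetilde{\Pi}^{[s,r]}_{H,A}\bigr)\;\simeq\;\mathrm{Hom}_{\breve{\Pi}^{[s,r]}_{H,A}}(M,N)\,\oplus\,\mathrm{Hom}_{\breve{\Pi}^{[s,r]}_{H,A}}(M,N\otimes X)^{\wedge},
\end{displaymath}
and the second summand is killed only after taking $\Gamma$-invariants. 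So the individual maps $\gamma^*\widetilde{M}\rightarrow\widetilde{M}$ need not descend, let alone uniquely, and your cocycle argument does not get off the ground; the continuity issue you flag at the end is downstream of this more basic problem. The argument that actually works (and is the one behind \cite[Lemma 5.8.17]{KL16}, mirrored elsewhere in this paper in the propositions after \cite[Lemma 5.6.9]{KL16} and \cite[Lemma 5.9.2]{KL16}) is a decompletion/successive-approximation step: use the splitting to write $\widetilde{M}=M_0\oplus(M_0\otimes X)^{\wedge}$ for a non-equivariant descent $M_0$, observe via the strict-exactness (vanishing) results that the discrepancy between $\gamma(M_0)$ and $M_0$ is measured by a term whose norm can be decreased by a fixed positive amount at each stage, and iterate to replace $M_0$ by a $\Gamma$-stable copy inside $\widetilde{M}$; continuity of the resulting action then comes from the same norm estimates. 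Your proposal should be repaired by substituting this approximation argument for the appeal to fully faithfulness.
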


\begin{proof}
See \cite[Lemma 5.8.17]{KL16}.	
\end{proof}

\begin{proposition} \mbox{\bf{(After Kedlaya-Liu \cite[Lemma 5.9.2]{KL16})}}
Keep the notations as above. For any finitely generated in general module over the ring $\widetilde{\Pi}^{[s,r]}_{H,A}$ carrying the action of $\Gamma$, one can find another module over $\breve{\Pi}^{[s,r]}_{H,A}$ which is now pseudocoherent carrying the action of $\Gamma$ which covers the previous module through a surjective map after taking the corresponding base change to the ring $\widetilde{\Pi}^{[s,r]}_{H,A}$.	
\end{proposition}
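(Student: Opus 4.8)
The plan is to prove this as the deformed analogue of \cite[Lemma 5.9.2]{KL16}; since the $\Gamma$-action is trivial on the affinoid direction $A$ and all the undeformed period rings involved admit Schauder bases over the relevant coefficient field (as used repeatedly above, cf.\ \cite[Definition 6.1]{KP}), the density of $\breve{\Pi}^{[s,r]}_{H}$ in $\widetilde{\Pi}^{[s,r]}_{H}$, the orthogonal direct-sum decomposition recorded above (the deformed analogue of \cite[Lemma 5.8.7]{KL16}), and the strict exactness of the quotient complexes $M\otimes\varphi^{-(\ell+1)}\Pi/\varphi^{-\ell}\Pi$ recorded above (after \cite[Lemma 5.8.14]{KL16}) all remain valid after $\widehat{\otimes}A$. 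These, together with the full faithfulness of the descent functor recorded above (after \cite[Lemma 5.8.17]{KL16}), are the only inputs I will use beyond what is already in \cite{KL16}.

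First I would fix a finite generating set $m_1,\dots,m_n$ of $M$ over $\widetilde{\Pi}^{[s,r]}_{H,A}$ and record, for the semilinear $\Gamma$-action, the matrices over $\widetilde{\Pi}^{[s,r]}_{H,A}$ describing the action of $\Gamma$ on the $m_i$; by continuity of the action these matrices are, on a small enough open subgroup, arbitrarily close to the identity. Under the decomposition $\widetilde{\Pi}^{[s,r]}_{H,A}\cong\Pi^{[s,r]}_{H,A}\oplus(\bigoplus_{\ell\ge 0}\varphi^{-(\ell+1)}\Pi^{[sp^{-h(\ell+1)},rp^{-h(\ell+1)}]}_{H,A}/\varphi^{-\ell}\Pi^{[sp^{-h\ell},rp^{-h\ell}]}_{H,A})^{\wedge}$ the subring $\breve{\Pi}^{[s,r]}_{H,A}$ is the dense non-completed sub-sum $\bigcup_k\varphi^{-k}\Pi^{[sp^{-hk},rp^{-hk}]}_{H,A}$. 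I would then truncate the coordinates of the $m_i$ and of the action matrices to a bounded level $\ell_0$, so that up to arbitrarily small error they lie in $\varphi^{-\ell_0}\Pi^{[sp^{-h\ell_0},rp^{-h\ell_0}]}_{H,A}\subset\breve{\Pi}^{[s,r]}_{H,A}$, and use the strict exactness of the complexes above to absorb the successive errors by a Newton-type iteration, exactly as in the decompletion argument of \cite[Lemma 5.9.2]{KL16}. This produces a finitely generated, $\Gamma$-stable $\breve{\Pi}^{[s,r]}_{H,A}$-submodule $N_0\subset M$ whose $\widetilde{\Pi}^{[s,r]}_{H,A}$-span is all of $M$, so that $N_0\otimes_{\breve{\Pi}^{[s,r]}_{H,A}}\widetilde{\Pi}^{[s,r]}_{H,A}\twoheadrightarrow M$; moreover the construction is arranged so that $N_0$ arises by base change from a finitely generated module $N_0'$ over a single finite stage $\varphi^{-k}\Pi^{[sp^{-hk},rp^{-hk}]}_{H,A}$.

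To conclude I would invoke the noetherian hypothesis on the tower: the ring $\varphi^{-k}\Pi^{[sp^{-hk},rp^{-hk}]}_{H,A}$ is strongly noetherian, so $N_0'$ is coherent, hence finitely presented and admits a resolution by finite free modules; since the transition maps of the filtered colimit computing $\breve{\Pi}^{[s,r]}_{H,A}$ are $2$-pseudoflat, indeed flat in this noetherian setting, and preserve \'etale-stable pseudocoherence (the deformed analogue of \cite[Corollary 5.8.8]{KL16} recorded above), the base change $N_0=N_0'\otimes\breve{\Pi}^{[s,r]}_{H,A}$ is a pseudocoherent $\breve{\Pi}^{[s,r]}_{H,A}$-module. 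Equipped with the $\Gamma$-action it carries as a $\Gamma$-stable submodule of $M$, it is the module required by the proposition, and $N_0\otimes_{\breve{\Pi}^{[s,r]}_{H,A}}\widetilde{\Pi}^{[s,r]}_{H,A}\twoheadrightarrow M$ is the desired surjection.

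The main obstacle is making the approximation step genuinely $\Gamma$-equivariant: a naive choice of $\breve{\Pi}^{[s,r]}_{H,A}$-generators produces a submodule that is only \emph{approximately} $\Gamma$-stable, since the truncated action matrices satisfy the cocycle relation defining the action only up to error, so one must show that the successive correction can be carried out simultaneously for all group elements, uniformly on an open subgroup of $\Gamma$, and compatibly with that relation. This is precisely where the strict exactness of the $\varphi^{-(\ell+1)}\Pi/\varphi^{-\ell}\Pi$ complexes is used, and where the estimates must be threaded through Schauder-coefficientwise in the $A$-direction; verifying that the iterations converge uniformly in $\Gamma$, in $A$, and in the interval $[s,r]$ at once is the delicate point, and it is also the place where one must check that the resulting $N_0$ really does descend to a finite stage so that the pseudocoherence argument of the previous paragraph applies.
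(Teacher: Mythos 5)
Your construction of the covering module has a genuine gap at the pseudocoherence step. You take the covering module to be a finitely generated $\Gamma$-stable $\breve{\Pi}^{[s,r]}_{H,A}$-submodule $N_0\subset M$, but $M$ is only assumed finitely generated over $\widetilde{\Pi}^{[s,r]}_{H,A}$ (not finitely presented), and coherence of $\breve{\Pi}^{[s,r]}_{H,A}$ only controls finitely generated submodules of \emph{finitely presented} modules: a finitely generated submodule of an arbitrary module over a coherent non-noetherian ring need not be finitely presented, let alone pseudocoherent (already $M=\widetilde{\Pi}^{[s,r]}_{H,A}/I$ with $I$ not finitely generated gives $N_0=\breve{\Pi}^{[s,r]}_{H,A}/(I\cap\breve{\Pi}^{[s,r]}_{H,A})$, which can fail to be finitely presented). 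Your attempted repair --- descending to a single noetherian stage $B_k=\varphi^{-k}\Pi^{[sp^{-hk},rp^{-hk}]}_{H,A}$ --- conflates the submodule $N_0\subset M$ with the abstract base change $N_0'\otimes_{B_k}\breve{\Pi}^{[s,r]}_{H,A}$: the natural map from the latter onto the former is surjective but generally not injective, because the relations among your chosen generators \emph{inside $M$} need not be generated at any finite stage. If you instead declare the covering module to be $N_0'\otimes_{B_k}\breve{\Pi}^{[s,r]}_{H,A}$, pseudocoherence is restored but the $\Gamma$-action no longer obviously descends, since $\gamma\in\Gamma$ maps level-$k$ relations to relations at a higher level which need not lie in the span of the level-$k$ ones; this is exactly the point you flag as ``delicate'' and do not resolve.

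The paper's proof sidesteps this entirely by never working with a submodule of $M$. It presents $M$ as a quotient of a finite free module $\widetilde{\Lambda}$ over $\widetilde{\Pi}^{[s,r]}_{H,A}$, encodes the $\Gamma$-action as a cocycle-type datum over the cochain rings $\widetilde{\Pi}^{[s,r]}_{H^\bullet,A}$, and transfers that datum to a finite \emph{free} module $\Lambda$ over $\breve{\Pi}^{[s,r]}_{H,A}$ by a converging sequence of lifts (the analogue of \cite[Lemma 5.6.8, Lemma 5.6.9]{KL16}); freeness means there are no relations to track under the approximation. The pseudocoherent cover is then the quotient of $\Lambda$ by a finitely generated $\Gamma$-stable submodule of the kernel of $\Lambda\rightarrow M$, and pseudocoherence is immediate because a finitely presented module over the coherent ring $\breve{\Pi}^{[s,r]}_{H,A}$ is pseudocoherent --- this is the one place coherence is used, and it is the ingredient your argument is missing. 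Your Schauder-basis/strict-exactness inputs are the right tools for the lifting step, but to close the gap you should reorganize the argument along these lines (free module first, quotient second) rather than trying to realize the cover inside $M$ and descend it to a noetherian stage.
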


\begin{proof}
As in the proof of \cite[Lemma 5.9.2]{KL16}, one can prove this in the similar fashion by considering the corresponding isomorphism respecting the cocycle requirement coming from the free module $\widetilde{\Lambda}$ in the presentation of a given module $\widetilde{\Delta}$ over the perfect Robba ring:
\begin{displaymath}
\widetilde{\Lambda}\otimes_{i_{0,0}} \widetilde{\Pi}^{[s,r]}_{H^0,A}\rightarrow \widetilde{\Lambda} \otimes_{i_{0,1}} \widetilde{\Pi}^{[s,r]}_{H^1,A}	
\end{displaymath}
coming from our prescribed $\Gamma$-action on the module $\widetilde{\Delta}$ over the perfect Robba ring in the way that one considers a converging sequence of different desired lifts (as in \cite[Lemma 5.6.9]{KL16}) of 
\begin{displaymath}
\Lambda\otimes_{i_{0,0}} \breve{\Pi}^{[s,r]}_{H^0,A}\rightarrow \Lambda \otimes_{i_{0,1}} \breve{\Pi}^{[s,r]}_{H^1,A}.	
\end{displaymath}	
To get the corresponding desired covering of $\widetilde{\Delta}$ one considers the corresponding finitely generated submodule of the corresponding kernel of the map from $\Lambda$ to $\widetilde{\Delta}$ then take the quotient of $\Lambda$ through this corresponding submodule. Note that in our situation the ring $\breve{\Pi}^{[s,r]}_{H,A}$ is also coherent which finishes the proof as in \cite[Lemma 5.9.2]{KL16}. 
\end{proof}

\begin{proposition} \mbox{\bf{(After Kedlaya-Liu \cite[Theorem 5.9.4]{KL16})}}
We have now the following categories are equivalence for the corresponding radii $0< s\leq r\leq r_0$ (with the further requirement as in \cite[Theorem 5.7.5]{KL16} that $s\in (0,r/q]$):\\
1. The category of all the pseudocoherent sheaves over the ring $\widetilde{\Pi}_{\mathrm{Spa}(H_0,H_0^+),A}$, carrying the $\varphi^a$-action;\\
2. The category of all the pseudocoherent sheaves over the ring $\widetilde{\Pi}^r_{\mathrm{Spa}(H_0,H_0^+),A}$, carrying the $\varphi^a$-action;\\
3. The category of all the pseudocoherent sheaves over the ring $\widetilde{\Pi}^{[s,r]}_{\mathrm{Spa}(\overline{H}_\infty,\overline{H}^+_\infty),A}$, carrying the $\varphi^a$-action.\\
\indent Then we have the second group of categories which are equvalent:\\
4. The category of all the pseudocoherent quasi-coherent sheaves over corresponding adic Fargues-Fontaine curve in the deformed setting $\mathrm{FF}_{\overline{H}'_\infty,A}$, carrying the corresponding action from the group $\Gamma$ which is assumed to be semilinear and continuous over each section over any affinoid subspace of the whole space which is assumed to be $\Gamma$-invariant;\\
5. The category of all the pseudocoherent modules over the ring $\Pi_{H,A}$, carrying the $(\varphi^a,\Gamma)$-action;\\
6. The category of all the pseudocoherent bundles over the ring $\Pi_{H,A}$, carrying the $(\varphi^a,\Gamma)$-action;\\
7. The category of all the pseudocoherent modules over the ring $\breve{\Pi}_{H,A}$, carrying the $(\varphi^a,\Gamma)$-action;\\
8. The category of all the pseudocoherent bundles over the ring $\breve{\Pi}_{H,A}$, carrying the $(\varphi^a,\Gamma)$-action;\\
9. The category of all the pseudocoherent modules over the ring $\breve{\Pi}^{[s,r]}_{H,A}$, carrying the $(\varphi^a,\Gamma)$-action;\\
10. The category of all the pseudocoherent modules over the ring $\widetilde{\Pi}_{H,A}$, carrying the $(\varphi^a,\Gamma)$-action;\\
11. The category of all the pseudocoherent bundles over the ring $\widetilde{\Pi}_{H,A}$, carrying the $(\varphi^a,\Gamma)$-action;\\
12. The category of all the pseudocoherent modules over the ring $\widetilde{\Pi}^{[s,r]}_{H,A}$, carrying the $(\varphi^a,\Gamma)$-action.\\	
\end{proposition}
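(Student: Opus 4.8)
The plan is to reduce everything to the analogous statement in the finite projective case (the previous proposition, \cite[Theorem 5.7.5]{KL16} deformed) together with the structural results already assembled in this section. First I would organize the twelve categories into the same two clusters as in the finite projective proposition: categories $1$--$4$ and $10$--$12$ on the perfect/Fargues--Fontaine side, and categories $5$--$9$ on the imperfect Robba side. For the cluster $1$--$4$ and $10$--$12$, I would invoke the comparison theorems established for the perfect deformed period rings and sheaves in \S3--\S4 of this excerpt, in particular \cref{proposition3.13} and the equivalences between pseudocoherent $\varphi^a$-modules over $\widetilde{\Pi}^\infty_{R,A}$, pseudocoherent $\varphi^a$-bundles over $\widetilde{\Pi}_{R,A}$, and pseudocoherent sheaves over the deformed schematic and adic Fargues--Fontaine curves; the $\Gamma$-equivariance is carried along automatically since all functors involved are the natural (base change / pullback / restriction) functors, and one only needs to note that a continuous semilinear $\Gamma$-action on the input induces one on the output.

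The heart of the argument is the passage between the imperfect rings $\Pi_{H,A}$, $\breve{\Pi}_{H,A}$, $\breve{\Pi}^{[s,r]}_{H,A}$ and the perfect ring $\widetilde{\Pi}^{[s,r]}_{H,A}$ for pseudocoherent $\Gamma$-modules. Here I would string together: the equivalence between $(\varphi^a,\Gamma)$-modules over $\Pi_{H,A}$, over $\Pi^r_{H,A}$, and over $\Pi^{[s,r]}_{H,A}$ (same proof as the analog of \cite[Lemma 5.4.11]{KL16}, using \cref{proposition6.15} for the bundle-to-interval-module comparison), the corresponding statement with $\breve{\Pi}$ in place of $\Pi$, and then the fully faithfulness of base change $\breve{\Pi}^{[s,r]}_{H,A}\rightarrow \widetilde{\Pi}^{[s,r]}_{H,A}$ on pseudocoherent $\Gamma$-modules from \cite[Lemma 5.8.17]{KL16} together with the descent-of-the-underlying-module characterization of the essential image. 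The $2$-pseudoflatness and \'etale-stable-pseudocoherence preservation of $\Pi^{[s,r]}_{H,A}\rightarrow \widetilde{\Pi}^{[s,r]}_{H,A}$ (the deformed \cite[Corollary 5.8.8, 5.8.11]{KL16}) and the strict exactness of the relevant layered quotient complexes (the deformed \cite[Lemma 5.8.14]{KL16}, \cite[Corollary 5.6.5]{KL16}) are what make the dévissage work: one writes $\widetilde{\Pi}^{[s,r]}_{H,A}$ as the completed direct sum in \cite[Lemma 5.8.7]{KL16} and controls the $\Gamma$-action layer by layer as in the proof of \cite[Lemma 5.6.9]{KL16}.

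The main obstacle, exactly as in \cite{KL16}, is essential surjectivity for the pseudocoherent (as opposed to finite projective) objects: given a pseudocoherent $(\varphi^a,\Gamma)$-module over the perfect ring $\widetilde{\Pi}^{[s,r]}_{H,A}$ one must descend it to $\breve{\Pi}^{[s,r]}_{H,A}$. The strategy is to use the deformed analog of \cite[Lemma 5.9.2]{KL16}: cover the given module by a pseudocoherent $\Gamma$-module over $\breve{\Pi}^{[s,r]}_{H,A}$ obtained by lifting a finite free presentation together with its $\Gamma$-equivariant transition isomorphism through a convergent sequence of lifts, using that $\breve{\Pi}^{[s,r]}_{H,A}$ is coherent in our noetherian-tower setting; then the kernel is again finitely generated, so one iterates to get a length-$\leq 1$ presentation and descends the whole presentation. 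The one genuinely new point over \cite{KL16} is that everything must be checked to remain \'etale-stably pseudocoherent over the coefficient ring $A$ as well --- this is where the Schauder basis of $A$ over $\mathbb{Q}_p$ (or $\mathbb{F}_p((\eta))$) is used repeatedly, exactly as in \cite[Definition 6.1]{KP}, to split all the completed tensor products and inherit strict exactness.

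Having assembled both clusters, the final step is to glue them: the equivalence between the perfect-side cluster and the imperfect-side cluster is induced by the base change $\breve{\Pi}^{[s,r]}_{H,A}\rightarrow \widetilde{\Pi}^{[s,r]}_{H,A}$ on $\Gamma$-modules (fully faithful by \cite[Lemma 5.8.17]{KL16}) and by the decompleting tower machinery (the deformed \cite[Corollary 5.6.5]{KL16}, \cite[Lemma 5.6.6]{KL16}) which identifies $\Gamma$-modules over the imperfect rings with $\varphi^a$-equivariant sheaves on $\mathrm{Spa}(\overline{H}_\infty,\overline{H}^+_\infty)$; the $\Gamma$-invariant affinoid subspace condition in category $4$ is precisely what translates into the $\Gamma$-action in categories $5$--$12$. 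Since all functors are the natural ones and have already been checked to be exact and to preserve pseudocoherence, composing them in a cycle gives the identity up to natural isomorphism, completing the proof along the lines of \cite[Theorem 5.9.4]{KL16}.
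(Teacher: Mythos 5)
Your proposal is correct and follows essentially the same route as the paper: the earlier module/bundle/sheaf comparisons handle the scaffolding, and the genuinely new step is the essential surjectivity of the base change $\breve{\Pi}^{[s,r]}_{H,A}\rightarrow \widetilde{\Pi}^{[s,r]}_{H,A}$ on pseudocoherent $(\varphi^a,\Gamma)$-objects, which the paper, like you, obtains as the $A$-relative analog of \cite[Lemma 5.9.3]{KL16} by applying the preceding covering proposition (the deformed \cite[Lemma 5.9.2]{KL16}) twice to get a two-term presentation $\widetilde{\Lambda}''\rightarrow\widetilde{\Lambda}'\rightarrow\widetilde{\Lambda}\rightarrow 0$ descending to $\breve{\Pi}^{[s,r]}_{H,A}$ and then taking the cokernel there, with full faithfulness supplied by the deformed \cite[Lemma 5.8.17]{KL16} and coherence of $\breve{\Pi}^{[s,r]}_{H,A}$ in the noetherian setting. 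Your additional remarks on the Schauder-basis/strict-exactness bookkeeping over $A$ are consistent with how the paper justifies the deformed d\'evissage lemmas.
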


\begin{proof}
This is by using the $A$-relative version of \cite[Lemma 5.9.3]{KL16} which states that actually the base change functor along
\begin{displaymath}
\breve{\Pi}_{H,A}^{[s,r]}\rightarrow \widetilde{\Pi}^{[s,r]}_{H,A}	
\end{displaymath}	
is then not only fully faithful on the category of pseudocoherent objects as above but also essential surjective. Indeed by the previous proposition in our situation we have that a surjective covering of any module $\widetilde{\Lambda}$ over $\widetilde{\Pi}^{[s,r]}_{H,A}$:
\begin{displaymath}
\widetilde{\Lambda}'\rightarrow \widetilde{\Lambda}\rightarrow 0	
\end{displaymath}
where $\widetilde{\Lambda}'$ descend to the ring $\breve{\Pi}_{H,A}^{[s,r]}$ carrying $\Gamma$-action. Then by applying the same process above to the corresponding kernel of the covering above we have another exact sequence in the following form:
\begin{displaymath}
\widetilde{\Lambda}''\rightarrow \widetilde{\Lambda}'\rightarrow \widetilde{\Lambda}\rightarrow 0	
\end{displaymath}
Here $\widetilde{\Lambda}'$ and $ \widetilde{\Lambda}$ are finitely presented as in \cite[Lemma 5.9.3]{KL16}. Then we can now consider the corresponding cokernel of the map $\widetilde{\Lambda}''\rightarrow \widetilde{\Lambda}'$ which will present a corresponding desired object carrying $\Gamma$-action over the ring $\breve{\Pi}_{H,A}^{[s,r]}$ which is again pseudocoherent in our context.
\end{proof}

\newpage

\section{Organizaion of Noncommutative Setting}

\subsection{Noncommutative Period Rings in Perfect Setting}

\noindent Now we assume $A$ to be some noncommutative Banach affinoid algebra over the local fields we consider above. Examples of such rings could be coming from the corresponding context of \cite{Soi1}, or the corresponding noncommutative Fr\'echet-Stein algebras as in \cite{ST1}. We will use the notation $E\{Z_1,...,Z_n\}$ to denote the corresponding noncommutative non-noetherian Tate algebra as in the commutative setting.

\begin{remark}
The noncommutative consideration is not quite new, for instance the aspects rooted in \cite{Zah1}, \cite{Wit1} and \cite{Wit2} (and even \cite{KL16}), but obviously the corresponding noncommutative setting is more complicated than the corresponding commutative setting which we discussed extensively in the previous context, therefore we do not have the chance to reach all the corresponding noncommutative version of the results above. 	
\end{remark}

\begin{remark}
We choose to closely in some parallel way present the corresponding construction in the noncommutative setting, which is parallel in some aspects to the commutative setting we presented above. We definitely won't have the chance to see the complete picture as in the commutative setting and even more consideration and effort has to be made during the corresponding development.	
\end{remark}

%
We first deform the corresponding constructions in \cite[Definition 4.1.1]{KL16}:

\begin{definition}
We first consider the corresponding deformation of the above rings over $\mathbb{Q}_p\{Z_1,...,Z_d\}$. We are going to use the notation $W_\pi(R)_{\mathbb{Q}_p\{Z_1,...,Z_d\}}$ to denote the complete tensor product of $W_\pi(R)$ with the Tate algebra $\mathbb{Q}_p\{Z_1,...,Z_d\}$ consisting of all the element taking the form as:
\begin{displaymath}
\sum_{k\geq 0,i_1\geq 0,...,i_d\geq 0}\pi^k[\overline{x}_{k,i_1,...,i_d}]Z_1^{i_1}Z_2^{i_2}...Z_d^{i_d}	
\end{displaymath}
over which we have the Gauss norm $\left\|.\right\|_{\alpha^r,\mathbb{Q}_p\{Z_1,...Z_d\}}$ for any $r>0$ which is defined by:
\begin{displaymath}
\left\|.\right\|_{\alpha^r,\mathbb{Q}_p\{Z_1,...Z_d\}}(\sum_{k\geq 0,i_1\geq 0,...,i_d\geq 0}\pi^k[\overline{x}_{k,i_1,...,i_d}]Z_1^{i_1}Z_2^{i_2}...Z_d^{i_d}):=\sup_{k\geq 0,i_1\geq 0,...,i_d\geq 0}p^{-k}\alpha(\overline{x}_{k,i_1,...,i_d})^r.	
\end{displaymath}
Then we define the corresponding convergent rings:
\begin{displaymath}
\widetilde{\Omega}^\mathrm{int}_{R,\mathbb{Q}_p\{Z_1,...,Z_d\}}=W_\pi(R)_{\mathbb{Q}_p\{Z_1,...,Z_d\}},	\widetilde{\Omega}_{R,\mathbb{Q}_p\{Z_1,...,Z_d\}}=W_\pi(R)_{\mathbb{Q}_p\{Z_1,...,Z_d\}}[1/\pi].	
\end{displaymath}
Then as in the previous setting we define the corresponding $\mathbb{Q}_p\{Z_1,...,Z_d\}$-relative integral Robba ring $\widetilde{\Pi}_{R,\mathbb{Q}_p\{Z_1,...Z_d\}}^{\mathrm{int},r}$ as the completion of the ring $W_{\pi}(R^+)_{\mathbb{Q}_p\{Z_1,...,Z_d\}}[[R]]$ by using the Gauss norm defined above.
Then one can take the corresponding union of all $\widetilde{\Pi}_{R,\mathbb{Q}_p\{Z_1,...,Z_d\}}^{\mathrm{int},r}$ throughout all $r>0$ to define the integral Robba ring $\widetilde{\Pi}_{R,\mathbb{Q}_p\{Z_1,...,Z_d\}}^{\mathrm{int}}$. For the bounded Robba rings we just set $\widetilde{\Pi}_{R,\mathbb{Q}_p\{Z_1,...,Z_d\}}^{\mathrm{bd},r}=\widetilde{\Pi}_{R,\mathbb{Q}_p\{Z_1,...,Z_d\}}^{\mathrm{int},r}[1/\pi]$ and taking the union throughout all $r>0$ to define the corresponding ring $\widetilde{\Pi}_{R,\mathbb{Q}_p\{Z_1,...,Z_d\}}^{\mathrm{bd}}$. Then we define the corresponding Robba ring $\widetilde{\Pi}_{R,\mathbb{Q}_p\{Z_1,...,Z_d\}}^{I}$ with respect to some interval $I\subset (0,\infty)$ by taking the Fr\'echet completion of ${W_\pi(R^+)}_{\mathbb{Q}_p\{Z_1,...,Z_d\}}[[R]][1/\pi]$ with respect to all the norms $\left\|.\right\|_{\alpha^r,\mathbb{Q}_p\{Z_1,...,Z_d\}}$ for all $r\in I$ which means that the corresponding equivalence classes in the completion procedure will be simultaneously Cauchy with respect to all the norms $\left\|.\right\|_{\alpha^r,\mathbb{Q}_p\{Z_1,...,Z_d\}}$ for all $r\in I$. Then we take suitable intervals such as $(0,r]$ and $(0,\infty)$ to define the corresponding Robba rings $\widetilde{\Pi}_{R,\mathbb{Q}_p\{Z_1,...,Z_d\}}^{r}$ and $\widetilde{\Pi}_{R,\mathbb{Q}_p\{Z_1,...,Z_d\}}^{\infty}$, respectively. Then taking the union throughout all $r>0$ one can define the corresponding Robba ring $\widetilde{\Pi}_{R,\mathbb{Q}_p\{Z_1,...,Z_d\}}$. Again as in \cite{KL16} one can define the corresponding integral rings of the similar types.
\end{definition}

\indent Then one can define the corresponding period rings in our context deformed over some affinoid $A$ which is isomorphic to some quotient of $\mathbb{Q}_p\{Z_1,...Z_d\}$, again deforming from the context of \cite[Definition 4.1.1]{KL16}.

\begin{definition}
In the characteristic $0$, we define the following period rings:
\begin{displaymath}
\widetilde{\Omega}^\mathrm{int}_{R,A},\widetilde{\Omega}_{R,A},\widetilde{\Pi}^\mathrm{int,r}_{R,A},\widetilde{\Pi}^\mathrm{bd,r}_{R,A}, \widetilde{\Pi}^I_{R,A},\widetilde{\Pi}^r_{R,A},\widetilde{\Pi}^\infty_{R,A}	
\end{displaymath}
by taking the suitable quotient of the following period rings defined above: 
\begin{align}
\widetilde{\Omega}^\mathrm{int}_{R,\mathbb{Q}_p\{Z_1,...,Z_d\}},&\widetilde{\Omega}_{R,\mathbb{Q}_p\{Z_1,...,Z_d\}},\widetilde{\Pi}^\mathrm{int,r}_{R,\mathbb{Q}_p\{Z_1,...,Z_d\}},\widetilde{\Pi}^\mathrm{bd,r}_{R,\mathbb{Q}_p\{Z_1,...,Z_d\}},\widetilde{\Pi}^I_{R,\mathbb{Q}_p\{Z_1,...,Z_d\}},\\
 &\widetilde{\Pi}^r_{R,\mathbb{Q}_p\{Z_1,...,Z_d\}},\widetilde{\Pi}^\infty_{R,\mathbb{Q}_p\{Z_1,...,Z_d\}}	
\end{align}
with respect to the structure of the affinoid algebra $A$ in the sense of Tate. Therefore they carry the corresponding quotient seminorms of the above Gauss norms defined in the previous definition, note that these are not something induced from the corresponding spectral seminorms from $A$ chosen at the very beginning of our study. We use the notation $\overline{\left\|.\right\|}_{\alpha^r,\mathbb{Q}_p\{Z_1,...,Z_d\}}$ to denote the corresponding quotient Gauss norm which induces then the corresponding spectral seminorm $\left\|.\right\|_{\alpha^r,A}$ for each $r>0$. Then we define the corresponding period rings:
\begin{align}
\widetilde{\Pi}^\mathrm{int}_{R,A},\widetilde{\Pi}^\mathrm{bd}_{R,A},\widetilde{\Pi}_{R,A}	
\end{align}
by taking suitable union throughout all $r>0$.
\end{definition}

\begin{definition}
In positive characteristic situation, when we are working over general affinoid algebra $A$, we use the same notations as in the previous definition, but by using $W(R)_{\mathbb{F}_p[[\eta]]\{Z_1,...,Z_d\}}$ and $W(R)_{A}$ as the starting rings, namely here $A$ is isomorphic to a quotient of $\mathbb{F}_p((\eta))\{Z_1,...,Z_d\}$. Again the corresponding Tate algebra is defined in term of free variables.	
\end{definition}

\indent Then we can define the following affinoid deformations (after \cite[Definition 4.1.1]{KL16} in the flavor as above):

\begin{definition}
Consider now the base change $W_{\pi,\infty,\mathbb{Q}_p\{Z_1,...,Z_d\}}(R)$ which is defined now to be the completed tensor product $(W_{\pi}(R)\widehat{\otimes}_{\mathcal{O}_E}\mathcal{O}_{E_\infty})\widehat{\otimes}_{\mathbb{Q}_p}\mathbb{Q}_p\{Z_1,...,Z_d\}$. Then the point is that each element in this ring admits a unique expression taking the form of 
\begin{center}
$\sum_{n\in \mathbb{Z}[1/p]_{\geq 0},i_1\geq 0,...,i_d\geq 0}\pi^n[\overline{x}_{n,i_1,...,i_d}]Z_1^{i_1}...Z_d^{i_d}$ 	
\end{center}
which allows us to perform the construction mentioned above. First we can define for some radius $r>0$ the corresponding period ring $\widetilde{\Pi}^{\mathrm{int},r}_{R,\infty,\mathbb{Q}_p\{Z_1,...,Z_d\}}$ by taking the completion of the ring
\begin{displaymath}
W_{\pi,\infty,\mathbb{Q}_p\{Z_1,...,Z_d\}}(R^+)[[R]]	
\end{displaymath}
with respect to the following Gauss type norm:
\begin{displaymath}
\left\|.\right\|_{\alpha^r}(\sum_{n\in \mathbb{Z}[1/p]_{\geq 0},i_1\geq 0,...,i_d\geq 0}\pi^n[\overline{x}_{n,i_1,...,i_d}]Z_1^{i_1}...Z_d^{i_d}):=\sup_{n\in \mathbb{Z}[1/p]_{\geq 0},i_1\geq 0,...,i_d\geq 0}\{p^{-n}\alpha(\overline{x}_{n,i_1,...,i_d})^r\}.	
\end{displaymath}
Then we can define the union $\widetilde{\Pi}^{\mathrm{int}}_{R,\infty,\mathbb{Q}_p\{Z_1,...,Z_d\}}$ throughout all the radius $r>0$. Then we just define the bounded Robba ring $\widetilde{\Pi}^{\mathrm{bd},r}_{R,\infty,\mathbb{Q}_p\{Z_1,...,Z_d\}}$ by $\widetilde{\Pi}^{\mathrm{int},r}_{R,\infty,\mathbb{Q}_p\{Z_1,...,Z_d\}}[1/\pi]$ and also we could define the union $\widetilde{\Pi}^{\mathrm{bd}}_{R,\infty,\mathbb{Q}_p\{Z_1,...,Z_d\}}$ throughout all the radius $r>0$. Then for any interval in $(0,\infty)$ which is denoted by $I$ we can define the corresponding Robba rings $\widetilde{\Pi}^{I}_{R,\infty,\mathbb{Q}_p\{Z_1,...,Z_d\}}$ by taking the Fr\'echet completion of 
\begin{displaymath}
W_{\pi,\infty,\mathbb{Q}_p\{Z_1,...,Z_d\}}(R^+)[[R]][1/\pi]	
\end{displaymath}
with respect to all the norms $\left\|.\right\|_{\alpha^t}$ for all $t\in I$. Then by taking suitable specified intervals one can define the rings $\widetilde{\Pi}^{r}_{R,\infty,\mathbb{Q}_p\{Z_1,...,Z_d\}}$ and $\widetilde{\Pi}^{\infty}_{R,\infty,\mathbb{Q}_p\{Z_1,...,Z_d\}}$ as before, and finally one can define the corresponding union $\widetilde{\Pi}_{R,\infty,\mathbb{Q}_p\{Z_1,...,Z_d\}}$ throughout all the radius $r>0$. Again we have the corresponding integral version of the rings defined over $E_\infty$ as \cite{KL16}. Finally over $A$ we can define in the same way as above to deform all the rings over $E_\infty$, and we do not repeat the construction again.

\end{definition}

\subsection{Basic Properties of Period Rings}	
	
\indent Then we do some reality checks over the investigation of the properties of the above period rings in the style taken in \cite[Section 5.2]{KL15} and \cite{KL16}. 

\begin{proposition} \mbox{\bf{(After Kedlaya-Liu \cite[Lemma 5.2.1]{KL15})}}
The function $t\mapsto \left\|x\right\|_{\alpha^t,\mathbb{Q}_p\{Z_1,...,Z_d\}}$ for $x\in \widetilde{\Pi}^r_{R,\mathbb{Q}_p\{Z_1,...,Z_d\}}$ is continuous log convex for the corresponding variable $t\in (0,r]$ for any $r>0$.
\end{proposition}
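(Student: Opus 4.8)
The plan is to mimic, word for word, the argument given for the commutative case (the earlier Proposition ``After Kedlaya--Liu \cite[Lemma 5.2.1]{KL15}'' with $T_i$'s) and for \cite[Lemma 5.2.1]{KL15} itself; the passage from commuting variables $T_i$ to noncommuting variables $Z_i$ is essentially cosmetic here, because the spectral seminorm $\alpha$ only sees the Teichm\"uller coefficients $\overline{x}_{k,i_1,\dots,i_d}$ and the Gauss norm is defined termwise as a supremum over monomials, irrespective of whether the monomials commute. First I would reduce to a single monomial: for an element of the pure form $\pi^k[\overline{x}_{k,i_1,\dots,i_d}]Z_1^{i_1}\cdots Z_d^{i_d}$, the norm function is $t\mapsto p^{-k}\alpha(\overline{x}_{k,i_1,\dots,i_d})^t$, whose logarithm $-k\log p + t\log\alpha(\overline{x}_{k,i_1,\dots,i_d})$ is affine in $t$, hence continuous and (trivially) convex on $(0,r]$.

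Second, I would pass to a finite sum of such monomials. Here one uses that $\|\cdot\|_{\alpha^t,\mathbb{Q}_p\{Z_1,\dots,Z_d\}}$ of a finite sum is the maximum of the monomial norms (this is where one uses that the Teichm\"uller expansion of an element of $W_\pi(R^+)_{\mathbb{Q}_p\{Z_1,\dots,Z_d\}}$ is by construction unique, exactly as recorded in the definition of the ring and its Gauss norm), so $\log\|x\|_{\alpha^{e^s}}$ as a function of $s=\log t$ is a finite maximum of affine functions, hence convex and continuous. The noncommutativity of the $Z_i$ plays no role: distinct monomials in the $Z_i$ are still linearly independent basis elements and contribute their own coefficient to the expansion.

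Third and last, I would handle a general element $x\in\widetilde{\Pi}^r_{R,\mathbb{Q}_p\{Z_1,\dots,Z_d\}}$ by approximation: $x$ is a limit, simultaneously in all the norms $\|\cdot\|_{\alpha^t}$ for $t$ in any compact subinterval of $(0,r]$, of finite sums $x_m$ of monomials (this is the defining completion property of the Robba ring). Each $t\mapsto \log\|x_m\|_{\alpha^t}$ is convex and continuous, and uniform convergence on compacta of convex continuous functions has convex continuous limit; a short estimate (the triangle inequality applied to $\|x\|_{\alpha^t}-\|x_m\|_{\alpha^t}$ on a compact subinterval, together with log-convexity used to control $\|x-x_m\|_{\alpha^t}$ for intermediate $t$ by its values at the endpoints) upgrades pointwise to locally uniform convergence and yields the continuity and log-convexity of $t\mapsto\|x\|_{\alpha^t}$ on all of $(0,r]$. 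The only point that requires a line of care --- the ``main obstacle'', such as it is --- is making sure the monomial-norm-as-maximum formula and the approximation step genuinely survive in the noncommutative setting; but since the ring is built as a completed tensor product with a Tate algebra on free noncommuting generators and the Gauss norm is defined by the very same supremum formula over the (unique) monomial expansion, there is nothing new to check beyond bookkeeping.
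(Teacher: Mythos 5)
Your proposal follows essentially the same route as the paper's own proof: affine behaviour (of the logarithm of the norm) on a single monomial $\pi^k[\overline{x}_{k,i_1,\dots,i_d}]Z_1^{i_1}\cdots Z_d^{i_d}$, log-convexity for finite sums via the maximum formula for the Gauss norm, and then approximation to reach a general element of $\widetilde{\Pi}^r_{R,\mathbb{Q}_p\{Z_1,\dots,Z_d\}}$, with the correct observation that noncommutativity of the $Z_i$ is immaterial since the norm only sees the coefficients of the unique monomial expansion. The only cosmetic blemish is the unnecessary substitution $s=\log t$ in the finite-sum step (the maximum of functions affine in $t$ is already convex in $t$), which does not affect the argument.
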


\begin{proof}
Adapt the corresponding argument in the proof of 5.2.1 of \cite[Lemma 5.2.1]{KL15} to our situation we then first look at the situation where the element is just of the form of $\pi^k[\overline{x}_{k,i_1,...,i_d}]Z_1^{i_1}...Z_d^{i_d}$ where the corresponding norm function in terms of $t>0$ is just affine. Then one focuses on the finite sums of these kind of elements which gives rise to to the log convex directly. Finally by taking the approximation we get the desired result.	
\end{proof}

\begin{proposition}\mbox{\bf{(After Kedlaya-Liu \cite[Lemma 5.2.2]{KL15})}}
For any element $x\in \widetilde{\Pi}_{R,\mathbb{Q}_p\{Z_1,...,Z_d\}}$ we have that $x\in \widetilde{\Pi}^\mathrm{bd}_{R,\mathbb{Q}_p\{Z_1,...,Z_d\}}$ if and only if we have the situation where $x$ actually lives in $\widetilde{\Pi}^r_{R,\mathbb{Q}_p\{Z_1,...,Z_d\}}$ (for some specific $r>0$) such that $x$ itself is bounded under the norm $\left\|.\right\|_{\alpha^t,\mathbb{Q}_p\{Z_1,...,Z_d\}}$ for each $t\in (0,r]$.	
\end{proposition}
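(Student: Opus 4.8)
The plan is to mirror the proof of the commutative analogue (the earlier Proposition "After Kedlaya-Liu \cite[Lemma 5.2.2]{KL15}") verbatim, since the only difference here is that the Tate algebra $\mathbb{Q}_p\{Z_1,\dots,Z_d\}$ is the free noncommutative Tate algebra rather than the polynomial one; but the argument never uses commutativity of the $Z_i$, only the existence of a Schauder-type expansion of elements as $\sum \pi^k[\overline{x}_{k,w}] w$ where $w$ now ranges over words in $Z_1,\dots,Z_d$ (instead of monomials), together with the multiplicativity/submultiplicativity of the Gauss norm $\|.\|_{\alpha^r,\mathbb{Q}_p\{Z_1,\dots,Z_d\}}$ on each single term. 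One direction is trivial: if $x\in\widetilde{\Pi}^\mathrm{bd}_{R,\mathbb{Q}_p\{Z_1,\dots,Z_d\}}$ then by definition it lies in some $\widetilde{\Pi}^{\mathrm{bd},r}_{R,\mathbb{Q}_p\{Z_1,\dots,Z_d\}}=\widetilde{\Pi}^{\mathrm{int},r}_{R,\mathbb{Q}_p\{Z_1,\dots,Z_d\}}[1/\pi]$, hence lies in $\widetilde{\Pi}^r$ and has bounded $\|.\|_{\alpha^t}$-norm for $t\in(0,r]$ (after clearing the power of $\pi$).

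First I would set up the nontrivial direction. Fix $r>0$ with $x\in\widetilde{\Pi}^r_{R,\mathbb{Q}_p\{Z_1,\dots,Z_d\}}$ and $\sup_{t\in(0,r]}\|x\|_{\alpha^t,\mathbb{Q}_p\{Z_1,\dots,Z_d\}}<\infty$. After replacing $x$ by $\pi^N x$ for suitable $N$ I may assume $\|x\|_{\alpha^t,\mathbb{Q}_p\{Z_1,\dots,Z_d\}}\le 1$ for all $t\in(0,r]$; the goal is then to show $x\in\widetilde{\Pi}^{\mathrm{int},r}_{R,\mathbb{Q}_p\{Z_1,\dots,Z_d\}}$, which after undoing the twist gives $x\in\widetilde{\Pi}^{\mathrm{bd},r}_{R,\mathbb{Q}_p\{Z_1,\dots,Z_d\}}\subset\widetilde{\Pi}^\mathrm{bd}_{R,\mathbb{Q}_p\{Z_1,\dots,Z_d\}}$. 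Choose an approximating sequence $\{x_i\}$ in $\widetilde{\Pi}^\mathrm{bd}_{R,\mathbb{Q}_p\{Z_1,\dots,Z_d\}}$ with, for each $j\ge 1$, an $N_j$ so that $\|x_i-x\|_{\alpha^t,\mathbb{Q}_p\{Z_1,\dots,Z_d\}}\le p^{-j}$ for all $t\in[p^{-j}r,r]$ whenever $i\ge N_j$. Write each $x_i=\sum_{k=n(x_i),\,w}\pi^k[\overline{x}_{i,k,w}]w$ over words $w$ and split $x_i=y_i+z_i$ where $y_i=\sum_{k\ge 0,\,w}\pi^k[\overline{x}_{i,k,w}]w$ is the integral part. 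The estimate on the tail $z_i$ proceeds exactly as in the commutative proposition: from $\|\pi^k[\overline{x}_{i,k,w}]w\|_{\alpha^{p^{-j}r},\mathbb{Q}_p\{Z_1,\dots,Z_d\}}\le 1$ for $k<0$ one extracts $\alpha(\overline{x}_{i,k,w})\le p^{kp^j/r}$, hence $\|\pi^k[\overline{x}_{i,k,w}]w\|_{\alpha^r,\mathbb{Q}_p\{Z_1,\dots,Z_d\}}\le p^{-k}p^{kp^j}\le p^{1-p^j}$, so $\|x_i-y_i\|_{\alpha^r,\mathbb{Q}_p\{Z_1,\dots,Z_d\}}\le p^{1-p^j}\to 0$. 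Therefore $y_i\to x$ under $\|.\|_{\alpha^r,\mathbb{Q}_p\{Z_1,\dots,Z_d\}}$ and, since each $y_i$ lies in the integral Robba ring and that ring is complete for this norm, the limit $x$ lies in $\widetilde{\Pi}^{\mathrm{int},r}_{R,\mathbb{Q}_p\{Z_1,\dots,Z_d\}}$, as desired.

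The only genuine points requiring care in the noncommutative setting, and hence what I would flag as the main obstacle, are (i) confirming that elements of $\widetilde{\Pi}^r_{R,\mathbb{Q}_p\{Z_1,\dots,Z_d\}}$ really do admit the uniform-convergence expansion $\sum_k\pi^k[\overline{x}_{k,w}]w$ indexed by words, with the Gauss norm computed termwise as a sup, and that the completions used to define $\widetilde{\Pi}^{\mathrm{int},r}$, $\widetilde{\Pi}^{\mathrm{bd},r}$ behave as in the definition section — this follows from the definitions of the noncommutative Tate algebra and the noncommutative $W_\pi(R^+)_{\mathbb{Q}_p\{Z_1,\dots,Z_d\}}[[R]]$ given just above, where the coefficients of words are the relevant data; and (ii) checking that the Gauss norm is still submultiplicative on products of words, so that the single-term estimates above are legitimate, which is immediate because $\alpha$ is power-multiplicative on $R$ and the norm of a word $w$ is $1$. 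Since none of the displayed estimates invoked commuting the variables, the commutative proof transcribes directly, and I would present it simply by writing "the same argument as in the proof of the commutative analogue above applies verbatim, replacing monomials $T_1^{i_1}\cdots T_d^{i_d}$ by words in the free variables $Z_1,\dots,Z_d$," and then reproducing the two-line tail estimate for completeness.
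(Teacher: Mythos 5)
Your proposal is correct and follows essentially the same route as the paper's own proof: reduce to the case $\|x\|_{\alpha^t}\le 1$ by twisting with a power of $\pi$, approximate by elements $x_i$ of the bounded Robba ring, split each $x_i$ into its integral part $y_i$ and tail $z_i$, and use the termwise Gauss-norm estimate $\alpha(\overline{x}_{i,k,\cdot})\le p^{kp^j/r}$ for $k<0$ to get $\|x_i-y_i\|_{\alpha^r}\le p^{1-p^j}$, so that $y_i\to x$ in the integral Robba ring. Your extra care in indexing the expansion by words in the free variables $Z_1,\dots,Z_d$ (rather than the ordered monomials the paper writes) is a harmless refinement of the same argument, since the estimates are termwise and never use commutativity.
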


\begin{proof}
One direction of the proof is easy, so we only choose to present the proof of the implication in the other direction as in the original proof of 5.2.2 of \cite[Lemma 5.2.2]{KL15} as in the following. First choose some radius $r>0$ such that the element could be assumed to be living in the ring $\widetilde{\Pi}^r_{R,\mathbb{Q}_p\{Z_1,...,Z_d\}}$. The idea is to transfer the original question to the question about showing the integrality of $x$ when we add some hypothesis on the norm by taking suitable powers of $p$ (since the norm is bounded for each $t\in (0,r]$ so we are reduced to the situation where the norm is bounded by $1$). Then we argue as in \cite[Lemma 5.2.2]{KL15} to choose some approximating sequence $\{x_i\}$ living in $\widetilde{\Pi}^\mathrm{bd}_{R,\mathbb{Q}_p\{Z_1,...,Z_d\}}$ of $x$. Therefore we have for any $j\geq 1$ one can find then some integer $N_j\geq 1$ such that for any $i\geq N_j$ we have the estimate:
\begin{displaymath}
\left\|.\right\|_{\alpha^t,\mathbb{Q}_p\{Z_1,...,Z_d\}}(x_i-x)\leq p^{-j}, \forall t\in [p^{-j}r,r].	
\end{displaymath}
Then the idea is to consider the integral decomposition of the element $x_i$ which has the form of $\sum_{k=n(x_i),i_1\geq 0,i_2\geq 0,...,i_d\geq 0}\pi^k[\overline{x}_{i,k,i_1,...,i_d}]$ into the following two parts:
\begin{align}
x_i &:= y_i+z_i\\
	&:=\sum_{k=0,i_1\geq 0,i_2\geq 0,...,i_d\geq 0}\pi^k[\overline{x}_{i,k,i_1,...,i_d}]Z_1^{i_1}...Z_d^{i_d}+z_i
\end{align}
from which we actually have the corresponding estimate over the residual part of the decomposition above:
\begin{displaymath}
\left\|.\right\|_{\alpha^{p^{-j}r},\mathbb{Q}_p\{Z_1,...,Z_d\}}(\pi^k[\overline{x}_{i,k,i_1,...,i_d}]Z_1^{i_1}...Z_d^{i_d})\leq 1, \forall k<0	
\end{displaymath}
which implies by direct computation:
\begin{align}
\alpha^{p^{-j}r}(\overline{x}_{i,k,i_1,...,i_d})&\leq p^k\\
\alpha(\overline{x}_{i,k,i_1,...,i_d})&\leq p^{kp^{j}/r}	
\end{align}
which implies that we have the following estimate: 
\begin{align}
\left\|.\right\|_{\alpha^{r},\mathbb{Q}_p\{Z_1,...,Z_d\}}(x_i-y_i)&\leq p^{-k}p^{kp^{j}r/r}\\
&\leq p^{1-p^j}
\end{align}
which implies that $y_i\rightarrow x$ under the norm $\left\|.\right\|_{\alpha^{r},\mathbb{Q}_p\{Z_1,...,Z_d\}}$ which furthermore under the norm $\left\|.\right\|_{\alpha^{r},\mathbb{Q}_p\{Z_1,...,Z_d\}}$ due the property of the norm, which finishes the proof of the desired result.
\end{proof}

\begin{proposition}\mbox{\bf{(After Kedlaya-Liu \cite[Corollary 5.2.3]{KL15})}} We have the following identity:
\begin{displaymath}
(\widetilde{\Pi}_{R,\mathbb{Q}_p\{Z_1,...,Z_d\}})^\times=(\widetilde{\Pi}^\mathrm{bd}_{R,\mathbb{Q}_p\{Z_1,...,Z_d\}})^\times.
\end{displaymath}
\end{proposition}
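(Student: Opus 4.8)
The plan is to mirror, in the noncommutative deformed setting, the argument that Kedlaya–Liu use to prove \cite[Corollary 5.2.3]{KL15}, keeping track of the fact that our coefficient algebra $\mathbb{Q}_p\{Z_1,...,Z_d\}$ (or the relevant quotient) is no longer commutative but is still a Banach ring carrying Gauss-type norms for which the two-sided unit theory behaves well. Since $\widetilde{\Pi}^\mathrm{bd}_{R,\mathbb{Q}_p\{Z_1,...,Z_d\}}$ is a subring of $\widetilde{\Pi}_{R,\mathbb{Q}_p\{Z_1,...,Z_d\}}$, one inclusion $(\widetilde{\Pi}^\mathrm{bd}_{R,\mathbb{Q}_p\{Z_1,...,Z_d\}})^\times \subseteq (\widetilde{\Pi}_{R,\mathbb{Q}_p\{Z_1,...,Z_d\}})^\times \cap \widetilde{\Pi}^\mathrm{bd}_{R,\mathbb{Q}_p\{Z_1,...,Z_d\}}$ is immediate, so the content is the reverse: an element of the bounded Robba ring that is invertible in the full Robba ring has its (two-sided) inverse already lying in the bounded Robba ring.

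First I would reduce to a single radius: by the description of $\widetilde{\Pi}_{R,\mathbb{Q}_p\{Z_1,...,Z_d\}}$ as the union of the $\widetilde{\Pi}^r_{R,\mathbb{Q}_p\{Z_1,...,Z_d\}}$, pick $r>0$ so that both $x$ and its inverse $x^{-1}$ lie in $\widetilde{\Pi}^r_{R,\mathbb{Q}_p\{Z_1,...,Z_d\}}$, and recall from the previous proposition (after \cite[Lemma 5.2.2]{KL15}) that $x\in \widetilde{\Pi}^\mathrm{bd}_{R,\mathbb{Q}_p\{Z_1,...,Z_d\}}$ is equivalent to boundedness of $t\mapsto \left\|x\right\|_{\alpha^t,\mathbb{Q}_p\{Z_1,...,Z_d\}}$ on $(0,r]$. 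Next I would use the continuity and log-convexity of $t\mapsto \left\|x\right\|_{\alpha^t,\mathbb{Q}_p\{Z_1,...,Z_d\}}$ and of $t\mapsto \left\|x^{-1}\right\|_{\alpha^t,\mathbb{Q}_p\{Z_1,...,Z_d\}}$ established in the first proposition of this subsection, together with the submultiplicativity $\left\|xy\right\|_{\alpha^t,\mathbb{Q}_p\{Z_1,...,Z_d\}}\leq \left\|x\right\|_{\alpha^t,\mathbb{Q}_p\{Z_1,...,Z_d\}}\left\|y\right\|_{\alpha^t,\mathbb{Q}_p\{Z_1,...,Z_d\}}$ of the Gauss norm, to get from $xx^{-1}=x^{-1}x=1$ the bound $1\leq \left\|x\right\|_{\alpha^t,\mathbb{Q}_p\{Z_1,...,Z_d\}}\left\|x^{-1}\right\|_{\alpha^t,\mathbb{Q}_p\{Z_1,...,Z_d\}}$ for all $t\in(0,r]$. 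The two functions $f(t)=\log\left\|x\right\|_{\alpha^t,\mathbb{Q}_p\{Z_1,...,Z_d\}}$ and $g(t)=\log\left\|x^{-1}\right\|_{\alpha^t,\mathbb{Q}_p\{Z_1,...,Z_d\}}$ are convex, continuous, and satisfy $f+g\geq 0$; one then wants to conclude each is bounded above on $(0,r]$, which (by the usual Kedlaya–Liu argument, comparing the slopes of the two convex functions near $t=0$) forces both slopes to vanish, hence both $f$ and $g$ are nonincreasing and therefore bounded on $(0,r]$ by their values at $r$. Applying the boundedness criterion then puts $x^{-1}\in \widetilde{\Pi}^\mathrm{bd}_{R,\mathbb{Q}_p\{Z_1,...,Z_d\}}$, completing the proof.

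The one point that genuinely needs care in the noncommutative setting — and the step I expect to be the main obstacle — is that the slope/convexity comparison argument must be run with \emph{two-sided} inverses and with the submultiplicativity inequality used in both orders; in the commutative case one freely uses $x x^{-1}=1$ and the unique-factorization-type analysis of Newton polygons of Laurent series, whereas here the coefficients in $\mathbb{Q}_p\{Z_1,...,Z_d\}$ do not commute and there is no Newton-polygon factorization available. However, the only structural input actually needed is: (i) the Gauss norm $\left\|.\right\|_{\alpha^t,\mathbb{Q}_p\{Z_1,...,Z_d\}}$ is submultiplicative (which holds because it is a Gauss norm over a submultiplicative coefficient norm), (ii) $t\mapsto\left\|x\right\|_{\alpha^t,\mathbb{Q}_p\{Z_1,...,Z_d\}}$ is continuous and log-convex (the first proposition of this subsection, whose proof only used approximation by finite sums of monomials $\pi^k[\overline{x}]Z_1^{i_1}\cdots Z_d^{i_d}$ and is insensitive to commutativity), and (iii) the boundedness criterion (the second proposition). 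All three are available verbatim, so the convexity argument goes through; the noncommutativity is harmless precisely because it is confined to the coefficient ring and never interacts with the norm estimates. I would therefore present the proof as: reduce to one radius, invoke the two convexity statements, run the slope comparison to deduce both $\left\|x\right\|_{\alpha^t,\mathbb{Q}_p\{Z_1,...,Z_d\}}$ and $\left\|x^{-1}\right\|_{\alpha^t,\mathbb{Q}_p\{Z_1,...,Z_d\}}$ are bounded on $(0,r]$, and conclude by the boundedness criterion — essentially referring to \cite[Corollary 5.2.3]{KL15} for the slope-comparison bookkeeping, which is where the bulk of the (routine) work sits.
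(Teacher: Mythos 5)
Your overall route coincides with the paper's, which for this proposition is nothing more than a citation of \cite[Corollary 5.2.3]{KL15}; you propose to transport that argument using the log-convexity statement and the boundedness criterion proved just before. But as a self-contained argument your key step has a genuine gap. From the three inputs you isolate --- submultiplicativity of the Gauss norm, continuity and log-convexity of $t\mapsto\left\|x\right\|_{\alpha^t,\mathbb{Q}_p\{Z_1,\ldots,Z_d\}}$, and the boundedness criterion --- one cannot conclude that $f(t)=\log\left\|x\right\|_{\alpha^t,\mathbb{Q}_p\{Z_1,\ldots,Z_d\}}$ and $g(t)=\log\left\|x^{-1}\right\|_{\alpha^t,\mathbb{Q}_p\{Z_1,\ldots,Z_d\}}$ are bounded above on $(0,r]$: the functions $f(t)=1/t$ and $g\equiv 0$ are convex, continuous and satisfy $f+g\geq 0$, yet $f$ blows up as $t\to 0^+$. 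Submultiplicativity only yields the one-sided inequality $f+g\geq 0$, and no slope comparison can be extracted from an inequality alone; moreover the phrase ``nonincreasing and therefore bounded on $(0,r]$ by their values at $r$'' is backwards, since a nonincreasing function of $t$ is bounded below, not above, by its value at the right endpoint. What the Kedlaya--Liu argument needs, and in effect uses, is an equality rather than an inequality: when the underlying seminorm is multiplicative the Gauss norm is multiplicative, so $f+g=0$, and two convex functions summing to zero are affine, hence bounded as $t\to 0^+$; in the general uniform (relative) case one requires a substitute for this, such as expressing the spectral Gauss norm as a supremum of multiplicative Gauss norms attached to points of the Gelfand spectrum of $R$ and bounding the slopes of the resulting affine functions uniformly by their values at two radii.

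This pinpoints the structural input you must actually verify in the deformed noncommutative setting, and it is exactly the one your list (i)--(iii) omits: multiplicativity of $\left\|\cdot\right\|_{\alpha^t,\mathbb{Q}_p\{Z_1,\ldots,Z_d\}}$ over multiplicative seminorms of $R$ (equivalently, that the pointwise/spectral decomposition of the Gauss norm survives the adjunction of the free variables $Z_1,\ldots,Z_d$), not mere submultiplicativity. This is plausible --- a leading-word argument with respect to a multiplication-compatible well-ordering of words shows there is no top-degree cancellation for Gauss norms in free noncommuting variables over a multiplicative seminorm --- but it is precisely where noncommutativity of the coefficient algebra could in principle interfere, so it should be stated and checked rather than subsumed under ``the noncommutativity is harmless.'' A minor further slip: the hard direction is that an arbitrary unit of $\widetilde{\Pi}_{R,\mathbb{Q}_p\{Z_1,\ldots,Z_d\}}$, not just one already known to lie in the bounded subring, belongs to $(\widetilde{\Pi}^{\mathrm{bd}}_{R,\mathbb{Q}_p\{Z_1,\ldots,Z_d\}})^\times$; your argument as run does cover this, since you only use that $x$ and $x^{-1}$ lie in some $\widetilde{\Pi}^r_{R,\mathbb{Q}_p\{Z_1,\ldots,Z_d\}}$, but the initial restatement of the reduction is weaker than the proposition.
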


\begin{proof}
See 5.2.3 of \cite[Corollary 5.2.3]{KL15}.	
\end{proof}

\begin{proposition}\mbox{\bf{(After Kedlaya-Liu \cite[Lemma 5.2.6]{KL15})}}
For any $0< r_1\leq r_2$ we have the following equality on the corresponding period rings:
\begin{displaymath}
\widetilde{\Pi}^{\mathrm{int},r_1}_{R,\mathbb{Q}_p\{Z_1,...,Z_d\}}\bigcap	\widetilde{\Pi}^{[r_1,r_2]}_{R,\mathbb{Q}_p\{Z_1,...,Z_d\}}=\widetilde{\Pi}^{\mathrm{int},r_2}_{R,\mathbb{Q}_p\{Z_1,...,Z_d\}}.
\end{displaymath}	
\end{proposition}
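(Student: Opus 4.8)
The plan is to mimic the proof of the commutative analogue given earlier in the excerpt (the proposition "After Kedlaya-Liu \cite[Lemma 5.2.6]{KL15}" for $\widetilde{\Pi}^{\mathrm{int},r}_{R,\mathbb{Q}_p\{T_1,...,T_d\}}$) essentially verbatim, since the only difference is that the polynomial variables $Z_1,\dots,Z_d$ are now noncommutative rather than commutative, and this plays no role in the Gauss-norm estimates: the norm $\left\|.\right\|_{\alpha^r,\mathbb{Q}_p\{Z_1,...,Z_d\}}$ is still defined termwise on monomials $\pi^k[\overline{x}_{k,i_1,\dots,i_d}]Z_1^{i_1}\cdots Z_d^{i_d}$ by $p^{-k}\alpha(\overline{x}_{k,i_1,\dots,i_d})^r$, and the key point — that each monomial's norm is a log-affine function of $t$ and that the integral part of a power series can be separated off cleanly — is purely combinatorial in the $\pi$-degree $k$. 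First I would dispose of the trivial inclusion $\widetilde{\Pi}^{\mathrm{int},r_2}_{R,\mathbb{Q}_p\{Z_1,...,Z_d\}}\subseteq \widetilde{\Pi}^{\mathrm{int},r_1}_{R,\mathbb{Q}_p\{Z_1,...,Z_d\}}\cap \widetilde{\Pi}^{[r_1,r_2]}_{R,\mathbb{Q}_p\{Z_1,...,Z_d\}}$ (monotonicity of the norms in the radius and the definition of the integral Robba ring), and reduce to the case $r_1<r_2$.

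For the nontrivial inclusion, I would take $x$ in the intersection on the left, choose an approximating sequence $\{x_i\}$ in the bounded Robba ring with $\left\|.\right\|_{\alpha^t,\mathbb{Q}_p\{Z_1,...,Z_d\}}(x_i-x)\leq p^{-j}$ for all $t\in[r_1,r_2]$ and all $i\geq N_j$, and then split each $x_i=y_i+z_i$ where $y_i$ collects the terms with $\pi$-exponent $k\geq 0$ (the "integral part") and $z_i$ the terms with $k<0$. The hypothesis that $x\in\widetilde{\Pi}^{\mathrm{int},r_1}_{R,\mathbb{Q}_p\{Z_1,...,Z_d\}}$ forces each negative-$\pi$-degree coefficient of $x_i$ to satisfy $\left\|.\right\|_{\alpha^{r_1},\mathbb{Q}_p\{Z_1,...,Z_d\}}(\pi^k[\overline{x}_{i,k,i_1,\dots,i_d}]Z_1^{i_1}\cdots Z_d^{i_d})\leq p^{-j}$, hence $\alpha(\overline{x}_{i,k,i_1,\dots,i_d})\leq p^{(k-j)/r_1}$, and then the log-affineness at radius $r_2$ gives
\begin{align}
\left\|.\right\|_{\alpha^{r_2},\mathbb{Q}_p\{Z_1,...,Z_d\}}(\pi^k[\overline{x}_{i,k,i_1,\dots,i_d}]Z_1^{i_1}\cdots Z_d^{i_d})&\leq p^{-k}p^{(k-j)r_2/r_1}\\
&\leq p^{1+(1-j)r_1/r_1},
\end{align}
so $\left\|.\right\|_{\alpha^{r_2},\mathbb{Q}_p\{Z_1,...,Z_d\}}(x_i-y_i)=\left\|.\right\|_{\alpha^{r_2},\mathbb{Q}_p\{Z_1,...,Z_d\}}(z_i)\to 0$ as $j\to\infty$. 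Consequently $\{y_i\}$ is a sequence in the integral Robba ring that still converges to $x$ in $\left\|.\right\|_{\alpha^{r_2},\mathbb{Q}_p\{Z_1,...,Z_d\}}$, so $x$ lies in the closure $\widetilde{\Pi}^{\mathrm{int},r_2}_{R,\mathbb{Q}_p\{Z_1,...,Z_d\}}$, which is exactly the desired inclusion.

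The only genuinely new point to check — and the step I would flag as the one requiring a moment's care — is that the "integral part" operation $x_i\mapsto y_i$ and the termwise estimates are still well defined in the noncommutative variable setting, i.e.\ that every element of $W_\pi(R^+)_{\mathbb{Q}_p\{Z_1,...,Z_d\}}[[R]]$ (resp.\ its completions) still has a well-defined expansion $\sum \pi^k[\overline{x}_{k,i_1,\dots,i_d}]Z_1^{i_1}\cdots Z_d^{i_d}$ with uniquely determined Teichmüller coefficients indexed by $(k,i_1,\dots,i_d)$, and that the noncommutative Tate algebra $\mathbb{Q}_p\{Z_1,\dots,Z_d\}$ is still a Banach ring admitting a Schauder basis of noncommutative monomials. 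This follows because the $Z_j$ are free variables and the completed tensor product is taken with respect to the monomial (Gauss) basis, so unique expansion and termwise norm computation are unaffected; in particular the estimates above never multiply two $Z$-monomials together in a way that would require commutativity. Once this bookkeeping is in place, the rest of the argument is identical to the commutative case and I would simply cite the proof of the corresponding earlier proposition, noting the replacement $T_j\rightsquigarrow Z_j$.
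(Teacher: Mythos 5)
Your proposal is correct and follows essentially the same route as the paper's own proof: the same reduction to $r_1<r_2$, the same approximation by bounded-Robba-ring elements, the same split $x_i=y_i+z_i$ into integral and negative-$\pi$-degree parts, the same coefficient estimate $\alpha(\overline{x}_{i,k,i_1,\dots,i_d})\leq p^{(k-j)/r_1}$ transferred to the radius $r_2$, and the same conclusion that the modified sequence $\{y_i\}$ converges to $x$ in $\left\|.\right\|_{\alpha^{r_2},\mathbb{Q}_p\{Z_1,\dots,Z_d\}}$, hence $x\in\widetilde{\Pi}^{\mathrm{int},r_2}_{R,\mathbb{Q}_p\{Z_1,\dots,Z_d\}}$. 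Your closing remark that the free noncommuting variables $Z_j$ change nothing because expansions and Gauss norms are computed termwise on monomials is precisely the (implicit) justification the paper relies on in repeating the commutative argument verbatim.
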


\begin{proof}
We adapt the argument in \cite[Lemma 5.2.6]{KL15} 5.2 to prove this in the situation where $r_1<r_2$ (otherwise this is trivial), again one direction is easy where we only present the implication in the other direction. We take any element $x\in \widetilde{\Pi}^{\mathrm{int},r_1}_{R,\mathbb{Q}_p\{Z_1,...,Z_d\}}\bigcap	\widetilde{\Pi}^{[r_1,r_2]}_{R,\mathbb{Q}_p\{Z_1,...,Z_d\}}$ and take suitable approximating elements $\{x_i\}$ living in the bounded Robba ring such that for any $j\geq 1$ one can find some integer $N_j\geq 1$ we have whenever $i\geq N_j$ we have the following estimate:
\begin{displaymath}
\left\|.\right\|_{\alpha^{t},\mathbb{Q}_p\{Z_1,...,Z_d\}}(x_i-x) \leq p^{-j}, \forall t\in [r_1,r_2].	
\end{displaymath}
Then we consider the corresponding decomposition of $x_i$ for each $i=1,2,...$ into a form having integral part and the rational part $x_i=y_i+z_i$ by setting
\begin{center}
 $y_i=\sum_{k=0,i_1,...,i_d}\pi^k[\overline{x}_{i,k,i_1,...,i_d}]Z_1^{i_1}...Z_d^{i_d}$ 
\end{center} 
out of
\begin{center} 
$x_i=\sum_{k=n(x_i),i_1,...,i_d}\pi^k[\overline{x}_{i,k,i_1,...,i_d}]Z_1^{i_1}...Z_d^{i_d}$.
\end{center}
Note that by our initial hypothesis we have that the element $x$ lives in the ring $\widetilde{\Pi}^{\mathrm{int},r_1}_{R,\mathbb{Q}_p\{Z_1,...,Z_d\}}$ which further implies that 
\begin{displaymath}
\left\|.\right\|_{\alpha^{r_1},\mathbb{Q}_p\{Z_1,...,Z_d\}}(\pi^k[\overline{x}_{i,k,i_1,...,i_d}]Z_1^{i_1}...Z^{i_d}_d)	\leq p^{-j}.
\end{displaymath}
Therefore we have $\alpha(\overline{x}_{i,k,i_1,...,i_d})\leq p^{(k-j)/r_1}$ directly from this through computation, which implies that then:
\begin{align}
\left\|.\right\|_{\alpha^{r_2},\mathbb{Q}_p\{Z_1,...,Z_d\}}(\pi^k[\overline{x}_{i,k,i_1,...,i_d}]Z_1^{i_1}...Z^{i_d}_d)	&\leq p^{-k}p^{(k-j)r_2/r_1}\\
	&\leq p^{1+(1-j)r_1/r_1}.
\end{align}
Then one can read off the result directly from this estimate since under this estimate we can have the chance to modify the original approximating sequence $\{x_i\}$ by $\{y_i\}$ which are initially chosen to be in the integral Robba ring, which implies that actually the element $x$ lives in the right-hand side of the identity in the statement of the proposition.
\end{proof}

\begin{proposition} \mbox{\bf{(After Kedlaya-Liu \cite[Lemma 5.2.6]{KL15})}}
For any $0< r_1\leq r_2$ we have the following equality on the corresponding period rings:
\begin{displaymath}
\widetilde{\Pi}^{\mathrm{int},r_1}_{R,A}\bigcap	\widetilde{\Pi}^{[r_1,r_2]}_{R,A}=\widetilde{\Pi}^{\mathrm{int},r_2}_{R,A}.
\end{displaymath}	
Here $A$ is some noncommutative Banach affinoid algebra over $\mathbb{Q}_p$.	
\end{proposition}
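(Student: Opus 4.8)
The plan is to follow the proof of \cref{proposition5.7} (its commutative counterpart) essentially verbatim, the only genuinely new ingredient being the behaviour of strict quotient presentations in the noncommutative Banach setting. We may assume $r_1<r_2$, the case $r_1=r_2$ being trivial, and the inclusion $\widetilde{\Pi}^{\mathrm{int},r_2}_{R,A}\subseteq \widetilde{\Pi}^{\mathrm{int},r_1}_{R,A}\cap\widetilde{\Pi}^{[r_1,r_2]}_{R,A}$ being immediate. Fix a presentation $A\cong \mathbb{Q}_p\{Z_1,\ldots,Z_d\}/\mathfrak{a}$ with $\mathfrak{a}$ a closed two-sided ideal, carrying the quotient norm; by construction this is precisely the norm placed on the deformed period rings $\widetilde{\Pi}^{\mathrm{int},r}_{R,A}$ and $\widetilde{\Pi}^{[r_1,r_2]}_{R,A}$.

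First I would record the key structural fact: the undeformed period rings $\widetilde{\Pi}^{\mathrm{int},r}_R$ and $\widetilde{\Pi}^{[r_1,r_2]}_R$ are orthonormalizable Banach $\mathbb{Q}_p$-modules (the Teichm\"uller expansions $\sum_k \pi^k[\overline{x}_k]$ furnish a Schauder basis, exactly as in \cite[Definition 6.1]{KP}), so completed tensor product against them is exact and preserves strictness of morphisms of Banach $\mathbb{Q}_p$-modules; in the commutative case this was the content of the appeal to \cite[2.1.8, Proposition 6]{BGR}, and here one argues the same way directly from orthonormalizability, since it is the module structure rather than commutativity that matters. Consequently the surjection $\widetilde{\Pi}^{\mathrm{int},r}_{R,\mathbb{Q}_p\{Z_1,\ldots,Z_d\}}\to\widetilde{\Pi}^{\mathrm{int},r}_{R,A}$ and the analogous one for $\widetilde{\Pi}^{[r_1,r_2]}$ remain strict, with kernels the closed ideals generated by $\mathfrak{a}$.

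Then I would run the lift--estimate--project argument. Take $\overline{x}$ in the left-hand intersection and lift it, using strictness, to $x\in\widetilde{\Pi}^{\mathrm{int},r_1}_{R,\mathbb{Q}_p\{Z_1,\ldots,Z_d\}}\cap\widetilde{\Pi}^{[r_1,r_2]}_{R,\mathbb{Q}_p\{Z_1,\ldots,Z_d\}}$ together with approximants $\{x_i\}$ in the bounded Robba ring satisfying $\left\|.\right\|_{\alpha^t,\mathbb{Q}_p\{Z_1,\ldots,Z_d\}}(x_i-x)\le p^{-j}$ for all $t\in[r_1,r_2]$ and all $i\ge N_j$. Decomposing $x_i=y_i+z_i$ into its integral part ($k\ge 0$) and its negative-$k$ part and invoking $x\in\widetilde{\Pi}^{\mathrm{int},r_1}$, one obtains $\alpha(\overline{x}_{i,k,i_1,\ldots,i_d})\le p^{(k-j)/r_1}$ for $k<0$, whence the $\alpha^{r_2}$-Gauss norm of each such monomial is $\le p^{-k}p^{(k-j)r_2/r_1}\le p^{1+(1-j)r_1/r_1}$; crucially these are term-by-term estimates, unaffected by the noncommutativity of the $Z_i$ (the Gauss norm is still computed word-by-word on the free monoid). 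Hence $y_i\to x$ in $\widetilde{\Pi}^{\mathrm{int},r_2}_{R,\mathbb{Q}_p\{Z_1,\ldots,Z_d\}}$, so $x$, and therefore $\overline{x}$ after applying the strict quotient map, lies in $\widetilde{\Pi}^{\mathrm{int},r_2}_{R,A}$. Combined with the already-established free-variable intersection identity over $\mathbb{Q}_p\{Z_1,\ldots,Z_d\}$ (the proposition immediately preceding), this completes the argument.

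The main obstacle is the second paragraph: in the noncommutative regime one lacks the commutative flatness/strictness machinery of \cite{BGR}, so one must check carefully that $\mathfrak{a}$ is closed and two-sided (so the quotient is again a Banach ring), that the quotient norm on $\widetilde{\Pi}^{\mathrm{int},r}_{R,A}$ genuinely coincides with the one induced from the orthonormalizable structure on $\widetilde{\Pi}^{\mathrm{int},r}_R$, and that strictness of $\mathbb{Q}_p\{Z_1,\ldots,Z_d\}\to A$ survives completed tensor product with an orthonormalizable module --- a fact that does hold, but for which one should spell out the short orthonormal-basis argument explicitly rather than cite a commutative reference. Everything else in the proof is independent of commutativity and transfers with no change.
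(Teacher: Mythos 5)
Your proposal takes essentially the same route as the paper: the paper's proof of this noncommutative statement is simply a pointer to the proof of \cref{proposition5.7}, i.e.\ the lift--estimate--project argument in which one uses strictness of the quotient $\mathbb{Q}_p\{Z_1,\ldots,Z_d\}\to A$, preserved under completed tensor with the Robba rings, lifts $\overline{x}$, runs the integral/negative-part decomposition with the Gauss-norm estimates, and projects back. Your additional care in justifying the strictness-after-completed-tensor step by orthonormalizability rather than the commutative reference \cite[2.1.8, Proposition 6]{BGR} is a reasonable supplement to a point the paper leaves implicit, and does not change the argument.
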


\begin{proof}
See the proof of \cref{proposition5.7}.	
\end{proof}

\indent Then we have the following analog of the corresponding result of \cite[Lemma 5.2.8]{KL15}:

\begin{proposition} \mbox{\bf{(After Kedlaya-Liu \cite[Lemma 5.2.8]{KL15})}}
Consider now in our situation the radii $0< r_1\leq r_2$, and consider any element $x\in \widetilde{\Pi}^{[r_1,r_2]}_{R,\mathbb{Q}_p\{Z_1,...,Z_d\}}$. Then we have that for each $n\geq 1$ one can decompose $x$ into the form of $x=y+z$ such that $y\in \pi^n\widetilde{\Pi}^{\mathrm{int},r_2}_{R,\mathbb{Q}_p\{Z_1,...,Z_d\}}$ with $z\in \bigcap_{r\geq r_2}\widetilde{\Pi}^{[r_1,r]}_{R,\mathbb{Q}_p\{Z_1,...,Z_d\}}$ with the following estimate for each $r\geq r_2$:
\begin{displaymath}
\left\|.\right\|_{\alpha^r,\mathbb{Q}_p\{Z_1,...,Z_d\}}(z)\leq p^{(1-n)(1-r/r_2)}\left\|.\right\|_{\alpha^{r_2},\mathbb{Q}_p\{Z_1,...,Z_d\}}(z)^{r/r_2}.	
\end{displaymath}

\end{proposition}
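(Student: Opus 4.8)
The plan is to imitate the proof of the corresponding perfect statement (After Kedlaya-Liu \cite[Lemma 5.2.8]{KL15}) given earlier in the excerpt — the one for $\widetilde{\Pi}^{[r_1,r_2]}_{R,\mathbb{Q}_p\{T_1,\ldots,T_d\}}$ — since the only change here is the renaming of the polynomial variables from $T_i$ to $Z_i$, and the estimates used are entirely local to the monomials $\pi^k[\overline{x}_{k,i_1,\ldots,i_d}]Z_1^{i_1}\cdots Z_d^{i_d}$, whose Gauss norm is computed coordinatewise exactly as before. First I would reduce to elements $x$ living in the bounded Robba ring $\widetilde{\Pi}^{\mathrm{bd},r_1}_{R,\mathbb{Q}_p\{Z_1,\ldots,Z_d\}}$, which have an explicit expansion $\sum_{k=n(x),i_1,\ldots,i_d}\pi^k[\overline{x}_{k,i_1,\ldots,i_d}]Z_1^{i_1}\cdots Z_d^{i_d}$; for such an $x$ I set $y$ to be the truncation $\sum_{k\geq n,i_1,\ldots,i_d}\pi^k[\overline{x}_{k,i_1,\ldots,i_d}]Z_1^{i_1}\cdots Z_d^{i_d}$, which manifestly lies in $\pi^n\widetilde{\Pi}^{\mathrm{int},r_2}_{R,\mathbb{Q}_p\{Z_1,\ldots,Z_d\}}$, and let $z=x-y$ collect the terms with $k<n$.

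Next I would verify the claimed term-by-term inequality: for a single monomial with exponent $k$ one has $\left\|\cdot\right\|_{\alpha^r,\mathbb{Q}_p\{Z_1,\ldots,Z_d\}}(\pi^k[\overline{x}_{k,i_1,\ldots,i_d}]Z_1^{i_1}\cdots Z_d^{i_d}) = p^{-k}\alpha(\overline{x}_{k,i_1,\ldots,i_d})^r = p^{-k(1-r/r_2)}\left\|\cdot\right\|_{\alpha^{r_2},\mathbb{Q}_p\{Z_1,\ldots,Z_d\}}(\pi^k[\overline{x}_{k,i_1,\ldots,i_d}]Z_1^{i_1}\cdots Z_d^{i_d})^{r/r_2}$, and since $k<n$ and $1-r/r_2\leq 0$ for $r\geq r_2$ we get the factor $p^{-k(1-r/r_2)}\leq p^{(1-n)(1-r/r_2)}$; taking the supremum over monomials preserves the bound and gives the stated estimate for $z$ in the bounded case, with $\bigcap_{r\geq r_2}$ membership following because the supremum defining $z$ is finite for all $r\geq r_2$. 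Then for a general $x\in\widetilde{\Pi}^{[r_1,r_2]}_{R,\mathbb{Q}_p\{Z_1,\ldots,Z_d\}}$ I would build a bounded-Robba approximating sequence $x_0,x_1,\ldots$ with $\left\|\cdot\right\|_{\alpha^r,\mathbb{Q}_p\{Z_1,\ldots,Z_d\}}(x-x_0-\cdots-x_i)\leq p^{-i-1}\left\|\cdot\right\|_{\alpha^r,\mathbb{Q}_p\{Z_1,\ldots,Z_d\}}(x)$ for all $r\in[r_1,r_2]$, decompose each $x_i=y_i+z_i$ by the bounded case, and set $y=\sum_i y_i$, $z=\sum_i z_i$; the $y_i$-series converges in the $\pi$-adically complete (hence Fréchet-complete) ring $\widetilde{\Pi}^{\mathrm{int},r_2}_{R,\mathbb{Q}_p\{Z_1,\ldots,Z_d\}}$ into $\pi^n\widetilde{\Pi}^{\mathrm{int},r_2}$, and summing the per-term estimates gives $\left\|\cdot\right\|_{\alpha^r,\mathbb{Q}_p\{Z_1,\ldots,Z_d\}}(z_i)\leq p^{(1-n)(1-r/r_2)}\left\|\cdot\right\|_{\alpha^{r_2},\mathbb{Q}_p\{Z_1,\ldots,Z_d\}}(x)^{r/r_2}$, whence the same bound for $z$ after passing to the limit.

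The only genuinely delicate point — and the step I expect to be the main obstacle — is the convergence and the final estimate for the series $z=\sum_i z_i$ in the noncommutative setting: one must check that the non-commutativity of $\mathbb{Q}_p\{Z_1,\ldots,Z_d\}$ does not disturb the coordinatewise Gauss-norm computations (it does not, since the norm is still defined as a supremum over the free monomials in the $Z_i$ and is submultiplicative), and that the limiting element $z$ genuinely lies in $\bigcap_{r\geq r_2}\widetilde{\Pi}^{[r_1,r]}_{R,\mathbb{Q}_p\{Z_1,\ldots,Z_d\}}$ with the stated log-convex-type bound rather than merely in $\widetilde{\Pi}^{[r_1,r_2]}$; here one invokes the continuity/log-convexity of $t\mapsto\left\|z\right\|_{\alpha^t}$ (the noncommutative analog of the first proposition of this subsection) together with the uniform estimate to propagate the bound from $r_2$ to all larger $r$. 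I would close the argument exactly as in the perfect case, noting in a remark that this subsection, like the analogous one in the perfect setting, carries over uniformly to the equal-characteristic case and to general ultrametric $E$ with no change in the proof.
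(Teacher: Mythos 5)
Your proposal is correct and follows essentially the same route as the paper's own proof: truncate at $\pi^n$ for elements of the bounded Robba ring, verify the single-monomial Gauss-norm estimate $p^{-k(1-r/r_2)}\leq p^{(1-n)(1-r/r_2)}$ for $k<n$, and then handle general $x$ via the inductively constructed bounded approximating sequence with $\left\|\cdot\right\|_{\alpha^r}(x-x_0-\cdots-x_i)\leq p^{-i-1}\left\|\cdot\right\|_{\alpha^r}(x)$, summing the $y_i$ in $\pi^n\widetilde{\Pi}^{\mathrm{int},r_2}_{R,\mathbb{Q}_p\{Z_1,\ldots,Z_d\}}$ and the $z_i$ with the uniform estimate. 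Your extra remark that the free (noncommutative) variables $Z_i$ do not disturb the coordinatewise norm computations is exactly the point the paper leaves implicit, so nothing further is needed.
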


\begin{proof}
As in \cite[Lemma 5.2.8]{KL15} and in the proof of our previous proposition we first consider those elements $x$ lives in the bounded Robba rings which could be expressed in general as
\begin{center}
 $\sum_{k=n(x),i_1,...,i_d}\pi^k[\overline{x}_{k,i_1,...,i_d}]Z_1^{i_1}...Z_d^{i_d}$.
 \end{center}	
In this situation the corresponding decomposition is very easy to come up with, namely we consider the corresponding $y_i$ as the corresponding series:
\begin{displaymath}
\sum_{k\geq n,i_1,...,i_d}\pi^k[\overline{x}_{k,i_1,...,i_d}]Z_1^{i_1}...Z_d^{i_d}	
\end{displaymath}
which give us the desired result since we have in this situation when focusing on each single term:
\begin{align}
\left\|.\right\|_{\alpha^r,\mathbb{Q}_p\{Z_1,...,Z_d\}}(\pi^k[\overline{x}_{k,i_1,...,i_d}]Z_1^{i_1}...Z_d^{i_d})&=p^{-k}\alpha(\overline{x}_{k,i_1,...,i_d})^r\\
&=p^{-k(1-r/r_2)}\left\|.\right\|_{\alpha^{r_2},\mathbb{Q}_p\{Z_1,...,Z_d\}}(\pi^k[\overline{x}_{k,i_1,...,i_d}]Z_1^{i_1}...Z_d^{i_d})^{r/r_2}\\
&\leq p^{(1-n)(1-r/r_2)}\left\|.\right\|_{\alpha^{r_2},\mathbb{Q}_p\{Z_1,...,Z_d\}}(\pi^k[\overline{x}_{k,i_1,...,i_d}]Z_1^{i_1}...Z_d^{i_d})^{r/r_2}
\end{align}
for all those suitable $k$. Then to tackle the more general situation we consider the approximating sequence consisting of all the elements in the bounded Robba ring as in the usual situation considered in \cite[Lemma 5.2.8]{KL15}, namely we inductively construct the following approximating sequence just as:
\begin{align}
\left\|.\right\|_{\alpha^r,\mathbb{Q}_p\{Z_1,...,Z_d\}}(x-x_0-...-x_i)\leq p^{-i-1}	\left\|.\right\|_{\alpha^r,\mathbb{Q}_p\{Z_1,...,Z_d\}}(x), i=0,1,..., r\in [r_1,r_2].
\end{align}
Here all the elements $x_i$ for each $i=0,1,...$ are living in the bounded Robba ring, which immediately gives rise to the suitable decomposition as proved in the previous case namely we have for each $i$ the decomposition $x_i=y_i+z_i$ with the desired conditions as mentioned in the statement of the proposition. We first take the series summing all the elements $y_i$ up for all $i=0,1,...$, which first of all converges under the norm $\left\|.\right\|_{\alpha^r,\mathbb{Q}_p\{Z_1,...,Z_d\}}$ for all the radius $r\in [r_1,r_2]$, and also note that all the elements $y_i$ within the infinite sum live inside the corresponding integral Robba ring $\widetilde{\Pi}^{\mathrm{int},r_2}_{R,\mathbb{Q}_p\{Z_1,...,Z_d\}}$, which further implies the corresponding convergence ends up in $\widetilde{\Pi}^{\mathrm{int},r_2}_{R,\mathbb{Q}_p\{Z_1,...,Z_d\}}$. For the elements $z_i$ where $i=0,1,...$ also sum up to a converging series in the desired ring since combining all the estimates above we have:
\begin{displaymath}
\left\|.\right\|_{\alpha^r,\mathbb{Q}_p\{Z_1,...,Z_d\}}(z_i)\leq p^{(1-n)(1-r/r_2)}\left\|.\right\|_{\alpha^{r_2},\mathbb{Q}_p\{Z_1,...,Z_d\}}(x)^{r/r_2}.	
\end{displaymath}
\end{proof}

%

\begin{proposition} \mbox{\bf{(After Kedlaya-Liu \cite[Lemma 5.2.10]{KL15})}}
We have the following identity:
\begin{displaymath}
\widetilde{\Pi}^{[s_1,r_1]}_{R,\mathbb{Q}_p\{Z_1,...,Z_d\}}\bigcap\widetilde{\Pi}^{[s_2,r_2]}_{R,\mathbb{Q}_p\{Z_1,...,Z_d\}}=\widetilde{\Pi}^{[s_1,r_2]}_{R,\mathbb{Q}_p\{Z_1,...,Z_d\}},
\end{displaymath}
here the radii satisfy $<s_1\leq s_2 \leq r_1 \leq r_2$.
\end{proposition}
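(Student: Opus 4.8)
The plan is to transcribe, essentially verbatim, the proof given above for the commutative deformation $\widetilde{\Pi}^{[s_1,r_1]}_{R,\mathbb{Q}_p\{T_1,...,T_d\}}$, checking at each step that nothing uses commutativity of the variables. The only quantitative input is the termwise description of the Gauss norm $\left\|.\right\|_{\alpha^r,\mathbb{Q}_p\{Z_1,...,Z_d\}}$; since this supremum is taken over all monomials and the monomials of the noncommutative Tate algebra are just the words in the free monoid on $Z_1,\dots,Z_d$, every coefficientwise estimate in the preceding propositions goes through unchanged. So the first thing I would record is that the noncommutative analogs of the decomposition proposition (the $\mathbb{Q}_p\{Z_1,...,Z_d\}$-version of \cite[Lemma 5.2.8]{KL15}) and of the intersection identity \cite[Lemma 5.2.6]{KL15}, both proved just above, are available with the same statements and estimates.

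The easy inclusion $\widetilde{\Pi}^{[s_1,r_2]}_{R,\mathbb{Q}_p\{Z_1,...,Z_d\}}\subseteq\widetilde{\Pi}^{[s_1,r_1]}_{R,\mathbb{Q}_p\{Z_1,...,Z_d\}}\cap\widetilde{\Pi}^{[s_2,r_2]}_{R,\mathbb{Q}_p\{Z_1,...,Z_d\}}$ follows because both $[s_1,r_1]$ and $[s_2,r_2]$ sit inside $[s_1,r_2]$ and passing to a larger interval refines the Fréchet topology. For the reverse inclusion I would take $x$ in the left-hand intersection, apply the decomposition proposition (with $n=1$ and radii $s_1\le r_1$) to split $x=y+z$ with $y\in\widetilde{\Pi}^{\mathrm{int},r_1}_{R,\mathbb{Q}_p\{Z_1,...,Z_d\}}$ and $z\in\bigcap_{r\ge r_1}\widetilde{\Pi}^{[s_1,r]}_{R,\mathbb{Q}_p\{Z_1,...,Z_d\}}$, so in particular $z\in\widetilde{\Pi}^{[s_1,r_2]}_{R,\mathbb{Q}_p\{Z_1,...,Z_d\}}$. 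Then $y=x-z$ lands in $\widetilde{\Pi}^{\mathrm{int},r_1}_{R,\mathbb{Q}_p\{Z_1,...,Z_d\}}\cap\widetilde{\Pi}^{[s_2,r_2]}_{R,\mathbb{Q}_p\{Z_1,...,Z_d\}}$, which, using $s_2\le r_1$ so that $\widetilde{\Pi}^{[s_2,r_2]}_{R,\mathbb{Q}_p\{Z_1,...,Z_d\}}\subseteq\widetilde{\Pi}^{[r_1,r_2]}_{R,\mathbb{Q}_p\{Z_1,...,Z_d\}}$, is contained in $\widetilde{\Pi}^{\mathrm{int},r_1}_{R,\mathbb{Q}_p\{Z_1,...,Z_d\}}\cap\widetilde{\Pi}^{[r_1,r_2]}_{R,\mathbb{Q}_p\{Z_1,...,Z_d\}}=\widetilde{\Pi}^{\mathrm{int},r_2}_{R,\mathbb{Q}_p\{Z_1,...,Z_d\}}$ by the intersection identity. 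Since $\widetilde{\Pi}^{\mathrm{int},r_2}_{R,\mathbb{Q}_p\{Z_1,...,Z_d\}}\subseteq\widetilde{\Pi}^{[s_1,r_2]}_{R,\mathbb{Q}_p\{Z_1,...,Z_d\}}$, adding $y$ and $z$ back gives $x\in\widetilde{\Pi}^{[s_1,r_2]}_{R,\mathbb{Q}_p\{Z_1,...,Z_d\}}$, as wanted.

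I do not expect a genuine obstacle here: the only care needed is bookkeeping of the radii when invoking the two auxiliary propositions — in particular making sure the interval hypotheses $s_1\le r_1$ and $s_2\le r_1\le r_2$ match what those propositions demand — together with confirming once and for all that the Gauss-norm manipulations underlying \cite[Lemma 5.2.6, Lemma 5.2.8]{KL15} are word-by-word and never rely on any commutation of the $Z_i$. Both points are routine; the substantive content is entirely inherited from the commutative (and classical) versions.
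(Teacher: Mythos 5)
Your proposal is correct and follows essentially the same route as the paper: the easy inclusion, then the decomposition $x=y+z$ from the $\mathbb{Q}_p\{Z_1,\dots,Z_d\}$-version of \cite[Lemma 5.2.8]{KL15} applied to the interval $[s_1,r_1]$, and finally the intersection identity $\widetilde{\Pi}^{\mathrm{int},r_1}_{R,\mathbb{Q}_p\{Z_1,...,Z_d\}}\cap\widetilde{\Pi}^{[r_1,r_2]}_{R,\mathbb{Q}_p\{Z_1,...,Z_d\}}=\widetilde{\Pi}^{\mathrm{int},r_2}_{R,\mathbb{Q}_p\{Z_1,...,Z_d\}}$ to place $y=x-z$ in the right ring. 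Your bookkeeping of radii (using $s_2\leq r_1$ to shrink $[s_2,r_2]$ to $[r_1,r_2]$ before invoking that identity) is in fact slightly more explicit than the paper's wording, and the observation that the termwise Gauss-norm estimates never use commutativity of the $Z_i$ is exactly the point the paper relies on.
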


\begin{proof}
In our situation one direction is obvious while on the other hand we consider any element $x$ in the intersection on the left, then by the previous proposition we	have the decomposition $x=y+z$ where $y\in \widetilde{\Pi}^{\mathrm{int},r_1}_{R,\mathbb{Q}_p\{Z_1,...,Z_d\}}$ and $z\in \widetilde{\Pi}^{[s_1,r_2]}_{R,\mathbb{Q}_p\{Z_1,...,Z_d\}}$. Then as in \cite[Lemma 5.2.10]{KL15} section 5.2 we look at $y=x-z$ which lives in the intersection:
\begin{displaymath}
\widetilde{\Pi}^{\mathrm{int},r_1}_{R,\mathbb{Q}_p\{Z_1,...,Z_d\}}\bigcap	\widetilde{\Pi}^{[s_2,r_2]}_{R,\mathbb{Q}_p\{Z_1,...,Z_d\}}=\widetilde{\Pi}^{\mathrm{int},r_2}_{R,\mathbb{Q}_p\{Z_1,...,Z_d\}}
\end{displaymath}
which finishes the proof.
\end{proof}

\begin{proposition}\mbox{\bf{(After Kedlaya-Liu \cite[Lemma 5.2.10]{KL15})}}
We have the following identity:
\begin{displaymath}
\widetilde{\Pi}^{[s_1,r_1]}_{R,A}\bigcap\widetilde{\Pi}^{[s_2,r_2]}_{R,A}=\widetilde{\Pi}^{[s_1,r_2]}_{R,A},
\end{displaymath}
here the radii satisfy $<s_1\leq s_2 \leq r_1 \leq r_2$.
	
\end{proposition}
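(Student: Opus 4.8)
The statement to prove is the noncommutative analog of an earlier intersection identity,
\[
\widetilde{\Pi}^{[s_1,r_1]}_{R,A}\bigcap\widetilde{\Pi}^{[s_2,r_2]}_{R,A}=\widetilde{\Pi}^{[s_1,r_2]}_{R,A},
\]
for $0<s_1\leq s_2\leq r_1\leq r_2$, where $A$ is now a noncommutative Banach affinoid algebra presented as a quotient of a noncommutative Tate algebra $\mathbb{Q}_p\{Z_1,\dots,Z_d\}$ (or of $\mathbb{F}_p((\eta))\{Z_1,\dots,Z_d\}$ in the equal-characteristic case). The plan is to follow exactly the same two-step descent strategy that was used for the commutative case in the proof referenced as \cref{proposition5.7}, since the structural ingredients it relies on carry over verbatim to the noncommutative setting.

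First I would establish the identity at the level of the free Tate algebra $\mathbb{Q}_p\{Z_1,\dots,Z_d\}$, i.e. prove
\[
\widetilde{\Pi}^{[s_1,r_1]}_{R,\mathbb{Q}_p\{Z_1,...,Z_d\}}\bigcap\widetilde{\Pi}^{[s_2,r_2]}_{R,\mathbb{Q}_p\{Z_1,...,Z_d\}}=\widetilde{\Pi}^{[s_1,r_2]}_{R,\mathbb{Q}_p\{Z_1,...,Z_d\}},
\]
which is the proposition immediately preceding this one in the excerpt. Its proof in turn reduces, via the decomposition lemma (the noncommutative analog of Kedlaya--Liu \cite[Lemma 5.2.8]{KL15} proved just above), to writing $x=y+z$ with $y\in\widetilde{\Pi}^{\mathrm{int},r_1}_{R,\mathbb{Q}_p\{Z_1,...,Z_d\}}$ and $z$ in the intersection over all large radii, and then invoking the integral intersection identity $\widetilde{\Pi}^{\mathrm{int},r_1}_{R,\mathbb{Q}_p\{Z_1,...,Z_d\}}\cap\widetilde{\Pi}^{[r_1,r_2]}_{R,\mathbb{Q}_p\{Z_1,...,Z_d\}}=\widetilde{\Pi}^{\mathrm{int},r_2}_{R,\mathbb{Q}_p\{Z_1,...,Z_d\}}$. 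All of these inputs have already been recorded above with free (noncommutative) variables $Z_1,\dots,Z_d$; the only thing to check is that the Gauss-norm estimates on monomials $\pi^k[\overline{x}_{k,i_1,\dots,i_d}]Z_1^{i_1}\cdots Z_d^{i_d}$ are insensitive to whether the $Z_i$ commute — and they are, because the norm depends only on the coefficient $\overline{x}_{k,i_1,\dots,i_d}$ and the multi-index, not on the order of the variables.

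Second, I would descend from the free Tate algebra to the general noncommutative affinoid $A$ exactly as in \cref{proposition5.7}: choose a presentation $\mathbb{Q}_p\{Z_1,\dots,Z_d\}\twoheadrightarrow A$, which is a strict surjection of Banach spaces, and observe that strictness is preserved after completed tensor with the various Robba rings (appealing to the noncommutative analog of \cite[2.1.8, Proposition 6]{BGR}; one needs that completed tensoring with a ring admitting a Schauder basis preserves strict exactness, and $\widetilde{\Pi}^{[s,r]}_R$ has a Schauder basis over $E$). Given $\overline{x}$ in the left-hand intersection over $A$, lift it to $x$ over $\mathbb{Q}_p\{Z_1,\dots,Z_d\}$, produce a bounded-Robba approximating sequence $\{x_i\}$ with $\|x_i-x\|_{\alpha^t,\mathbb{Q}_p\{Z_1,...,Z_d\}}\leq p^{-j}$ for $t\in[s_1,r_2]$, split $x_i=y_i+z_i$ into integral and rational parts, use the monomial norm estimates to see the $z_i$ are negligibly small in the $\alpha^{r_2}$-norm, and conclude that $x$ can be rearranged to a representative lying in $\widetilde{\Pi}^{[s_1,r_2]}_{R,\mathbb{Q}_p\{Z_1,...,Z_d\}}$; projecting back through the strict surjection places $\overline{x}$ in $\widetilde{\Pi}^{[s_1,r_2]}_{R,A}$. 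The reverse inclusion is obvious.

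The main obstacle I anticipate is not the norm combinatorics — those are identical to the commutative case — but verifying that the functional-analytic facts used in the descent step genuinely survive noncommutativity: specifically that a strict surjection of noncommutative Banach $\mathbb{Q}_p$-algebras remains strict after $\widehat{\otimes}$ with $\widetilde{\Pi}^{[s,r]}_R$, and that the period rings $\widetilde{\Pi}^{[s,r]}_{R,A}$ are honestly defined as quotients independent of presentation in the noncommutative category. Both should follow from the Schauder-basis argument since $\widetilde{\Pi}^{[s,r]}_R$ is a Banach space over $E$ with an orthonormal basis and completed tensor product over $\mathbb{Q}_p$ with such a space is exact on strict sequences; but this is the point where one must be careful, and it is exactly the point the referenced proof of \cref{proposition5.7} glosses as ``the map $\dots$ is strict, which will remain to be so when tensor (completely) with the corresponding Robba rings.'' I would therefore make that Schauder-basis/strictness lemma explicit once, and then the present proposition is a one-line consequence: ``See the proof of \cref{proposition5.7}.''
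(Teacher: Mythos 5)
Your proposal matches the paper's own route: the paper's proof of this proposition is literally a pointer to \cref{proposition5.7}, whose argument is exactly your two-step plan — first the identity over the Tate algebra via the decomposition in the preceding propositions, then descent to $A$ along the strict presentation surjection, with strictness preserved under completed tensor with the Robba rings (the \cite[2.1.8, Proposition 6]{BGR}/Schauder-basis point). Your insistence on making that strictness-after-$\widehat{\otimes}$ lemma explicit is a refinement of, not a departure from, the paper's argument, which glosses it in one clause.
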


\begin{proof}
See the proof of \cref{proposition5.7}.		
\end{proof}

\begin{remark}
Again this subsection is finished so far only for the situation where $E$ is of mixed characteristic. But everything uniformly carries over for our original assumption on the field $E$ and $A$. We will not repeat the proof again.	
\end{remark}

\subsection{Noncommutative Period Rings and Period Sheaves}

\begin{setting}
We will work in the categories of the pseudocoherent, fpd and finite projective modules over the period rings defined above. First we specify the Frobenius in our setting. The rings involved are:
\begin{align}
\widetilde{\Omega}^\mathrm{int}_{R,A},\widetilde{\Omega}_{R,A}, \widetilde{\Pi}^\mathrm{int}_{R,A}, \widetilde{\Pi}^{\mathrm{int},r}_{R,A},\widetilde{\Pi}^\mathrm{bd}_{R,A},\widetilde{\Pi}^{\mathrm{bd},r}_{R,A}, \widetilde{\Pi}_{R,A}, \widetilde{\Pi}^r_{R,A},\widetilde{\Pi}^+_{R,A}, \widetilde{\Pi}^\infty_{R,A},\widetilde{\Pi}^I_{R,A}.
\end{align}
We are going to endow these rings with the Frobenius induced by continuation from the Witt vector part only, which is to say the corresponding Frobenius induced by the $p^h$-power absolute Frobenius over $R$. Note all the rings above are defined by taking the product of $\triangle$ where each $\triangle$ representing one of the following rings (over $E$):
\begin{align}
\widetilde{\Omega}^\mathrm{int}_{R},\widetilde{\Omega}_{R}, \widetilde{\Pi}^\mathrm{int}_{R}, \widetilde{\Pi}^{\mathrm{int},r}_{R},\widetilde{\Pi}^\mathrm{bd}_{R},\widetilde{\Pi}^{\mathrm{bd},r}_{R}, \widetilde{\Pi}_{R}, \widetilde{\Pi}^r_{R},\widetilde{\Pi}^+_{R}, \widetilde{\Pi}^\infty_{R},\widetilde{\Pi}^I_{R}
\end{align}
with the affinoid ring $A$. The Frobenius acts on $A$ trivially and we assume that the action is $A$-linear.  
\end{setting}

\indent First we consider the following sheafification as in \cite[Definition 4.4.2]{KL16}:

\begin{setting} 
Consider the space $X=\mathrm{Spa}(R,R^+)$, over this perfectoid space there were sheaves:
\begin{align}
\widetilde{\Omega}^\mathrm{int}_{},\widetilde{\Omega}_{}, \widetilde{\Pi}^\mathrm{int}_{}, \widetilde{\Pi}^{\mathrm{int},r}_{},\widetilde{\Pi}^\mathrm{bd}_{},\widetilde{\Pi}^{\mathrm{bd},r}_{}, \widetilde{\Pi}_{}, \widetilde{\Pi}^r_{},\widetilde{\Pi}^+_{}, \widetilde{\Pi}^\infty_{},\widetilde{\Pi}^I_{}.
\end{align}
defined over this space through the corresponding adic, \'etale, pro-\'etale or $v$-topology, we consider the corresponding sheaves defined over the same Grothendieck sites but with further deformed consideration:
\begin{align}
\widetilde{\Omega}^\mathrm{int}_{*,A},\widetilde{\Omega}_{*,A}, \widetilde{\Pi}^\mathrm{int}_{*,A}, \widetilde{\Pi}^{\mathrm{int},r}_{*,A},\widetilde{\Pi}^\mathrm{bd}_{*,A},\widetilde{\Pi}^{\mathrm{bd},r}_{*,A}, \widetilde{\Pi}_{*,A}, \widetilde{\Pi}^r_{*,A},\widetilde{\Pi}^+_{*,A}, \widetilde{\Pi}^\infty_{*,A},\widetilde{\Pi}^I_{*,A}.
\end{align}
\end{setting}

\begin{remark}
This is well defined, since we have orthogonal basis for the ring $A$.	
\end{remark}

\begin{definition}
In our situation we consider the corresponding Frobenius action on the following period rings and sheaves:
\begin{align}
\widetilde{\Omega}^\mathrm{int}_{R,A},\widetilde{\Omega}^\mathrm{int}_{R,A},\widetilde{\Pi}^\mathrm{int}_{R,A},\widetilde{\Pi}^\mathrm{bd}_{R,A},\widetilde{\Pi}_{R,A},\widetilde{\Pi}^\infty_{R,A}, \widetilde{\Omega}^\mathrm{int}_{*,A}, \widetilde{\Pi}^\mathrm{int}_{*,A}, \widetilde{\Pi}^+_{*,A}, \widetilde{\Omega}_{*,A}, \widetilde{\Pi}^\mathrm{bd}_{*,A}, \widetilde{\Pi}^\infty_{*,A}, \widetilde{\Pi}^+_{*,A}, \widetilde{\Pi}_{*,A},   
\end{align}
which is defined by considering the corresponding lift of the absolute Frobenius of $p^h$-power in characteristic $p>0$ induced from $R$, which will be denoted by $\varphi$. We then introduce more general consideration by taking $E_a$ to be some unramified extension of $E$ of degree $a$ divisible by $h$, the corresponding Frobenius will be denoted by $\varphi^a$.
\end{definition}

\indent Then we generalize the corresponding Frobenius modules in \cite[Definition 4.4.4]{KL16} to our situation as in the following.

\begin{definition}
Over the period rings and sheaves (each is denoted by $\triangle$ in this definition) defined in the previous definition we define as in \cite[Definition 4.4.4]{KL16} the corresponding $\varphi^a$-modules over $\triangle$ which are respectively projective to be the corresponding finite projetive modules over $\triangle$ with further assigned semilinear action of the operator $\varphi^a$. Here we define in our situation the corresponding $\varphi^a$-cohomology to be the (hyper)-cohomology of the following complex:
\[
\xymatrix@R+0pc@C+0pc{
0\ar[r]\ar[r]\ar[r] &M
\ar[r]^{\varphi-1}\ar[r]\ar[r] &M
\ar[r]\ar[r]\ar[r] &0.
}
\]
All modules are right over the noncommutative rings. 
\end{definition}

\indent Now we define the corresponding modules over the rings which are the domains in the following morphisms induced from the Frobenius map $\varphi^a$:
\begin{align} 
\widetilde{\Pi}^{\mathrm{int},r}_{R,A}\rightarrow \widetilde{\Pi}^{\mathrm{int},rp^{-ha}}_{R,A},\widetilde{\Pi}^{\mathrm{bd},r}_{R,A}\rightarrow \widetilde{\Pi}^{\mathrm{bd},rp^{-ha}}_{R,A},\widetilde{\Pi}^{r}_{R,A}\rightarrow \widetilde{\Pi}^{rp^{-ha}}_{R,A}\\
\widetilde{\Pi}^{\mathrm{int},r}_{*,A}\rightarrow \widetilde{\Pi}^{\mathrm{int},rp^{-ha}}_{*,A},\widetilde{\Pi}^{\mathrm{bd},r}_{*,A}\rightarrow \widetilde{\Pi}^{\mathrm{bd},rp^{-ha}}_{*,A},\widetilde{\Pi}^{r}_{*,A}\rightarrow \widetilde{\Pi}^{rp^{-ha}}_{*,A}.	
\end{align}

\begin{definition}
Over each rings $\triangle$ which are the domains in the morphisms as mentioned just before this definition, we define the corresponding projective $\varphi^a$-module over any $\triangle$ listed above to be the corresponding finite projective module $M$ over $\triangle$ with additionally endowed semilinear Frobenius action from $\varphi^a$ such that we have the isomorphism $\varphi^{a*}M\overset{\sim}{\rightarrow}M\otimes \square$ where the ring $\square$ is one of the targets listed above. Also the cohomology of any module under this definition will be defined to be the (hyper)cohomology of the complex in the following form:
\[
\xymatrix@R+0pc@C+0pc{
0\ar[r]\ar[r]\ar[r] &M
\ar[r]^{\varphi-1}\ar[r]\ar[r] &M\otimes_\triangle \square
\ar[r]\ar[r]\ar[r] &0.
}
\]
All modules are right over the noncommutative rings.
\end{definition}

\indent Then we consider the following morphisms of specific period rings induced by the Frobenius. 
\begin{align}
\widetilde{\Pi}_{R,A}^{[s,r]}	\rightarrow \widetilde{\Pi}_{R,A}^{[sp^{-ah},rp^{-ah}]}\\
\widetilde{\Pi}_{*,A}^{[s,r]}	\rightarrow \widetilde{\Pi}_{*,A}^{[sp^{-ah},rp^{-ah}]}
\end{align}

with the corresponding morphisms in the following:

\begin{align}
\widetilde{\Pi}_{R,A}^{[s,r]}	\rightarrow \widetilde{\Pi}_{R,A}^{[s,rp^{-ah}]}\\
\widetilde{\Pi}_{*,A}^{[s,r]}	\rightarrow \widetilde{\Pi}_{*,A}^{[s,rp^{-ah}]}
\end{align}

\begin{definition}
Again as in \cite[Definition 4.4.4]{KL16}, we define the corresponding projective $\varphi^a$-modules over the domain rings or sheaves of rings in the morphisms just before this definition to be the finite projective modules (which will be denoted by $M$) over the domain rings in the morphism just before this definition additionally endowed with semilinear Frobenius action from $\varphi^a$ with the following isomorphisms:
\begin{align}
\varphi^{a*}M\otimes_{\widetilde{\Pi}_{R,A}^{[sp^{-ah},rp^{-ah}]}}\widetilde{\Pi}_{R,A}^{[s,rp^{-ah}]}\overset{\sim}{\rightarrow}M\otimes_{\widetilde{\Pi}_{R,A}^{[s,r]}}\widetilde{\Pi}_{R,A}^{[s,rp^{-ah}]},\\
\varphi^{a*}M\otimes_{\widetilde{\Pi}_{*,A}^{[sp^{-ah},rp^{-ah}]}}\widetilde{\Pi}_{*,A}^{[s,rp^{-ah}]}\overset{\sim}{\rightarrow}M\otimes_{\widetilde{\Pi}_{*,A}^{[s,r]}}\widetilde{\Pi}_{*,A}^{[s,rp^{-ah}]}.
\end{align}
All modules are right over the noncommutative rings.
\end{definition}

\noindent Also one can further define the corresponding bundles carrying semilinear Frobenius in our context as in the situation of \cite[Definition 4.4.4]{KL16}:

\begin{definition}
Over the ring $\widetilde{\Pi}_{R,A}$ we define a corresponding projective Frobenius bundle to be a family $(M_I)_I$ of finite projective modules over each $\widetilde{\Pi}^I_{R,A}$ carrying the natural Frobenius action coming from the operator $\varphi^a$ such that for any two involved intervals having the relation $I\subset J$ we have:
\begin{displaymath}
M_J\otimes_{\widetilde{\Pi}^J_{R,A}}\widetilde{\Pi}^I_{R,A}\overset{\sim}{\rightarrow}	M_I
\end{displaymath}
with the obvious cocycle condition. Here we have to propose condition on the intervals that for each $I=[s,r]$ involved we have $s\leq rp^{ah}$. All modules are right over the noncommutative rings.
\end{definition}

%
%
%

\indent Then we generalize the comparison of local systems and Frobenius modules in \cite[Theorem 4.5.7]{KL16} to our noncommutative context as in the following:

\begin{theorem} \mbox{\bf{(After Kedlaya-Liu \cite[Theorem 4.5.7]{KL16})}}
1. There is a fully faithful embedding functor from the category of the projective right $\mathcal{O}_{E_a^{\varphi^a}}\widehat{\otimes}\mathcal{O}_A$-local systems over $X_\text{pro\'et}$ to the following two categories:\\
1(a).  The category of projective right Frobenius modules over the period sheaf $\widetilde{\Omega}_{*,A}^\mathrm{int}$ over $X$, $X_\text{\'et}$, $X_\text{pro\'et}$;\\
1(b).  The category of projective right Frobenius modules over the period sheaf $\widetilde{\Pi}_{*,A}^\mathrm{int}$ over $X$, $X_\text{\'et}$, $X_\text{pro\'et}$.
\end{theorem}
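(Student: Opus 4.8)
The plan is to reduce the noncommutative statement to the corresponding commutative mechanism already established in \cref{theorem4.4}, together with the noncommutative analogue of the fundamental exact sequences of \cref{proposition4.1}. First I would set up the base change functor itself: to a projective right $\mathcal{O}_{E_a^{\varphi^a}}\widehat{\otimes}\mathcal{O}_A$-local system $\mathbb{L}$ over $X_\text{pro\'et}$ one associates
\[
\mathbb{L}\mapsto \mathbb{L}\otimes_{\underline{\mathcal{O}_{E_a^{\varphi^a}}\widehat{\otimes}\mathcal{O}_A}}\widetilde{\Omega}_{*,A}^\mathrm{int}
\quad\text{and}\quad
\mathbb{L}\mapsto \mathbb{L}\otimes_{\underline{\mathcal{O}_{E_a^{\varphi^a}}\widehat{\otimes}\mathcal{O}_A}}\widetilde{\Pi}_{*,A}^\mathrm{int},
\]
where the tensor product is taken on the right since all modules are right modules over the noncommutative rings in play. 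One checks that the output carries the semilinear $\varphi^a$-action induced from the Frobenius on the period sheaf (which, recall, acts trivially on $A$ and $A$-linearly), and that the $\varphi^a$-invariants recover $\mathbb{L}$: this is exactly the content of the noncommutative versions of the four exact sequences of \cref{proposition4.1}, whose proof goes through verbatim because the arguments there are about controlling Witt-coefficient expansions $\sum_{n\geq 0}\pi^n[\overline{x}_n]$, an operation that takes place entirely on the (commutative) Witt side and is unaffected by the noncommutativity of $A$ as long as $A$ is $A$-linearly acted on and admits an orthogonal Schauder basis, which is assumed.

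Next I would prove fully faithfulness. For two local systems $\mathbb{L}_1,\mathbb{L}_2$, the morphisms between the images are computed as global sections of an internal-Hom sheaf; since both objects are \emph{projective} (finite projective right modules), the internal Hom is again a projective right Frobenius module, and one reduces to showing that $H^0_{\varphi^a}$ of the period sheaf $\widetilde{\Omega}_{*,A}^\mathrm{int}$ (resp.\ $\widetilde{\Pi}_{*,A}^\mathrm{int}$) tensored with the relevant local system gives back the Hom-local system. This is the $i=0$ part of the exact sequences from the previous paragraph applied to $\mathcal{H}om(\mathbb{L}_1,\mathbb{L}_2)$, combined with the vanishing beyond degree $0$ of the pro-\'etale cohomology of these period sheaves (the noncommutative analogue of \cref{lemma3.11} and \cref{proposition3.13}, which again go through since the coefficient ring $A$ is handled through a completed tensor product with an orthogonal basis and plays no role in the vanishing mechanism). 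The fact that the embedding lands in all three of $X$, $X_\text{\'et}$, $X_\text{pro\'et}$ follows because the period sheaves, and their cohomologies, agree across these sites as in \cref{lemma3.11}.

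The main obstacle I anticipate is the bookkeeping around \emph{right} modules and the descent/sheafification in the noncommutative setting: one must verify that the sheaf $\widetilde{\Omega}_{*,A}^\mathrm{int}$ (resp.\ $\widetilde{\Pi}_{*,A}^\mathrm{int}$) is genuinely a sheaf of (possibly noncommutative) rings on the pro-\'etale site and that the associated-sheaf functor for right modules behaves well enough that the base-change functor is well defined and its essential image consists of honest Frobenius modules. This is where the assumption that $A$ has an orthogonal Schauder basis is essential — it guarantees that completed tensor products are exact and preserve the glueing exact sequences, exactly as invoked for the commutative period sheaves. One should be careful that the equivalence in the commutative case is genuinely an equivalence (not merely a fully faithful embedding) whereas here we only claim fully faithfulness; this is consistent, since essential surjectivity in the commutative proof used finiteness/noetherian inputs that are not available noncommutatively, so I would \emph{not} attempt to promote the statement to an equivalence. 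Beyond that, everything is a direct transcription of the proof of the deformed part (statement 1) of \cref{theorem4.4}, with "$\mathbb{Q}_p$" and "$A$" read as noncommutative Banach affinoid algebras and "module" read as "right module" throughout.
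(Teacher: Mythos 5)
Your proposal follows the paper's own route: the paper proves this statement simply by referring back to \cref{theorem4.4}, whose deformed part 1 introduces exactly the base-change functors you write down and deduces fully faithfulness from the $A$-deformed (completed tensor product) versions of the exact sequences in \cref{proposition4.1}, with the orthogonal/Schauder basis of $A$ playing the same justifying role you assign it. Your extra appeal to internal Homs and to the vanishing results of \cref{lemma3.11} and \cref{proposition3.13} is more machinery than the paper invokes for this point (and those vanishing statements are only established there in the commutative setting), but it is not needed: the degree-zero identification of $\varphi^a$-invariants coming from the exact sequences already gives fully faithfulness, so your argument is consistent with, and essentially identical to, the paper's.
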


\begin{proof}
See \cref{theorem4.4}.	
\end{proof}

\indent Then as in \cite[Corollary 4.5.8]{KL16} we consider the setting of more general adic spaces:

\begin{proposition} \mbox{\bf{(After Kedlaya-Liu \cite[Corollary 4.5.8]{KL16})}}
1. Let $X$	be a preadic space over $E_a$. Then we have that there is fully faithful embedding functor from the category of all the right $\mathcal{O}_{E_{a}^{\varphi^a}}\widehat{\otimes}\mathcal{O}_A$-local systems (projective) over $X$, $X_{\text{\'et}}$ and $X_\text{pro\'et}$ to the category of all the projective right Frobenius modules over the sheaves over $\widetilde{\Pi}_{R,A}^\mathrm{int}$ over the corresponding sites;\\
2. Let $X$	be a preadic space over $E_{\infty,a}$. Then we have that there is fully faithful embedding functor from the category of all the right $\mathcal{O}_{E_{\infty,a}^{\varphi^a}}\widehat{\otimes}\mathcal{O}_A$-local systems (projective) over $X$, $X_{\text{\'et}}$ and $X_\text{pro\'et}$ to the category of all the projective right Frobenius modules over the sheaves over $\widetilde{\Pi}_{R,\infty,A}^\mathrm{int}$ over the corresponding sites.
\end{proposition}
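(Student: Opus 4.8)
The plan is to reduce this statement to the global comparison results already established over the perfectoid space $X = \mathrm{Spa}(R,R^+)$, exactly as in the commutative analog. First I would recall that the preceding theorem (the noncommutative version of \cite[Theorem 4.5.7]{KL16}) already gives a fully faithful embedding of the category of projective right $\mathcal{O}_{E_a^{\varphi^a}}\widehat{\otimes}\mathcal{O}_A$-local systems over $X_\text{pro\'et}$ into the category of projective right Frobenius modules over $\widetilde{\Pi}^\mathrm{int}_{*,A}$. The passage from this affinoid-type statement to an arbitrary preadic space $X$ over $E_a$ is then purely a matter of gluing: a preadic space is built by glueing affinoid perfectoid pieces (after passing to a pro-\'etale cover), so one covers $X$ by such pieces, applies the previous theorem on each piece, and checks that the resulting Frobenius modules over the sheaves $\widetilde{\Pi}^\mathrm{int}_{*,A}$ glue along overlaps. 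This is the content of the argument in \cite[Corollary 4.5.8]{KL16}, and the noncommutative modifications are only bookkeeping about ``right'' modules since the deformation by $A$ is by (completed) tensor product with a fixed Banach ring and the Frobenius acts $A$-linearly.

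For part (1), I would proceed as follows. Choose a pro-\'etale cover of $X$ by affinoid perfectoid spaces $\{Y_i\}$, on each of which the preceding theorem applies verbatim to give the fully faithful functor $\mathbb{L}\mapsto \mathbb{L}\otimes_{\underline{\mathcal{O}_{E_a^{\varphi^a}}\widehat{\otimes}\mathcal{O}_A}}\widetilde{\Pi}^\mathrm{int}_{*,A}$; then use that the period sheaf $\widetilde{\Pi}^\mathrm{int}_{*,A}$ is genuinely a sheaf on the relevant site (which holds because the undeformed period sheaves admit Schauder/orthogonal bases and $A$ does too, as noted in the excerpt), so that descent of the locally-defined Frobenius modules is effective. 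Fully faithfulness is local on the base and hence inherited from the affinoid case, invoking the deformed exact sequences of \cref{proposition4.1}, base-changed by $A$, to identify $\mathrm{Hom}$ on the module side with $\mathrm{Hom}$ of local systems via Frobenius invariants. The key point is that taking Frobenius invariants commutes with the glueing, which follows from the exactness of the Artin–Schreier–type sequences on the sheaf level.

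For part (2), the same argument runs with $E_{\infty,a}$ in place of $E_a$ and $\widetilde{\Pi}^\mathrm{int}_{R,\infty,A}$ in place of $\widetilde{\Pi}^\mathrm{int}_{R,A}$; here one additionally uses that over $E_\infty$ each Witt-vector element has a unique expansion $\sum_{n\in\mathbb{Z}[1/p]_{\geq 0}}\pi^n[\overline{x}_n]$, so the deformed perfectoid period rings $\widetilde{\Pi}^{\mathrm{int},r}_{R,\infty}\widehat{\otimes}_{\mathcal{O}_E}\mathcal{O}_{E_\infty}$ are perfectoid, stably uniform and sheafy by the propositions recalled in Section 3, which is what makes the glueing and the Frobenius-invariants descent work on the infinite level. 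Concretely I would just ``repeat the argument in the proof of the previous theorem'', as the paper's own proof sketch indicates.

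The main obstacle, and the place requiring genuine care rather than formal bookkeeping, will be verifying effective descent for \emph{right} Frobenius modules over the deformed sheaves $\widetilde{\Pi}^\mathrm{int}_{*,A}$ when $A$ is noncommutative: one must ensure that the glueing data respect the right-module structure and that the orthogonal basis of $A$ supplies enough flatness/exactness for the \v{C}ech complexes of $\widetilde{\Pi}^\mathrm{int}_{*,A}$ to stay exact, so that pseudocoherence-style descent arguments (here in the simpler finite projective case) go through. Since we restrict to projective objects, the cohomological complications of the pseudocoherent case are avoided, and the descent reduces to the statement that finite projective right modules glue in the presence of a sheaf of rings with an orthogonal basis — which is standard once one checks the noncommutative completed tensor products behave well, and this is the only step where the noncommutativity of $A$ genuinely intervenes.
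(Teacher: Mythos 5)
Your proposal is correct and follows essentially the same route as the paper, whose proof is simply to apply the preceding (noncommutative) analogue of \cite[Theorem 4.5.7]{KL16} — i.e.\ the base-change functor $\mathbb{L}\mapsto \mathbb{L}\otimes\widetilde{\Pi}^\mathrm{int}_{*,A}$ with full faithfulness supplied by the $A$-deformed exact sequences of \cref{proposition4.1}, full faithfulness being local on the pro-\'etale site — and to repeat that argument over $E_{\infty,a}$ for part (2). The only remark is that your closing concern about effective descent of right Frobenius modules is not actually needed here: the functor is defined globally as base change along the sheaf of rings $\widetilde{\Pi}^\mathrm{int}_{*,A}$ on the site of $X$, so no glueing of objects in the target category is required, and only the sheaf property of Hom together with the local Frobenius-invariant computation enters.
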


\begin{proof}
For one, apply the previous theorem. For two, repeat the argument in the proof of the previous theorem.	
\end{proof}

\indent The following definition is kind of generalization of the corresponding one in \cite[Definition 4.5.9]{KL16}:

\begin{definition}
Now over the ring $\widetilde{\Pi}_{R,A}$ or $\widetilde{\Pi}^\mathrm{bd}_{R,A}$	we call the corresponding Frobenius modules globally \'etale if they arise from the Frobenius modules over the ring $\widetilde{\Pi}^\mathrm{int}_{R,A}$. Now over the ring $\widetilde{\Pi}_{R,\infty,A}$ or $\widetilde{\Pi}^\mathrm{bd}_{R,\infty,A}$	we call the corresponding Frobenius modules globally \'etale if they arise from the Frobenius modules over the ring $\widetilde{\Pi}^\mathrm{int}_{R,\infty,A}$. \\
Now over the sheaf $\widetilde{\Pi}_{*,A}$ or $\widetilde{\Pi}^\mathrm{bd}_{*,A}$	we call the corresponding Frobenius modules globally \'etale if they arise from the Frobenius modules over the sheaf $\widetilde{\Pi}^\mathrm{int}_{*,A}$. Now over the ring $\widetilde{\Pi}_{*,\infty,A}$ or $\widetilde{\Pi}^\mathrm{bd}_{*,\infty,A}$	we call the corresponding Frobenius modules globally \'etale if they arise from the Frobenius modules over the sheaf $\widetilde{\Pi}^\mathrm{int}_{*,\infty,A}$.

\end{definition}

\begin{proposition} \mbox{\bf{(After Kedlaya-Liu \cite[Theorem 4.5.11]{KL16})}}
1. Let $X$ be a preadic space over $E_{a}$. Then we have that there is a fully faithful embedding of the category of the corresponding right $E^{\varphi^a}_{a}\widehat{\otimes}A$-local systems (in the projective setting) into the category of the corresponding projective right Frobenius $\varphi^a$-modules over $\widetilde{\Pi}_{*,A}$;\\
2. Let $X$ be a preadic space over $E_{\infty,a}$. Then we have that there is a fully faithful embedding of the category of the corresponding right $E^{\varphi^a}_{\infty,a}\widehat{\otimes}A$-local systems (in the projective setting) into the category of the corresponding projective right  Frobenius $\varphi^a$-modules over $\widetilde{\Pi}_{*,A}$;\\
\end{proposition}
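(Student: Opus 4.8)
The plan is to reduce the noncommutative statement to the commutative content already proved in this paper, exactly as the proof sketch for the $v$-topology and the mixed characteristic cases does. The target is the pair of fully faithful embeddings: from the category of projective right $E^{\varphi^a}_{a}\widehat{\otimes}A$-local systems (resp. $E^{\varphi^a}_{\infty,a}\widehat{\otimes}A$-local systems) over a preadic space $X$ into the category of projective right Frobenius $\varphi^a$-modules over $\widetilde{\Pi}_{*,A}$. First I would recall, as in \cite[Theorem 4.5.11]{KL16}, that the functor in question is the base change along the inclusion of the constant sheaf into $\widetilde{\Pi}_{*,A}$, namely $\mathbb{L}\mapsto \mathbb{L}\otimes_{\underline{E^{\varphi^a}_{a}\widehat{\otimes}A}}\widetilde{\Pi}_{*,A}$, with all tensor products taken on the right over the noncommutative ring $A$ (and on the left over the relevant base); here the Frobenius acts trivially on $A$ as fixed in the preceding Setting.

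The key step is the noncommutative analog of \cref{proposition4.1}: one must produce, after base change, the exact sequence
\[
\xymatrix@R+0pc@C+0pc{
0\ar[r] &\underline{E_a}^{\varphi^a}\widehat{\otimes}A
\ar[r] &\widetilde{\Pi}_{*,A}
\ar[r] &\widetilde{\Pi}_{*,A} \ar[r] &0,
}
\]
with the third arrow $\varphi^a-1$, and similarly over $E_{\infty,a}$. This follows because the undeformed exact sequences of \cref{proposition4.1} admit orthogonal Schauder bases (as recorded in the Remark that the deformed sheaves are well defined since $A$ has an orthogonal basis), so completely tensoring on the right with $A$ preserves strict exactness. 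From such an exact sequence the functor is fully faithful: a projective right Frobenius module descends to a local system precisely when it is globally étale, and the Hom-groups between two local systems are computed by the $\varphi^a$-invariants, which are recovered from the degree-zero cohomology of the displayed complex. The passage to a general preadic space $X$ over $E_a$ (resp. $E_{\infty,a}$) is then the usual gluing/descent argument, using that the statement is local on $X_\text{pro\'et}$ and that the sheaves $\widetilde{\Pi}_{*,A}$ are defined on the étale, pro-étale and $v$-sites; this is verbatim \cite[Corollary 4.5.8]{KL16} combined with the previous Proposition in this subsection.

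The main obstacle is checking that all the module-theoretic manipulations genuinely survive the passage to right modules over a \emph{noncommutative} Banach affinoid algebra $A$: in particular that "finite projective right module" is stable under the base changes along $\widetilde{\Pi}^{\mathrm{int}}_{*,A}\to\widetilde{\Pi}_{*,A}$ and along the Frobenius transition maps, and that the five-lemma/snake-lemma arguments hidden in \cite[Theorem 4.5.11]{KL16} do not secretly use commutativity of the coefficient ring (they use only exactness of $\widehat{\otimes}_A$, which holds since $A$ has an orthogonal basis, and the flatness of the relevant ring maps on the perfect side, which is the content of the basic properties subsection). I would also note that, unlike the commutative case, we do not assert an equivalence, only a fully faithful embedding, so no essential-surjectivity analysis is required — which is exactly why the proof can be this short. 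With these checks in place the statement reduces, as indicated, to \cref{theorem4.4} and \cref{proposition4.1} applied on the right, together with the preadic gluing of \cite[Corollary 4.5.8, Theorem 4.5.11]{KL16}.
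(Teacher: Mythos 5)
Your proposal matches the paper's own argument: the proof given there is exactly to base change the exact sequences of \cref{proposition4.1} (deformed by completely tensoring with $A$, which is harmless since $A$ admits an orthogonal/Schauder basis) to obtain the corresponding exact sequence of sheaves, from which full faithfulness of $\mathbb{L}\mapsto \mathbb{L}\otimes\widetilde{\Pi}_{*,A}$ follows as in \cite[Theorem 4.5.11]{KL16}. Your additional checks on right modules over the noncommutative $A$ and the reduction via \cref{theorem4.4} are consistent elaborations of the same route, so there is nothing to correct.
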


\begin{proof}
As in \cite[Theorem 4.5.11]{KL16}, we consider the corresponding base change of the corresponding exact sequence in \cref{proposition4.1} which reflects an exact sequence on the sheaves.
\end{proof}

\begin{lemma} \mbox{\bf{(After Kedlaya-Liu \cite[6.2.2-6.2.4]{KL15}, \cite[Lemma 4.6.9]{KL16}})}\\ \mbox{\bf{(And also see \cite[Proposition 2.11]{T1})}}
For any Frobenius $\varphi^a$-bundle $M$ over $\widetilde{\Pi}_{R,A}$, then we have that for any interval $I=[s,r]$ where $0<s\leq r$ the map $\varphi^a-1: M_{[s,rq]}\rightarrow M_{[s,r]}$ is surjective after taking some Frobenius twist as in \cite[6.2.2]{KL15} and our previous work namely the new morphism $\varphi^a-1: M(n)_{[s,rq]}\rightarrow M(n)_{[s,r]}$ for sufficiently large $n\geq 1$. As in the previous established version one may have the chance to take the number to be $1$ is the bundle initially comes from the corresponding base change from the integral Robba ring. 
\end{lemma}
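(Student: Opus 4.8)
The statement is the noncommutative analogue of \cite[Proposition 2.11]{T1}, so the plan is to reproduce that argument with the coefficient ring $A$ now only a noncommutative Banach affinoid algebra, tracking carefully that every step uses only right-module structure. First I would reduce to a single closed interval $I=[s,r]$ and pick a finite set of generators $v_1,\dots,v_m$ of the finite projective right $\widetilde{\Pi}^I_{R,A}$-module $M_{[s,r]}$, together with the projective-bundle glueing data relating $M_{[s,rq]}$ and $M_{[s,r]}$ via the Frobenius morphism $\widetilde{\Pi}^{[s,r]}_{R,A}\to\widetilde{\Pi}^{[sp^{-ah},rp^{-ah}]}_{R,A}$ recorded in the relevant definition. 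The key computational input is the decomposition estimate of \cref{proposition5.7}-type results (our analogues of \cite[Lemma 5.2.8]{KL15}), which over $\mathbb{Q}_p\{Z_1,\dots,Z_d\}$ and hence, by strictness of the quotient map, over $A$, lets one split an element of a $[r_1,r_2]$-ring into an integral part landing in $\pi^n\widetilde{\Pi}^{\mathrm{int},r_2}$ and a ``residual'' part whose norm decays geometrically in $n$.

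The main body of the argument is the usual approximation loop: given $w\in M(0)_{[s,r]}$ one wants to solve $(\varphi^a-1)(u)=w$ in $M(0)_{[s,rq]}$ up to a Frobenius twist. One writes $w$ as a convergent sum of pieces, each supported on a single monomial $\pi^k[\overline{x}_k]Z^{\underline{i}}$ tensored against a generator $v_j$, and for each piece one uses the geometric series $\sum_{\ell\ge 0}\varphi^{a\ell}$ — which converges after twisting by $\varphi^{a}f=p^df$ for a suitable eigenfunction $f$, exactly as in the perfect case — to produce a partial solution whose norm is controlled by the log-convexity/continuity result (our noncommutative analogue of \cite[Lemma 5.2.1]{KL15}) and the decomposition estimate above. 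Summing over all pieces and all monomials and invoking completeness of $M$ for the natural (Fréchet / Banach) topology gives an honest solution in $M(n)_{[s,rq]}$ for $n$ large; the special case $n=1$ when $M$ comes by base change from the integral Robba ring follows because then the residual parts already have norm $\le 1$ and the geometric series converges on the nose without any twist. Finally I would note that none of these manipulations ever multiplies module elements by ring elements on the left, so everything is legitimate for right modules over the noncommutative $A$; the only place one must be slightly careful is that ``finite projective'' must be read as finite projective as a right module, and the tensor products $M\otimes_{\widetilde{\Pi}^{[s,r]}_{R,A}}\widetilde{\Pi}^{[s,rp^{-ah}]}_{R,A}$ in the definition of a Frobenius bundle are right-module base changes.

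The hard part will be verifying that the norm estimates from the commutative subsection genuinely transfer: in the commutative case one freely uses the spectral seminorm on $A$ and the Schauder/orthogonal basis of $A$ to reduce to coefficient-wise estimates, and one must check (as the paper does for sheafiness) that a noncommutative Banach affinoid $A$ still admits an orthogonal (Schauder) basis over $E$ so that the Gauss norms $\|.\|_{\alpha^t,A}$ behave as in the commutative setting and the quotient-norm strictness argument of \cref{proposition5.7} applies. Granting that — which the paper has implicitly been assuming throughout the noncommutative section, e.g.\ in the remark that the sheafification ``is well defined, since we have orthogonal basis for the ring $A$'' — the proof is word-for-word the perfect-coefficient case, and I would simply write ``see the proof of \cite[Proposition 2.11]{T1}, noting that all module structures are right module structures and that $A$ admits an orthogonal basis over $E$ so that the norm estimates of the previous subsection apply verbatim.''
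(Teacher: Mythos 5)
Your proposal takes essentially the same route as the paper: the paper's entire proof is the citation ``see the proof of [T1, Proposition 2.11],'' and you reconstruct that twisting/approximation argument and then conclude with exactly such a citation, adding the (correct) caveats that all module structures are right-module structures over the noncommutative $A$ and that the norm estimates transfer via the orthogonal basis of $A$. No gap beyond what the paper itself leaves implicit.
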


\begin{proof}
See the proof of \cite[Proposition 2.11]{T1}.

\end{proof}

%

\begin{lemma} \mbox{\bf{(After Kedlaya-Liu \cite[6.2.2-6.2.4]{KL15}, \cite[Lemma 4.6.9]{KL16}})}\\ \mbox{\bf{(And also see \cite[Proposition 2.11, Proposition 2.14]{T1})}}
For any Frobenius $\varphi^a$-module $M$ over $\widetilde{\Pi}_{R,A}$, we have that for suifficiently large number $n\geq 1$ the space $H^1_{\varphi^a}(M(n))$ vanishes.	
\end{lemma}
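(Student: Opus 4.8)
The plan is to reduce the vanishing of $H^1_{\varphi^a}(M(n))$ for large $n$ to the corresponding statement over the undeformed period ring $\widetilde{\Pi}_R$, exactly as in the proof of \cite[Proposition 2.14]{T1}, and then to handle the $A$-direction by exploiting the fact that $A$ admits an orthogonal (Schauder) basis so that $\widetilde{\Pi}_{R,A}$ is a completed tensor product $\widetilde{\Pi}_R \widehat{\otimes}_E A$ compatibly with the Frobenius (which acts $A$-linearly). First I would recall that a Frobenius $\varphi^a$-module $M$ over $\widetilde{\Pi}_{R,A}$ comes, by the definitions in the excerpt, from a Frobenius module $M'$ over $\widetilde{\Pi}^r_{R,A}$ for some $r>0$, so the $\varphi^a$-cohomology is computed by the complex $[M' \xrightarrow{\varphi^a-1} M'\otimes \square]$ on the appropriate radii; combining this with the previous lemma of the excerpt (surjectivity of $\varphi^a-1 : M(n)_{[s,rq]} \to M(n)_{[s,r]}$ after a Frobenius twist) reduces the question to controlling the kernel/cokernel of $\varphi^a - 1$ on the relevant interval modules $M(n)_{[s,r]}$.

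The key steps, in order: (1) Replace $M$ by a Frobenius twist $M(n)$ and use the preceding surjectivity lemma to arrange that $\varphi^a - 1$ is surjective on the interval-level modules for sufficiently large $n$; this already kills the obstruction to lifting $1$-cocycles from the bounded/interval level. (2) For the remaining degree-one classes, imitate the slope-theoretic argument of \cite[Proposition 2.14]{T1}: after twisting, the bundle $M(n)$ has all slopes positive, so one can solve the equation $(\varphi^a - 1)v = w$ by a convergent iteration $v = -\sum_{k\geq 0}\varphi^{ak}(w)$, where convergence is guaranteed precisely because the Frobenius contracts the relevant Gauss norms $\|.\|_{\alpha^t,A}$ by a factor bounded away from $1$ once the slope is large enough. (3) Check that this iteration makes sense $A$-linearly: since $\varphi^a$ is $A$-linear and the norms on $\widetilde{\Pi}^I_{R,A}$ are the quotient Gauss norms built from $\alpha$ (not from the spectral norm of $A$), the contraction estimate is uniform in the $A$-direction, so the series converges in $\widetilde{\Pi}^I_{R,A}$-coefficients just as over $\widetilde{\Pi}^I_R$. (4) Finally glue the interval-level solutions over a covering of $(0,r]$ by closed intervals, using the descent/intersection identities for the deformed Robba rings established earlier in the excerpt (the analogues of \cite[Lemma 5.2.6, 5.2.10]{KL15}) to conclude that the class vanishes over $\widetilde{\Pi}_{R,A}$ itself.

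The main obstacle I expect is Step (2)–(3): making the ``sufficiently large $n$'' explicit and, more importantly, uniform across the entire deformation, i.e. ensuring the slope bound that forces the geometric series to converge does not depend on the point of $\mathrm{Spec}A$. Over a field base this is the standard Dieudonné–Manin / Kedlaya slope argument, but here $M'$ is only a finitely generated (not necessarily projective) module over a somewhat large ring, so one must first pass to a finite projective cover as in \cref{prop3.18} (the analogue of \cite[Lemma 1.5.2]{KL15}), prove the vanishing there, and then descend along the presentation using the exactness results of \cref{coro4.16}. The bookkeeping of how large $n$ must be — tracking it through the projective cover and through the completed tensor product with $A$ — is the part that needs genuine care; everything else is a routine transcription of the arguments of \cite[Proposition 2.11, Proposition 2.14]{T1} into the present deformed (and here noncommutative, with all modules taken as right modules) setting.
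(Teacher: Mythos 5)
Your overall strategy is the one the paper itself intends: the paper gives no independent argument for this lemma, simply deferring to the proofs of \cite[Proposition 2.11, Proposition 2.14]{T1}, and your plan — twist, use the surjectivity of $\varphi^a-1$ at the interval level, solve $(\varphi^a-1)v=w$ by the contraction/geometric-series iteration, note that the estimates are uniform in the $A$-direction because $\varphi^a$ is $A$-linear and the norms are the quotient Gauss norms, then glue over intervals — is exactly that transcription, so in substance you and the paper take the same route.

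Two cautions, however. First, your descent step "descend along the presentation using the exactness results of \cref{coro4.16}" would be circular in the paper's logical order: \cref{coro4.16} (and its noncommutative counterpart \cref{2coro4.16}) is itself deduced from the surjectivity lemmas together with the present $H^1$-vanishing statement. What you actually need there is only the elementary long exact sequence in $\varphi^a$-cohomology attached to a short exact sequence $0\to K\to P\to M\to 0$ of Frobenius modules, which requires checking that the sequence stays exact after the base change appearing in the target of the two-term complex (a Tor-vanishing/flatness point), not the full strength of \cref{coro4.16}. Second, note where this lemma sits: it is stated in the noncommutative section, where the paper's Frobenius $\varphi^a$-modules over $\widetilde{\Pi}_{R,A}$ are by definition finite projective right modules (no pseudocoherent or merely finitely generated objects are defined there, and \cref{prop3.18} has no stated noncommutative analogue), so the reduction to a finite projective cover is not needed — and not available — in the setting at hand; it is only relevant for the commutative variant of the lemma. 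Finally, the phrase "all slopes positive" should be read purely as the norm-contraction estimate on the twisted Frobenius: no slope theory is available over the deformed (let alone noncommutative) base, and the convergence of $v=-\sum_{k\geq 0}\varphi^{ak}(w)$ must be extracted directly from the Gauss-norm estimates, as you indicate in your step (3).
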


\begin{proof}
See the proof of \cite[Proposition 2.11, Proposition 2.14]{T1}.	
\end{proof}

\indent Then we have the following key corollary which is analog of \cite[Corollary 6.2.3, Lemma 6.3.3]{KL15}, \cite[Corollary 4.6.10]{KL16} and \cite[Corollary 2.13, Corollary 3.4, Proposition 3.9]{T1}:

\begin{corollary} \label{2coro4.16}
I. When $M_\alpha,M,M_\beta$ are three Frobenius $\varphi^a$-bundles over the ring $\widetilde{\Pi}_{R,A}$ then we have that for sufficiently large $n\geq 0$ we have the following exact sequence:
\[
\xymatrix@R+0pc@C+0pc{
0\ar[r]\ar[r]\ar[r] &M_{\alpha,I}(n)^{\varphi^a}
\ar[r]\ar[r]\ar[r] &M_I(n)^{\varphi^a}
\ar[r]\ar[r]\ar[r] &M_{\beta,I}(n)^{\varphi^a} \ar[r]\ar[r]\ar[r] &0,
}
\]	
for each interval $I$.	\\
II. When $M_\alpha,M,M_\beta$ are three Frobenius $\varphi^a$-modules over the ring $\widetilde{\Pi}_{R,A}$ then we have that for sufficiently large $n\geq 0$ we have the following exact sequence:
\[
\xymatrix@R+0pc@C+0pc{
0\ar[r]\ar[r]\ar[r] &M_{\alpha,I}(n)^{\varphi^a}
\ar[r]\ar[r]\ar[r] &M_I(n)^{\varphi^a}
\ar[r]\ar[r]\ar[r] &M_{\beta,I}(n)^{\varphi^a} \ar[r]\ar[r]\ar[r] &0,
}
\]	
for each interval $I$.\\
III. For a Frobenius $\varphi^a$-bundle $M$ over $\widetilde{\Pi}_{R,A}$, and for each module $M_I$ over some $\widetilde{\Pi}_{R,A}^{I}$, we put for any element $f$ such that $\varphi^af=p^df$:
\begin{displaymath}
M_{I,f}:=\bigcup_{n\in \mathbb{Z}}f^{-n}M_I(dn)^{\varphi^a}.	
\end{displaymath}
Then with this convention suppose we have three Frobenius $\varphi^a$-bundles taking the form of $M_\alpha,M,M_\beta$ over
$\widetilde{\Pi}_{R,A}$, then for each closed interval $I$ we have the following is an exact sequence:
\[
\xymatrix@R+0pc@C+0pc{
0\ar[r]\ar[r]\ar[r] &M_{\alpha,I,f}\ar[r]\ar[r]\ar[r] &M_{I,f}
\ar[r]\ar[r]\ar[r] &M_{\beta,I,f} \ar[r]\ar[r]\ar[r] &0,
}
\]
where each module in the exact sequence is now a module over $\widetilde{\Pi}_{R,A}[1/f]^{\varphi^a}$;\\
IV. For a Frobenius $\varphi^a$-module $M$ over $\widetilde{\Pi}_{R,A}$, we put for any element $f$ such that $\varphi^af=p^df$:
\begin{displaymath}
M_{f}:=\bigcup_{n\in \mathbb{Z}}f^{-n}M(dn)^{\varphi^a}.	
\end{displaymath}
Then with this convention suppose we have three Frobenius $\varphi^a$-modules taking the form of $M_\alpha,M,M_\beta$ over
$\widetilde{\Pi}_{R,A}$, then we have the following is an exact sequence:
\[
\xymatrix@R+0pc@C+0pc{
0\ar[r]\ar[r]\ar[r] &M_{\alpha,f}\ar[r]\ar[r]\ar[r] &M_{f}
\ar[r]\ar[r]\ar[r] &M_{\beta,f} \ar[r]\ar[r]\ar[r] &0,
}
\]
where each module in the exact sequence is now a module over $\widetilde{\Pi}_{R,A}[1/f]^{\varphi^a}$.\\

\end{corollary}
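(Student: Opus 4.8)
The statement to prove is \cref{2coro4.16}, which is the noncommutative analog of \cref{coro4.16}. The plan is to transplant the proof of \cref{coro4.16} to the noncommutative setting, working throughout with right modules over the noncommutative rings $\widetilde{\Pi}^I_{R,A}$ and keeping track of the Frobenius-twisted twist operation $M\mapsto M(n)$.

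First I would establish part I. Given three Frobenius $\varphi^a$-bundles $M_\alpha, M, M_\beta$ over $\widetilde{\Pi}_{R,A}$ sitting in a short exact sequence, one fixes a closed interval $I=[s,r]$ and looks at the sequence of $\varphi^a$-invariants $0\to M_{\alpha,I}(n)^{\varphi^a}\to M_I(n)^{\varphi^a}\to M_{\beta,I}(n)^{\varphi^a}\to 0$. Left exactness is automatic. For right exactness, the key input is the noncommutative version of the surjectivity of $\varphi^a-1$ on a sufficiently large Frobenius twist, which is exactly the lemma stated just above (the noncommutative analog of \cite[Proposition 2.11]{T1}) together with the vanishing $H^1_{\varphi^a}(M(n))=0$ for $n$ large. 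One runs the snake lemma on the two-row diagram whose rows are $0\to M_{?,I}(n)\xrightarrow{\varphi^a-1}M_{?,I}(n)\to 0$; the connecting map lands in $H^1_{\varphi^a}(M_{\alpha,I}(n))$, which vanishes for $n$ large by the cited lemma applied to $M_\alpha$. Since all three twist operations are compatible and there are only three modules, one picks $n$ large enough to work for all of them simultaneously. Part II is verbatim the same argument with ``bundle'' replaced by ``module'', using the module version of the same lemmas; indeed the proof of \cref{coro4.16} already treats both cases uniformly, so nothing new is needed in the noncommutative setting beyond having the noncommutative lemmas, which are the two lemmas immediately preceding this corollary.

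For parts III and IV, I would form $M_{I,f}=\bigcup_{n\in\mathbb{Z}}f^{-n}M_I(dn)^{\varphi^a}$ (respectively $M_f$) for $f$ with $\varphi^a f=p^d f$. The point is that this is a filtered union over $n$ of the invariant spaces appearing in parts I and II, localized at $f$. Since filtered colimits are exact and localization at a central-enough element $f$ is exact on right modules, the short exact sequences from parts I/II pass to the colimit, yielding $0\to M_{\alpha,I,f}\to M_{I,f}\to M_{\beta,I,f}\to 0$. One must check that each term is genuinely a right module over $\widetilde{\Pi}_{R,A}[1/f]^{\varphi^a}$, which follows because the twist operation and the $\varphi^a$-invariance interact with the $\widetilde{\Pi}_{R,A}^{\varphi^a}$-action just as in the commutative case, $f$ being a $\varphi^a$-eigenvector so that multiplication by $f^{-n}$ is well-defined on the union. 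All of this is parallel to \cite[Corollary 6.2.3, Lemma 6.3.3]{KL15}, \cite[Corollary 4.6.10]{KL16} and \cite[Corollary 2.13, Corollary 3.4, Proposition 3.9]{T1}, so the reference-style proof used for \cref{coro4.16} suffices once one notes the arguments are insensitive to commutativity.

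The main obstacle I anticipate is the interaction between noncommutativity and localization at $f$ in parts III and IV: in the commutative setting one freely localizes $\widetilde{\Pi}_{R,A}[1/f]$, but in the noncommutative setting one needs $f$ to generate an Ore set (or to be central, or at least normal) in $\widetilde{\Pi}_{R,A}^{\varphi^a}$ for $\widetilde{\Pi}_{R,A}[1/f]^{\varphi^a}$ and the associated localizations to be well-behaved and flat. Here one uses that $f$ comes from the graded ring $P_{R,A}$ of $\pi^d$-eigenfunctions, where the relevant eigenfunctions behave like central/normal elements (the noncommutativity is confined to the coefficient algebra $A$ acting $A$-linearly, and $f$ is built from the perfect period ring part on which Frobenius acts); so the Ore condition is inherited from the structure. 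I would verify this once and then the exactness of localization — hence the passage of the exact sequences from parts I/II to parts III/IV — goes through formally. The twisting and invariants machinery itself is purely formal and transfers without change.
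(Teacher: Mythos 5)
Your proposal is correct and follows essentially the same route as the paper, which simply defers to \cite[Corollary 6.2.3, Lemma 6.3.3]{KL15}, \cite[Corollary 4.6.10]{KL16} and \cite[Corollary 2.13, Corollary 3.4, Proposition 3.9]{T1}: left exactness of invariants plus vanishing of $H^1_{\varphi^a}$ after a large twist (the two lemmas preceding the corollary) gives I and II, and passing to the filtered union $\bigcup_n f^{-n}M_I(dn)^{\varphi^a}$ gives III and IV. Your worry about Ore localization is resolved exactly as you suspect: the noncommutativity lives only in $A$, on which $\varphi^a$ acts trivially, while $f$ comes from the period-ring factor of the completed tensor product, so it is central and the localization argument is the same as in the commutative case.
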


\begin{proof}
See the proof of \cite[Corollary 6.2.3, Lemma 6.3.3]{KL15}, \cite[Corollary 4.6.10]{KL16} and \cite[Corollary 2.13, Corollary 3.4, Proposition 3.9]{T1}.	
\end{proof}

\subsection{Noncommutative Imperfect Setting}

\noindent We now consider the corresponding imperfectization of the corresponding constructions we considered above, after \cite{KL16}. We will consider the corresponding towers in \cite[Chapter 5]{KL16}, so we keep the assumption on the towers as in the commutative setting, namely:

\begin{assumption}
In this section, we are going to assume that $k$ is just $\mathbb{F}_{p^h}$.
\end{assumption}


\indent Now we describe the corresponding imperfect period rings which we will deform. These rings are those introduced in \cite[Definition 5.2.1]{KL16} by using series of imperfection processes. Recall in more detail in our noncommutative setting we have:

\begin{setting} \label{2setting6.3}
Fix a perfectoid tower $(H_\bullet,H^+_\bullet)$ Recall from \cite[Definition 5.2.1]{KL16} we have the following different imperfect constructions:\\
A. First we have the ring $\overline{H'}_\infty$, which could give us the corresponding ring $\widetilde{\Omega}^{\mathrm{int}}_{\overline{H'}_\infty}$. Taking the corresponding product with $A$ we have the corresponding deformed period ring;\\
B. We then have the ring $\widetilde{\Pi}^{\mathrm{int},r}_{\overline{H'}_\infty}$ coming from $\overline{H'}_\infty$. Taking the corresponding product with $A$ we have the corresponding deformed period ring;\\
C. We then have the ring $\widetilde{\Pi}^{\mathrm{int}}_{\overline{H'}_\infty}$ coming from $\overline{H'}_\infty$. Taking the corresponding product with $A$ we have the corresponding deformed period ring;\\
D. We then have the ring $\widetilde{\Pi}^{\mathrm{bd},r}_{\overline{H'}_\infty}$ coming from $\overline{H'}_\infty$. Taking the corresponding product with $A$ we have the corresponding deformed period ring;\\
E. We then have the ring $\widetilde{\Pi}^{\mathrm{bd}}_{\overline{H'}_\infty}$ coming from $\overline{H'}_\infty$. Taking the corresponding product with $A$ we have the corresponding deformed period ring;\\
F. We then have the ring $\widetilde{\Pi}^{r}_{\overline{H'}_\infty}$ coming from $\overline{H'}_\infty$. Taking the corresponding product with $A$ we have the corresponding deformed period ring;\\
G. We then have the ring $\widetilde{\Pi}^{[s,r]}_{\overline{H'}_\infty}$ coming from $\overline{H'}_\infty$. Taking the corresponding product with $A$ we have the corresponding deformed period ring;\\
H. We then have the ring $\widetilde{\Pi}_{\overline{H'}_\infty}$ coming from $\overline{H'}_\infty$. Taking the corresponding product with $A$ we have the corresponding deformed period ring;\\
I. $\Pi^{\mathrm{int},r}_{H}$ comes from the ring $\widetilde{\Pi}^{\mathrm{int},r}_{\overline{H'}_\infty}$ consisting of those elements of $\widetilde{\Pi}^{\mathrm{int},r}_{\overline{H'}_\infty}$ with the requirement that whenever we have $n$ an integer such that $nh>-\log_pr$ we have then $\theta(\varphi^{-n}(x))\in H_n$. Taking the corresponding product with $A$ we have the corresponding deformed period ring;\\
J. $\Pi^{\mathrm{int},\dagger}_{H}$ is defined to be the corresponding union of the rings in $I$. Taking the corresponding product with $A$ we have the corresponding deformed period ring;\\
K. $\Omega^\mathrm{int}_H$ is defined to be the corresponding period ring coming from the corresponding $\pi$-adic completion of the ring $\Pi^{\mathrm{int},\dagger}_{H}$ in $J$. Taking the corresponding product with $A$ we have the corresponding deformed period ring;\\
L. $\breve{\Omega}^{\mathrm{int}}_{H}$ is the ring which is defined to be the union of all the $\varphi^{-n}\Omega^\mathrm{int}_H$. Taking the corresponding product with $A$ we have the corresponding deformed period ring;\\
M. $\breve{\Pi}^{\mathrm{int},r}_H$ is then the ring which is defined to be the union of all the $\varphi^{-n}\Pi^{\mathrm{int},p^{hn}r}_H$. Taking the corresponding product with $A$ we have the corresponding deformed period ring;\\
N. $\breve{\Pi}^{\mathrm{int},\dagger}_H$ is define to be union of all the $\varphi^{-n}\breve{\Pi}^{\mathrm{int},\dagger}_H$. Taking the corresponding product with $A$ we have the corresponding deformed period ring;\\
O. $\widehat{\Omega}^{\mathrm{int}}_{H}$ is defined to be the $\pi$-completion of $\breve{\Omega}^{\mathrm{int}}_{H}$. Taking the corresponding product with $A$ we have the corresponding deformed period ring;\\
P. $\widehat{\Pi}^{\mathrm{int},r}_H$ is defined to be the $\mathrm{max}\{\|.\|_{\overline{\alpha}_\infty^r},\|.\|_{\pi-\text{adic}}\}$-completion of $\breve{\Pi}^{\mathrm{int},r}_H$. Taking the corresponding product with $A$ we have the corresponding deformed period ring;\\
Q. We then have $\widehat{\Pi}^{\mathrm{int},\dagger}_H$ by taking the union over $r>0$. Taking the corresponding product with $A$ we have the corresponding deformed period ring;\\
R. Correspondingly we have $\breve{\Omega}_{H}$, $\breve{\Pi}^{\mathrm{bd},r}_H$, $\breve{\Pi}^{\mathrm{bd},\dagger}_H$ by inverting the element $\pi$. Taking the corresponding product with $A$ we have the corresponding deformed period ring;\\
S. Correspondingly we also have $\widehat{\Omega}_{H}$, $\widehat{\Pi}^{\mathrm{bd},r}_H$, $\widehat{\Pi}^{\mathrm{bd},\dagger}_H$ by inverting the corresponding element $\pi$. Taking the corresponding product with $A$ we have the corresponding deformed period ring;\\
T. We also have $\Omega_{H}$, $\Pi^{\mathrm{bd},r}_H$, $\Pi^{\mathrm{bd},\dagger}_H$ again by inverting the element $\pi$. Taking the corresponding product with $A$ we have the corresponding deformed period ring;\\
U. Taking the $\max\{\|.\|_{\overline{\alpha}_\infty^s},\|.\|_{\overline{\alpha}_\infty^r}\}$ (for $0<s\leq r$) completion of the ring $\Pi^{\mathrm{bd},r}_H$ we have the ring $\Pi^{[s,r]}_{H}$, while taking the Fr\'echet completion with respect to the norm $\|.\|_{\overline{\alpha}^t_\infty}$ for each $0<t\leq r$ we have the ring $\Pi^r_{H}$. Taking the corresponding product with $A$ we have the corresponding deformed period ring;\\
V. Taking the union we have the ring $\Pi_H$. Taking the corresponding product with $A$ we have the corresponding deformed period ring;\\
W. We use the notation $\breve{\Pi}_H$ to denote the corresponding union of all the $\varphi^{-n}\Pi_H$. Taking the corresponding product with $A$ we have the corresponding deformed period ring;\\
X. We use the notation $\breve{\Pi}^{[s,r]}_{H}$ to be the corresponding union of all the $\varphi^{-n}\Pi^{[p^{-hn}s,p^{-hn}r]}_H$. Taking the corresponding product with $A$ we have the corresponding deformed period ring;\\
Y. We use the notation $\breve{\Pi}_H^r$ to be the union of all the $\varphi^{-n}\breve{\Pi}_H^{p^{-hn}r}$. Taking the corresponding product with $A$ we have the corresponding deformed period ring.\\
\end{setting}

\indent Then we have the following direct analog of the relative version of the ring defined above (here as before the ring $A$ denotes a Banach affinoid algebra in the noncommutative setting):

\begin{setting} \label{2setting6.4}
Now we consider the deformation of the rings above:\\
I. We have the first group of the period rings in the deformed setting:
\begin{align}
\Pi^{\mathrm{int},r}_{H,A},\Pi^{\mathrm{int},\dagger}_{H,A},\Omega^\mathrm{int}_{H,A}, \Omega_{H,A}, \Pi^{\mathrm{bd},r}_{H,A}, \Pi^{\mathrm{bd},\dagger}_{H,A},\Pi^{[s,r]}_{H,A}, \Pi^r_{H,A}, \Pi_{H,A}.
\end{align}
II. We also have the second group of desired rings in the desired setting:
\begin{align}
\breve{\Pi}^{\mathrm{int},r}_{H,A},\breve{\Pi}^{\mathrm{int},\dagger}_{H,A},\breve{\Omega}^\mathrm{int}_{H,A}, \breve{\Omega}_{H,A}, \breve{\Pi}^{\mathrm{bd},r}_{H,A}, \breve{\Pi}^{\mathrm{bd},\dagger}_{H,A},\breve{\Pi}^{[s,r]}_{H,A}, \breve{\Pi}^r_{H,A}, \breve{\Pi}_{H,A}.	
\end{align}
III. We also have the third group:
\begin{align}
\widehat{\Pi}^{\mathrm{int},r}_{H,A},\widehat{\Pi}^{\mathrm{int},\dagger}_{H,A},\widehat{\Omega}^\mathrm{int}_{H,A}, \widehat{\Omega}_{H,A}, \widehat{\Pi}^{\mathrm{bd},r}_{H,A}, \widehat{\Pi}^{\mathrm{bd},\dagger}_{H,A}.	
\end{align}
\end{setting}

\indent Now we discuss some properties of the corresponding deformed version of the imperfect rings in our context, which is parallel to the corresponding discussion we made in the perfect setting and generalizing the corresponding discussion in \cite{KL16}, again in our noncommutative setting:

\begin{proposition}\mbox{\bf{(After Kedlaya-Liu \cite[Lemma 5.2.10]{KL16})}}
For any $0< r_1\leq r_2$ we have the following equality on the corresponding period rings:
\begin{displaymath}
\Pi^{\mathrm{int},r_1}_{H,\mathbb{Q}_p\{Z_1,...,Z_d\}}\bigcap	\Pi^{[r_1,r_2]}_{H,\mathbb{Q}_p\{Z_1,...,Z_d\}}=\Pi^{\mathrm{int},r_2}_{H,\mathbb{Q}_p\{Z_1,...,Z_d\}}.
\end{displaymath}	
\end{proposition}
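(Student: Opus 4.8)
The plan is to adapt verbatim the argument already used for the perfect period rings in the proof of the proposition ``After Kedlaya-Liu \cite[Lemma 5.2.6]{KL15}'' (the one establishing $\widetilde{\Pi}^{\mathrm{int},r_1}_{R,\mathbb{Q}_p\{T_1,\dots,T_d\}}\cap\widetilde{\Pi}^{[r_1,r_2]}_{R,\mathbb{Q}_p\{T_1,\dots,T_d\}}=\widetilde{\Pi}^{\mathrm{int},r_2}_{R,\mathbb{Q}_p\{T_1,\dots,T_d\}}$), with $[\overline{x}]$ replaced by the honest (imperfect) coefficient $x$ and $\alpha$ replaced by the limiting spectral norm $\overline{\alpha}_\infty$ of the decompleted tower. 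One direction is trivial: an element of $\Pi^{\mathrm{int},r_2}_{H,\mathbb{Q}_p\{Z_1,\dots,Z_d\}}$ lies in $\Pi^{\mathrm{int},r_1}_{H,\mathbb{Q}_p\{Z_1,\dots,Z_d\}}$ since the integral condition is stable under shrinking the radius, and it lies in $\Pi^{[r_1,r_2]}_{H,\mathbb{Q}_p\{Z_1,\dots,Z_d\}}$ by restriction. For the reverse inclusion, I would take $x$ in the left-hand intersection and pick an approximating sequence $\{x_i\}$ in the bounded (imperfect) Robba ring with $\left\|.\right\|_{\overline{\alpha}_\infty^t,\mathbb{Q}_p\{Z_1,\dots,Z_d\}}(x_i-x)\le p^{-j}$ for all $t\in[r_1,r_2]$ once $i\ge N_j$.

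The key computation is the same term-by-term estimate as in the perfect case. Write $x_i=\sum_{k=n(x_i),i_1,\dots,i_d}\pi^k x_{i,k,i_1,\dots,i_d}Z_1^{i_1}\cdots Z_d^{i_d}$ and split off the integral part $y_i=\sum_{k\ge 0,i_1,\dots,i_d}\pi^k x_{i,k,i_1,\dots,i_d}Z_1^{i_1}\cdots Z_d^{i_d}$. Since $x\in\Pi^{\mathrm{int},r_1}_{H,\mathbb{Q}_p\{Z_1,\dots,Z_d\}}$, the negative-$k$ terms of $x_i$ are forced to have $\left\|.\right\|_{\overline{\alpha}_\infty^{r_1},\mathbb{Q}_p\{Z_1,\dots,Z_d\}}(\pi^k x_{i,k,i_1,\dots,i_d}Z_1^{i_1}\cdots Z_d^{i_d})\le p^{-j}$, hence $\overline{\alpha}_\infty(x_{i,k,i_1,\dots,i_d})\le p^{(k-j)/r_1}$ for $k<0$, which then gives $\left\|.\right\|_{\overline{\alpha}_\infty^{r_2},\mathbb{Q}_p\{Z_1,\dots,Z_d\}}(\pi^k x_{i,k,i_1,\dots,i_d}Z_1^{i_1}\cdots Z_d^{i_d})\le p^{-k}p^{(k-j)r_2/r_1}\le p^{1+(1-j)r_1/r_1}$ using $k<0$ and $r_2/r_1\ge 1$. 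This means $x_i-y_i\to 0$ in $\left\|.\right\|_{\overline{\alpha}_\infty^{r_2},\mathbb{Q}_p\{Z_1,\dots,Z_d\}}$, so the modified sequence $\{y_i\}$, which lives in the integral Robba ring, still converges to $x$; therefore $x\in\Pi^{\mathrm{int},r_2}_{H,\mathbb{Q}_p\{Z_1,\dots,Z_d\}}$.

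The one genuinely nontrivial point — and the only place this differs from the perfect analogue — is that in the imperfect setting the coefficients $x_{i,k,i_1,\dots,i_d}$ do not a priori give a convergent Teichmüller-style expansion, so one must make sure the decomposition $x_i=y_i+z_i$ actually defines elements of the imperfect bounded Robba rings. This is handled exactly as in \cite[Definition 5.2.1]{KL16} and the preceding propositions of this section: the imperfect ring is carved out of the perfect one by the condition $\theta(\varphi^{-n}(x))\in H_n$, and that condition is visibly compatible with truncating the $\pi$-adic expansion to the nonnegative part, so $y_i$ and $z_i$ inherit membership in $\Pi^{\mathrm{int},r}_H$, respectively $\Pi^{[r_1,r_2]}_H$, after taking the product with $\mathbb{Q}_p\{Z_1,\dots,Z_d\}$. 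I expect the main obstacle to be purely notational bookkeeping: keeping the noncommutative variables $Z_1,\dots,Z_d$ in the right (fixed) monomial order throughout the estimate, since the Gauss norm is defined on monomials $Z_1^{i_1}\cdots Z_d^{i_d}$ and the term-by-term bounds never interact with the multiplication, so noncommutativity of $A$ plays no role in this particular statement and the proof goes through unchanged. The subsequent affinoid version over a general noncommutative Banach $A$ is then deduced, as elsewhere in the paper, from the strictness of $\mathbb{Q}_p\{Z_1,\dots,Z_d\}\twoheadrightarrow A$ via \cref{proposition5.7}.
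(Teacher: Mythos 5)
Your argument is correct and is essentially the paper's own proof: the same approximation by bounded elements $x_i$, the same splitting $x_i=y_i+z_i$ into nonnegative and negative $\pi$-power parts, the same interpolation of the $r_1$-norm bound (forced by $x$ being integral at radius $r_1$) to an $r_2$-norm bound on the discarded terms, and the same conclusion that the modified sequence $\{y_i\}$ converges to $x$ in $\Pi^{\mathrm{int},r_2}_{H,\mathbb{Q}_p\{Z_1,\dots,Z_d\}}$; your added remarks on the imperfect coefficients and on the irrelevance of noncommutativity, and the reduction of the general $A$ case to strictness via \cref{proposition5.7}, match the paper's treatment as well.
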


\begin{proof}
We adapt the argument in \cite[Lemma 5.2.10]{KL16} to prove this in the situation where $r_1<r_2$ (otherwise this is trivial), again one direction is easy where we only present the implication in the other direction. We take any element $x\in \Pi^{\mathrm{int},r_1}_{H,\mathbb{Q}_p\{Z_1,...,Z_d\}}\bigcap	\Pi^{[r_1,r_2]}_{H,\mathbb{Q}_p\{Z_1,...,Z_d\}}$ and take suitable approximating elements $\{x_i\}$ living in the bounded Robba ring such that for any $j\geq 1$ one can find some integer $N_j\geq 1$ we have whenever $i\geq N_j$ we have the following estimate:
\begin{displaymath}
\left\|.\right\|_{\overline{\alpha}_\infty^{t},\mathbb{Q}_p\{Z_1,...,Z_d\}}(x_i-x) \leq p^{-j}, \forall t\in [r_1,r_2].	
\end{displaymath}
Then we consider the corresponding decomposition of $x_i$ for each $i=1,2,...$ into a form having integral part and the rational part $x_i=y_i+z_i$ by setting
\begin{center}
 $y_i=\sum_{k=0,i_1,...,i_d}\pi^kx_{i,k,i_1,...,i_d}Z_1^{i_1}...Z_d^{i_d}$ 
\end{center} 
out of
\begin{center} 
$x_i=\sum_{k=n(x_i),i_1,...,i_d}\pi^kx_{i,k,i_1,...,i_d}Z_1^{i_1}...Z_d^{i_d}$.
\end{center}
Note that by our initial hypothesis we have that the element $x$ lives in the ring $\Pi^{\mathrm{int},r_1}_{H,\mathbb{Q}_p\{Z_1,...,Z_d\}}$ which further implies that 
\begin{displaymath}
\left\|.\right\|_{\overline{\alpha}_\infty^{r_1},\mathbb{Q}_p\{Z_1,...,Z_d\}}(\pi^kx_{i,k,i_1,...,i_d}Z_1^{i_1}...Z^{i_d}_d)	\leq p^{-j}.
\end{displaymath}
Therefore we have ${\overline{\alpha}_\infty}(\overline{x}_{i,k,i_1,...,i_d})\leq p^{(k-j)/r_1},\forall k< 0$ directly from this through computation, which implies that then:
\begin{align}
\left\|.\right\|_{\overline{\alpha}_\infty^{r_2},\mathbb{Q}_p\{Z_1,...,Z_d\}}(\pi^kx_{i,k,i_1,...,i_d}Z_1^{i_1}...Z^{i_d}_d)	&\leq p^{-k}p^{(k-j)r_2/r_1}\\
	&\leq p^{1+(1-j)r_1/r_1}.
\end{align}
Then one can read off the result directly from this estimate since under this estimate we can have the chance to modify the original approximating sequence $\{x_i\}$ by $\{y_i\}$ which are initially chosen to be in the integral Robba ring, which implies that actually the element $x$ lives in the right-hand side of the identity in the statement of the proposition.
\end{proof}

\begin{remark}
The following result cannot be derived from the previous proposition but the proof could be made essentially in the fashion, which is the same as the corresponding situation we encountered in the commutative situation, for instance see  \cref{proposition5.7}.	
\end{remark}

\begin{proposition}\mbox{\bf{(After Kedlaya-Liu \cite[Lemma 5.2.10]{KL16})}}
For any $0< r_1\leq r_2$ we have the following equality on the corresponding period rings:
\begin{displaymath}
\Pi^{\mathrm{int},r_1}_{H,A}\bigcap	\Pi^{[r_1,r_2]}_{H,A}=\Pi^{\mathrm{int},r_2}_{H,A}.
\end{displaymath}	
Here $A$ is some Banach affinoid algebra over $\mathbb{Q}_p$ in the noncommutative setting.	
\end{proposition}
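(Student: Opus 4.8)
The plan is to reduce the noncommutative statement to the $\mathbb{Q}_p\{Z_1,\dots,Z_d\}$-case proved in the preceding proposition, exactly along the lines of the commutative argument recorded in \cref{proposition5.7}. The only structural feature of the commutative proof that was used there, besides the explicit Gauss-norm estimates on individual terms $\pi^k x_{k,i_1,\dots,i_d}Z_1^{i_1}\cdots Z_d^{i_d}$, was the \emph{strictness} of the presentation map $\mathbb{Q}_p\{Z_1,\dots,Z_d\}\twoheadrightarrow A$ and the fact that this strictness is preserved after completed tensor product with the relevant period rings. So first I would fix a presentation of the noncommutative Banach affinoid $A$ as a quotient of the free noncommutative Tate algebra $\mathbb{Q}_p\{Z_1,\dots,Z_d\}$ by a closed two-sided ideal, and observe that this quotient map is strict (the quotient seminorm on $A$ agrees with its given norm up to equivalence, by the very definition of the residue norm); this is the noncommutative analogue of the citation \cite[2.1.8, Proposition 6]{BGR} invoked in \cref{proposition5.7}.

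Next I would check that strictness is inherited after taking the completed tensor product with the undeformed imperfect Robba rings $\Pi^{\mathrm{int},r_1}_{H}$, $\Pi^{[r_1,r_2]}_{H}$, etc. The key point, emphasized repeatedly in the excerpt, is that the undeformed period rings admit (orthogonal) Schauder bases, so completed tensoring with a strict surjection of Banach spaces remains a strict surjection — this works verbatim in the noncommutative setting since one only needs the module structure over the base field, not commutativity of $A$. With strictness in hand, the argument is: take $\overline{x}$ in the left-hand intersection $\Pi^{\mathrm{int},r_1}_{H,A}\cap\Pi^{[r_1,r_2]}_{H,A}$, lift it to some $x$ in the corresponding intersection over $\mathbb{Q}_p\{Z_1,\dots,Z_d\}$, run the bounded-Robba-ring approximation $\|x_i-x\|_{\overline{\alpha}_\infty^t,\mathbb{Q}_p\{Z_1,\dots,Z_d\}}\le p^{-j}$ for $t\in[r_1,r_2]$, split each $x_i=y_i+z_i$ into its $k\ge 0$ and $k<0$ parts, and use the term-by-term estimate $\overline{\alpha}_\infty(\overline{x}_{i,k,i_1,\dots,i_d})\le p^{(k-j)/r_1}$ to deduce $\|\pi^k x_{i,k,i_1,\dots,i_d}Z_1^{i_1}\cdots Z_d^{i_d}\|_{\overline{\alpha}_\infty^{r_2}}\le p^{1+(1-j)r_1/r_1}$, hence that one may replace $\{x_i\}$ by a sequence lying in $\Pi^{\mathrm{int},r_2}_{H,\mathbb{Q}_p\{Z_1,\dots,Z_d\}}$. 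Projecting back to $A$ via the strict surjection then places $\overline{x}$ in $\Pi^{\mathrm{int},r_2}_{H,A}$, giving the nontrivial inclusion; the reverse inclusion is immediate.

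The main obstacle I anticipate is purely bookkeeping about the noncommutative Tate algebra: one must make sure that the "unique expression as a series $\sum \pi^k x_{k,i_1,\dots,i_d}Z_1^{i_1}\cdots Z_d^{i_d}$" genuinely holds when the $Z_i$ are noncommuting variables — i.e., that monomials in the free monoid on $Z_1,\dots,Z_d$ form a topological basis — and that the Gauss norm $\|\cdot\|_{\overline{\alpha}_\infty^r,\mathbb{Q}_p\{Z_1,\dots,Z_d\}}$ is still multiplicative (or at least submultiplicative) with respect to this basis so that the single-term estimates used above remain valid. Once one grants that the free noncommutative Tate algebra is a Banach space with orthonormal basis indexed by the free monoid, and that $\|\cdot\|_{\overline{\alpha}_\infty^r}$ is the associated Gauss norm, everything else goes through mechanically. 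I would therefore simply say: the proof is identical to that of \cref{proposition5.7}, replacing $\mathbb{Q}_p\{T_1,\dots,T_d\}$ by the free noncommutative Tate algebra $\mathbb{Q}_p\{Z_1,\dots,Z_d\}$ and using that the quotient map to $A$ remains strict after completed tensor product with the period rings in question, which is guaranteed by the existence of Schauder bases on the undeformed imperfect Robba rings.
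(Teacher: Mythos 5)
Your proposal is correct and follows essentially the same route as the paper: the paper proves this proposition simply by pointing to the argument of \cref{proposition5.7}, i.e.\ strictness of the quotient map from the Tate algebra to $A$, preserved under completed tensor product with the undeformed period rings (via their Schauder bases), followed by lifting, the bounded-approximation and term-by-term decomposition estimate, and projecting back. Your additional bookkeeping about the free-monoid monomial basis of $\mathbb{Q}_p\{Z_1,\dots,Z_d\}$ and the associated Gauss norm is exactly the point that makes the commutative argument carry over verbatim, and is consistent with how the paper sets up the noncommutative Tate algebra.
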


\begin{proof}
See the proof of \cref{proposition5.7}.		
\end{proof}


\begin{proposition} \mbox{\bf{(After Kedlaya-Liu \cite[Lemma 5.2.8]{KL15} and \cite{KL16})}}
Consider now in our situation the radii $0< r_1\leq r_2$, and consider any element $x\in \Pi^{[r_1,r_2]}_{H,\mathbb{Q}_p\{Z_1,...,Z_d\}}$. Then we have that for each $n\geq 1$ one can decompose $x$ into the form of $x=y+z$ such that $y\in \pi^n\Pi^{\mathrm{int},r_2}_{H,\mathbb{Q}_p\{Z_1,...,Z_d\}}$ with $z\in \bigcap_{r\geq r_2}\Pi^{[r_1,r]}_{H,\mathbb{Q}_p\{Z_1,...,Z_d\}}$ with the following estimate for each $r\geq r_2$:
\begin{displaymath}
\left\|.\right\|_{\overline{\alpha}_\infty^r,\mathbb{Q}_p\{Z_1,...,Z_d\}}(z)\leq p^{(1-n)(1-r/r_2)}\left\|.\right\|_{\overline{\alpha}_\infty^{r_2},\mathbb{Q}_p\{Z_1,...,Z_d\}}(z)^{r/r_2}.	
\end{displaymath}

\end{proposition}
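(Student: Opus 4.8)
The statement is the noncommutative analog of the earlier Proposition following \cite[Lemma 5.2.8]{KL15}, now over the noncommutative Tate algebra $\mathbb{Q}_p\{Z_1,\dots,Z_d\}$. The overall strategy is to imitate verbatim the proof already given in the commutative imperfect setting, since the only features of the coefficient algebra $\mathbb{Q}_p\{Z_1,\dots,Z_d\}$ that were used there are: (i) that elements admit a unique expansion $\sum_{k,i_1,\dots,i_d}\pi^k x_{k,i_1,\dots,i_d}Z_1^{i_1}\cdots Z_d^{i_d}$ with coefficients in the relevant period ring, which remains true for the free noncommutative variables $Z_j$; and (ii) that the Gauss norm $\|.\|_{\overline{\alpha}_\infty^r,\mathbb{Q}_p\{Z_1,\dots,Z_d\}}$ is multiplicative on monomials and a supremum over coefficients, which is again true here because the noncommutative Tate norm is still the supremum norm on the (noncommuting) monomial coefficients. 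So no genuinely new ideas are needed; the work is to verify that each estimate goes through.

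First I would treat the elementary case where $x$ lies in the bounded Robba ring $\Pi^{\mathrm{bd},r_2}_{H,\mathbb{Q}_p\{Z_1,\dots,Z_d\}}$, so that $x=\sum_{k\geq n(x),\,i_1,\dots,i_d}\pi^k x_{k,i_1,\dots,i_d}Z_1^{i_1}\cdots Z_d^{i_d}$. Here I set $y:=\sum_{k\geq n,\,i_1,\dots,i_d}\pi^k x_{k,i_1,\dots,i_d}Z_1^{i_1}\cdots Z_d^{i_d}$ (the ``high-$\pi$-order tail''), which manifestly lies in $\pi^n\Pi^{\mathrm{int},r_2}_{H,\mathbb{Q}_p\{Z_1,\dots,Z_d\}}$, and $z:=x-y$ is the finite-$\pi$-order part. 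The key estimate is the term-by-term computation
\begin{align}
\|.\|_{\overline{\alpha}_\infty^r,\mathbb{Q}_p\{Z_1,\dots,Z_d\}}(\pi^k x_{k,i_1,\dots,i_d}Z_1^{i_1}\cdots Z_d^{i_d})
&= p^{-k}\overline{\alpha}_\infty(\overline{x}_{k,i_1,\dots,i_d})^r\\
&= p^{-k(1-r/r_2)}\|.\|_{\overline{\alpha}_\infty^{r_2},\mathbb{Q}_p\{Z_1,\dots,Z_d\}}(\pi^k x_{k,i_1,\dots,i_d}Z_1^{i_1}\cdots Z_d^{i_d})^{r/r_2}\\
&\leq p^{(1-n)(1-r/r_2)}\|.\|_{\overline{\alpha}_\infty^{r_2},\mathbb{Q}_p\{Z_1,\dots,Z_d\}}(\pi^k x_{k,i_1,\dots,i_d}Z_1^{i_1}\cdots Z_d^{i_d})^{r/r_2}
\end{align}
valid for all indices $k<n$ appearing in $z$; taking the supremum over such terms and using that the norm is a supremum norm gives the claimed bound on $z$, and in particular that $z$ lies in every $\Pi^{[r_1,r]}_{H,\mathbb{Q}_p\{Z_1,\dots,Z_d\}}$ with $r\geq r_2$.

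Next I would remove the boundedness hypothesis by the standard approximation argument: given a general $x\in\Pi^{[r_1,r_2]}_{H,\mathbb{Q}_p\{Z_1,\dots,Z_d\}}$, inductively choose elements $x_i$ in the bounded Robba ring with
\begin{align}
\|.\|_{\overline{\alpha}_\infty^r,\mathbb{Q}_p\{Z_1,\dots,Z_d\}}(x-x_0-\cdots-x_i)\leq p^{-i-1}\|.\|_{\overline{\alpha}_\infty^r,\mathbb{Q}_p\{Z_1,\dots,Z_d\}}(x),\quad i=0,1,\dots,\ r\in[r_1,r_2],
\end{align}
apply the bounded case to each $x_i$ to obtain $x_i=y_i+z_i$, and then sum: $\sum_i y_i$ converges in $\Pi^{\mathrm{int},r_2}_{H,\mathbb{Q}_p\{Z_1,\dots,Z_d\}}$ (using that each $y_i$ lies there and that the integral Robba ring is complete for the Gauss norm), while $\sum_i z_i$ converges in $\bigcap_{r\geq r_2}\Pi^{[r_1,r]}_{H,\mathbb{Q}_p\{Z_1,\dots,Z_d\}}$ and inherits the estimate $\|.\|_{\overline{\alpha}_\infty^r}(z_i)\leq p^{(1-n)(1-r/r_2)}\|.\|_{\overline{\alpha}_\infty^{r_2}}(x)^{r/r_2}$ from the geometric decay. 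Setting $y=\sum y_i$ and $z=\sum z_i$ completes the argument. The only point requiring a little care — and the place where noncommutativity must be checked rather than ignored — is that the ``integral part versus rational part'' decomposition $x_i=y_i+z_i$ is additive in $x_i$ and compatible with the Gauss-norm estimates in the presence of non-commuting monomials $Z_1^{i_1}\cdots Z_d^{i_d}$; since the decomposition is performed purely on the level of $\pi$-adic order of the scalar coefficients and the Gauss norm is still a coefficient-wise supremum, this is routine, but I would state it explicitly. I do not anticipate a substantive obstacle beyond this bookkeeping.
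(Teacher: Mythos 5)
Your proposal is correct and follows essentially the same route as the paper's own proof: split the series at $\pi$-adic order $n$ for elements of the bounded Robba ring, verify the estimate term by term (valid precisely for $k<n$ since $1-r/r_2\leq 0$), and then handle general $x$ via the inductively chosen approximating sequence in the bounded Robba ring with geometric decay, summing the $y_i$ in $\Pi^{\mathrm{int},r_2}_{H,\mathbb{Q}_p\{Z_1,\dots,Z_d\}}$ and the $z_i$ with the stated bound. Your explicit remark that the decomposition acts only on the $\pi$-adic order of the coefficients, so that noncommutativity of the $Z_j$ plays no role because the Gauss norm is a coefficient-wise supremum, is exactly the (implicit) point in the paper's argument.
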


\begin{proof}
As in \cite[Lemma 5.2.8]{KL15} and \cite{KL16} and in the proof of our previous proposition we first consider those elements $x$ lives in the bounded Robba rings which could be expressed in general as
\begin{center}
 $\sum_{k=n(x),i_1,...,i_d}\pi^kx_{k,i_1,...,i_d}Z_1^{i_1}...Z_d^{i_d}$.
 \end{center}	
In this situation the corresponding decomposition is very easy to come up with, namely we consider the corresponding $y_i$ as the corresponding series:
\begin{displaymath}
\sum_{k\geq n,i_1,...,i_d}\pi^kx_{k,i_1,...,i_d}Z_1^{i_1}...Z_d^{i_d}	
\end{displaymath}
which give us the desired result since we have in this situation when focusing on each single term:
\begin{align}
\left\|.\right\|_{\overline{\alpha}_\infty^r,\mathbb{Q}_p\{Z_1,...,Z_d\}}(\pi^kx_{k,i_1,...,i_d}Z_1^{i_1}...Z_d^{i_d})&=p^{-k}\overline{\alpha}_\infty(\overline{x}_{k,i_1,...,i_d})^r\\
&=p^{-k(1-r/r_2)}\left\|.\right\|_{\overline{\alpha}_\infty^{r_2},\mathbb{Q}_p\{Z_1,...,Z_d\}}(\pi^kx_{k,i_1,...,i_d}Z_1^{i_1}...Z_d^{i_d})^{r/r_2}\\
&\leq p^{(1-n)(1-r/r_2)}\left\|.\right\|_{\overline{\alpha}_\infty^{r_2},\mathbb{Q}_p\{Z_1,...,Z_d\}}(\pi^kx_{k,i_1,...,i_d}Z_1^{i_1}...Z_d^{i_d})^{r/r_2}
\end{align}
for all those suitable $k$. Then to tackle the more general situation we consider the approximating sequence consisting of all the elements in the bounded Robba ring as in the usual situation considered in \cite[Lemma 5.2.8]{KL15} and \cite{KL16}, namely we inductively construct the following approximating sequence just as:
\begin{align}
\left\|.\right\|_{\overline{\alpha}_\infty^r,\mathbb{Q}_p\{Z_1,...,Z_d\}}(x-x_0-...-x_i)\leq p^{-i-1}	\left\|.\right\|_{\overline{\alpha}_\infty^r,\mathbb{Q}_p\{Z_1,...,Z_d\}}(x), i=0,1,..., r\in [r_1,r_2].
\end{align}
Here all the elements $x_i$ for each $i=0,1,...$ are living in the bounded Robba ring, which immediately gives rise to the suitable decomposition as proved in the previous case namely we have for each $i$ the decomposition $x_i=y_i+z_i$ with the desired conditions as mentioned in the statement of the proposition. We first take the series summing all the elements $y_i$ up for all $i=0,1,...$, which first of all converges under the norm $\left\|.\right\|_{\overline{\alpha}_\infty^r,\mathbb{Q}_p\{Z_1,...,Z_d\}}$ for all the radius $r\in [r_1,r_2]$, and also note that all the elements $y_i$ within the infinite sum live inside the corresponding integral Robba ring $\Pi^{\mathrm{int},r_2}_{H,\mathbb{Q}_p\{Z_1,...,Z_d\}}$, which further implies the corresponding convergence ends up in $\Pi^{\mathrm{int},r_2}_{H,\mathbb{Q}_p\{Z_1,...,Z_d\}}$. For the elements $z_i$ where $i=0,1,...$ also sum up to a converging series in the desired ring since combining all the estimates above we have:
\begin{displaymath}
\left\|.\right\|_{\overline{\alpha}_\infty^r,\mathbb{Q}_p\{Z_1,...,Z_d\}}(z_i)\leq p^{(1-n)(1-r/r_2)}\left\|.\right\|_{\overline{\alpha}_\infty^{r_2},\mathbb{Q}_p\{Z_1,...,Z_d\}}(x)^{r/r_2}.	
\end{displaymath}
\end{proof}

%

\begin{proposition} \mbox{\bf{(After Kedlaya-Liu \cite[Lemma 5.2.10]{KL15})}}
We have the following identity:
\begin{displaymath}
\Pi^{[s_1,r_1]}_{H,\mathbb{Q}_p\{Z_1,...,Z_d\}}\bigcap\Pi^{[s_2,r_2]}_{H,\mathbb{Q}_p\{Z_1,...,Z_d\}}=\Pi^{[s_1,r_2]}_{H,\mathbb{Q}_p\{Z_1,...,Z_d\}},
\end{displaymath}
here the radii satisfy $<s_1\leq s_2 \leq r_1 \leq r_2$.
\end{proposition}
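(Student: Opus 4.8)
The statement to prove is the identity
\[
\Pi^{[s_1,r_1]}_{H,\mathbb{Q}_p\{Z_1,...,Z_d\}}\bigcap\Pi^{[s_2,r_2]}_{H,\mathbb{Q}_p\{Z_1,...,Z_d\}}=\Pi^{[s_1,r_2]}_{H,\mathbb{Q}_p\{Z_1,...,Z_d\}}
\]
for radii $0<s_1\leq s_2\leq r_1\leq r_2$, in the noncommutative polynomial-variable setting. The plan is to follow verbatim the strategy already used for the commutative analogue earlier in the excerpt (the proposition whose proof cites \cite[Lemma 5.2.10]{KL15}), since the free noncommutative variables $Z_1,\dots,Z_d$ play no role beyond bookkeeping of monomials, and the Gauss norms $\|.\|_{\overline{\alpha}_\infty^t,\mathbb{Q}_p\{Z_1,...,Z_d\}}$ behave exactly as in the commutative case on each monomial $Z_1^{i_1}\cdots Z_d^{i_d}$.

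First I would dispose of the easy inclusion: any element of $\Pi^{[s_1,r_2]}_{H,\mathbb{Q}_p\{Z_1,...,Z_d\}}$ restricts to elements of both $\Pi^{[s_1,r_1]}_{H,\mathbb{Q}_p\{Z_1,...,Z_d\}}$ and $\Pi^{[s_2,r_2]}_{H,\mathbb{Q}_p\{Z_1,...,Z_d\}}$ since the defining seminorm families for the latter two rings are subfamilies of that for the former (using $s_1\le s_2$ and $r_1\le r_2$ and the fact that the intervals $[s_1,r_1]$ and $[s_2,r_2]$ are both contained in $[s_1,r_2]$). For the reverse inclusion, I would take $x$ in the intersection on the left and invoke the preceding decomposition proposition (the noncommutative analogue of \cite[Lemma 5.2.8]{KL15} already proved in this subsection): applied over the interval $[s_2,r_2]$ with $r_2$ in the role of $r_2$ there, it writes $x=y+z$ with $y\in\widetilde{\Pi}^{\mathrm{int},r_1}_{H,\mathbb{Q}_p\{Z_1,...,Z_d\}}$ (equivalently $\Pi^{\mathrm{int},r_1}_{H,\mathbb{Q}_p\{Z_1,...,Z_d\}}$, after multiplying by a suitable power of $\pi$) and $z\in\Pi^{[s_1,r_2]}_{H,\mathbb{Q}_p\{Z_1,...,Z_d\}}$. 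Then $y=x-z$ lies in the intersection
\[
\Pi^{\mathrm{int},r_1}_{H,\mathbb{Q}_p\{Z_1,...,Z_d\}}\bigcap\Pi^{[s_2,r_2]}_{H,\mathbb{Q}_p\{Z_1,...,Z_d\}}=\Pi^{\mathrm{int},r_2}_{H,\mathbb{Q}_p\{Z_1,...,Z_d\}},
\]
where the equality is the noncommutative version of the $\Pi^{\mathrm{int},r_1}\cap\Pi^{[r_1,r_2]}=\Pi^{\mathrm{int},r_2}$ proposition proved just above in this subsection (note $[s_2,r_2]$ contains $[r_1,r_2]$, so membership in $\Pi^{[s_2,r_2]}$ forces membership in $\Pi^{[r_1,r_2]}$). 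Hence $y\in\Pi^{\mathrm{int},r_2}_{H,\mathbb{Q}_p\{Z_1,...,Z_d\}}\subset\Pi^{[s_1,r_2]}_{H,\mathbb{Q}_p\{Z_1,...,Z_d\}}$, and therefore $x=y+z\in\Pi^{[s_1,r_2]}_{H,\mathbb{Q}_p\{Z_1,...,Z_d\}}$, finishing the proof.

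The only point requiring genuine care — and the step I expect to be the main obstacle — is verifying that all the cited intermediate results (the decomposition lemma and the $\mathrm{int}$-intersection proposition) are legitimately available in the noncommutative free-variable setting with no hidden commutativity assumption. Both of those were stated and proved earlier in this same subsection with the $Z_i$ variables, and their proofs operate term-by-term on monomials $\pi^k x_{k,i_1,\dots,i_d}Z_1^{i_1}\cdots Z_d^{i_d}$ using only the multiplicativity of $\|.\|_{\overline{\alpha}_\infty^t}$ on the Witt-coefficient part and the affine (in $t$) behaviour of $t\mapsto\log\|.\|_{\overline{\alpha}_\infty^t}$ on a single such term; no reordering of the $Z_i$ is ever performed, so noncommutativity is harmless. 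I would simply record this observation and then cite \cref{proposition5.7} (which already handles the passage from the Tate-algebra case to a general affinoid $A$ via strictness of the presentation map, tensored with the Robba rings) in case one wants the statement over $A$ rather than over $\mathbb{Q}_p\{Z_1,...,Z_d\}$; the pure Tate-algebra statement above is then immediate from the two displayed identities. Everything else is routine and parallels the commutative argument line for line.
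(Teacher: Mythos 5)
Your argument is correct and is essentially the paper's own proof: dispose of the easy inclusion, apply the preceding decomposition proposition to write $x=y+z$ with $y\in \Pi^{\mathrm{int},r_1}_{H,\mathbb{Q}_p\{Z_1,\dots,Z_d\}}$ and $z\in \Pi^{[s_1,r_2]}_{H,\mathbb{Q}_p\{Z_1,\dots,Z_d\}}$, and then use $\Pi^{\mathrm{int},r_1}_{H,\mathbb{Q}_p\{Z_1,\dots,Z_d\}}\cap\Pi^{[s_2,r_2]}_{H,\mathbb{Q}_p\{Z_1,\dots,Z_d\}}=\Pi^{\mathrm{int},r_2}_{H,\mathbb{Q}_p\{Z_1,\dots,Z_d\}}$ (legitimate since $s_2\leq r_1$) to place $y=x-z$, hence $x$, in $\Pi^{[s_1,r_2]}_{H,\mathbb{Q}_p\{Z_1,\dots,Z_d\}}$, exactly as the paper does. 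One bookkeeping slip worth fixing: the decomposition you quote (with $y$ integral of radius $r_1$ and $z$ over $[s_1,r_2]$) comes from applying the previous proposition to $x$ viewed in $\Pi^{[s_1,r_1]}_{H,\mathbb{Q}_p\{Z_1,\dots,Z_d\}}$, i.e.\ over the interval $[s_1,r_1]$, not over $[s_2,r_2]$ as you wrote (and the stray tilde on $\widetilde{\Pi}^{\mathrm{int},r_1}$ should be dropped in this imperfect setting); with that relabelling your proof matches the paper's line for line.
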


\begin{proof}
In our situation one direction is obvious while on the other hand we consider any element $x$ in the intersection on the left, then by the previous proposition we	have the decomposition $x=y+z$ where $y\in \Pi^{\mathrm{int},r_1}_{H,\mathbb{Q}_p\{Z_1,...,Z_d\}}$ and $z\in \Pi^{[s_1,r_2]}_{H,\mathbb{Q}_p\{Z_1,...,Z_d\}}$. Then as in \cite[Lemma 5.2.10]{KL15} section 5.2 we look at $y=x-z$ which lives in the intersection:
\begin{displaymath}
\Pi^{\mathrm{int},r_1}_{H,\mathbb{Q}_p\{Z_1,...,Z_d\}}\bigcap	\Pi^{[s_2,r_2]}_{H,\mathbb{Q}_p\{Z_1,...,Z_d\}}=\Pi^{\mathrm{int},r_2}_{H,\mathbb{Q}_p\{Z_1,...,Z_d\}}
\end{displaymath}
which finishes the proof.
\end{proof}

\begin{proposition} \mbox{\bf{(After Kedlaya-Liu \cite[Lemma 5.2.10]{KL15})}}
We have the following identity:
\begin{displaymath}
\Pi^{[s_1,r_1]}_{H,A}\bigcap \Pi^{[s_2,r_2]}_{H,A}=\Pi^{[s_1,r_2]}_{H,A},
\end{displaymath}
here the radii satisfy $<s_1\leq s_2 \leq r_1 \leq r_2$.
	
\end{proposition}

\begin{proof}
See the proof of \cref{proposition5.7}.		
\end{proof}

\begin{remark}
Again one can follow the same strategy to deal with the corresponding equal-characteristic situation.	
\end{remark}

\subsection{Modules and Bundles over Noncommutative Rings}

Now we consider the modules and bundles over the rings introduced in the previous subsection. First we make the following assumption:

\begin{setting}
Recall that from \cite[Definition 5.2.3]{KL16} any tower $(H_\bullet,H_\bullet^+)$ is called weakly decompleting if we have that first the density of the perfection of $H_{\infty}$ in $\overline{H}_\infty$. Here the ring $H_\infty$ is the ring coming from the mod-$\pi$ construction of the ring $\Omega^\mathrm{int}_{H}$, also at the same time one can find some $r>0$ such that the corresponding modulo $\pi$ operation from the ring $\Omega^\mathrm{int}_{H}$ to the ring $H_\infty$ is actually surjective strictly. 
\end{setting}

\begin{assumption} \label{assumption642}
We now assume that we are basically in the situation where $(H_\bullet,H_\bullet^+)$ is actually weakly decompleting. Also as in \cite[Lemma 5.2.7]{KL16} we assume we fix some radius $r_0>0$, for instance this will correspond to the corresponding index in the situation we have the corresponding noetherian tower. Recall that a tower is called noetherian if we have some specific radius as above such that we have the strongly noetherian property on the ring $\Pi^{[s,r]}_{H}$ with $[s,r]\subset (0,r_0]$. We now assume that the tower is then noetherian and any ring $\Pi^{[s,r]}_{H,A}$ with $[s,r]\subset (0,r_0]$ is noetherian in the noncommutative setting. 
\end{assumption}

\begin{example}
This assumption could be achieved as in the following. First naively we can consider a noncommutative affinoid algebra which is finite over some commutative affinoid algebra over $\mathbb{Q}_p$, then tensor with the corresponding the Robba ring $\Pi^{[s,r]}_{H}$ which is assumed to be strongly noetherian. Then for some deeper case, we can take the Robba ring $D_{[a,b]}(\mathbb{Z}_p,\mathbb{Q}_p)$ attached to the group $\mathbb{Z}_p$ in the sense of \cite[Proposition 4.1]{Z1} (namely the usual Robba ring over $\mathbb{Q}_p$, here as in \cite[Proposition 4.1]{Z1} we assume that $a,b\in p^\mathbb{Q}$), and we take the corresponding complete tensor product with the local chart ring of the distribution algebra attached to a uniform $p$-adic Lie group $G$ as in \cite[Proposition 4.1]{Z1}:
\begin{displaymath}
D_\rho(G,\mathbb{Q}_p)	
\end{displaymath}
with some radius $\rho>0$ living in $p^\mathbb{Q}$. The whole product:
\begin{displaymath}
D_{[a,b]}(\mathbb{Z}_p,\mathbb{Q}_p)\widehat{\otimes}D_\rho(G,\mathbb{Q}_p)	
\end{displaymath}
is noetherian. Indeed, as in \cite[Proposition 4.1]{Z1} we apply the criterion in \cite[Proposition I.7.1.2]{LVO} to check that actually the graded ring induced by the product norm:
\begin{displaymath}
gr^\bullet_{\|.\|_\otimes}(D_{[a,b]}(\mathbb{Z}_p,\mathbb{Q}_p)\widehat{\otimes}D_\rho(G,\mathbb{Q}_p)	)\overset{\sim}{\rightarrow} gr^\bullet_{\|.\|_\otimes}(D_{[a,b]}(\mathbb{Z}_p,\mathbb{Q}_p)\otimes D_\rho(G,\mathbb{Q}_p)	)
\end{displaymath}
admits a surjection map from:
\begin{align}
gr^\bullet_{\|.\|_{[a,b]}}(D_{[a,b]}(\mathbb{Z}_p,\mathbb{Q}_p))\otimes_{gr^\bullet\mathbb{Q}_p} gr^\bullet_{\|.\|_\rho}(D_\rho(G,\mathbb{Q}_p)	)
\end{align}
As in the proof of \cite[Proposition 4.1]{Z1} one can show that this is a tensor product of a Laurent polynomial ring and a polynomial ring, which is noetherian.
\end{example}

\indent Then we can start to discuss the corresponding modules over the rings in our deformed setting, first as in \cite[Lemma 5.3.3]{KL16} the following result should be derived from our construction:

\indent Then we deform the basic notation of bundles in \cite[Definition 5.3.6]{KL16}:

\begin{definition} \mbox{\bf{(After Kedlaya-Liu \cite[Definition 5.3.6]{KL16})}}
We define the bundle over the ring $\Pi^{r_0}_{H,A}$ to be a collection $(M_I)_I$ of finite projective right modules over each $\Pi_{H,A}^{I}$ with $I\subset (0,r_0]$ closed subintervals of $(0,r_0]$ such that we have the following requirement in the glueing fashion. First for any $I_1\subset I_2$ two closed intervals we have $M_{I_2}\otimes_{\Pi_{H,A}^{I_2}}\Pi_{H,A}^{I_1}\overset{\sim}{\rightarrow} M_{I_1}$ with the obvious cocycle condition with respect to three closed subintervals of $(0,r_0]$ namely taking the form of $I_1\subset I_2\subset I_3$.\\
\indent We define the pseudocoherent sheaf over the ring $\Pi^{r_0}_{H,A}$ to be a collection $(M_I)_I$ of pseudocoherent right modules over each $\Pi_{H,A}^{I}$ with $I\subset (0,r_0]$ closed subintervals of $(0,r_0]$ such that we have the following requirement in the glueing fashion. First for any $I_1\subset I_2$ two closed intervals we have $M_{I_2}\otimes_{\Pi_{H,A}^{I_2}}\Pi_{H,A}^{I_1}\overset{\sim}{\rightarrow} M_{I_1}$ with the obvious cocycle condition with respect to three closed subintervals of $(0,r_0]$ namely taking the form of $I_1\subset I_2\subset I_3$. 	
\end{definition}

\indent We make the following discussion around the corresponding module and sheaf structures defined above.

\begin{lemma} \mbox{\bf{(After Kedlaya-Liu \cite[Lemma 5.3.8]{KL16})}}
We have the isomorphism between the ring $\Pi^r_{H,A}$ and the inverse limit of the ring $\Pi^{[s,r]}_{H,A}$ with respect to the radius $s$ by the map $\Pi^r_{H,A}\rightarrow \Pi^{[s,r]}_{H,A}$.	
\end{lemma}
 
\begin{proof}
As in \cite[Lemma 5.3.8]{KL16} it is injective by the isometry, and then use the corresponding elements $x_n,n=0,1,...$ in the dense ring $\Pi^{r}_{H,A}$ to approximate any element $x$ in the ring $\Pi^{[s,r]}_{H,A}$ in the same way as in \cite[Lemma 5.3.8]{KL16}:
\begin{displaymath}
\|.\|_{\overline{\alpha}_\infty^t,A}(x-x_n)\leq p^{n}	
\end{displaymath}
for any radius $t$ now living in the corresponding interval $[r2^{-n},r]$. This will establish Cauchy sequence which finishes the proof as in \cite[Lemma 5.3.8]{KL16}.
\end{proof}

\begin{proposition} \mbox{\bf{(After Kedlaya-Liu \cite[Lemma 5.3.9]{KL16})}}
For some radius $r\in (0,r_0]$. Suppose we have that $M$ is a vector bundle in the general setting or $M$ is a pseudocoherent sheaf in the setting where the tower is noetherian. Then we have that the corresponding global section is actually dense in each section with respect to some closed interval. And then we have the corresponding vanishing result of the first derived inverse limit functor.	
\end{proposition}

\begin{proof}
See \cite[Lemma 5.3.9]{KL16}. In our setting, note that we are actually in the formalism of the corresponding (noncommutative) Fr\'echet-Stein algebras as in \cite{ST1}.	
\end{proof}

\indent The interesting issue here as in \cite{KL16} is the corresponding finitely generated of the global section of a pseudocoherent sheaf which is actually not guaranteed in general. Therefore as in \cite{KL16} we have to distinguish the corresponding well-behaved sheaves out from the corresponding category of all the corresponding pseudocoherent sheaves. But we are not quite for sure how more complicated the noncommutative situation is than the commutative setting. For the completeness of the presentation we present some conjectures in our mind.

\begin{conjecture} \mbox{\bf{(After Kedlaya-Liu \cite[Lemma 5.3.10]{KL16})}}
As in the previous proposition we choose some $r\in (0,r_0]$. Now assume that the corresponding tower $(H_\bullet,H_\bullet^+)$ is noetherian. Now for any pseudocoherent sheaf $M$ defined above we have the following three statements are equivalent. A. The first statement is that one can find a sequence of positive integers $x_1,x_2,...$ such that for any closed interval living inside $(0,r]$ the section of the sheaf with respect this closed interval admits a projective resolution of modules with corresponding ranks bounded by the sequence of integer $x_1,x_2,...$. B. The second statement is that for any locally finite covering of the corresponding interval $(0,r]$ which takes the corresponding form of $\{I_i\}$ one can find a sequence of positive integers $x_1,x_2,...$ such that for any closed interval living inside $\{I_i\}$ the section of the sheaf with respect this closed interval admits a projective resolution of modules with corresponding ranks bounded by the sequence of integer $x_1,x_2,...$. C. Lastly the third statement is that the corresponding global section is a pseudocoherent module over the ring $\Pi^r_{H,A}$. 	
\end{conjecture}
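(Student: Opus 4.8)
The plan is to prove the cycle of implications $A\Rightarrow B\Rightarrow C\Rightarrow A$, adapting the argument of Kedlaya--Liu for \cite[Lemma 5.3.10]{KL16} to the noncommutative Fr\'echet--Stein formalism of \cite{ST1} made available by \cref{assumption642}. The implication $A\Rightarrow B$ I would dispatch immediately: every closed interval contained in a member $I_i$ of a locally finite covering of $(0,r]$ is in particular a closed interval contained in $(0,r]$, so the uniform rank sequence $x_1,x_2,\dots$ furnished by $A$ works verbatim for $B$. All the real content then lies in $C\Rightarrow A$ and in $B\Rightarrow C$.

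For $C\Rightarrow A$, suppose $\Gamma(M)$ is pseudocoherent over $\Pi^r_{H,A}$ and fix a resolution $\dots\to P_1\to P_0\to\Gamma(M)\to 0$ by finite projective right $\Pi^r_{H,A}$-modules. For each closed interval $I\subset(0,r_0]$ I would base change along $\Pi^r_{H,A}\to\Pi^I_{H,A}$; since the tower is noetherian the transition maps are flat (the noetherian strengthening of the analog of \cite[Lemma 5.3.4]{KL16}, compare \cite[Corollary 2.6.9]{KL16}), so the complex stays exact, and the gluing description of $M$ together with the density and $R^1\varprojlim$-vanishing statement (the analog of \cite[Lemma 5.3.9]{KL16}) identifies $\Gamma(M)\otimes_{\Pi^r_{H,A}}\Pi^I_{H,A}$ with the section $M_I$. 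Setting $x_i:=\mathrm{rank}(P_i)$ then gives the uniform bound required by $A$, using that finite projective modules over the noncommutative affinoid pieces occurring in \cref{assumption642} carry a well-defined rank.

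For $B\Rightarrow C$ I would fix a locally finite covering $\{I_i\}$ of $(0,r]$ by closed intervals together with the projective resolutions $(\dots\to P_{1,I_i}\to P_{0,I_i}\to M_{I_i}\to 0)$ of ranks bounded by $x_1,x_2,\dots$, and, after passing to a common refinement, assemble these into a projective resolution of $\Gamma(M)$. The ambient structure is the Fr\'echet--Stein system $(\Pi^I_{H,A})_I$ with $\Pi^r_{H,A}=\varprojlim_s\Pi^{[s,r]}_{H,A}$ (the analog of \cite[Lemma 5.3.8]{KL16}) whose members are noncommutative noetherian Banach rings: one inductively builds syzygies, using effective descent for finite projective modules along $\Pi^I_{H,A}\to\bigoplus_i\Pi^{I\cap I_i}_{H,A}$ (a finite sum by local finiteness, in the style of \cite[Proposition 5.3.5]{KL16}; cf.\ \cref{proposition6.15}) to glue the $P_{n,I_i}$ into a finite projective module over each $\Pi^I_{H,A}$, and then invoking the vanishing of $R^1\varprojlim$ to pass to the limit while preserving exactness. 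The outcome is a projective resolution of $\Gamma(M)$ with terms still of ranks bounded by $x_1,x_2,\dots$, which is exactly $C$. To sidestep projectivity subtleties one may first replace each $P_{n,I_i}$ by a finite free cover, at the cost of only a bounded increase of ranks.

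The hard part, I expect, will be precisely this gluing-and-limit step in $B\Rightarrow C$: it requires that kernels of maps between coadmissible right $\Pi^r_{H,A}$-modules remain coadmissible and that $R^1\varprojlim$ vanishes for the resulting systems of syzygies. In the commutative setting Kedlaya--Liu obtain this from the coherence of $\Pi^r_{H,A}$ and the exactness of inverse limits in the Fr\'echet--Stein category, but neither is automatic noncommutatively; moreover the very formulation of a ``rank sequence'' presupposes that finite projective modules over the affinoid pieces have a well-defined rank, which needs an extra hypothesis (for instance that these pieces are connected, so that stably free modules are free up to bounded rank). This is why the statement is recorded here as a conjecture: $A\Rightarrow B$ is unconditional and, granting coherence of $\Pi^r_{H,A}$, $C\Rightarrow A$ goes through essentially unchanged, but $B\Rightarrow C$ appears to genuinely require either a coherence input for $\Pi^r_{H,A}$ or a strengthening of \cref{assumption642}.
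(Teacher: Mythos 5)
The statement you are attacking is recorded in the paper as a \emph{conjecture}, and the paper deliberately gives no proof: the surrounding text says that finite generation of the global sections of a pseudocoherent sheaf ``is actually not guaranteed in general'' and that it is unclear ``how more complicated the noncommutative situation is,'' so there is no argument of the paper against which to compare yours. Your plan is the natural transplant of Kedlaya--Liu's proof of \cite[Lemma 5.3.10]{KL16} ($A\Rightarrow B$ trivially, $C\Rightarrow A$ by base change of a resolution of the global sections, $B\Rightarrow C$ by gluing syzygies and passing to the inverse limit), and you correctly locate where it breaks; but by your own admission the text is a road map rather than a proof, so it cannot be accepted as one.

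Concretely, the gap is exactly the one you flag, and it is wider than your closing paragraph suggests. The only noncommutative ingredients the paper actually establishes in this subsection are the analogs of \cite[Lemma 5.3.8]{KL16} (the limit description $\Pi^r_{H,A}=\varprojlim_s\Pi^{[s,r]}_{H,A}$) and \cite[Lemma 5.3.9]{KL16} (density of global sections and vanishing of $R^1\varprojlim$). The analogs of \cite[Lemma 5.3.3]{KL16}, of the (2-pseudo)flatness statement \cite[Lemma 5.3.4]{KL16} for the restriction maps $\Pi^{I'}_{H,A}\to\Pi^{I}_{H,A}$, and of the gluing/descent statement \cref{proposition6.15} are never stated for noncommutative $A$; \cref{assumption642} only gives noetherianity of the rings $\Pi^{[s,r]}_{H,A}$, not flatness of restrictions, not coherence of the Fr\'echet--Stein limit $\Pi^r_{H,A}$, and not effective descent for pseudocoherent right modules. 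Hence the identification $\Gamma(M)\otimes_{\Pi^r_{H,A}}\Pi^I_{H,A}\cong M_I$ underlying your $C\Rightarrow A$, and the syzygy-by-syzygy gluing plus limit in your $B\Rightarrow C$, are both unsupported as stated; and, as you note, even the formulation of the rank bounds presupposes a well-behaved notion of rank (or at least of ``generated by at most $x_i$ elements'') for finite projective right modules over these noncommutative Banach rings. Your honest assessment that these inputs are missing is consistent with the paper's decision to leave the statement as a conjecture, but it also means the proposal does not establish the equivalence; only $A\Rightarrow B$ is actually proved.
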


%
%

\indent As in \cite[Definition 5.3.11]{KL16} we call the sheaf satisfies the corresponding equivalent conditions in the proposition above uniform pseudocoherent sheaf. Then we have the following analog of \cite[Lemma 5.3.12]{KL16}:

\begin{conjecture} \mbox{\bf{(After Kedlaya-Liu \cite[Lemma 5.3.12]{KL16})}}
The global section functor defines the corresponding equivalence between the categories of the following two sorts of objects. The first ones are the corresponding uniform pseudocoherent sheaves over $\Pi^r_{H,A}$. The second ones are those pseudocoherent modules over the ring $\Pi^r_{H,A}$. 	
\end{conjecture}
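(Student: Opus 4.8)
The statement to be proved is the noncommutative analog of \cite[Lemma 5.3.12]{KL16}, asserting that the global section functor gives an equivalence between uniform pseudocoherent sheaves over $\Pi^r_{H,A}$ and pseudocoherent modules over $\Pi^r_{H,A}$, in the setting of \cref{assumption642} where $A$ is a noncommutative Banach affinoid algebra and each $\Pi^{[s,r]}_{H,A}$ is (right) noetherian. The plan is to follow the architecture of the commutative proof given earlier (the proposition proved via \cite[Lemma 5.3.12]{KL16}), replacing two-sided module arguments by right-module arguments throughout and invoking the noncommutative Fr\'echet--Stein formalism of \cite{ST1} in place of the commutative completed tensor product and coherence bookkeeping.

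First I would set up the two functors. In one direction, restriction sends a pseudocoherent module $M$ over $\Pi^r_{H,A}$ to the collection $(M\otimes_{\Pi^r_{H,A}}\Pi^{[s,r]}_{H,A})_{[s,r]\subset(0,r]}$; one checks this is a pseudocoherent sheaf using the $2$-pseudoflatness of the localization maps $\Pi^{[r_1,r_2]}_{H,A}\to\Pi^{[r_1,t]}_{H,A}$ and $\Pi^{[r_1,r_2]}_{H,A}\to\Pi^{[t,r_2]}_{H,A}$ (the noncommutative version of \cite[Lemma 5.3.4]{KL16}; in the noetherian setting this is actual flatness as noted in the Remark after that lemma), and one verifies it is \emph{uniform} by exhibiting the bounding sequence $x_1,x_2,\dots$ of ranks directly from a fixed projective resolution of $M$, whose image resolves each section. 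In the other direction, global sections $\varprojlim_{[s,r]}M_{[s,r]}$; here I would use the noncommutative analog of \cite[Lemma 5.3.8]{KL16} (already stated and proved above, identifying $\Pi^r_{H,A}$ with $\varprojlim_s\Pi^{[s,r]}_{H,A}$) together with the noncommutative \cite[Lemma 5.3.9]{KL16}-analog (the vanishing of $R^1\varprojlim$ and density of global sections in each section, proved via the Fr\'echet--Stein formalism of \cite{ST1}). That $R^1\varprojlim$ vanishing, applied to the terms of a finite free presentation of a uniform sheaf, is what forces the global sections of a uniform pseudocoherent sheaf to be a pseudocoherent $\Pi^r_{H,A}$-module (this is the content of the conjectured equivalence "B $\Leftrightarrow$ C" in the preceding Conjecture, which one should be able to upgrade to a theorem in the noetherian case since strong noetherianity of $\Pi^{[s,r]}_{H,A}$ controls the ranks uniformly).

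Next I would check the two round-trip natural transformations. For a pseudocoherent module $M$, the canonical map $M\to\varprojlim_{[s,r]}(M\otimes_{\Pi^r_{H,A}}\Pi^{[s,r]}_{H,A})$ is an isomorphism: this reduces, by taking a finite free presentation and using the flatness/$2$-pseudoflatness of localization plus right-exactness of completed tensor product and the $R^1\varprojlim$ vanishing, to the case $M=\Pi^r_{H,A}$, which is exactly the noncommutative \cite[Lemma 5.3.8]{KL16}. For a uniform pseudocoherent sheaf $(M_I)_I$, one must show the natural map from the restriction of its global sections back to $(M_I)_I$ is an isomorphism of sheaves; by the noncommutative analog of \cite[Proposition 5.3.5]{KL16} (effective descent for pseudocoherent and finite projective right modules along $\Pi^I_{H,A}\to\bigoplus_i\Pi^{I_i}_{H,A}$) it suffices to check this on each closed subinterval, where it follows from density of global sections and completeness.

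\textbf{Main obstacle.} The hard part will be establishing the noncommutative noetherian-case analog of \cite[Lemma 5.3.10]{KL16}, i.e.\ that uniformity of a pseudocoherent sheaf (a bound on ranks in a projective resolution) is equivalent to finite generation of global sections over $\Pi^r_{H,A}$ — the step I flagged above as only conjectural in the excerpt. In the commutative setting this rests on the rank-boundedness extracted from strong noetherianity plus the glueing of projective resolutions; over a noncommutative noetherian Banach algebra one must be careful that "pseudocoherent" behaves well (kernels of maps between finite free right modules are finitely generated, which holds by right-noetherianity) and that the completed inverse limit of a compatible system of finite free resolutions of bounded rank converges to a finite free resolution of the global sections of bounded rank. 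The key technical input is that $\Pi^{[s,r]}_{H,A}$ is strongly (right-)noetherian in the noncommutative sense — guaranteed by \cref{assumption642} and illustrated by the distribution-algebra Example via the graded-ring criterion \cite[Proposition I.7.1.2]{LVO} — so that the Fr\'echet--Stein machinery of \cite{ST1} applies and produces the required coherence of the inverse limit; once that is in hand, the equivalence follows by the same formal manipulations as in the commutative proof.
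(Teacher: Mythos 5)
There is a genuine gap here, and it is worth being explicit about what it is. The statement you are proving is left as a \emph{conjecture} in the paper: the author states just before it that the finite generation of global sections of a pseudocoherent sheaf is not guaranteed, that it is unclear how much worse the noncommutative situation is than the commutative one, and that the analogs of \cite[Lemma 5.3.10]{KL16} and \cite[Lemma 5.3.12]{KL16} are therefore only recorded as conjectures. So there is no proof in the paper to compare against, and your proposal does not supply one either: at the decisive moment you write that the equivalence between uniformity of the sheaf and pseudocoherence of its global sections (the preceding conjecture, the analog of \cite[Lemma 5.3.10]{KL16}) ``should be able to be upgraded to a theorem in the noetherian case,'' and that once this is in hand the rest ``follows by the same formal manipulations.'' That upgrade is precisely the content that is missing; asserting that strong right-noetherianity ``controls the ranks uniformly'' is not an argument, since the whole point of the uniformity condition is that global sections of a pseudocoherent sheaf over a (noncommutative) Fr\'echet--Stein algebra are in general only coadmissible and need not be finitely generated, so some genuine replacement for Kedlaya--Liu's rank bookkeeping has to be produced, not invoked.

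Beyond that central step, several of the supporting inputs you cite do not exist in the paper's noncommutative setting and are not obviously available. You appeal to a ``noncommutative version of \cite[Lemma 5.3.4]{KL16}'' ($2$-pseudoflatness, or flatness, of the restriction maps $\Pi^{[r_1,r_2]}_{H,A}\to\Pi^{[r_1,t]}_{H,A}$, $\Pi^{[r_1,r_2]}_{H,A}\to\Pi^{[t,r_2]}_{H,A}$) and to a noncommutative analog of \cite[Proposition 5.3.5]{KL16} (exactness of the augmented \v{C}ech complex and effective descent for pseudocoherent right modules along $\Pi^I_{H,A}\to\bigoplus_i\Pi^{I_i}_{H,A}$). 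The paper proves these only in the commutative section (via the strict presentations of \cite[Lemma 5.3.3]{KL16}, which rest on commutative Tate-algebra arguments); in the noncommutative section it instead develops, in the ``Noncommutative Descent'' subsection, a hand-made glueing formalism that covers finite projective data and finitely presented data under an added flatness hypothesis — exactly because the pseudocoherent case is not under control. So your plan, while a faithful transcription of the commutative architecture and correct in identifying where the difficulty sits, is in substance a reduction of this conjecture to other unproved statements (the preceding conjecture plus noncommutative flatness and descent lemmas), not a proof of it. To make progress you would need to actually establish: (i) flatness (or at least $2$-pseudoflatness) of the interval restriction maps for the noncommutative $\Pi^{[s,r]}_{H,A}$, e.g.\ from the graded-ring/noetherianity hypotheses of the assumption you cite; (ii) descent for finitely generated right modules over interval coverings in this setting; and (iii) the uniformity $\Leftrightarrow$ finitely generated global sections equivalence, with an honest treatment of how bounded-rank resolutions pass to the inverse limit.
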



\begin{conjecture} \mbox{\bf{(After Kedlaya-Liu \cite[Lemma 5.3.13]{KL16})}}
The global section functor defines the corresponding equivalence between the categories of the following two sorts of objects. The first ones are the corresponding finite projective uniform pseudocoherent sheaves over $\Pi^r_{H,A}$. The second ones are those finite projective modules over the ring $\Pi^r_{H,A}$. 	
\end{conjecture}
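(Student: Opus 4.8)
The plan is to deduce this from the preceding conjecture (the noncommutative analog of \cite[Lemma 5.3.12]{KL16}), by checking that the global section functor and its quasi-inverse restrict to the subcategories of finite projective objects. In the forward direction, starting from a finite projective right module $M$ over $\Pi^r_{H,A}$, I would realise $M$ as a direct summand of a finite free module $(\Pi^r_{H,A})^n$ and set $M_I:=M\otimes_{\Pi^r_{H,A}}\Pi^I_{H,A}$ for each closed $I\subset(0,r]$; since the completed base change along $\Pi^r_{H,A}\to\Pi^I_{H,A}$ carries finite free to finite free and commutes with passing to direct summands, each $M_I$ is finite projective over $\Pi^I_{H,A}$, the glueing isomorphisms and cocycle conditions are automatic, and the common bound $n$ on the number of generators of all the $M_I$ shows $(M_I)_I$ is a uniform pseudocoherent sheaf; hence it lies in the first category.

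For full faithfulness I would invoke the identification $\Pi^r_{H,A}\overset{\sim}{\to}\varprojlim_s\Pi^{[s,r]}_{H,A}$ from the noncommutative analog of \cite[Lemma 5.3.8]{KL16} together with the module-level statement (the analog of \cite[Lemma 5.3.9]{KL16}) that the global section is dense in each section and that the first derived inverse limit vanishes: a compatible system of morphisms $M_I\to N_I$ of finite projective sheaves then passes to a unique morphism $M=\varprojlim M_I\to N=\varprojlim N_I$, and conversely. This is exactly the restriction to finite projective objects of the fully faithfulness already recorded for pseudocoherent objects in the preceding conjecture.

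For essential surjectivity, given a finite projective uniform pseudocoherent sheaf $(M_I)_I$, the preceding conjecture yields a pseudocoherent right module $M$ over $\Pi^r_{H,A}$ with $M\otimes_{\Pi^r_{H,A}}\Pi^I_{H,A}\cong M_I$; the remaining point is that $M$ is finite projective. I would pick a surjection $(\Pi^r_{H,A})^n\twoheadrightarrow M$ (possible since $M$ is finitely generated, with $n$ the uniform bound), let $K$ be the kernel, and base change to each $\Pi^I_{H,A}$; using the noncommutative $2$-pseudoflatness of $\Pi^r_{H,A}\to\Pi^I_{H,A}$ on pseudocoherent right modules one gets $0\to K_I\to(\Pi^I_{H,A})^n\to M_I\to 0$ with $K_I$ finite projective, hence $\mathrm{Ext}^1_{\Pi^I_{H,A}}(M_I,K_I)=0$ for every $I$. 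Expressing $\mathrm{Ext}^1_{\Pi^r_{H,A}}(M,K)$ through the inverse system over a cofinal chain $I_1\supset I_2\supset\cdots$ exhausting $(0,r]$, the vanishing of these pieces together with the vanishing of $R^1\varprojlim$ of the system $\{\mathrm{Hom}_{\Pi^{I_j}_{H,A}}(M_{I_j},K_{I_j})\}$ — which follows from the Fr\'echet--Stein formalism, i.e. the analog of \cite[Lemma 5.3.9]{KL16} applied to the internal Hom sheaf $\underline{\mathrm{Hom}}(M,K)$ — forces $\mathrm{Ext}^1_{\Pi^r_{H,A}}(M,K)=0$, so the presentation splits and $M$ is a direct summand of $(\Pi^r_{H,A})^n$, hence finite projective.

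The \textbf{main obstacle} I anticipate is the noncommutative Fr\'echet--Stein bookkeeping underlying this last step: one needs $\Pi^r_{H,A}$ to be a genuine two-sided Fr\'echet--Stein algebra with the $\Pi^{[s,r]}_{H,A}$ as its defining Banach pieces (requiring both left and right Noetherianity and flatness of the transition maps, granted by \cref{assumption642}), the exactness of the base change $\Pi^r_{H,A}\to\Pi^I_{H,A}$ on pseudocoherent right modules, and the compatibility of $\mathrm{Ext}$ with the relevant inverse limits for coadmissible modules. Once these are in place the argument is formally identical to the commutative case and to \cite[Lemma 5.3.13]{KL16}; since the two preceding statements are themselves conjectural in the noncommutative setting, the present proof is correspondingly conditional on them.
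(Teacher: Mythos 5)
The paper offers no proof of this statement to compare yours against: it is deliberately recorded as a conjecture, together with the noncommutative analogs of \cite[Lemma 5.3.10, Lemma 5.3.12]{KL16}, with the explicit caveat that the author is ``not quite sure how more complicated the noncommutative situation is.'' So your write-up should be read as a conditional reduction rather than a proof, and you do acknowledge that it rests on the preceding (also conjectural) equivalence for uniform pseudocoherent sheaves. The forward direction and the full-faithfulness step, which only use the noncommutative analogs of \cite[Lemma 5.3.8, Lemma 5.3.9]{KL16} that the paper does state, are fine as far as they go.

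Even granting the preceding conjecture, two steps in your essential-surjectivity argument have genuine gaps. (i) You invoke ``noncommutative $2$-pseudoflatness of $\Pi^r_{H,A}\to\Pi^I_{H,A}$'' and flatness of the transition maps $\Pi^{I'}_{H,A}\to\Pi^{I}_{H,A}$. In the commutative part of the paper these come from the analogs of \cite[Lemma 5.3.3, Lemma 5.3.4]{KL16} together with \cite[Corollary 2.6.9]{KL16}, but in the noncommutative subsection no such statements are made, and \cref{assumption642} only gives noetherianity of the Banach pieces, not flatness of these non-finite ring maps; without it you cannot identify $K\otimes_{\Pi^r_{H,A}}\Pi^{I}_{H,A}$ with $\ker(F_I\to M_I)$, which your $\mathrm{Ext}$ computation at the finite levels and its comparison with the limit both require. (ii) The vanishing of $R^1\varprojlim$ of $\{\mathrm{Hom}_{\Pi^{I_j}_{H,A}}(M_{I_j},K_{I_j})\}$ cannot be obtained by ``applying the analog of \cite[Lemma 5.3.9]{KL16} to the internal Hom sheaf'': over a noncommutative ring, $\mathrm{Hom}$ of right modules is not a right module (only a module over the center, or a bimodule after dualizing), so this family is not a vector bundle or pseudocoherent sheaf in the sense of the paper's definitions, and that proposition does not apply to it as stated. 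You would need either a direct topological Mittag--Leffler argument (dense images of the transition maps on these Hom spaces in their Fr\'echet topologies) or a genuinely two-sided coadmissibility/Fr\'echet--Stein formalism for $\Pi^r_{H,A}$ identifying $\mathrm{Ext}^1$ over the limit ring with the limit of finite-level data --- precisely the machinery the paper does not establish and the reason the statement is left as a conjecture. Until (i) and (ii) are supplied, the splitting of the presentation of $M$, which is the heart of essential surjectivity, is not proved.
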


\subsection{$\varphi$-Module Structures and $\Gamma$-Module Structures over Noncommutative Rings}

\indent Now we consider the corresponding Frobenius actions over the corresponding imperfect rings we defined before, note that the corresponding Frobenius actions are induced from the corresponding imperfect rings in the undeformed situation from \cite{KL16} which is to say that the Frobenius action on the ring $A$ is actually trivial.

\begin{assumption}
We now drop the corresponding requirement on $A$ which makes the noncommutative deformation of the Robba ring over some interval both left and right noetherian, in \cref{assumption642}. 	
\end{assumption}

\indent First we consider the corresponding Frobenius modules:

\begin{definition} \mbox{\bf{(After Kedlaya-Liu \cite[Definition 5.4.2]{KL16})}}
Over the period rings $\Pi_{H,A}$ or $\breve{\Pi}_{H,A}$  (which is denoted by $\triangle$ in this definition) we define the corresponding $\varphi^a$-modules over $\triangle$ which are respectively projective to be the corresponding finite projective right modules over $\triangle$ with further assigned semilinear action of the operator $\varphi^a$. 
\end{definition}

\begin{definition} \mbox{\bf{(After Kedlaya-Liu \cite[Definition 5.4.2]{KL16})}}
Over each rings $\triangle=\Pi^r_{H,A},\breve{\Pi}^r_{H,A}$ we define the corresponding projective $\varphi^a$-module over any $\triangle$ to be the corresponding finite projective right module $M$ over $\triangle$ with additionally endowed semilinear Frobenius action from $\varphi^a$ such that we have the isomorphism $\varphi^{a*}M\overset{\sim}{\rightarrow}M\otimes \square$ where the ring $\square$ is one $\triangle=\Pi^{r/p}_{H,A},\breve{\Pi}^{r/p}_{H,A}$.   
\end{definition}

\begin{definition} \mbox{\bf{(After Kedlaya-Liu \cite[Definition 5.4.2]{KL16})}}
Again as in \cite[Definition 5.4.2]{KL16}, we define the corresponding projective $\varphi^a$-modules over ring $\Pi^{[s,r]}_{H,A}$ or $\breve{\Pi}^{[s,r]}_{H,A}$  to be the finite projective right modules (which will be denoted by $M$) over $\Pi^{[s,r]}_{H,A}$ or $\breve{\Pi}^{[s,r]}_{H,A}$ respectively additionally endowed with semilinear Frobenius action from $\varphi^a$ with the following isomorphisms:
\begin{align}
\varphi^{a*}M\otimes_{\Pi_{H,A}^{[sp^{-ah},rp^{-ah}]}}\Pi_{H,A}^{[s,rp^{-ah}]}\overset{\sim}{\rightarrow}M\otimes_{\Pi_{H,A}^{[s,r]}}\Pi_{H,A}^{[s,rp^{-ah}]}
\end{align}
and
\begin{align}
\varphi^{a*}M\otimes_{\breve{\Pi}_{R,A}^{[sp^{-ah},rp^{-ah}]}}\breve{\Pi}_{R,A}^{[s,rp^{-ah}]}\overset{\sim}{\rightarrow}M\otimes_{\breve{\Pi}_{R,A}^{[s,r]}}\breve{\Pi}_{R,A}^{[s,rp^{-ah}]}.\\
\end{align} 
\end{definition}

\noindent Also one can further define the corresponding bundles carrying semilinear Frobenius in our context as in the situation of \cite[Definition 5.4.10]{KL16}:

\begin{definition} \mbox{\bf{(After Kedlaya-Liu \cite[Definition 5.4.10]{KL16})}}
Over the ring $\Pi^t_{H,A}$ or $\breve{\Pi}^t_{H,A}$ we define a corresponding projective Frobenius bundle to be a family $(M_I)_I$ of finite projective right modules over each $\Pi^I_{H,A}$ or $\breve{\Pi}^I_{H,A}$ respectively carrying the natural Frobenius action coming from the operator $\varphi^a$ such that for any two involved intervals having the relation $I\subset J$ we have:
\begin{displaymath}
M_J\otimes_{\Pi^J_{H,A}}\Pi^I_{H,A}\overset{\sim}{\rightarrow}	M_I
\end{displaymath}
and 
\begin{displaymath}
M_J\otimes_{\breve{\Pi}^J_{H,A}}\breve{\Pi}^I_{H,A}\overset{\sim}{\rightarrow}	M_I
\end{displaymath}
with the obvious cocycle condition. Here we have to propose condition on the intervals that for each $I=[s,r]$ involved we have $s\leq r/p^{ah}$. As in \cite[Definition 5.4.10]{KL16} one can take the corresponding 2-limit in the direct sense to define the corresponding objects over the full Robba rings.
\end{definition}

\indent We can then compare the corresponding objects defined above:

\begin{conjecture} \mbox{\bf{(After Kedlaya-Liu \cite[Lemma 5.4.11]{KL16})}}\\
I. Consider the following objects for some radius $r_0$ in our situation. The first group of objects are those finite projective $\varphi^a$-modules over the Robba ring $\Pi^{r_0}_{H,A}$. The second group of objects are those finite projective $\varphi^a$-bundles over the Robba ring $\Pi^{r_0}_{H,A}$. The third group of objects are those finite projective $\varphi^a$-modules over the Robba ring $\Pi^{[s,r]}_{H,A}$ for some radii $0<s\leq r\leq r_0$. Then we have that the corresponding categories of the two groups of objects are equivalent.
\end{conjecture}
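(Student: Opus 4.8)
The plan is to follow the structure of the proof of \cite[Lemma 5.4.11]{KL16} (and its commutative counterpart established earlier in this paper, namely the proposition after \cref{proposition6.15}), adapting each step to the noncommutative, right-module setting. The three categories in question are: finite projective $\varphi^a$-modules over the full Robba ring $\Pi^{r_0}_{H,A}$; finite projective $\varphi^a$-bundles over $\Pi^{r_0}_{H,A}$; and finite projective $\varphi^a$-modules over a single $\Pi^{[s,r]}_{H,A}$ with $0 < s \leq r \leq r_0$ (here one should take $s \leq r/p^{ah}$ so that the Frobenius isomorphism makes sense). The functors relating them are the obvious ones: a module over the full Robba ring restricts to a bundle by base change to each $\Pi^I_{H,A}$; a bundle restricts to its value $M_{[s,r]}$ on a single interval; and conversely one uses the Frobenius isomorphism to reconstruct the family $(M_I)_I$ from the datum over one interval by transport along $\varphi^{a}$.

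First I would treat the equivalence between bundles over $\Pi^{r_0}_{H,A}$ and modules over $\Pi^{[s,r]}_{H,A}$. In one direction, restriction of a bundle to the interval $[s,r]$ is clearly functorial. In the other direction, given a finite projective $\varphi^a$-module $M_{[s,r]}$ over $\Pi^{[s,r]}_{H,A}$, one propagates it across the Frobenius pullbacks $\varphi^{a*}M\otimes_{\Pi_{H,A}^{[sp^{-ah},rp^{-ah}]}}\Pi_{H,A}^{[s,rp^{-ah}]}\cong M\otimes_{\Pi_{H,A}^{[s,r]}}\Pi_{H,A}^{[s,rp^{-ah}]}$ to obtain modules over $\Pi^{[sp^{-nah},rp^{-nah}]}_{H,A}$ for all $n$, and then glues over a locally finite covering of $(0,r_0]$ by such translated intervals using the descent statement of \cref{proposition6.15} (which holds for finite projective right modules in the Banach setting). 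The cocycle condition is automatic from the cocycle condition for $\varphi^a$. Then the equivalence between bundles and modules over the full Robba ring $\Pi^{r_0}_{H,A}$ follows from the inverse-limit description $\Pi^{r}_{H,A}\cong \varprojlim_s \Pi^{[s,r]}_{H,A}$ (the lemma after \cref{proposition6.15} adapted to the noncommutative case) together with the vanishing of $R^1\varprojlim$ for finite projective modules, which one gets from the density of global sections in each section — this is the Fr\'echet--Stein formalism of \cite{ST1}, which applies since we are precisely in that situation.

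The main obstacle will be the finite-generatedness of the global sections: one needs to know that gluing finite projective right modules over the intervals $\Pi^I_{H,A}$ produces a finitely generated (indeed finite projective) module over $\Pi^{r_0}_{H,A}$, rather than merely a coherent-type object. In the commutative pseudocoherent setting this is exactly the uniformity issue, and here — because we have restricted to finite projective objects and (in \cref{assumption642}) to the noetherian setting for $\Pi^{[s,r]}_{H,A}$ — the uniform bound on the number of generators over each interval is provided by the Frobenius action, which forces all $M_I$ to have the same rank; one then argues as in \cite[Proposition 2.7.16]{KL16} that a uniform pseudocoherent sheaf which is locally finite projective of bounded rank has finite projective global sections. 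Care is needed that all tensor products and Hom's are taken on the correct (right) side, and that the Frobenius pullback $\varphi^{a*}(-) = \varphi^{a*}_{\Pi} \otimes_{\varphi^a,\Pi}(-)$ is the appropriate base change for right modules; apart from this bookkeeping, the argument of \cite[Lemma 5.4.11]{KL16} goes through verbatim. This is why the statement is left as a conjecture only at the pseudocoherent level (the earlier conjectures in this subsection), whereas the finite projective case can be handled cleanly.
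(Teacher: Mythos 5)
You should note at the outset that the paper gives no proof of this statement: it is deliberately left as a conjecture, and the reason is precisely the step your sketch takes for granted. Your argument transplants the commutative proof, and its two load-bearing inputs are (i) the interval-descent statement \cref{proposition6.15}, which you assert ``holds for finite projective right modules in the Banach setting,'' and (ii) the finite generation and projectivity of the global sections of a bundle, for which you invoke \cite[Proposition 2.7.16]{KL16} together with a rank argument. Both inputs are only available, in this paper and in \cite{KL16}, for commutative coefficients: \cref{proposition6.15} is stated and proved for a reduced (commutative) affinoid algebra $A$, and its proof rests on \cite[Lemma 5.3.3, Lemma 5.3.4, Proposition 5.3.5]{KL16} (the strict presentations of $\Pi^{[t,r]}_{H,A}$ as quotients of $\Pi^{[s,r]}_{H,A}\{X/p^{-t}\}$ and the $2$-pseudoflatness of the restriction maps), none of which is established after completed tensor with a noncommutative Banach algebra $A$. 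Likewise \cite[Proposition 2.7.16]{KL16} concerns commutative sheafy adic Banach rings; its noncommutative counterpart is exactly the content of the conjectures after \cite[Lemma 5.3.10]{KL16}, \cite[Lemma 5.3.12]{KL16} and \cite[Lemma 5.3.13]{KL16} stated earlier in the same subsection, which the paper leaves open. So both the glueing step (producing a bundle over $\Pi^{r_0}_{H,A}$ from a module over one interval) and the globalization step (producing a finite projective module over $\Pi^{r_0}_{H,A}$ from a bundle) are unsupported as written; citing the commutative results does not close them.

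Two further concrete points. First, the claim that ``the Frobenius action forces all $M_I$ to have the same rank'' does not transfer: over a noncommutative ring a finite projective right module need not have a well-defined rank, and in any case the Frobenius isomorphism only compares an interval with its $p^{-ah}$-translate, so a uniform bound on the number of generators over all closed subintervals of $(0,r_0]$ still requires an argument in the style of \cite[Lemma 5.4.11]{KL16} combined with the (here conjectural) uniformity statements. Second, what the paper actually does in this direction is Section 6.7: it develops noncommutative glueing lemmas after \cite[Section 1.3]{KL15} and \cite[Section 2.7]{KL15} (\cref{lemma677}, \cref{lemma678}, \cref{lemma679} and their Banach versions), and then proves only a conditional comparison of bundles with modules over a single interval, keeping \cref{assumption642}, assuming \cref{conjecture672}, and assuming in addition flatness of the restriction maps together with strict exactness and density for the relevant glueing square. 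To upgrade your sketch to a proof you would need to verify, for the deformed rings with noncommutative $A$, the strict exactness of $0\rightarrow\Pi^{I\cup J}_{H,A}\rightarrow\Pi^{I}_{H,A}\oplus\Pi^{J}_{H,A}\rightarrow\Pi^{I\cap J}_{H,A}\rightarrow 0$, the density statement, and flatness (or at least $2$-pseudoflatness) of the restrictions — i.e.\ noncommutative replacements for \cite[Lemma 5.3.3--Proposition 5.3.5]{KL16} — and, for the passage to $\Pi^{r_0}_{H,A}$, a noncommutative analog of \cite[Lemma 5.3.13]{KL16}, which is also still conjectural.
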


%

\indent Now we define the corresponding $\Gamma$-modules over the period rings attached to the tower $(H_\bullet,H_\bullet^+)$. The corresponding structures are actually abstractly defined in the same way as in \cite{KL16} and parallel to our commutative setting. First we consider the deformation of the corresponding complex $*_{H^\bullet}$ for any ring $*$ in \cref{2setting6.4}.

\begin{assumption}
Recall that the corresponding tower is called decompleting if it is weakly decompleting, finite \'etale on each finite level and having the exact sequence $\overline{\varphi}^{-1}H'_{H^\bullet}/H'_{H^\bullet}$ is exact. We now assume that the tower $(H_\bullet,H^+_\bullet)$ is then decompleting.	
\end{assumption}


\begin{setting}
Assume now $\Gamma$ is a topological group as in \cite[Definition 5.5.5]{KL16} acting on the corresponding period rings in the \cref{2setting6.3} in the original context of \cref{2setting6.3}. Then we consider the corresponding induced continous action over the corresponding deformed version in our context namely in \cref{2setting6.4}. Assume now that the tower is Galois with the corresponding Galois group $\Gamma$.	
\end{setting}

\begin{definition}
We now consider the corresponding inhomogeneous continuous cocycles of the group $\Gamma$, as in \cite[Definition 5.5.5]{KL16} we use the following notation to denote the corresponding complex extracted from a single tower for a given period ring $*_{H,A}$ in \cref{2setting6.4} for each $k>0$:
\begin{displaymath}
*_{H^k,A}:=C_\mathrm{con}(\Gamma^k,*_{H})\widehat{\otimes}_{\sharp} ?
\end{displaymath}
where $\sharp=\mathbb{Q}_p,\mathbb{Z}_p$ and $?=A,\mathfrak{o}_A$ respectively, which forms the corresponding complex $(*_{H^\bullet,A},d^\bullet)$ with the corresponding differential as in \cite[Definition 5.5.5]{KL16} in the sense of continuous group cohomology.	
\end{definition}

\begin{definition}
Having established the corresponding meaning of the $\Gamma$-structure we now consider the corresponding definition of $\Gamma$-modules. Such modules called the corresponding right $\Gamma$-modules are defined over the corresponding rings in \cref{2setting6.4}. Again we allow that the corresponding right modules to be finite projective, or pseudocoherent or fpd over the rings in \cref{2setting6.4}. And the modules are defined to be carrying the corresponding continuous semilinear action from the group $\Gamma$.	
\end{definition}

\begin{proposition} \mbox{\bf{(After Kedlaya-Liu \cite[Corollary 5.6.5]{KL16})}}
The complex $\varphi^{-1}\Pi^{[sp^{-h},rp^{-h}]}_{H^\bullet_{\geq n},A}/\Pi^{[s,r]}_{H^\bullet_{\geq n},A}$ and the complex $\widetilde{\Pi}^{[s,r]}_{H^\bullet_{\geq n},A}/\Pi^{[s,r]}_{H^\bullet_{\geq n},A}$ are strictly exact for any truncation index $n$. The corresponding radii satisfy the corresponding relation $0<s\leq r\leq r_0$.
\end{proposition}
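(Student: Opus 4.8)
The plan is to reduce the statement to its non-deformed counterpart \cite[Corollary 5.6.5]{KL16} by exploiting the fact that the deformed period rings are obtained from the undeformed ones by a completed tensor product with the noncommutative affinoid algebra $A$, and that $A$ admits an orthogonal (Schauder) basis over the relevant coefficient field $\sharp=\mathbb{Q}_p$ or $\mathbb{Z}_p$. Concretely, each term of the complex $\varphi^{-1}\Pi^{[sp^{-h},rp^{-h}]}_{H^\bullet_{\geq n},A}/\Pi^{[s,r]}_{H^\bullet_{\geq n},A}$ is, by \cref{2setting6.4} and the definition of the cochain complex $*_{H^\bullet,A}=C_\mathrm{con}(\Gamma^\bullet,*_H)\widehat{\otimes}_\sharp ?$, isomorphic as a topological $\sharp$-module to the corresponding undeformed term completely tensored with $A$ (equivalently $\mathfrak{o}_A$). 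Since $A$ has an orthogonal basis, $-\widehat{\otimes}_\sharp A$ is an exact functor on the category of complete $\sharp$-Banach (or Fréchet) modules that preserves strictness: it sends a strict exact sequence to a strict exact sequence because one may write the tensored complex as a (completed) direct sum of copies of the original complex indexed by the basis, and strict exactness is detected termwise on such orthogonal decompositions.

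The key steps, in order, would be: first, recall from \cref{2setting6.3} and \cref{2setting6.4} that all the imperfect rings $\Pi^{[s,r]}_{H^\bullet_{\geq n},A}$, $\varphi^{-1}\Pi^{[sp^{-h},rp^{-h}]}_{H^\bullet_{\geq n},A}$ and $\widetilde{\Pi}^{[s,r]}_{H^\bullet_{\geq n},A}$ are by construction the completed tensor products over $\sharp$ of the corresponding undeformed rings with $A$, compatibly with the transition maps, the Frobenius, and the $\Gamma$-cochain differentials, so the two complexes in the statement are literally $(-)\widehat{\otimes}_\sharp A$ applied to the complexes appearing in \cite[Corollary 5.6.5]{KL16}. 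Second, invoke \cite[Corollary 5.6.5]{KL16} to know the undeformed complexes are strict exact for every truncation index $n$ and every $0<s\leq r\leq r_0$. Third, establish the auxiliary fact that for a complete $\sharp$-Banach module $V$ possessing an orthogonal basis (which is the role played by $A$), the functor $-\widehat{\otimes}_\sharp V$ preserves strict exactness of complexes of complete $\sharp$-Banach modules; this is where one uses the orthogonal basis to identify $C\widehat{\otimes}_\sharp V$ with the completed direct sum $\widehat{\bigoplus}_{b\in \text{basis}} C$, on which strictness can be checked coordinatewise. Fourth, conclude by applying this functor to the undeformed strict exact complex. One should also remark, as in the cited proposition, that the truncation by $n$ and the Frobenius-twist bookkeeping are preserved under the tensoring because they are already present at the undeformed level.

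The main obstacle I expect is the third step: justifying carefully that the completed tensor product with $A$, which is a \emph{noncommutative} Banach affinoid algebra, still behaves flatly/strictly on these particular Fréchet and Banach complexes. In the commutative case the paper repeatedly appeals to the Schauder/orthogonal basis of $A$ over $\mathbb{Q}_p$ (cf.\ the remark following the relevant setting, and \cite[Definition 6.1]{KP}); in the noncommutative setting one must check that $A$ — while not noetherian — still admits such an orthonormalizable structure as a topological $\mathbb{Q}_p$-vector space (which it does, being a quotient of the free noncommutative Tate algebra $E\{Z_1,\dots,Z_n\}$, whose monomials in the noncommuting variables form an orthogonal basis), and that the completed tensor product commutes with the formation of quotients by the norm-closed ideal defining $A$ and with the $\Gamma$-continuous-cochain construction. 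Once one has the orthogonal basis and the compatibility of $\widehat{\otimes}$ with strict quotients (which follows from \cite[2.1.8, Proposition 6]{BGR}-type strictness arguments, exactly as used in the proof of \cref{proposition5.7}), the preservation of strict exactness is formal, and the proof of the proposition reduces to a one-line citation of \cite[Corollary 5.6.5]{KL16} plus the remark to ``consider the corresponding Schauder basis of $A$,'' precisely as the surrounding commutative propositions are handled.
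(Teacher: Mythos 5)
Your proposal matches the paper's own argument: the paper proves this by citing \cite[Corollary 5.6.5]{KL16} and invoking the Schauder (orthogonal) basis of $A$ so that the completed tensor with $A$ preserves strict exactness, which is exactly the reduction you spell out in more detail. Your elaboration of the third step (orthonormalizability of the noncommutative $A$ and compatibility of $\widehat{\otimes}$ with strict quotients) is a faithful expansion of what the paper leaves implicit, not a different route.
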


\begin{proof}
See \cite[Corollary 5.6.5]{KL16}, and consider the corresponding Schauder basis of $A$.	
\end{proof}

\begin{proposition} \mbox{\bf{(After Kedlaya-Liu \cite[Lemma 5.6.6]{KL16})}}
The complex 
\begin{displaymath}
M\otimes_{\Pi^{[s,r]}_{H,A}} \varphi^{-(\ell+1)}\Pi^{[sp^{-h(\ell+1)},rp^{-h(\ell+1)}]}_{H^\bullet,A}/ \varphi^{-\ell}\Pi^{[sp^{-h\ell},rp^{-h\ell}]}_{H^\bullet,A}	
\end{displaymath}
and the complex 
\begin{displaymath}
M\otimes_{\Pi^{[s,r]}_{H,A}} \widetilde{\Pi}^{[s,r]}_{H^\bullet,A}/\varphi^{-\ell}\Pi^{[sp^{-h(\ell)},rp^{-h(\ell)}]}_{H^\bullet,A}
\end{displaymath}
are strictly exact for any truncation index $n$. The corresponding radii satisfy the corresponding relation $0<s\leq r\leq r_0$, and $\ell$ is bigger than some existing truncated integer $\ell_0\geq  0$.
\end{proposition}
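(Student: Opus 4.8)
The plan is to follow the strategy of \cite[Lemma 5.6.6]{KL16} essentially verbatim, importing the deformed ingredients we have already assembled. The statement to prove is that for any right $\Gamma$-module $M$ over $\Pi^{[s,r]}_{H,A}$ and any sufficiently large truncation index $\ell \geq \ell_0$, the two complexes
\[
M\otimes_{\Pi^{[s,r]}_{H,A}} \varphi^{-(\ell+1)}\Pi^{[sp^{-h(\ell+1)},rp^{-h(\ell+1)}]}_{H^\bullet,A}/ \varphi^{-\ell}\Pi^{[sp^{-h\ell},rp^{-h\ell}]}_{H^\bullet,A}
\]
and
\[
M\otimes_{\Pi^{[s,r]}_{H,A}} \widetilde{\Pi}^{[s,r]}_{H^\bullet,A}/\varphi^{-\ell}\Pi^{[sp^{-h\ell},rp^{-h\ell}]}_{H^\bullet,A}
\]
are strictly exact. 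First I would reduce to the undeformed graded pieces: both quotient complexes of period rings are built, degree by degree in $\bullet$, from continuous cochains $C_\mathrm{con}(\Gamma^k, -)$ of the corresponding quotients of the Kedlaya--Liu period rings, completely tensored with $A$ over $\mathbb{Q}_p$ (or $\mathfrak{o}_A$ over $\mathbb{Z}_p$). Since $A$ admits a Schauder basis as an orthogonalizable Banach space over the base field --- this is exactly the hypothesis recorded around the sheaf constructions in Section 3 and reused in the preceding propositions (the analog of \cite[Corollary 5.6.5]{KL16}) --- completely tensoring with $A$ is an exact functor on the relevant category of admissible Banach complexes and preserves strict exactness. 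So the deformed complexes are strictly exact as soon as their undeformed counterparts are, which is precisely \cite[Lemma 5.6.6]{KL16} for the truncation index chosen there.

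Next I would handle the tensor factor $M$. Here the point, exactly as in \cite[Lemma 5.6.6]{KL16}, is that the two quotient complexes of period rings have a direct-sum (orthogonal) decomposition after completion: once $\ell$ is large enough (larger than the Kedlaya--Liu truncation bound $\ell_0$, which depends only on the tower and on $r_0$), the transition maps $\varphi^{-\ell}\Pi^{[sp^{-h\ell},rp^{-h\ell}]}_H \hookrightarrow \varphi^{-(\ell+1)}\Pi^{[sp^{-h(\ell+1)},rp^{-h(\ell+1)}]}_H$ split compatibly with the $\Gamma$-action up to controlled error, so the quotient is itself an orthogonalizable Banach module over $\Pi^{[s,r]}_{H,A}$. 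Tensoring a strictly exact complex of orthogonalizable Banach modules by a finite projective (or more generally a flat Banach) right $\Gamma$-module $M$ over $\Pi^{[s,r]}_{H,A}$ again yields a strictly exact complex; this is the completed-tensor analog of flatness and is where the noetherian hypothesis on $\Pi^{[s,r]}_{H,A}$ (Assumption preceding the proposition) guarantees that pseudocoherent $M$ are finitely presented and that $\widehat{\otimes}_{\Pi^{[s,r]}_{H,A}} M$ commutes with the relevant finite limits. In the noncommutative setting one simply records that all modules are taken to be right modules and that the base change $-\widehat{\otimes}_{\Pi^{[s,r]}_{H,A}} M$ is performed on the right, which does not affect the strictness argument.

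The main obstacle I anticipate is the interaction between the completed tensor product $\widehat{\otimes}_\sharp$ defining the $\Gamma$-cochain complexes and the noncommutativity of $A$: in \cite{KL16} the Schauder-basis argument is set up for commutative coefficient rings, and one must check that for a noncommutative Banach affinoid $A$ the complex $C_\mathrm{con}(\Gamma^\bullet, *_H)\widehat{\otimes}_\sharp A$ is still computing the right thing and that the orthogonal decompositions survive. I would resolve this by noting that $A$ is still orthogonalizable as a Banach \emph{module} over the base field (which is all the Schauder-basis argument uses --- it never uses multiplicativity of $A$), so the completed tensor product with $A$ is degreewise a direct sum of copies of the undeformed complex indexed by the Schauder basis, hence strict exactness is inherited term by term. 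The only genuinely new check is that the $\Gamma$-action and the residual splittings are compatible with the \emph{right} $A$-module structure, which holds because $\Gamma$ acts $A$-linearly (the action on $A$ being trivial), as fixed in the setting for the $\Gamma$-structure. After these reductions the proof is complete by appeal to \cite[Lemma 5.6.6]{KL16}.
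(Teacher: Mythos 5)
Your proposal takes essentially the same route as the paper: the paper's own proof is simply a citation of \cite[Lemma 5.6.6]{KL16}, with the deformation by $A$ handled exactly as you describe, via a Schauder (orthogonal) basis of $A$ so that the completed tensor with $A$ preserves the strict exactness of the undeformed complexes (this is made explicit in the proof of the neighboring analog of \cite[Corollary 5.6.5]{KL16}). Your extra observations about the right $A$-module structure, the $A$-linearity of the $\Gamma$-action, and reducing the factor $M$ to the projective/orthogonalizable case are consistent with that intended argument rather than a departure from it.
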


\begin{proof}
See \cite[Lemma 5.6.6]{KL16}.	
\end{proof}

\begin{lemma} \mbox{\bf{(After Kedlaya-Liu \cite[Lemma 5.6.8]{KL16})}} Suppose we have a direct system of Banach rings $(B_i,\iota_i)$ where the corresponding map $\iota_i$ as in \cite[Lemma 5.6.8]{KL16} is submetric for each $i$. As in \cite[Lemma 5.6.8]{KL16} we assume the direct limit is endowed with the corresponding infimum seminorm. Now assume that $B\rightarrow C$ is a isometry and of the corresponding image which is assumed to be dense into a Banach ring $C$. Then we have that one can have the chance to have that any finitely generated right projective module over the ring $C$ could come from the corresponding one finitely generated right projective module over some ring $B_i$ after the corresponding base change.

\end{lemma}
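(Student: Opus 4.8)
The plan is to adapt the argument of \cite[Lemma 5.6.8]{KL16} directly, keeping track only of the modifications needed when modules are taken to be right modules over the (possibly noncommutative) Banach rings in the direct system. The key inputs are: (i) a finitely generated right projective module $P$ over $C$ can be presented as a direct summand of some finite free module $C^n$, via an idempotent $e\in \mathrm{M}_n(C)$ acting on the right; (ii) because $B\rightarrow C$ has dense image and the direct limit $B=\varinjlim B_i$ carries the infimum seminorm, the ring $C$ is the completion of (the image of) $B$, so every element of $\mathrm{M}_n(C)$ is a limit of elements coming from matrices over the $B_i$; and (iii) idempotents are rigid under small perturbation in a Banach ring, so an element of $\mathrm{M}_n(C)$ close enough to $e$ can be corrected to a genuine idempotent conjugate to $e$.

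First I would fix $e\in \mathrm{M}_n(C)$ with $eC^n\cong P$ as right $C$-modules. Using the density of the image of $B$ in $C$ and the fact that each $\iota_i$ is submetric (so that norms are controlled uniformly along the system and the infimum seminorm on $B$ is computed from finitely many stages), I would choose some index $i$ and a matrix $e_0'\in \mathrm{M}_n(B_i)$ whose image $e'$ in $\mathrm{M}_n(C)$ satisfies $\|e'-e\| < \varepsilon$ for a suitably small $\varepsilon$ depending only on $\|e\|$ and the submetric constants. Next I would apply the standard idempotent-lifting/correction lemma: for $\varepsilon$ small, the element $e''=3(e')^2-2(e')^3$ (or the analogous Newton-type correction valid in noncommutative Banach algebras) is an idempotent in $\mathrm{M}_n(C)$, commutes appropriately with $e$ up to controllable error, and is conjugate to $e$ by a unit $u=1+(e-e')(\ldots)$ lying in $\mathrm{M}_n(C)$, hence $e''C^n\cong eC^n\cong P$. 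The point is that $e''$ is a polynomial in $e'$, hence already defined over $B_i$ (after possibly enlarging $i$ so that the polynomial expression converges there — but it is a finite polynomial, so no enlargement is needed): set $e_0''=3(e_0')^2-2(e_0')^3\in \mathrm{M}_n(B_i)$. Since $B_i\to C$ factors through $B$ and the correction polynomial commutes with ring maps, $e_0''$ maps to $e''$. One must then check $e_0''$ is itself idempotent in $\mathrm{M}_n(B_i)$, or at worst in $\mathrm{M}_n(B_{i'})$ for some $i'\geq i$ — this requires that the relation $(e_0'')^2=e_0''$, which holds in the completion $C$, already holds at a finite stage; here I would use that $B\to C$ is an isometry onto a dense subring, so any identity among finitely many elements of the image of $B$ that holds in $C$ holds in $B$, and then that $B=\varinjlim B_i$ means it holds in some $B_{i'}$. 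Finally, the right projective module $e_0''B_{i'}^n$ over $B_{i'}$ base changes to $e''C^n\cong P$, which is the desired statement.

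The main obstacle I anticipate is controlling the interaction between the infimum seminorm on the direct limit $B$ and the requirement that an \emph{algebraic} identity (idempotency of $e_0''$) descends to a finite stage $B_{i'}$. A naive approach only gives that $(e_0'')^2-e_0''$ has arbitrarily small norm in $B$, not that it vanishes; to get genuine idempotency one needs either (a) the ring maps to be injective (so that vanishing in $C$, via the isometry, forces vanishing in $B$ and then in some $B_{i'}$), or (b) a separate idempotent-lifting argument carried out already inside the Banach ring $B_{i'}$ using that $B_{i'}\to C$ is submetric, perturbing $e_0'$ \emph{within} $\mathrm{M}_n(B_{i'})$ before mapping to $C$. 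I expect the cleanest route is (b): perform the entire perturbation-and-correction argument at the stage $B_{i'}$, producing an honest idempotent there, and only then observe that its image realizes $P$; this mirrors how \cite[Lemma 5.6.8]{KL16} handles the commutative case and only uses that matrix rings over Banach rings admit idempotent perturbation, a property insensitive to commutativity. A secondary, more bookkeeping-level difficulty is verifying that "finitely generated projective" in the noncommutative right-module sense is still equivalent to "direct summand of a finite free module," which is standard but should be stated explicitly so that the matrix-idempotent description is available.
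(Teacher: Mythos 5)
Your ``cleanest route (b)'' is in fact the paper's proof, so in outline you are on track: present the module by an idempotent $O\in\mathrm{M}_n(C)$, approximate it by a matrix $P$ defined over some $B_i$, correct $P$ to an honest idempotent at a finite stage, and then identify the base change with the original module via a unit close to $1$. But one claim in your main narrative is wrong and should be repaired: a single application of the correction $e''=3(e')^2-2(e')^3$ does \emph{not} produce an idempotent of $\mathrm{M}_n(C)$; that polynomial only shrinks the idempotency defect quadratically. The paper instead iterates, setting $Q_0:=P$ and $Q_{k+1}:=3Q_k^2-2Q_k^3$, derives $\|Q_k^2-Q_k\|\to 0$ together with Cauchyness of $(Q_k)$ from the starting bound on $\|P^2-P\|$, and takes the limit inside the matrix ring over a fixed finite stage: every iterate is a polynomial in $P$, hence already defined there, and completeness of that Banach ring yields a genuine idempotent $Q$ at a finite stage. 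This also dissolves the difficulty you raise about descending the algebraic identity $(e_0'')^2=e_0''$ to a finite stage --- no identity has to descend, because the limit is taken at that stage; your alternative (a) via injectivity is neither available nor sufficient, since one polynomial correction never gives exact idempotency anyway. (One small bookkeeping point your option (b) rightly anticipates: the smallness of $\|P^2-P\|$ must be arranged in the norm of the stage where you iterate, which the isometry onto a dense image and the infimum seminorm allow after possibly enlarging the index.)

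The genuinely noncommutative content, which you only gesture at, is concentrated in one estimate: to launch the iteration one needs $\|P^2-P\|$ small, and since $O$ and $P$ need not commute the commutative manipulation must be replaced by the identity $P^2-P=(P-O)(P+O-1)+O(P-O)+(O-P)O$, which gives $\|P^2-P\|<\|O\|^{-2}$ once $\|O-P\|<\|O\|^{-3}$; this is precisely the modification the paper makes to Kedlaya--Liu's argument. For the final comparison the paper does not write a unit of the form $1+(e-e')(\cdots)$ but checks that $U:=OQ+(1-O)(1-Q)$ differs from $1$ by an element of small norm, hence is invertible, and intertwines the two idempotents, so that $QB_i^n\otimes_{B_i}C\cong OC^n$; your unit-conjugation plan is the same idea and works once $Q$ is an honest idempotent. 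Your remark that ``finitely generated projective right module'' is still equivalent to ``image of an idempotent matrix acting on a finite free right module'' is correct and indeed worth stating, as the paper uses it silently.
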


\begin{proof}
We need to work in the corresponding noncommutative setting. We adapt the argument in the proof of \cite[Lemma 5.6.8]{KL16} with some possible modification due to the fact that the rings are noncommutative. Let $O$ be the matrix attached to the corresponding projector associated to the module over the ring $C$. Now choose $P$ such that $\|O-P\|< \|O\|^{-3}$ (and we need to guarantee that this $P$ could live over some $B_i$). Now we consider the corresponding iteration where $Q_0:=P$ and set $Q_{k+1}=3Q_k^2-2Q_k^3$, so we have:
\begin{align}
Q_{k+1}-Q_k&=(1-2Q_k)(Q_k^2-Q_k),\\
Q^2_{k+1}-Q_{k+1}&=(4Q_k^2-4Q_k-3)(Q_k^2-Q_k)^2,	
\end{align}
which implies that by induction (see the estimate on $P^2-P$ below):
\begin{align}
\|Q_{k+1}-O\|&\leq \|O\|\|P^2-P\|,\\
\|Q^2_{k}-Q_{k}\|&\leq (\|P^2-P\|\|O\|^2)^{2^k}\|O\|^{-2}.	
\end{align}
%
We estimate $P^2-P$ in the following way:
\begin{align}
P^2-P &=P^2-P-O^2+O	\\
      &=(P-O)(P+O-1)+OP-PO\\
      &=(P-O)(P+O-1)+OP-O^2+O^2-PO\\
      &=(P-O)(P+O-1)+O(P-O)+(O-P)O
\end{align}
which implies that actually:
\begin{align}
\|P^2-P\| &\leq \sup\{\|(P-O)(P+O-1)\|,\|O(P-O)\|,\|(O-P)O\|\}\\
	      &\leq \sup\{\|(P-O)(P+O-1)\|,\|O\|\|(P-O)\|,\|(O-P)\|\|O\|\}\\
	      &< \|O\|^{-3}\|O\|\\
	      &< \|O\|^{-2}
\end{align}
which by taking the limit gives rise to the desired matrix $Q$ which proves the result. Indeed one can consider the corresponding projection induced by $Q$ from some free module over $B_i$, which gives rise to the desired projective module over $B_i$ whose base change to $C$ gives the initially chosen finite projective module 
over $C$ since one can compute:
\begin{align}
OQ+(1-O)(1-Q)-1&=OQ-O^2-Q^2+OQ\\
	           &=O(Q-O)-(Q-O)Q
\end{align}
which implies that $\|OQ+(1-O)(1-Q)-1\|\leq 1$.

\end{proof}

\begin{remark}
The bound for $Q_k-O$ is not the same as in \cite[Lemma 5.6.8]{KL16}, we would like to thank Professor Kedlaya for letting us know this correction.	
\end{remark}

\begin{proposition} \mbox{\bf{(After Kedlaya-Liu \cite[Lemma 5.6.9]{KL16})}}
With the corresponding notations as above we have that the corresponding base change from the ring taking the form of $\breve{\Pi}^{[s,r]}_{H,A}$ to the ring taking the form of $\widetilde{\Pi}^{[s,r]}_{H,A}$ establishes the corresponding equivalence on the corresponding categories of $\Gamma$-modules.	
\end{proposition}

\begin{proof}
This is a relative version of the corresponding result in \cite[Lemma 5.6.9]{KL16} we adapt the corresponding argument here. Indeed the corresponding fully faithfulness comes from the previous proposition the idea to prove the corresponding essential surjectivity comes from writing the module $M$ over the ring taking form of $\widetilde{\Pi}^{[s,r]}_{H,A}$ as the base change from $\varphi^{-k}(\Pi^{[s,r]}_{H,A})$ of a module $M_0$ after the corresponding analog of \cite[Lemma 5.6.8]{KL16} as above.	Then as in \cite[Lemma 5.6.9]{KL16} we consider the corresponding norms on the corresponding $M$ (the corresponding differentials) and the base change of $M_0$ (the corresponding differentials) which could be controlled up to some constant which could be further modified to be zero by reducing each time positive amount of constant from the constant represented by the difference of the norms.
\end{proof}

\begin{definition} \mbox{\bf{(After Kedlaya-Liu \cite[Definition 5.7.2]{KL16})}}
Over the period rings $\Pi_{H,A}$ or $\breve{\Pi}_{H,A}$  (which is denoted by $\triangle$ in this definition) we define the corresponding $(\varphi^a,\Gamma)$-modules over $\triangle$ which are respectively projective to be the corresponding finite projective right $\Gamma$-modules over $\triangle$ with further assigned semilinear action of the operator $\varphi^a$ with the isomorphism defined by using the Frobenius.  
\end{definition}

\begin{definition} \mbox{\bf{(After Kedlaya-Liu \cite[Definition 5.7.2]{KL16})}}
Over each rings $\triangle=\Pi^r_{H,A},\breve{\Pi}^r_{H,A}$ we define the corresponding projective $(\varphi^a,\Gamma)$-module over any $\triangle$ to be the corresponding finite projective right $\Gamma$-module $M$ over $\triangle$ with additionally endowed semilinear Frobenius action from $\varphi^a$ such that we have the isomorphism $\varphi^{a*}M\overset{\sim}{\rightarrow}M\otimes \square$ where the ring $\square$ is one $\triangle=\Pi^{r/p}_{H,A},\breve{\Pi}^{r/p}_{H,A}$.  
\end{definition}

\begin{definition} \mbox{\bf{(After Kedlaya-Liu \cite[Definition 5.7.2]{KL16})}}
Again as in \cite[Definition 5.7.2]{KL16}, we define the corresponding projective $(\varphi^a,\Gamma)$-modules over ring $\Pi^{[s,r]}_{H,A}$ or $\breve{\Pi}^{[s,r]}_{H,A}$ to be the finite projective right $\Gamma$-modules (which will be denoted by $M$) over $\Pi^{[s,r]}_{H,A}$ or $\breve{\Pi}^{[s,r]}_{H,A}$ respectively additionally endowed with semilinear Frobenius action from $\varphi^a$ with the following isomorphisms:
\begin{align}
\varphi^{a*}M\otimes_{\Pi_{H,A}^{[sp^{-ah},rp^{-ah}]}}\Pi_{H,A}^{[s,rp^{-ah}]}\overset{\sim}{\rightarrow}M\otimes_{\Pi_{H,A}^{[s,r]}}\Pi_{H,A}^{[s,rp^{-ah}]}
\end{align}
and
\begin{align}
\varphi^{a*}M\otimes_{\breve{\Pi}_{R,A}^{[sp^{-ah},rp^{-ah}]}}\breve{\Pi}_{R,A}^{[s,rp^{-ah}]}\overset{\sim}{\rightarrow}M\otimes_{\breve{\Pi}_{R,A}^{[s,r]}}\breve{\Pi}_{R,A}^{[s,rp^{-ah}]}.
\end{align}
\end{definition}

\begin{definition} \mbox{\bf{(After Kedlaya-Liu \cite[Definition 5.7.2]{KL16})}}
Over the ring $\Pi^t_{H,A}$ or $\breve{\Pi}^t_{H,A}$ we define a corresponding projective $(\varphi^a,\Gamma)$ bundle to be a family $(M_I)_I$ of finite projective right $\Gamma$-modules over each $\widetilde{\Pi}^I_{H,A}$ carrying the natural Frobenius action coming from the operator $\varphi^a$ such that for any two involved intervals having the relation $I\subset J$ we have:
\begin{displaymath}
M_J\otimes_{\Pi^J_{H,A}}\Pi^I_{H,A}\overset{\sim}{\rightarrow}	M_I
\end{displaymath}
and 
\begin{displaymath}
M_J\otimes_{\breve{\Pi}^J_{H,A}}\breve{\Pi}^I_{H,A}\overset{\sim}{\rightarrow}	M_I
\end{displaymath}
with the obvious cocycle condition. Here we have to propose condition on the intervals that for each $I=[s,r]$ involved we have $s\leq r/p^{ah}$. We require the corresponding topological conditions as we did for the corresponding Frobenius bundles. one can take the corresponding 2-limit in the direct sense to define the corresponding objects over the full Robba rings.
\end{definition}

\begin{definition} \mbox{\bf{(After Kedlaya-Liu \cite[Definition 5.7.2]{KL16})}}
Over the period rings $\widetilde{\Pi}_{H,A}$ (which is denoted by $\triangle$ in this definition) we define the corresponding $(\varphi^a,\Gamma)$-modules over $\triangle$ which are respectively projective to be the corresponding finite projective right $\Gamma$-modules over $\triangle$ with further assigned semilinear action of the operator $\varphi^a$ with the isomorphism defined by using the Frobenius. 
\end{definition}

\begin{definition} \mbox{\bf{(After Kedlaya-Liu \cite[Definition 5.7.2]{KL16})}}
Over each ring $\triangle=\widetilde{\Pi}^r_{H,A}$ we define the corresponding projective $(\varphi^a,\Gamma)$-module over any $\triangle$ to be the corresponding finite projective right $\Gamma$-module $M$ over $\triangle$ with additionally endowed semilinear Frobenius action from $\varphi^a$ such that we have the isomorphism $\varphi^{a*}M\overset{\sim}{\rightarrow}M\otimes \square$ where the ring $\square$ is one $\triangle=\widetilde{\Pi}^{r/p}_{H,A}$.

\end{definition}

\begin{definition} \mbox{\bf{(After Kedlaya-Liu \cite[Definition 5.7.2]{KL16})}}
Again as in \cite[Definition 5.7.2]{KL16}, we define the corresponding projective $(\varphi^a,\Gamma)$-modules over ring $\widetilde{\Pi}^{[s,r]}_{H,A}$ to be the finite projective right $\Gamma$-modules (which will be denoted by $M$) over $\widetilde{\Pi}^{[s,r]}_{H,A}$ additionally endowed with semilinear Frobenius action from $\varphi^a$ with the following isomorphisms:
\begin{align}
\varphi^{a*}M\otimes_{\widetilde{\Pi}_{H,A}^{[sp^{-ah},rp^{-ah}]}}\widetilde{\Pi}_{H,A}^{[s,rp^{-ah}]}\overset{\sim}{\rightarrow}M\otimes_{\widetilde{\Pi}_{H,A}^{[s,r]}}\widetilde{\Pi}_{H,A}^{[s,rp^{-ah}]}.
\end{align}
\end{definition}

\begin{definition} \mbox{\bf{(After Kedlaya-Liu \cite[Definition 5.7.2]{KL16})}}
Over the ring $\widetilde{\Pi}^t_{H,A}$ we define a corresponding projective $(\varphi^a,\Gamma)$ bundle to be a family $(M_I)_I$ of finite projective right $\Gamma$-modules over each $\widetilde{\Pi}^I_{H,A}$ carrying the natural Frobenius action coming from the operator $\varphi^a$ such that for any two involved intervals having the relation $I\subset J$ we have:
\begin{displaymath}
M_J\otimes_{\widetilde{\Pi}^J_{H,A}}\widetilde{\Pi}^I_{H,A}\overset{\sim}{\rightarrow}	M_I
\end{displaymath}
with the obvious cocycle condition. Here we have to propose condition on the intervals that for each $I=[s,r]$ involved we have $s\leq r/p^{ah}$. We require the corresponding topological conditions as we did for the corresponding Frobenius bundles. one can take the corresponding 2-limit in the direct sense to define the corresponding objects over the full Robba rings.
\end{definition}


\begin{conjecture} \mbox{\bf{(After Kedlaya-Liu \cite[Theorem 5.7.5]{KL16})}} \label{conjecture672}
We have now the following categories are equivalence for the corresponding radii $0< s\leq r\leq r_0$ (with the further requirement as in \cite[Theorem 5.7.5]{KL16} that $s\in (0,r/q]$):\\
1. The category of all the finite projective sheaves over the ring $\widetilde{\Pi}_{\mathrm{Spa}(H_0,H_0^+),A}$, carrying the $\varphi^a$-action;\\
2. The category of all the finite projective sheaves over the ring $\widetilde{\Pi}^r_{\mathrm{Spa}(H_0,H_0^+),A}$, carrying the $\varphi^a$-action;\\
3. The category of all the finite projective sheaves over the ring $\widetilde{\Pi}^{[s,r]}_{\mathrm{Spa}(H_0,H_0^+),A}$, carrying the $\varphi^a$-action;\\
4. The category of all the finite projective modules over the ring $\Pi_{H,A}$, carrying the $(\varphi^a,\Gamma)$-action;\\
5. The category of all the finite projective bundles over the ring $\Pi_{H,A}$, carrying the $(\varphi^a,\Gamma)$-action;\\
6. The category of all the finite projective modules over the ring $\breve{\Pi}_{H,A}$, carrying the $(\varphi^a,\Gamma)$-action;\\
7. The category of all the finite projective bundles over the ring $\breve{\Pi}_{H,A}$, carrying the $(\varphi^a,\Gamma)$-action;\\
8. The category of all the finite projective modules over the ring $\breve{\Pi}^{[s,r]}_{H,A}$, carrying the $(\varphi^a,\Gamma)$-action;\\
9. The category of all the finite projective modules over the ring $\widetilde{\Pi}_{H,A}$, carrying the $(\varphi^a,\Gamma)$-action;\\
10. The category of all the finite projective bundles over the ring $\widetilde{\Pi}_{H,A}$, carrying the $(\varphi^a,\Gamma)$-action;\\
11. The category of all the finite projective modules over the ring $\widetilde{\Pi}^{[s,r]}_{H,A}$, carrying the $(\varphi^a,\Gamma)$-action.
\end{conjecture}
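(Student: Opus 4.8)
The plan is to follow the architecture of the commutative counterpart (the proposition following \cite[Definition 5.7.2]{KL16} in the perfect-deformed setting, itself modeled on \cite[Theorem 5.7.5]{KL16}) and to organize the eleven categories into three blocks linked by two bridges. Block A consists of the perfect-side $\varphi^a$-sheaves $1,2,3$ over $\widetilde{\Pi}_{\mathrm{Spa}(H_0,H_0^+),A}$, $\widetilde{\Pi}^r_{\mathrm{Spa}(H_0,H_0^+),A}$ and $\widetilde{\Pi}^{[s,r]}_{\mathrm{Spa}(H_0,H_0^+),A}$; Block B consists of the smaller imperfect rings $4$--$8$, namely finite projective $(\varphi^a,\Gamma)$-modules and bundles over $\Pi_{H,A}$ and over $\breve{\Pi}_{H,A}$ together with modules over $\breve{\Pi}^{[s,r]}_{H,A}$; Block C consists of the wider imperfect rings $9,10,11$ over $\widetilde{\Pi}_{H,A}$ and $\widetilde{\Pi}^{[s,r]}_{H,A}$. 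First I would prove the equivalences internal to each block; then bridge B with C via the base-change equivalence $\breve{\Pi}^{[s,r]}_{H,A}\to\widetilde{\Pi}^{[s,r]}_{H,A}$ on $\Gamma$-modules established above (the noncommutative analog of \cite[Lemma 5.6.9]{KL16}, which rests on the noncommutative idempotent-lifting lemma proved above with the corrected iteration bound); and finally bridge A with C by Frobenius descent together with the identification of a $(\varphi^a,\Gamma)$-module over $\widetilde{\Pi}^{[s,r]}_{H,A}$ with a $\Gamma$-equivariant $\varphi^a$-sheaf over the interval $[s,r]$ of the perfectoid tower.

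For the internal equivalences in Block C, a $(\varphi^a,\Gamma)$-bundle over $\widetilde{\Pi}^r_{H,A}$ is pinned down by its single section over an interval $[s,r]$ with $s\le r/q$: the Frobenius isomorphism $\varphi^{a*}M_{[sp^{-ah},rp^{-ah}]}\overset{\sim}{\rightarrow}M_{[s,rp^{-ah}]}$ propagates the section to every overlapping interval, and one reassembles a bundle by glueing finite projective right Banach modules along the coverings $\widetilde{\Pi}^I_{H,A}\to\bigoplus_i\widetilde{\Pi}^{I_i}_{H,A}$; passing from bundles over $\widetilde{\Pi}^r_{H,A}$ to modules over $\widetilde{\Pi}_{H,A}$ is the union (direct $2$-colimit) construction as in \cite{KL16}. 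Block B is handled by the same pattern for $\Pi$ and $\breve{\Pi}$, with the passage $\Pi_{H,A}\leftrightarrow\breve{\Pi}_{H,A}$ reduced to the formal union over $\varphi^{-n}$ once finite projectivity is seen to be stable under the transition base changes. In Block A one transfers the three comparisons to the perfectoid tower by splitting after base change to $E_\infty$ (and to a perfectoid cover of $A$ where needed), and then invokes the perfect-setting results of the earlier sections in their noncommutative form; here one uses that $\widetilde{\Pi}^{[s,r]}_{\overline{H}'_\infty,A}$ is sheafy, as assumed in the Remark preceding the statement, so that the global-section functor and effective descent for finite projective modules are available.

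The A--C bridge is the substantive step: $\widetilde{\Pi}^{[s,r]}_{H,A}$ is, by construction, the $\Gamma$-descent of $\widetilde{\Pi}^{[s,r]}_{\overline{H}'_\infty,A}$ along the perfectoid tower, the relevant exactness being that of the complexes $\widetilde{\Pi}^{[s,r]}_{H^\bullet,A}/\Pi^{[s,r]}_{H^\bullet,A}$ and their $\varphi^{-\ell}$-variants (available in the deformed noncommutative setting by the Schauder-basis argument), so a finite projective $(\varphi^a,\Gamma)$-module over $\widetilde{\Pi}^{[s,r]}_{H,A}$ is the same as a $\Gamma$-equivariant finite projective $\varphi^a$-sheaf on the interval $[s,r]$ of $\mathrm{Spa}(\overline{H}_\infty,\overline{H}^+_\infty)$, and the perfect-setting equivalence with sheaves on $\mathrm{Spa}(H_0,H_0^+)$ then upgrades $\Gamma$-equivariantly. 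The main obstacle I expect is exactly the noncommutative perfect-setting input invoked in Block A and in this bridge: the commutative proof leans on Kedlaya--Hansen sheafiness, effective descent for pseudocoherent and finite projective Banach modules over perfectoid Robba rings, and the tilting correspondence, none of which is fully in place in the noncommutative framework of this paper --- which is why the statement is recorded as a conjecture. Restricting, as the statement does, to \emph{finite projective} modules mitigates this, since the idempotent-lifting lemma proved above can replace the heavier descent machinery, and assuming sheafiness of $\widetilde{\Pi}^{[s,r]}_{\overline{H}'_\infty,A}$ removes another gap; but even so one must still verify that $\Gamma$-equivariant glueing of finite projective right modules over the deformed perfectoid Robba sheaves holds, and this is the point most likely to demand a genuinely new noncommutative argument.
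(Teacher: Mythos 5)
You should note first that the paper itself offers no proof of this statement: it is recorded explicitly as a conjecture, and the only piece the paper actually establishes is the proposition immediately following it, namely the equivalence of the three categories of finite projective $(\varphi^a,\Gamma)$-\emph{modules} over the full rings $\Pi_{H,A}$, $\breve{\Pi}_{H,A}$ and $\widetilde{\Pi}_{H,A}$, proved by transporting the argument of \cite[Theorem 5.7.5]{KL16} through the noncommutative analogues of \cite[Lemma 5.6.8]{KL16} (idempotent lifting, with the corrected iteration bound) and \cite[Lemma 5.6.9]{KL16} (the base change equivalence $\breve{\Pi}^{[s,r]}_{H,A}\to\widetilde{\Pi}^{[s,r]}_{H,A}$). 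Your B--C bridge therefore coincides with what the paper can actually do, and your overall three-block architecture is the natural reading of how the commutative proof would be transported; as a roadmap it is consistent with the paper's intent.

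The genuine gap is that several of your ``internal'' equivalences are not available results but are themselves open in this paper's noncommutative framework, so your outline cannot be closed into a proof by citation. The module-versus-bundle-versus-interval-module comparison over $\Pi^{r_0}_{H,A}$ (your Block B interior, and its analogue for $\widetilde{\Pi}$ in Block C) is stated in the paper only as a conjecture after \cite[Lemma 5.4.11]{KL16}; the uniform-pseudocoherence and global-section comparisons that feed it are likewise conjectures; and the noncommutative glueing results of the final subsection are proved only under the noetherian hypothesis of \cref{assumption642} together with flatness of the interval restrictions --- indeed the last proposition there takes \cref{conjecture672} itself as a hypothesis, so appealing to that machinery here would be circular. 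Similarly, your Block A and the A--C bridge presuppose effective descent and $\Gamma$-equivariant glueing of finite projective right modules over the deformed perfectoid Robba sheaves, plus a noncommutative form of the perfect-setting comparison theorems of Sections 3--4, none of which is established beyond the fully faithful embeddings proved in the noncommutative perfect setting; sheafiness of $\widetilde{\Pi}^{[s,r]}_{\overline{H}'_\infty,A}$ is only assumed, not proved. You correctly identify these as the obstacles, but identifying them does not discharge them: what you have is a plausible reduction of the conjecture to a list of currently unproved noncommutative statements, which is precisely why the paper leaves it as a conjecture rather than a theorem.
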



\begin{proposition} \mbox{\bf{(After Kedlaya-Liu \cite[Theorem 5.7.5]{KL16})}}
We have now the corresponding equivalence among categories described as below:\\
1. The category of all the finite projective modules over the ring $\Pi_{H,A}$, carrying the $(\varphi^a,\Gamma)$-action;\\
2. The category of all the finite projective modules over the ring $\breve{\Pi}_{H,A}$, carrying the $(\varphi^a,\Gamma)$-action;\\
3. The category of all the finite projective modules over the ring $\widetilde{\Pi}_{H,A}$, carrying the $(\varphi^a,\Gamma)$-action.

\end{proposition}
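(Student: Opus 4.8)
The plan is to prove the three-fold equivalence by reducing to the interval level and then assembling the two basic descent mechanisms already available in this section: the perfect-to-imperfect descent for $\Gamma$-modules and the Frobenius descent. First I would use the built-in definition of $(\varphi^a,\Gamma)$-modules over the full Robba rings: any such module over $\widetilde{\Pi}_{H,A}$ (resp. $\breve{\Pi}_{H,A}$, $\Pi_{H,A}$) is by definition the base change of a finite projective $(\varphi^a,\Gamma)$-module over $\widetilde{\Pi}^r_{H,A}$ (resp. $\breve{\Pi}^r_{H,A}$, $\Pi^r_{H,A}$) for some $r\in(0,r_0]$. Invoking the noncommutative analog of \cite[Lemma 5.3.8]{KL16} proved above (identifying $\widetilde{\Pi}^r_{H,A}$ with $\varprojlim_s \widetilde{\Pi}^{[s,r]}_{H,A}$) together with the density of global sections and the vanishing of the first derived inverse limit (the noncommutative Fr\'echet--Stein formalism, as in \cite{ST1}, replacing \cite[Lemma 5.3.9]{KL16}), a finite projective $(\varphi^a,\Gamma)$-module over $\widetilde{\Pi}^r_{H,A}$ is the same datum as a compatible family of finite projective right $\Gamma$-modules over the interval rings $\widetilde{\Pi}^{[s,r]}_{H,A}$ equipped with the Frobenius isomorphism relating the section over $[sp^{-ah},rp^{-ah}]$ with the section over $[s,r]$ after the appropriate twist; the same holds for the $\breve{\ }$ and undeformed imperfect rings.

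For the equivalence between categories $2$ and $3$, I would work at interval level and apply the noncommutative analog of \cite[Lemma 5.6.9]{KL16} proved above: base change along $\breve{\Pi}^{[s,r]}_{H,A}\to\widetilde{\Pi}^{[s,r]}_{H,A}$ is an equivalence on the categories of right $\Gamma$-modules. Fully faithfulness there comes from the strict exactness of the complexes $\widetilde{\Pi}^{[s,r]}_{H^\bullet,A}/\Pi^{[s,r]}_{H^\bullet,A}$ and $\varphi^{-1}\Pi^{[sp^{-h},rp^{-h}]}_{H^\bullet,A}/\Pi^{[s,r]}_{H^\bullet,A}$ (the noncommutative analogs of \cite[Corollary 5.6.5]{KL16} and \cite[Lemma 5.6.6]{KL16}), and essential surjectivity from the noncommutative descent lemma for finite projective right modules proved above (the analog of \cite[Lemma 5.6.8]{KL16}), which writes a module over $\widetilde{\Pi}^{[s,r]}_{H,A}$ as a base change of a finite projective module over some $\varphi^{-k}\Pi^{[s,r]}_{H,A}$, followed by the successive-approximation argument transporting the $\Gamma$-cocycle down to $\breve{\Pi}^{[s,r]}_{H,A}$. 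Since the Frobenius isomorphism in the interval datum of Step 1 is carried along by the very same base-change functor, this equivalence respects the Frobenius structure, and by Step 1 it globalizes to an equivalence between categories $2$ and $3$.

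For the equivalence between categories $1$ and $2$ (Frobenius descent), I would argue exactly as in the perfect-setting descent of the earlier sections and as in \cite[Theorem 5.7.5]{KL16}. Given a finite projective $(\varphi^a,\Gamma)$-module $M$ over $\breve{\Pi}_{H,A}$, realized over $\breve{\Pi}^r_{H,A}=\bigcup_n\varphi^{-n}\Pi^{p^{-hn}r}_{H,A}$, the noncommutative descent lemma for finite projective right modules shows that $M$ descends to a finite projective right module $M_0$ over some $\varphi^{-n}\Pi^{p^{-hn}r}_{H,A}$; applying $\varphi^{an}$ and using the semilinear Frobenius isomorphism of $M$ to identify the various $\varphi$-translates produces a canonical finite projective $\varphi^a$-module over $\Pi^{r'}_{H,A}$ for a suitable radius, and the $\Gamma$-action descends because $\Gamma$ commutes with the Frobenius on the tower rings. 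Fully faithfulness is immediate: a $\varphi^a$-equivariant morphism of right modules over $\breve{\Pi}_{H,A}$ is determined by, and descends along, the filtered colimit presentation of $\breve{\Pi}_{H,A}$ by the $\varphi^{-n}\Pi_{H,A}$. Composing the equivalences of the last two steps yields the equivalence of all three categories.

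The main obstacle I anticipate is the essential-surjectivity part of Step 2: one must run the approximation argument transporting the $\Gamma$-cocycle from $\widetilde{\Pi}^{[s,r]}_{H,A}$ down to $\breve{\Pi}^{[s,r]}_{H,A}$ while controlling the operator norms of the gluing matrices, and in the noncommutative setting one must verify that the relevant matrix-norm estimates (submultiplicativity of the tensor-product norm, and the projector-refinement iteration) go through without change. This is precisely where the corrected bound in the noncommutative version of \cite[Lemma 5.6.8]{KL16} established above enters, and it is the only point where the passage from the commutative proof requires genuine care; the remaining steps are formal once the interval-level reduction of Step 1 is in place.
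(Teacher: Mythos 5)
Your proposal is correct in outline and follows essentially the same route as the paper, whose entire proof is the one-line remark that the statement is proved exactly as \cite[Theorem 5.7.5]{KL16} using the development of this section --- namely interval-level reduction, the noncommutative analogs of \cite[Lemma 5.6.8, Lemma 5.6.9]{KL16} for the comparison between $\breve{\Pi}^{[s,r]}_{H,A}$ and $\widetilde{\Pi}^{[s,r]}_{H,A}$, and Frobenius descent along $\Pi_{H,A}\rightarrow\breve{\Pi}_{H,A}=\bigcup_n\varphi^{-n}\Pi_{H,A}$. Note only that your Step 1 (identifying finite projective $(\varphi^a,\Gamma)$-modules over $\widetilde{\Pi}^r_{H,A}$ with compatible interval families) is precisely the module--bundle comparison that the paper records merely as a conjecture in the noncommutative setting, so your write-up inherits the same implicit dependence that the paper's citation-style proof does.
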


\begin{proof}
These are proved exactly the same as \cite[Theorem 5.7.5]{KL16} by using our development.
\end{proof}

\subsection{Noncommutative Descent}

\indent Motivated by the corresponding comparison between the bundles and modules carrying the corresponding action coming from the Frobenius and the group $\Gamma$ we study some noncommutative descent, which takes the form which is more representation theoretic and topos theoretic. Note that the corresponding localization and descent are very complicated issues within the corresponding noncommutative geometry.

\indent Here we study the corresponding noncommutative descent in the style coming from \cite[Section 1.3]{KL15}. The corresponding story in our setting will be a noncommutative analog of \cite[Section 1.3]{KL15}. Also we mention that \cite{DLLZ1} has already considered some interesting generalization of \cite[Section 1.3]{KL15} along the other direction within the study of the corresponding logarithmic spaces.

\begin{setting}\mbox{\bf{(Noncommutaive Glueing)}}
Consider the following noncommutative analog of \cite[Section 1.3]{KL15}. First we have a square diagram taking the following form:
\[
\xymatrix@R+2pc@C+2pc{
\Pi \ar[r]\ar[r]\ar[r]\ar[d]\ar[d]\ar[d] &\Pi_2 \ar[d]\ar[d]\ar[d]\\
\Pi_1 \ar[r]\ar[r]\ar[r] &\Pi_{12},\\
}
\]
which expands to the corresponding short exact sequence:
\[
\xymatrix@R+1pc@C+1pc{
0 \ar[r]\ar[r]\ar[r] &\Pi \ar[r]\ar[r]\ar[r] &\Pi_1\oplus \Pi_2 \ar[r]^{*-*}\ar[r]\ar[r] &\Pi_{12}  \ar[r]\ar[r]\ar[r] &0
}
\]	
in the corresponding category of $\Pi$-modules. The corresponding datum consists also of three right modules $M_1,M_2,M_{12}$ over the corresponding rings $\Pi_1,\Pi_2,\Pi_{12}$ respectively, with the corresponding base change isomorphisms from the module $M_1,M_2$ to the module $M_{12}$. Take the corresponding kernel we have the corresponding exact sequence:
\[
\xymatrix@R+1pc@C+1pc{
0 \ar[r]\ar[r]\ar[r] &M \ar[r]\ar[r]\ar[r] &M_1\oplus M_2 \ar[r]\ar[r]\ar[r] &M_{12}.  
}
\]
We are going to call the corresponding datum in our situation to be $coherent$, $pseudocoherent$, $ finite$, $finitely~presented$, $finite~projective$ if the corresponding modules involved are so over the corresponding rings $\Pi_1,\Pi_2,\Pi_{12}$, which is to say $coherent$, $pseudocoherent$, $ finite$, $finitely~presented$, $finite~projective$.
\end{setting}


\begin{lemma}\mbox{\bf{(After Kedlaya-Liu \cite[Lemma 1.3.8]{KL15})}}\label{lemma677} 
For finite datum defined in the corresponding sense, suppose we further assume that the corresponding map $M\otimes\Pi_1\rightarrow M_1$ is surjective. Then we have that the corresponding maps $M\otimes\Pi_2\rightarrow M_2$ and $f_1-f_2:M_1\oplus M_2\rightarrow M_{12}$ are also being surjective, and one can find a finite submodule of $M$ such that the corresponding base change to the $R_1$ or $R_2$ maps through surjective maps to the corresponding module $M_1$ or $M_2$ respectively.
\end{lemma}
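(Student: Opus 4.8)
The plan is to transpose the argument of \cite[Lemma 1.3.8]{KL15} into the noncommutative (right-module) setting, being careful that all the maps in sight are right-$\Pi$-linear and that exactness of the glueing square is used only on the level of underlying abelian groups, where the commutative/noncommutative distinction is irrelevant. First I would fix a finite generating set $m_1,\dots,m_k$ of $M$ over $\Pi$ and, using the hypothesis that $M\otimes_\Pi\Pi_1\to M_1$ is surjective, note that the images of $m_1,\dots,m_k$ already generate $M_1$ over $\Pi_1$. The key observation, exactly as in \cite[Lemma 1.3.8]{KL15}, is that for the second projection one chooses, for each generator $n_j$ of $M_2$ over $\Pi_2$, an element of $M_{12}$ equal to its image; surjectivity of $M_1\to M_{12}$ (which itself follows from the base-change isomorphism $M_1\otimes_{\Pi_1}\Pi_{12}\xrightarrow{\sim}M_{12}$) lets us lift it to $M_1$, and then the exact sequence
\[
0 \longrightarrow M \longrightarrow M_1\oplus M_2 \longrightarrow M_{12}
\]
of abelian groups (obtained by applying $\mathrm{Hom}$-free kernel formation to the structural short exact sequence of the glueing square, tensored appropriately) produces a preimage in $M$. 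This shows $M\otimes_\Pi\Pi_2\to M_2$ is surjective, and simultaneously that $f_1-f_2\colon M_1\oplus M_2\to M_{12}$ is surjective.

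Next, for the last clause — existence of a \emph{finite} submodule $N\subseteq M$ whose base changes surject onto $M_1$ and $M_2$ — I would proceed as in the commutative case: the generators $m_1,\dots,m_k$ of $M$ need not themselves suffice because one enlarges the set only finitely, so take $N$ to be the $\Pi$-submodule generated by $m_1,\dots,m_k$ together with the finitely many preimages in $M$ of lifts of generators of $M_2$ constructed in the previous paragraph. By construction $N$ is finitely generated over $\Pi$, $N\otimes_\Pi\Pi_1$ contains generators of $M_1$, and $N\otimes_\Pi\Pi_2$ contains generators of $M_2$; hence both base-change maps $N\otimes_\Pi\Pi_i\to M_i$ are surjective. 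Here one must only check that the base-change maps $M\otimes_\Pi\Pi_i\to M_i$ restrict compatibly to $N$, which is immediate since $N\hookrightarrow M$ is right-$\Pi$-linear and tensoring with $\Pi_i$ is right exact.

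The step I expect to be the main obstacle is verifying the exactness of the three-term sequence $0\to M\to M_1\oplus M_2\to M_{12}$ \emph{as right $\Pi$-modules} and confirming that kernel formation commutes with the base changes in the way the argument needs — in the commutative setting this is packaged cleanly, but over a noncommutative $\Pi$ one must be slightly careful that $M$ is defined precisely as this kernel (as stipulated in the glueing setting above) and that the map $M_1\oplus M_2\to M_{12}$ is $f_1-f_2$ with $f_i$ the composite of base change with the glueing isomorphism, so that no left/right ambiguity arises. Once that bookkeeping is in place, the rest is a routine finite diagram chase identical in form to \cite[Lemma 1.3.8]{KL15}; no completeness or topological input is needed for this particular lemma, since everything is about finitely generated modules and surjectivity, so the proof is purely algebraic.
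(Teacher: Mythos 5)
There is a genuine gap at the heart of your argument. You claim that the map $M_1\to M_{12}$ is surjective ``because of the base-change isomorphism $M_1\otimes_{\Pi_1}\Pi_{12}\xrightarrow{\sim}M_{12}$''. That is not true: the natural map $M_1\to M_1\otimes_{\Pi_1}\Pi_{12}\cong M_{12}$ only has image that \emph{generates} $M_{12}$ as a right $\Pi_{12}$-module; elements of $M_{12}$ are $\Pi_{12}$-linear combinations of images of elements of $M_1$, not themselves such images (already for rings, $\Pi_1\to\Pi_{12}$ need not be surjective). Once this step fails, your lift of $f_2(n_j)$ to $M_1$ and the resulting preimage in $M$ are not available. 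A telltale sign is that your argument for the two surjectivity claims never really uses the hypothesis that $M\otimes_\Pi\Pi_1\to M_1$ is surjective (you only use it to say generators of $M$ map to generators of $M_1$), whereas the statement is false without that hypothesis. The paper's proof, following Kedlaya--Liu, uses it essentially: since $M\otimes\Pi_1\to M_1$ is onto, every element of $M_1$ is a $\Pi_1$-combination of first components of elements of $M$, whose two components have equal image in $M_{12}$; combining this with the surjectivity of $\Pi_1\oplus\Pi_2\to\Pi_{12}$ to split each coefficient $r\in\Pi_{12}$ as $\bar s-\bar t$ with $s\in\Pi_1$, $t\in\Pi_2$, one writes any $w\in M_{12}$ as $f_1(y_1)-f_2(y_2)$ with $y_2$ lying in the image of $M\otimes\Pi_2\to M_2$. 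This gives surjectivity of $f_1-f_2$, and applying it to $w=f_2(m)$ for $m\in M_2$ yields $(y_1,\,y_2+m)\in M=\ker(f_1-f_2)$, so $m=(y_2+m)-y_2$ lies in the image of $M\otimes\Pi_2\to M_2$. The coefficients multiply on the right, so this goes through verbatim for right modules over noncommutative rings.

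A secondary problem: you begin by fixing a finite generating set of $M$ over $\Pi$, but $M$ is not assumed finitely generated; ``finite datum'' refers to $M_1$, $M_2$, $M_{12}$ being finite over $\Pi_1$, $\Pi_2$, $\Pi_{12}$ (indeed the last clause of the lemma would be nearly vacuous if $M$ itself were finite). The correct way to get the final clause is to choose finitely many elements of $M$ whose images generate $M_1$ (possible by the surjectivity hypothesis and finiteness of $M_1$) and finitely many whose images generate $M_2$ (possible once the surjectivity of $M\otimes\Pi_2\to M_2$ has been proved), and let $N$ be the submodule they generate; your closing paragraph is fine once it is rebuilt on that footing rather than on a finite generating set of $M$.
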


\begin{proof}
The proof in our noncommutative setting is actually parallel to \cite[Lemma 1.3.8]{KL15}. To be more precise the map $f_1-f_2:M_1\oplus M_2\rightarrow M_{12}$ is then surjective is just because we can derive this from the corresponding surjectivity of $M\otimes\Pi_1\rightarrow M_1$. The surjectivity of $M\otimes\Pi_1\rightarrow M_1$ implies that the corresponding surjectivity of $M\otimes (\Pi_1\oplus \Pi_2)\rightarrow M_{12}$ which factor through the corresponding desired map in our situation. Then after proving this one can show the corresponding surjectivity of the map $M\otimes\Pi_2\rightarrow M_2$. Indeed suppose we start from some element $m\in M_2$ then by taking the corresponding base change to $\Pi_{12}$ we can have corresponding elements $m_1$ and $m_2$ living in the image of the maps:
\begin{align}
g_1:M\otimes \Pi_1\rightarrow M_1,\\
g_2:M\otimes \Pi_2\rightarrow M_2,	
\end{align}
which gives the corresponding element $f_1(m_1)-f_2(m_2)$ which is just the corresponding image of the element $m$ in $\Pi_{12}$. Then look at the element $(m_2,m_2+m)$ in the direct sum, then one can see that the image of the map in our mind will contain actually the element $m_2$ and $m+m_2$ at the same times, which implies the result. Finally the last statement then follows from this.
\end{proof}

\begin{lemma} \mbox{\bf{(After Kedlaya-Liu \cite[Lemma 1.3.9]{KL15})}} \label{lemma678}
For finitely projective datum, suppose that we have the surjectivity of the map $M\otimes \Pi_1\rightarrow M_1$. Then we have the kernel $M$ is finitely presented and we have the isomorphisms $M\otimes \Pi_1\rightarrow M_1$ and $M\otimes \Pi_2 \rightarrow M_2$.
\end{lemma}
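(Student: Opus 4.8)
The plan is to follow the blueprint of \cite[Lemma 1.3.9]{KL15} but keeping careful track of the fact that all modules here are right modules over noncommutative rings, so that the only "commutative" ingredients available are the exactness of the glueing square
\[
0 \longrightarrow \Pi \longrightarrow \Pi_1\oplus\Pi_2 \longrightarrow \Pi_{12}\longrightarrow 0
\]
and the properties of finite projective modules under base change. First I would invoke \cref{lemma677}: from the hypothesis that $M\otimes\Pi_1\to M_1$ is surjective we already know that $M\otimes\Pi_2\to M_2$ and $f_1-f_2\colon M_1\oplus M_2\to M_{12}$ are surjective, and that there is a finite submodule $N\subseteq M$ whose base changes to $\Pi_1$ and $\Pi_2$ surject onto $M_1$ and $M_2$ respectively. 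In particular $M$ itself is finitely generated (replace $M$ by $N$ after checking the kernel datum of $N$ reconstructs the same $M$, exactly as in \cite{KL15}).

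The next step is to upgrade "finitely generated" to "finitely presented" and to promote the base-change surjections to isomorphisms. Choose a finite free right module $F$ over $\Pi$ with a surjection $F\twoheadrightarrow M$; tensoring with the exact glueing sequence and using right-exactness of $-\otimes_\Pi\Pi_i$ gives a commutative diagram of exact rows comparing $F\otimes\Pi_i \to M\otimes\Pi_i$ with $F_i := F\otimes\Pi_i \to M_i$. Since $M_1,M_2,M_{12}$ are finite projective, the surjections $M\otimes\Pi_i\to M_i$ split after base change, and one argues as in \cite[Lemma 1.3.9]{KL15} that the kernel $K$ of $F\twoheadrightarrow M$ has $K\otimes\Pi_i$ finitely generated (because $F_i\to M_i$ has finitely generated kernel, $M_i$ being projective), hence $K$ is finitely generated by a second application of \cref{lemma677}-type reasoning to the kernel datum $(K\otimes\Pi_1, K\otimes\Pi_2, K\otimes\Pi_{12})$; this yields that $M$ is finitely presented. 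For the isomorphism statement, I would consider the cokernel $C_i$ of $M\otimes\Pi_i\to M_i$: it is zero after base change to $\Pi_{12}$ (the two base changes to $M_{12}$ agree and $M\otimes\Pi_{12}\to M_{12}$ is onto), so $C_1$ and $C_2$ glue to the zero module over $\Pi$; running the same surjectivity argument on the kernels of $M\otimes\Pi_i\to M_i$ and using that these vanish after base change to $\Pi_{12}$, one concludes both maps are isomorphisms, precisely mirroring \cite{KL15}.

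The main obstacle, and the place where genuine care is needed, is that several steps in \cite{KL15} implicitly use that a finitely generated module over a ring which is a finite projective module summand behaves well, and that "zero after faithfully flat-type base change implies zero" — but over noncommutative rings faithful flatness of $\Pi\to\Pi_1\oplus\Pi_2$ is not automatic. What saves us is exactly the exact sequence in the glueing datum: a right $\Pi$-module $V$ with $V\otimes\Pi_1=V\otimes\Pi_2=0$ fits, after tensoring the glueing sequence, into $V\to 0\oplus 0\to V\otimes\Pi_{12}$, and chasing the long exact Tor-sequence (or rather the explicit four-term sequence, as in the setting block) forces $V=0$ provided one knows $\mathrm{Tor}_1^\Pi(V,\Pi_{12})$ is controlled; here I would instead argue directly on elements as in the proof of \cref{lemma677} to avoid invoking Tor in the noncommutative setting. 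So the real work is to redo the \cite{KL15} element-chase for cokernels and for the kernel $K$ entirely within the right-module formalism, checking at each use of "surjectivity descends / vanishing descends" that it is the \emph{explicit} glueing exact sequence and \cref{lemma677}, not flatness, that is being used. Once that bookkeeping is done, finitely-presentedness of $M$ and the two isomorphisms follow as in \cite[Lemma 1.3.9]{KL15}.
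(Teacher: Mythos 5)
Your skeleton matches the paper's argument (use \cref{lemma677}, cover by a finite free module, compare kernels using projectivity of the $M_i$, recurse), but two steps are only gestured at, and one of them, as you describe it, would fail. First, you declare at the outset that $M$ is finitely generated by ``replacing $M$ by $N$''; in the paper this is exactly the nontrivial point. One covers the finite submodule produced by \cref{lemma677} by a finite free module $G$, observes that $0\to \ker(G_j\to M_j)\to G_j\to M_j\to 0$ splits (projectivity of $M_j$), so that $\ker(G_j\to M_j)\otimes_{\Pi_j}\Pi_{12}\overset{\sim}{\to}\ker(G_{12}\to M_{12})$ and hence the difference map on these kernels is surjective, and only then runs an explicit diagram chase to conclude that $G\to M$ is surjective. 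Nothing in \cref{lemma677} alone tells you the finite submodule generates $M$, so this chase cannot be skipped. Your recursion for finite presentation has a related slip: the datum to recurse on is $\bigl(\ker(F_1\to M_1),\ker(F_2\to M_2),\ker(F_{12}\to M_{12})\bigr)$, not $(K\otimes\Pi_1,K\otimes\Pi_2,K\otimes\Pi_{12})$; that the maps $K\otimes\Pi_i\to\ker(F_i\to M_i)$ are surjective is itself something to be proved, not an identification.

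Second, and more seriously, your mechanism for the isomorphisms is not sound. The cokernels $C_i$ are zero already by surjectivity, so that part carries no content; and the kernels $K_i=\ker(M\otimes\Pi_i\to M_i)$ are $\Pi_i$-modules, so the glueing principle ``a $\Pi$-module killed by both base changes is zero'' does not apply to them. Moreover the claim that $K_i$ vanishes after base change to $\Pi_{12}$ is unjustified: since $M_i$ is projective the sequence $0\to K_i\to M\otimes\Pi_i\to M_i\to 0$ splits, whence $K_i\otimes_{\Pi_i}\Pi_{12}\cong\ker(M\otimes\Pi_{12}\to M_{12})$, and the vanishing of that kernel is essentially the statement you are trying to prove, so assuming it is circular. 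The paper gets injectivity differently: compare the right-exact sequence $\ker(G\to M)\otimes\Pi_i\to G_i\to M\otimes\Pi_i\to 0$ with the exact sequence $0\to\ker(G_i\to M_i)\to G_i\to M_i\to 0$, note that the left vertical arrow is surjective (this is where the recursion on the finite projective kernel datum is used), and apply the five lemma. You should replace your cokernel/kernel-vanishing argument by this five-lemma step; with that and the diagram chase for surjectivity of $G\to M$ supplied, the rest of your outline goes through.
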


\begin{proof}
By applying the previous lemma one can have the chance to find a submodule $M'$ of $M$ which is finite which admits a covering from some finite free module $G$. Then set $G_1,G_2,G_{12}$ to be the corresponding base change of $G$ to the corresponding rings $\Pi_1,\Pi_2,\Pi_{12}$. Then we have the following commutative diagram:
\[
\xymatrix@R+2pc@C+2pc{
     &0\ar[d]\ar[d]\ar[d] &0\ar[d]\ar[d]\ar[d] &0\ar[d]\ar[d]\ar[d] \\ 
0 \ar[r]\ar[r]\ar[r] &F \ar[r]\ar[r]\ar[r]\ar[d]\ar[d]\ar[d] &F_1\oplus F_2 \ar[d]\ar[d]\ar[d] \ar[r]\ar[r]\ar[r] &F_{12} \ar[d]\ar[d]\ar[d]\\
0 \ar[r]\ar[r]\ar[r] &G \ar[r]\ar[r]\ar[r]\ar[d]\ar[d]\ar[d] &G_1\oplus G_2 \ar[d]\ar[d]\ar[d] \ar[r]\ar[r]\ar[r] &G_{12} \ar[d]\ar[d]\ar[d] \ar[r]\ar[r]\ar[r] \ar[d]\ar[d]\ar[d] &0\\
0 \ar[r]\ar[r]\ar[r] &M \ar[r]\ar[r]\ar[r]  &M_1\oplus M_2  \ar[r]\ar[r]\ar[r] \ar[d]\ar[d]\ar[d] &M_{12} \ar[d]\ar[d]\ar[d] \ar[r]\ar[r]\ar[r] \ar[d]\ar[d]\ar[d] &0  \\
&  &0  &0.
}
\]
Here the module $F,F_1,F_2,F_{12}$ are the corresponding kernel of the corresponding maps in the diagram, namely $G\rightarrow M$, $G_1\rightarrow M_1$, $G_2\rightarrow M_2$, $G_{12}\rightarrow M_{12}$ respectively. Now for each $j=\{1,2\}$ we have the corresponding exact sequence:
\[
\xymatrix@R+1pc@C+1pc{
0 \ar[r]\ar[r]\ar[r] &F_j\otimes_{\Pi_j}\Pi_{12} \ar[r]\ar[r]\ar[r] &G_{12} \ar[r]\ar[r]\ar[r] &M_{12}  \ar[r]\ar[r]\ar[r] &0
}
\]
since the corresponding module $M_j$ ($j\in\{1,2\}$) is finite projective. We then have the situation that $F_j\otimes_{\Pi_j}\Pi_{12}\overset{\sim}{\rightarrow} F_{12}$ for each $j\in \{1,2\}$ and the corresponding modules $F_1,F_2$ are finitely generated and projective. Now apply the previous lemma we have that $F_1\oplus F_2\rightarrow F_{12}$ is then surjective which gives the following commutative diagram:
\[
\xymatrix@R+2pc@C+2pc{
     &0\ar[d]\ar[d]\ar[d] &0\ar[d]\ar[d]\ar[d] &0\ar[d]\ar[d]\ar[d] \\ 
0 \ar[r]\ar[r]\ar[r] &F \ar[r]\ar[r]\ar[r]\ar[d]\ar[d]\ar[d] &F_1\oplus F_2 \ar[d]\ar[d]\ar[d] \ar[r]^{\gamma}\ar[r]\ar[r] &F_{12} \ar[d]\ar[d]\ar[d] \ar[r]\ar[r]\ar[r] \ar[d]\ar[d]\ar[d] &0\\
0 \ar[r]\ar[r]\ar[r] &G \ar[r]\ar[r]\ar[r]\ar[d]\ar[d]\ar[d] &G_1\oplus G_2 \ar[d]\ar[d]\ar[d] \ar[r]^{\beta}\ar[r]\ar[r] &G_{12} \ar[d]\ar[d]\ar[d] \ar[r]\ar[r]\ar[r] \ar[d]\ar[d]\ar[d] &0\\
0 \ar[r]\ar[r]\ar[r] &M \ar[r]\ar[r]\ar[r]  &M_1\oplus M_2  \ar[r]^{\alpha}\ar[r]\ar[r] \ar[d]\ar[d]\ar[d] &M_{12} \ar[d]\ar[d]\ar[d] \ar[r]\ar[r]\ar[r] \ar[d]\ar[d]\ar[d] &0  \\
&  &0  &0.
}
\]
Then we claim that then the map $G\rightarrow M$ is surjective. Indeed this is by direct diagram chasing. First take any element in $M_1\oplus M_2$, which is denoted by $(m_1,m_2)$ for which we assume that $(m_1,m_2)\in \mathrm{Ker}\alpha$, then take any element in $(f_1,f_2)\in G_1\oplus G_2$ lifting this element. Let $\overline{f_1-f_2}$ be the image of $(f_1,f_2)$ under $\beta$. By the commutativity of the diagram we have $\overline{f_1-f_2}$ dies in $M_{12}$, so we can find $g_{12}\in F_{12}$ whose image in $G_{12}$ is $\overline{f_1-f_2}$. Then take any element element $(g_1,g_2)$ in $F_1\oplus F_2$ which lifts $f_{12}$. Then since the diagram is commutative we then have that $(g_1,g_2)$ takes image in $G_1\oplus G_2$ which must be living in same equivalence class with $(f_1,f_2)$ with respect to $G$, so we have $(f_1,f_2)=(g_1,g_2)\oplus f$ where $f\in G$. But then we have $f$ which is an element in the kernel of $\beta$ maps to $(m_1,m_2)$ which finishes the proof of the claim. Then repeat this we can have the chance to derive the finiteness of the module $F$. Then to finish we look at the corresponding commutative diagram:
\[
\xymatrix@R+2pc@C+2pc{
 &\mathrm{Ker}(G\rightarrow M)\otimes \Pi_i \ar[r]\ar[r]\ar[r]\ar[d]\ar[d]\ar[d] &G_i \ar[d]\ar[d]\ar[d] \ar[r]\ar[r]\ar[r] &M\otimes \Pi_i \ar[d]\ar[d]\ar[d] \ar[r]\ar[r]\ar[r] \ar[d]\ar[d]\ar[d] &0\\
0 \ar[r]\ar[r]\ar[r] &\mathrm{Ker}(G_i\rightarrow M_i) \ar[r]\ar[r]\ar[r]&G_i  \ar[r]\ar[r]\ar[r] &M_i  \ar[r]\ar[r]\ar[r]  &0,
}
\]
which implies that the corresponding map from $M\otimes \Pi_i$ to $M_i$ is injective as well by five lemma (note that the corresponding map $\mathrm{Ker}(G\rightarrow M)\otimes \Pi_i$ to $\mathrm{Ker}(G_i\rightarrow M_i)$ is surjective). So we have the corresponding desired isomorphisms.	
\end{proof}

\indent The following is a similar noncommutative version of \cite[Lemma A.3]{DLLZ1}:

\begin{lemma}\mbox{\bf{(After Kedlaya-Liu \cite[Lemma 1.3.9]{KL15})}} \label{lemma679}
For finitely presented datum, suppose that we have the surjectivity of the map $M\otimes \Pi_1\rightarrow M_1$. And suppose we have the corresponding additional requirement that $\Pi_1\rightarrow \Pi_{12}$ and $\Pi_2\rightarrow \Pi_{12}$ are now assumed to be flat. Then we have the kernel $M$ is finitely presented and we have the isomorphisms $M\otimes \Pi_1\rightarrow M_1$ and $M\otimes \Pi_2 \rightarrow M_2$.
\end{lemma}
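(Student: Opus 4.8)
\textbf{Proof proposal for Lemma \ref{lemma679}.} The plan is to reduce this statement to the finite projective case already handled in \cref{lemma678}, replacing the projectivity of the $M_j$ by a presentation argument that uses the two flatness hypotheses. First I would fix finite presentations
\[
\Pi_j^{b_j}\xrightarrow{\ \psi_j\ }\Pi_j^{a_j}\xrightarrow{\ \ }M_j\to 0,\qquad j\in\{1,2\},
\]
and note that the base change isomorphisms $M_1\otimes_{\Pi_1}\Pi_{12}\xrightarrow{\sim}M_{12}\xleftarrow{\sim}M_2\otimes_{\Pi_2}\Pi_{12}$ let us glue these (after enlarging generating sets, as in the usual argument) so that $M_{12}$ has a compatible presentation. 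Flatness of $\Pi_j\to\Pi_{12}$ is exactly what is needed here: it guarantees that $\ker(\Pi_{12}^{a_j}\to M_{12})$ is the base change of $\ker(\Pi_j^{a_j}\to M_j)$, so the kernels themselves form a glueing datum of the same shape.

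Next I would set up the diagram exactly as in the proof of \cref{lemma678}: choose a finite free $\Pi$-module $G$ surjecting onto a finite submodule $M'\subseteq M$ (this finite submodule exists by \cref{lemma677}, whose hypothesis — surjectivity of $M\otimes\Pi_1\to M_1$ — is assumed), form the base changes $G_1,G_2,G_{12}$, and let $F,F_1,F_2,F_{12}$ be the respective kernels of $G_\bullet\to M_\bullet$. The point of the flatness assumption, replacing the use of projectivity in \cref{lemma678}, is that for $j\in\{1,2\}$ the sequence
\[
0\to F_j\otimes_{\Pi_j}\Pi_{12}\to G_{12}\to M_{12}\to 0
\]
is still exact — finite presentedness of $M_j$ together with flat base change along $\Pi_j\to\Pi_{12}$ gives $F_j\otimes_{\Pi_j}\Pi_{12}\xrightarrow{\sim}F_{12}$ and, because $G$ is free, $F_1,F_2$ are finitely generated (here I would invoke the standard fact that the kernel of a surjection from a finite free module onto a finitely presented module is finitely generated, which requires no projectivity). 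Then \cref{lemma677} applied to the datum $(F_1,F_2,F_{12})$ shows $F_1\oplus F_2\to F_{12}$ is surjective, and the diagram chase from \cref{lemma678} goes through verbatim to show $G\to M$ is surjective, hence $M$ is finite; iterating once more (apply the same construction to $F$, which is again a glueing datum with the flatness hypotheses intact) yields that $M$ is finitely presented, and the final five-lemma diagram gives the isomorphisms $M\otimes_\Pi\Pi_j\xrightarrow{\sim}M_j$.

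The main obstacle I anticipate is verifying that the glueing of the presentations of $M_1$ and $M_2$ into a presentation of $M_{12}$ really is compatible in the noncommutative, Banach setting — one must be careful that the identifications used are the prescribed base change isomorphisms, that generating sets can be enlarged without breaking the cocycle condition, and that "flat" in the noncommutative context is being used on the correct side (these are \emph{right} modules, so flatness of $\Pi_j\to\Pi_{12}$ must be the relevant left/right notion making $-\otimes_{\Pi_j}\Pi_{12}$ exact on right modules). Everything else — the two diagram chases and the five-lemma step — is formal and identical to the argument already written out for \cref{lemma678}, so I would state those steps briefly and refer back rather than repeat them.
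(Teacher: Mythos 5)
Your proposal is correct and follows essentially the same route as the paper's proof: the finite free cover $G$ of a finite submodule supplied by \cref{lemma677}, the kernels $F,F_1,F_2,F_{12}$, exactness of $0\to F_j\otimes_{\Pi_j}\Pi_{12}\to G_{12}\to M_{12}\to 0$ now coming from flatness of $\Pi_j\to\Pi_{12}$ in place of projectivity of $M_j$, finite generation of $F_1,F_2$ from the finite presentations of $M_1,M_2$, the diagram chase giving surjectivity of $G\to M$, iteration to get finite presentation of $M$, and the five-lemma step for the isomorphisms $M\otimes\Pi_j\to M_j$. Your opening paragraph about gluing presentations of $M_1$ and $M_2$ is redundant (the paper never needs it), and your caution about which sided notion of flatness makes $-\otimes_{\Pi_j}\Pi_{12}$ exact on right modules is a sensible precision, but neither changes the argument.
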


\begin{proof}
By applying the previous lemma one can have the chance to find a submodule $M'$ of $M$ which is finite which admits a covering from some finite free module $G$. Then set $G_1,G_2,G_{12}$ to be the corresponding base change of $G$ to the corresponding rings $\Pi_1,\Pi_2,\Pi_{12}$. Then we have the following commutative diagram:
\[
\xymatrix@R+2pc@C+2pc{
     &0\ar[d]\ar[d]\ar[d] &0\ar[d]\ar[d]\ar[d] &0\ar[d]\ar[d]\ar[d] \\ 
0 \ar[r]\ar[r]\ar[r] &F \ar[r]\ar[r]\ar[r]\ar[d]\ar[d]\ar[d] &F_1\oplus F_2 \ar[d]\ar[d]\ar[d] \ar[r]\ar[r]\ar[r] &F_{12} \ar[d]\ar[d]\ar[d]\\
0 \ar[r]\ar[r]\ar[r] &G \ar[r]\ar[r]\ar[r]\ar[d]\ar[d]\ar[d] &G_1\oplus G_2 \ar[d]\ar[d]\ar[d] \ar[r]\ar[r]\ar[r] &G_{12} \ar[d]\ar[d]\ar[d] \ar[r]\ar[r]\ar[r] \ar[d]\ar[d]\ar[d] &0\\
0 \ar[r]\ar[r]\ar[r] &M \ar[r]\ar[r]\ar[r]  &M_1\oplus M_2  \ar[r]\ar[r]\ar[r] \ar[d]\ar[d]\ar[d] &M_{12} \ar[d]\ar[d]\ar[d] \ar[r]\ar[r]\ar[r] \ar[d]\ar[d]\ar[d] &0  \\
&  &0  &0.
}
\]
Here the module $F,F_1,F_2,F_{12}$ are the corresponding kernel of the corresponding maps in the diagram, namely $G\rightarrow M$, $G_1\rightarrow M_1$, $G_2\rightarrow M_2$, $G_{12}\rightarrow M_{12}$ respectively. Now for each $j=\{1,2\}$ we have the corresponding exact sequence:
\[
\xymatrix@R+1pc@C+1pc{
0 \ar[r]\ar[r]\ar[r] &F_j\otimes_{\Pi_j}\Pi_{12} \ar[r]\ar[r]\ar[r] &G_{12} \ar[r]\ar[r]\ar[r] &M_{12}  \ar[r]\ar[r]\ar[r] &0
}
\]
since we have the corresponding additional requirement that $\Pi_1\rightarrow \Pi_{12}$ and $\Pi_2\rightarrow \Pi_{12}$ are now assumed to be flat. We then have the situation that $F_j\otimes_{\Pi_j}\Pi_{12}\overset{\sim}{\rightarrow} F_{12}$ for each $j\in \{1,2\}$. The corresponding modules $F_1,F_2$ are finitely generated by assumption. Now applying the previous lemma we have that $F_1\oplus F_2\rightarrow F_{12}$ is then surjective which gives the following commutative diagram:
\[
\xymatrix@R+2pc@C+2pc{
     &0\ar[d]\ar[d]\ar[d] &0\ar[d]\ar[d]\ar[d] &0\ar[d]\ar[d]\ar[d] \\ 
0 \ar[r]\ar[r]\ar[r] &F \ar[r]\ar[r]\ar[r]\ar[d]\ar[d]\ar[d] &F_1\oplus F_2 \ar[d]\ar[d]\ar[d] \ar[r]^{\gamma}\ar[r]\ar[r] &F_{12} \ar[d]\ar[d]\ar[d] \ar[r]\ar[r]\ar[r] \ar[d]\ar[d]\ar[d] &0\\
0 \ar[r]\ar[r]\ar[r] &G \ar[r]\ar[r]\ar[r]\ar[d]\ar[d]\ar[d] &G_1\oplus G_2 \ar[d]\ar[d]\ar[d] \ar[r]^{\beta}\ar[r]\ar[r] &G_{12} \ar[d]\ar[d]\ar[d] \ar[r]\ar[r]\ar[r] \ar[d]\ar[d]\ar[d] &0\\
0 \ar[r]\ar[r]\ar[r] &M \ar[r]\ar[r]\ar[r]  &M_1\oplus M_2  \ar[r]^{\alpha}\ar[r]\ar[r] \ar[d]\ar[d]\ar[d] &M_{12} \ar[d]\ar[d]\ar[d] \ar[r]\ar[r]\ar[r] \ar[d]\ar[d]\ar[d] &0  \\
&  &0  &0.
}
\]
Then we claim that then the map $G\rightarrow M$ is surjective. Indeed this is by direct diagram chasing. First take any element in $M_1\oplus M_2$, which is denoted by $(m_1,m_2)$ for which we assume that $(m_1,m_2)\in \mathrm{Ker}\alpha$, then take any element in $(f_1,f_2)\in G_1\oplus G_2$ lifting this element. Let $\overline{f_1-f_2}$ be the image of $(f_1,f_2)$ under $\beta$. By the commutativity of the diagram we have $\overline{f_1-f_2}$ dies in $M_{12}$, so we can find $g_{12}\in F_{12}$ whose image in $G_{12}$ is $\overline{f_1-f_2}$. Then take any element element $(g_1,g_2)$ in $F_1\oplus F_2$ which lifts $f_{12}$. Then since the diagram is commutative we then have that $(g_1,g_2)$ takes image in $G_1\oplus G_2$ which must be living in same equivalence class with $(f_1,f_2)$ with respect to $G$, so we have $(f_1,f_2)=(g_1,g_2)\oplus f$ where $f\in G$. But then we have $f$ which is an element in the kernel of $\beta$ maps to $(m_1,m_2)$ which finishes the proof of the claim. Then repeat this we can have the chance to derive the finiteness of the module $F$. Then to finish we look at the corresponding commutative diagram:
\[
\xymatrix@R+2pc@C+2pc{
 &\mathrm{Ker}(G\rightarrow M)\otimes \Pi_i \ar[r]\ar[r]\ar[r]\ar[d]\ar[d]\ar[d] &G_i \ar[d]\ar[d]\ar[d] \ar[r]\ar[r]\ar[r] &M\otimes \Pi_i \ar[d]\ar[d]\ar[d] \ar[r]\ar[r]\ar[r] \ar[d]\ar[d]\ar[d] &0\\
0 \ar[r]\ar[r]\ar[r] &\mathrm{Ker}(G_i\rightarrow M_i) \ar[r]\ar[r]\ar[r]&G_i  \ar[r]\ar[r]\ar[r] &M_i  \ar[r]\ar[r]\ar[r]  &0,
}
\]
which implies that the corresponding map from $M\otimes \Pi_i$ to $M_i$ is injective as well by five lemma (note that the corresponding map $\mathrm{Ker}(G\rightarrow M)\otimes \Pi_i$ to $\mathrm{Ker}(G_i\rightarrow M_i)$ is surjective). So we have the corresponding desired isomorphisms.	
	
\end{proof}

\indent Now we consider the corresponding Banach structures as well. In this case we consider the following glueing process. First we have the following analog of \cite[Lemma 2.7.1]{KL15}:

\begin{lemma}\mbox{\bf{(After Kedlaya-Liu \cite[Lemma 2.7.1]{KL15})}}
Suppose we have the following two homomorphism of Banach rings (which are not necessarily commutative in our situation):
\begin{align}
f_1: \Pi_1\rightarrow \Gamma\\
f_2: \Pi_2\rightarrow \Gamma	
\end{align}
such that they are bounded and we have the following sum map induced which is surjective in the strict sense:
\begin{displaymath}
\Pi_1\oplus \Pi_2\rightarrow \Gamma.	
\end{displaymath}
Then one can find some constant $c$ with the fact that for any positive integer $n\geq 1$, we have the corresponding decomposition of any invertible matrix $I\in \Gamma^{n\times n}$ such that $\|I-1\|\leq c$ in the following form:
\begin{align}
f_1(I_1)f_2(I_2)	
\end{align}
with invertible matrices $I_1,I_2$ living in $\Pi_1^{n\times n}$ and $\Pi_2^{n\times n}$	respectively.
\end{lemma}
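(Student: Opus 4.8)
The plan is to adapt the proof of \cite[Lemma 2.7.1]{KL15} to the noncommutative setting, where the only genuinely new feature is that we must write the correction factors multiplicatively rather than additively. First I would fix, by the strict surjectivity of $\Pi_1\oplus\Pi_2\rightarrow\Gamma$ and the open mapping theorem for Banach modules, a constant $\lambda>0$ such that every element $g\in\Gamma$ admits a preimage $(g_1,g_2)\in\Pi_1\oplus\Pi_2$ with $\|g_1\|,\|g_2\|\leq\lambda\|g\|$; this passes to $n\times n$ matrices with the same constant (with respect to the max-entry norm, say). The main construction is an iterative scheme: given an invertible $I\in\Gamma^{n\times n}$ with $\|I-1\|$ small, write $I-1=f_1(U_1)+f_2(U_2)$ with $\|U_j\|\leq\lambda\|I-1\|$, and then replace $I$ by $f_1(1+U_1)^{-1}\,I$, which lies over $\Gamma$ and is congruent to $f_2(1+U_2)$ modulo a quadratically smaller error. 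Iterating and keeping track of norms produces convergent products $I_1=\prod_k(1+U_1^{(k)})\in\Pi_1^{n\times n}$ and $I_2\in\Pi_2^{n\times n}$ with $f_1(I_1)f_2(I_2)=I$; choosing $c$ so that $\lambda c<1$ and $\lambda^2 c<$ (something forcing geometric decay of $\|I-1\|^{2^k}$-type errors) guarantees that all the intermediate matrices stay invertible, since $\|(1+U_j^{(k)})-1\|<1$ in a Banach ring forces invertibility via the Neumann series.

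The steps, in order, would be: (1) record the strict surjectivity $\Rightarrow$ bounded splitting with constant $\lambda$, and note it upgrades to matrices; (2) set up the recursion $I^{(0)}=I$, $I^{(k+1)}=f_1(1+U_1^{(k)})^{-1}I^{(k)}f_2(1+U_2^{(k)})^{-1}$ where $(U_1^{(k)},U_2^{(k)})$ is a bounded lift of $I^{(k)}-1$; (3) prove by induction the estimate $\|I^{(k)}-1\|\leq (\lambda^2\|I-1\|)^{2^k}(\lambda^2)^{-1}$ or a similar doubly-exponential bound — this is exactly the place where, as in the noncommutative Lemma after \cite[Lemma 5.6.8]{KL16} quoted above, one must expand $I^{(k+1)}-1$ carefully using $f_1,f_2$ being ring homomorphisms and the submultiplicativity of the norm, being mindful that $f_1(U_1)$ and $f_2(U_2)$ need not commute; (4) deduce from (3) that the partial products $\prod_{k\leq K}(1+U_1^{(k)})$ and $\prod_{k\leq K}(1+U_2^{(k)})$ are Cauchy in $\Pi_1^{n\times n}$ and $\Pi_2^{n\times n}$ respectively (using completeness of the Banach rings), hence converge to matrices $I_1,I_2$ which are invertible because each factor is $1+$(something of norm $<1$) and the norm estimates make the infinite product of inverses converge too; (5) pass to the limit in the recursion to get $f_1(I_1)f_2(I_2)=I$, and extract the explicit $c$.

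The hard part will be step (3): in the commutative case of \cite[Lemma 2.7.1]{KL15} the error term after one step simplifies because scalars commute, but here one has to control an expression of the shape $f_1(1+U_1)^{-1}\bigl((1+f_1(U_1))(1+f_2(U_2)) - (1+f_1(U_1)+f_2(U_2))\bigr)f_2(1+U_2)^{-1}$, i.e. essentially $f_1(1+U_1)^{-1}f_1(U_1)f_2(U_2)f_2(1+U_2)^{-1}$, and show its norm is bounded by a constant times $\|I-1\|^2$. This is where I would import the bookkeeping philosophy (and the corrected constants) from the noncommutative version of \cite[Lemma 5.6.8]{KL16} given earlier in the paper: the key inputs are that $\|f_j(1+U_j)^{-1}\|$ stays bounded (say by $2$) once $\|U_j\|<1/2$, and that $\|f_1(U_1)f_2(U_2)\|\leq\|f_1\|\,\|f_2\|\,\|U_1\|\,\|U_2\|\leq(\|f_1\|\|f_2\|\lambda^2)\|I-1\|^2$. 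Once the quadratic contraction is in hand the convergence and invertibility are routine, and one reads off that any $c$ with $c<\min\{1/(2\lambda),\ (\|f_1\|\|f_2\|\lambda^2)^{-1}\}$ works, possibly shrinking further to make the doubly-exponential tail summable; I would not grind through the optimal value of $c$ since only its existence is claimed.
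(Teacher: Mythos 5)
Your proposal is correct and follows essentially the same route as the paper's proof: use the strict surjectivity to produce lifts of $I-1$ with controlled norms, perform a multiplicative Newton-type iteration that multiplies $I$ on the left by a correction from $\Pi_1$ and on the right by one from $\Pi_2$ so that the error contracts quadratically, and then let the accumulated (suitably ordered) products converge to the invertible matrices $I_1$, $I_2$. The only cosmetic difference is that the paper corrects by the linearized factors $f_1(1-A)$ and $f_2(1-B)$ rather than the exact inverses $f_1(1+U_1)^{-1}$, $f_2(1+U_2)^{-1}$, which changes nothing essential in the estimate $\|I'-1\|\leq d\|I-1\|^2$ or in the convergence argument.
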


\begin{proof}
Following 	\cite[Lemma 2.7.1]{KL15}, we consider the corresponding lifting process. First by the conditions of the lemma we have that one can lift $I-1$ to a pair matrices $A,B$ living in $\mathrm{GL}(\Pi_1)$ and $\mathrm{GL}(\Pi_2)$ with the corresponding estimate on the entries:
\begin{displaymath}
\|A_{ij}\|\leq c_0 \|(I-1)_{ij}\|\\
\|B_{ij}\|\leq c_0 \|(I-1)_{ij}\|\\	
\end{displaymath}
for some constant $c_0$. Then consider the corresponding iteration process, namely from what we have set $I'$ to be $f_1(1-A)If_2(1-B)$ such that we have the corresponding estimate in the form of $\|I'-1\|\leq d\|I-1\|^2$. Then to finish run the construction process above step by step, we will be done.
\end{proof}

\indent Now in the following setting, we set all rings involved to be Banach:

\begin{setting}\mbox{\bf{(Noncommutaive Glueing)}}
Consider the following noncommutative analog of \cite[Section 1.3]{KL15}. First we have a square diagram taking the following form which is commutative:
\[
\xymatrix@R+2pc@C+2pc{
\Pi \ar[r]\ar[r]\ar[r]\ar[d]\ar[d]\ar[d] &\Pi_2 \ar[d]\ar[d]\ar[d]\\
\Pi_1 \ar[r]\ar[r]\ar[r] &\Pi_{12},\\
}
\]
which expands to the corresponding short exact sequence:
\[
\xymatrix@R+1pc@C+1pc{
0 \ar[r]\ar[r]\ar[r] &\Pi \ar[r]\ar[r]\ar[r] &\Pi_1\oplus \Pi_2 \ar[r]^{*-*}\ar[r]\ar[r] &\Pi_{12}  \ar[r]\ar[r]\ar[r] &0
}
\]	
in the corresponding category of $\Pi$-modules. The corresponding datum consists also of three right modules $M_1,M_2,M_{12}$ over the corresponding rings $\Pi_1,\Pi_2,\Pi_{12}$ respectively, with the corresponding base change isomorphisms from the module $M_1,M_2$ to the module $M_{12}$. Take the corresponding kernel we have the corresponding exact sequence:
\[
\xymatrix@R+1pc@C+1pc{
0 \ar[r]\ar[r]\ar[r] &M \ar[r]\ar[r]\ar[r] &M_1\oplus M_2 \ar[r]\ar[r]\ar[r] &M_{12}.  
}
\]
We are going to call the corresponding datum in our situation to be $coherent$, $pseudocoherent$, $ finite$, $finitely~presented$, $finite~projective$ if the corresponding modules involved are so over the corresponding rings $\Pi_1,\Pi_2,\Pi_{12}$, which is to say $coherent$, $pseudocoherent$, $finite$, $finitely~presented$, $finite~projective$, and we require that the corresponding exact sequence:
\[
\xymatrix@R+1pc@C+1pc{
0 \ar[r]\ar[r]\ar[r] &\Pi \ar[r]\ar[r]\ar[r] &\Pi_1\oplus \Pi_2 \ar[r]^{*-*}\ar[r]\ar[r] &\Pi_{12}  \ar[r]\ar[r]\ar[r] &0
}
\]
is surjective in the strict sense, and we require that the map:
\begin{displaymath}
\Pi_2\rightarrow \Pi_{12}	
\end{displaymath}
has the image which is assumed in our situation to be dense.
\end{setting}

\begin{lemma} \mbox{\bf{(After Kedlaya-Liu \cite[Lemma 2.7.3]{KL15})}}
Consider a finite glueing datum as above, we then have the fact that 
$M\otimes \Pi_1 \rightarrow M_{1}$ and $M\otimes \Pi_2 \rightarrow M_{2}$ are surjective and the map $M_1\oplus M_2\rightarrow M_{12}$ is also surjective.
\end{lemma}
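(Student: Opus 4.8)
The plan is to imitate the proof of Kedlaya--Liu \cite[Lemma 2.7.3]{KL15}, adapting it to the noncommutative Banach setting. Since the glueing datum is finite, we may choose a finite free module $G_1$ over $\Pi_1$ surjecting onto $M_1$. Base-changing $G_1$ to $\Pi_{12}$ and using the base-change isomorphism $M_1\otimes_{\Pi_1}\Pi_{12}\overset{\sim}{\rightarrow}M_{12}\overset{\sim}{\leftarrow}M_2\otimes_{\Pi_2}\Pi_{12}$, we get a surjection from a finite free $\Pi_{12}$-module onto $M_{12}$ that factors through $M_2\otimes_{\Pi_2}\Pi_{12}$. The first step is to descend a suitable approximation of the chosen generators of $M_1$ to generators of $M_2$: because the image of $\Pi_2\to\Pi_{12}$ is dense and $M_2$ is finite, one can approximate each image in $M_{12}$ of the free generators of $G_1\otimes\Pi_{12}$ closely by elements coming from $M_2$, and then correct the discrepancy using the strict surjectivity of $\Pi_1\oplus\Pi_2\to\Pi_{12}$ together with the previous completion lemma (the noncommutative analog of \cite[Lemma 2.7.1]{KL15}) to absorb small perturbations of the transition matrix into invertible matrices over $\Pi_1$ and $\Pi_2$ respectively.

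Concretely, I would first fix a finite free covering $G_1\twoheadrightarrow M_1$ over $\Pi_1$ and a finite free covering $G_2\twoheadrightarrow M_2$ over $\Pi_2$ with the same rank $n$ (enlarging one of them by a free summand if necessary). Over $\Pi_{12}$ both become surjections onto $M_{12}$, and comparing them yields an invertible matrix $U\in\mathrm{GL}_n(\Pi_{12})$ identifying the two. By density of $\Pi_2\to\Pi_{12}$ and strict surjectivity of $\Pi_1\oplus\Pi_2\to\Pi_{12}$, we can write $U=U_0 V$ where $U_0$ is close to $1$ and $V$ has a compatible factorization; applying the glueing lemma for invertible matrices (stated just above) we factor the near-identity part as $f_1(I_1)f_2(I_2)$ with $I_1\in\mathrm{GL}_n(\Pi_1)$, $I_2\in\mathrm{GL}_n(\Pi_2)$. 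Adjusting the bases of $G_1$ and $G_2$ by $I_1$ and $I_2$, we may assume the two free covers of $M_{12}$ glue to a single finite free module $G$ over $\Pi$, whose images in $M_1$ and $M_2$ generate. Then $G\otimes\Pi_1\to M_1$ and $G\otimes\Pi_2\to M_2$ are both surjective by construction, and composing with $M_j\to M_{12}$ shows $M_1\oplus M_2\to M_{12}$ is surjective as well (the argument here is identical to the diagram chase in the proof of \cref{lemma677}).

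The main obstacle I anticipate is the bookkeeping around \emph{strictness} of the various maps: in the noncommutative setting one must be careful that the surjection $\Pi_1\oplus\Pi_2\to\Pi_{12}$ being strict transfers to the corresponding statement for finite free modules, and that the matrix factorization lemma applies with a uniform constant independent of the rank $n$ (which the lemma above does guarantee). One also needs the density hypothesis on $\Pi_2\to\Pi_{12}$ precisely to start the approximation; without it the first step fails. A secondary subtlety is that tensor products and kernels must be taken in the category of right modules consistently throughout, so that the base-change isomorphisms are isomorphisms of right $\Pi_{12}$-modules and the final glued module $G$ is naturally a right $\Pi$-module. Once these points are handled, the surjectivity claims follow formally, and this lemma then feeds into the noncommutative analogs of \cref{lemma678} and \cref{lemma679} exactly as in the commutative development.
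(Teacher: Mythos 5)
Your opening paragraph is in the right spirit (approximate over the dense image and correct the discrepancy using the matrix factorization lemma), but the concrete implementation has a genuine gap: you assert that the two finite free covers $G_1\otimes\Pi_{12}\twoheadrightarrow M_{12}$ and $G_2\otimes\Pi_{12}\twoheadrightarrow M_{12}$ are identified by an invertible matrix $U\in\mathrm{GL}_n(\Pi_{12})$. The datum here is only \emph{finite}, not finite projective, so no such invertible transition matrix need exist: all you get are two matrices $I,J$ over $\Pi_{12}$ expressing the images of each generating family in terms of the other, and these are neither unique nor invertible (their products only act as the identity on the images of the generators inside $M_{12}$, not as matrices). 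The subsequent step, writing $U=U_0V$ ``where $V$ has a compatible factorization,'' is also unjustified: the noncommutative analog of \cite[Lemma 2.7.1]{KL15} only factors matrices close to $1$, and density of the image of $\Pi_2\to\Pi_{12}$ does not let you replace an approximating matrix by one that is invertible over $\Pi_2$; gluing the two free covers into a single free $\Pi$-module $G$ is precisely the kind of cocycle statement that fails for a general $U$, and it is not how either the paper or Kedlaya--Liu argue at this stage.

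The paper's proof instead takes finite generating sets $e_1,\dots,e_m$ of $M_1$ and $f_1,\dots,f_m$ of $M_2$, writes $s_2(f_j)=\sum_i I_{ij}s_1(e_i)$ and $s_1(e_j)=\sum_i J_{ij}s_2(f_i)$ with $I,J$ matrices over $\Pi_{12}$ (no invertibility claimed), uses density to choose $J'$ with entries coming from $\Pi_2$ so that $\|I(J'-J)\|\leq c$, factors the near-identity matrix $1+I(J'-J)=K_1K_2^{-1}$ by the matrix lemma, and then exhibits explicit elements $(a_j,b_j)=(\sum_i K_{1,ij}e_i,\ \sum_i (J'K_2)_{ij}f_i)$ lying in the kernel $M$; since $K_1$ is invertible, the first components generate $M_1$, so $M\otimes\Pi_1\to M_1$ is surjective, and the remaining surjectivities then follow from \cref{lemma677}. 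To repair your argument you should drop the invertible transition matrix and the glued free module, and run the approximation directly on generating sets so as to produce elements of the kernel $M$ whose first components generate $M_1$.
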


\begin{proof}
This is a noncommutative version of \cite[Lemma 2.7.3]{KL15}. We just need to check that in our situation the condition of \cref{lemma677} holds. Now consider the bases of $M_1$ and $M_2$ respectively:
\begin{align}
e_1,e_2,...,e_m,\\
f_1,f_2,...,f_m.	
\end{align}
And we consider the corresponding expansion with respect to these bases after taking the image under the maps $s_1:M_1\otimes \Pi_{12}\rightarrow M_{12}$ and $s_2:M_2\otimes \Pi_{12}\rightarrow M_{12}$:
\begin{displaymath}
s_2(f_j)=\sum_{i} I_{ij} s_1(e_i), s_1(f_j)=\sum_{i} J_{ij} s_2(f_i).	
\end{displaymath}
Then we have from the assumption on the dense image some matrix $J'$ such that $I(J'-J)$ could be satisfying the condition $\|I(J'-J)\|\leq c$ where $c$ is the constant in the previous lemma. Then by using the previous lemma we have the following decomposition:
\begin{displaymath}
1+I(J'-J)=K_1K_2^{-1}	
\end{displaymath}
with $K_1\in \mathrm{GL}(\Pi_1),K_2\in \mathrm{GL}(\Pi_2)$. Then we consider the element:
\begin{displaymath}
(a_{j},b_j):=(\sum_{i} K_{1,ij} e_i,\sum_{i} (J'K_2)_{ij} f_i).	
\end{displaymath}
We compute as in the following:
\begin{align}
s_1(a_{j})-s_2(b_j)&=\sum_{i} K_{1,ij} s_1(e_i)-	\sum_{i} J'K_{2,ij} s_2(f_i))\\
                   &=\sum_i K_{1,ij} s_1(e_i)-(IJ'K_{2})_{ij}s_1(e_i))\\
                   &=\sum_i ((1-IJ)K_2)_{ij} s_1(e_i))\\
                   &=0.
\end{align}
Therefore the element we defined is now living in the kernel $M$. Consider just the first component we can see that the map $M\otimes \Pi_1\rightarrow M_1$ is now surjective.	
\end{proof}

%
%

\begin{lemma}\mbox{\bf{(After Kedlaya-Liu \cite[Lemma 2.7.4]{KL15})}}
For finitely projective datum, we then in our situation have the surjectivity of the map $M\otimes \Pi_1\rightarrow M_1$. Then we have the kernel $M$ is finitely presented and we have the isomorphisms $M\otimes \Pi_1\rightarrow M_1$ and $M\otimes \Pi_2 \rightarrow M_2$.
\end{lemma}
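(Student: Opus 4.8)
The plan is to deduce the statement directly from the purely algebraic finite projective glueing lemma \cref{lemma678}, once the surjectivity hypothesis it requires has been checked in the present Banach setting. First I would note that a finite projective Banach glueing datum is in particular a finite projective datum in the algebraic sense of the earlier glueing setting, so \cref{lemma678} applies as soon as one knows that $M\otimes\Pi_1\rightarrow M_1$ is surjective. That surjectivity is exactly the content of the preceding lemma (the noncommutative analog of \cite[Lemma 2.7.3]{KL15}): one picks bases $e_1,\dots,e_m$ of $M_1$ and $f_1,\dots,f_m$ of $M_2$, writes down the transition matrices $I,J$ of the two base-change isomorphisms over $\Pi_{12}$, uses the assumed density of the image of $\Pi_2\rightarrow\Pi_{12}$ to replace $J$ by a nearby $J'$ with $\|I(J'-J)\|$ smaller than the constant of the matrix decomposition lemma (the noncommutative analog of \cite[Lemma 2.7.1]{KL15}), and factors $1+I(J'-J)=K_1K_2^{-1}$ with $K_1\in\mathrm{GL}(\Pi_1)$, $K_2\in\mathrm{GL}(\Pi_2)$; the element $\bigl(\sum_i K_{1,ij}e_i,\ \sum_i (J'K_2)_{ij}f_i\bigr)$ then lies in the kernel $M$ and its $M_1$-component runs over a generating set. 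The same computation also yields surjectivity of $M\otimes\Pi_2\rightarrow M_2$ and of $M_1\oplus M_2\rightarrow M_{12}$.

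With that hypothesis in hand I would invoke \cref{lemma678} essentially verbatim: produce a finite submodule of $M$, cover it by a finite free $G$ with base changes $G_1,G_2,G_{12}$, and build the two-layer commutative diagram whose rows are the defining short exact sequences of $\Pi$ and of $G$ — exact because the sum map $\Pi_1\oplus\Pi_2\rightarrow\Pi_{12}$ is strictly surjective and hence remains surjective after tensoring with a finite free module. Projectivity of $M_1,M_2$ forces the kernels $F_1,F_2$ of $G_j\rightarrow M_j$ to be finite projective with $F_j\otimes_{\Pi_j}\Pi_{12}\overset{\sim}{\rightarrow}F_{12}$; \cref{lemma677} applied to the finite datum $(F_1,F_2,F_{12})$ gives that $F_1\oplus F_2\rightarrow F_{12}$ is surjective, a diagram chase then forces $G\rightarrow M$ to be surjective so $M$ is finite, one more iteration with $\mathrm{Ker}(G\rightarrow M)$ gives finite presentation, and the closing five-lemma argument on the diagram relating $M\otimes\Pi_i\rightarrow M_i$ to $0\rightarrow\mathrm{Ker}(G_i\rightarrow M_i)\rightarrow G_i\rightarrow M_i\rightarrow 0$ shows $M\otimes\Pi_1\rightarrow M_1$ and $M\otimes\Pi_2\rightarrow M_2$ are isomorphisms.

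The step I expect to need the most care is not any single computation but the bookkeeping at the interface between the Banach and algebraic pictures: one must check that the kernel $M$ of the strict map $M_1\oplus M_2\rightarrow M_{12}$ formed in the category of Banach right modules is the object to which \cref{lemma678} is being applied, that the topological "finite projective datum'' really specializes to a finite projective datum in the algebraic sense (in particular that the transition maps are honest isomorphisms after base change), and that the tensor products occurring are the algebraic ones, or, where a completed tensor product is used, that finite projectivity makes the completed and algebraic versions agree. This is routine in the noncommutative right-module setting, since finite projective modules are direct summands of finite free ones and completion commutes with passing to such summands, but it is precisely where an error could hide, so I would record it explicitly rather than absorb it into a reference to \cite[Lemma 2.7.4]{KL15}.
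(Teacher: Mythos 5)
Your proposal is correct and follows essentially the same route as the paper, whose proof simply combines the preceding Banach glueing lemma (giving surjectivity of $M\otimes \Pi_1\rightarrow M_1$) with \cref{lemma678}; your extra remarks on the Banach-versus-algebraic bookkeeping are a reasonable elaboration of the same argument rather than a different approach.
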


\begin{proof}
This is by the previous lemma and \cref{lemma678}.	
\end{proof}

\indent The following is a similar noncommutative version of \cite[Lemma A.7]{DLLZ1}:

\begin{lemma}\mbox{\bf{(After Kedlaya-Liu \cite[Lemma 2.7.4]{KL15})}}
For finitely presented datum, we then in our situation have the surjectivity of the map $M\otimes \Pi_1\rightarrow M_1$. Furthermore suppose we are in the situation where $\Pi_1\rightarrow \Pi_{12}$ and $\Pi_2\rightarrow \Pi_{12}$ are simultaneously assumed to be flat. Then we have the kernel $M$ is finitely presented and we have the isomorphisms $M\otimes \Pi_1\rightarrow M_1$ and $M\otimes \Pi_2 \rightarrow M_2$.
\end{lemma}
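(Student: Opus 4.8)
The plan is to reduce the statement to the purely module-theoretic \cref{lemma679} by first establishing the surjectivity hypothesis through the Banach-ring glueing machinery developed just above, exactly as the proof of the finitely projective Banach case was reduced to \cref{lemma678}. Since a finitely presented glueing datum is in particular a finite glueing datum, the noncommutative Banach glueing lemma for finite datum proved above (the analog of \cite[Lemma 2.7.3]{KL15}) applies verbatim, and it yields the surjectivity of $M\otimes\Pi_1\rightarrow M_1$.

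Concretely, I would first recall how that surjectivity is produced: one expands the images of chosen bases $e_1,\dots,e_m$ of $M_1$ and $f_1,\dots,f_m$ of $M_2$ in $M_{12}$ as matrices $I,J$ over $\Pi_{12}$, uses the assumed density of the image of $\Pi_2\rightarrow\Pi_{12}$ to replace $J$ by a matrix $J'$ defined over the right-hand ring with $\|I(J'-J)\|$ below the glueing constant $c$, factors $1+I(J'-J)=K_1K_2^{-1}$ with $K_1\in\mathrm{GL}(\Pi_1)$, $K_2\in\mathrm{GL}(\Pi_2)$ via the noncommutative matrix-decomposition lemma (the analog of \cite[Lemma 2.7.1]{KL15}), and thereby exhibits elements $(a_j,b_j)=(\sum_i K_{1,ij}e_i,\sum_i (J'K_2)_{ij}f_i)$ lying in the kernel $M$ whose first components span $M_1$. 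The only point demanding attention beyond the commutative argument is bookkeeping with the order of multiplication, since all modules here are right modules over noncommutative rings; the norm estimates are unchanged. With $M\otimes\Pi_1\rightarrow M_1$ surjective, I would then simply invoke \cref{lemma679}: the assumed flatness of $\Pi_1\rightarrow\Pi_{12}$ and $\Pi_2\rightarrow\Pi_{12}$ is precisely what lets one conclude $F_j\otimes_{\Pi_j}\Pi_{12}\xrightarrow{\sim}F_{12}$ from mere finite generation of the kernels $F_j$ (in place of projectivity, as in \cref{lemma678}), and the diagram chase there gives that $M$ is finitely presented together with the isomorphisms $M\otimes\Pi_1\xrightarrow{\sim}M_1$ and $M\otimes\Pi_2\xrightarrow{\sim}M_2$; this is the noncommutative counterpart of \cite[Lemma A.7]{DLLZ1}.

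The main obstacle lies entirely in the first step, namely in confirming that the noncommutative matrix-decomposition lemma genuinely delivers invertible $K_1,K_2$ over the one-sided and possibly non-noetherian Banach rings $\Pi_i$ under only a density, rather than a strictness, hypothesis on $\Pi_2\rightarrow\Pi_{12}$, and that the iteration $I'=f_1(1-A)\,I\,f_2(1-B)$ converges with a quadratic error bound $\|I'-1\|\leq d\|I-1\|^2$ and a glueing constant $c$ that are independent of the matrix size $n$. Once this uniform convergence is secured the argument becomes formal, and the hand-off to \cref{lemma679}, whose statement is ring-theoretic and makes no further use of the Banach topology, is immediate. I do not expect the flatness hypothesis to create difficulties, as it is invoked only to split the relevant short exact sequences after base change to $\Pi_{12}$.
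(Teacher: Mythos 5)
Your proposal is correct and follows essentially the same route as the paper: the paper's proof simply cites the preceding Banach glueing lemma (the analog of Kedlaya--Liu Lemma 2.7.3, which gives the surjectivity of $M\otimes\Pi_1\rightarrow M_1$ for finite datum via the matrix-decomposition argument) and then invokes \cref{lemma679}, exactly as you do. Your elaboration of the surjectivity step and the hand-off to the flat finitely presented case adds detail but no new ideas beyond the paper's argument.
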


\begin{proof}
See lemma above and \cref{lemma679}.	
\end{proof}


\begin{definition} \mbox{\bf{(After Kedlaya-Liu \cite[Definition 5.7.2]{KL16})}}
Over the period rings $\Pi_{H,A}$ (which is denoted by $\triangle$ in this definition) we define the corresponding $(\varphi^a,\Gamma)$-modules over $\triangle$ which are pseudocoherent or fpd to be the corresponding pseudocoherent or fpd right $\Gamma$-modules over $\triangle$ with further assigned semilinear action of the operator $\varphi^a$ with the isomorphism defined by using the Frobenius. We also require that the modules are complete for the natural topology involved in our situation and for any module over $\Pi_{H,A}$ to be some base change of some module over $\Pi^r_{H,A}$ (which will be defined in the following).
\end{definition}

\begin{definition} \mbox{\bf{(After Kedlaya-Liu \cite[Definition 5.7.2]{KL16})}}
Over each ring $\triangle=\Pi^r_{H,A}$ we define the corresponding pseudocoherent or fpd $(\varphi^a,\Gamma)$-module over any $\triangle$ to be the corresponding pseudocoherent or fpd right $\Gamma$-module $M$ over $\triangle$ with additionally endowed semilinear Frobenius action from $\varphi^a$ such that we have the isomorphism $\varphi^{a*}M\overset{\sim}{\rightarrow}M\otimes \square$ where the ring $\square$ is one $\triangle=\Pi^{r/p}_{H,A}$. Also as in \cite[Definition 5.7.2]{KL16} we assume that the module over $\Pi^r_{H,A}$ is then complete for the natural topology and the corresponding base change to $\Pi^I_{H,A}$ for any interval which is assumed to be closed $I\subset [0,r)$ gives rise to a module over $\Pi^I_{H,A}$ with specified conditions which will be specified below.

\end{definition}

\begin{definition} \mbox{\bf{(After Kedlaya-Liu \cite[Definition 5.7.2]{KL16})}}
Again as in \cite[Definition 5.7.2]{KL16}, we define the corresponding pseudocoherent and fpd $(\varphi^a,\Gamma)$-modules over ring $\Pi^{[s,r]}_{H,A}$ to be the pseudocoherent and fpd right $\Gamma$-modules (which will be denoted by $M$) over $\Pi^{[s,r]}_{H,A}$ additionally endowed with semilinear Frobenius action from $\varphi^a$ with the following isomorphisms:
\begin{align}
\varphi^{a*}M\otimes_{\Pi_{H,A}^{[sp^{-ah},rp^{-ah}]}}\Pi_{H,A}^{[s,rp^{-ah}]}\overset{\sim}{\rightarrow}M\otimes_{\Pi_{H,A}^{[s,r]}}\Pi_{H,A}^{[s,rp^{-ah}]}.
\end{align}
We now assume that the modules are complete with respect to the natural topology. 
\end{definition}

\begin{definition} \mbox{\bf{(After Kedlaya-Liu \cite[Definition 5.7.2]{KL16})}}
Over the ring $\Pi^t_{H,A}$ we define a corresponding pseudocoherent and fpd $(\varphi^a,\Gamma)$ bundle to be a family $(M_I)_I$ of pseudocoherent and fpd right $\Gamma$-modules over each $\Pi^I_{H,A}$ carrying the natural Frobenius action coming from the operator $\varphi^a$ such that for any two involved intervals having the relation $I\subset J$ we have:
\begin{displaymath}
M_J\otimes_{\Pi^J_{H,A}}\Pi^I_{H,A}\overset{\sim}{\rightarrow}	M_I
\end{displaymath}
with the obvious cocycle condition. Here we have to propose condition on the intervals that for each $I=[s,r]$ involved we have $s\leq r/p^{ah}$. We require the corresponding topological conditions as above. one can take the corresponding 2-limit in the direct sense to define the corresponding objects over the full Robba rings.
\end{definition}

\begin{remark}
We have the obvious notion of coherent objects in the noetherian setting. Also note that in the noetherian setting we do not have to worry about the corresponding completion issue, which to say the finiteness is all enough. In our current situation and furthermore also in the noetherian setting, by using the corresponding open mapping, one can also mimick the proof in \cite[3.7.3/2,3]{BGR} to prove this by considering the corresponding presentation and take the corresponding kernel.
\end{remark}

\begin{proposition}
Keeping \cref{assumption642} and under the assumption of \cref{conjecture672}, consider the following categories:\\
1. The category of all the pseudocoherent bundles over the ring $\Pi_{H,A}$, carrying the $(\varphi^a,\Gamma)$-action;\\
2. The category of all the pseudocoherent modules over the ring $\Pi^{[s,r]}_{H,A}$, carrying the $(\varphi^a,\Gamma)$-action.\\
\indent Then in our situation they are equivalent, if we have for any intervals permitted $I\subset I'$ we have $\Pi^{I'}_{H,A}\rightarrow \Pi^I_{H,A}$ is flat.	
\end{proposition}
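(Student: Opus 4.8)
The plan is to produce the equivalence in exactly the same way the analogous comparison between pseudocoherent bundles and pseudocoherent modules over a fixed interval was handled in the commutative setting (Proposition~\ref{proposition6.15} and the surrounding discussion, following \cite[Proposition 5.3.5, Lemma 5.3.8--5.3.12]{KL16}), but feeding in the noncommutative \v{C}ech--glueing lemmas (\cref{lemma678}, \cref{lemma679}) in place of the commutative descent statements. First I would set up the functor in the forward direction: given a pseudocoherent $(\varphi^a,\Gamma)$-bundle $(M_I)_I$ over $\Pi^{r_0}_{H,A}$, one restricts to a single interval $[s,r]\subset(0,r_0]$ with $s\leq r/p^{ah}$, obtaining $M_{[s,r]}$, a pseudocoherent right $\Gamma$-module over $\Pi^{[s,r]}_{H,A}$ with the required Frobenius isomorphism $\varphi^{a*}M\otimes\Pi^{[s,rp^{-ah}]}_{H,A}\overset{\sim}{\rightarrow}M\otimes\Pi^{[s,rp^{-ah}]}_{H,A}$; here one uses that the base change maps $\Pi^{I'}_{H,A}\to\Pi^I_{H,A}$ are flat (the extra hypothesis in the statement), so that pseudocoherence and the $\Gamma$-structure descend to each subinterval without loss. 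This is visibly functorial and compatible with the Frobenius and $\Gamma$ actions.

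Next I would construct the quasi-inverse. Given a pseudocoherent $(\varphi^a,\Gamma)$-module $M$ over $\Pi^{[s,r]}_{H,A}$, one wants to build a compatible family $(M_I)_I$ over all closed subintervals of $(0,r_0]$. The key point is that the Frobenius isomorphism lets one propagate $M$ outward: twisting by $\varphi^{-a}$ one gets a module over $\Pi^{[sp^{-ah},rp^{-ah}]}_{H,A}$ agreeing with $M$ on the overlap $[s,rp^{-ah}]$, and iterating gives modules on $[sp^{-ahn},rp^{-ahn}]$ for all $n$; on the other direction one uses the flatness of the restriction maps to restrict $M$ to smaller intervals. To patch two overlapping intervals $I_1\cup I_2=I$ one applies the noncommutative glueing: the relevant square
\[
\xymatrix@R+1pc@C+1pc{
\Pi^{I}_{H,A} \ar[r]\ar[d] &\Pi^{I_2}_{H,A}\ar[d]\\
\Pi^{I_1}_{H,A}\ar[r] &\Pi^{I_1\cap I_2}_{H,A}
}
\]
is a strict-exact glueing datum with dense image (this is the $A$-relative, noncommutative analog of \cite[Lemma 5.3.4]{KL16} together with \cref{lemma679}, which applies precisely because we have imposed flatness of the restriction maps), so a pseudocoherent module on $I$ is equivalent to a compatible pair of pseudocoherent modules on $I_1,I_2$. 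Since every closed subinterval of $(0,r_0]$ is covered by finitely many Frobenius translates and subintervals of $[s,r]$, one obtains a well-defined bundle $(M_I)_I$, and the cocycle condition is automatic from the uniqueness in the two-interval glueing. One then checks the $\Gamma$-structure is preserved under all of these operations because $\Gamma$ acts compatibly on all the rings in \cref{2setting6.4} and commutes with the Frobenius propagation.

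Finally I would verify that the two functors are mutually inverse: restricting the bundle $(M_I)_I$ back to $[s,r]$ obviously recovers $M_{[s,r]}$ up to canonical isomorphism, and conversely, starting from $M$, propagating and re-restricting returns $M$ because the glueing data determine the module uniquely on each interval. Compatibility with the extra data (Frobenius semilinear isomorphisms, continuous semilinear $\Gamma$-action, completeness for the natural topology) is checked along the way; the completeness condition is where one invokes the inverse-limit description $\Pi^r_{H,A}=\varprojlim_s\Pi^{[s,r]}_{H,A}$ and the vanishing of the relevant $\varprojlim^1$ (the noncommutative Fr\'echet--Stein formalism of \cite{ST1}, cf.\ the Lemma after \cite[Lemma 5.3.8]{KL16} in the noncommutative subsection). \textbf{The main obstacle} I anticipate is exactly the two-interval glueing step: in the noncommutative setting the descent lemma \cref{lemma679} requires both transition maps $\Pi^{I_1}_{H,A}\to\Pi^{I_1\cap I_2}_{H,A}$ and $\Pi^{I_2}_{H,A}\to\Pi^{I_1\cap I_2}_{H,A}$ to be flat and the glueing square to be strict exact with dense image --- the hypothesis of flatness of $\Pi^{I'}_{H,A}\to\Pi^I_{H,A}$ in the statement is tailored to supply this, but one must still verify that the specific rings $\Pi^{[s,r]}_{H,A}$ attached to a noetherian decompleting tower deformed over a noncommutative Banach affinoid $A$ actually satisfy the strict-exactness and density conditions, which is the noncommutative, $A$-relative analog of \cite[Lemma 5.3.3, Lemma 5.3.4]{KL16} and where one pays the price of working outside the well-understood commutative theory.
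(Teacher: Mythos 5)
Your proposal follows essentially the same route as the paper: the forward functor is restriction of the bundle to the chosen interval, and essential surjectivity is obtained by propagating with the Frobenius action and patching overlapping intervals via the noncommutative glueing lemmas, with the flatness hypothesis feeding exactly into \cref{lemma679}. The one point you flag as the main obstacle --- strict exactness of the two-interval glueing square and density of the image --- is precisely what the paper's proof disposes of by invoking the existence of a Schauder (orthogonal) basis for $A$, so your argument matches the paper's once that observation is added.
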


\begin{proof}
First note that we are working over noetherian rings. The functor from the category of the Frobenius bundles to that of the Frobenius modules is just projection of the family of Frobenius modules to the corresponding Frobenius module with respect to the specific interval. In our our situation we have 
\[
\xymatrix@R+1pc@C+1pc{
0 \ar[r]\ar[r]\ar[r] &\Pi \ar[r]\ar[r]\ar[r] &\Pi_1\oplus \Pi_2 \ar[r]^{*-*}\ar[r]\ar[r] &\Pi_{12}  \ar[r]\ar[r]\ar[r] &0
}
\]
is exact in the strict sense due to the existence of Schauder basis, and we have the image of $\Pi_2\rightarrow \Pi_{12}$ is dense as well in the same way. Therefore we can see that the corresponding categories are equivalent since we can show the corresponding essential surjectivity by using the Frobenius action and the glueing process established above to reach any closed interval involved.	
\end{proof}


\newpage

\newpage

\subsection*{Acknowledgements} 

\indent This is our second paper on the Hodge-Iwasawa theory. Largely the initial motivation comes from the work \cite{KP}, \cite{KL15} and \cite{KL16}. Our development is also largely driven by the corresponding deep philosophy rooted in the work pioneered by Burns-Flach-Fukaya-Kato on the noncommutative interaction among motives. The author would like to thank Professor Kedlaya for helpful discussion throughout the whole preparation. During the preparation, under Professor Kedlaya the author was supported by NSF Grant DMS-1844206.

\newpage

\bibliographystyle{ams}

\end{document}